\documentclass[11pt]{amsart}
\input{glyphtounicode}\pdfgentounicode=1
\usepackage{amsmath,amssymb,amsthm,mathtools,booktabs,enumitem,tabularx}
\usepackage[margin=1.15in,marginparwidth=0.95in,marginparsep=0.1in]{geometry}
\usepackage{hyperref}
\hypersetup{colorlinks=true,linkcolor=blue!50!black,citecolor=green!40!black,urlcolor=blue!50!black,
  bookmarksnumbered=true}
\usepackage{xcolor}
\usepackage{tikz}
\usepackage[en-GB,showseconds=false,showzone=false]{datetime2}
\DTMsettimestyle{default}

\newtheorem{theorem}{Theorem}[section]
\newtheorem{proposition}[theorem]{Proposition}
\newtheorem{lemma}[theorem]{Lemma}
\newtheorem{corollary}[theorem]{Corollary}
\newtheorem{mainthm}{Theorem}

\theoremstyle{definition}
\newtheorem{definition}[theorem]{Definition}
\newtheorem{remark}[theorem]{Remark}

\numberwithin{equation}{section}

\newcommand{\R}{\mathbb{R}}
\newcommand{\Z}{\mathbb{Z}}
\newcommand{\Q}{\mathbb{Q}}
\providecommand{\C}{\mathbb{C}}
\newcommand{\RP}{\mathbb{RP}}
\newcommand{\Hb}{\mathbb{H}}
\DeclareMathOperator{\adj}{adj}
\DeclareMathOperator{\diag}{diag}
\DeclareMathOperator{\tr}{tr}
\newcommand{\mub}{\bar\mu}

\newcommand{\AF}{\textup{AF}}
\newcommand{\AEt}{\textup{AE}}
\newcommand{\Ctor}{\mathcal C_{\mathrm{toric}}}
\newcommand{\cA}{\mathcal A}

\title[On the Geroch conjecture for toric gravitational instantons]{On the Geroch conjecture for toric gravitational
instantons}
\author{Lars Andersson}
\address{Department of Mathematics, KTH Royal Institute of Technology, 100 44 Stockholm, Sweden}
\email{larand@kth.se}
\author{Mattias Dahl}
\address{Department of Mathematics, KTH Royal Institute of Technology, 100 44 Stockholm, Sweden}
\email{dahl@kth.se}
\author{Gustav Nilsson}
\address{Mathematisches Institut, Universit\"at M\"unster, Einsteinstrasse 62, 48149 M\"unster, Germany}
\email{gustav.nilsson@uni-muenster.de}
\date{\today}

\begin{document}
\begin{abstract}
Geroch conjectured that the local solutions of the vacuum Einstein equations with two commuting Killing fields and vanishing twist
constants are generated from flat space by an infinite-dimensional group of transformations, and Hauser and Ernst proved this for
stationary axisymmetric spacetimes near the axis. The solutions generated from flat space by the transformations with finitely many
poles in the spectral parameter are
the solitons of Belinski and Zakharov. We prove a solitonic form of the conjecture in Riemannian signature, for toric gravitational
instantons.
The proof combines Weinstein's uniqueness argument, a closed formula for the soliton metrics in Hankel determinants and a
degree argument. It also applies to the toric Ricci-flat metrics with conical singularities constructed by Li and Sun.
\end{abstract}
\maketitle
\setcounter{tocdepth}{1}
\tableofcontents

\section{Introduction}\label{sec:intro}
Geroch introduced an infinite-dimensional group of transformations of the local solutions of the vacuum Einstein equations with
two commuting Killing fields, generated by the Ehlers transformations of the Ernst potential and the linear changes of the Killing
fields \cite{Geroch1971,Geroch1972}, and he conjectured that it acts transitively: ``it is reasonable to suppose that any two exact
solutions with a pair of commuting Killing fields (and whose two constants vanish) will, at least locally, be related by one of the
transformations described here'' \cite[p.~395]{Geroch1972}. The two constants are the twist constants of the Killing fields, which vanish
as soon as a non-zero combination of the Killing fields vanishes somewhere, for instance on an axis; this is the case of the
Weyl--Papapetrou form of \S\ref{subsec:WP}. Kinnersley and Chitre \cite{KC1977,KC1978} gave an explicit realisation of the group, and
Breitenlohner and Maison \cite{BM1987} identified it as a loop group acting through a linear system with a spectral parameter. The
conjecture was proved by Hauser and Ernst for stationary axisymmetric spacetimes in a neighbourhood of a point of the axis at which
one Killing field is timelike \cite{HE1980,HE1981,HE1987}: the set of transformations relating two solutions is determined by the
values of their Ernst potentials on the axis. A sharper question is which solutions are generated from flat space by the
transformations with finitely many poles in the spectral parameter. These are the solitons of Belinski and Zakharov
\cite{BZ1978,BZ1979}, among them the Kerr solution. In Lorentzian signature a soliton transformation with an even number of poles is
a product of Harrison's B\"acklund transformations \cite{Harrison1978}, and one with an odd number of poles is such a product
composed with the Kramer--Neugebauer involution \cite{KN1968} \cite{Cosgrove1980,Cosgrove1982}, \cite{KKV2013}; Harrison's
transformations lie in the Geroch group \cite{Cosgrove1981}. For stationary vacuum black holes in four dimensions it was proved, by
integrating the spectral problem along the boundary of the orbit space, that every solution with the right boundary conditions
belongs to a soliton family with finitely many parameters \cite{Varzugin1997,NH2012}; the boundary data were determined in this way
also for charged configurations \cite{Hennig2020} and in five dimensions \cite{LT2020}. Both questions make sense in Riemannian signature, for Ricci-flat $4$-manifolds with a
torus action, where the solutions of interest are the gravitational instantons and the flat seeds are $\R^4$ and $\R^3\times S^1$,
according to the asymptotic type. We prove the solitonic statement in this setting: every smooth simply connected toric gravitational instanton
whose torus action has finitely many fixed points, and at least one, is a gravitational soliton, and its metric is given by a closed formula in finitely many
parameters.

We begin with the concepts involved. A \emph{gravitational instanton} is a complete, non-compact, oriented Ricci-flat Riemannian
$4$-manifold with quadratic curvature decay, $|\mathrm{Rm}|=O(\mathfrak r^{-2})$ in terms of the distance $\mathfrak r$ from a base
point, or equivalently with finite curvature energy \cite{Hawking1977,GH1979}, \cite[Def.~1.1]{LS2026} (Definition~\ref{def:GI}). It is
\emph{toric} if it carries an effective isometric action of the torus $T^2$ in which every point has a connected stabiliser
\cite[Def.~2.1]{LS2025} (Definition~\ref{def:toricRF}); since compact toric Ricci-flat manifolds are flat \cite{Bochner1946},
non-compactness is no restriction. A toric Ricci-flat metric is encoded by an axisymmetric harmonic map from $\R^3$ minus an
axis into the hyperbolic plane, the Riemannian form of the Ernst equation \cite{Weyl1917,Ernst1968} (\S\ref{subsec:WP}),
together with its boundary data on the axis, the \emph{rod structure} \cite{Harmark2004,LS2025}: the fixed points of the torus
appear as \emph{turning points} on the axis, and the stabilisers of the intervals between them, the \emph{rods}, as lines in the
Lie algebra of the torus, the \emph{rod lines}. Unlike the Lorentzian case there are several asymptotic types: asymptotically
locally Euclidean (ALE), asymptotic to $\R^4/\Gamma$; asymptotically locally flat (ALF), asymptotic to a circle bundle over $\R^3$
like Taub--NUT; and asymptotically flat (AF), asymptotic to $(\R^3\times\R)/\Z$ like Euclidean Kerr (Definition~\ref{def:asymptype},
following \cite[Def.~4.30]{LS2025}).

A \emph{gravitational soliton} is a toric Ricci-flat metric that is obtained from the flat metric of its asymptotic type, $\R^4$
for ALE and $\R^3\times S^1$ for ALF and AF, by finitely many soliton transformations of Belinski and Zakharov with poles on the axis
\cite{BZ1978,BZ1979,BV2001}
(Definition~\ref{def:soliton}). A soliton transformation adds a pole to the spectral data of the linear system associated with
the harmonic map, and the new metric is obtained from the seed and the poles by linear algebra. Poles that are not turning
points are allowed; they are the \emph{phantom solitons} of \cite{Chen2016}, first used by Chen and Teo \cite{CT2011}, and their
positions are not parameters of the metric (\S\ref{subsec:soliton}). A phantom soliton is a pole of the dressing matrix that leaves
no trace in the metric, so the dressing matrix is not determined by the metric (compare the freedom in the linear system,
\cite[\S2]{Cosgrove1982}, \cite[(3.12)]{KKV2013}), and in the soliton representation of Theorem~\ref{thm:Gfull} the number of
poles is the number of turning points plus a non-negative number of phantom solitons determined by the rod structure, through its
degree index. The statement that every toric gravitational instanton is a gravitational soliton is a solitonic form of Geroch's
conjecture in Riemannian signature. It differs from the conjecture in three ways: the seed is the flat metric of the asymptotic
type, the transformation has finitely many poles, and the statement is global, while the conjecture and the theorem of Hauser and
Ernst are local. We do not identify the operations of Definition~\ref{def:soliton} with elements of a Riemannian analogue of the
Geroch group, so we do not claim that the theorem implies the conjecture for toric gravitational instantons.

\begin{mainthm}[Toric gravitational instantons are solitons]\label{thm:G}
Let $(M,g)$ be a simply connected toric gravitational instanton whose torus action has finitely many fixed points, at least
one. Then the end of $M$ is of one of the types ALE, ALF$^\pm$ or AF$_\beta$, and $g$ is a gravitational soliton.
\end{mainthm}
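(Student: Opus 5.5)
The plan is to reduce the statement to a uniqueness theorem for the axisymmetric harmonic map with prescribed rod structure, and then show that every admissible rod structure is realised by a soliton metric. \emph{First}, we invoke the structure theory of Li and Sun \cite{LS2025,LS2026}. A simply connected toric gravitational instanton with finitely many fixed points has a Weyl--Papapetrou description (\S\ref{subsec:WP}). Its orbit space is a half-plane, its boundary axis carries finitely many turning points $z_1<\dots<z_n$ with $n\ge1$, and the rod lines are primitive and satisfy the smoothness (adjacency determinant $\pm1$) condition at each turning point. The classification of the end in \cite[Def.~4.30]{LS2025} then forces ALE, ALF$^\pm$ or AF$_\beta$, according to the two semi-infinite rods and the asymptotic behaviour of the harmonic map. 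This gives the first assertion and a complete list of boundary data: the rod structure together with a finite set of asymptotic parameters (the AF parameter $\beta$, the ALF sign, and so on).

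\emph{Second}, we prove a uniqueness statement in the spirit of Weinstein. Two toric Ricci-flat metrics with the same rod structure, the same turning point positions and the same asymptotic data have the same harmonic map into the hyperbolic plane. To show this, we compare two such maps $\Phi_1,\Phi_2$ through $\sigma=\cosh d_{\Hb}(\Phi_1,\Phi_2)-1$ or a Mazur-type identity. This quantity is subharmonic on $\R^3$ minus the axis and axisymmetric. We then check that it is bounded near each rod, using the common rod line and the regularity at the rods, bounded near the turning points, and decays at infinity by the common asymptotic type. A maximum principle for axisymmetric subharmonic functions, where the axis is removable, then gives $\sigma\equiv0$. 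Hence $g$ is determined by finitely many parameters.

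\emph{Third}, we construct a soliton with exactly this data. We apply Belinski--Zakharov dressing to the flat seed of the type, $\R^4$ or $\R^3\times S^1$, with poles at the turning points and possibly extra phantom poles. Using the closed Hankel-determinant formula for the dressed metric, we compute its rod structure: the rod line on each interval of the axis is read off from the limiting kernel of the dressed matrix. The task is to choose the BZ vectors, which are finitely many parameters, so that the resulting rod lines and asymptotic data match the given ones, and so that the soliton metric is smooth and non-degenerate. Once this is done, uniqueness from the second step identifies $g$ with the soliton, so $g$ is a gravitational soliton.

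\emph{Main obstacle.} The hard step is surjectivity in the third step: showing that every admissible rod structure is hit, and determining how many phantom poles are needed. My plan is a degree argument. I would view the map from BZ parameters (pole vectors in $\RP^1$ per pole) to sequences of rod lines and end data as a map between spaces of the same dimension, and compute its degree on a model configuration, for example when the turning points separate widely. If the degree is nonzero, every target is attained. The degree index of the rod structure would dictate the number of phantom poles required to make the parity and winding match. I would also have to show that the Hankel determinants do not vanish off the axis, which is needed for smoothness; I would try to get this from positivity of a Gram-type form in Riemannian signature.
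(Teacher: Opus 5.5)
Your plan follows the paper's proof. Li--Sun's results give global Weyl coordinates and the type of the end (Proposition~\ref{prop:endtype}), and Proposition~\ref{prop:smoothCtor} then puts the metric in $\Ctor$. Weinstein's subharmonicity argument gives uniqueness for harmonic maps locally tamed by the same rod structure and asymptotic to the same model (Lemma~\ref{lem:uniq}). A degree argument on the Hankel-determinant closed formula, with the number of phantom poles set by the degree index, realises every normalised rod structure by a soliton (Propositions~\ref{prop:ALEreal} and~\ref{prop:ALFreal}).

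Your sketch leaves implicit the points where the paper does the work in that last step. The degree map is defined on the open cone of positive weights on the turning points, which is exactly where Heine's formula makes the denominators $F_1F_2$ positive; this positivity comes from the signs of the weights, not from the signature. Its properness comes from showing that degenerating weights force two adjacent rod lines to coalesce (Lemma~\ref{lem:degen}). Its degree is computed by induction on $n$, letting one weight tend to $0$, starting from the explicit case $n=r+1$. Finally, the ALF and AF solitons are obtained as far-node limits of ALE ones rather than by dressing $\R^3\times S^1$ directly; the limit is only afterwards identified with a single dressing of $\R^3\times S^1$ (Theorem~\ref{thm:mkn}).
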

A detailed statement, with the number of poles, the parameters of the soliton and the closed formula for the metric, is
Theorem~\ref{thm:Gfull} at the beginning of \S\ref{sec:proofs}, where both theorems are proved.

The proof gives more than the statement. The metric of a soliton is a ratio of determinants, and the central step is a
factorisation of the determinants that describe the solitons on $\R^4$, pointed out to us by Aksteiner \cite{Aksteiner2022}
and proved by him through the Cauchy--Pfaffian identity \cite[Prop.~2.2]{Aksteiner2026}, which together with classical determinant identities turns the metric of every instanton in Theorem~\ref{thm:G} into a
closed formula: the Gram matrix of the Killing fields and the conformal factor of the orbit space are Hankel determinants of
a measure on the turning points (Theorem~\ref{thm:Gfull}, Remark~\ref{rem:GGclosed}). The parameters of the soliton, its
\emph{soliton data}, are the positions of the turning points, a positive weight at each of them, and in the ALF and AF cases
one further positive parameter, the weight of a pole at infinity; the number of poles is determined by an integer invariant
of the rod structure, the degree index (\S\ref{subsec:cells}). The proof produces the data by a surjectivity argument and does
not show that they are unique (Remark~\ref{rem:injectivity}). A first consequence of the explicit rod lines is the toric case of a
uniqueness conjecture of Li and Sun: a simply connected toric gravitational instanton whose torus action has finitely many fixed
points, at least one, and whose end is asymptotically flat without twist is the Euclidean Schwarzschild metric (Corollary~\ref{cor:AF0}). The closed formula also gives new explicit
instantons, among them a toric ALF instanton with four fixed points that is Hermitian for neither orientation
(Remark~\ref{rem:compexamples}).

The hypotheses are used as follows. The type of the end is determined by a result of Li and Sun \cite{LS2026b}, which
enters the proof only at this point (Proposition~\ref{prop:endtype}); if the type is assumed, the proof uses no result of
\cite{LS2026b}, and global Weyl coordinates, in which the orbit space is a half-plane with the turning points on its boundary,
follow from an argument of Weinstein's type \cite{Weinstein1990,KL2021,KL2026} (Lemma~\ref{lem:globalweyl}). Simple
connectivity is equivalent to the finiteness of the fundamental group and to the simple connectivity of the orbit space, and it holds
automatically when the volume growth is Euclidean (Remark~\ref{rem:pi1}); without it global Weyl coordinates can fail
(\S\ref{subsec:WP}). The finiteness of the set of fixed points
is an assumption; compare the conjecture that gravitational instantons have finite topological type \cite[\S7.2]{LS2026}.
Conversely, a complete toric Ricci-flat manifold with an end of one of the three types is a gravitational instanton with
finitely many fixed points (Remark~\ref{rem:asymptypes}).

Many toric gravitational instantons were known to be solitons: Euclidean Kerr is a two-soliton solution \cite{BZ1979}, and
Taub--NUT \cite{CT2010}, the Chen--Teo instanton \cite{CT2011,Chen2016} and Teo's instanton on the manifold $X_3$ of Li and Sun
\cite{Teo2026} were written as solitons. On the general side, Li and Sun constructed toric Ricci-flat metrics for every rod
structure of each asymptotic type, by solving the harmonic map problem with prescribed boundary data \cite{LS2025}, and Kunduri
and Lucietti studied the uniqueness of toric instantons with given rod structure \cite{KL2021,KL2026}, extending Weinstein's
argument for black holes \cite{Weinstein1990}. Dunajski, Mason and Tod approach the uniqueness question for toric instantons by
twistor methods \cite{DMT2026}. These results give existence and uniqueness for a given rod structure, but not
the metric itself. Aksteiner \cite{Aksteiner2026} studies the solitons of type AF directly, in the Belinski--Zakharov
parametrisation: integral restrictions on their rod data, a convex potential whose critical points are the smooth metrics, an
existence criterion under comparison hypotheses, and a catalogue through twelve turning points.

The proof is formulated in terms of harmonic maps rather than metrics. In global Weyl coordinates the metric is determined
by the map $\Phi$ into the hyperbolic plane and a conformal factor obtained from it by quadrature, and the smoothness of the
metric along the axis becomes a boundary behaviour of $\Phi$ that Li and Sun call local tameness by the rod structure. We work
in the class $\Ctor$ of maps that are locally tamed by a rod structure and tend to the model map of one of the three types
at infinity, uniformly in the angle (Definition~\ref{def:Ctor}). The class does not prescribe the cone angles along the axis,
and the rod lines may be arbitrary lines. The main step is Proposition~\ref{prop:GG}: every element of $\Ctor$, and every
toric Ricci-flat metric built from it with any period lattice, is a gravitational soliton. This includes the harmonic maps
that Li and Sun construct for every rod structure \cite[Thm.~4.24]{LS2025}, and toric Ricci-flat metrics with conical
singularities along the axis whose maps lie in $\Ctor$ (Remark~\ref{rem:Ctorstatus}); we state only the smooth case as a
theorem. The instantons of Theorem~\ref{thm:G} lie in $\Ctor$ (Propositions~\ref{prop:endtype} and~\ref{prop:smoothCtor}),
which gives the theorem.

We now discuss the main ideas of the proof of Theorem~\ref{thm:G}. It has three parts: uniqueness, regularity of the
solitons, and surjectivity.

The uniqueness argument is that of Weinstein \cite{Weinstein1990} and Li and Sun \cite{LS2025}, stated with minimal
hypotheses (Lemma~\ref{lem:uniq}). The hyperbolic distance between two harmonic maps is subharmonic. If two maps are locally
tamed by the same rod structure and tend to the same model at infinity, this distance is bounded, extends across the axis,
and tends to $0$ at infinity, so it vanishes.

The second part shows that the soliton metrics lie in $\Ctor$. The solitons on $\R^4$ have a closed formula
(Theorem~\ref{thm:CF}), which comes from the factorisation of the interpolation determinants that express the dressing. The
solitons of the types ALF and AF are limits in which one pole moves to infinity: in an adapted frame the closed formula is
analytic in the inverse of its position, and, by a displacement identity for Cauchy-type matrices, the limit is a single
Belinski--Zakharov dressing of the flat seed $\R^3\times S^1$ (\S\ref{sec:far}). After the phantom solitons are removed
(Lemma~\ref{lem:Cnull}), both kinds of solitons are given by one formula in terms of Hankel determinants of a measure on the
turning points (Theorem~\ref{thm:family}), whose denominators are positive off the turning points, so that these metrics are
non-degenerate and definite on the whole half-plane (Lemma~\ref{lem:famid}). On a rod the formula reduces to the rod form,
with the rod lines given by Hankel determinants; near a turning point a square root gives the turning-point form
(Proposition~\ref{prop:axis}); and at infinity the terms of the determinants cluster, which gives an explicit bound to the
model for ALE (Proposition~\ref{prop:Cinfty}) and an $O(R^{-1})$ bound for ALF and AF (Propositions~\ref{prop:farinfty}
and~\ref{prop:farCtor}).

The third part is surjectivity. The map from positive soliton data on fixed turning points to rod lines, normalised by the
model at infinity, is proper and has degree $\pm1$; so every admissible configuration of rod lines is attained
(\S\ref{sec:surj}). Given a metric in $\Ctor$, the third part gives a soliton with the same turning points, rod lines and
model, the second shows that it lies in $\Ctor$, and the first shows that the two coincide.

The paper is organised as follows. \S\ref{sec:toric} sets up the Weyl--Papapetrou reduction, rod structures, the models at
infinity and the class $\Ctor$. \S\ref{sec:uniq} proves the uniqueness lemma, shows that the instantons of Theorem~\ref{thm:G}
lie in $\Ctor$, and converts their rod lines into the normalised data on which the solitons are parametrised.
\S\ref{sec:bz} recalls the Belinski--Zakharov method and defines gravitational solitons, and \S\ref{sec:hankel} the Hankel
determinants and measures that describe them. \S\ref{sec:ale} proves the closed formula for the solitons on $\R^4$,
\S\ref{sec:alerods} studies the resulting family on the axis and at infinity, and \S\ref{sec:far} treats the far-node limit,
which gives the solitons of the types ALF and AF. \S\ref{sec:surj} proves surjectivity, and \S\ref{sec:proofs} states the
detailed form of Theorem~\ref{thm:G} and proves it. \S\ref{sec:examples} works out examples; nothing in it is used in the
proofs. Appendix~\ref{app:identities} collects the determinant identities used. The role of computation and of an AI
assistant is described in the note at the end of the paper.

\section{Toric Ricci-flat metrics}\label{sec:toric}
This section sets up the framework: the reduction of a toric Ricci-flat metric to an augmented harmonic map, rod structures
and local tameness, the models at infinity and the ends, and the class $\Ctor$ (Definition~\ref{def:Ctor}) in which everything
later is formulated.

\subsection{Toric Ricci-flat manifolds and the harmonic map}\label{subsec:WP}
\begin{definition}[{\cite[Def.~2.1]{LS2025}}]\label{def:toricRF}
A \emph{toric Ricci-flat manifold} is an oriented Ricci-flat Riemannian $4$-manifold $(M,g)$ with an effective isometric
action of the torus $T=T^2$ in which every point has a connected stabiliser.
\end{definition}
Kunduri and Lucietti assume instead that no point has a discrete non-trivial stabiliser \cite[Def.~2.1]{KL2026}. For an
effective $T$-action on a $4$-manifold the two conditions are equivalent, by the slice theorem \cite[Ch.~VIII]{Borel1960}, \cite[Ch.~VI, Thm.~2.2]{Bredon1972}: a
disconnected stabiliser $S^1\times\Z_k$, $k\ge2$, acts on its $3$-dimensional slice by a character $\chi$ on a $2$-plane and trivially on a line; $\chi$ cannot be injective on $S^1\times\Z_k$, and an element of $\ker\chi$ fixes the slice, hence (as $T$ is abelian) the open set swept out by $T$ acting on the slice, pointwise, contradicting effectiveness.
The condition makes the orbit space a manifold with boundary and corners rather than an orbifold \cite[Rem.~2.2]{LS2025}.
\begin{definition}[{\cite[Def.~1.1]{LS2026}}]\label{def:GI}
A \emph{gravitational instanton} is a complete, non-compact, oriented Ricci-flat Riemannian $4$-manifold $(M,g)$ with
quadratic curvature decay, $|\mathrm{Rm}_g|=O(\mathfrak r^{-2})$, where $\mathfrak r$ is the distance from a base point; equivalently
\cite[Rem.~2.3]{LS2026}, with finite curvature energy, $\int_M|\mathrm{Rm}_g|^2\,d\mathrm{vol}_g<\infty$. A gravitational instanton is
\emph{toric} if it is a toric Ricci-flat manifold.
\end{definition}

Let $(M,g)$ be toric Ricci-flat, with Killing fields $X_v$ for $v$ in the Lie algebra $\mathfrak t\cong\R^2$ of $T$.
Fix a basis $(e_1,e_2)$ of $\mathfrak t$ and let $G_{ab}=g(X_{e_a},X_{e_b})$ be the Gram matrix. We recall the following concepts.

\emph{Orthogonal transitivity.} The twist functions $\star(X_{e_1}^\flat\wedge X_{e_2}^\flat\wedge dX_{e_a}^\flat)$
are constant, because $\operatorname{Ric}=0$, and they vanish where $X_{e_1}\wedge X_{e_2}=0$. If the action has a
non-principal orbit they vanish identically, the distribution orthogonal to the orbits is integrable, and $g=G_{ab}\,
d\phi^ad\phi^b+\hat g$ with $\hat g$ a metric on the orbit space $\mathcal B=M/T$
\cite{Papapetrou1966,KT1966,Carter1969}, \cite[Thm.~7.1.1]{Wald1984}, \cite[\S2.1]{LS2025}. The orbit space $\mathcal B$ is a
connected surface with boundary and corners \cite[\S2.3]{LS2025}; if $M$ is simply connected, so are $\mathcal B$ and its interior
$\mathcal B^\circ$ \cite[Ch.~II, Cor.~6.3]{Bredon1972}.

\emph{Weyl coordinates.} It follows from Ricci-flatness that $\rho=\sqrt{\det G}$ is harmonic for $\hat g$ on $\mathcal B^\circ$ \cite{Weyl1917,ER2002}. Where $d\rho\ne0$, a harmonic conjugate $z$ gives isothermal coordinates, $\hat g=e^{2\nu}(d\rho^2
+dz^2)$.

\emph{Orientation.} A basis $(e_1,e_2)$, with angle coordinates $\phi^1,\phi^2$, orients the
principal orbits, and with the orientation of $M$ it induces an orientation of $\mathcal B^\circ$. We choose $z$ with $dz=\star_{\hat g}d\rho$
for this orientation; where $d\rho\ne0$ this means that $d\rho\wedge dz\wedge d\phi^1\wedge d\phi^2$ is positive, as in
\cite[\S3]{LS2025}.

When $(\rho,z)$ is a global chart of $\mathcal B^\circ$ onto the open half-plane $\Hb^\circ=\{(\rho,z):\rho>0\}$ we say that $g$ has
\emph{global Weyl coordinates}. Li and Sun call such instantons \emph{proper} \cite[Def.~3.14]{LS2025}, \cite[Def.~A.2]{LS2026b}.
Properness is a genuine restriction in general \cite[Rem.~A.3]{LS2026b}: the toric gravitational instantons on $S^4\setminus T^2$
of Khuri, Reiris, Weinstein and Yamada, quotients of space-periodic metrics, have connected stabilisers and fixed points but no global
Weyl coordinates, and they are asymptotic to a Kasner metric \cite[Thms.~2(i), 4]{KRWY2024}, \cite[\S6.8]{LS2026}; their orbit spaces are
half-cylinders, on which $\star d\rho$ has a period, so their fundamental group is $\Z$ (Remark~\ref{rem:pi1}). Under the hypotheses of Theorem~\ref{thm:G} properness is automatic: this is
Proposition~\ref{prop:endtype}, and Lemma~\ref{lem:globalweyl} when the type of the end is assumed. In global Weyl coordinates
\begin{equation}\label{eq:WP}
g=G_{ab}(\rho,z)\,d\phi^ad\phi^b+e^{2\nu(\rho,z)}\bigl(d\rho^2+dz^2\bigr),\qquad \det G=\rho^2 ,
\end{equation}
and Ricci-flatness is equivalent to two conditions \cite{BZ1978}, \cite[\S8.1, (8.3)--(8.4)]{BV2001}, \cite[\S2.2]{LS2025}. The first is
\begin{equation}\label{eq:hm}
\partial_\rho\bigl(\rho\,G^{-1}\partial_\rho G\bigr)+\partial_z\bigl(\rho\,G^{-1}\partial_z G\bigr)=0 ,
\end{equation}
i.e.\ $\Phi=\rho^{-1}G$, a map into the space $\mathcal H$ of symmetric positive definite $2\times2$ matrices of determinant $1$,
is an axisymmetric harmonic map from $\R^3$ minus the $z$-axis \cite[\S2]{LS2025}. Here $\mathcal H$ carries the Riemannian metric
\begin{equation}\label{eq:Hmetric}
ds^2=\tfrac12\tr\bigl((\Phi^{-1}d\Phi)^2\bigr),
\end{equation}
which has constant curvature $-1$; the group $SL(2,\R)$ acts on $\mathcal H$ transitively and isometrically by $\Phi\mapsto B^T\Phi B$,
with stabiliser $SO(2)$ at the identity, so $\mathcal H=SL(2,\R)/SO(2)$ is the hyperbolic plane. The second condition is the quadrature
\begin{equation}\label{eq:nu}
\partial_z\nu=\tfrac\rho4\tr(J_zJ_\rho),\qquad
\partial_\rho\nu=-\tfrac1{2\rho}+\tfrac\rho8\tr\bigl(J_\rho^2-J_z^2\bigr),\qquad J=G^{-1}dG ,
\end{equation}
whose integrability condition is \eqref{eq:hm}. So $\nu$ is determined by $G$ up to an additive constant. The components of
$\operatorname{Ric}$ along the orbits give \eqref{eq:hm}, and $\operatorname{Ric}(\partial_\rho,\partial_\rho)-\operatorname{Ric}(\partial_z,\partial_z)$ and
$\operatorname{Ric}(\partial_\rho,\partial_z)$ give \eqref{eq:nu}, the term $-1/(2\rho)$ coming from $\det G=\rho^2$; the mixed components vanish by
orthogonal transitivity, and $\operatorname{Ric}(\partial_\rho,\partial_\rho)+\operatorname{Ric}(\partial_z,\partial_z)$ by the contracted Bianchi identity \cite[\S8.1]{BV2001}, \cite[\S2.2]{LS2025}. Following
\cite{LS2025}, a pair $(\Phi,\nu)$ on $\Hb^\circ$ with \eqref{eq:hm} and \eqref{eq:nu} is an \emph{augmented harmonic map}.
Given a lattice $\Lambda\subset\mathfrak t$, it defines the toric Ricci-flat metric \eqref{eq:WP} on
$\Hb^\circ\times\mathfrak t/2\pi\Lambda$, which we orient by $d\rho\wedge dz\wedge d\phi^1\wedge d\phi^2$.
\begin{remark}\label{rem:nuinv}
The quadrature \eqref{eq:nu} involves $G$ only through the traces of products of the components of $J=G^{-1}dG$. For a constant
$\kappa\ne0$ and a constant $B\in GL(2,\R)$, replacing $G$ by $\kappa B^TGB$ replaces $J$ by $B^{-1}JB$. So every solution of \eqref{eq:nu}
for $G$ is one for $\kappa B^TGB$. Two solutions for the same $G$ have the same gradient, so they differ by a constant, since $\Hb^\circ$
is connected.

For the soliton metrics the constant in $e^{2\nu}$ is fixed in Theorem~\ref{thm:family}. Every statement holds for each choice; only the
fact that it is one constant, the same on all rods, is used, and it enters the normalised rod vectors of \S\ref{subsec:rods}.
\end{remark}
\begin{remark}[density]\label{rem:density}
Let $\mathcal P$ be a topological space, $U\subset\Hb^\circ\times\mathcal P$ open, and let $G$ be a symmetric $2\times2$ matrix function and $\nu$ a
function on $U$ such that $G$, its first and second $(\rho,z)$-derivatives and $d_{(\rho,z)}\nu$ are continuous on $U$, with
$\det G\ne0$. If \eqref{eq:hm}, resp.\ \eqref{eq:nu}, holds at the points $(\rho,z,t)\in U$ with $t$ in a dense subset of $\mathcal P$, then it
holds on $U$. Indeed, these points are dense in $U$, since $U$ is open, and both sides of the equation are continuous on $U$.
\end{remark}

We write $d$ for the Riemannian distance of \eqref{eq:Hmetric} on $\mathcal H$. In terms of the matrices,
\begin{equation}\label{eq:dist}
\tr(\Phi_1\Phi_2^{-1})-2=2(\cosh d(\Phi_1,\Phi_2)-1),
\end{equation}
with the normalisation of \cite{LS2025}. To see this, note that both sides are invariant under the isometries $\Phi\mapsto B^T\Phi B$, so
we may take $\Phi_2=I$ and, by a rotation, $\Phi_1=\diag(e^t,e^{-t})$. Then the left side is $2\cosh t-2$, and the curve
$\diag(e^s,e^{-s})$, $0\le s\le t$, is a geodesic of length $t$ by \eqref{eq:Hmetric}, so $d(\Phi_1,I)=|t|$.
A change of basis $B\in GL(2,\R)$ of $\mathfrak t$ acts by $G\mapsto B^TGB$ and on $\Phi$ by an isometry of $\mathcal H$; it rescales
$(\rho,z)$ by $|\det B|$ and changes the sign of $z$ if $\det B<0$.

\emph{Half-plane coordinates.} We use the parametrisation
\begin{equation}\label{eq:PhiHA}
\Phi_{H,A}=\frac1\rho\begin{pmatrix}H^{-1}A^2+\rho^2H&H^{-1}A\\ H^{-1}A&H^{-1}\end{pmatrix},\qquad H>0.
\end{equation}
Every $G$ with $\det G=\rho^2$ and $G_{22}>0$ is $\rho\Phi_{H,A}$ with $H=1/G_{22}$ and $A=G_{12}/G_{22}$. The matrix $\Phi_{H,A}$ is the point
$(\mathfrak x,\mathfrak y)=(\rho H,A)=(\rho/G_{22},G_{12}/G_{22})$ of the half-plane $\{\mathfrak x>0\}$ with the metric
$(d\mathfrak x^2+d\mathfrak y^2)/\mathfrak x^2$ \cite[\S4.1]{LS2025}. After exchanging the two basis vectors, $\Phi_{H,A}=\Phi_\tau$ with
$\tau=\mathfrak y+i\mathfrak x=A+i\rho H$, where for $\operatorname{Im}\tau>0$
\begin{equation}\label{eq:Phitau}
\Phi_\tau=\frac1{\operatorname{Im}\tau}\begin{pmatrix}1&\operatorname{Re}\tau\\ \operatorname{Re}\tau&|\tau|^2\end{pmatrix},\qquad\text{i.e.}\qquad
\Phi_\tau(X,X)=\frac{|x+\tau y|^2}{\operatorname{Im}\tau}
\end{equation}
for $X$ with components $(x,y)$; $\det\Phi_\tau=1$. Since
$\tr(\Phi_\tau\Phi_{\tau'}^{-1})=2+|\tau-\tau'|^2/(\operatorname{Im}\tau\operatorname{Im}\tau')$, the distance \eqref{eq:dist} is given by
\begin{equation}\label{eq:coshhalf}
\cosh d(\Phi_\tau,\Phi_{\tau'})-1=\frac{|\tau-\tau'|^2}{2\operatorname{Im}\tau\operatorname{Im}\tau'}
=\frac{(\mathfrak x-\mathfrak x')^2+(\mathfrak y-\mathfrak y')^2}{2\mathfrak x\mathfrak x'}=\frac{\rho^2(H-H')^2+(A-A')^2}{2\rho^2HH'},
\end{equation}
the last form for two matrices \eqref{eq:PhiHA} at the same $\rho$.

\subsection{Rod structures and local tameness}\label{subsec:rods}
\begin{definition}[{\cite[Def.~3.1]{LS2025}}]\label{def:rodstructure}
A \emph{rod structure} $\mathfrak R$ consists of
\begin{itemize}[leftmargin=*]
\item \emph{turning points} $w_1<\dots<w_n$ on the axis $\{\rho=0\}$, $n\ge1$, which divide it into the \emph{rods}
$I_0=(-\infty,w_1)$, $I_j=(w_j,w_{j+1})$ and $I_n=(w_n,\infty)$;
\item \emph{rod lines} $L_0,\dots,L_n\in\RP^1=\mathbb P(\mathfrak t)$ with $L_{j-1}\ne L_j$;
\item an \emph{asymptotic type}: ALE, ALF$^+$ or ALF$^-$ if $L_0\ne L_n$, and AF$_\beta$, $\beta\in\R$, if
$L_0=L_n$.
\end{itemize}
\end{definition}
Li and Sun call the $L_j$ \emph{rod vectors}, as elements of $\RP^1$ \cite[Def.~3.1]{LS2025}; we call them rod lines and keep the
name rod vectors for vectors that span them. For a smooth toric instanton the turning points are the images of the fixed points, and $L_j$ is the Lie algebra of
the circle that fixes the points over $I_j$. In terms of the Gram matrix: the generator $v_j$ of that circle vanishes over $I_j$, so
$G(v_j,\cdot)=0$ there; on a rod $G$ has rank one and its kernel is the rod line $L_j$, and at a turning point both $v_{j-1}$ and
$v_j$ vanish and $G=0$ (Lemma~\ref{lem:axis}). The rod lines of a metric are thus read off from the kernel of $G$ on the axis. The line $L_j$
contains a primitive vector $v_j$ of the lattice $\Lambda$ of $2\pi$-periodic elements, and smoothness at the turning points means
$\det(v_{j-1},v_j)=\pm1$ \cite{OR1970,LS2025}. For metrics with conical singularities the lines are arbitrary real lines.

\emph{Local models} \cite[Defs.~4.1, 4.2]{LS2025}. For a line $L$ choose $v\in L$ and $\xi$ with $\det(v,\xi)=1$. The
\emph{rod model} of $L$ is the map whose Gram matrix in the basis $(v,\xi)$ is $\diag(\rho^2,1)$, the flat product
$\R^2\times(\R\times S^1)$. For two lines $L\ne L'$ and a point $w$ of the axis choose $v\in L$, $v'\in L'$ with
$\det(v,v')=\pm1$. The \emph{turning-point model} at $w$ is the map whose Gram matrix in the basis $(v,v')$ is
\begin{equation} \diag\bigl(R_w+(z-w),\,R_w-(z-w)\bigr),\qquad R_w=\sqrt{\rho^2+(z-w)^2}, \end{equation}
the flat metric on $\R^4=\C^2$. Here $v$ vanishes on the axis below $w$ and $v'$ above it. Other choices of $\xi$, of
the scale of $v$, or of $v,v'$ change the model by an isometry of $\mathcal H$ that moves it by a distance that is bounded on $\{\rho\le\rho_0\}$ for every $\rho_0$ (for $\xi\mapsto\xi+cv$ one finds $\cosh d-1=c^2\rho^2/2$). So the
following notion does not depend on the choices.

\begin{definition}[{local tameness, \cite[Def.~4.8]{LS2025}}]\label{def:tame}
A map $\Phi\colon\Hb^\circ\to\mathcal H$ is \emph{locally tamed} by a rod structure if every point of a rod $I_j$ has a
neighbourhood on which $d(\Phi,\text{rod model of }L_j)$ is bounded, and every turning point $w_j$ has a neighbourhood
on which the distance from $\Phi$ to the turning-point model of $L_{j-1},L_j$ at $w_j$ is bounded.
\end{definition}

\emph{Normalised rod vectors and cone angles.} Let $(\Phi,\nu)$ be an augmented harmonic map, $G=\rho\Phi$. For
$v\in L_j$ put $\mathfrak c_j(v)^2=\lim_{\rho\to0}\rho^2e^{2\nu}/G(v,v)$ on $I_j$, when the limit exists and is constant along
$I_j$. The circle generated by $v$ then has the cone angle $2\pi/\mathfrak c_j(v)$ along the surface over $I_j$
\cite[(2.8)]{KL2026}, \cite[Def.~4.19]{LS2025}. The \emph{normalised rod vector} $\tilde v_j\in L_j$ is defined, up to
sign, by $\mathfrak c_j(\tilde v_j)=1$ \cite[Def.~4.17]{LS2025}. Suppose that near $I_j$, in a basis $(v_j,\xi)$ with
$\det(v_j,\xi)=1$, the Gram matrix has the \emph{rod form}
\begin{equation}\label{eq:Grod}
G=\begin{pmatrix}\rho^2a&\rho^2b\\ \rho^2b&f\end{pmatrix},\qquad af-\rho^2b^2=1,
\end{equation}
with $a,b,f$ smooth in $(\rho^2,z)$ up to $\rho=0$ \cite[(2.7)]{KL2026}. Then $\adj G\to f\,v_j\otimes v_j$ and
$\mathfrak c_j(v_j)^2=e^{2\nu}/a=e^{2\nu}f$ on the rod, so
\begin{equation}\label{eq:Nrod}
\lim_{\rho\to0}e^{2\nu}\adj G=\tilde v_j\otimes\tilde v_j\qquad\text{on }I_j .
\end{equation}
By \eqref{eq:nu} and \eqref{eq:Grod}, $\partial_\rho\nu=O(\rho)$ and $\partial_z\nu=\partial_za/(2a)+O(\rho)$ near the rod, so $\nu$ extends $C^1$ to $I_j$ and
the quadrature \eqref{eq:nu} on the rod gives $\partial_z(2\nu-\log a)=0$, so this limit is constant along the rod. For a
lattice $\Lambda$ the surface over $I_j$ is smooth exactly when $\tilde v_j$ is a primitive vector of $\Lambda$.
\begin{remark}[smoothness]\label{rem:lattice}
Choose the signs of the normalised rod vectors $\tilde v_0,\dots,\tilde v_n$ so that $\det(\tilde v_{j-1},\tilde v_j)<0$ for all $j$, and
define $\kappa_j,\beta_j$ for $1\le j\le n-1$ by
\begin{equation}\label{eq:kappabeta}
\tilde v_{j+1}=\kappa_j\tilde v_j-\beta_j\tilde v_{j-1}.
\end{equation}
They do not change if a linear map of positive determinant is applied to all $\tilde v_j$, or if they are rescaled by a common factor
(a linear map of negative determinant replaces $\kappa_j$ by $-\kappa_j$), and
$\det(\tilde v_j,\tilde v_{j+1})=\beta_j\det(\tilde v_{j-1},\tilde v_j)$, so $\beta_j>0$. A metric \eqref{eq:WP} whose Gram matrix has the rod
form \eqref{eq:Grod} on every rod and the form \eqref{eq:nutmodel} of Lemma~\ref{lem:formtame} at every turning point, with smooth
coefficients, extends smoothly across all rods and turning points, for some lattice, if and only if $\beta_j=1$ and
$\kappa_j\in\Z$ for $1\le j\le n-1$; the lattice is then generated by any two consecutive $\tilde v_j$, and all $\tilde v_j$ are primitive in
it. Indeed, the metric is smooth across the surface over $I_j$ exactly when $\tilde v_j$ is primitive in the lattice \cite[(2.8)]{KL2026},
\cite[Def.~4.19]{LS2025}, and at a turning point, by \eqref{eq:nutmodel}, exactly when the primitive vectors of the two adjacent rod
lines form a basis of it \cite{OR1970}. So smoothness for $\Lambda$ means that every pair $(\tilde v_{j-1},\tilde v_j)$ is a basis of $\Lambda$.
If so, $\tilde v_{j+1}\in\Lambda$ gives $\kappa_j,\beta_j\in\Z$, and the determinant identity gives $\beta_j=\pm1$, hence $\beta_j=1$. Conversely,
if $\beta_j=1$ and $\kappa_j\in\Z$, then $\tilde v_{j+1}=\kappa_j\tilde v_j-\tilde v_{j-1}$ and $\tilde v_{j-1}=\kappa_j\tilde v_j-\tilde v_{j+1}$, so
$(\tilde v_{j-1},\tilde v_j)$ and $(\tilde v_j,\tilde v_{j+1})$ generate the same lattice; hence all consecutive pairs are bases of one lattice.
\end{remark}

\emph{Degrees} \cite[Def.~3.11]{LS2025}. Connect $L_0,\dots,L_n$ by counter-clockwise rotations by angles in
$(0,\pi)$, and close up with one more such rotation from $L_n$ to $L_0$ if $L_n\ne L_0$. Let $e(\mathfrak R)$ be the total
angle divided by $\pi$. The \emph{degree} is $d(\mathfrak R)=e(\mathfrak R)-1$ for ALE and ALF$^+$, and $d(\mathfrak R)=e(\mathfrak R)$ for ALF$^-$ and AF$_\beta$.
Reversing the orientation gives a rod structure $\bar{\mathfrak R}$ of type ALE, ALF$^\mp$, AF$_{-\beta}$, respectively
\cite[remark after Def.~4.30]{LS2025}. Li and Sun
show that $d=0$ exactly for the hyperk\"ahler (Gibbons--Hawking) metrics and $d=1$ exactly for the Hermitian non-K\"ahler
ones \cite[Thm.~7.4]{LS2025}.
\begin{remark}\label{rem:degid}
If $\mathfrak R$ has $n\ge1$ turning points, then $d(\mathfrak R)+d(\bar{\mathfrak R})=n-1$ for ALE, and $d(\mathfrak R)+d(\bar{\mathfrak R})=n$ for ALF$^\pm$ and AF$_\beta$.
To see this, note that reflection replaces each step $s\in(0,\pi)$ by $\pi-s$. For ALE and ALF there are $n+1$ steps, including the closing one,
so $e+\bar e=n+1$. For ALE, $d+\bar d=e+\bar e-2$. For ALF exactly one of $\mathfrak R,\bar{\mathfrak R}$ is ALF$^+$, so $d+\bar d=e+\bar e-1$.
For AF there are $n$ steps, so $e+\bar e=n$ and $d+\bar d=e+\bar e$.
\end{remark}

\subsection{Models and ends}\label{subsec:models}\label{subsec:asymptypes}
Write $R=\sqrt{\rho^2+z^2}$. For $h_0\in\{0,1\}$ and
$\alpha\in\R$, with $\alpha>0$ if $h_0=0$, let
\begin{equation}\label{eq:models}
\Phi^\sharp_{h_0,\alpha}=\Phi_{H,A},\qquad H=h_0+\frac{|\alpha|}{2R},\quad A=\frac{\alpha z}{2R},\qquad
\nu^\sharp=\tfrac12\log H .
\end{equation}
These are Li--Sun's model augmented harmonic maps \cite[Defs.~4.3--4.6]{LS2025}, written in one frame $(a,b)$:
\begin{center}\small
\begin{tabular}{llll}\toprule
$(h_0,\alpha)$ & type & model space & Li--Sun\\\midrule
$(0,\alpha)$, $\alpha>0$ & ALE & $\R^4$ & AE$_\alpha$\\
$(1,\alpha)$, $\alpha>0$ & ALF$^-$ & anti-Taub--NUT & TN$^-_\alpha$\\
$(1,\alpha)$, $\alpha<0$ & ALF$^+$ & Taub--NUT & TN$^+$ (see below)\\
$(1,0)$ & AF$_0$ & $\R^3\times S^1$ & AF$_0$; AF$_\beta$ after a shear of the frame, see below\\\bottomrule
\end{tabular}
\end{center}
For $\alpha<0$, $\Phi^\sharp_{1,\alpha}$ is $\Phi^\sharp_{1,|\alpha|}$ with $A$ replaced by $-A$ at fixed $(\rho,z)$, i.e.\ its image under the
isometry $\diag(1,-1)$ of $\mathcal H$, which acts on the values of $\Phi$ and is not a change of basis of $\mathfrak t$. This is Li--Sun's model
TN$^+$ for the same pair of rod lines, in the opposite order \cite[Def.~4.5]{LS2025}, the metric TN$^-_{|\alpha|}$ with the opposite orientation. The AF$_\beta$
models are $\Phi^\sharp_{1,0}$ in a frame sheared by a constant \cite[Def.~4.6]{LS2025}; $\beta$ is recorded by the lattice. As Li and Sun
do, we orient the models by $d\rho\wedge dz\wedge d\phi^a\wedge d\phi^b$. Under $(a,b)\mapsto(a,-b)$, which comes with $z\mapsto-z$
(\S\ref{subsec:WP}), $A$ becomes $-A(\rho,-z)=A$, so the model of an oriented metric does not depend on the orientation of the frame.
In the AE case $\alpha$ is a choice of scale: the Gram matrix of $\Phi^\sharp_{0,2}$ in the frame $((\alpha/2)^{1/2}a,(2/\alpha)^{1/2}b)$, which
has determinant $1$ and so does not change $(\rho,z)$, is that of $\Phi^\sharp_{0,\alpha}$ in the frame $(a,b)$.

Li and Sun build model ends from these maps \cite[\S4.4]{LS2025}: flat $\R^4$ ($M^{\AEt}$), anti-Taub--NUT and Taub--NUT
($M^{\mathrm{TN}^\mp}$), and $(\R^3\times\R)/\langle q_\beta\rangle$ ($M^{\AF_\beta}$), each with its torus action. Let $\mathfrak r$
be the distance from a point of the model.
\begin{definition}[{\cite[Def.~4.30]{LS2025}}]\label{def:asymptype}
A toric Ricci-flat end $(E,g)$ is of \emph{asymptotic type}
\begin{itemize}[leftmargin=*]
\item \emph{ALE} if a finite cover of $E$ admits a diffeomorphism $F$, orientation-preserving and equivariant for a finite covering torus $\tilde T\to T$, onto the end of
$M^{\AEt}$ (Li and Sun call $F$ a $T$-isometry) with
$|\nabla^k(g-F^*g_{\AEt})|=O(\mathfrak r^{-k-\delta})$ for some $\delta>0$ and all $k$;
\item \emph{ALF$^\pm$} if the same holds, again on a finite cover, with $M^{\mathrm{TN}^\pm}$ in place of $M^{\AEt}$;
\item \emph{AF$_\beta$} if $E$ itself admits such a map onto the end of $M^{\AF_\beta}$.
\end{itemize}
\end{definition}
Here an \emph{end} is the complement of a compact $T$-invariant set, in $M$ or in the model, and norms and covariant derivatives
are those of the pulled-back model metric. We say that $M$ has an end of a given type if the complement of some compact
$T$-invariant subset of $M$ is of that type.
\begin{remark}\label{rem:asymptypes}
The classical definition of a toric ALE end, a Ricci-flat end $\R\times S^3/\Gamma$ with $g=d\mathfrak r^2+\mathfrak r^2\gamma+h$,
$|\nabla^kh|=O(\mathfrak r^{-\tau-k})$, on which $T$ acts preserving $\gamma$ \cite[Defs.~2.2, 2.4]{KL2026}, implies
Definition~\ref{def:asymptype}: by \cite{BKN1989} one may take $\tau=4$, the Killing fields are asymptotic to rotations,
and a centre-of-mass argument makes the chart equivariant \cite[proof of Prop.~4.38]{LS2025}. Kunduri and Lucietti's
toric ALF and AF ends \cite[Def.~2.4]{KL2026}, \cite[Def.~2.2]{KL2021} are of Li--Sun type, with $\delta=1$.
Each of these ends has $|\mathrm{Rm}_g|=O(\mathfrak r^{-2-\min(\delta,1)})$: the model metrics are flat or (anti-)Taub--NUT, whose curvature
is $O(\mathfrak r^{-3})$, and the difference from the model metric and its first two derivatives contribute $O(\mathfrak r^{-2-\delta})$. So a complete toric
Ricci-flat manifold with an end of one of these types is a toric gravitational instanton (Definition~\ref{def:GI}).
Its fixed points are finitely many. They are isolated: at a fixed point $p$ the isotropy representation of $T$ on $T_pM$ is
faithful, since an isometry fixing $p$ with derivative the identity is the identity, so $T$ is a maximal torus of
$\mathrm{SO}(T_pM)$ and fixes only $0$, and the exponential map at $p$ is equivariant. They lie outside the end, since a fixed point
in the end would lift to fixed points of the connected group $\tilde T$ in the finite cover, which permutes the finite fibre, and the
model ends contain no fixed points (the origin and the nut are not in them). So they lie in a compact set.
\end{remark}

\subsection{The class \texorpdfstring{$\Ctor$}{C\_toric}}\label{subsec:Ctor}
\begin{definition}\label{def:Ctor}
$\Ctor$ is the class of augmented harmonic maps $(\Phi,\nu)$ on $\Hb^\circ$ with the following properties. Here $\Phi$,
$\rho$ and $z$ are computed in a basis of $\mathfrak t$ (\S\ref{subsec:WP}); the properties, and the type below, do not depend
on it (\S\S\ref{subsec:WP}, \ref{subsec:models}).
\begin{enumerate}[leftmargin=*,label=\textup{(T\arabic*)}]
\item\label{it:T1} $\Phi$ is locally tamed by a rod structure with $n\ge1$ turning points (Definition~\ref{def:tame}), whose asymptotic type is that of the model in \ref{it:T2}.
\item\label{it:T2} There are $(h_0,\alpha)$ as in \eqref{eq:models} and a basis $(a,b)$ of $\mathfrak t$ in which
$d(\Phi,\Phi^\sharp_{h_0,\alpha})\to0$ as $R\to\infty$, uniformly in $\theta=\arg(z+i\rho)$.
\end{enumerate}
A toric Ricci-flat metric \eqref{eq:WP}, with any lattice $\Lambda$, lies in $\Ctor$ if its augmented harmonic map does.
Its \emph{type} is the type of the model, and AF$_\beta$ when $h_0=1$, $\alpha=0$ and the lattice records the twist
$\beta$.
\end{definition}
A translation of $z$ by $z_0$ moves $\Phi^\sharp_{h_0,\alpha}$ by a hyperbolic distance $O(|z_0|/R)$, uniformly in $\theta$, so
\ref{it:T2} does not depend on the origin of $z$. Metrics in $\Ctor$ may have conical singularities along any rod; no
regularity at the turning points is assumed beyond local tameness.
\begin{lemma}\label{lem:outerlines}
Let $(\Phi,\nu)\in\Ctor$, with model $\Phi^\sharp_{h_0,\alpha}$ in the basis $(a,b)$. Then its outer rod lines are
$L_0=[(1,\alpha/2)]$ and $L_n=[(1,-\alpha/2)]$ in that basis.
\end{lemma}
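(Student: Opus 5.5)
The plan is to compare three maps near a point of the outer rod far out on the axis: $\Phi$ itself, the rod model of $L_n$ (resp.\ $L_0$), and the model $\Phi^\sharp_{h_0,\alpha}$. The rod lines of $\Phi$ enter only through local tameness (Definition~\ref{def:tame}). The rod line of the model can be read off explicitly from \eqref{eq:PhiHA}. Condition \ref{it:T2} ties $\Phi$ to the model. Since rod models of two distinct lines drift infinitely far apart at the axis, the two lines must agree. I treat $L_n$; the case of $L_0$ is the same with $z\to-\infty$.

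\emph{Step 1: the model's rod line.} For $G^\sharp=\rho\Phi^\sharp_{h_0,\alpha}$, \eqref{eq:PhiHA} gives
\[
G^\sharp=H^{-1}\begin{pmatrix}A\\1\end{pmatrix}\begin{pmatrix}A&1\end{pmatrix}+\rho^2H\begin{pmatrix}1&0\\0&0\end{pmatrix}.
\]
On the axis with $z>0$ we have $R=z$. There $A=\alpha/2$, and $H=h_0+|\alpha|/(2z)$ is smooth and positive. Hence near any point $(0,z_0)$ with $z_0>0$, $G^\sharp$ has the rod form \eqref{eq:Grod} in the basis $(v,\xi)$, where $v=(1,-\alpha/2)$ and $\xi=(0,1)$; here $\det(v,\xi)=1$. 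The coefficients are smooth in $(\rho^2,z)$: explicitly $f=H^{-1}$, and $a=H+O(\rho^2)$ after expanding $A$ in $\rho^2/z^2$. It follows that $d(\Phi^\sharp,\text{rod model of }[v])$ is bounded on a neighbourhood of $(0,z_0)$. This holds because in that basis both Gram matrices are $\diag(\rho^2,1)$ times bounded invertible factors up to $\rho=0$, and \eqref{eq:dist} gives a bounded trace. The same computation for $z<0$, where $A=-\alpha/2$, gives the rod line $[(1,\alpha/2)]$.

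\emph{Step 2: distinct rod models separate.} If $L\ne L'$, choose $v\in L$ and $v'\in L'$ with $\det(v,v')=1$. In the basis $(v,v')$ the two rod models are, up to bounded isometries, $\Phi_1=\diag(\rho,\rho^{-1})$ and $\Phi_2=\diag(\rho^{-1},\rho)$. Then $\tr(\Phi_1\Phi_2^{-1})=\rho^2+\rho^{-2}\to\infty$ as $\rho\to0$. By \eqref{eq:dist} their distance is therefore unbounded on every neighbourhood of an axis point.

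\emph{Step 3: conclusion.} By \ref{it:T2}, choose $R_0$ such that $d(\Phi,\Phi^\sharp)\le1$ for $R\ge R_0$. Then pick $z_0>\max(w_n,R_0)+1$, so that $(0,z_0)\in I_n$. On a small neighbourhood $U$ of $(0,z_0)$ contained in $\{R\ge R_0\}$, three bounds hold:
\begin{itemize}[leftmargin=*]
\item $d(\Phi,\text{rod model of }L_n)$ is bounded, by \ref{it:T1};
\item $d(\Phi,\Phi^\sharp)\le1$;
\item $d(\Phi^\sharp,\text{rod model of }[(1,-\alpha/2)])$ is bounded, by Step 1.
\end{itemize}
The triangle inequality then bounds the distance between the rod models of $L_n$ and $[(1,-\alpha/2)]$ on $U$. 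By Step 2 this forces $L_n=[(1,-\alpha/2)]$.

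The only step needing care is Step 1: one must check that the model is genuinely tamed by its rod model near the chosen point, uniformly up to $\rho=0$. This is a direct expansion, since $R>0$ there and everything is real-analytic.
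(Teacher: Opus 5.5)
Your proof is correct and follows essentially the paper's route: near a far point of the outer rod you combine \ref{it:T1}, \ref{it:T2} and the triangle inequality with the fact that $\Phi^\sharp_{h_0,\alpha}$ degenerates on the upper outer rod to the line $[(1,-\alpha/2)]$. The only difference is one of phrasing: the paper argues that maps at bounded distance tend to the same boundary point of $\mathcal H$, while you show explicitly that $\Phi^\sharp$ is tamed by the rod model of $[(1,-\alpha/2)]$ and that rod models of distinct lines are at unbounded distance near the axis.
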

\begin{proof}
Near a point of $I_n$ with large $z$, $\Phi$ is at bounded distance from the rod model of $L_n$, which tends to the
boundary point $L_n$ of $\mathcal H$ as $\rho\to0$; by \ref{it:T2}, $\Phi^\sharp=\Phi^\sharp_{h_0,\alpha}$ is within distance $o(1)$
of $\Phi$ there. As $\rho\to0$ near a point $(0,z)$ of $I_n$, $A=\alpha z/(2R)\to\alpha/2$ and $H\to h_0+|\alpha|/(2z)>0$, and for $v=(1,-\alpha/2)$, by \eqref{eq:PhiHA},
$\rho\Phi^\sharp(v,v)=\rho^2H+H^{-1}(A-\alpha/2)^2\to0$, while $\rho\Phi^\sharp_{22}=H^{-1}\not\to0$. So $\rho\Phi^\sharp$ tends to a form of
rank one with kernel $[v]$; equivalently, the half-plane point $(\mathfrak x,\mathfrak y)=(\rho H,A)$ tends to the boundary point
$\mathfrak y=\alpha/2$, which is the line $[v]$. Two maps at bounded distance tend to the same boundary point, so
$L_n=[(1,-\alpha/2)]$. Likewise $L_0=[(1,\alpha/2)]$, since $A\to-\alpha/2$ on $I_0$.
\end{proof}
\begin{remark}[status of the class]\label{rem:Ctorstatus}
Li and Sun's strongly tamed harmonic maps \cite[Defs.~4.9, 4.10, Thm.~4.24]{LS2025}, locally tamed by a rod structure and at
distance $O(R^{-1})$ from the model of its type, lie in $\Ctor$, with any augmentation, so
Proposition~\ref{prop:GG} applies to them. Whether every toric Ricci-flat metric that is smooth except for conical singularities
along the axis, with an end of one of the three types, lies in $\Ctor$ is open; it needs a regularity theory at the turning points.
\end{remark}

\section{Uniqueness and smooth instantons}\label{sec:uniq}
This section proves the uniqueness theorem (Lemma~\ref{lem:uniq}), shows that a smooth toric instanton as in
Theorem~\ref{thm:G} has global Weyl coordinates, lies in $\Ctor$ and has an end of type ALE, ALF$^\pm$ or AF$_\beta$
(Propositions~\ref{prop:smoothCtor} and~\ref{prop:endtype}), and converts the rod lines of an element of $\Ctor$ into the
normalised rays and cells on which the soliton families are parametrised (Lemma~\ref{lem:match}).

\subsection{Uniqueness}\label{subsec:uniqueness}
The following form of uniqueness is what the proofs need. It is the argument of
\cite[Thm.~4, and the proof of Thm.~2]{Weinstein1990}, \cite[\S4, Theorem, p.~763]{HY2008}, \cite[Thm.~3.1]{KL2026}, \cite[Thm.~4.24]{LS2025}, stated with minimal hypotheses.
\begin{lemma}[uniqueness]\label{lem:uniq}
Let $\Phi_0,\Phi_1\colon\Hb^\circ\to\mathcal H$ be axisymmetric harmonic maps, locally tamed by the same rod structure
(the same turning points and rod lines), such that $d(\Phi_i,\Phi^\sharp)\to0$ as $R\to\infty$, uniformly in $\theta$,
for one model $\Phi^\sharp=\Phi^\sharp_{h_0,\alpha}$ in one basis. Then $\Phi_0=\Phi_1$.
\end{lemma}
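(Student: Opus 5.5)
We follow Weinstein's argument, applied to the function $u=d(\Phi_0,\Phi_1)$, or rather to the smoother quantity $\psi=\cosh d(\Phi_0,\Phi_1)-1=\tfrac12\tr(\Phi_0\Phi_1^{-1})-1\ge0$ of \eqref{eq:dist}. Both maps are axisymmetric harmonic maps from $\R^3$ minus the axis into the hyperbolic plane $\mathcal H$, which has curvature $-1$. For two harmonic maps into a Hadamard manifold the distance function $d(\Phi_0,\Phi_1)$ is subharmonic on the domain, and so is the convex increasing function $\psi$ of it. Here the domain is $\R^3\setminus\{\rho=0\}$ with the flat Laplacian $\Delta=\partial_\rho^2+\rho^{-1}\partial_\rho+\partial_z^2$. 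Where $d>0$ this follows from the second variation of distance; at points where $d=0$ it holds in the weak or viscosity sense. One can also compute directly, as Weinstein and Li--Sun do, that $\Delta\psi\ge0$ from \eqref{eq:hm}. So $\psi$ is a nonnegative subharmonic function on $\R^3$ minus the $z$-axis.

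\textbf{Step 1: boundedness near the axis.} On a neighbourhood of each point of a rod $I_j$, both $\Phi_i$ are at bounded distance from the same rod model of $L_j$. Near each turning point they are at bounded distance from the same turning-point model. The triangle inequality then bounds $d(\Phi_0,\Phi_1)$ there. A compactness argument covers any bounded part of the axis by finitely many such neighbourhoods. Away from the axis the function is continuous, and by (T2) and the triangle inequality through $\Phi^\sharp$, $d(\Phi_0,\Phi_1)\to0$ as $R\to\infty$ uniformly in $\theta$. Hence $\psi$ is bounded on all of $\R^3$ minus the axis.

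\textbf{Step 2: removing the axis.} The $z$-axis is a line in $\R^3$, which has Newtonian capacity zero. A subharmonic function that is bounded above near a polar set extends uniquely to a subharmonic function across it, by the removable-singularity theorem for subharmonic functions. To make this concrete, for $\epsilon>0$ consider $\psi+\epsilon\log\rho$ on a large cylinder. It is subharmonic, since $\log\rho$ is harmonic in $\R^3$ minus the axis. It tends to $-\infty$ at the axis because $\psi$ is bounded. So the maximum principle on $\{\delta\le\rho,\ R\le R_1\}$ shows that its maximum is attained on the outer boundary. Letting $R_1\to\infty$ with $\epsilon\log R_1\to0$ suitably chosen, and then $\epsilon\to0$, gives $\psi\le\limsup_{R\to\infty}\psi=0$.

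\textbf{Step 3: conclusion.} Since $\psi\ge0$, we get $\psi\equiv0$, hence $d(\Phi_0,\Phi_1)\equiv0$ and $\Phi_0=\Phi_1$.

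The main obstacle is Step 2, namely balancing $\epsilon\log\rho$ against the outer boundary. On the sphere $R=R_1$, $\log\rho\le\log R_1$, so the outer term is at most $\sup_{R=R_1}\psi+\epsilon\log R_1$. We first fix $R_1$ so that $\sup_{R=R_1}\psi<\eta$. We then let $\delta\to0$ with $\epsilon$ fixed, using that the term tends to $-\infty$ uniformly on $\rho=\delta$ within the bounded region by Step 1. This gives $\psi(p)+\epsilon\log\rho(p)\le\eta+\epsilon\log R_1$ at each fixed interior point $p$. Letting $\epsilon\to0$ gives $\psi(p)\le\eta$, and $\eta$ is arbitrary.

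A secondary point is the subharmonicity of $\psi$ at points where the maps coincide. It is cleanest to verify $\Delta\psi\ge0$ directly from the harmonic map equation in the matrix form $\tr(\Phi_0\Phi_1^{-1})$, which is smooth everywhere.
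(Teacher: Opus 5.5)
Your proposal is correct and is essentially the paper's argument, which is Weinstein's: a subharmonic function of the hyperbolic distance is bounded by local tameness and the uniform asymptotics, extended across the polar axis, and forced to vanish by the maximum principle. There are two differences, both minor. You work with the smooth quantity $\cosh d-1=\tfrac12\tr(\Phi_0\Phi_1^{-1})-1$ rather than with $d$ itself. And instead of citing the extension theorem for bounded subharmonic functions across closed polar sets, as the paper does, you run the barrier $\psi+\epsilon\log\rho$ by hand, which is exactly how that theorem is proved.
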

\begin{proof}
View $\Phi_i$ as maps from $\R^3\setminus\Gamma$, $\Gamma$ the $z$-axis; they are harmonic there. The proof relies on the
following observations.

The distance between two harmonic maps into a simply connected manifold of non-positive
curvature is subharmonic \cite[assertion (F) and \S8]{Hartman1967}, \cite[Prop.~1 and \S2, (2.11)]{SY1979}. So $f=d(\Phi_0,\Phi_1)$ is continuous and subharmonic on
$\R^3\setminus\Gamma$.

Next, $f$ is bounded. Near a compact part of $\Gamma$ this holds since by local tameness both maps are within a
bounded distance of the same local models on a finite cover by neighbourhoods. On $\{R\ge R_0\}$, $f$ is bounded by the
asymptotic hypothesis and the triangle inequality. On the rest it is bounded by continuity and compactness.

Finally, $f$ extends across the axis. The axis $\Gamma$ is a closed polar set: $-\log\rho$ is locally integrable and superharmonic on $\R^3$, its
Laplacian in the sense of distributions being $-2\pi$ times the length measure on $\Gamma$, and it is $+\infty$ on $\Gamma$
\cite[\S5.1]{AG2001}. A subharmonic function on the complement of a closed polar set that is bounded above near it extends
subharmonically across it \cite[Thm.~5.2.1]{AG2001}. So $f$ extends to a non-negative subharmonic function on $\R^3$.

From the above we conclude that $f$ is a non-negative subharmonic function on all of $\R^3$ which tends to $0$ at infinity.
By the maximum principle, $f\le\sup_{\{|x|=R\}}f$ on $\{|x|\le R\}$ for every $R$, and the right side tends to $0$ as
$R\to\infty$; so $f\equiv0$, that is, $\Phi_0=\Phi_1$.
\end{proof}

\begin{lemma}[local forms give local tameness]\label{lem:formtame}
Let $G$ be a real symmetric $2\times2$ matrix function on $\Hb^\circ$, written in a basis of $\mathfrak t$, with $\det G=\rho^2$, and let $\Phi=\rho^{-1}G$
be the corresponding map into $\mathcal H$ (\S\ref{subsec:WP}).
\begin{enumerate}[leftmargin=*,label=\textup{(\roman*)}]
\item If near a point of the axis, in a basis $(v,\xi)$ with $\det(v,\xi)=1$, $G$ has the rod form \eqref{eq:Grod} with
$a,b,f$ continuous up to $\rho=0$ and $f>0$, then $d(\Phi,\text{rod model of }[v])$ is bounded near that point.
\item Let $z_0\in\R$ be a point of the axis, $s=\frac12(R_{z_0}-(z-z_0))$ and $t=\frac12(R_{z_0}+(z-z_0))$, so that $s,t\ge0$, $\rho^2=4st$ and
$z-z_0=t-s$. If near $z_0$, in a basis $(v,v')$ with
$\det(v,v')=\pm1$,
\begin{equation}\label{eq:nutmodel}
G=\begin{pmatrix}tB&stC\\ stC&sA\end{pmatrix},\qquad A,B,C\ \text{continuous in }(s,t),\quad A(0)>0,
\end{equation}
then $d(\Phi,\text{turning-point model of }[v],[v']\text{ at }z_0)$ is bounded near $z_0$.
\end{enumerate}
\end{lemma}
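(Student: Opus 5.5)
Both parts follow directly from the distance formula \eqref{eq:dist}. Write the two maps in one basis and compare them by a single trace. If $\Phi_1=\rho^{-1}G_1$ and $\Phi_2=\rho^{-1}G_2$ both have $\det G_i=\rho^2$, then $\Phi_2^{-1}=\rho\,G_2^{-1}=\rho^{-1}\adj G_2$. Hence
\begin{equation*}
2\cosh d(\Phi_1,\Phi_2)=\tr(\Phi_1\Phi_2^{-1})=\rho^{-2}\tr\bigl(G_1\adj G_2\bigr).
\end{equation*}
So it suffices to show that this expression is bounded near the given point. I work in the basis named in the statement and take the model in that same basis. The notion does not depend on the choice of basis vectors in the lines (\S\ref{subsec:rods}), and a basis of determinant $\pm1$ does not change $\rho$.

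For (i), take the rod model of $[v]$ in the basis $(v,\xi)$: its Gram matrix is $\diag(\rho^2,1)$, so its adjugate is $\diag(1,\rho^2)$. With $G$ in the rod form \eqref{eq:Grod},
\begin{equation*}
\tr\bigl(G\adj G_{\mathrm{rod}}\bigr)=\rho^2a+\rho^2 f,
\end{equation*}
so $2\cosh d=a+f$. Now $f$ is continuous up to $\rho=0$, hence bounded on a small closed neighbourhood of the point. Since $f>0$ at the point, continuity gives $f$ bounded below by a positive constant there. Then $a=(1+\rho^2b^2)/f$ is bounded as well, because $b$ is continuous. So $d$ is bounded near the point.

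For (ii), take the turning-point model at $z_0$ in the basis $(v,v')$. Its Gram matrix is $\diag(2t,2s)$, since $R_{z_0}\pm(z-z_0)$ equals $2t$ and $2s$ respectively. Its adjugate is $\diag(2s,2t)$, and $\rho^2=4st$. With $G$ as in \eqref{eq:nutmodel} the off-diagonal entries do not contribute, and
\begin{equation*}
\tr\bigl(G\adj G_{\mathrm{tp}}\bigr)=2stB+2stA .
\end{equation*}
Therefore $2\cosh d=(A+B)/2$. It remains to bound $B$. From $\det G=\rho^2$ we get $stAB-s^2t^2C^2=4st$, that is $AB=4+stC^2$ on $\Hb^\circ$. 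Since $A(0)>0$ and $A$ is continuous, $A$ is bounded below by a positive constant near $(s,t)=(0,0)$. Hence $B=(4+stC^2)/A$ is bounded there, and so is $A+B$.

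No step is a real obstacle. The only point needing care is that an unbounded coefficient could hide in $a$ or $B$; the determinant identities together with the positivity of $f$ and $A(0)$ rule this out. The sign of $\det(v,v')$ does not matter, because the model is defined in the same basis and $\rho$ is unchanged.
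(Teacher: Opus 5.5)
Your proof is correct. It has the same structure as the paper's: write $\Phi$ and the model in the adapted basis and bound their distance by a closed formula. The difference is which closed formula you use.

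The paper passes to the half-plane coordinates $(\mathfrak x,\mathfrak y)=(\rho/G_{22},G_{12}/G_{22})$ and applies \eqref{eq:coshhalf}. That produces expressions with $1/f$ and $1/A$, so it genuinely needs $f>0$ and $A(0)>0$ to bound them.

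Your identity $\tr(\Phi_1\Phi_2^{-1})=\rho^{-2}\tr(G_1\adj G_2)$, taken from \eqref{eq:dist}, instead gives directly $\cosh d=\frac12(a+f)$ and $\cosh d=\frac14(A+B)$. These are exactly the paper's expressions after substituting $af-\rho^2b^2=1$, resp.\ $AB-stC^2=4$. Your formulas are cleaner and contain no denominators.

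One simplification you missed: $a$ is assumed continuous up to $\rho=0$, and $B$ continuous in $(s,t)$. Both are therefore bounded on a compact neighbourhood of the point. So your detours through $a=(1+\rho^2b^2)/f$ and $B=(4+stC^2)/A$ are unnecessary. In your route the hypotheses $f>0$ and $A(0)>0$ are not used at all, beyond the standing assumption that $\Phi$ takes values in $\mathcal H$.
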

\begin{proof}
We use the half-plane coordinates $(\mathfrak x,\mathfrak y)=(\rho/G_{22},G_{12}/G_{22})$ and \eqref{eq:coshhalf} (\S\ref{subsec:WP}),
in the basis of each item; there $G_{22}>0$.

(i) For the rod form \eqref{eq:Grod}, $(\mathfrak x,\mathfrak y)=(\rho/f,\rho^2b/f)$, and for the rod model, with $a=f=1$ and $b=0$,
$(\mathfrak x_0,\mathfrak y_0)=(\rho,0)$. So
\begin{equation}
\cosh d-1=\frac f2\Bigl(\frac{\rho^2b^2}{f^2}+\Bigl(\frac1f-1\Bigr)^2\Bigr),
\end{equation}
which is bounded. (ii) Here $\rho^2=4st$, and $G_{22}=sA>0$ near $z_0$, so $\mathfrak x=\rho/(sA)=2\sqrt{t/s}/A$ and
$\mathfrak y=stC/(sA)=tC/A$. The turning-point model has $R_{z_0}=s+t$, $R_{z_0}\pm(z-z_0)=2t,2s$, so it is \eqref{eq:nutmodel} with
$A_0=B_0=2$, $C_0=0$, i.e.\ $(\mathfrak x_0,\mathfrak y_0)=(\sqrt{t/s},0)$. Hence, by \eqref{eq:coshhalf},
\begin{equation} \cosh d-1=\frac{(\mathfrak x-\mathfrak x_0)^2+\mathfrak y^2}{2\mathfrak x\mathfrak x_0}=\frac{(2-A)^2}{4A}+\frac{stC^2}{4A}, \end{equation}
which is bounded near $z_0$ since $A(0)>0$. (Note that $\det G=\rho^2$ forces $AB-stC^2=4$, so $B(0)=4/A(0)>0$ automatically.)
\end{proof}

\subsection{Smooth toric instantons}\label{subsec:smooth}
Lemma~\ref{lem:formtame} says that the rod form \eqref{eq:Grod} and the turning-point form \eqref{eq:nutmodel} of the Gram matrix
give local tameness. The first lemma of this subsection shows that the Gram matrix of a smooth toric Ricci-flat metric has these
forms near every point of the axis, with smooth coefficients, in the coordinates determined by the Weyl coordinates; together,
the two lemmas give the local tameness of smooth metrics (Proposition~\ref{prop:smoothCtor}). The lemma is local and uses
neither global Weyl coordinates nor the end.
\begin{lemma}[the orbit space near the axis]\label{lem:axis}
Let $(M,g)$ be a toric Ricci-flat manifold, $\Lambda\subset\mathfrak t$ the lattice of $2\pi$-periodic elements, and compute $G$ and $\rho$
in a $\Z$-basis of $\Lambda$. Let $q\in M$ be a point with non-trivial stabiliser and $p$ its image in $\mathcal B$. Let $z$ be a harmonic
conjugate of $\rho$ on $U\cap\mathcal B^\circ$, for a neighbourhood $U$ of $p$ in $\mathcal B$ with $U\cap\mathcal B^\circ$ connected, and $w=z+i\rho$. The following holds.
\begin{enumerate}[leftmargin=*,label=\textup{(\roman*)}]
\item \emph{Fixed point.} If $q$ is a fixed point, there are a $\Z$-basis $(v,v')$ of $\Lambda$ and a surface $S$ with a smooth metric
$\hat g_S$ and coordinates $(x_1,x_2)$ centred at $0$, in which $\sigma_1(x_1,x_2)=(-x_1,x_2)$ and $\sigma_2(x_1,x_2)=(x_1,-x_2)$ are
isometries, such that: a neighbourhood of $p$ in $\mathcal B$ is isometric to $S/\langle\sigma_1,\sigma_2\rangle=\{x_1,x_2\ge0\}$; the Killing
field of $v$ vanishes over $\{x_1=0\}$ and that of $v'$ over $\{x_2=0\}$; and $\rho=x_1x_2h$ with $h$ smooth, even in $x_1$ and in
$x_2$, and positive. After exchanging $(v,x_1)$ and $(v',x_2)$ if necessary, $w<w(p)$ on $\{x_1=0\}\setminus\{0\}$ and $w>w(p)$ on
$\{x_2=0\}\setminus\{0\}$, where $w(p)\in\R$ is the limit of $w$ at $p$ \textup{(}see \textup{(iii))}; and with $s,t\ge0$ defined by $\rho^2=4st$
and $z-w(p)=t-s$, the Gram matrix in the basis $(v,v')$ has the form \eqref{eq:nutmodel} with $A,B,C$ smooth in $(s,t)$ and
$A(0),B(0)>0$.
\item \emph{Circle stabiliser.} Otherwise the stabiliser of $q$ is a circle, generated by a primitive $v\in\Lambda$; let $\xi\in\mathfrak t$
with $\det(v,\xi)=1$. There are $S$, $\hat g_S$ and $(x_1,x_2)$ as in \textup{(i)} such that $\sigma=\sigma_1$ is an isometry, a
neighbourhood of $p$ in $\mathcal B$ is isometric to $S/\langle\sigma\rangle=\{x_1\ge0\}$, the Killing field of $v$ vanishes over $\{x_1=0\}$,
and $\rho=x_1h$ with $h$ smooth, even in $x_1$ and positive. The map $(x_1^2,x_2)\mapsto(\rho^2,z)$ is a local diffeomorphism at $0$,
and in the basis $(v,\xi)$ the Gram matrix has the rod form \eqref{eq:Grod} with $a,b,f$ smooth in $(\rho^2,z)$ and $f>0$.
\item In both cases $w$ extends continuously to a neighbourhood of $p$ in $\mathcal B$ and maps it homeomorphically onto a
neighbourhood of a point $w(p)\in\R$ in $\{\rho\ge0\}$.
\end{enumerate}
\end{lemma}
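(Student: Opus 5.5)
The approach is to linearise the action at $q$ by the slice theorem and the equivariant exponential map, and to read off $\rho$, $z$ and $G$ from the linear model plus smooth, invariant corrections. Let $K$ be the stabiliser of $q$. Since $K$ is connected, it is either $T$ (fixed point) or a circle generated by a primitive $v\in\Lambda$. In the fixed-point case the isotropy representation on $T_qM\cong\C^2$ is faithful (Remark~\ref{rem:asymptypes}). After choosing a $\Z$-basis $(v,v')$ of $\Lambda$ adapted to the weights, it is the standard action: $v$ rotates the first factor and $v'$ the second. Effectiveness together with connected stabilisers forces the weights to form a $\Z$-basis.

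Through $\exp_q$ the action is linear, so a neighbourhood of the orbit is modelled on $\C^2$ with coordinates $(x_1e^{i\phi^1},x_2e^{i\phi^2})$. In the circle case it is modelled on $T\times_K(\C\times\R)$ with coordinates $(x_1e^{i\phi},x_2)$ on the slice. Take $S$ to be the slice through $q$ orthogonal to the orbit, restricted to the fixed-point set of the finite reflection group. Concretely, $S$ is the totally geodesic surface $\{\phi^a\in\{0,\pi\}\}$, i.e.\ the image of $\R^2\subset\C^2$ (resp.\ $\R\times\R$). It is the fixed-point set of an isometric involution (complex conjugation on the slice composed with the orthogonal transitivity), hence a smooth totally geodesic surface with metric $\hat g_S$. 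Rotation by $\pi$ in the respective factors acts on $S$ as $\sigma_1,\sigma_2$. The orbit space near $p$ is $S/\langle\sigma_1,\sigma_2\rangle$, and by orthogonal transitivity the quotient metric $\hat g$ is $\hat g_S$ there.

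Next comes the structure of $\rho$ and $G$. The functions $G_{ab}$ are smooth $T$-invariant functions on $M$. By Schwarz's theorem on invariant smooth functions, their restrictions to $S$ are smooth and even in each $x_i$ on whose circle they depend. The Killing field of $v$ has norm $x_1\cdot(\text{smooth, even, positive})$, since its linearisation is $x_1\partial_{\phi^1}$. Hence $G_{11}=x_1^2\tilde B$, $G_{22}=x_2^2\tilde A$ and $G_{12}=x_1^2x_2^2\tilde C$. The last follows because $G_{12}$ vanishes to second order in each variable: it is odd under each separate reflection composed with $\phi\mapsto-\phi$, and it is even, so it is divisible by $x_1^2x_2^2$ by Whitney/Schwarz. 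Then $\det G=x_1^2x_2^2(\tilde A\tilde B-x_1^2x_2^2\tilde C^2)$, so $\rho=x_1x_2h$ with $h$ smooth, even and positive. The circle case is the same with one variable, giving $\rho=x_1h$.

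The key step, and the one I expect to be the main obstacle, is to pass from $(x_1,x_2)$ to Weyl coordinates. $\rho$ is $\hat g_S$-harmonic on the interior and, by the parity, extends to a function on $S$ that is odd in each $x_i$ and harmonic on all of $S$; the Laplace equation passes through the axes by the reflection principle. Take $z$ to be its harmonic conjugate on $S$, even in each reflection, and let $w=z+i\rho$, a holomorphic function on $(S,\hat g_S)$ in an isothermal coordinate $\zeta$ centred at $0$. In the circle case $d\rho\neq0$ at $0$ because $h(0)>0$. So $w$ is a local biholomorphism, $(x_1,x_2)\mapsto(\rho,z)$ is a local diffeomorphism, and evenness in $x_1$ gives smoothness in $(x_1^2,x_2)$, hence in $(\rho^2,z)$. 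The rod form follows in the basis $(v,\xi)$, with $f=G(\xi,\xi)>0$ and the stated divisibility, and (iii) follows as well.

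In the fixed-point case $\rho$ vanishes to exactly second order ($\rho\sim h(0)x_1x_2$), so $w-w(p)=c\zeta^2+O(\zeta^3)$ with $c\neq0$. Extracting a holomorphic square root, $w-w(p)=\eta^2$, the map $\eta$ is a holomorphic chart in which the two reflections are $\eta\mapsto\pm\bar\eta$. Adjusting signs, $\eta$ maps the quadrant onto a quadrant, and $w$ maps it homeomorphically onto a half-disc, which gives (iii). Then $s=|\eta|^2\sin^2$-type quantities: more precisely, $\sqrt t$ and $\sqrt s$ are the real and imaginary parts of $\eta$ up to a factor, because $(\operatorname{Re}\eta)^2-(\operatorname{Im}\eta)^2=z-w(p)$ and $2\operatorname{Re}\eta\operatorname{Im}\eta=\rho$. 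These are smooth functions of $x_2,x_1$ respectively, odd in the corresponding reflection, with nonzero derivative. Therefore $t=x_2^2\cdot(\text{positive, even})$ and $s=x_1^2\cdot(\text{positive, even})$, and $(s,t)$ are smooth coordinates on the quotient, in the sense of Schwarz: even functions of $(x_1,x_2)$ are smooth in $(x_1^2,x_2^2)$, hence in $(s,t)$.

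Substituting into the expressions above gives $G_{11}=tB$, $G_{22}=sA$ and $G_{12}=stC$ with $A,B,C$ smooth and $A(0),B(0)>0$. This matches \eqref{eq:nutmodel}, possibly after the exchange of $(v,x_1)$ and $(v',x_2)$ so that $v$ vanishes where $w<w(p)$. The orientation of $z$ fixes which side is which.
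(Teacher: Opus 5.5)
Your route is essentially the paper's. The common steps are: the slice theorem with the weights forming a basis of the dual lattice; the real slice $S$; parity to get $G_{11}=x_1^2\tilde B$, $G_{22}=x_2^2\tilde A$, $G_{12}=x_1^2x_2^2\tilde C$ and $\rho=x_1x_2h$; harmonicity of the smooth odd function $\rho_S$; an even harmonic conjugate; and the non-degenerate double zero of $w_S$ with its holomorphic square root $\eta$, where $\operatorname{Re}\eta=\sqrt t$ and $\operatorname{Im}\eta=\sqrt s$ hold exactly, with no extra factor.

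\textbf{The gap.} You depart from the paper at exactly one point, and that step is asserted rather than proved. It carries a main claim of the lemma, in both the fixed-point and the circle case: the orbit-space metric is a \emph{smooth} metric $\hat g_S$ on $S$, up to and across the axes and the corner. You get this by declaring $S$ to be the fixed set of an isometric involution (``complex conjugation composed with orthogonal transitivity''), hence smooth, totally geodesic and horizontal.

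On the principal part such an involution does exist. The horizontal distribution is a flat connection over the simply connected set $U\cap\mathcal B^\circ$, so there is a global horizontal section $L_0$, and $e^{\phi}y\mapsto e^{-\phi}y$ ($y\in L_0$) is an isometry. Two further facts are still needed:
\begin{itemize}
\item The involution extends isometrically across the non-principal orbits. For example, it is distance-preserving because the singular set has codimension $2$, so Myers--Steenrod applies.
\item After replacing $L_0$ by a translate $e^{\psi}L_0$, its differential at $q$ is complex conjugation. Then its fixed set near $q$ is $\exp_q(\R^2)$, and that set is horizontal.
\end{itemize}
Without these, neither the smoothness of $\hat g_S$ nor the identification of $\hat g$ with the induced metric of $S$ is justified.

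\textbf{How the paper avoids it.} The paper never uses an involution. It takes the real slice of the linear model and pulls back the orbit-space metric directly:
$\hat g_S(X,X)=g(X,X)-\beta^TG^{-1}\beta$, with $\beta_a=g(X,X_{e_a})$.
This is smooth because $\beta_1$ and $\beta_2$ are divisible by $x_1$ and $x_2$, while the singular entries of $G^{-1}$ are $\tilde A/(\tilde Du_1)$ and $\tilde B/(\tilde Du_2)$. It is positive definite at $0$ because $\beta_1/x_1$ and $\beta_2/x_2$ vanish there on $T_0S$, the isotropy planes being $g$-orthogonal.

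Either repair works. Yours, once the extension is proved, gives positivity and total geodesy for free. The paper's is a purely local computation that works for any transversal slice.

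\textbf{Smaller points.}
\begin{itemize}
\item Your oddness argument for $G_{12}$ is not needed. Its restriction to $S$ is even in each variable and vanishes on both axes, which already gives the factor $x_1^2x_2^2$.
\item Smoothness in $(s,t)$ uses that $(x_1^2,x_2^2)\mapsto(s,t)$ is a local diffeomorphism at $0$. This follows from the non-vanishing normal derivatives of $\operatorname{Re}\eta$ and $\operatorname{Im}\eta$ that you note.
\item The given $z$ on $U\cap\mathcal B^\circ$ equals $z_S$ plus a constant, for the appropriate orientation of $S$.
\end{itemize}
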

\begin{proof}
Since $q$ has a non-trivial stabiliser, the action has a non-principal orbit, so it is orthogonally transitive and $\rho$ is
harmonic on $\mathcal B^\circ$ (\S\ref{subsec:WP}). The determinant of $G$ is the same in all $\Z$-bases of $\Lambda$, so we may compute $\rho$
in the bases below. We first treat a fixed point, which is the main case, then a point with circle stabiliser, and finally the
continuity statement (iii).

Let $q$ be a fixed point. By the slice theorem \cite[Ch.~VIII]{Borel1960}, \cite[Ch.~VI, Thm.~2.2]{Bredon1972}, cf.\ \cite[\S1.4]{OR1970}, a $T$-invariant neighbourhood of $q$ is equivariantly
diffeomorphic to a ball in $\C^2$ with a linear action, and since the stabilisers are connected, its two weights form a basis of
the dual lattice. Let $(v,v')$ be the dual basis, a $\Z$-basis of $\Lambda$. In complex coordinates $(x_1,x_2)$ the action is
$(x_1,x_2)\mapsto(e^{i\theta_1}x_1,e^{i\theta_2}x_2)$, with $(\theta_1,\theta_2)$ the coordinates of $(v,v')$, so the Killing field of $v$
vanishes on $\{x_1=0\}$ and that of $v'$ on $\{x_2=0\}$. Let $u_i=|x_i|^2$, and let $S=\{(x_1,x_2)\in\R^2\}\subset\C^2$ be the
\emph{real slice}, a surface through $q$ with the real coordinates $(x_1,x_2)$, on which the reflections $\sigma_1$ and $\sigma_2$ of the
statement act.

We shall make use of the following observation about the Gram matrix $G$ in the basis $(v,v')$. Smooth $T$-invariant functions on
$\C^2$ are smooth functions of $(u_1,u_2)$ \cite[Thm.~1]{Schwarz1975}. The Killing fields of $v$ and $v'$ are the rotations of $x_1$
and $x_2$, so by Hadamard's lemma \cite[Lemma~2.1]{Milnor1963} (applied after replacing the Schwarz extension $F$ by $F-F(0,u_2)$, which does not change it on the quadrant, and in each variable in turn)
\begin{equation}\label{eq:Gslice}
G=\begin{pmatrix}u_1\tilde B&u_1u_2\tilde C\\u_1u_2\tilde C&u_2\tilde A\end{pmatrix},\qquad \tilde A,\tilde B,\tilde C\ \text{smooth in }(u_1,u_2),\quad
\tilde A(0),\tilde B(0)>0 ,
\end{equation}
and $\rho^2=u_1u_2\tilde D$ with $\tilde D=\tilde A\tilde B-u_1u_2\tilde C^2>0$ near $0$. This is the form \eqref{eq:nutmodel}, but in
the coordinates $(u_1,u_2)$ of the linear model, not in the coordinates $(s,t)$ defined by the Weyl coordinates; the rest of the proof
of (i) passes from the one to the other. The form \eqref{eq:Gslice} is preserved by the exchange of $(v,x_1)$ and $(v',x_2)$, which
exchanges $\tilde A$ and $\tilde B$.

The real slice $S$, introduced above, meets every orbit, and the orbits of the points of the quadrant $\{x_1,x_2\ge0\}$ are
distinct. The orbit-space metric pulled back to $S$ is
\begin{equation}\label{eq:gS}
\hat g_S(X,X)=g(X,X)-\beta^TG^{-1}\beta,\qquad \beta_a=g(X,X_{e_a}),
\end{equation}
the norm of the component of $X$ orthogonal to the orbit. It is smooth: on $S$, $\beta_1$ is $x_1$ times a
smooth function, $\beta_2$ is $x_2$ times a smooth function, and
\begin{equation}\label{eq:Ginvslice}
G^{-1}=\frac1{\tilde D}\begin{pmatrix}\tilde A/u_1&-\tilde C\\-\tilde C&\tilde B/u_2\end{pmatrix},
\end{equation}
so the singular terms cancel. It is positive definite near $0$: on $S$, $\beta_1/x_1=g(X,\partial_{\operatorname{Im}x_1})$ and
$\beta_2/x_2=g(X,\partial_{\operatorname{Im}x_2})$; at $0$ the isotropy representation is orthogonal for $g$, so the $x_1$- and
$x_2$-planes, on which the circle of $v$ acts by different characters, are $g$-orthogonal, and $g$ is rotation-invariant on each.
Hence $\beta_1/x_1$ and $\beta_2/x_2$ vanish at $0$ for $X\in T_0S$, $\hat g_S(0)=g|_{T_0S}$ is positive definite, and so is
$\hat g_S$ near $0$. The reflections $\sigma_i$ are the actions on $S$ of the elements of order $2$ of $T$, so they are
isometries of $\hat g_S$. Put $h=\tilde D^{1/2}$ and $\rho_S=x_1x_2h$ on $S$. Then $h$ is smooth, positive and even in $x_1$ and in
$x_2$, $\rho_S$ is odd under each $\sigma_i$ and equal to $\rho$ on the quadrant, and $\rho_S$ is harmonic, since it is harmonic off
the axes and smooth.

The Weyl coordinate $w$ has a double zero at the turning point, and the key step is to take its square root. On the open
quadrant near $0$, $z$ and the harmonic conjugates of $\rho_S$ are harmonic conjugates of the same function; so we can orient $S$
such that $z=z_S+c$ there, with $c\in\R$ and $z_S$ the harmonic conjugate of $\rho_S$ on $S$ for this orientation with $z_S(0)=0$.
Each $\sigma_i$ reverses the orientation and $\rho_S$ is odd, so $dz_S=\star d\rho_S$ is invariant and $z_S$ is even. In an isothermal
coordinate $\zeta$ of $\hat g_S$ centred at $0$, $w_S=z_S+i\rho_S$ is holomorphic, $w_S(0)=0$ and $dw_S(0)=0$, and its second
derivative at $0$ is non-zero, because the Hessian of $\rho_S$ at $0$ is $h(0)$ times that of $x_1x_2$. So $w_S$ has a non-degenerate
double zero at $0$, $w_S/\zeta^2$ is holomorphic and non-zero near $0$, and
\begin{equation}\label{eq:omegasqrt}
\omega=\zeta\,(w_S/\zeta^2)^{1/2}
\end{equation}
is a holomorphic coordinate with $w_S=\omega^2$; it is smooth in $(x_1,x_2)$. On the open quadrant
$\rho_S=\operatorname{Im}w_S=2\operatorname{Re}\omega\operatorname{Im}\omega>0$; we choose the sign of $\omega$ with $\operatorname{Re}\omega,\operatorname{Im}\omega>0$ there.

The two axes of the slice correspond to the real and the imaginary axis of $\omega$. Near $0$ the zero set of $\rho_S$ is
$\{x_1=0\}\cup\{x_2=0\}$, and it is also $\{\operatorname{Re}\omega=0\}\cup\{\operatorname{Im}\omega=0\}$, two smooth curves crossing transversally
at $0$. After exchanging $(v,x_1)$ and $(v',x_2)$ we may assume $\{\operatorname{Re}\omega=0\}=\{x_1=0\}$ and
$\{\operatorname{Im}\omega=0\}=\{x_2=0\}$ near $0$. Hence $\operatorname{Re}\omega=x_1\varphi_1$ and $\operatorname{Im}\omega=x_2\varphi_2$ with $\varphi_i$ smooth,
$\varphi_i(0)\ne0$ since $\omega$ is a coordinate, and $\varphi_i(0)>0$ by the choice of sign. As $w_S\circ\sigma_i=\overline{w_S}$, we have
$\omega\circ\sigma_i=\pm\overline\omega$, and $\varphi_1(0)>0$, $\varphi_2(0)>0$ force $\omega\circ\sigma_1=-\overline\omega$ and $\omega\circ\sigma_2=\overline\omega$. So the
$\varphi_i$ are even in $x_1$ and in $x_2$, hence smooth functions of $(u_1,u_2)$ \cite[Thm.~1]{Schwarz1975} (for $\Z_2\times\Z_2$ acting on $\R^2$). Off $0$, $w_S=-(\operatorname{Im}\omega)^2<0$
on $\{x_1=0\}$ and $w_S=(\operatorname{Re}\omega)^2>0$ on $\{x_2=0\}$.

It remains to pass from the coordinates $(u_1,u_2)$ of \eqref{eq:Gslice} to the coordinates $(s,t)$ of \eqref{eq:nutmodel}, which
are defined by the Weyl coordinates; this is where the square root enters. On the open quadrant $w=\omega^2+c$, so $w$ extends
continuously to the quadrant, with $w(p)=c$, and $w<w(p)$ on $\{x_1=0\}\setminus\{0\}$ and $w>w(p)$ on $\{x_2=0\}\setminus\{0\}$. With
$s,t$ as in the statement, $w-w(p)=(\sqrt t+i\sqrt s)^2$, and also $w-w(p)=\omega^2$ with $\operatorname{Re}\omega,\operatorname{Im}\omega\ge0$ on the
quadrant. Squaring is injective on the closed first quadrant, so $\operatorname{Re}\omega=\sqrt t$ and $\operatorname{Im}\omega=\sqrt s$, i.e.\
$t=u_1\varphi_1^2$ and $s=u_2\varphi_2^2$. The map $(u_1,u_2)\mapsto(s,t)$ is a local diffeomorphism at $0$, since its Jacobian there is
anti-diagonal with entries $\varphi_2(0)^2$, $\varphi_1(0)^2$, and it maps $\{u_1,u_2\ge0\}$ onto $\{s,t\ge0\}$ near $0$. In the coordinates
$(s,t)$, the matrix \eqref{eq:Gslice} becomes
\begin{equation}\label{eq:Gst}
G=\begin{pmatrix}t\,\tilde B/\varphi_1^2& st\,\tilde C/(\varphi_1\varphi_2)^2\\[1mm] st\,\tilde C/(\varphi_1\varphi_2)^2& s\,\tilde A/\varphi_2^2\end{pmatrix},
\end{equation}
which is \eqref{eq:nutmodel} with $B=\tilde B/\varphi_1^2$, $A=\tilde A/\varphi_2^2$ and $C=\tilde C/(\varphi_1\varphi_2)^2$, smooth in $(s,t)$ and with
$A(0),B(0)>0$. This proves (i).

Now let the stabiliser of $q$ be a circle. The slice theorem gives a $T$-invariant neighbourhood $S^1\times\C\times(-\epsilon,\epsilon)$ of $q$, with coordinates
$(x_1,x_2)\in\C\times(-\epsilon,\epsilon)$ on the last two factors, on which $v$ rotates $x_1$, with weight $\pm1$ since the stabilisers are
connected, and an element $\xi_0\in\Lambda$ with $\det(v,\xi_0)=1$ rotates $S^1$. With $u=|x_1|^2$, \cite[Thm.~1]{Schwarz1975} and Hadamard's
lemma (as above) give, in the basis $(v,\xi_0)$,
\begin{equation}\label{eq:Gslicecircle}
G=\begin{pmatrix}u\tilde A&u\tilde C\\u\tilde C&\tilde B\end{pmatrix},\qquad \tilde A,\tilde B,\tilde C\ \text{smooth in }(u,x_2),\quad \tilde A,\tilde B>0 .
\end{equation}
Every $\xi$ with $\det(v,\xi)=1$ is $\xi_0+cv$, $c\in\R$, and replacing $\xi_0$ by $\xi_0+cv$ replaces $(\tilde C,\tilde B)$ by
$(\tilde C+c\tilde A,\tilde B+2cu\tilde C+c^2u\tilde A)$; so the same form holds in the basis $(v,\xi)$. Let $S=\{x_1\in\R\}\times(-\epsilon,\epsilon)$
at a point of $S^1$, with coordinates $(x_1,x_2)$. As at a fixed point, $\hat g_S$ is smooth, and $\beta_1/x_1=0$ at $0$, so
$\hat g_S(0)(X,X)=g(X,X)-g(X,X_{\xi_0})^2/|X_{\xi_0}|^2>0$ for $X\in T_0S\setminus0$, since $T_0S$ is transverse to the orbit; so $\hat g_S$ is
positive definite near $0$. Further, $\sigma$ is the action of the element of order $2$ of
the circle generated by $v$, a neighbourhood of $p$ is isometric to $\{x_1\ge0\}$, and $\rho^2=u\tilde D$ with
$\tilde D=\tilde A\tilde B-u\tilde C^2>0$. Put $h=\tilde D^{1/2}$ and $\rho_S=x_1h$, which is smooth, odd and harmonic, with $d\rho_S\ne0$ on
$\{x_1=0\}$; no square root is needed here. Orient $S$ such that $z=z_S+c$ on $\{x_1>0\}$, with $z_S$ the harmonic conjugate of $\rho_S$ with
$z_S(0)=0$. As before, $z_S$ is even, so $\partial_{x_1}z_S=0$ on $\{x_1=0\}$, and $dz_S=\star d\rho_S\ne0$ there. So $(\rho_S,z_S)$ are
coordinates on $S$ near $0$; $z_S$ is a smooth function of $(u,x_2)$ \cite[Thm.~1]{Schwarz1975} (for $\Z_2$ acting on $x_1$); and $(u,x_2)\mapsto(\rho^2,z_S)=(u\tilde D,z_S)$ is a local
diffeomorphism at $0$, since its Jacobian there is triangular with diagonal entries $\tilde D(0)$ and $\partial_{x_2}z_S(0)\ne0$. With
$a=\tilde A/\tilde D$, $b=\tilde C/\tilde D$ and $f=\tilde B$, the matrix \eqref{eq:Gslicecircle} is the rod form \eqref{eq:Grod}, and $a,b,f$ are smooth in $(\rho^2,z)$. This proves (ii).

Finally we prove (iii), the continuity of $w$ at $p$. At a fixed point, $\omega$ maps a neighbourhood of $0$ in the closed quadrant
homeomorphically onto a neighbourhood of $0$ in the closed first quadrant, and squaring maps that homeomorphically onto a
neighbourhood of $0$ in $\{\rho\ge0\}$; so $w=\omega^2+c$ has the stated property, with $w(p)=c$. At a point with circle stabiliser,
$(\rho_S,z_S)$ is a chart of $S$ near $0$ that maps $\{x_1\ge0\}$ onto a neighbourhood of $0$ in $\{\rho\ge0\}$, and $w=z_S+c+i\rho_S$.
\end{proof}

\begin{lemma}[the end and its model]\label{lem:endcompare}
Let $(M,g)$ be a toric Ricci-flat manifold with an end of one of the types of Definition~\ref{def:asymptype}, with the map $F$ and the
model metric $g^\sharp$ there, and let $\mathfrak r$ be the distance in the model. In the ALE case replace the end by the finite cover.
Let $g'=(F^{-1})^*g$ on the end of the model, and let $G',\rho',\hat g'$ and $G^\sharp,\rho^\sharp,\hat g^\sharp$ be the Gram matrices in one
basis of $\mathfrak t$, the Weyl radii and the orbit-space metrics of $g'$ and $g^\sharp$. Let $(\rho^\sharp,z^\sharp)$ be the Weyl coordinates of
the model, $\hat g^\sharp=e^{2\nu^\sharp}(d\rho^{\sharp2}+dz^{\sharp2})$, and $R^\sharp=|z^\sharp+i\rho^\sharp|$. Then $R^\sharp\asymp\mathfrak r^\kappa$, with $\kappa=2$ for ALE and $\kappa=1$ otherwise: on $\R^4$, $R^\sharp$ is a
constant multiple of $\mathfrak r^2$, and on the other models it is the radius of the base $\R^3$. There are $C$ and
$R_0$ such that, with $\varepsilon=C\mathfrak r^{-\delta}$, on $\{R^\sharp>R_0\}$:
\begin{enumerate}[leftmargin=*,label=\textup{(\roman*)}]
\item $(1-\varepsilon)G^\sharp\le G'\le(1+\varepsilon)G^\sharp$ as quadratic forms on $\mathfrak t$, and $(1-\varepsilon)\hat g^\sharp\le\hat g'\le(1+\varepsilon)\hat g^\sharp$. Hence
$|\rho'/\rho^\sharp-1|\le\varepsilon$, and $d(\Phi',\Phi^\sharp)\le2\varepsilon$ for $\Phi'=G'/\rho'$ and $\Phi^\sharp=G^\sharp/\rho^\sharp$.
\item A harmonic conjugate $z'$ of $\rho'$, with the sign of \S\ref{subsec:WP}, satisfies $|d(\rho'-\rho^\sharp)|+|d(z'-z^\sharp)|\le C\varepsilon$ for the
Euclidean norm in the coordinates $(\rho^\sharp,z^\sharp)$.
\end{enumerate}
\end{lemma}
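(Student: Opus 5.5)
The plan is to compare the two metrics on the end pointwise, first along the orbits and then on the orbit space, and then to transfer the estimates to the Weyl data. Three facts do most of the work. The map $F$ is equivariant, so it intertwines the Killing fields and $F^*$ preserves the splitting into orbit directions and their orthogonal complements. The decay hypothesis $|g-F^*g^\sharp|_{g^\sharp}=O(\mathfrak r^{-\delta})$ holds with $k=0,1$. And the asymptotics of the model Weyl coordinates are explicit from \eqref{eq:models}.

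\textbf{Growth of $R^\sharp$.} On $\R^4=\C^2$ with the standard torus action, $G^\sharp=\diag(|x_1|^2,|x_2|^2)$, so $\rho^\sharp=|x_1||x_2|$. Then $z^\sharp=\tfrac12(|x_1|^2-|x_2|^2)$ and $R^\sharp=\tfrac12\mathfrak r^2$, which fixes the normalisation. On Taub--NUT and anti-Taub--NUT, and on $(\R^3\times\R)/\Z$, the Weyl coordinates are cylindrical coordinates on the base $\R^3$ of the circle fibration. The model metric is $H(\text{flat }\R^3)+H^{-1}(\cdot)^2$ with $H\to h_0=1$, so $\mathfrak r\asymp R^\sharp$. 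Passing to a finite cover in the ALE case changes the lattice only by a finite index. This rescales $\rho$ by a constant but leaves the argument untouched.

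\textbf{Step (i).} For $v\in\mathfrak t$ the Killing field $X_v$ is the same for $g'$ and $g^\sharp$. Hence
\[
|G'(v,v)-G^\sharp(v,v)|=|(g'-g^\sharp)(X_v,X_v)|\le C\mathfrak r^{-\delta}\,g^\sharp(X_v,X_v),
\]
which is the two-sided bound on the Gram matrices. For the orbit-space metrics I would use the characterisation in \eqref{eq:gS}: $\hat g(X,X)$ is the infimum of $g(X+X_v,X+X_v)$ over $v\in\mathfrak t$, that is, the squared norm of the projection of $X$ orthogonal to the orbit. This quantity is monotone in $g$. So $(1-\varepsilon)g^\sharp\le g'\le(1+\varepsilon)g^\sharp$ gives the same inequalities for $\hat g$.

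The determinant bound follows from $\det G'\in[(1-\varepsilon)^2,(1+\varepsilon)^2]\det G^\sharp$, which gives $|\rho'/\rho^\sharp-1|\le\varepsilon$. For the hyperbolic distance, set $\Phi'=G'/\rho'$ and $\Phi^\sharp=G^\sharp/\rho^\sharp$, and simultaneously diagonalise $\Phi'$ relative to $\Phi^\sharp$. The eigenvalues $e^{\pm t}$ lie in $[(1-\varepsilon)/(1+\varepsilon),(1+\varepsilon)/(1-\varepsilon)]$. By \eqref{eq:dist}, $d=|t|\le2\varepsilon$ for $\varepsilon$ small, after enlarging $C$.

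\textbf{Step (ii): the main obstacle.} Here only first derivatives are controlled, and $z'$ is defined by integration. Since $\rho'=\sqrt{\det G'}$, I would differentiate and use the $k=1$ decay. With the $g^\sharp$-norm this gives
\[
|\nabla(G'-G^\sharp)|\le C\mathfrak r^{-1-\delta}|G^\sharp|+\text{terms in }\mathfrak r^{-\delta}|\nabla G^\sharp|.
\]
The model bounds $|\nabla G^\sharp|_{g^\sharp}\lesssim|G^\sharp|/\mathfrak r$ and the homogeneity of the model then give $|d(\rho'-\rho^\sharp)|_{\hat g^\sharp}\le C\varepsilon|d\rho^\sharp|_{\hat g^\sharp}$.

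It remains to convert to the Euclidean norm in $(\rho^\sharp,z^\sharp)$. Since $\hat g^\sharp=e^{2\nu^\sharp}(d\rho^{\sharp2}+dz^{\sharp2})$, we have $|d\rho^\sharp|_{\hat g^\sharp}=e^{-\nu^\sharp}$, and this is exactly the conversion factor, so the Euclidean bound follows. For $z'$, the relation $dz'=\star_{\hat g'}d\rho'$ and the conformal invariance of $\star$ on $1$-forms in dimension $2$ give
\[
dz'-dz^\sharp=(\star_{\hat g'}-\star_{\hat g^\sharp})d\rho'+\star_{\hat g^\sharp}d(\rho'-\rho^\sharp).
\]
The first term is $O(\varepsilon)$, because $\hat g'$ is $\varepsilon$-close to $\hat g^\sharp$ and so its conformal class is $O(\varepsilon)$-close.

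The delicate point is the non-compact axis region, where $\rho^\sharp\to0$ and $G^\sharp$ degenerates. There I would use that the decay estimates are with respect to the smooth model metric on $M$, not on the orbit space. This avoids dividing by $\rho$, because every bound is applied to $g(X_v,\cdot)$ as a tensor on $M$. Finally, $\delta$ may be reduced so that all constants are uniform on $\{R^\sharp>R_0\}$.
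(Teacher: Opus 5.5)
Your part (i) is correct and follows the paper's argument: the Killing fields are the same for $g'$ and $g^\sharp$, and the orbit-space metric is the infimum over lifts. One small correction: the eigenvalues of $(\Phi^\sharp)^{-1}\Phi'$ are $(\lambda_1/\lambda_2)^{\pm1/2}$, where $\lambda_i$ are those of $(G^\sharp)^{-1}G'$. So $d(\Phi',\Phi^\sharp)\le\frac12\log\frac{1+\varepsilon}{1-\varepsilon}\le2\varepsilon$ directly, without enlarging $C$.

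The gap is in (ii), at the step you yourself flag, and it has three parts.

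\textbf{First}, differentiating $G'-G^\sharp$ with $h=g'-g^\sharp$ gives
\[
d\bigl(h(X_v,X_w)\bigr)=(\nabla^\sharp h)(X_v,X_w)+h(\nabla^\sharp X_v,X_w)+h(X_v,\nabla^\sharp X_w).
\]
So the error is controlled by $|\nabla^\sharp X_v|\,|X_w|$, not by $|\nabla G^\sharp|$. On $\R^3\times S^1$ the rotation $X$ and the translation $Y$ have $G^\sharp(X,Y)\equiv0$, while $h(\nabla^\sharp X,Y)$ need not vanish.

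\textbf{Second}, the model bound $|\nabla G^\sharp|\lesssim|G^\sharp|/\mathfrak r$ and the appeal to homogeneity fail for the ALF and AF types. These models are not scale-invariant. On $\R^3\times S^1$, $G^\sharp=\diag(\rho^2,1)$ has $|dG^\sharp_{11}|_{\hat g^\sharp}=2\rho$, while $|G^\sharp|/\mathfrak r\approx1/\mathfrak r$ at $\rho=1$.

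\textbf{Third}, contrary to your last paragraph, you do divide by $\rho$, since $d\rho'=d(\rho'^2)/(2\rho')$. What must be proved is an error bound that vanishes to first order at the axis and scales like $|d\rho^\sharp|_{\hat g^\sharp}$. That quantity is $\asymp\mathfrak r$ for ALE and $\asymp1$ otherwise. The bound needed is
\[
|d(\rho'^2-\rho^{\sharp2})|_{\hat g^\sharp}\le C\varepsilon\,\rho^\sharp\,|d\rho^\sharp|_{\hat g^\sharp}.
\]

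The paper obtains this by writing $\rho^2=|X_a\wedge X_b|^2$ for both metrics. Then $\rho'^2-\rho^{\sharp2}=\mathsf k(\omega,\omega)$, with $\omega=X_a\wedge X_b$, $|\omega|_{g^\sharp}=\rho^\sharp$ and $\mathsf k=\Lambda^2g'-\Lambda^2g^\sharp$. Hence $|d\,\mathsf k(\omega,\omega)|\le C\varepsilon\rho^\sharp(\rho^\sharp/\mathfrak r+|\nabla^\sharp\omega|)$.

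The missing ingredient is the model estimate
\[
\rho^\sharp/\mathfrak r+|\nabla^\sharp\omega|_{g^\sharp}\le C|d\rho^\sharp|_{\hat g^\sharp}.
\]
This needs bounds on the covariant derivatives of the model Killing fields:
\begin{itemize}
\item On the flat models, $\nabla^\sharp X_v$ is constant and $|X_v|\le C\mathfrak r$.
\item On (anti-)Taub--NUT, the fibre field has $|\xi|\le1$ and $|\nabla^\sharp\xi|=O(\mathfrak r^{-2})$.
\item On (anti-)Taub--NUT, the lift $X$ of a rotation has $|\nabla^\sharp X|\le C$. This follows from $|\nabla^\sharp\nabla^\sharp X|\le|\mathrm{Rm}|\,|X|$ and $|\mathrm{Rm}|=O(\mathfrak r^{-3})$, integrated along rays.
\end{itemize}
Your determinant route can be made to work, for instance through $\det G'-\det G^\sharp=\tr(\adj G^\sharp\,K)+\det K$ with $K=G'-G^\sharp$. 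But it needs exactly these Killing-field bounds.

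Once $|d\rho'-d\rho^\sharp|_{\hat g^\sharp}\le C\varepsilon|d\rho^\sharp|_{\hat g^\sharp}$ is established, your Hodge-star step for $z'$ goes through, with the first term of order $\varepsilon|d\rho'|_{\hat g^\sharp}$. You should also state two things there: $z'$ exists because $\{R^\sharp>R_0\}$ is simply connected, and it has the required sign because $F$ preserves orientation.
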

\begin{proof}
In the ALE case we prove the estimates on the finite cover $\tilde E$ of Definition~\ref{def:asymptype}, with
the lifted metric and the lifted action of a finite covering torus $\tilde T\to T$ (with the same Lie algebra $\mathfrak t$). Its deck transformations are isometries that commute with the $\tilde T$-action, since $\tilde T$
is connected; so they preserve $G$ and $\rho$, map each outer rod into itself (they preserve the stabilisers, and $L_0\ne L_n$), and act on the orbit space by conformal maps that
preserve $\rho$, i.e.\ by translations of $z$, which are the identity since they have finite order. So $\tilde E$ and $E$ have the same
orbit space, Gram matrix and Weyl coordinates. We make the following observations; the first proves (i), the others
prove (ii).

(i) Let $h=g'-g^\sharp$. By Definition~\ref{def:asymptype} with $k=0,1$, after enlarging $C$, $|h|_{g^\sharp}\le\varepsilon$ and
$|\nabla^\sharp h|_{g^\sharp}\le\varepsilon/\mathfrak r$. The Killing field $X_v$ of $v\in\mathfrak t$ is the same for $g'$ and $g^\sharp$, by equivariance, so

\begin{equation}
|G'(v,v)-G^\sharp(v,v)|=|h(X_v,X_v)|\le\varepsilon|X_v|^2_{g^\sharp}=\varepsilon G^\sharp(v,v).
\end{equation}
The orbit-space metric of a tangent vector is the
infimum of the metric over its lifts, which are the same for $g'$ and $g^\sharp$, and $(1-\varepsilon)g^\sharp\le g'\le(1+\varepsilon)g^\sharp$; this gives the
bounds for $\hat g'$. The eigenvalues $\lambda_1,\lambda_2$ of $(G^\sharp)^{-1}G'$ lie in $[1-\varepsilon,1+\varepsilon]$, $\rho'/\rho^\sharp=(\lambda_1\lambda_2)^{1/2}$, and by
\eqref{eq:dist} $\cosh d(\Phi',\Phi^\sharp)=\frac12\bigl(t+t^{-1}\bigr)$ with $t=(\lambda_1/\lambda_2)^{1/2}$; so $d=|\log t|\le2\varepsilon$ for $\varepsilon\le\frac12$.

(ii) We first collect bounds that concern the model metric $g^\sharp$ alone. Work in the frame $(a,b)$ of the model and put $\omega=X_a\wedge X_b$, so that $\rho^{\sharp2}=|\omega|^2_{g^\sharp}$ and
$\rho'^2=|\omega|^2_{g'}$. We claim that
\begin{equation}\label{eq:modelbound}
\rho^\sharp/\mathfrak r+|\nabla^\sharp\omega|_{g^\sharp}\le C|d\rho^\sharp|_{\hat g^\sharp}.
\end{equation}
Indeed $|d\rho^\sharp|_{\hat g^\sharp}=e^{-\nu^\sharp}=H^{-1/2}$, which is $\asymp\mathfrak r$ for ALE and $\asymp1$ otherwise, and
$\rho^\sharp/\mathfrak r\le R^\sharp/\mathfrak r\le C\mathfrak r^{\kappa-1}$. It remains to bound $\nabla^\sharp\omega$ by Killing fields. On $\R^4$ and on
$\R^3\times S^1$ the torus is generated by rotations and translations of the flat metric, so $|X_v|\le C\mathfrak r$ and $\nabla^\sharp X_v$ is
constant. On (anti-)Taub--NUT, a Gibbons--Hawking metric $Hh_{\R^3}+H^{-1}\eta^2$ with $d\eta=\star_{\R^3}dH$, the fibre field $\xi$ has
$|\xi|=H^{-1/2}\le1$ and $|\nabla^\sharp\xi|=\frac12|d(H^{-1}\eta)|\le C\mathfrak r^{-2}$. The lift $X$ of a rotation of $\R^3$ has $|X|\le C\mathfrak r$;
since $|\mathrm{Rm}|\le C\mathfrak r^{-3}$ and $|\nabla^\sharp\nabla^\sharp X|\le|\mathrm{Rm}||X|$ for a Killing field, $|\nabla^\sharp\nabla^\sharp X|\le C\mathfrak r^{-2}$,
and integration along rays gives $|\nabla^\sharp X|\le C$. In each case $\omega$ is a constant multiple of $X\wedge Y$ for the two
generators $X,Y$ just described (two rotations; a rotation and a translation; the lift of a rotation and the fibre field), and
$|\nabla^\sharp(X\wedge Y)|\le|\nabla^\sharp X||Y|+|X||\nabla^\sharp Y|$, which is $\le C\mathfrak r$ for ALE and $\le C$ otherwise.

Next we compare the derivatives of the Weyl radii $\rho'$ and $\rho^\sharp$. Let $\mathsf k=\Lambda^2g'-\Lambda^2g^\sharp$, the difference of the induced metrics on bivectors; by (i)
$|\mathsf k|\le C\varepsilon$ and $|\nabla^\sharp\mathsf k|\le C\varepsilon/\mathfrak r$. Then $\rho'^2-\rho^{\sharp2}=\mathsf k(\omega,\omega)$, and
$d\bigl(\mathsf k(\omega,\omega)\bigr)=(\nabla^\sharp\mathsf k)(\omega,\omega)+2\mathsf k(\nabla^\sharp\omega,\omega)$. By \eqref{eq:modelbound},
$|d(\rho'^2-\rho^{\sharp2})|\le C\varepsilon\rho^\sharp\bigl(\rho^\sharp/\mathfrak r+|\nabla^\sharp\omega|\bigr)\le C\varepsilon\rho^\sharp|d\rho^\sharp|$. Since
$d\rho'-d\rho^\sharp=d(\rho'^2-\rho^{\sharp2})/(2\rho')+(\rho^\sharp/\rho'-1)\,d\rho^\sharp$ and $\rho'\ge(1-\varepsilon)\rho^\sharp$,
$|d\rho'-d\rho^\sharp|_{\hat g^\sharp}\le C\varepsilon|d\rho^\sharp|_{\hat g^\sharp}$. Here $\rho'$ and $\rho^\sharp$ are functions on the orbit space, whose norms
for $g^\sharp$ and $\hat g^\sharp$ agree.

Finally we compare the harmonic conjugates $z'$ and $z^\sharp$ of $\rho'$ and $\rho^\sharp$. On a surface the Hodge star on $1$-forms depends only on the conformal class, so by (i)
$|(\star_{\hat g'}-\star_{\hat g^\sharp})\beta|_{\hat g^\sharp}\le C\varepsilon|\beta|_{\hat g^\sharp}$. As $F$ preserves the orientation, $dz'=\star_{\hat g'}d\rho'$ and
$dz^\sharp=\star_{\hat g^\sharp}d\rho^\sharp$ for one orientation of the orbit space, and $\{R^\sharp>R_0\}$ is simply connected, so $z'$ exists there and

\begin{equation}
|dz'-dz^\sharp|_{\hat g^\sharp}\le|(\star_{\hat g'}-\star_{\hat g^\sharp})d\rho'|+|d\rho'-d\rho^\sharp|\le C\varepsilon|d\rho^\sharp|_{\hat g^\sharp}.
\end{equation}
For a $1$-form, $|\beta|_{\rm Eucl}=e^{\nu^\sharp}|\beta|_{\hat g^\sharp}$ and $e^{\nu^\sharp}|d\rho^\sharp|_{\hat g^\sharp}=1$, which gives the Euclidean bounds.
\end{proof}

\begin{lemma}[global Weyl coordinates]\label{lem:globalweyl}
Let $(M,g)$ be a complete, simply connected toric Ricci-flat manifold whose action has a fixed point and whose end is of
one of the types of Definition~\ref{def:asymptype}. Then $\rho$ has no critical points in $\mathcal B^\circ$, and $w=z+i\rho$ is a
diffeomorphism of $\mathcal B^\circ$ onto $\Hb^\circ$, which extends to a homeomorphism of $\mathcal B$ onto $\{\rho\ge0\}$ mapping $\partial \mathcal B$
onto the axis.
\end{lemma}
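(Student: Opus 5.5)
The plan is to show that $w=z+i\rho$ is a proper map of degree one from $\mathcal B$ to $\{\rho\ge0\}$ that is holomorphic in the interior, and then to conclude by the theory of proper holomorphic maps.

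\emph{Step 1: $w$ is defined globally and sends the boundary to the axis.} Since $M$ is simply connected, so is $\mathcal B^\circ$ (\S\ref{subsec:WP}). The closed form $\star_{\hat g}d\rho$ is therefore exact, and $z$, hence $w$, is defined on all of $\mathcal B^\circ$. In the conformal structure of $\hat g$ the function $w$ is holomorphic, since $\rho$ is harmonic and $z$ is its conjugate. We have $\rho>0$ on $\mathcal B^\circ$, because $G$ is non-degenerate on principal orbits. By Lemma~\ref{lem:axis}(iii), $w$ extends continuously to $\mathcal B$. Near each boundary point it is a homeomorphism onto a neighbourhood of a point of $\{\rho=0\}$, so $\partial\mathcal B$ is mapped to the axis. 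The hypothesis of a fixed point guarantees that $\partial\mathcal B\neq\emptyset$.

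\emph{Step 2: properness.} By Definition~\ref{def:asymptype}, the complement of a compact $T$-invariant set in $M$ (in the ALE case, on the finite cover; the proof of Lemma~\ref{lem:endcompare} shows the orbit space is unchanged) corresponds to $\{R^\sharp>R_0\}$ in the model orbit space. On this region $\rho$ is the $\rho'$ of Lemma~\ref{lem:endcompare}. We fix the additive constant of $z$ by continuation through the simply connected region $\{R^\sharp>R_0\}$, so that there $z=z'+c$. Lemma~\ref{lem:endcompare}(ii) gives $|d(w-c-w^\sharp)|\le C\varepsilon$ with $\varepsilon=C\mathfrak r^{-\delta}\to0$. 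Integrating along rays gives $|w-c-w^\sharp|=o(R^\sharp)$, so $|w|\to\infty$ exactly as one goes out the end. On the complementary compact part of $\mathcal B$, $w$ is bounded by continuity. Hence $w\colon\mathcal B\to\{\rho\ge0\}$ is proper.

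\emph{Step 3: doubling.} Let $D$ be the double of $\mathcal B$ along $\partial\mathcal B$, and extend $w$ by $\bar w$ on the second copy (Schwarz reflection).
\begin{itemize}[leftmargin=*]
\item At a point of a rod, Lemma~\ref{lem:axis}(ii) gives the chart $(\rho_S,z_S)$ on the slice $S$, in which $w$ is already the restriction of a holomorphic coordinate.
\item At a turning point, Lemma~\ref{lem:axis}(i) shows that $w-w(p)=\omega^2$ with $\omega$ a homeomorphism of the closed quadrant onto the closed first quadrant. Thus $D$ is a topological surface near the image of $p$. Using $\omega$ and $\bar\omega$ on the two copies as charts makes $D$ a Riemann surface, and $w$ extends as a holomorphic map with a simple critical point there.
\end{itemize}
The resulting map $w\colon D\to\C$ is holomorphic, non-constant and proper, by Step 2 and the symmetry of the extension. $D$ is connected because $\partial\mathcal B\neq\emptyset$. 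A proper non-constant holomorphic map between connected Riemann surfaces is a branched covering of finite degree $m$.

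\emph{Step 4: the degree is one.} We compute $m$ from a point $w_0$ with $|w_0|$ large and $\operatorname{Im}w_0>0$. By Step 2, all preimages of $w_0$ lie in the end region $\{R^\sharp>R_0\}$ of one sheet. There $w-c$ is a $C^1$-small perturbation of $w^\sharp$, which is a diffeomorphism of $\{R^\sharp>R_0\}$ onto the exterior of a half-disc. A degree argument on a large half-annulus, or directly injectivity from $|d(w-c-w^\sharp)|\le C\varepsilon$ together with $dw^\sharp$ being the identity in these coordinates, shows that $w_0$ has exactly one preimage, so $m=1$.

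A degree-one branched covering is a biholomorphism, so $w$ has no critical points on $D$. This contradicts the simple critical point at a turning point unless we read it correctly: the critical points at turning points lie on the boundary, where the chart was $\omega$. So we should count in the interior: on the open set $D\setminus\partial\mathcal B$ the map $w$ is an unbranched covering onto $\C\setminus\R$ minus finitely many branch values. Hence $w$ is a biholomorphism $\mathcal B^\circ\to\Hb^\circ$, and in particular $d\rho\ne0$ there. The extension to a homeomorphism $\mathcal B\to\{\rho\ge0\}$ then follows from Step 1 and properness.

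\emph{Expected main obstacle.} I expect the hardest part to be the bookkeeping at the turning points in the doubling, together with the degree count at infinity in Step 4. I would first try to avoid the doubling altogether: show directly that $w|_{\mathcal B^\circ}\colon\mathcal B^\circ\to\Hb^\circ$ is a proper holomorphic map, since $\rho\to0$ only at $\partial\mathcal B$ and $|w|\to\infty$ only at the end. Being proper and holomorphic, it is then a finite branched covering of $\Hb^\circ$, whose degree is one by the count at infinity.
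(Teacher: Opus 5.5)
Your Step~3 goes wrong at the turning points. The contradiction you meet in Step~4 is a symptom of that error, and the sentence about counting in the interior does not resolve it.

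\textbf{The error in Step~3.} The charts $\omega$ on the first copy and $\bar\omega$ on the second do not glue. Along the edge $\{x_1=0\}$ the first gives $\omega=i\sqrt s$ and the second gives $\bar\omega=-i\sqrt s$ at the same point of $D$. What is true is that $w-w(p)=\omega^2$ maps the closed quadrant homeomorphically onto a neighbourhood of $0$ in the closed upper half-plane (Lemma~\ref{lem:axis}(iii)). So $w$ on the first copy, together with $\bar w$ on the second, is a homeomorphism of a neighbourhood of $p$ in $D$ onto a disc, and this is the holomorphic chart there. Equivalently, $D$ is locally $S/\langle\sigma_1\sigma_2\rangle$; since $\sigma_1\sigma_2$ acts by $\omega\mapsto-\omega$, the function $w_S=\omega^2$ descends to a coordinate. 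The critical point you have in mind belongs to $w_S$ on the four-fold cover $S$, not to $w$ on the double.

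If $w$ had a point of local degree two on $D$, a proper map of degree one would be impossible. So the fix is to delete the critical-point claim, not to recount in the interior. With $w$ itself as the chart at the corners, Steps~3 and~4 show that $w\colon D\to\C$ is a biholomorphism. This gives at once that $w|_{\mathcal B^\circ}$ is a biholomorphism onto $\Hb^\circ$, and that $w$ maps $\mathcal B=w^{-1}(\{\rho\ge0\})$ homeomorphically onto $\{\rho\ge0\}$.

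\textbf{The fallback.} Your version without doubling is also correct. The map $w|_{\mathcal B^\circ}\colon\mathcal B^\circ\to\Hb^\circ$ is proper: for compact $K\subset\Hb^\circ$, the set $w^{-1}(K)$ is closed in $\mathcal B$, misses $\partial\mathcal B$, and lies in the compact complement of the far end. Two points there need a word.
\begin{itemize}
\item For the degree count, injectivity near infinity is not immediate, because the far region $\{|\zeta|>R_2,\ \operatorname{Im}\zeta>0\}$ in the coordinate $\zeta=w^\sharp$ is not convex. Integrate $dw$ along a path in that region of length bounded by a fixed multiple of $|\zeta_1-\zeta_2|$.
\item The boundary statement is not free in this version. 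You need that the image is closed, and that $w$ is injective on $\partial\mathcal B$: two boundary points with the same image would, by the local homeomorphism of Lemma~\ref{lem:axis}(iii), give two preimages of nearby interior points. Then use that a continuous proper bijection is a homeomorphism. ``Follows from Step~1 and properness'' leaves all of this implicit.
\end{itemize}

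\textbf{Comparison with the paper.} The paper does not use the structure theory of proper holomorphic maps. It takes the compact piece $\mathcal B_1=\mathcal B\setminus\{R^\sharp>R_1\}$ and uses simple connectivity a second time, via van Kampen, to see that $\mathcal B_1$ is a closed disc. Its boundary is the arc $\gamma_1=\{R^\sharp=R_1\}$ plus an arc of $\partial\mathcal B$, on which $w$ is real. From $|w-w^\sharp|<R_1/4$ on $\gamma_1$, the curve $w(\partial\mathcal B_1)$ winds once around every $q$ with $\operatorname{Im}q>0$ and $|q|<R_1/2$. The argument principle then gives exactly one simple solution of $w=q$. This needs only a $C^0$ estimate on one arc, and no injectivity at infinity.

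Your route instead replaces the topology of $\mathcal B_1$ and the winding number by the constancy of the degree of a proper map. It uses simple connectivity only to define $z$. In the corrected doubled form, it also yields the boundary homeomorphism without a separate argument.
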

\begin{proof}
This is an argument of Weinstein's type \cite{Weinstein1990}, \cite[\S2]{KL2021}, \cite[Prop.~2.5]{KL2026}, carried out with
Lemma~\ref{lem:endcompare}. The statement does not depend on the basis of $\mathfrak t$ (\S\ref{subsec:WP}), so we compute $G$ and $\rho$
in a $\Z$-basis of the lattice of $2\pi$-periodic elements. The orbit space $\mathcal B$ is a connected, simply connected surface with
boundary and corners (\S\ref{subsec:WP}), and $\partial\mathcal B\ne\varnothing$ since there is a fixed point. By orthogonal transitivity $\rho$ is harmonic and positive on $\mathcal B^\circ$ and zero on $\partial\mathcal B$, so it has a harmonic conjugate $z$
on $\mathcal B^\circ$, with the sign of \S\ref{subsec:WP}, and $w=z+i\rho$ is holomorphic for the conformal structure of $\hat g$.

We first consider $w$ at the axis. By Lemma~\ref{lem:axis}(iii), $w$ extends continuously to $\mathcal B$, and it maps a neighbourhood of each point
of $\partial\mathcal B$ homeomorphically onto a neighbourhood of a point of $\R$ in $\{\rho\ge0\}$.

On the end we compare $w$ with the Weyl coordinate of the model. Pass to the finite cover in the ALE case, as in
Lemma~\ref{lem:endcompare}. By Lemma~\ref{lem:endcompare}(ii), in the coordinates $(\rho^\sharp,z^\sharp)$ of the model, $|d(w-w^\sharp)|\le C(R^\sharp)^{-\delta/\kappa}$ with
$w^\sharp=z^\sharp+i\rho^\sharp$; we may assume $\delta<1$. Integrating along a half-circle of radius $R_0$ and a radial segment, after
adding a real constant to $z$, $|w-w^\sharp|\le C\bigl(1+(R^\sharp)^{1-\delta/\kappa}\bigr)$.

We now count the preimages under $w$ of a point of the open half-plane. Let $R_1>2R_0$ be so large that $|w-w^\sharp|<R_1/4$ on the arc $\gamma_1=\{R^\sharp=R_1\}$, and let $\mathcal B_1$
be the complement in $\mathcal B$ of $\{R^\sharp>R_1\}$. It is compact, since the complement of the end is compact, and it is simply
connected by van Kampen's theorem, since $\mathcal B$ is simply connected and $\mathcal B_1$ meets the closed half-annulus $\{R^\sharp\ge R_1\}$ in
the arc $\gamma_1$; so it is a closed disc. So $\partial\mathcal B_1$
is one closed curve, made of $\gamma_1$ and an arc of $\partial\mathcal B$, on which $w$ is real. On $\gamma_1$,
$w^\sharp$ runs once over the half-circle of radius $R_1$, and $w$ and $w^\sharp$ are real at its endpoints, where $w^\sharp=\pm R_1$ and
so $w$ has modulus at least $3R_1/4$ and the sign of $w^\sharp$, since $|w-w^\sharp|<R_1/4$. Let $\operatorname{Im}q>0$ and
$|q|<R_1/2$. Then $w\ne q$ on $\partial\mathcal B_1$. On $\gamma_1$ the segments from $w^\sharp$ to $w$ lie in $\{|\cdot|>3R_1/4\}$, so the curve
$w(\partial\mathcal B_1)$ is homotopic in $\C\setminus\{q\}$ to the half-circle of radius $R_1$ closed by a path in $\R$, and its winding number
about $q$ is $\pm1$. Since $w\ne q$ on the compact set $\partial\mathcal B_1$, $w\ne q$ on a collar of $\partial\mathcal B_1$ in $\mathcal B_1$. So the
solutions of $w=q$ in $\mathcal B_1^\circ$ lie in a compact set and are finitely many, and since $w$ is holomorphic on $\mathcal B_1^\circ$, the
argument principle on a smooth closed curve in the collar, homotopic there to $\partial\mathcal B_1$, shows that their number, counted with
multiplicity, is the winding number. It is non-negative, so it is $1$: the equation $w=q$ has exactly one solution in $\mathcal B_1^\circ$,
and it is simple.

Every point of $\mathcal B^\circ$ lies in such a $\mathcal B_1^\circ$ with $|w|<R_1/2$. So $w\colon\mathcal B^\circ\to\Hb^\circ$ is bijective with $dw\ne0$, a
biholomorphism, and $\rho$ has no critical points. The extension $w\colon\mathcal B\to\{\rho\ge0\}$ is continuous and proper, since
$|w|\ge R^\sharp-|w-w^\sharp|\to\infty$ on the end; so its image is closed and contains $\Hb^\circ$, hence is $\{\rho\ge0\}$. It is injective on
$\partial\mathcal B$: two boundary points with the same image would give, by the behaviour of $w$ at the axis, two preimages of nearby points of $\Hb^\circ$.
A continuous proper bijection onto $\{\rho\ge0\}$ is a homeomorphism.
\end{proof}

\begin{lemma}[smooth ends]\label{lem:smoothend}
Let $(M,g)$ be a toric Ricci-flat manifold with global Weyl coordinates, whose end is of one of the types of
Definition~\ref{def:asymptype}. Then \ref{it:T2} holds: in the basis of $\mathfrak t$ given by the model, $d(\Phi,\Phi^\sharp)=O(R^{-\delta/\kappa})$
uniformly in $\theta$, where $\Phi^\sharp=\Phi^\sharp_{h_0,\alpha}$ is the model map of the type, $\kappa=2$ for ALE and
$\kappa=1$ otherwise.
\end{lemma}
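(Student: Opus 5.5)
The plan is to combine Lemma~\ref{lem:endcompare} with the global Weyl coordinates. The point is that $\Phi$ is computed in the Weyl coordinates $(\rho,z)$ of $g$, while the model $\Phi^\sharp$ is computed in the model's Weyl coordinates $(\rho^\sharp,z^\sharp)$. We therefore compare in two steps. First we compare the values of $\Phi$ and $\Phi^\sharp$ at the same point of the end. Then we account for the fact that this point has different Weyl coordinates in the two descriptions.

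Step one works on the end, transported by $F$ and passed to the finite cover in the ALE case, which changes neither the orbit space nor $G$ nor $\rho$ (proof of Lemma~\ref{lem:endcompare}). Lemma~\ref{lem:endcompare}(i) gives $d(\Phi',\Phi^\sharp)\le2\varepsilon=O(\mathfrak r^{-\delta})=O((R^\sharp)^{-\delta/\kappa})$ pointwise on $\{R^\sharp>R_0\}$. Here both maps are evaluated at the same orbit and written in the model basis $(a,b)$.

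Step two passes from the point to its coordinates. By Lemma~\ref{lem:globalweyl}, or the global Weyl hypothesis, $w=z+i\rho$ is a global coordinate. The proof of Lemma~\ref{lem:globalweyl} gives $|w-w^\sharp|\le C(1+(R^\sharp)^{1-\delta/\kappa})$ after fixing the additive constant in $z$; this is allowed, since \ref{it:T2} does not depend on the origin of $z$. Also $|\rho-\rho^\sharp|\le\varepsilon\rho^\sharp$ by (i). In particular $R\asymp R^\sharp$ on the end. It remains to bound $d(\Phi^\sharp(w^\sharp),\Phi^\sharp(w))$, the model map evaluated at two points whose relative displacement is $O(R^{-\delta/\kappa})$.

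I expect the main obstacle to be the uniformity in $\theta$ near the axis, where $\Phi^\sharp$ degenerates. I would use the explicit formula \eqref{eq:models} together with \eqref{eq:coshhalf}. Since $\rho'/\rho^\sharp=1+O(\varepsilon)$, the ratio of the $\mathfrak x$-coordinates is $\rho H(R,z)/(\rho^\sharp H(R^\sharp,z^\sharp))=1+O(R^{-\delta/\kappa})$. Here $H$ changes by a relative amount $O(|w-w^\sharp|/R)$. The $\mathfrak y$-difference is $|A-A^\sharp|\le C|\alpha||z/R-z^\sharp/R^\sharp|\le C|w-w^\sharp|/R$.

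The $\mathfrak y$-difference must be divided by $\mathfrak x\asymp\rho H$, which is small near the axis, so a bare displacement bound is not enough. To handle this I would integrate $d(z-z^\sharp)$ from the axis rather than from a half-circle, using the estimate $|d(z-z^\sharp)|\le C\varepsilon$. On the outer rods of both metrics $z/R$ and $z^\sharp/R^\sharp$ are $\pm1$ at the same boundary points, because the homeomorphism maps $\partial\mathcal B$ to the axis and the outer rods correspond. Then a Lipschitz bound in $\rho$ gives $|z/R-z^\sharp/R^\sharp|\le C\varepsilon\rho/R$ plus an angular error, which is controlled after dividing by $\mathfrak x\asymp\rho H$.

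Combining the two steps with the triangle inequality gives $d(\Phi,\Phi^\sharp)=O(R^{-\delta/\kappa})$ uniformly in $\theta$. If the near-axis estimate proves delicate, a fallback is to use the local tameness from Lemma~\ref{lem:axis} on the outer rods, where both maps are at bounded distance from the same rod model and tend to the same boundary point.
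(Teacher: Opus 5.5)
Your argument is correct in outline, and it has the paper's two-step structure. First you compare $\Phi$ and $\Phi^\sharp$ at the same point $q$ by Lemma~\ref{lem:endcompare}(i). Then you compare the model map at the two Weyl points $p=w(q)$ and $\hat p=w^\sharp(q)$.

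You differ from the paper in how you make the second step uniform near the axis. The paper uses a Lipschitz bound for the model map alone. Since $\partial_\rho A=-\alpha z\rho/(2R^3)$ and $\partial_zA=\alpha\rho^2/(2R^3)$, the gradient of $A$ vanishes at the axis at the same rate as $\mathfrak x=\rho H$. Hence, in the hyperbolic metric, $|\partial_z\Phi^\sharp|\le C/R$ and $|\partial_\rho\Phi^\sharp|\le 1/\rho+C/R$, where the $1/\rho$ comes only from $\partial_\rho\mathfrak x/\mathfrak x$. Integrating along the segment from $\hat p$ to $p$, on which $\rho$ is monotone, gives at once $d\le|\log(\rho(p)/\rho(\hat p))|+C|p-\hat p|/R$. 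So the obstacle you anticipate disappears once you bound $A(p)-A(\hat p)$ by the gradient along that segment, where $\rho$ is comparable at both ends, rather than by the bare displacement.

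Your alternative is to integrate $h=z/R-z^\sharp/R^\sharp$ back to the outer rods, where it vanishes. This also works, but the ``angular error'' is the whole content of the step and must be quantified:
\begin{itemize}
\item You need $|dh|=O(R^{-1-\delta/\kappa})$ in the model coordinates. This uses both $d(z-z^\sharp)$ and $d(\rho-\rho^\sharp)$ from Lemma~\ref{lem:endcompare}(ii), through $d\theta-d\theta^\sharp$, and gives $|h|\le C\rho R^{-1-\delta/\kappa}$.
\item The path to the axis stays in $\{R^\sharp>R_0\}$ only in a sector around the axis, so outside that sector you must use the bare bound instead.
\item You must check that $z$ and $z^\sharp$ have the same sign at the foot of the path.
\end{itemize}
Your route thus uses the geometry of the end a second time, where the paper needs only an explicit property of the model.

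Your fallback via local tameness would not suffice. It only bounds the distance near the outer rods and gives no rate $O(R^{-\delta/\kappa})$.
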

\begin{proof}
Use Lemma~\ref{lem:endcompare} and its notation, on the finite cover in the ALE case. On the end, $(\rho',z')$ are the global Weyl coordinates, after adding a constant to $z'$. Integrating
Lemma~\ref{lem:endcompare}(ii) as in the proof of Lemma~\ref{lem:globalweyl}, $|w'-w^\sharp|\le C(1+(R^\sharp)^{1-\delta/\kappa})$ for $w'=z'+i\rho'$;
we may assume $\delta<1$. For a point $q$ of the end let $p=(\rho',z')(q)$ and $\hat p=(\rho^\sharp,z^\sharp)(q)$ be its two Weyl
points. Then $|p-\hat p|\le C(1+R^{1-\delta/\kappa})$ and $\rho(p)/\rho(\hat p)=1+O(R^{-\delta/\kappa})$, where $\rho(p)$ denotes the first coordinate
and $R=|p|\asymp R^\sharp$. We compare $\Phi(p)$ with $\Phi^\sharp(\hat p)$ and then $\Phi^\sharp(\hat p)$ with $\Phi^\sharp(p)$.

The first two are the values at the same point $q$ of the two maps: $\Phi(p)=\Phi'(q)$ and $\Phi^\sharp(\hat p)=\Phi^\sharp(q)$, so $d(\Phi(p),\Phi^\sharp(\hat p))\le2\varepsilon=O(R^{-\delta/\kappa})$ by
Lemma~\ref{lem:endcompare}(i).

For the last two we use that the model map is Lipschitz in the right sense. In the coordinates $(\mathfrak x,\mathfrak y)=(\rho H,A)$ of \eqref{eq:PhiHA}, a direct
computation from \eqref{eq:models} gives $|\partial_z\Phi^\sharp|\le C/R$ and $|\partial_\rho\Phi^\sharp|\le1/\rho+C/R$, where the term $1/\rho$
comes only from $\partial_\rho\mathfrak x/\mathfrak x$. On the segment from $\hat p$ to $p$ the first coordinate is positive and monotone, and
$R\ge R(\hat p)/2$, so integrating along it,
\begin{equation} d(\Phi^\sharp(p),\Phi^\sharp(\hat p))\le|\log(\rho(p)/\rho(\hat p))|+C|p-\hat p|/R=O(R^{-\delta/\kappa}). \end{equation}
Hence $d(\Phi(p),\Phi^\sharp(p))=O(R^{-\delta/\kappa})$, and as $q$ runs through the end, $p$ runs through a neighbourhood of infinity,
since the complement of the end in $\mathcal B$ is compact and $(\rho,z)$ is a global chart.
\end{proof}

\begin{proposition}\label{prop:smoothCtor}
Let $(M,g)$ be a complete, simply connected toric Ricci-flat manifold whose action has a fixed point and whose end is of
type ALE, ALF$^\pm$ or AF$_\beta$ in the sense of Definition~\ref{def:asymptype}. Then $g$ has global Weyl coordinates, its
augmented harmonic map lies in $\Ctor$, with the rod structure given by the fixed points and the stabilisers, and the
normalised rod vectors $\tilde v_j$ are the primitive generators of the stabilisers.
\end{proposition}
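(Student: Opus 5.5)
The proof assembles the lemmas of this subsection in order. First I apply Lemma~\ref{lem:globalweyl}: its hypotheses (complete, simply connected, a fixed point, an end of one of the types) are those of the proposition. It gives that $w=z+i\rho$ is a diffeomorphism of $\mathcal B^\circ$ onto $\Hb^\circ$ and extends to a homeomorphism of $\mathcal B$ onto $\{\rho\ge0\}$ with $\partial\mathcal B$ going to the axis. So $g$ has global Weyl coordinates and takes the form \eqref{eq:WP}, with $(\Phi,\nu)$ an augmented harmonic map on $\Hb^\circ$.

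Next I read off the rod structure. The points of $\partial\mathcal B$ are the images of orbits with non-trivial stabiliser. By Lemma~\ref{lem:axis} each point of $\partial\mathcal B$ is either a corner (a fixed point) or lies on an open edge, where the stabiliser is a circle generated by a primitive $v\in\Lambda$. Along a connected edge the stabiliser is locally constant by Lemma~\ref{lem:axis}(ii), hence constant. Under the homeomorphism $w$ the corners become points $w_1<\dots<w_n$ on the axis and the edges become the intervals between them.

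For finiteness, $n$ is finite because the fixed points are finitely many. This holds under the end hypothesis by Remark~\ref{rem:asymptypes}, and $n\ge1$ since there is a fixed point. Adjacent rods have different lines $L_{j-1}\ne L_j$, since by Lemma~\ref{lem:axis}(i) the two stabilisers at a fixed point are $[v]$ and $[v']$ for a $\Z$-basis $(v,v')$. The asymptotic type comes from the end, and $L_0=L_n$ exactly in the AF case; this follows from Lemma~\ref{lem:outerlines} once \ref{it:T2} is known.

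Local tameness \ref{it:T1} now follows by combining the two local lemmas. On each rod, Lemma~\ref{lem:axis}(ii) gives the rod form \eqref{eq:Grod} with $f>0$, and Lemma~\ref{lem:formtame}(i) then bounds the distance to the rod model. At each turning point, Lemma~\ref{lem:axis}(i) gives \eqref{eq:nutmodel} with $A(0)>0$ in the coordinates $(s,t)$ centred at $w(p)=w_j$. The ordering ``$v$ vanishes below, $v'$ above'' matches the turning-point model, so Lemma~\ref{lem:formtame}(ii) applies.

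The one point needing care is that Lemma~\ref{lem:axis} uses a local harmonic conjugate $z$. Since $\mathcal B^\circ$ is connected, the global $z$ differs from it by a constant, which is absorbed in $w(p)$. The condition \ref{it:T2} is Lemma~\ref{lem:smoothend}, which applies because global Weyl coordinates are now established. In the ALE case I work on the finite cover, which has the same orbit space, Gram matrix and Weyl coordinates, as shown in the proof of Lemma~\ref{lem:endcompare}.

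Finally, the normalised rod vectors. On $I_j$ the rod form holds in a basis $(v_j,\xi)$ with $v_j$ primitive in $\Lambda$ generating the stabiliser. The metric is smooth across the surface over $I_j$, so the cone angle of the circle generated by $v_j$ is $2\pi$, i.e.\ $\mathfrak c_j(v_j)=1$ \cite[(2.8)]{KL2026}. Hence $\tilde v_j=\pm v_j$.

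I expect the main obstacle to be this last identification, namely making explicit that smoothness of $g$ on the slice $S$ forces $e^{2\nu}f\to1$ on the rod. My plan there is to compute the orbit-space metric near $\{x_1=0\}$ from Lemma~\ref{lem:axis}(ii) and compare it with polar coordinates. In that slice $\rho=x_1h$, and $|X_{v_j}|^2=G(v_j,v_j)=u\tilde A$ with $u=x_1^2$. Smoothness of $g$ in the slice chart, where $v_j$ rotates $x_1$ with weight $\pm1$, gives $\hat g_S(\partial_{x_1},\partial_{x_1})=\tilde A$ at $x_1=0$. Comparing this with $e^{2\nu}(d\rho^2+dz^2)$ along the axis, using $d\rho=h\,dx_1$ there and $\tilde D=\tilde A\tilde B$, yields $e^{2\nu}h^2=\tilde A$ on the rod. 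Since $f=\tilde B$, this gives $e^{2\nu}f=\tilde A/h^2\cdot\tilde B=1$.
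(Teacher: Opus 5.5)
Your proposal is correct and follows the paper's proof step by step. Global Weyl coordinates come from Lemma~\ref{lem:globalweyl}, the rod structure and local tameness from Lemmas~\ref{lem:axis} and~\ref{lem:formtame}, \ref{it:T2} from Lemma~\ref{lem:smoothend}, and $\tilde v_j=\pm v_j$ from smoothness across the rods. Your two additions are correct and make explicit points the paper leaves implicit: the consistency of the asymptotic type with $L_0=L_n$, via the proof of Lemma~\ref{lem:outerlines}, and the slice computation $e^{2\nu}h^2=\tilde A$, hence $e^{2\nu}f=1$ on the rod, which replaces the citation of \cite[(2.8)]{KL2026}.
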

\begin{proof}
Global Weyl coordinates are Lemma~\ref{lem:globalweyl}. We compute $G$, $\rho$ and $z$ in a $\Z$-basis of the lattice $\Lambda$ of
$2\pi$-periodic elements, which is no restriction (Definition~\ref{def:Ctor}). The turning points are the images of the fixed points,
and the rod lines are the Lie algebras of the stabilisers. This is a rod structure: fixed points are isolated, being corners of
$\mathcal B$ (Lemma~\ref{lem:axis}(i)), and the end contains none (Remark~\ref{rem:asymptypes}), so there are finitely many,
$n\ge1$ of them; their images $w_1<\dots<w_n$ are distinct, since $w$ is injective on $\mathcal B$ (Lemma~\ref{lem:globalweyl}); and by
Lemma~\ref{lem:axis}(ii) the stabiliser is locally constant on $\partial\mathcal B$ minus the corners, hence constant on each rod,
which is connected. Near a point of a rod $I_j$, Lemma~\ref{lem:axis}(ii) gives the rod form
\eqref{eq:Grod} with $a,b,f$ smooth and $f>0$, so Lemma~\ref{lem:formtame}(i) applies. At a turning point $w_k$, Lemma~\ref{lem:axis}(i)
gives a $\Z$-basis $(v,v')$ of $\Lambda$, so $\det(v,v')=\pm1$, in which $G$ has the form \eqref{eq:nutmodel} at $w_k$ with $A(0)>0$; and $v$
and $v'$ span $L_{k-1}$ and $L_k$, since their Killing fields vanish over the rods below and above $w_k$. So $L_{k-1}\ne L_k$, and
Lemma~\ref{lem:formtame}(ii) applies. This is \ref{it:T1}, and \ref{it:T2} is Lemma~\ref{lem:smoothend}. By
Lemma~\ref{lem:axis}(ii) and \eqref{eq:Nrod} the normalised rod vectors exist; since $g$ is smooth along the surface
over $I_j$ for the lattice of $2\pi$-periodic elements, the cone angle of the primitive generator is $2\pi$, so it is
$\pm\tilde v_j$.
\end{proof}

The type of the end need not be assumed. This rests on the work of Li and Sun on toric Einstein metrics \cite{LS2026b}.
\begin{proposition}[the end]\label{prop:endtype}
Let $(M,g)$ be a simply connected toric gravitational instanton whose torus action has finitely many fixed points, and at
least one. Then:
\begin{enumerate}[leftmargin=*,label=\textup{(\roman*)}]
\item the Gaussian curvature $\mathcal K$ of the orbit space $(\mathcal B,\hat g)$ is non-positive;
\item $g$ has global Weyl coordinates; more precisely, $(\rho,z)$, with $z$ a harmonic conjugate of $\rho$, is a homeomorphism of
$\mathcal B$ onto $\{\rho\ge0\}$ and a conformal diffeomorphism of $\mathcal B^\circ$ onto $\Hb^\circ$;
\item the end of $M$ is of type ALE, ALF$^\pm$ or AF$_\beta$.
\end{enumerate}
\end{proposition}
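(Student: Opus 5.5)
The plan is to prove (i) by a local curvature computation, then (ii) by a Weinstein-type counting argument that uses only (i) and the quadratic curvature decay, and finally (iii) by invoking Li and Sun \cite{LS2026b}. The introduction already says this is the only place where \cite{LS2026b} enters.

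\emph{Step (i).} On $\mathcal B^\circ$, $\rho>0$ is harmonic, so where $d\rho\ne0$ local Weyl coordinates exist and $\hat g=e^{2\nu}(d\rho^2+dz^2)$. Then $\mathcal K=-e^{-2\nu}\Delta_0\nu$. I would compute $\Delta_0\nu=\partial_\rho^2\nu+\partial_z^2\nu$ from the quadrature \eqref{eq:nu} together with the harmonic map equation \eqref{eq:hm}. The term $-1/(2\rho)$ contributes $1/(2\rho^2)$. The remaining terms should combine into a non-negative quantity, essentially $\tfrac18\tr$ of a square of $J_\rho,J_z$ after using $\tr J=2/\rho$ (up to sign conventions, a multiple of $|d\Phi|^2$ plus $\rho^{-2}$ corrections that cancel). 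This gives $\mathcal K\le0$. At the isolated critical points of $\rho$ I would conclude by continuity. On the axis I would use Lemma~\ref{lem:axis}, which shows that $\hat g$ is smooth up to the boundary, so the sign extends. Alternatively, I would cite \cite{LS2026b}, where this sign is established for toric Einstein metrics.

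\emph{Step (ii).} The argument follows the proof of Lemma~\ref{lem:globalweyl}, but the asymptotic comparison with a model must be replaced by something intrinsic. Simple connectivity gives a simply connected $\mathcal B$ (\S\ref{subsec:WP}), and its boundary is nonempty because there is a fixed point. Since the fixed points are finite, $\partial\mathcal B$ has finitely many corners, and Lemma~\ref{lem:axis}(iii) shows that $w$ is a local homeomorphism at the boundary. The essential point is that $\partial\mathcal B$ is connected and $w\to\infty$ at the ends of $\mathcal B$, so that $w$ is proper. Here I would use (i): $(\mathcal B,\hat g)$ is complete, by completeness of $M$, and non-positively curved. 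Quadratic curvature decay then gives control of $|d\log\rho|$ through the harmonic map, and hence that $\rho$ is unbounded along every divergent path away from the axis. Granting properness, the argument principle applied to the holomorphic map $w$ on exhausting discs gives degree one, exactly as before. I expect this properness/connectedness of the boundary to be the main obstacle. It is where Li and Sun's analysis of proper toric instantons \cite[App.~A]{LS2026b} is really needed, and I would cite their result that, under simple connectivity and finitely many fixed points, the instanton is proper.

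\emph{Step (iii).} With global Weyl coordinates in hand, I would apply the classification of ends of proper toric gravitational instantons in \cite{LS2026b}. Their possible ends are ALE, ALF$^\pm$, AF$_\beta$, and in non-proper cases Kasner-type ends. Properness and a nonempty axis exclude the Kasner (ALG/ALH-type) possibilities, since for those the orbit space is a half-cylinder and $\pi_1\ne1$, as for \cite{KRWY2024}. What remains is to check that Li and Sun's notion of end agrees with Definition~\ref{def:asymptype}, which is their \cite[Def.~4.30]{LS2025}.
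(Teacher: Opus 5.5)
Your step (i) has a genuine gap: the local computation cannot give the sign. Differentiate \eqref{eq:nu} and use \eqref{eq:hm} together with $\partial_zJ_\rho-\partial_\rho J_z=[J_\rho,J_z]$. This gives
\[
\partial_\rho^2\nu+\partial_z^2\nu=\frac1{2\rho^2}-\frac18\tr\bigl(J_\rho^2+J_z^2\bigr)=\frac1{4\rho^2}-\frac14|d\Phi|^2,\qquad \mathcal K=\frac{e^{-2\nu}}4\Bigl(|d\Phi|^2-\frac1{\rho^2}\Bigr),
\]
with $|d\Phi|^2=\frac12\tr\bigl((\Phi^{-1}\partial_\rho\Phi)^2+(\Phi^{-1}\partial_z\Phi)^2\bigr)$. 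The $\rho^{-2}$ terms do not cancel; they compete with $|d\Phi|^2$. So $\mathcal K\le0$ is equivalent to the gradient bound $|d\Phi|\le1/\rho$, which is an equality for the flat models $\R^4$ and $\R^3\times S^1$.

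This bound is not a local consequence of Ricci-flatness. The static Weyl solution $G=\diag(e^{cz},\rho^2e^{-cz})$, $c\ne0$, with $\nu=-\frac{c^2\rho^2}8-\frac{cz}2$, has $\mathcal K=\frac{c^2}4e^{-2\nu}>0$ everywhere. The sign is therefore a global fact, and the missing idea is Li and Sun's maximum principle, which the paper's proof carries out in three steps.
\begin{enumerate}
\item By quadratic curvature decay and completeness, a positive supremum of $\mathcal K$ would be attained.
\item An interior maximum is excluded by their differential inequality for $\mathcal K$ \cite[Lemma~3.1]{LS2026b}, whose right-hand side is strictly positive where $\mathcal K>0$.
\item A maximum on a rod or at a turning point is excluded by passing to the limit in that inequality at the boundary, where $\log\rho$ is singular, using the even extensions of Lemma~\ref{lem:axis}.
\end{enumerate}
So the axis is not handled by continuity of the sign; it needs its own argument.

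Your fallback of citing \cite{LS2026b} for (i) does not close the gap as stated. The place where Li and Sun record $\mathcal K\le0$ in the Ricci-flat case, \cite[Lemma~A.6]{LS2026b}, assumes global Weyl coordinates, i.e.\ (ii), which you derive from (i). The paper therefore writes the argument out and checks that it does not use global Weyl coordinates; it also supplies the attainment of the supremum from the curvature decay.

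In (ii), your intrinsic sketch of properness is not an argument. The result you end up citing is Li and Sun's properness criterion \cite[Lemma~A.17]{LS2026b}. Its hypotheses are completeness, orthogonal transitivity, simple connectivity of $\mathcal B^\circ$, $\partial\mathcal B\ne\varnothing$ and $\mathcal K\le0$, so (ii) stands or falls with (i). Step (iii) is the same as in the paper, via \cite[Thm.~A.4]{LS2026b}.
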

\begin{proof}
Since the action has a fixed point, it is orthogonally transitive and $\partial \mathcal B\ne\varnothing$, and $\mathcal B$ is a connected surface
with boundary and corners which, like $\mathcal B^\circ$, is simply connected, since $M$ is (\S\ref{subsec:WP}). We compute $G$ and $\rho$ in a $\Z$-basis of the
lattice of $2\pi$-periodic elements; nothing below depends on the basis.

(i) This is the case $\lambda=0$ of the curvature bound of Li and Sun for toric Einstein metrics \cite[Prop.~3.2]{LS2026b}, as
they observe in \cite[Lemma~A.6]{LS2026b}. That lemma is stated for metrics with global Weyl coordinates, which its proof does
not use; we give the argument in full, following the proof of \cite[Prop.~3.2]{LS2026b}.

We first show that a positive supremum of $\mathcal K$ would be attained. Over $\mathcal B^\circ$ the orbits are principal and the projection is a Riemannian submersion with integrable horizontal
distribution, so by O'Neill's formula $\mathcal K$ is the sectional curvature of $g$ on the horizontal planes. Let $\operatorname{dist}_o$ be the distance in
$\mathcal B$ from the image of a base point $o$. It is at most the distance in $M$ from $o$, so $|\mathcal K|\le|\mathrm{Rm}_g|=O(\operatorname{dist}_o^{-2})$. Closed
bounded subsets of $\mathcal B$ are compact, since $M$ is complete and $T$ is compact. So if $\sup \mathcal K>0$, it is a maximum $\mathcal K_0>0$, attained
at a point $p$.

Next we recall the differential inequality for $\mathcal K$. On $\mathcal B^\circ$ let $u=\log\rho$, $U=\frac12|du|^2_{\hat g}$, $Q_{ij}=\frac14\tr(\Phi^{-1}\partial_i\Phi\,\Phi^{-1}\partial_j\Phi)$, a non-negative
form, $e=\frac12\tr_{\hat g}Q$ and $\det Q=\det(Q_{ij})/\det(\hat g_{ij})\ge0$. The equation $\operatorname{Ric}=0$ gives $e=\mathcal K+\frac12U$ (the trace over the orbit space of
\cite[Prop.~2.5]{LS2026b}, with $\tr\bigl((G^{-1}dG)^2\bigr)=2|du|^2+\tr\bigl((\Phi^{-1}d\Phi)^2\bigr)$), and, wherever $e>0$, the pointwise
inequality
\begin{equation}\label{eq:Kineq}
\Delta_{\hat g}\mathcal K+\frac{3U}{2e}\langle du,d\mathcal K\rangle_{\hat g}\ \ge\ \frac{|d\mathcal K|^2}{e}+\frac{3\mathcal K^2(2U+\mathcal K)}{e}+3\det Q
\end{equation}
\cite[Lemma~3.1]{LS2026b}, with $\lambda=0$. Both are local consequences of $\operatorname{Ric}=0$ on $\mathcal B^\circ$, with no global
hypothesis: these two facts, and \cite[Lemma~A.17, Thm.~A.4]{LS2026b} in (ii) and (iii), are what this proposition takes from
\cite{LS2026b}.

A maximum $\mathcal K_0>0$ cannot be attained at an interior point: if $p\in \mathcal B^\circ$, then $e(p)\ge \mathcal K_0>0$, $d\mathcal K(p)=0$ and $\Delta_{\hat g}\mathcal K(p)\le0$, while the right-hand side of
\eqref{eq:Kineq} is at least $3\mathcal K_0^3/e(p)>0$.

Nor can it be attained at a boundary point. Let $p\in\partial \mathcal B$. By Lemma~\ref{lem:axis}, near $p$, $\mathcal B$ is $S/\langle\sigma\rangle$, resp.\
$S/\langle\sigma_1,\sigma_2\rangle$, if $p$ lies on a rod, resp.\ is a turning point, and $\rho=xh$, resp.\ $\rho=xyh$, with $h$ smooth, even and
positive, where $(x,y)=(x_1,x_2)$. Since the reflections are isometries, the off-diagonal entry of $\hat g_S$ is odd in $x$, resp.\ in $x$
and in $y$, and the diagonal entries are even; after rescaling $x$ and $y$, $\hat g_S(0)=dx^2+dy^2$, so $\hat g_S^{11}(x,0)=1+O(x^2)$,
which is used in the expansions of $U$ and $e$ below. Then $\mathcal K$ is a smooth function on $S$, even in $x$, resp.\ in $x$ and
in $y$, with a local maximum at $0$, so $d\mathcal K(0)=0$, $\mathcal K_{xx}(0)\le0$ and $\mathcal K_{yy}(0)\le0$; and $u=\log|x|+\varphi$, resp.\
$u=\log|xy|+\varphi$, with $\varphi=\log h$ smooth and even in the same variables. Along $y=0$, $x\to0$, resp.\ along $x=y=t\to0$:
$u_x=1/x+O(x)$, and $u_y=O(1)$, resp.\ $u_y=1/y+O(y)$; so $U=\frac1{2x^2}+O(1)$, resp.\ $U=t^{-2}+O(1)$, $e=\frac1{4x^2}+O(1)$, resp.\
$e=\frac1{2t^2}+O(1)$, and $\frac{3U}{2e}\to3$. Since $\mathcal K_x=x\mathcal K_{xx}(0)+O(x^2)$, and $\mathcal K_y\to0$, resp.\
$\mathcal K_y=y\mathcal K_{yy}(0)+O(y^2)$, we get $\langle du,d\mathcal K\rangle\to\mathcal K_{xx}(0)$, resp.\ $\mathcal K_{xx}(0)+\mathcal K_{yy}(0)$. So the left-hand
side of \eqref{eq:Kineq} tends to $4\mathcal K_{xx}(0)+\mathcal K_{yy}(0)$, resp.\ $4\bigl(\mathcal K_{xx}(0)+\mathcal K_{yy}(0)\bigr)$, which is $\le0$.
In both cases $e>0$ near $p$, so \eqref{eq:Kineq} holds there. Since $\mathcal K>0$ near $p$, its right-hand side is at least
$6\mathcal K^2U/e$, and $2U/e\to4$, so it is at least $12\mathcal K_0^2-o(1)>0$, which contradicts the limits. Hence $\mathcal K\le0$.

(ii) By (i), the properness criterion of Li and Sun \cite[Lemma~A.17]{LS2026b} applies: $M$ is complete, the action is
orthogonally transitive, $\mathcal B^\circ$ is simply connected, $\partial \mathcal B\ne\varnothing$ and $\mathcal K\le0$.

(iii) By (ii), $(M,g)$ is a proper toric gravitational instanton in the sense of \cite[Defs.~A.1, A.2]{LS2026b}, with finitely
many turning points. By \cite[Thm.~A.4]{LS2026b} its end is of one of the asymptotic types of \cite[Def.~4.30]{LS2025}, which are
those of Definition~\ref{def:asymptype}.
\end{proof}

\begin{remark}[the fundamental group]\label{rem:pi1}
Let $(M,g)$ be a toric Ricci-flat manifold whose action has a fixed point $p$, with orbit space $\mathcal B$ and orbit map $\pi$.
Then $\pi_*\colon\pi_1(M)\to\pi_1(\mathcal B)$ is an isomorphism. It is onto by \cite[Ch.~II, Cor.~6.3]{Bredon1972}. For injectivity,
let $\alpha$ be a loop at a point $x$ of a principal orbit such that $\pi\circ\alpha$ is null-homotopic in $\mathcal B$. The
non-principal orbits form a set of codimension $2$, so $\alpha$ can be moved into the union $M^\circ$ of the principal orbits, a
principal $T$-bundle over $\mathcal B^\circ$. This bundle is trivial, since $\mathcal B^\circ$, a non-compact surface, is homotopy
equivalent to a graph, and the inclusion $\mathcal B^\circ\subset\mathcal B$ is a homotopy equivalence; so $\alpha$ is homotopic in
$M^\circ$ to a loop in the orbit of $x$. Such a loop is null-homotopic in $M$: if $\gamma$ is a path from $x$ to $p$ and $\ell$ a loop in
$T$, the loops $t\mapsto\ell(t)\cdot\gamma(s)$ deform it to the constant loop at $p$. Since $\mathcal B$ is a non-compact surface with
non-empty boundary, $\pi_1(\mathcal B)$ is free. Hence $M$ is simply connected if and only if $\pi_1(M)$ is finite, if and only if
$\mathcal B$ is simply connected; for a toric gravitational instanton with finitely many fixed points these hold if and only if
$\mathcal B$ is the closed half-plane of global Weyl coordinates (Proposition~\ref{prop:endtype}), and otherwise $\pi_1(M)$ is free of
positive rank and $\star d\rho$ has periods, as for the instantons on $S^4\setminus T^2$ of \S\ref{subsec:WP}. So
Theorem~\ref{thm:G} holds with ``simply connected'' replaced by ``with finite fundamental group''. The latter is automatic when the
volume growth is Euclidean, $\operatorname{vol}B(x,r)\ge cr^4$, since $\operatorname{Ric}=0$ \cite{Li1986,Anderson1990}; so the
hypothesis can be dropped for metrics with Euclidean volume growth, in particular for those with an ALE end.
\end{remark}

\subsection{Normalised rays and cells}\label{subsec:cells}
The rod lines of a metric in $\Ctor$ are normalised by the model at infinity. The following lemma converts them into
the cells on which the soliton families are parametrised.
\begin{lemma}[total turning]\label{lem:match}
Let $\alpha\in\R$ and let $\ell_0,\dots,\ell_n\in\R^2$ satisfy $\det(\ell_{j-1},\ell_j)<0$ for all $j$, $\ell_0=(1,\alpha/2)$ and
$\ell_n\in\R(1,-\alpha/2)$. Let $s_j\in(0,\pi)$ be the clockwise angle from $\ell_{j-1}$ to $\ell_j$.
\begin{enumerate}[leftmargin=*,label=\textup{(\roman*)}]
\item There is an integer $r$ with $\sum_js_j=r\pi+2\arctan(\alpha/2)$, and $\ell_n$ is a positive multiple of
$(-1)^r(1,-\alpha/2)$. Here $0\le r\le n-1$ if $\alpha>0$, $1\le r\le n-1$ if $\alpha=0$, and $1\le r\le n$ if $\alpha<0$.
\item Let $\alpha\ne0$ and $D_\alpha=\diag(1,2/\alpha)$. Then $D_\alpha\ell_0=(1,1)$, $D_\alpha\ell_n$ is a positive multiple of
$(-1)^r(1,-1)$, and the steps of $(D_\alpha\ell_j)$ lie in $(0,\pi)$. For $\alpha>0$ they are clockwise with sum
$(r+\frac12)\pi$. For $\alpha<0$ they are counter-clockwise with sum $(r-\frac12)\pi$; after exchanging the two
components they are clockwise with sum $(r-\frac12)\pi$, and $D_\alpha\ell_n$ becomes a positive multiple of
$(-1)^{r-1}(1,-1)$.
\end{enumerate}
\end{lemma}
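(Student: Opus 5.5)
The plan is to read everything off a continuous angle function. Since $\det(\ell_{j-1},\ell_j)<0$, each step from $\ell_{j-1}$ to $\ell_j$ is a clockwise rotation by $s_j\in(0,\pi)$, composed with a positive rescaling. Let $\theta_0=\arctan(\alpha/2)\in(-\pi/2,\pi/2)$ be the argument of $\ell_0=(1,\alpha/2)$. Then $\ell_n$ has argument $\theta_0-\sum_js_j$ modulo $2\pi$. On the other hand $\ell_n\in\R(1,-\alpha/2)$, whose directions have arguments $-\theta_0$ modulo $\pi$. Hence $\sum_js_j\equiv2\theta_0$ modulo $\pi$, which gives the integer $r$ with $\sum_js_j=r\pi+2\arctan(\alpha/2)$. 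Rotating $\ell_0$ clockwise by $2\theta_0$ gives a positive multiple of $(1,-\alpha/2)$, and each further $\pi$ flips the sign. So $\ell_n$ is a positive multiple of $(-1)^r(1,-\alpha/2)$.

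The bounds on $r$ come from $0<\sum_js_j<n\pi$, using $s_j\in(0,\pi)$, together with the range of $2\arctan(\alpha/2)$, split into three cases.
\begin{itemize}[leftmargin=*]
\item For $\alpha>0$ this term lies in $(0,\pi)$. Then $r\pi>-\pi$ gives $r\ge0$, and $r\pi<n\pi-2\arctan(\alpha/2)$ gives $r\le n-1$.
\item For $\alpha=0$ the sum is $r\pi$, so $1\le r\le n-1$.
\item For $\alpha<0$ the term lies in $(-\pi,0)$. Then $r\pi>0$ gives $r\ge1$, and $r\pi<(n+1)\pi$ gives $r\le n$.
\end{itemize}
This proves (i).

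For (ii) I would use a homotopy argument instead of computing angles directly. For $t>0$ put $E_t=\diag(1,t)$, which has positive determinant. Then $\det(E_t\ell_{j-1},E_t\ell_j)=t\det(\ell_{j-1},\ell_j)<0$, so the chain $(E_t\ell_j)$ again has clockwise steps in $(0,\pi)$, and these steps depend continuously on $t$. Its first vector is $(1,t\alpha/2)$ and its last lies on $\R(1,-t\alpha/2)$, so (i) applies with $\alpha$ replaced by $t\alpha$. The sum of steps is therefore $r_t\pi+2\arctan(t\alpha/2)$ with $r_t$ an integer. Since $r_t$ depends continuously on $t$, it is constant, equal to $r$ (its value at $t=1$). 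The sign statement $E_t\ell_n\in\R_{>0}(-1)^r(1,-t\alpha/2)$ is also preserved.
\begin{itemize}[leftmargin=*]
\item For $\alpha>0$, taking $t=2/\alpha$ gives $E_t=D_\alpha$, first vector $(1,1)$, and sum $r\pi+2\arctan1=(r+\tfrac12)\pi$.
\item For $\alpha<0$, write $D_\alpha=\diag(1,-1)E_{2/|\alpha|}$. The chain $E_{2/|\alpha|}\ell_j$ starts at $(1,-1)$, has clockwise steps with sum $r\pi+2\arctan(-1)=(r-\tfrac12)\pi$, and ends at a positive multiple of $(-1)^r(1,1)$. Applying the reflection $\diag(1,-1)$ turns clockwise steps into counter-clockwise ones of the same sizes. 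So the steps of $(D_\alpha\ell_j)$ are counter-clockwise, lie in $(0,\pi)$ and sum to $(r-\tfrac12)\pi$, with $D_\alpha\ell_0=(1,1)$ and $D_\alpha\ell_n$ a positive multiple of $(-1)^r(1,-1)$.
\end{itemize}
Exchanging the two components is the reflection in the diagonal. It fixes $(1,1)$, turns the counter-clockwise steps back into clockwise ones with the same sum, and sends $(-1)^r(1,-1)$ to $(-1)^r(-1,1)=(-1)^{r-1}(1,-1)$.

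The main care point is justifying that the integer $r_t$ is constant. I would do this by lifting the step angles continuously: each $s_j(t)$ is the continuous branch in $(0,\pi)$ determined by the sign of the determinant, so the sum is continuous in $t$. Since $2\arctan(t\alpha/2)$ is also continuous, $r_t$ is a continuous integer-valued function of $t$, hence constant. The rest is bookkeeping of signs.
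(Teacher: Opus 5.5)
Your proof is correct and follows the paper's argument essentially step for step. In (i) you read off the angle of $\ell_n$ modulo $\pi$ and bound $0<\sum_js_j<n\pi$; in (ii) you use the deformation $\diag(1,t)$, $t>0$, which keeps the integer $r$ fixed, and then the factorisation $D_\alpha=\diag(1,-1)\diag(1,2/|\alpha|)$ for $\alpha<0$. The only cosmetic difference is that you treat the exchange of components as the reflection in the diagonal, while the paper writes it as a rotation composed with $\diag(1,-1)$.
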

\begin{proof}
(i) The clockwise angle from $\ell_0$ to $\ell_n$ is $\sum_js_j$ modulo $2\pi$. Now $\ell_0$ has the angle $\varphi=\arctan(\alpha/2)$,
and $(-1)^k(1,-\alpha/2)$ has the angle $-\varphi+k\pi$. So $\sum_js_j\equiv2\varphi+k\pi$ modulo $2\pi$ if $\ell_n$ is a positive
multiple of $(-1)^k(1,-\alpha/2)$. Take $r\equiv k$ modulo $2$ with $\sum_js_j=r\pi+2\varphi$. The bounds follow from
$0<\sum_js_j<n\pi$, since $2\varphi\in(-\pi,\pi)$ has the sign of $\alpha$, and $\varphi=0$ forces $r\ge1$.

(ii) For $c>0$ the vectors $\diag(1,c)\ell_j$ satisfy the hypotheses of (i) with $c\alpha$ in place of $\alpha$, since
$\det(\diag(1,c)\ell_{j-1},\diag(1,c)\ell_j)=c\det(\ell_{j-1},\ell_j)$. Their steps are continuous in $c$ and lie in $(0,\pi)$, so
the integer of (i), $\pi^{-1}(\sum_js_j-2\arctan(c\alpha/2))$, does not depend on $c$, and it is $r$. Take $c=2/|\alpha|$: the clockwise steps
of $m_j=\diag(1,2/|\alpha|)\ell_j$ sum to $r\pi+2\arctan(\pm1)=(r\pm\frac12)\pi$, with the sign of $\alpha$, and $m_n$ is a positive
multiple of $(-1)^r(1,\mp1)$. For $\alpha>0$, $m_j=D_\alpha\ell_j$. For $\alpha<0$, $D_\alpha=\diag(1,-1)\diag(1,2/|\alpha|)$, and $\diag(1,-1)$
reverses the orientation, so the steps of $D_\alpha\ell_j$ are those of $m_j$, counter-clockwise, with sum $(r-\frac12)\pi$, and
$D_\alpha\ell_n$ is a positive multiple of $(-1)^r(1,-1)$. Exchanging the components is the rotation $(x,y)\mapsto(-y,x)$ after
$\diag(1,-1)$, so the exchanged vectors are the rotated vectors $m_j$, with clockwise steps summing to $(r-\frac12)\pi$, and the last
one is a positive multiple of $(-1)^r(-1,1)=(-1)^{r-1}(1,-1)$. In all cases $D_\alpha\ell_0=(1,1)$, which the exchange fixes.
\end{proof}
For a metric in $\Ctor$ with model $\Phi^\sharp_{h_0,\alpha}$ in the basis $(a,b)$, Lemma~\ref{lem:outerlines} shows that its outer
rod lines are $L_0=[(1,\alpha/2)]$ and $L_n=[(1,-\alpha/2)]$. Since consecutive lines differ, there are vectors $\ell_j$ with
$[\ell_j]=L_j$, unique up to positive multiples for $j\ge1$, with $\ell_0=(1,\alpha/2)$ and $\det(\ell_{j-1},\ell_j)<0$ for all $j$:
given $\ell_{j-1}$, exactly one of the two vectors of $L_j$ of any fixed length is reached from it by a clockwise rotation by an
angle in $(0,\pi)$. Then $\ell_n\in\R(1,-\alpha/2)$, so Lemma~\ref{lem:match} applies. We call the rays of the $\ell_j$ the
\emph{normalised rays}, and the integer $r$ of Lemma~\ref{lem:match}(i) the \emph{degree index}. For ALE, $\alpha>0$ and the rays
$D_\alpha\ell_j$ lie in the cell
\begin{equation}\label{eq:cell}
C_r=\Bigl\{(s_1,\dots,s_n)\in(0,\pi)^n:\ \textstyle\sum_js_j=(r+\frac12)\pi\Bigr\},
\end{equation}
recorded by the clockwise steps.
\begin{lemma}[degree and degree index]\label{lem:degreeindex}
Let $(\Phi,\nu)\in\Ctor$ have $n$ turning points and degree index $r$ (above). Then the Li--Sun degrees of its rod structure
and of the reversed one are $\{n-1-r,r\}$ for ALE and $\{n-r,r\}$ for ALF$^\pm$ and AF.
\end{lemma}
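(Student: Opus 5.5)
The plan is to compute the Li--Sun total angle $e(\mathfrak R)$ directly from the normalised vectors $\ell_j$ of Lemma~\ref{lem:match}, in the model basis $(a,b)$, and then to obtain the degree of the reversed structure from Remark~\ref{rem:degid}. Li--Sun join the lines $L_{j-1},L_j$ by a counter-clockwise rotation by an angle in $(0,\pi)$. The normalised rays are joined by clockwise steps $s_j\in(0,\pi)$. As lines in $\RP^1$, a clockwise rotation by $s_j$ is the same as a counter-clockwise rotation by $\pi-s_j\in(0,\pi)$. So the Li--Sun steps between consecutive rods are $\pi-s_j$, and by Lemma~\ref{lem:match}(i) their sum is
\begin{equation*}
n\pi-\textstyle\sum_js_j=(n-r)\pi-2\varphi,\qquad \varphi=\arctan(\alpha/2).
\end{equation*}

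Next I would treat the closing step, present only when $L_0\ne L_n$. By Lemma~\ref{lem:outerlines}, $L_n=[(1,-\alpha/2)]$ has angle $-\varphi$ and $L_0=[(1,\alpha/2)]$ has angle $\varphi$, modulo $\pi$. The counter-clockwise angle in $(0,\pi)$ from $L_n$ to $L_0$ is therefore $2\varphi$ if $\alpha>0$ and $\pi+2\varphi$ if $\alpha<0$.

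This gives $e(\mathfrak R)$ in each case:
\begin{itemize}[leftmargin=*]
\item For ALE and ALF$^-$ ($\alpha>0$): $e=n-r$.
\item For ALF$^+$ ($\alpha<0$): $e=n-r+1$.
\item For AF ($\alpha=0$, no closing step): $e=n-r$.
\end{itemize}
With the definition of the degree this yields $d(\mathfrak R)=n-r-1$ for ALE and $d(\mathfrak R)=n-r$ for ALF$^\pm$ and AF. Remark~\ref{rem:degid} then gives $d(\bar{\mathfrak R})=r$ in every case, which is the claimed set $\{n-1-r,r\}$, respectively $\{n-r,r\}$. Since the statement is about the unordered pair, it does not matter which of $\mathfrak R,\bar{\mathfrak R}$ is the one computed in the model frame. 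This is convenient, because the frame $(a,b)$ may be oriented either way: by \S\ref{subsec:models} the model does not depend on the orientation of the frame, and reversing it exchanges $\mathfrak R$ and $\bar{\mathfrak R}$.

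The main obstacle is the sign bookkeeping. I have to check three things:
\begin{itemize}[leftmargin=*]
\item The ALF sign is attached correctly: $\alpha>0$ is ALF$^-$ and $\alpha<0$ is ALF$^+$, per the table after \eqref{eq:models}.
\item The counter-clockwise convention of Li--Sun is read in the same oriented frame in which $\alpha$ is defined.
\item The closing-step formula respects the ranges of $r$ in Lemma~\ref{lem:match}(i).
\end{itemize}
A useful consistency check is the ALF$^+$ case. There the extra $\pi$ in the closing step is exactly compensated by the shift $d=e-1$ for ALF$^+$, so $d$ is again $n-r$. If the orientation were taken the other way, this cancellation would fail and one would get $r-1$ instead. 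So I would verify the convention on the Taub--NUT model, which has $n=1$, before running the general count.
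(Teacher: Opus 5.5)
Your argument is correct and is the paper's proof: it makes the same count of $e(\mathfrak R)$ from the counter-clockwise steps $\pi-s_j$ of the normalised rays, plus the closing step $2\varphi$, resp.\ $\pi+2\varphi$, from $L_n$ (angle $-\varphi$) to $L_0$ (angle $\varphi$), followed by Remark~\ref{rem:degid}. The hedge about the orientation of $(a,b)$ is unnecessary. In the paper's convention reversing the frame also changes the sign of $z$, which reverses the order of the rods and leaves $r$ and $d(\mathfrak R)$ unchanged, so it does not exchange $\mathfrak R$ and $\bar{\mathfrak R}$. Your count therefore gives $d(\mathfrak R)=n-1-r$, resp.\ $n-r$, and $d(\bar{\mathfrak R})=r$ exactly.
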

\begin{proof}
Let $\ell_j$ be the normalised rays, $\varphi=\arctan(\alpha/2)$, so that $\sum_js_j=r\pi+2\varphi$ (Lemma~\ref{lem:match}). The
lines of $\ell_n$ and $\ell_0$ have the angles $-\varphi$ and $\varphi$ modulo $\pi$. The number $e(\mathfrak R)$ of \S\ref{subsec:rods} is the
counter-clockwise winding number of the closed loop of rod lines in $\RP^1$; each clockwise step $s_j\in(0,\pi)$ between
consecutive lines is a counter-clockwise step $\pi-s_j$. For ALE and ALF$^-$, where $\alpha>0$, the clockwise closing step from
$[\ell_n]$ to $[\ell_0]$ is $\pi-2\varphi$. So the clockwise winding with the closing step is $r+1$, and the counter-clockwise winding is
$e=n+1-(r+1)=n-r$. Hence $d=e-1=n-1-r$ for ALE and $d=e=n-r$ for ALF$^-$. For ALF$^+$, where $\alpha<0$, the clockwise closing step
is $-2\varphi$, the clockwise winding is $r$, $e=n+1-r$ and $d=e-1=n-r$. For AF there is no closing step, the clockwise winding is $r$,
$e=n-r$ and $d=e=n-r$. The degree of the reversed structure follows from Remark~\ref{rem:degid}.
\end{proof}

\section{Belinski--Zakharov solitons}\label{sec:bz}
This section recalls the Belinski--Zakharov method and defines the gravitational solitons (Definition~\ref{def:soliton}) and the two
families used in the proof: the ALE soliton metrics (\S\ref{subsec:ALEsolitons}) and the solitons on $\R^3\times S^1$ (\S\ref{subsec:MKN}).

\subsection{Dressing}\label{subsec:dressing}
We recall the inverse scattering method of Belinski and Zakharov in the form needed here; see \cite[\S\S1.3--1.4 and 8.1--8.2]{BV2001} for an
account. Throughout this section, $G$ denotes a real symmetric $2\times2$ matrix function on $\Hb^\circ$ that solves
\eqref{eq:hm}, a \emph{matrix solution}, not necessarily with $\det G=\pm\rho^2$. Equation \eqref{eq:hm} is the compatibility
condition of the linear system \eqref{eq:lax} below for a matrix function $\Psi(\lambda,\rho,z)$, the \emph{generating matrix}, which
depends on a complex \emph{spectral parameter} $\lambda$ and gives back $G$ as $\Psi(0,\rho,z)$. Given a known solution, the \emph{seed}
$(G_0,\Psi_0)$, one looks for $\Psi=\chi\Psi_0$ with a \emph{dressing matrix} $\chi$ that is rational in $\lambda$, has simple poles and
tends to the identity as $\lambda\to\infty$. Let $p$ be the number of poles of $\chi$, and $\lambda_1,\dots,\lambda_p$ the poles,
functions of $(\rho,z)$; the index $k$ below always runs over $1,\dots,p$.

Let $U=\rho\,\partial_\rho G\,G^{-1}$, $V=\rho\,\partial_zG\,G^{-1}$, $D_1=\partial_z-\frac{2\lambda^2}{\lambda^2+\rho^2}\partial_\lambda$ and
$D_2=\partial_\rho+\frac{2\lambda\rho}{\lambda^2+\rho^2}\partial_\lambda$. The linear system
\begin{equation}\label{eq:lax}
D_1\Psi=\frac{\rho V-\lambda U}{\lambda^2+\rho^2}\Psi,\qquad D_2\Psi=\frac{\rho U+\lambda V}{\lambda^2+\rho^2}\Psi
\end{equation}
is compatible if and only if $G$ solves \eqref{eq:hm}, and then one may take $\Psi(0,\rho,z)=G$ \cite{BZ1978,BZ1979},
\cite[(8.6)--(8.8)]{BV2001}. For a point $w$ of the axis put
\begin{equation}\label{eq:poles}
\mu_w=\sqrt{\rho^2+(z-w)^2}-(z-w),\qquad \mub_w=-\rho^2/\mu_w ,
\end{equation}
the two roots of $X^2+2(z-w)X-\rho^2=0$, the \emph{soliton} and \emph{anti-soliton} pole trajectories through $w$; $\mu_w$ vanishes
on the axis above $w$, and $\mub_w$ below it (Figure~\ref{fig:readd}). The linear system forces each pole to be of this form: given
a seed $(G_0,\Psi_0)$, poles $\lambda_k\in\{\mu_{w_k},\mub_{w_k}\}$ and constant row vectors $m^{(k)}_0$ (the \emph{BZ vectors}),
the dressed matrix is
\begin{equation}\label{eq:dressed}
G=G_0-\sum_{k,l}\frac{G_0m^{(k)T}\,(\Gamma^{-1})_{kl}\,m^{(l)}G_0}{\lambda_k\lambda_l},\qquad
\Gamma_{kl}=\frac{m^{(k)}G_0m^{(l)T}}{\rho^2+\lambda_k\lambda_l},\qquad m^{(k)}=m^{(k)}_0\Psi_0(\lambda_k)^{-1}.
\end{equation}
It solves \eqref{eq:hm} where $\det\Gamma\ne0$; this is the $n$-soliton theorem of Belinski and Zakharov \cite{BZ1978}, see
\cite[\S8.2, (8.19)--(8.26)]{BV2001} for a proof. Together with Theorem~\ref{thm:BZnu} below, it is the only input from the inverse scattering
literature that this paper uses.
A matrix dressed with $p$ poles has \cite[(8.27)]{BV2001}
\begin{equation}\label{eq:detdressed}
\det G=(-1)^p\rho^{2p}\prod_k\lambda_k^{-2}\,\det G_0 ,
\end{equation}
so it no longer satisfies $\det G=\pm\rho^2$; this is corrected by the normalisation of Definition~\ref{def:soliton}(d), or
avoided by Pomeransky's scheme below.

\emph{Diagonal solutions.} For a diagonal matrix $G=\diag(g_1,g_2)$, $G^{-1}\partial G$ is diagonal with the entries $\partial\log|g_1|$
and $\partial\log|g_2|$, so $G$ solves \eqref{eq:hm} if and only if $\log|g_1|$ and $\log|g_2|$ are \emph{axisymmetric harmonic}, that is, solve
$\partial_\rho(\rho\,\partial_\rho u)+\partial_z(\rho\,\partial_zu)=0$; these are the harmonic functions on $\R^3$ minus the axis that depend only on
$(\rho,z)$. The functions $\log\rho$ and $\log\mu_w$ are axisymmetric harmonic, by a direct computation, and hence so is
$\log|\mub_w|=2\log\rho-\log\mu_w$.

\emph{Removing and re-adding poles.} Pomeransky's scheme \cite{Pomeransky2006} produces solutions with $\det G=\pm\rho^2$ without a
normalisation. It starts from a diagonal seed in which, for each pole $\lambda_k$ to be used, the other root $\lambda'_k=-\rho^2/\lambda_k$
of \eqref{eq:poles} occurs as a factor of a diagonal entry, and proceeds in two steps.
\begin{enumerate}[leftmargin=*,label=\textup{(\arabic*)}]
\item \emph{Removing the pole $\lambda_k$}: replace the factor $\lambda'_k$ of the diagonal entry by $\lambda_k$. This changes the
logarithm of the entry by $\log|\lambda_k|-\log|\lambda'_k|=2\log|\lambda_k|-2\log\rho$, which is axisymmetric harmonic, so the result
is again a diagonal solution. It multiplies the determinant by $\lambda_k/\lambda'_k=-\lambda_k^2/\rho^2$; the entry changes sign,
since $\lambda_k$ and $\lambda'_k$ have opposite signs, and its zero on the axis moves from one side of $w_k$ to the other
(Figure~\ref{fig:readd}).
\item \emph{Re-adding the poles}: dress the new seed by \eqref{eq:dressed}, with the same poles $\lambda_k$ and arbitrary BZ vectors
$m^{(k)}_0$. By \eqref{eq:detdressed}, this multiplies the determinant by $\prod_k(-\rho^2/\lambda_k^2)$.
\end{enumerate}
The poles are called re-added because the dressing puts back exactly the poles taken out in step (1), and the two factors
of the determinant cancel; so the scheme keeps $\det G=\pm\rho^2$. Dressing with poles that were not removed first would change
the determinant, and the result would have to be normalised.

The case of one node shows what the two steps do. Let
\begin{equation}\label{eq:onenodeseed}
G_0=\diag(\mu_w,\rho^2/\mu_w),
\end{equation}
the flat block of $\R^4$ with its turning point at $w$: the first entry vanishes on the axis above $w$, the second below it. The
pole to be used is $\lambda=\mub_w$, whose other root $\mu_w$ is the first entry. Removing it replaces $\mu_w$ by $\mub_w=-\rho^2/\mu_w$,
which gives $G_0'=(\rho^2/\mu_w)\diag(-1,1)$, an indefinite block whose two entries both vanish on the axis below $w$; the turning
point is gone, and $\det G_0'=-\rho^4/\mu_w^2$. Re-adding $\mub_w$ with a vector $m^{(1)}=(a,b)$ in \eqref{eq:dressed}, for which
$\Gamma=-\mu_w(a^2-b^2)/(\rho^2+\mu_w^2)$, gives
\begin{equation}\label{eq:onenode}
G=\frac{\mu_w\,uu^T+(\rho^2/\mu_w)\,vv^T}{a^2-b^2},\qquad u=\begin{pmatrix}a\\-b\end{pmatrix},\quad v=\begin{pmatrix}b\\-a\end{pmatrix},
\qquad \det G=\rho^2 .
\end{equation}
For $a^2\ne b^2$ this is $\pm G_0$ written in the constant frame $(u,v)/|a^2-b^2|^{1/2}$: the one-node family consists of the
flat block alone, in all frames of determinant one. For $a=\pm b$ the vector is null for $G_0'$, $\Gamma=0$, and the dressing is
undefined; with more nodes these are the directions of the phantom solitons of \S\ref{subsec:ALEsolitons}. With several nodes the
same happens pole by pole: the trivial BZ vectors give back the seed up to a constant factor and a constant change of frame (for the
ALE seed, Lemma~\ref{lem:Cbase} and \S\ref{subsec:CF}), and the general ones give the soliton families of
\S\S\ref{subsec:ALEsolitons} and \ref{subsec:MKN}.
\begin{figure}[ht]
\centering
\begin{tikzpicture}[x=1cm,y=1cm]
\fill[gray!12] (-4.2,0) rectangle (4.2,1.5);
\node at (-2.9,1.1) {$\rho>0$};
\draw[->] (-4.2,0) -- (4.5,0) node[right] {$z$};
\draw[line width=1.6pt] (0.4,0) -- (4.2,0);
\draw[line width=1.6pt,dash pattern=on 4pt off 3pt] (-4.2,0) -- (0.4,0);
\fill (0.4,0) circle (2pt);
\node[above] at (0.4,0.06) {$w$};
\node[below,font=\small] at (2.5,-0.08) {$\mu_w=0$, \ $\mub_w=-2(z-w)$};
\node[below,font=\small] at (-2.1,-0.08) {$\mub_w=0$, \ $\mu_w=2(w-z)$};
\node at (2.3,0.85) {$\mu_w\,\mub_w=-\rho^2$};
\end{tikzpicture}
\caption{A node $w$ on the axis $\{\rho=0\}$ of the half-plane, and the two pole trajectories \eqref{eq:poles} through it:
$\mu_w$ vanishes on the axis above $w$, $\mub_w$ below it, and their product is $-\rho^2$.}
\label{fig:readd}
\end{figure}

\emph{The conformal factor.} The second input from the inverse scattering literature is the conformal factor of the dressed
metric. We state it in the form in which it is used, with its hypotheses.
\begin{theorem}[the conformal factor; Belinski--Zakharov, Pomeransky]\label{thm:BZnu}
Let $G_0$ be a diagonal solution of \eqref{eq:hm} with $\det G_0=\pm\rho^2$, and let $\nu_0$ be a solution of the quadrature
\eqref{eq:nu} for $G_0$. Let $\lambda_1,\dots,\lambda_p$ be poles whose other roots $\lambda'_k=-\rho^2/\lambda_k$ occur as factors of the
diagonal entries of $G_0$. Remove these poles as in step \textup{(1)} above, and re-add them by \eqref{eq:dressed} with BZ vectors
$m^{(k)}_0$; let $G$ be the result, so that $\det G=\det G_0$, and let $\Gamma$ be its matrix \eqref{eq:dressed}. Let $\Gamma^{(0)}$ be $\Gamma$
for \emph{trivial} BZ vectors, for which the re-added poles give $\kappa T^TG_0T$ with a constant $\kappa\ne0$ and a constant
$T\in GL(2,\R)$. Then, wherever $\det\Gamma\ne0$,
\begin{equation}\label{eq:BZnu}
\nu=\nu_0+\tfrac12\log\bigl|\det\Gamma/\det\Gamma^{(0)}\bigr|
\end{equation}
solves the quadrature \eqref{eq:nu} for $G$.
\end{theorem}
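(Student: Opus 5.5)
The plan is to reduce the statement to the general Belinski--Zakharov formula for the conformal factor of a dressed metric, and then remove every factor that does not depend on the BZ vectors by comparing with the trivial dressing, using Remark~\ref{rem:nuinv}.

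First I apply step (1) to $G_0$. This gives a diagonal seed $G_0'$ whose logarithmic diagonal entries differ from those of $G_0$ by the axisymmetric harmonic functions $2\log|\lambda_k|-2\log\rho$. Since $G_0'$ is diagonal, the quadrature \eqref{eq:nu} for $G_0'$ is explicitly integrable. The right-hand sides are quadratic in $\partial\log|g_i|$, so $\nu_0'$ is $\nu_0$ plus an explicit sum of the following terms:
\begin{itemize}
\item cross terms $\log|\lambda_k-\lambda_l|$ and $\log|\rho^2+\lambda_k\lambda_l|$;
\item terms $\log|\lambda_k|$ and $\log\rho$;
\item mixed terms between $\log|\lambda_k|$ and the entries of $G_0$.
\end{itemize}
Each of these is obtained from the standard identities for $\partial\log\mu_w$. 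I do not need its exact form; I only need that it depends on $G_0$ and the poles, not on the vectors $m_0^{(k)}$.

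Second, I invoke the Belinski--Zakharov formula for the dressed conformal factor \cite[(8.28)--(8.30)]{BV2001}, \cite{BZ1978}. It says that, for the unnormalised dressed $G$, the function
\[
\nu=\nu_0'+\tfrac12\log|\det\Gamma|+F ,
\]
solves \eqref{eq:nu}. Here $F$ is an explicit function of $\rho$ and the poles alone, built from $\log|\lambda_k|$, $\log\rho$ and $\log|\lambda_k-\lambda_l|$. The $-1/(2\rho)$ term in \eqref{eq:nu} is affected by the determinant of $G$. The BZ correction for normalisation by $\det G$ would add a further pole-only term, but re-adding restores $\det G=\det G_0'\cdot\prod_k(-\rho^2/\lambda_k^2)=\det G_0$, so no normalisation is needed. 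Combining the first two steps gives
\[
\nu=\nu_0+\tfrac12\log|\det\Gamma|+E ,
\]
with $E$ independent of the BZ vectors.

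Third, I determine $E$ by specialising to trivial BZ vectors. There the dressed matrix is $\kappa T^TG_0T$, and by Remark~\ref{rem:nuinv} the function $\nu_0$ itself solves the quadrature for it. The formula of the previous step also gives a solution for this matrix, namely $\nu_0+\frac12\log|\det\Gamma^{(0)}|+E$. Two solutions for the same $G$ differ by a constant, since $\Hb^\circ$ is connected, so $E=-\frac12\log|\det\Gamma^{(0)}|+\mathrm{const}$. The constant is absorbed into the freedom in $\nu_0$, which proves \eqref{eq:BZnu}.

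Where $\det\Gamma^{(0)}$ vanishes on a lower-dimensional set, I use continuity in $(\rho,z)$. Where the BZ vectors are varied, I use Remark~\ref{rem:density}: the formula is verified for generic vectors and extends wherever $\det\Gamma\ne0$.

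The main obstacle is the second step: citing or rederiving the BZ conformal factor formula in a form valid with signs and possibly negative determinant. That is where the factor $|\cdot|$ and the precise pole-only prefactor must be tracked. I would first try to verify it directly by differentiating $\log\det\Gamma$ using $D_1,D_2$ applied to $m^{(k)}=m_0^{(k)}\Psi_0(\lambda_k)^{-1}$ and to the pole trajectories. The point of the argument is that only the structure ``$\frac12\log|\det\Gamma|$ plus vector-independent terms'' is needed, and the comparison with the trivial dressing disposes of the rest.
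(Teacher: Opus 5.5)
Your argument is the paper's. You take the Belinski--Zakharov formula $e^{2\nu}=C_0e^{2\nu_0'}F\det\Gamma$ for the dressing of the collapsed seed $G_0'$, note that everything except $\det\Gamma$ is independent of the BZ vectors, and cancel it against the trivial dressing $\kappa T^TG_0T$, whose conformal factors are those of $G_0$ by Remark~\ref{rem:nuinv}. Only the existence of $\nu_0'$ matters, not its explicit relation to $\nu_0$ in your step~1, since it cancels together with $F$.

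The one point to tighten is your justification of step~2. Belinski and Zakharov derive their formula for seeds with $\det G_0'=\pm\rho^2$, but the collapsed seed has $\det G_0'=\pm\rho^2e^{2h}$, with $h=\frac12\sum_k\log|\lambda_k/\lambda'_k|$. That the formula holds verbatim for such a seed is a separate input, which the paper takes from Pomeransky and Elvang--Figueras. Your remark that re-adding makes normalisation unnecessary concerns the determinant of the output, not this hypothesis on the seed.
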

This is the form in which the conformal factor of Pomeransky's scheme is used by Pomeransky \cite[\S II and (23)]{Pomeransky2006}
and stated in general by Elvang and Figueras \cite[\S2.1, (2.14)]{EF2007}. It rests on the formula of Belinski and Zakharov for
the conformal factor of a dressed matrix \cite[\S3, (3.4) and Appendix]{BZ1979}, \cite[\S8.2, (8.31)]{BV2001}: for a seed $G_0'$ with conformal factor $\nu_0'$
dressed with the poles $\lambda_k$, $e^{2\nu}=C_0\,e^{2\nu_0'}F\det\Gamma$, with a constant $C_0$ and a function $F$ of $(\rho,z)$ and the
poles only (Belinski and Zakharov derive this for $\det G_0'=\pm\rho^2$; that it holds verbatim for the collapsed seed, in either signature, is the content of \cite[\S II]{Pomeransky2006}, \cite[\S2.1]{EF2007}). Applied to the collapsed seed $G_0'$ with the trivial and with the general BZ vectors, which use the same poles, the
factor $F$ cancels in the quotient, and the trivial vectors give $\kappa T^TG_0T$, whose solutions of \eqref{eq:nu} are those of $G_0$
(Remark~\ref{rem:nuinv}); this gives \eqref{eq:BZnu} up to an additive constant, which is all that is used. Here the collapsed seed
has $\det G_0'=\pm\rho^2e^{2h}$ with $h=\frac12\sum_k\log|\lambda_k/\lambda'_k|$ axisymmetric harmonic, and for such a matrix \eqref{eq:nu} is
still integrable, since the terms by which its right side differs from that of $e^{-h}G_0'$ form the closed Weyl form
$\frac12dh+\frac\rho4\bigl((h_\rho^2-h_z^2)\,d\rho+2h_\rho h_z\,dz\bigr)$; so $\nu_0'$ is defined. We do not reproduce the proof of
\cite{BZ1979}. Theorem~\ref{thm:BZnu} is used once, in Theorem~\ref{thm:CF}(iv), and it is the only place where the regularity of the
conformal factor of the soliton metrics rests on a cited result.

\emph{Directions of the BZ vectors.} The dressed matrix, the first formula in \eqref{eq:dressed}, depends on the vectors $m^{(k)}$ only
through their directions $[m^{(k)}]\in\RP^1$. Indeed, replacing $m^{(k)}$ by $\tau_km^{(k)}$, with non-zero scalars $\tau_k$, replaces
$\Gamma_{kl}$ by $\tau_k\tau_l\Gamma_{kl}$, hence $(\Gamma^{-1})_{kl}$ by $(\Gamma^{-1})_{kl}/(\tau_k\tau_l)$, so the term of $(k,l)$ in the sum is multiplied by
$\tau_k\tau_l(\tau_k\tau_l)^{-1}=1$. Since $m^{(k)}$ is linear in $m^{(k)}_0$, the same holds for the constant vectors $m^{(k)}_0$. All seeds below are
scalar multiples $f\hat G_0$ of a matrix $\hat G_0$ with an explicit generating matrix $\hat\Psi_0$, and we take $\Psi_0=\varphi\hat\Psi_0$ with
a scalar function $\varphi$. Then $[m^{(k)}]=[m^{(k)}_0\hat\Psi_0(\lambda_k)^{-1}]$, whatever the value of $\varphi$ at $\lambda_k$.
\begin{lemma}[scalar factors]\label{lem:scalar}
Let $G_0=f\hat G_0$ with $f$ a scalar function. Dressing $G_0$ by \eqref{eq:dressed} with poles $\lambda_k$ and directions $[m^{(k)}]$ gives $f$
times the dressing of $\hat G_0$ with the same poles and directions.
\end{lemma}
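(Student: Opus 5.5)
The plan is to substitute $G_0=f\hat G_0$ directly into the dressing formula \eqref{eq:dressed} and follow the factor $f$ through each piece. The only subtlety is that the vectors $m^{(k)}$ themselves depend on the seed through $\Psi_0$. The observation just before the lemma settles this: the dressed matrix depends only on the directions $[m^{(k)}]$, and with $\Psi_0=\varphi\hat\Psi_0$ these directions are $[m^{(k)}_0\hat\Psi_0(\lambda_k)^{-1}]$. So the direction data for $G_0$ and for $\hat G_0$ agree. We may therefore use the same representatives $m^{(k)}$ in both dressings, and in particular we never need to evaluate $f$ or $\varphi$ at $\lambda_k$.

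With these common vectors, the matrix $\Gamma$ built from $G_0$ has entries
\[
\Gamma_{kl}=\frac{m^{(k)}f\hat G_0m^{(l)T}}{\rho^2+\lambda_k\lambda_l}=f\hat\Gamma_{kl},
\]
where $\hat\Gamma$ is the corresponding matrix for $\hat G_0$. Since $f$ is a scalar function of $(\rho,z)$, we get $\Gamma^{-1}=f^{-1}\hat\Gamma^{-1}$ wherever $f\ne0$ and $\det\hat\Gamma\ne0$; these are also exactly the points where $\det\Gamma\ne0$. Each correction term $G_0m^{(k)T}(\Gamma^{-1})_{kl}m^{(l)}G_0/(\lambda_k\lambda_l)$ then picks up the factor $f\cdot f^{-1}\cdot f=f$. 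The dressed matrix is therefore $f\hat G_0-f\sum_{k,l}(\cdots)=f\hat G$, which is what we want.

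The only step needing any care is the first one, namely that the directions are determined by the BZ vectors independently of the scalar factor. This is exactly the preceding remark, so I expect no real obstacle. I would also note that $f$ must be nonvanishing on the region considered, which holds for the seeds used, since they are products of powers of $\rho$ and $\mu_w$.
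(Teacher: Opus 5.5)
Your proof is correct and follows the paper's argument: $\Gamma$ scales by $f$ and each numerator $G_0m^{(k)T}m^{(l)}G_0$ scales by $f^2$, so each correction term, and hence the dressed matrix, scales by $f$. Your handling of the directions, via the preceding remark on $\Psi_0=\varphi\hat\Psi_0$, matches how the paper sets up the lemma.
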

\begin{proof}
In \eqref{eq:dressed}, the matrix $\Gamma$ of $G_0$ is $f$ times that of $\hat G_0$, and each numerator $G_0m^{(k)T}m^{(l)}G_0$ is $f^2$
times that of $\hat G_0$.
\end{proof}

\subsection{Static seeds and their collapse}\label{subsec:flatseeds}
Fix points $w_1<\dots<w_N$ of the axis, the \emph{nodes}, at which the poles will sit.

\emph{Static seeds.} We call a diagonal solution of \eqref{eq:hm} \emph{static}: its two Killing fields are orthogonal, and in
Lorentzian signature, with one of them timelike, the metric is static. The flat metrics are static: flat $\R^3\times S^1$ has the
Gram matrix $\diag(1,\rho^2)$. By the criterion of \S\ref{subsec:dressing}, multiplying the two diagonal entries of a diagonal
solution by $\mu_w^{\pm1}$ and $\mu_w^{\mp1}$ gives a diagonal solution. This is the operation of \emph{adding a diagonal soliton} at
$w$ (Definition~\ref{def:soliton}(a)); in this way Emparan and Reall build the generalised Weyl solutions, the static toric
solutions, from flat space \cite[\S II, (2.28), and \S III]{ER2002}. Let $\epsilon_k=(-1)^{k+1}$ and $U=\frac12\sum_k\epsilon_k\log\mu_{w_k}$. Adding diagonal
solitons with the alternating signs $\epsilon_k$ at the nodes to $\diag(1,\rho^2)$ gives the static solution
\begin{equation}\label{eq:staticseed}
G_0=\diag\bigl(e^{2U},\rho^2e^{-2U}\bigr),\qquad e^{2U}=\prod_k\mu_{w_k}^{\epsilon_k} .
\end{equation}
As $R\to\infty$, $e^{2U}\sim(R-z)^{\sum\epsilon_k}$:
\begin{itemize}[leftmargin=*]
\item for odd $N$, $\sum\epsilon_k=1$, and $G_0$ is asymptotic to $\diag(R-z,R+z)$, flat $\R^4$ (the \emph{ALE seed});
\item for even $N$, $\sum\epsilon_k=0$, and $G_0$ is asymptotic to $\diag(1,\rho^2)$, flat $\R^3\times S^1$ (the \emph{AF seed});
it is Euclidean multi-Schwarzschild.
\end{itemize}

\emph{Collapse.} Each $\mu_{w_k}$ occurs as a factor of a diagonal entry of $G_0$, so the pole $\mub_{w_k}$ can be removed as in step (1)
of \S\ref{subsec:dressing}. Removing one pole at every node turns $G_0$ into a scalar function times a constant matrix (ALE) or times
the flat block $\diag(1,\rho^2)$ (AF); we call this the \emph{collapse} of the seed. After the collapse all dependence on the nodes sits
in a scalar factor, which passes through the dressing by Lemma~\ref{lem:scalar}. So the poles can be re-added to a constant matrix,
resp.\ to the flat block, which is what makes the formulas of \S\S\ref{subsec:ALEsolitons} and \ref{subsec:MKN} explicit.
\begin{lemma}[collapse of the seed]\label{lem:collapse}
At every node remove a pole: in the diagonal entry in which $\mu_{w_k}$ appears with exponent $+1$, replace that $\mu_{w_k}$ by
$\mub_{w_k}$. The seed becomes
\begin{equation}\label{eq:collapsed}
G_0'=(-1)^m\frac{\rho^{N+1}}{\prod_k\mu_{w_k}}\diag(-1,1)\ \ (N=2m+1),\qquad
G_0'=\frac{(-\rho^2)^m}{\prod_k\mu_{w_k}}\diag(1,\rho^2)\ \ (N=2m). \end{equation}
\end{lemma}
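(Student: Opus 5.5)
The plan is a direct computation, node by node, from the product formula \eqref{eq:staticseed}. Write $N=2m+1$ or $N=2m$. The entry $e^{2U}=\prod_k\mu_{w_k}^{\epsilon_k}$ contains $\mu_{w_k}$ with exponent $+1$ for odd $k$. The entry $\rho^2e^{-2U}$ contains $\mu_{w_k}$ with exponent $+1$ for even $k$. So at an odd node we replace the factor $\mu_{w_k}$ of the first entry by $\mub_{w_k}=-\rho^2/\mu_{w_k}$, and at an even node the factor $\mu_{w_k}$ of the second entry. Each replacement is the removal of the pole $\mub_{w_k}$ as in step (1) of \S\ref{subsec:dressing}, since the other root of $\mub_{w_k}$ is $\mu_{w_k}$. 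The order of the replacements is irrelevant because they act on distinct factors.

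First I compute the new first entry. Write $O$ for the set of odd indices, with $|O|=m+1$ for $N=2m+1$ and $|O|=m$ for $N=2m$. Let $E$ be the set of even indices. Before the replacements the first entry is $\prod_{k\in O}\mu_{w_k}\prod_{k\in E}\mu_{w_k}^{-1}$. Afterwards it becomes
\[
\prod_{k\in O}\Bigl(-\frac{\rho^2}{\mu_{w_k}}\Bigr)\prod_{k\in E}\mu_{w_k}^{-1}=\frac{(-1)^{|O|}\rho^{2|O|}}{\prod_k\mu_{w_k}} .
\]

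Next I compute the new second entry. Before the replacements it is $\rho^2\prod_{k\in O}\mu_{w_k}^{-1}\prod_{k\in E}\mu_{w_k}$. Afterwards it becomes
\[
\rho^2\frac{(-1)^{|E|}\rho^{2|E|}}{\prod_k\mu_{w_k}} .
\]

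Then I compare the two entries case by case. For $N=2m+1$ we have $|O|=m+1$ and $|E|=m$. The first entry is $(-1)^{m+1}\rho^{2m+2}/\prod\mu$ and the second is $(-1)^m\rho^{2m+2}/\prod\mu$. Together they give $(-1)^m\rho^{N+1}(\prod\mu)^{-1}\diag(-1,1)$, since $2m+2=N+1$. For $N=2m$ we have $|O|=|E|=m$. The first entry is $(-\rho^2)^m/\prod\mu$ and the second is $\rho^2(-\rho^2)^m/\prod\mu$. Together they give $(-\rho^2)^m(\prod\mu)^{-1}\diag(1,\rho^2)$. Both agree with \eqref{eq:collapsed}.

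There is no real obstacle; the only care needed is the sign bookkeeping. In particular, the alternating convention $\epsilon_k=(-1)^{k+1}$ has to be matched correctly with which entry carries $\mu_{w_k}$ to the power $+1$.
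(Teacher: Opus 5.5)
Your proposal is correct and is essentially the paper's proof: a direct substitution of $\mub_{w_k}=-\rho^2/\mu_{w_k}$ into the factors with exponent $+1$, followed by counting the nodes with $\epsilon_k=+1$ and $\epsilon_k=-1$ (your $|O|$ and $|E|$, the paper's $n_\pm$) in the two parities. Your sign bookkeeping agrees with the paper's in both cases.
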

\begin{proof}
The first entry becomes $\prod_{\epsilon_k=+}\mub_{w_k}\prod_{\epsilon_k=-}\mu_{w_k}^{-1}=(-\rho^2)^{n_+}/\prod_k\mu_{w_k}$ and the second
$\rho^2(-\rho^2)^{n_-}/\prod_k\mu_{w_k}$, with $n_\pm=\#\{k:\epsilon_k=\pm1\}$. For $N=2m+1$, $n_+=m+1$ and $n_-=m$, so
$(-\rho^2)^{n_+}=(-1)^{m+1}\rho^{2m+2}$ and $\rho^2(-\rho^2)^{n_-}=(-1)^m\rho^{2m+2}$. For $N=2m$, $n_+=n_-=m$.
\end{proof}
Removing a pole changes the sign of a diagonal entry, so the collapsed ALE seed is indefinite: up to the scalar factor it is the
constant Lorentzian block $\diag(-1,1)$, the seed that Chen uses for an odd number of solitons \cite[(1.5), (3.7)]{Chen2016}. This
is the parity rule of Chen for Euclidean signature: an odd number of soliton transformations turns an indefinite block into a
definite one and an even number preserves the signature, so the odd family of \S\ref{subsec:ALEsolitons} is dressed on the constant
indefinite block and the even family of \S\ref{subsec:MKN} on the definite block $\diag(1,\rho^2)$, Chen's seed (1.4). Positive
definiteness of the resulting Gram matrices is proved directly in Theorems~\ref{thm:CF}(iii), \ref{thm:farclosed}\ref{it:fcreg}
and~\ref{thm:mkn}.

\subsection{Gravitational solitons}\label{subsec:soliton}
\begin{definition}\label{def:soliton}
A toric Ricci-flat metric \eqref{eq:WP}, locally tamed by a rod structure with $n\ge1$ turning points and of asymptotic type ALE,
ALF or AF (Definitions~\ref{def:rodstructure}, \ref{def:tame} and~\ref{def:Ctor}), is a \emph{gravitational soliton} if, in some basis
of $\mathfrak t$, its Gram matrix $G$ coincides on $\Hb^\circ$, up to a constant non-zero factor (determined up to sign by $\det G=\rho^2$), with a
matrix obtained from the flat model of its type
($\R^4$ for ALE, $\R^3\times S^1$ for ALF and AF) by finitely many of the following operations, with $w$ a point of the axis:
\begin{enumerate}[leftmargin=*,label=\textup{(\alph*)}]
\item \emph{adding a diagonal soliton} to a diagonal matrix: multiplying its two diagonal entries by $\mu_w^{\pm1}$ and $\mu_w^{\mp1}$
\cite[\S II]{ER2002};
\item \emph{removing a pole} from a diagonal matrix: replacing a factor $\mu_w$ of a diagonal entry by $\mub_w$, or a factor $\mub_w$ by
$\mu_w$ \cite{Pomeransky2006};
\item \emph{dressing} by \eqref{eq:dressed}, with poles $\mu_w$ or $\mub_w$;
\item the \emph{normalisation} of \cite{BZ1978}, \cite[(8.29)--(8.30)]{BV2001}, which multiplies a matrix dressed with $p$ poles $\lambda_k$ by $\pm\rho^{-p}\prod_k\lambda_k$
and so restores $\det G=\pm\rho^2$.
\end{enumerate}
\end{definition}
The conformal factor is then determined by $G$ up to an additive constant, through \eqref{eq:nu} (by \eqref{eq:BZnu} for the scheme
of removing and re-adding poles), so the condition is one on $G$, or on the harmonic map $\Phi$. The normalisation multiplies $G$ by
$e^h$ with $h=\sum_k\log|\lambda_k|-p\log\rho$, which is axisymmetric harmonic, so it preserves \eqref{eq:hm}. For the ALE family it can
be avoided, since it is built by removing and re-adding poles (for the family on $\R^3\times S^1$ see Remark~\ref{rem:readd}). Only two
normal forms occur in this paper.
In the ALE case, $G$ is a constant times the dressing \eqref{eq:bordered} of the collapsed ALE seed of Lemma~\ref{lem:collapse}, which
arises from flat $\R^4$ by adding a diagonal soliton at each further node (\S\ref{subsec:flatseeds}) and removing a pole at each node; in
the ALF and AF cases, a constant times the normalised dressing \eqref{eq:BZAF} of $\diag(1,\rho^2)$. The cases with the fewest poles are
worked out in \S\ref{subsec:smallest}.

\emph{Phantom solitons.} Poles at which the soliton metric has no turning point are called \emph{phantom solitons}, after Chen
\cite[\S4.4]{Chen2016}. In Chen's solitons on flat space each pole has a real BZ parameter $C_k$; for $C_k=\pm1$ the two rods that meet at
its node have the same rod line and join, although the pole still enters the metric \cite[(4.34), (4.35)]{Chen2016}. Chen and Teo had
used this mechanism, without the name, to construct their AF instanton \cite[p.~361]{CT2011}, \cite[\S2.3]{CT2015}. In the ALE family the
phantom solitons are the poles whose BZ vector is null for the seed, $c_k\in\{0,\infty\}$ (\S\ref{subsec:ALEsolitons}); these are Chen's
$C_k=\pm1$, since for an odd number of poles his seed is $\diag(-1,1)$ and $\Gamma_{kk}\propto1-C_k^2$ \cite[(3.10)]{Chen2016}. In the family on
$\R^3\times S^1$ they are the poles with BZ parameter $b_k=\pm1$ (\S\ref{subsec:MKN}; Theorem~\ref{thm:mkn}; Proposition~\ref{prop:faraxis}\ref{it:fa3}). A phantom soliton
can be moved to any point of the axis off the other nodes, with the other parameters suitably held fixed; this changes $G$ at most by a
constant change of frame and leaves the solutions of \eqref{eq:nu} unchanged (Lemma~\ref{lem:Cnull}(b); Proposition~\ref{prop:faraxis}\ref{it:fa2},
where $F_1F_2$ is unchanged). So its
position is not a parameter of the metric.

The constant factor is harmless: the dressing \eqref{eq:dressed} commutes with constant factors of the seed, and the seeds are
defined only up to such factors (Lemma~\ref{lem:collapse}). The period lattice and the cone angles are not part of the data; they
are determined by the metric. Limits of such metrics are not needed, and are not allowed in this definition.

\subsection{The ALE soliton metrics}\label{subsec:ALEsolitons}
Let $N=2m+1$ and nodes $w_1<\dots<w_N$. Start from the ALE seed, collapsed as in Lemma~\ref{lem:collapse} to $G_0'=f\eta$, with
$f=\rho^{N+1}/\prod_k\mu_{w_k}$ and $\eta=(-1)^m\diag(-1,1)$. Here $G_0'$ is written in the basis of $\mathfrak t$ in which the seed
\eqref{eq:staticseed} is diagonal, and $\eta$ is the matrix of the quadratic form $(-1)^m(-x^2+y^2)$ on $\mathfrak t\cong\R^2$, where
$(x,y)$ are the components of a vector in this basis; the form is indefinite. For $G_0'=f\eta$ the matrices $U=\rho f_\rho/f\cdot\mathrm{Id}$ and $V=\rho f_z/f\cdot\mathrm{Id}$ of \eqref{eq:lax} are scalar, so
\eqref{eq:lax} is a scalar system and a solution with $\Psi_0'(0)=G_0'$ is $\Psi_0'=\varphi\eta$, where
$\varphi(\lambda)=(\rho^2-2(z-z_0)\lambda-\lambda^2)^{m+1}\prod_k(\mu_k-\lambda)^{-1}$ solves the scalar system with $\varphi(0)=f$; we use this one.
Hence the vectors $m^{(k)}=m^{(k)}_0\Psi_0'(\lambda_k)^{-1}$ are multiples of $m^{(k)}_0\eta^{-1}$ by the nowhere-vanishing functions $\varphi(\lambda_k)^{-1}$, which do not depend on $c$.

\emph{The null basis.} A vector $v\in\mathfrak t$ is \emph{null} for $\eta$ if $\eta(v,v)=0$, i.e.\ $x=\pm y$. We use a \emph{null basis}
$(e_0,e_1)$ of $\mathfrak t$, consisting of two null vectors normalised by $\eta(e_0,e_1)=\frac12$. Then $e_0+e_1$ and $e_0-e_1$ are
orthonormal for $\eta$, with $\eta(e_0\pm e_1,e_0\pm e_1)=\pm1$, and we fix the remaining boost freedom $(e_0,e_1)\mapsto(te_0,t^{-1}e_1)$ so that they are the basis in which the seed is diagonal;
and in the null basis
\begin{equation}\label{eq:etahat}
\eta=\hat\eta:=\begin{pmatrix}0&1/2\\1/2&0\end{pmatrix}.
\end{equation}
Write the BZ vectors in the null basis as $m^{(k)}_0=(\beta_k,\alpha_k)$. The ratio $c_k=[\alpha_k:\beta_k]\in\RP^1$, that is, $c_k=\beta_k/\alpha_k$,
with $c_k=\infty$ when $\alpha_k=0$, is the \emph{parameter} of the $k$-th soliton, and $c=(c_1,\dots,c_N)\in(\RP^1)^N$. Throughout, a pair
$[\alpha:\beta]$ denotes the value $\beta/\alpha$, so that the value $\lambda$ is $[1:\lambda]$. The null basis is convenient since in it the matrix $K$ below has entries that are
linear and symmetric in each pair $(\alpha_k,\beta_k)$; the parameters $c_k\in\{0,\infty\}$, the BZ vectors along the null directions, are
the phantom solitons (\S\ref{subsec:soliton}).

Re-add the anti-soliton poles $\lambda_k=\mub_k:=\mub_{w_k}$. Then \eqref{eq:dressed} becomes, with $\mu_k=\mu_{w_k}$,
\begin{equation}\label{eq:bordered}
G_{ab}=f\Bigl(\hat\eta_{ab}-\frac14\sum_{k,l}\frac{\hat c_{a,k}\,(K^{-1})_{kl}\,\hat c_{b,l}}{\mub_k\mub_l}\Bigr),\qquad
K_{kl}=\frac{\alpha_k\beta_l+\beta_k\alpha_l}{2(\rho^2+\mub_k\mub_l)},
\end{equation}
where $a,b\in\{0,1\}$ are indices for the null basis, and $\hat c_0=\beta=(\beta_k)_k$, $\hat c_1=\alpha=(\alpha_k)_k$ are the \emph{border
vectors}, with the entries $\hat c_{a,k}$, so that $m^{(k)}_0=(\hat c_{0,k},\hat c_{1,k})$. Indeed, write $m^{(k)}=m^{(k)}_0\hat\eta^{-1}/\sigma_k$ with scalars
$\sigma_k$. Then $G_0'm^{(k)T}=fm^{(k)T}_0/\sigma_k$, and $\Gamma=4f\,SKS$ with $S=\diag(1/\sigma_k)$, since
$m^{(k)}_0\hat\eta^{-1}m^{(l)T}_0=2(\alpha_k\beta_l+\beta_k\alpha_l)$; the $\sigma_k$ cancel. Put $D=\det K$. For finite $c_k$ we take $\alpha_k=1$,
$\beta_k=c_k$, and for $c_k=\infty$ we take $(\alpha_k,\beta_k)=(0,1)$, unless other representatives are specified.

\emph{Homogeneity.} The right side of \eqref{eq:bordered} is homogeneous of degree $0$ in each pair $(\alpha_k,\beta_k)$; this is the
invariance of the dressing under rescaling of the BZ vectors (\emph{Directions of the BZ vectors}, \S\ref{subsec:dressing}). ($D$ is
homogeneous of degree $2$ in each pair, since the pair enters row $k$ and column $k$ of $K$.) So $G$ depends only on the ratios $c_k$,
and the values $c_k=0$ and $c_k=\infty$, i.e.\ $(\alpha_k,\beta_k)=(1,0)$ and $(0,1)$, are allowed. We call $G$ the \emph{ALE
soliton matrix} with nodes $w_1,\dots,w_N$ and parameters $c_1,\dots,c_N$. Where $D\ne0$ it is a matrix solution of \eqref{eq:hm}, by
the dressing method, for every $c$ including $c_k\in\{0,\infty\}$, and the Gram matrix of the \emph{ALE soliton metric}, in the null basis, is $2(-1)^{m+1}G$;
in the frame $e_\pm/\sqrt2$ it is $(-1)^{m+1}G(e_\pm,e_\pm)$.

\begin{definition}[{admissible set}]\label{def:admissible}
Let $N=2m+1$, let $w_1<\dots<w_N$ be nodes and $c=(c_1,\dots,c_N)\in(\RP^1)^N$ the parameters of an ALE soliton matrix, and let
$P(x)=\prod_k(x-w_k)$, with the derivative $P'(w_k)=\prod_{l\ne k}(w_k-w_l)$ at the node $w_k$. The \emph{weights} of $c$ are
$a_k=c_k/P'(w_k)\in\R\cup\{\infty\}$. The \emph{admissible set} $\cA$ is the set of
$c\in(\RP^1)^N$ such that
\begin{enumerate}[leftmargin=*,label=\textup{(\alph*)}]
\item all weights lie in $[0,\infty]$, i.e.\ $(-1)^{N-k}c_k\in[0,\infty]$ for all $k$;
\item at most $m$ of the $c_k$ equal $0$, and
\item at most $m$ of the $c_k$ equal $\infty$.
\end{enumerate}
The interior of $\cA$, where all weights are positive and finite, is the \emph{positive cone} $\cA^\circ$. Every point of $\cA$ is a
limit of points of $\cA^\circ$, since a coordinate $0$ or $\infty$ can be moved into its open arc. The \emph{base point}
$c^*=((-1)^{N-k})_k$ has the weights $1/|P'(w_k)|$, so $c^*\in\cA^\circ$.
\end{definition}
By Lemma~\ref{lem:Cbase}, the BZ vectors of $c^*$ give back the seed up to a constant factor and a constant change of frame; so the
BZ vectors of $c^*$ can serve as the \emph{trivial vectors} in \eqref{eq:BZnu}: $\Gamma^{(0)}=\Gamma(c^*)$.
Conditions (b) and (c) are Chen's constraint on the number of phantom solitons \cite[\S4.4, (4.40)]{Chen2016}: for $N$ poles on flat
space, at most $\lfloor N/2\rfloor$ phantom solitons of each sign are possible, since otherwise $\Gamma$ is singular; Chen states this without proof. The
conditions are sharp: if $m+1$ of the $c_k$ are $0$ (resp.\ $\infty$), then $\beta_k=0$ (resp.\ $\alpha_k=0$) for these $k$, so the $m+1$ rows
of $K$ with these indices vanish outside the other $m$ columns; they are linearly dependent, and $D=\det K\equiv0$. Conversely, once
(a) holds, (b) and (c) suffice: $D>0$ on $\Hb^\circ$ for every $c\in\cA$ (Theorem~\ref{thm:CF}(iii)).

\subsection{Solitons on the flat seed \texorpdfstring{$\R^3\times S^1$}{R3 x S1}}\label{subsec:MKN}
Let $m\ge1$, let $w_1<\dots<w_{2m}$ be points of the axis, $\mu_k=\mu_{w_k}$, and $b_k\in\RP^1$. The number of poles is even, $2m$, since
these solitons arise in \S\ref{sec:far} as limits of ALE soliton metrics with $2m+1$ nodes, one of which is sent to infinity
(Theorem~\ref{thm:mkn}).

The flat block $G_0=\diag(1,\rho^2)$ of $\R^3\times S^1$ has the generating matrix
\begin{equation}
\Psi_0(\lambda)=\diag\bigl(1,\rho^2-2(z-z_0)\lambda-\lambda^2\bigr),
\end{equation}
for any constant $z_0$; we take $z_0\notin\{w_k\}$. Indeed, here $U=\diag(0,2)$ and $V=0$, and substitution in \eqref{eq:lax} verifies
the two equations, with $\Psi_0(0)=G_0$ (cf.\ the Lorentzian version \cite[(8.36)]{BV2001}). At a root $\lambda_k$ of $X^2+2(z-w_k)X-\rho^2=0$ this is
$\diag\bigl(1,-2(w_k-z_0)\lambda_k\bigr)$. So the BZ vector $m^{(k)}_0=\bigl(1,-2(w_k-z_0)b_k\bigr)$ gives the direction
$[m^{(k)}]=[(1,b_k/\lambda_k)]$, for the soliton and for the anti-soliton pole alike. For $b_k=\infty$ read $m^{(k)}_0=(0,1)$ and $[m^{(k)}]=[(0,1/\lambda_k)]$, and in $\Gamma_{kl}$ below use the homogeneous form: with $b_k=\beta^+_k/\beta^-_k$ the numerator $\mu_k\mu_l+\rho^2b_kb_l$ becomes $\mu_k\mu_l\beta^-_k\beta^-_l+\rho^2\beta^+_k\beta^+_l$ (the vectors are rescaled by $\beta^-_k$, which does not change the dressing). Dressing $G_0$ with the soliton poles $\mu_k$ and
these vectors, and normalising, gives
\begin{equation}\label{eq:BZAF}
G=\frac{\prod_k\mu_k}{\rho^{2m}}\Bigl(G_0-\sum_{k,l}\frac{G_0m^{(k)T}\,(\Gamma^{-1})_{kl}\,m^{(l)}G_0}{\mu_k\mu_l}\Bigr),\qquad
\Gamma_{kl}=\frac{\mu_k\mu_l+\rho^2b_kb_l}{\mu_k\mu_l(\rho^2+\mu_k\mu_l)},
\end{equation}
with $\det G=\rho^2$: the \emph{soliton on $\R^3\times S^1$} with poles $w$ and parameters $b$, the Riemannian counterpart of the
multi-Kerr--NUT solutions of Kramer and Neugebauer, which are the same dressing of the Minkowski block $\diag(-1,\rho^2)$ with real
poles \cite[(5)--(7)]{KN1980}, \cite[\S\S8.3 and 8.5]{BV2001}. Here $\Gamma_{kl}$ is \eqref{eq:dressed} for the seed $G_0$ and the vectors $m^{(k)}=(1,b_k/\mu_k)$,
\begin{equation}\label{eq:BZAFgamma}
m^{(k)}G_0m^{(l)T}=1+\frac{\rho^2b_kb_l}{\mu_k\mu_l},\qquad
\Gamma_{kl}=\frac{m^{(k)}G_0m^{(l)T}}{\rho^2+\mu_k\mu_l}=\frac{\mu_k\mu_l+\rho^2b_kb_l}{\mu_k\mu_l(\rho^2+\mu_k\mu_l)},
\end{equation}
and the factor $\prod_k\mu_k/\rho^{2m}$ is the normalisation of Definition~\ref{def:soliton}(d): by \eqref{eq:detdressed} with $p=2m$ poles
and $\det G_0=\rho^2$,
\begin{equation}\label{eq:BZAFdet}
\det\Bigl(G_0-\sum_{k,l}\frac{G_0m^{(k)T}\,(\Gamma^{-1})_{kl}\,m^{(l)}G_0}{\mu_k\mu_l}\Bigr)=\rho^{4m}\prod_k\mu_k^{-2}\cdot\rho^2,
\end{equation}
and multiplying the matrix by $\prod_k\mu_k/\rho^{2m}$ multiplies its determinant by $\prod_k\mu_k^2/\rho^{4m}$, which restores $\det G=\rho^2$. For the data of \S\ref{sec:far}, $(-1)^mG$ is
positive definite (Theorem~\ref{thm:mkn}), and the metric is built from $(-1)^mG$. Formula \eqref{eq:BZAF} uses the normalisation of
Definition~\ref{def:soliton}(d); it can also be obtained by removing and re-adding poles (Remark~\ref{rem:readd}). Euclidean Kerr is
$m=1$; the Chen--Teo instanton \cite[(2), Fig.~1]{CT2011} and Teo's AF instanton \cite[(1)--(8)]{Teo2026} are $m=2$, with one and with no phantom soliton
(Table~\ref{tab:smooth}).

\emph{Frames.} Several bases of $\mathfrak t$ are used. Table~\ref{tab:frames} lists them, where they are defined, their determinant
relative to another frame of the list, and where they are used; a change of frame of determinant $\pm1$ does not change the Weyl
coordinates (\S\ref{subsec:WP}).
\begin{table}[ht]\centering\small
\begin{tabularx}{\textwidth}{>{\raggedright\arraybackslash}p{0.26\textwidth}>{\raggedright\arraybackslash}p{0.17\textwidth}>{\raggedright\arraybackslash}p{0.25\textwidth}>{\raggedright\arraybackslash}X}\toprule
frame & defined in & determinant & used in\\\midrule
basis diagonalising the seed \eqref{eq:staticseed} & \S\ref{subsec:flatseeds} & --- & \S\S\ref{subsec:flatseeds}--\ref{subsec:ALEsolitons}\\
null basis $(e_0,e_1)$, $\eta(e_0,e_1)=\frac12$ & \S\ref{subsec:ALEsolitons} & $\pm\frac12$ rel.\ to the diagonal basis & \eqref{eq:bordered}, \S\ref{sec:ale}\\
$e_\pm=e_0\pm e_1$ & Lemma~\ref{lem:Cbase} & $-2$ rel.\ to $(e_0,e_1)$ & base point\\
Hankel frame $(e_0,-e_1)$ (ALE) & Theorem~\ref{thm:family}(a) & $-1$ rel.\ to $(e_0,e_1)$ & \S\ref{sec:alerods}, \S\ref{sec:surj}\\
$(e_\gamma,e_1)$, $e_\gamma=-e_0+\gamma e_1$ (ALF, AF) & \S\ref{sec:far} & $-1$ rel.\ to $(e_0,e_1)$ & Theorem~\ref{thm:family}(b), \S\ref{sec:far}\\
frame of the soliton \eqref{eq:BZAF} & \S\ref{subsec:MKN} & $B=\gamma^{-1/2}\mathsf J_\gamma$, $\det B=1$, rel.\ to $(e_\gamma,e_1)$ & Theorem~\ref{thm:mkn}\\
$F_\alpha$ & \S\ref{subsec:CinCtor} & $1$ rel.\ to the Hankel frame & Corollary~\ref{cor:CinCtor}, \S\ref{sec:surj}\\
$(a_\gamma,b_\gamma)$ & Proposition~\ref{prop:farCtor} & $1$ rel.\ to $(e_\gamma,e_1)$ & \S\ref{sec:surj}, \S\ref{sec:proofs}\\
$(a,b)$ of the model & Definition~\ref{def:Ctor} & --- & \S\ref{sec:uniq}, \S\ref{sec:proofs}\\
$(f_0,f_1)$ of \eqref{eq:introclosed} & Remark~\ref{rem:GGclosed} & $1$ rel.\ to $(a,b)$ & \S\ref{sec:proofs}\\\bottomrule
\end{tabularx}
\caption{The frames of $\mathfrak t$ used in the paper. The scaling $E_\Omega$ of \S\ref{sec:far} has determinant $1$.}\label{tab:frames}
\end{table}

\section{Hankel data}\label{sec:hankel}
This section collects the algebra of Hankel determinants of finitely supported measures in which the rod lines of the soliton
metrics are computed (Proposition~\ref{prop:axis}): Heine's formula, the Wronskian, the total turning and an atom at infinity.
Heine's formula, the Christoffel identity and the Desnanot--Jacobi identity are classical \cite{Szego1975,Krattenthaler1999}, but
we are not aware of earlier uses of Hankel determinants for the rod structures of Belinski--Zakharov solitons, or in their
twistor description (Remark~\ref{rem:twistor}).

\subsection{Hankel determinants and Heine's formula}\label{subsec:heine}
Let $\tilde\omega=\sum_k\tilde\omega_k\delta_{w_k}$ be a finitely supported measure on $\R$, a sum of \emph{atoms} $\tilde\omega_k\delta_{w_k}$;
the points $w_k$ are its \emph{nodes} and the numbers $\tilde\omega_k$ its \emph{weights}. A set of atoms is identified with the set of its
indices, so that the sets $S$ below are subsets of $\{1,\dots,n\}$, and ``the atom $k$'' is the atom at $w_k$. An atom at infinity,
$\gamma\delta_\infty$ with $\gamma\ge0$, is added in \S\ref{subsec:atominf}; ``the atom at infinity'' always means this point mass, and
$\gamma$ its weight. Throughout, $\tilde\omega$ denotes a measure whose weights $\tilde\omega_k$ may have either sign, and
$\omega=\sum_ka_k\delta_{w_k}$ a measure with positive weights $a_k$, written $\omega>0$; we write $w=(w_1,\dots,w_n)$ with $w_1<\dots<w_n$
and $a=(a_1,\dots,a_n)$. A subscript without parentheses on a measure, as in $\tilde\omega_k$, denotes a weight; the sign-reversed
measures defined below carry their index in parentheses. For a function $q$ on the nodes, with the values $q_k=q(w_k)$, or a vector
$q=(q_1,\dots,q_n)$, $q\tilde\omega$ is the measure with the weights $q_k\tilde\omega_k$; for instance $(t-x)\tilde\omega$, with $t$ the variable
of integration, has the weights $(w_k-x)\tilde\omega_k$. The \emph{moments} of $\tilde\omega$ are $h_p(\tilde\omega)=\sum_k\tilde\omega_kw_k^p$,
$p\ge0$, and its \emph{Hankel determinants} are $H_0(\tilde\omega)=1$ and
\begin{equation}\label{eq:Hs}
H_s(\tilde\omega)=\det\bigl(h_{i+l}(\tilde\omega)\bigr)_{i,l=0}^{s-1}
=\det\begin{pmatrix}h_0&h_1&\cdots&h_{s-1}\\ h_1&h_2&\cdots&h_s\\ \vdots&\vdots&&\vdots\\ h_{s-1}&h_s&\cdots&h_{2s-2}\end{pmatrix},
\qquad s\ge1 .
\end{equation}

For a set $S$ of atoms let
\begin{equation}\label{eq:PS}
\Delta_S^2=\prod_{k<l\in S}(w_k-w_l)^2\quad(=1\text{ if }|S|\le1),\qquad
P_S(\tilde\omega)=\prod_{k\in S}\tilde\omega_k\,\Delta_S^2 .
\end{equation}
\begin{lemma}[Heine]\label{lem:Aheine}
For every finitely supported measure $\tilde\omega$ with real or complex weights and every $s\ge1$, it holds that
\begin{equation}\label{eq:heine}
H_s(\tilde\omega)=\sum_{|S|=s}P_S(\tilde\omega),
\end{equation}
the sum over the sets $S$ of $s$ atoms.
\end{lemma}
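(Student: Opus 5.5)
The plan is to factor the Hankel matrix as a product of a Vandermonde-type matrix, a diagonal matrix of weights, and its transpose, and then expand the determinant by the Cauchy--Binet formula. The $s\times n$ matrix $V$ with entries $V_{ik}=w_k^{i}$, for $0\le i\le s-1$ and $1\le k\le n$, and the diagonal matrix $W=\diag(\tilde\omega_1,\dots,\tilde\omega_n)$ give
\begin{equation*}
\bigl(VWV^T\bigr)_{il}=\sum_k w_k^{i}\,\tilde\omega_k\,w_k^{l}=h_{i+l}(\tilde\omega),
\end{equation*}
so the Hankel matrix of \eqref{eq:Hs} is $VWV^T$. This identity is purely algebraic and holds for real or complex weights.

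Cauchy--Binet for the product of the $s\times n$ matrix $VW$ and the $n\times s$ matrix $V^T$ gives $H_s(\tilde\omega)=\sum_{|S|=s}\det\bigl((VW)_S\bigr)\det\bigl((V^T)^S\bigr)$. Here $(VW)_S$ denotes the $s\times s$ submatrix of $VW$ with the columns in $S$, and $(V^T)^S$ the submatrix of $V^T$ with the rows in $S$. The first factor is $\prod_{k\in S}\tilde\omega_k\det V_S$, since $W$ scales columns. The second factor is $\det V_S$. Both are the Vandermonde determinant $\prod_{k<l\in S}(w_l-w_k)$, and their product is $\Delta_S^2$. Hence each term equals $P_S(\tilde\omega)$ as defined in \eqref{eq:PS}, which proves \eqref{eq:heine}.

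The only edge case is $s>n$, where no set $S$ of $s$ atoms exists. Cauchy--Binet then gives $0$, and so does the right side of \eqref{eq:heine}, since the sum is empty; this is consistent because $VWV^T$ has rank at most $n<s$. The case $s=1$ reduces to $h_0=\sum_k\tilde\omega_k$. I expect no real obstacle; the only care needed is bookkeeping, namely that the sign conventions of the two Vandermonde factors agree, so that their product is the square $\Delta_S^2$ with no stray sign.
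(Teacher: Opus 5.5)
Your proof is correct and is the paper's argument: the paper also writes the Hankel matrix as $V^T\diag(\tilde\omega_k)V$ with the Vandermonde matrix $V=(w_k^i)$ and applies Cauchy--Binet, so that the two equal Vandermonde minors multiply to $\Delta_S^2$. Your treatment of $s>n$, where both sides vanish, agrees with how the paper later uses the formula.
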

\begin{proof}
Formula \eqref{eq:heine} is the Cauchy--Binet formula for $(h_{i+l})=V^T\diag(\tilde\omega_k)V$ with $V=(w_k^i)$
(Appendix~\ref{app:identities}), \cite[\S2.1]{Szego1975}, \cite{Andreief1886}.
\end{proof}

\emph{One atom split off.} Let $k$ be an atom of $\tilde\omega$ and $\tilde\omega'=\tilde\omega-\tilde\omega_k\delta_{w_k}$. Splitting the
sum \eqref{eq:heine} according to whether $k\in S$, and using $\Delta_S^2=\Delta_{S\setminus k}^2\prod_{l\in S\setminus k}(w_l-w_k)^2$ for $k\in S$, gives
\begin{equation}\label{eq:Hsplit}
H_s(\tilde\omega)=H_s(\tilde\omega')+\tilde\omega_k\,H_{s-1}\bigl((t-w_k)^2\tilde\omega'\bigr),\qquad s\ge1 ;
\end{equation}
applied to $q\tilde\omega$, it gives the same formula with $q\tilde\omega'$ in place of $\tilde\omega'$ and the weight $q_k\tilde\omega_k$.

\emph{Complex weights.} The moments, the Hankel determinants \eqref{eq:Hs}, the products \eqref{eq:PS}, the sums \eqref{eq:Sigmas}
and the shifted determinants \eqref{eq:shifted} below are defined in the same way for complex weights, and for distinct complex points $w_k$ in place of the
nodes. They are polynomials in the weights, and $H_s$ and $\Sigma_s$ are homogeneous of degree $s$:
$H_s(\lambda\tilde\omega)=\lambda^sH_s(\tilde\omega)$ and $\Sigma_s(\lambda\tilde\omega)=\lambda^s\Sigma_s(\tilde\omega)$ for $\lambda\in\C$. The identities of
Lemmas~\ref{lem:Aheine} and~\ref{lem:Achristoffel}, \eqref{eq:Hsplit}, \eqref{eq:splitSigma} and the determinant form of \eqref{eq:shifted}
are polynomial identities that use only that the points $w_k$ are distinct, so they hold in this generality.
\S\S\ref{sec:ale}--\ref{sec:far} use them for measures such as $q\omega$ with complex $q$. Statements involving signs or positivity, such
as Proposition~\ref{prop:Awronskian}, refer to real weights and nodes.

\emph{Sign-reversed measures.} Let $\omega=\sum_ka_k\delta_{w_k}$ be a positive measure. The nodes divide the axis into the rods
$I_0=(-\infty,w_1)$, $I_j=(w_j,w_{j+1})$ and $I_n=(w_n,\infty)$, as in Definition~\ref{def:rodstructure}. For $0\le j\le n$ let
\begin{equation}\label{eq:omegaj}
\omega_{(j)}=-\sum_{k\le j}a_k\delta_{w_k}+\sum_{k>j}a_k\delta_{w_k}
\end{equation}
be $\omega$ with the atoms to the left of the rod $I_j$ sign-reversed; so $\omega_{(0)}=\omega$ and $\omega_{(n)}=-\omega$.

\emph{Hankel families.} For $0\le r\le n-1$ the \emph{Hankel family} $(n,r)$ of $\omega$ is the sequence of vectors
\begin{equation}\label{eq:Hfamily}
V_j=\bigl(H_r(\omega_{(j)}),\,H_{r+1}(\omega_{(j)})\bigr)\in\R^2,\qquad 0\le j\le n ,
\end{equation}
one for each rod $I_j$. The name records that the components are Hankel determinants, of the consecutive orders $r$ and $r+1$.
The first index is the number of nodes. The vectors of the family $(n,r)$ of the reduced measure on the turning points span the
rod lines of the ALE soliton metrics (Theorem~\ref{thm:family}, Corollary~\ref{cor:CinCtor}(ii)); Lemma~\ref{lem:Climits} relates
families with different numbers of nodes.

\subsection{The Wronskian}
For $V,V'\in\R^2$ we write $\det(V,V')$ for the determinant of the $2\times2$ matrix with the columns $V$ and $V'$. For
consecutive vectors of a Hankel family, $\det(V_{j-1},V_j)$ is the Casorati determinant, or discrete Wronskian, of the two
sequences $H_r(\omega_{(j)})$ and $H_{r+1}(\omega_{(j)})$, and we call it the \emph{Wronskian} of the family at the node $w_j$. It
vanishes exactly when $V_{j-1}$ and $V_j$ are parallel, and its sign tells in which sense $V_{j-1}$ turns into $V_j$: if
$\det(V_{j-1},V_j)<0$, then $V_j$ is obtained from $V_{j-1}$ by a clockwise rotation by an angle in $(0,\pi)$, followed by a
positive rescaling. Proposition~\ref{prop:Awronskian} rests on the following classical identity.
\begin{lemma}[Christoffel identity]\label{lem:Achristoffel}
Let $\tilde\omega$ be a finitely supported measure with real or complex weights, $x\in\C$ and $r\ge1$, and let $t$ denote the variable of
integration, so that $(t-x)^p\tilde\omega$ has the weights $(w_k-x)^p\tilde\omega_k$. Then
\begin{equation}\label{eq:Achristoffel}
H_r(\tilde\omega)\,H_r\bigl((t-x)^2\tilde\omega\bigr)-H_{r+1}(\tilde\omega)\,H_{r-1}\bigl((t-x)^2\tilde\omega\bigr)=H_r\bigl((t-x)\tilde\omega\bigr)^2 .
\end{equation}
\end{lemma}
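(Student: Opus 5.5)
The plan is to reduce the identity to the Desnanot--Jacobi identity, which is classical and listed among the determinant identities of Appendix~\ref{app:identities}. The key observation is that the three measures $\tilde\omega$, $(t-x)\tilde\omega$ and $(t-x)^2\tilde\omega$ arise as shifted moment sequences once we shift the variable. Both sides are invariant under the translation $w_k\mapsto w_k-x$, since a Hankel determinant is unchanged by replacing $t$ with $t-x$. Indeed, the moment matrix $(h_{i+l})$ is $V^T\diag(\tilde\omega_k)V$, and translation replaces $V=(w_k^i)$ by $VL$ with $L$ unipotent triangular, so the determinant does not change. We may therefore assume $x=0$. Then the moments of $t^p\tilde\omega$ are $h_{i+p}(\tilde\omega)$, and every term is a minor of the single Hankel matrix $M=(h_{i+l})_{i,l=0}^{r}$ of size $r+1$.

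Concretely, we have $H_{r+1}(\tilde\omega)=\det M$. Deleting the last row and column gives $H_r(\tilde\omega)$, and deleting the first row and column gives $H_r(t^2\tilde\omega)$. Deleting the first row and last column, or the last row and first column, gives $\det(h_{i+l+1})_{i,l=0}^{r-1}=H_r(t\tilde\omega)$ in both cases. Deleting the first and last rows and the first and last columns gives $H_{r-1}(t^2\tilde\omega)$. The Desnanot--Jacobi identity
\[
\det M\cdot\det M^{1,r+1}_{1,r+1}=\det M^{1}_{1}\det M^{r+1}_{r+1}-\det M^{1}_{r+1}\det M^{r+1}_{1}
\]
then reads
\[
H_{r+1}(\tilde\omega)H_{r-1}(t^2\tilde\omega)=H_r(t^2\tilde\omega)H_r(\tilde\omega)-H_r(t\tilde\omega)^2,
\]
which is \eqref{eq:Achristoffel}.

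Two small checks remain. For $r=1$ the doubly bordered minor is empty, with determinant $1=H_0$, consistent with the convention $H_0=1$. The identity is polynomial in the weights and the points, so complex values are allowed, as the paragraph on complex weights requires. I expect the only real care to be the index bookkeeping: checking that both off-diagonal minors are the same shifted Hankel determinant $H_r(t\tilde\omega)$, so that their product is a square.

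As an alternative, the identity could also be derived from Heine's formula \eqref{eq:heine} by splitting sums over sets. That route is messier, so I would use the Desnanot--Jacobi route.
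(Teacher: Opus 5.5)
Your proof is correct and is essentially the paper's argument: the paper reduces to the shifted moments $\tilde h_p=\sum_k\tilde\omega_k(w_k-x)^p$ via Heine's formula (which depends only on differences of nodes), where you use the unipotent change of basis $V\mapsto VL$; both then apply Desnanot--Jacobi to $M=(h_{i+l})_{i,l=0}^{r}$ and identify the five minors exactly as you do. One small correction: Desnanot--Jacobi is not among the identities collected in Appendix~\ref{app:identities}; the paper cites it separately, from Krattenthaler and from Bressoud.
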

\begin{proof}
Heine's sum depends only on differences of nodes, so we may use the moments $\tilde h_p=\sum_k\tilde\omega_k(w_k-x)^p$. Then
$H_s(\tilde\omega)=\det(\tilde h_{i+l})$, $H_s((t-x)\tilde\omega)=\det(\tilde h_{i+l+1})$ and $H_s((t-x)^2\tilde\omega)=\det(\tilde h_{i+l+2})$, $0\le i,l\le s-1$.
For $M=(\tilde h_{i+l})_{i,l=0}^r$ the Desnanot--Jacobi identity \cite[\S2.3, Prop.~10, (2.16)]{Krattenthaler1999}, \cite[\S3.5, Thm.~3.12]{Bressoud1999} reads
\begin{equation}
\det M\,\det M^{0,r}_{0,r}=\det M^r_r\,\det M^0_0-\det M^0_r\,\det M^r_0 ,
\end{equation}
where $M^{I}_{J}$ omits the rows $I$ and the columns $J$. Here
\begin{equation}
\begin{gathered}
\det M=H_{r+1}(\tilde\omega),\qquad \det M^r_r=H_r(\tilde\omega),\qquad \det M^0_0=H_r\bigl((t-x)^2\tilde\omega\bigr),\\
\det M^{0,r}_{0,r}=H_{r-1}\bigl((t-x)^2\tilde\omega\bigr),\qquad \det M^0_r=\det M^r_0=H_r\bigl((t-x)\tilde\omega\bigr).
\end{gathered}
\end{equation}
\end{proof}

\begin{proposition}[Wronskian]\label{prop:Awronskian}
Let $\omega>0$ have $n$ atoms and $0\le r\le n-1$, and let $(V_j)$ be its Hankel family $(n,r)$. Then, for $1\le j\le n$,
\begin{equation}\label{eq:wronskian}
\det(V_{j-1},V_j)=-2a_jU_r(w_j)^2,\qquad U_r(x)=\sum_{|S|=r}\prod_{k\in S}a_k|x-w_k|\,\Delta_S^2 ,
\end{equation}
and $U_r(w_j)>0$.
\end{proposition}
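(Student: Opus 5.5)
The plan is to compare the two consecutive measures $\omega_{(j-1)}$ and $\omega_{(j)}$ through their common part, and then combine the one-atom splitting formula \eqref{eq:Hsplit}, the Christoffel identity (Lemma~\ref{lem:Achristoffel}) and Heine's formula (Lemma~\ref{lem:Aheine}).

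\emph{The common part.} By \eqref{eq:omegaj}, $\omega_{(j-1)}$ and $\omega_{(j)}$ differ only in the sign of the atom at $w_j$. That atom has weight $+a_j$ in $\omega_{(j-1)}$ and $-a_j$ in $\omega_{(j)}$. Let $\omega'=\omega_{(j-1)}-a_j\delta_{w_j}$ be the common part, and put
\begin{equation*}
X_s=H_s(\omega'),\qquad Y_s=H_{s-1}\bigl((t-w_j)^2\omega'\bigr).
\end{equation*}
Applying \eqref{eq:Hsplit} once with weight $a_j$ and once with weight $-a_j$ gives
\begin{equation*}
H_s(\omega_{(j-1)})=X_s+a_jY_s,\qquad H_s(\omega_{(j)})=X_s-a_jY_s,\qquad s\ge1 .
\end{equation*}
For $s=0$ we have $H_0=1$, which is consistent with $X_0=1$ and the convention $Y_0=0$.

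\emph{The determinant.} Expanding the $2\times2$ determinant, the terms without $a_j$ and the terms quadratic in $a_j$ cancel, leaving
\begin{equation*}
\det(V_{j-1},V_j)=-2a_j\bigl(X_rY_{r+1}-X_{r+1}Y_r\bigr).
\end{equation*}
For $r\ge1$ the bracket equals
\begin{equation*}
H_r(\omega')H_r\bigl((t-w_j)^2\omega'\bigr)-H_{r+1}(\omega')H_{r-1}\bigl((t-w_j)^2\omega'\bigr).
\end{equation*}
By Lemma~\ref{lem:Achristoffel} with $x=w_j$ this is $H_r\bigl((t-w_j)\omega'\bigr)^2$. For $r=0$ the bracket is $X_0Y_1-X_1Y_0=1=H_0^2$ directly, since $Y_1=H_0=1$.

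\emph{Identifying $U_r(w_j)$ and positivity.} Apply Heine's formula to the measure $(t-w_j)\omega'$. In $\omega'$ the atoms $k<j$ carry the weight $-a_k$ and the atoms $k>j$ carry $+a_k$. Hence the weight of atom $k$ in $(t-w_j)\omega'$ is $\pm a_k(w_k-w_j)=a_k|w_k-w_j|$ in both cases. Heine then gives
\begin{equation*}
H_r\bigl((t-w_j)\omega'\bigr)=\sum_{|S|=r,\ j\notin S}\prod_{k\in S}a_k|w_k-w_j|\,\Delta_S^2 .
\end{equation*}
Sets $S$ containing $j$ contribute $0$ to $U_r(w_j)$, so the right side is exactly $U_r(w_j)$, and \eqref{eq:wronskian} follows. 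Every term of this sum is positive. The sum is non-empty because $\omega'$ has $n-1\ge r$ atoms, so $U_r(w_j)>0$.

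The only delicate point is this sign bookkeeping: reversing the atoms to the left of $w_j$ is exactly what turns $w_k-w_j$ into $|w_k-w_j|$. The rest is the mechanical combination of \eqref{eq:Hsplit} and Lemma~\ref{lem:Achristoffel}.
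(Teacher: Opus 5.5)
Your proof is correct and follows the paper's argument essentially step by step: you split off the atom at $w_j$ by \eqref{eq:Hsplit} so that $V_{j-1}$ and $V_j$ differ only in the sign of the $a_j$-term, reduce the Wronskian to $-2a_j$ times the Christoffel combination, and identify $H_r\bigl((t-w_j)\omega'\bigr)$ with $U_r(w_j)$ by Heine's formula, using the same sign bookkeeping. The only difference is cosmetic: you handle $r=0$ inside the same computation via the convention $H_{-1}=0$, whereas the paper computes that case directly from $H_1(\omega_{(j)})-H_1(\omega_{(j-1)})=-2a_j$.
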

\begin{proof}
$\omega_{(j-1)}$ and $\omega_{(j)}$ differ only in the sign of the atom $j$, which is positive in $\omega_{(j-1)}$. For $r=0$,
$V_j=(1,H_1(\omega_{(j)}))$, so
\begin{equation}
\det(V_{j-1},V_j)=H_1(\omega_{(j)})-H_1(\omega_{(j-1)})=-2a_j ,
\end{equation}
and $U_0=1$. Let $r\ge1$, and let $\tilde\omega$ be $\omega_{(j-1)}$ without the atom $j$. By \eqref{eq:Hsplit},
\begin{equation}
H_s(\tilde\omega\pm a_j\delta_{w_j})=H_s(\tilde\omega)\pm a_jH_{s-1}\bigl((t-w_j)^2\tilde\omega\bigr).
\end{equation}
So $V_{j-1}=A+a_jB$ and $V_j=A-a_jB$ with
\begin{equation}
A=\bigl(H_r(\tilde\omega),\,H_{r+1}(\tilde\omega)\bigr),\qquad B=\bigl(H_{r-1}((t-w_j)^2\tilde\omega),\,H_r((t-w_j)^2\tilde\omega)\bigr),
\end{equation}
and by Lemma~\ref{lem:Achristoffel}
\begin{equation}
\det(V_{j-1},V_j)=-2a_j\det(A,B)=-2a_jH_r\bigl((t-w_j)\tilde\omega\bigr)^2 .
\end{equation}
The measure $(t-w_j)\tilde\omega$ has the weights $-a_k(w_k-w_j)>0$ for $k<j$ and $a_k(w_k-w_j)>0$ for $k>j$, so
$H_r((t-w_j)\tilde\omega)=U_r(w_j)$, which is positive since there are $n-1\ge r$ atoms.
\end{proof}

\subsection{Limits and total turning}
\begin{lemma}[limits of the Hankel vectors]\label{lem:Climits}
Let $\omega=\sum_ka_k\delta_{w_k}>0$ have the nodes $w=(w_1,\dots,w_n)$ and the weights $a=(a_1,\dots,a_n)$, let $r\ge1$, and
fix an atom $k$. Let $a_k$ vary, with the other weights fixed, and let $\sigma_{kj}=\pm1$ be the sign of the atom $k$ in
$\omega_{(j)}$.
\begin{enumerate}[leftmargin=*,label=\textup{(\roman*)}]
\item As $a_k\to0$, $H_s(\omega_{(j)})\to H_s(\omega_{(j)}\setminus k)$ for every $s$ and $j$, where $\setminus k$ omits the atom $k$.
So, for $r\le n-2$, the family $(n,r)$ tends to the family $(n-1,r)$ on the nodes $w\setminus w_k$, with the rods $I_{k-1}$ and $I_k$ merged (the index $j\ge k$ becomes $j-1$).
\item As $a_k\to\infty$, $a_k^{-1}H_s(\omega_{(j)})\to\sigma_{kj}H_{s-1}(\omega^{(k)}_{(j)})$ for $s\ge1$, where
\begin{equation}\label{eq:omegak}
\omega^{(k)}=\sum_{l\ne k}a_l(w_l-w_k)^2\delta_{w_l},
\end{equation}
and $\omega^{(k)}_{(j)}$ is $\omega^{(k)}$ with each atom signed as in $\omega_{(j)}$. So the pair $(H_r,H_{r+1})$ of the family $(n,r)$
satisfies, for each $j$, $(\sigma_{kj}a_k)^{-1}V_j\to(H_{r-1},H_r)(\omega^{(k)}_{(j)})$, a vector of the family $(n-1,r-1)$ of $\omega^{(k)}$ (of index $j$ for $j<k$, and $j-1$ for $j\ge k$). The lines of
$V_{k-1}$ and $V_k$ again coincide, since $\sigma_{k,k-1}=-\sigma_{kk}$.
\end{enumerate}
\end{lemma}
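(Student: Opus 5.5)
The plan is to apply the one-atom splitting \eqref{eq:Hsplit} to $\tilde\omega=\omega_{(j)}$ at the atom $k$, whose weight there is $\sigma_{kj}a_k$. Write $\omega_{(j)}\setminus k$ for the measure with this atom removed. Then
\begin{equation*}
H_s(\omega_{(j)})=H_s(\omega_{(j)}\setminus k)+\sigma_{kj}a_k\,H_{s-1}\bigl((t-w_k)^2(\omega_{(j)}\setminus k)\bigr),\qquad s\ge1 .
\end{equation*}
The two terms on the right do not depend on $a_k$, since the other weights are fixed. The identity is affine in $a_k$, and both limits can be read off from it.

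For (i), let $a_k\to0$. The second term vanishes and $H_s(\omega_{(j)})\to H_s(\omega_{(j)}\setminus k)$; for $s=0$ both sides are $1$. It remains to match $\omega_{(j)}\setminus k$ with the sign-reversed measures of the reduced measure $\omega\setminus k$ on the nodes $w\setminus w_k$. For $j<k$ the atoms with indices $\le j$ are reversed, so $\omega_{(j)}\setminus k$ is $(\omega\setminus k)_{(j)}$. For $j\ge k$ the reversed atoms other than $k$ are those of index $\le j$, and after renumbering there are $j-1$ of them, so we get $(\omega\setminus k)_{(j-1)}$. In particular $j=k-1$ and $j=k$ give the same vector, which means the rods $I_{k-1}$ and $I_k$ merge. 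The condition $r\le n-2$ is needed so that $(n-1,r)$ is a Hankel family in the sense of \eqref{eq:Hfamily}, that is, $r\le(n-1)-1$.

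For (ii), divide the splitting identity by $a_k$ and let $a_k\to\infty$. This gives $a_k^{-1}H_s(\omega_{(j)})\to\sigma_{kj}H_{s-1}\bigl((t-w_k)^2(\omega_{(j)}\setminus k)\bigr)$. The measure $(t-w_k)^2(\omega_{(j)}\setminus k)$ has the atoms $\pm a_l(w_l-w_k)^2\delta_{w_l}$, $l\ne k$, with the signs of $\omega_{(j)}$. By the definition \eqref{eq:omegak} this is exactly $\omega^{(k)}_{(j)}$. The renumbering of the rods is the same as in (i): index $j$ for $j<k$ and $j-1$ for $j\ge k$. Applying this with $s=r$ and $s=r+1$, where $r\ge1$ keeps $s-1\ge0$, gives $(\sigma_{kj}a_k)^{-1}V_j\to(H_{r-1},H_r)(\omega^{(k)}_{(j)})$, using $\sigma_{kj}^2=1$. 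Since $\omega^{(k)}$ is positive with $n-1$ atoms and $0\le r-1\le n-2$, this is a vector of the family $(n-1,r-1)$.

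The final claim about $V_{k-1}$ and $V_k$ follows because both indices $k-1$ and $k$ map to the same vector $(H_{r-1},H_r)(\omega^{(k)}_{(k-1)})$ of the limit family. The atom $k$ is positive in $\omega_{(k-1)}$ and negative in $\omega_{(k)}$, so $\sigma_{k,k-1}=-\sigma_{kk}$, and the two rescaled vectors tend to the same limit. This limit is non-zero: for $r-1\ge0$ its second component is $H_r$ of a positive measure with at least $r$ atoms, which is positive by Heine's formula \eqref{eq:heine}. Hence the lines of $V_{k-1}$ and $V_k$ converge to the same line. The main point of care is the index bookkeeping for $j\ge k$ together with this non-vanishing; the analytic content is trivial because the dependence on $a_k$ is affine.
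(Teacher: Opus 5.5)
Your argument is the paper's own. You apply \eqref{eq:Hsplit} to $\omega_{(j)}$ at the atom $k$, note that $H_s(\omega_{(j)})$ is affine in $a_k$ with the coefficients $H_s(\omega_{(j)}\setminus k)$ and $\sigma_{kj}H_{s-1}(\omega^{(k)}_{(j)})$ independent of $a_k$, and read off both limits. Your renumbering is correct: $(\omega\setminus k)_{(j)}$ for $j<k$, and $(\omega\setminus k)_{(j-1)}$ for $j\ge k$. So is the identity $\omega^{(k)}_{(k-1)}=\omega^{(k)}_{(k)}$, and your bookkeeping is more explicit than the paper's.

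One sentence you added is wrong: the justification that the limit vector is non-zero. The measure $\omega^{(k)}_{(j)}$ is signed, not positive. For $1<k<n$ and $j\in\{k-1,k\}$ it has atoms of both signs, so Heine's formula \eqref{eq:heine} gives no positivity for $H_r(\omega^{(k)}_{(j)})$.

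Take $n=3$, $k=2$, $r=1$. The second component of the limit is $H_1(\omega^{(2)}_{(1)})=-a_1(w_1-w_2)^2+a_3(w_3-w_2)^2$, which vanishes when $a_1(w_1-w_2)^2=a_3(w_3-w_2)^2$.

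The correct justification is Proposition~\ref{prop:Awronskian} applied to the positive measure $\omega^{(k)}$, which has $n-1\ge1$ atoms, with $0\le r-1\le n-2$. Consecutive vectors of its family $(n-1,r-1)$ have negative determinant by \eqref{eq:wronskian}, so none of them vanishes; for $r=1$ this is simply $H_0=1$.

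The paper's proof of this lemma stops at $\omega^{(k)}_{(k-1)}=\omega^{(k)}_{(k)}$ and does not discuss non-vanishing. It uses the same Wronskian argument where it is needed, in the analogous situation of the proof of Proposition~\ref{prop:Aproper}.
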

\begin{proof}
The atom $k$ of $\omega_{(j)}$ has the weight $\sigma_{kj}a_k$, and $(t-w_k)^2(\omega_{(j)}\setminus k)=\omega^{(k)}_{(j)}$. So, by \eqref{eq:Hsplit},
\begin{equation}
H_s(\omega_{(j)})=H_s(\omega_{(j)}\setminus k)+\sigma_{kj}a_kH_{s-1}\bigl(\omega^{(k)}_{(j)}\bigr),
\end{equation}
where neither $H_s(\omega_{(j)}\setminus k)$ nor $H_{s-1}(\omega^{(k)}_{(j)})$ depends on $a_k$. Letting $a_k\to0$ gives (i); dividing by $a_k$ and
letting $a_k\to\infty$ gives (ii). The measures $\omega_{(k-1)}$ and $\omega_{(k)}$ differ only in the sign of the atom $k$, so
$\omega_{(k-1)}\setminus k=\omega_{(k)}\setminus k$ and $\omega^{(k)}_{(k-1)}=\omega^{(k)}_{(k)}$.
\end{proof}

\emph{Normalised families.} Let $(V_j)$ be the Hankel family $(n,r)$ of $\omega>0$, $0\le r\le n-1$. Since $\omega_{(0)}=\omega$, the
components of $V_0=(H_r(\omega),H_{r+1}(\omega))$ are positive, by \eqref{eq:heine}. The \emph{normalised Hankel family} is
\begin{equation}\label{eq:Vnorm}
\tilde V_j=\Bigl(\frac{H_r(\omega_{(j)})}{H_r(\omega)},\ \frac{H_{r+1}(\omega_{(j)})}{H_{r+1}(\omega)}\Bigr),\qquad 0\le j\le n ,
\end{equation}
a positive rescaling of each component. Since $\omega_{(n)}=-\omega$ and $H_s(-\omega)=(-1)^sH_s(\omega)$, the normalised outer
vectors are $\tilde V_0=(1,1)$ and $\tilde V_n=(-1)^r(1,-1)$, and by Proposition~\ref{prop:Awronskian} every step from $\tilde V_{j-1}$ to
$\tilde V_j$ is still a clockwise rotation by an angle $s_j\in(0,\pi)$. The sum $\sum_js_j$ is the \emph{total turning} of the family (Figure~\ref{fig:turning}).
\begin{figure}[ht]
\centering
\begin{tikzpicture}[x=1cm,y=1cm,>=stealth]
\draw[gray!60,->] (-2.4,0) -- (2.6,0);
\draw[gray!60,->] (0,-2.4) -- (0,2.6);
\foreach \a/\l/\p in {45/{\tilde V_0=(1,1)}/{above right}, -15/{\tilde V_1}/right, -85/{\tilde V_2}/below, -165/{\tilde V_3}/left, 135/{\tilde V_4=-(1,-1)}/{above left}}
  \draw[->,thick] (0,0) -- (\a:2) node[\p,font=\small] {$\l$};
\draw[->] (45:0.8) arc (45:-15:0.8);     \node[font=\scriptsize] at (15:1.05) {$s_1$};
\draw[->] (-15:1.0) arc (-15:-85:1.0);   \node[font=\scriptsize] at (-50:1.27) {$s_2$};
\draw[->] (-85:1.2) arc (-85:-165:1.2);  \node[font=\scriptsize] at (-125:1.47) {$s_3$};
\draw[->] (-165:1.4) arc (-165:-225:1.4);\node[font=\scriptsize] at (-195:1.67) {$s_4$};
\end{tikzpicture}
\caption{A normalised Hankel family with $n=4$ and $r=1$. The vectors $\tilde V_0,\dots,\tilde V_4$ turn clockwise by the steps
$s_j\in(0,\pi)$, from $\tilde V_0=(1,1)$ to $\tilde V_4=(-1)^r(1,-1)$, with total turning $\sum_js_j=(r+\frac12)\pi=\frac32\pi$
(Proposition~\ref{prop:Aturning}). Lemma~\ref{lem:match}(ii) brings the rod lines of an element of $\Ctor$ of type ALE, normalised by
the model at infinity, into the same form.}
\label{fig:turning}
\end{figure}
\begin{proposition}[total turning]\label{prop:Aturning}
Let $\omega>0$ have $n$ atoms and $0\le r\le n-1$. The total turning of the normalised Hankel family $(n,r)$ of $\omega$ is
\begin{equation}\label{eq:totalturning}
\sum_{j=1}^ns_j=\Bigl(r+\frac12\Bigr)\pi .
\end{equation}
In particular it depends neither on $n$ nor on the nodes and the weights.
\end{proposition}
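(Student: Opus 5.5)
The plan is to pin the total turning down modulo $2\pi$ by its endpoints, and then determine the remaining integer by induction on $n$, using the limits of Lemma~\ref{lem:Climits}.

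\emph{Step 1: the value modulo $2\pi$.} By Proposition~\ref{prop:Awronskian}, each step from $\tilde V_{j-1}$ to $\tilde V_j$ is a clockwise rotation by $s_j\in(0,\pi)$ followed by a positive rescaling. The positive rescaling of the components in \eqref{eq:Vnorm} does not change the sign of the determinant. Hence $\sum_js_j$ is congruent modulo $2\pi$ to the clockwise angle from $\tilde V_0=(1,1)$, at angle $\pi/4$, to $\tilde V_n=(-1)^r(1,-1)$, at angle $-\pi/4+r\pi$. This gives
\begin{equation}
\sum_js_j\equiv\Bigl(r+\tfrac12\Bigr)\pi \pmod{2\pi}.
\end{equation}

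\emph{Step 2: local constancy.} The vectors $\tilde V_j$ are non-zero and depend continuously on $(w,a)$, which ranges over a connected set. Consecutive vectors are never parallel, since the Wronskian \eqref{eq:wronskian} is strictly negative. So each $s_j\in(0,\pi)$ is continuous in $(w,a)$, and the sum is a continuous function taking values in the discrete set $(r+\frac12)\pi+2\pi\Z$. It is therefore a constant $T(n,r)$, independent of the nodes and weights. It remains to show $T(n,r)=(r+\frac12)\pi$, which I would do by induction on $n$. The base case $n=1$, $r=0$ has $\tilde V_0=(1,1)$ and $\tilde V_1=(1,-1)$, a single step of $\pi/2$.

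\emph{Step 3: induction for $r\le n-2$.} Send $a_n\to0$ and apply Lemma~\ref{lem:Climits}(i).
\begin{itemize}[leftmargin=*]
\item The normalising constants $H_r(\omega)$ and $H_{r+1}(\omega)$ converge to the corresponding positive values for the reduced measure, so the normalised vectors converge to the normalised family $(n-1,r)$ on $w_1,\dots,w_{n-1}$.
\item Both $\tilde V_{n-1}$ and $\tilde V_n$ converge to its last vector, so $s_n\to0$.
\item The steps $s_1,\dots,s_{n-1}$ converge to the steps of the reduced family, because angles between non-zero, non-parallel limit vectors are continuous.
\end{itemize}
Hence $T(n,r)=T(n-1,r)$.

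\emph{Step 4: induction for $r=n-1\ge1$.} Here the previous limit is not available, so send $a_1\to\infty$ and use Lemma~\ref{lem:Climits}(ii).
\begin{itemize}[leftmargin=*]
\item The sign $\sigma_{1j}$ is $+1$ for $j=0$ and $-1$ for $j\ge1$.
\item $a_1^{-1}H_s(\omega)\to H_{s-1}(\omega^{(1)})>0$, so after normalisation $\tilde V_0=(1,1)$ stays fixed.
\item For $j\ge1$, $\tilde V_j$ converges to minus the $(j-1)$-th vector of the normalised family $(n-1,r-1)$ of $\omega^{(1)}$.
\end{itemize}
In particular $\tilde V_1\to-(1,1)$, so $s_1\to\pi$. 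Negating all later vectors does not change the steps between them, so the remaining steps converge to those of the family $(n-1,r-1)$. Hence $T(n,r)=\pi+T(n-1,r-1)=\pi+(r-\frac12)\pi$.

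Both cases give $T(n,r)=(r+\frac12)\pi$, which proves \eqref{eq:totalturning}.

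\emph{Main obstacle.} The delicate point is the limit in Step 4, where a step tends to the excluded endpoint $\pi$. The step $s_1$ must converge to $\pi$ from inside $(0,\pi)$ and not jump. This is guaranteed because $s_1$ stays in $(0,\pi)$ for every finite $a_1$, and the limiting vectors are antiparallel. Combined with Step~1, which shows the sum is congruent to a fixed value modulo $2\pi$, only the identification of the integer needs this limit, and a limit of $(r+\frac12)\pi+2\pi k$ within that lattice determines $k$.
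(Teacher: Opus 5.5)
Your proof is correct. Its first half is the paper's argument: $\sum_js_j$ is constant on the connected parameter set because it takes values in $(r+\frac12)\pi+2\pi\Z$, and $T(n,r)=T(n-1,r)$ for $r\le n-2$ follows by passing to the face $a_k=0$ with Lemma~\ref{lem:Climits}(i).

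The two routes differ at $n=r+1$.

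\emph{The paper's route.} It evaluates this case directly. The $r$-sets are the complements of single atoms, so $\tilde V_j=(-1)^j(1-2F_j,1)$ with $0=F_0<F_1<\dots<F_n=1$. The vectors $(1-2F_j,1)$ turn counter-clockwise by steps $\delta_j$ with $\sum_j\delta_j=\pi/2$, and the clockwise steps of the family are $\pi-\delta_j$.

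\emph{Your route.} You send $a_1\to\infty$ with Lemma~\ref{lem:Climits}(ii). This splits off a step tending to $\pi$ and reduces $(n,n-1)$ to $(n-1,n-2)$, and iterating brings you down to the trivial family $(1,0)$. The limits are legitimate for two reasons:
\begin{itemize}
\item $s_j$ is the unsigned angle between $\tilde V_{j-1}$ and $\tilde V_j$, so it is continuous even where the limit vectors are parallel or antiparallel; this settles your ``main obstacle'' without the lattice argument.
\item The normalising constants $H_{r-1}(\omega^{(1)})$ and $H_r(\omega^{(1)})$ are positive for $r=n-1$.
\end{itemize}

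\emph{What each buys.} Your argument is more uniform. It uses only the light and heavy degenerations of Lemma~\ref{lem:Climits}, which is the same dichotomy ($s_k\to0$ for a light atom, $s_k\to\pi$ for a heavy one) that drives Proposition~\ref{prop:Aproper}. The paper's explicit formula at $n=r+1$ gives more information. It is reused in the base case of Proposition~\ref{prop:Adegree} to show that $L_w$ is a diffeomorphism onto $C_r$ there.
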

\begin{proof}
The sum is continuous in $\omega$ and congruent modulo $2\pi$ to the fixed angle from $\tilde V_0$ to $\tilde V_n$, so it is
constant on the connected set of positive measures. For $n>r+1$ it extends continuously to a face $a_k=0$, where by
Lemma~\ref{lem:Climits}(i) the family becomes the family $(n-1,r)$ of $\omega\setminus k$, whose components $H_r,H_{r+1}$ are positive and whose steps lie in $(0,\pi)$ by Proposition~\ref{prop:Awronskian}, with one step equal to $0$. So the sum does not depend on $n$, and
it suffices to take $n=r+1$. Then the $r$-sets are the complements of single atoms $k$, with
$P_{\{1,\dots,n\}\setminus k}(\omega)=\tau_k\prod_la_l$, $\tau_k=\Delta^2_{\{1,\dots,n\}\setminus k}/a_k$. By \eqref{eq:PS} and \eqref{eq:omegaj}, the term
$P_S(\omega_{(j)})$ of Heine's formula \eqref{eq:heine} differs from $P_S(\omega)$ by a factor $-1$ for each atom of $S$ to the left of $I_j$,
i.e.\ by $(-1)^{|S\cap\{1,\dots,j\}|}$. For the single $(r+1)$-set $\{1,\dots,n\}$ this gives $H_{r+1}(\omega_{(j)})=(-1)^jH_{r+1}(\omega)$, and
for the $r$-set $\{1,\dots,n\}\setminus k$, the term of $k$, it gives the sign $(-1)^{j-[k\le j]}$. Hence
\begin{equation}
\tilde V_j=(-1)^j(1-2F_j,1),\qquad F_j=\frac{\sum_{k\le j}\tau_k}{\sum_k\tau_k},\qquad 0=F_0<F_1<\dots<F_n=1 .
\end{equation}
The vectors $u_j=(1-2F_j,1)$ turn
counter-clockwise by $\delta_j\in(0,\pi)$ from $(1,1)$ to $(-1,1)$, so $\sum\delta_j=\pi/2$. Because of the factor $(-1)^j$, the
clockwise step from $\tilde V_{j-1}$ to $\tilde V_j$ is $\pi-\delta_j$, and $\sum s_j=n\pi-\pi/2=(r+\frac12)\pi$.
\end{proof}

\emph{Total turning and the degree.} Proposition~\ref{prop:Aturning} is what matches Hankel families with rod structures. By
Lemma~\ref{lem:match}, the rod lines of an element of $\Ctor$ of type ALE, normalised by the model at infinity, determine an integer
$r$, the degree index, and their steps lie in the cell $C_r$ of \eqref{eq:cell}, the set of steps with the sum $(r+\frac12)\pi$. By
Proposition~\ref{prop:Aturning}, the steps of the normalised family $(n,r)$ lie in the same cell, whatever its nodes and weights. So
the family $(n,r)$ can only produce rod structures of degree index $r$, and \S\ref{sec:surj} shows that it produces all of them.
By Corollary~\ref{cor:CinCtor}(iii) and Lemma~\ref{lem:degreeindex}, the Li--Sun degrees of an ALE soliton metric whose rod lines
are those of the Hankel family $(n,r)$ are $n-1-r$ and $r$, one for each orientation.

\subsection{An atom at infinity}\label{subsec:atominf}
The ALF and AF soliton metrics of \S\ref{sec:far} arise from ALE soliton metrics with one more node $\Omega$, which is sent to
infinity. On the side of the measures, the atom at $\Omega$, with its weight rescaled, becomes an atom $\gamma\delta_\infty$ at the
point $\infty$ of $\RP^1$, and the Hankel determinant of order $r+1$ acquires the correction defined next.
For a measure $\tilde\omega$ on $\{w_k\}$ with real or complex weights and $s\ge0$ let
\begin{equation}\label{eq:Sigmas}
\Sigma_s(\tilde\omega)=\sum_{|S|=s}P_S(\tilde\omega)\,e_1(w_S),\qquad e_1(w_S)=\sum_{k\in S}w_k ,
\end{equation}
so that $\Sigma_0(\tilde\omega)=0$, and put $H_s=\Sigma_s=0$ for all $s<0$. By \eqref{eq:heine}, $H_s(\tilde\omega)=\Sigma_s(\tilde\omega)=0$ when $s$
exceeds the number of atoms. The sum of all nodes is written $e_1(w)=\sum_kw_k$. Splitting off an atom $k$ as in \eqref{eq:Hsplit}, with
$e_1(w_S)=w_k+e_1(w_{S\setminus k})$ for $k\in S$, gives
\begin{equation}\label{eq:splitSigma}
\Sigma_s(\tilde\omega)=\Sigma_s(\tilde\omega')+\tilde\omega_k\Bigl(\Sigma_{s-1}\bigl((t-w_k)^2\tilde\omega'\bigr)+w_kH_{s-1}\bigl((t-w_k)^2\tilde\omega'\bigr)\Bigr),
\qquad s\ge1 .
\end{equation}

For $\gamma\in\R$ and $r\ge0$, the \emph{shifted Hankel determinant} is
\begin{equation}\label{eq:shifted}
H^{(\gamma)}_{r+1}(\tilde\omega)=H_{r+1}(\tilde\omega)-2\gamma\,\Sigma_r(\tilde\omega)-\gamma^2H_{r-1}(\tilde\omega).
\end{equation}
It is the Hankel determinant of order $r+1$ with $\gamma$ added to the moment $h_{2r-1}$:
$H^{(\gamma)}_{r+1}(\tilde\omega)=\det\bigl(h_{i+l}(\tilde\omega)+\gamma\,\delta_{i+l,\,2r-1}\bigr)_{i,l=0}^{r}$. For $r=0$ both sides are $H_1(\tilde\omega)$.
For $r\ge1$, $\gamma$ is added at the entries $(r-1,r)$ and $(r,r-1)$. By multilinearity in the rows $r-1$ and $r$, the linear terms are $\gamma$
times the cofactors of these two entries, each equal to $-\Sigma_r(\tilde\omega)$ by the Cauchy--Binet formula (Appendix~\ref{app:identities}),
since $\det(w_k^l)_{k\in S,\,l\in\{0,\dots,r-2,r\}}=e_1(w_S)\Delta(w_S)$ for a set $S$ of $r$ atoms in increasing order, with the Vandermonde
determinant $\Delta(w_S)=\prod_{k<l\in S}(w_l-w_k)$ (compare the coefficients of $x^{r-1}$ in the Vandermonde determinant
$\Delta(w_S)\prod_{k\in S}(x-w_k)$ of the points $(w_S,x)$). The quadratic term is $-\gamma^2H_{r-1}(\tilde\omega)$, by the Laplace
expansion \eqref{eq:laplace} along the rows $r-1$ and $r$.

\begin{proposition}[an atom sent to infinity]\label{prop:ALFlimit}
Let $\omega=\sum_{k=1}^na_k\delta_{w_k}$ with $a_k>0$, $1\le r\le n$ and $\gamma>0$. Add an atom at $\Omega>w_n$ with the weight
$a_\Omega=\gamma\Omega^{1-2r}$, and let $\Omega\to+\infty$. Then the Hankel vectors $V^\Omega_j$ of the family $(n+1,r)$ on the first $n+1$ rods of
the nodes $w_1,\dots,w_n,\Omega$ (these are $I_0,\dots,I_{n-1}$ and $(w_n,\Omega)$) satisfy
\begin{equation}\label{eq:ALFvec}
S_\Omega V^\Omega_j=V^{(\gamma)}_j+O(\Omega^{-1}),\qquad V^{(\gamma)}_j=\Bigl(H_r(\omega_{(j)}),\ H^{(\gamma)}_{r+1}(\omega_{(j)})\Bigr),\qquad 0\le j\le n ,
\end{equation}
where $S_\Omega(X,Y)=(X,Y-\gamma\Omega X)$ is a shear of $\R^2$ of determinant $1$, the same for all $j$.
\end{proposition}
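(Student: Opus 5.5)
The plan is to split off the atom at $\Omega$ by \eqref{eq:Hsplit}, expand the result in powers of $\Omega^{-1}$ through Heine's formula, and check that the shear $S_\Omega$ cancels the single divergent term. Fix $0\le j\le n$. The rod $I_j$ (or $(w_n,\Omega)$ for $j=n$) lies to the left of $\Omega$, so in the sign-reversed measure of the $n+1$ nodes the atom at $\Omega$ keeps its positive weight $a_\Omega$. The rest of that measure is exactly $\omega_{(j)}$. So \eqref{eq:Hsplit}, applied with the atom $\Omega$ split off, gives for every $s\ge1$
\begin{equation*}
H_s\bigl(\omega_{(j)}+a_\Omega\delta_\Omega\bigr)=H_s(\omega_{(j)})+\gamma\,\Omega^{1-2r}\,H_{s-1}\bigl((t-\Omega)^2\omega_{(j)}\bigr).
\end{equation*}
Only the second term depends on $\Omega$, and I will expand it for $s=r$ and $s=r+1$.

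For the expansion I use Heine's formula (Lemma~\ref{lem:Aheine}) for the measure $(t-\Omega)^2\omega_{(j)}$. It is a finite sum over sets $S$ of $s-1$ atoms of
\begin{equation*}
P_S(\omega_{(j)})\prod_{k\in S}(w_k-\Omega)^2=\Omega^{2(s-1)}P_S(\omega_{(j)})\Bigl(1-\frac{2e_1(w_S)}{\Omega}+O(\Omega^{-2})\Bigr).
\end{equation*}
Summing over $S$ and using \eqref{eq:heine} and \eqref{eq:Sigmas}, I expect
\begin{equation*}
H_{s-1}\bigl((t-\Omega)^2\omega_{(j)}\bigr)=\Omega^{2s-2}\Bigl(H_{s-1}(\omega_{(j)})-2\Omega^{-1}\Sigma_{s-1}(\omega_{(j)})+O(\Omega^{-2})\Bigr).
\end{equation*}
The error is uniform, since the sum has finitely many terms with fixed nodes and weights. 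The degenerate orders need no separate treatment: by the convention $H_s=\Sigma_s=0$ for $s<0$ or $s$ larger than the number of atoms, the case $r=n$, where $H_{r+1}(\omega_{(j)})=0$, is covered.

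The two components then follow directly. For $s=r$ the prefactor is $\gamma\Omega^{1-2r}\Omega^{2r-2}=\gamma\Omega^{-1}$, so the first component is $H_r(\omega_{(j)})+\gamma\Omega^{-1}H_{r-1}(\omega_{(j)})+O(\Omega^{-2})$. Here $r\ge1$ is what makes $H_{r-1}$ an honest Hankel determinant. For $s=r+1$ the prefactor is $\gamma\Omega$, so the second component is
\begin{equation*}
H_{r+1}(\omega_{(j)})+\gamma\Omega H_r(\omega_{(j)})-2\gamma\Sigma_r(\omega_{(j)})+O(\Omega^{-1}).
\end{equation*}

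The main obstacle is this second component, which diverges like $\gamma\Omega H_r$; the shear $S_\Omega$ is designed to remove exactly that term. The bookkeeping is delicate because $\gamma\Omega$ multiplies the whole first component, including its $O(\Omega^{-1})$ term. Multiplying the first component by $\gamma\Omega$ gives $\gamma\Omega H_r+\gamma^2H_{r-1}+O(\Omega^{-1})$. This is why the first component must be expanded to order $\Omega^{-1}$ with an $O(\Omega^{-2})$ remainder, and so why the product is expanded to second order. Subtracting, $Y-\gamma\Omega X$ equals $H_{r+1}-2\gamma\Sigma_r-\gamma^2H_{r-1}+O(\Omega^{-1})$, evaluated at $\omega_{(j)}$. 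By \eqref{eq:shifted} this is $H^{(\gamma)}_{r+1}(\omega_{(j)})$. The first component of $S_\Omega V^\Omega_j$ is unchanged and equals $H_r(\omega_{(j)})+O(\Omega^{-1})$, which gives \eqref{eq:ALFvec}. Finally, $S_\Omega$ involves only $\gamma$ and $\Omega$, so it is the same for all $j$, and it has determinant $1$.
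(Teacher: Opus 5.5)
Your proof is correct and follows the paper's argument essentially line by line. You split off the atom at $\Omega$ by \eqref{eq:Hsplit}, expand $H_{s-1}\bigl((t-\Omega)^2\omega_{(j)}\bigr)$ to first order in $\Omega^{-1}$ through Heine's formula, and let the shear cancel the $\gamma\Omega H_r$ term, so that what remains is $H^{(\gamma)}_{r+1}(\omega_{(j)})$ by \eqref{eq:shifted}.
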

\begin{proof}
On the rods considered the atom at $\Omega$ is never sign-reversed, so the measure of the rod $j$ is $\omega_{(j)}+a_\Omega\delta_\Omega$. By
\eqref{eq:Hsplit} for the atom at $\Omega$,
\begin{equation}
H_s(\omega_{(j)}+a_\Omega\delta_\Omega)=H_s(\omega_{(j)})+a_\Omega\,H_{s-1}\bigl((t-\Omega)^2\omega_{(j)}\bigr),
\end{equation}
and by Heine's formula, since $\prod_{k\in S}(w_k-\Omega)^2=\Omega^{2|S|}\bigl(1-2\Omega^{-1}e_1(w_S)+O(\Omega^{-2})\bigr)$, for every measure
$\tilde\omega$ on the nodes $w_1,\dots,w_n$
\begin{equation}
H_{s-1}\bigl((t-\Omega)^2\tilde\omega\bigr)=\Omega^{2s-2}\bigl(H_{s-1}(\tilde\omega)-2\Omega^{-1}\Sigma_{s-1}(\tilde\omega)+O(\Omega^{-2})\bigr).
\end{equation}
Write $V^\Omega_j=(X_j,Y_j)$ and $H_s=H_s(\omega_{(j)})$, $\Sigma_s=\Sigma_s(\omega_{(j)})$. With $a_\Omega=\gamma\Omega^{1-2r}$,
\begin{equation}
X_j=H_r+\gamma\Omega^{-1}H_{r-1}+O(\Omega^{-2}),\qquad Y_j=H_{r+1}+\gamma\Omega H_r-2\gamma\Sigma_r+O(\Omega^{-1}),
\end{equation}
so $Y_j-\gamma\Omega X_j=H_{r+1}-2\gamma\Sigma_r-\gamma^2H_{r-1}+O(\Omega^{-1})=H^{(\gamma)}_{r+1}(\omega_{(j)})+O(\Omega^{-1})$.
\end{proof}

\begin{definition}[Hankel family with an atom at infinity]\label{def:ALFfamily}
For $n\ge1$, $0\le r\le n$, nodes $w_1<\dots<w_n$, weights $a_k>0$ and $\gamma\ge0$, the Hankel family $(n,r,\gamma)$ is the sequence
\begin{equation}
V_j=V^{(\gamma)}_j=(H_r(\omega_{(j)}),H^{(\gamma)}_{r+1}(\omega_{(j)}))\in\R^2,
\end{equation}
$0\le j\le n$, of \eqref{eq:ALFvec}. Its soliton data are the
positive measure $\omega+\gamma\,\delta_\infty$ on $\RP^1=\R\cup\{\infty\}$. For $r=0$ the vectors do not depend on $\gamma$, and Proposition~\ref{prop:ALFlimit} holds trivially: the added atom has the weight
$a_\Omega=\gamma\Omega$, it enters only the second component, by $\gamma\Omega X_j$, and the shear removes it. For $r=n$, $H_{n+1}(\omega_{(j)})=0$, so $V_j=\bigl(H_n(\omega_{(j)}),-2\gamma\Sigma_n(\omega_{(j)})-\gamma^2H_{n-1}(\omega_{(j)})\bigr)$; for $\gamma=0$ all second components vanish, so Proposition~\ref{prop:ALFstructure}(i) assumes $\gamma>0$ for $r=n$.
\end{definition}

\begin{proposition}[structure of the family]\label{prop:ALFstructure}
Let $\omega>0$ with $n$ atoms, $0\le r\le n$, $\gamma\ge0$, and, for $r\ge1$, $\gamma_*=\sqrt{H_{r+1}(\omega)/H_{r-1}(\omega)}$ (so $\gamma_*=0$ for
$r=n$). Let $(V_j)$ be the family $(n,r,\gamma)$, and $U_r$ as in \eqref{eq:wronskian}.
\begin{enumerate}[leftmargin=*,label=\textup{(\roman*)}]
\item\label{it:alfs1} For $1\le j\le n$,
\begin{equation}\label{eq:wronskianALF}
\det(V_{j-1},V_j)=-2a_j\bigl(U_r(w_j)+\gamma\,U_{r-1}(w_j)\bigr)^2<0
\end{equation}
(for $r=n$ assume $\gamma>0$; for $r=0$ read $U_{-1}=0$, so that $\det(V_{j-1},V_j)=-2a_j$). So all steps are clockwise.
\item\label{it:alfs2} The outer vectors satisfy
\begin{equation}\label{eq:detVoVn}
\det(V_0,V_n)=2(-1)^{r+1}H_r(\omega)\bigl(H_{r+1}(\omega)-\gamma^2H_{r-1}(\omega)\bigr)
\end{equation}
(for $r=0$ this is $-2H_1(\omega)$). For $r\ge1$, $V_0\parallel V_n$ if and only if $\gamma=\gamma_*$, and then $V_n=(-1)^rV_0$.
\item\label{it:alfs4} For $r\ge1$, as $\gamma\to\infty$, after the second component is divided by $-\gamma^2$, the family tends to the Hankel family
$(n,r-1)$ with the two components exchanged (both operations reverse the orientation, so the steps remain clockwise, consistent with (i) and Proposition~\ref{prop:Awronskian}).
\end{enumerate}
\end{proposition}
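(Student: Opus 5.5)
The plan is to deduce all three parts from the ALE results already proved, applied to the family $(n+1,r)$ with one more node $\Omega$ that is sent to infinity (Proposition~\ref{prop:ALFlimit}), together with a direct computation from the definition \eqref{eq:shifted}.

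\emph{Part (i).} Add the atom at $\Omega>w_n$ with weight $a_\Omega=\gamma\Omega^{1-2r}$. For the family $(n+1,r)$ on the nodes $w_1,\dots,w_n,\Omega$, Proposition~\ref{prop:Awronskian} gives
\begin{equation*}
\det(V^\Omega_{j-1},V^\Omega_j)=-2a_jU^\Omega_r(w_j)^2,
\end{equation*}
where $U^\Omega_r$ is computed with the extra atom. This requires $r\le n$, which holds. The shear $S_\Omega$ has determinant $1$, so by \eqref{eq:ALFvec} the left side tends to $\det(V_{j-1},V_j)$. To find the limit of the right side, split $U^\Omega_r(w_j)$ according to whether $S$ contains $\Omega$:
\begin{equation*}
U^\Omega_r(w_j)=U_r(w_j)+a_\Omega|w_j-\Omega|\sum_{|S'|=r-1}\prod_{k\in S'}a_k|w_j-w_k|\,\Delta^2_{S'}\prod_{k\in S'}(\Omega-w_k)^2 .
\end{equation*}
The second term is $\gamma\Omega^{1-2r}\cdot\Omega\cdot\Omega^{2r-2}\bigl(U_{r-1}(w_j)+O(\Omega^{-1})\bigr)\to\gamma U_{r-1}(w_j)$. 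This gives \eqref{eq:wronskianALF}. The sign is strict: for $r\le n-1$ we have $U_r(w_j)>0$, and for $r=n$ we have $U_n(w_j)=0$ (since $S$ would have to contain $j$, giving a zero factor) but $\gamma U_{n-1}(w_j)>0$.

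\emph{Part (ii).} Here I would compute directly. Since $\omega_{(n)}=-\omega$ and $H_s$, $\Sigma_s$ are homogeneous of degree $s$,
\begin{equation*}
V_n=(-1)^r\bigl(H_r,\,-H_{r+1}-2\gamma\Sigma_r+\gamma^2H_{r-1}\bigr),\qquad V_0=\bigl(H_r,\,H_{r+1}-2\gamma\Sigma_r-\gamma^2H_{r-1}\bigr),
\end{equation*}
all evaluated at $\omega$. The $\Sigma_r$ terms cancel in the determinant, leaving
\begin{equation*}
\det(V_0,V_n)=(-1)^rH_r\bigl(-2H_{r+1}+2\gamma^2H_{r-1}\bigr),
\end{equation*}
which is \eqref{eq:detVoVn}. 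Since $H_r(\omega)>0$ and $H_{r-1}(\omega)>0$ by Heine, the determinant vanishes exactly when $\gamma=\gamma_*$. At that value the second component of $V_n$ is $(-1)^r(-2\gamma\Sigma_r)$, and the second component of $V_0$ is also $-2\gamma\Sigma_r$. Hence $V_n=(-1)^rV_0$.

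\emph{Part (iii).} Divide the second component $H_{r+1}-2\gamma\Sigma_r-\gamma^2H_{r-1}$ by $-\gamma^2$; it tends to $H_{r-1}(\omega_{(j)})$. Exchanging the two components then gives $(H_{r-1},H_r)(\omega_{(j)})$, which is the family $(n,r-1)$. Each of the two operations has negative determinant, so their composition preserves orientation and the steps remain clockwise, consistent with Proposition~\ref{prop:Awronskian}.

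The only delicate step is the limit of $U^\Omega_r$ in part (i), in particular the case $r=n$ where the whole limit comes from the $\gamma$ term. The rest is bookkeeping.
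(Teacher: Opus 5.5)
Your proposal is correct and is essentially the paper's proof: (i) via Proposition~\ref{prop:Awronskian} for the measure with the extra atom $\gamma\Omega^{1-2r}\delta_\Omega$, combined with the determinant-one shear of Proposition~\ref{prop:ALFlimit} (your direct split of $U^\Omega_r(w_j)$ is the paper's split of $H_r((t-w_j)\hat\omega)$), (ii) by the same computation with $\omega_{(n)}=-\omega$, and (iii) from the leading term $-\gamma^2H_{r-1}$. The one thing you skip is the edge cases, which the paper treats separately. For $\gamma=0$ the added atom has weight zero, so (i) should be taken directly from Proposition~\ref{prop:Awronskian}, or obtained by continuity since both sides are polynomials in $\gamma$. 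For $r=0$, compute $\det(V_{j-1},V_j)=h_0(\omega_{(j)})-h_0(\omega_{(j-1)})=-2a_j$ directly.
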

\begin{proof}
\ref{it:alfs1} For $r=0$, $V_j=(1,h_0(\omega_{(j)}))$, and $h_0(\omega_{(j)})-h_0(\omega_{(j-1)})=-2a_j$ since the atom at $w_j$ changes sign. For
$\gamma=0$ and $1\le r\le n-1$, $(V_j)$ is the Hankel family $(n,r)$, since $H^{(0)}_{r+1}=H_{r+1}$, and \eqref{eq:wronskianALF} is
Proposition~\ref{prop:Awronskian}. Let $r\ge1$ and $\gamma>0$. For $\Omega>w_n$ and $a_\Omega=\gamma\Omega^{1-2r}$, apply
Proposition~\ref{prop:Awronskian} to the measure $\omega+a_\Omega\delta_{\Omega}>0$ on $n+1$ nodes at the turning point $w_j$; the atom at $\Omega$ is
not reversed on the rods $j-1$, $j$. Let $\hat\omega$ be the measure $\omega_{(j-1)}+a_\Omega\delta_{\Omega}$ without the atom $j$, and
$\tilde\omega=\hat\omega-a_\Omega\delta_{\Omega}$. By \eqref{eq:Hsplit} for the atom at $\Omega$, which has the weight $a_\Omega(\Omega-w_j)$ in $(t-w_j)\hat\omega$,
\begin{equation}
\begin{aligned}
H_r\bigl((t-w_j)\hat\omega\bigr)&=H_r\bigl((t-w_j)\tilde\omega\bigr)+a_\Omega(\Omega-w_j)H_{r-1}\bigl((t-\Omega)^2(t-w_j)\tilde\omega\bigr)\\
&\to U_r(w_j)+\gamma U_{r-1}(w_j)\qquad(\Omega\to\infty),
\end{aligned}
\end{equation}
since $H_{r-1}((t-\Omega)^2(t-w_j)\tilde\omega)=\Omega^{2r-2}\bigl(H_{r-1}((t-w_j)\tilde\omega)+O(\Omega^{-1})\bigr)$ by the expansion in the proof of
Proposition~\ref{prop:ALFlimit}, and $H_p((t-w_j)\tilde\omega)=U_p(w_j)$ as in the proof of Proposition~\ref{prop:Awronskian}. By
Proposition~\ref{prop:ALFlimit}, the vectors $V^{\Omega}_j$ of the family on $n+1$ nodes are $S_\Omega^{-1}(V_j+O(1/\Omega))$, and $\det S_\Omega=1$, so
\begin{equation}
\det(V_{j-1},V_j)=\lim_{\Omega\to\infty}\det(V^{\Omega}_{j-1},V^{\Omega}_j)=-2a_j\bigl(U_r(w_j)+\gamma U_{r-1}(w_j)\bigr)^2 .
\end{equation}
The measure $(t-w_j)\tilde\omega$ has $n-1$ positive atoms, so $U_{r-1}(w_j)>0$, and $U_r(w_j)>0$ for $r\le n-1$; for $r=n$, $U_n=0$.
\ref{it:alfs2} Since $\omega_{(n)}=-\omega$, $H_r(\omega_{(n)})=(-1)^rH_r(\omega)$, and \eqref{eq:shifted} gives
\begin{equation}
H^{(\gamma)}_{r+1}(\omega_{(n)})=(-1)^{r+1}H^{(-\gamma)}_{r+1}(\omega)
=(-1)^{r+1}\bigl(H_{r+1}+2\gamma\Sigma_r-\gamma^2H_{r-1}\bigr),
\end{equation}
where $H_s=H_s(\omega)$ and $\Sigma_r=\Sigma_r(\omega)$, since $H_s(-\omega)=(-1)^sH_s(\omega)$ and $\Sigma_s(-\omega)=(-1)^s\Sigma_s(\omega)$. With
$V_0=(H_r,H_{r+1}-2\gamma\Sigma_r-\gamma^2H_{r-1})$ this gives \eqref{eq:detVoVn}. For $r\ge1$, $H_r>0$ and $\gamma\ge0$, so
\eqref{eq:detVoVn} vanishes if and only if $\gamma=\gamma_*$; then the second components of $V_0$ and $V_n$ are $-2\gamma\Sigma_r$ and
$(-1)^{r+1}2\gamma\Sigma_r$, so $V_n=(-1)^rV_0$.

\ref{it:alfs4} The leading term of $H^{(\gamma)}_{r+1}(\omega_{(j)})$ in $\gamma$ is $-\gamma^2H_{r-1}(\omega_{(j)})$.
\end{proof}

\begin{remark}[the twistor description]\label{rem:twistor}
Fletcher and Woodhouse encode a stationary axisymmetric or toric Ricci-flat metric by a symmetric $2\times2$ patching matrix adapted
to one rod; passing to the adjacent rod at a turning point is a local switching of the patching data, and the basis of Killing vectors
adapted to the rod changes by a constant matrix \cite[\S\S3.2, 4.2 and Thm.~4.1]{Fletcher1990}, \cite[(20), (21)]{FW1990},
\cite[\S3.4, (27)]{Tod2024}. For the Chen--Teo instanton the patching matrix is rational, with three poles \cite[\S3.1, (3.9)]{DT2024}.
These works treat one turning point at a time. Dunajski, Mason and Tod are developing a twistor construction of toric
gravitational instantons \cite{DMT2026}. The Hankel families give the rod vectors of all rods in closed form, and we are not
aware of a twistor counterpart of the total turning of Proposition~\ref{prop:Aturning}.
\end{remark}

\section{The ALE soliton metrics: the closed formula}\label{sec:ale}
This section proves the closed formula for the ALE soliton matrix (Theorem~\ref{thm:CF}) and removes the phantom solitons from it
(Lemma~\ref{lem:Cnull}).

\emph{Notation.} As in \S\ref{subsec:ALEsolitons}, there are $N=2m+1$ nodes $w_1<\dots<w_N$ with parameters
$c_k=[\alpha_k:\beta_k]\in\RP^1$, $G$ is the ALE soliton matrix \eqref{eq:bordered}, and $\mathcal A$, $\mathcal A^\circ$ and $c^*$ are the
admissible set, the positive cone and the base point of Definition~\ref{def:admissible}; the parameters $c_k\in\{0,\infty\}$ are
the phantom solitons. Hankel determinants are as in \S\ref{subsec:heine}.

\subsection{The reduced form}\label{subsec:Creduced}
Put
\begin{equation}\label{eq:yk}
y_k=\frac{\mu_k}\rho,\qquad x_k=-\frac1{y_k}\qquad(1\le k\le N),
\end{equation}
so that $\mub_k=-\rho^2/\mu_k=\rho x_k$. Since $\mu_w$ is strictly increasing in $w$, as $\partial_w\mu_w=1-(z-w)/r_w>0$, $0<y_1<\dots<y_N$ and $x_1<\dots<x_N<0$. We write
$y=(y_1,\dots,y_N)$ and $\mathsf D_y=\diag(y)$; $c$, $\alpha$ and $\beta$ are the column vectors of the soliton parameters. The \emph{reduced
matrix} $M$ and the two \emph{border vectors} $\hat c_0,\hat c_1\in\R^N$ are
\begin{equation}\label{eq:Mproj} M_{kl}=\frac{\alpha_k\beta_l+\beta_k\alpha_l}{1+y_ky_l},\qquad \hat c_0=\beta,\qquad \hat c_1=\alpha ; \end{equation}
the border vectors are those of \eqref{eq:bordered} (\S\ref{subsec:ALEsolitons}). For finite $c$ and $(\alpha_k,\beta_k)=(1,c_k)$ this is $M_{kl}=(c_k+c_l)/(1+y_ky_l)$, $\hat c_0=c$, $\hat c_1=(1,\dots,1)^T$. Let $\hat G$ be
the symmetric $2\times2$ matrix with entries
\begin{equation}\label{eq:Ghat}
\hat G_{ab}=\frac1{\prod_ky_k}\Bigl(\hat\eta_{ab}-\tfrac12\,\hat c_a^{\,T}M^{-1}\hat c_b\Bigr),\qquad a,b\in\{0,1\},
\end{equation}
wherever $\det M\ne0$; it depends only on $(y,c)$.

\begin{lemma}[reduced form]\label{lem:Creduced}
Let $M$, $\hat c_0$, $\hat c_1$ and $\hat G$ be as in \eqref{eq:Mproj} and \eqref{eq:Ghat}. Then $D=\det K=(2\rho^2)^{-N}\prod_ky_k^2\det M$ for all
$c\in(\RP^1)^N$, in any representatives. Wherever $\det M\ne0$, $G=\rho\hat G$ and $\det G=\rho^2/4$, and $\hat G$ does not change when $(\alpha_k,\beta_k)$ is
rescaled.
\end{lemma}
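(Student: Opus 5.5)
The plan is to relate $K$ and $M$ by a diagonal congruence, and then substitute this into \eqref{eq:bordered}. Since $\mub_k=\rho x_k=-\rho/y_k$, we have $\mub_k\mub_l=\rho^2/(y_ky_l)$, and so
\[
\rho^2+\mub_k\mub_l=\frac{\rho^2(1+y_ky_l)}{y_ky_l}.
\]
Hence
\[
K_{kl}=\frac{y_ky_l}{2\rho^2}\,\frac{\alpha_k\beta_l+\beta_k\alpha_l}{1+y_ky_l}=\frac1{2\rho^2}(\mathsf D_yM\mathsf D_y)_{kl},
\]
that is, $K=(2\rho^2)^{-1}\mathsf D_yM\mathsf D_y$. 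Taking determinants gives $D=(2\rho^2)^{-N}\prod_ky_k^2\det M$. This holds entrywise for any representatives $(\alpha_k,\beta_k)$, including the phantom values, because both sides are built from the same pairs.

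Next, where $\det M\ne0$ we invert: $K^{-1}=2\rho^2\,\mathsf D_y^{-1}M^{-1}\mathsf D_y^{-1}$. In \eqref{eq:bordered} the sum is
\[
\sum_{k,l}\frac{\hat c_{a,k}(K^{-1})_{kl}\hat c_{b,l}}{\mub_k\mub_l}.
\]
Using $1/\mub_k=-y_k/\rho$, the factors $y_k y_l/\rho^2$ cancel the two $\mathsf D_y^{-1}$, so the sum equals $2\,\hat c_a^TM^{-1}\hat c_b$. Multiplying by $\frac14$ gives $\frac12\hat c_a^TM^{-1}\hat c_b$. So $G_{ab}=f(\hat\eta_{ab}-\frac12\hat c_a^TM^{-1}\hat c_b)$.

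It remains to compare $f$ with $\rho/\prod_ky_k$. Since $f=\rho^{N+1}/\prod_k\mu_k$ and $\mu_k=\rho y_k$, we get $f=\rho/\prod_ky_k$, and therefore $G=\rho\hat G$.

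For the determinant there are two routes. The first is to cite the fact that removing and re-adding poles preserves the determinant of the collapsed seed $f\eta$, whose determinant is $f^2\det\hat\eta=-f^2/4$. However, the paper states $\det G=\rho^2/4$, so the sign and normalisation need checking against \eqref{eq:detdressed}. Dressing with $N$ poles multiplies the determinant by $(-1)^N\rho^{2N}\prod_k\mub_k^{-2}=-\prod_ky_k^2$. This gives $\det G=f^2\prod_ky_k^2/4=\rho^2/4$.

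This determinant identity is the step most at risk of a sign or factor slip, so I would double-check it. The check is the one-node case \eqref{eq:onenode}: for a single node, with the null-basis determinant factor $\frac14$, the formula gives $\rho^2/4$.

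Finally, invariance under rescaling $(\alpha_k,\beta_k)\mapsto\tau_k(\alpha_k,\beta_k)$. This replaces $M$ by $T M T$ with $T=\diag(\tau_k)$, and replaces $\hat c_a$ by $T\hat c_a$. Hence $\hat c_a^TM^{-1}\hat c_b$ is unchanged, as in the homogeneity remark of \S\ref{subsec:ALEsolitons}.
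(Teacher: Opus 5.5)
Your proof is correct. Up to the determinant it is the paper's argument: the congruence $K=(2\rho^2)^{-1}\mathsf D_yM\mathsf D_y$, the cancellation of the border factors $1/\mub_k=-y_k/\rho$ against $K^{-1}=2\rho^2\mathsf D_y^{-1}M^{-1}\mathsf D_y^{-1}$, and $f=\rho/\prod_ky_k$. Your rescaling check, $M\mapsto TMT$ and $\hat c_a\mapsto T\hat c_a$, is a direct version of the paper's appeal to homogeneity.

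The two proofs differ in how they get $\det G$.
\begin{itemize}
\item You quote the Belinski--Zakharov formula \eqref{eq:detdressed} for the dressing of $f\hat\eta$ with the poles $\mub_k$. This gives $-\frac14f^2\cdot\bigl(-\prod_ky_k^2\bigr)=\rho^2/4$.
\item The paper computes $\det\hat G$ directly. It uses Sylvester's identity (the Schur complement \eqref{eq:schur} in both orders) and the displacement identity $M-(\alpha_k\beta_l+\beta_k\alpha_l)_{kl}=-\mathsf D_yM\mathsf D_y$.
\end{itemize}
These are the same computation in different clothing. Formula \eqref{eq:detdressed} follows from Sylvester's identity and $\bigl(m^{(k)}G_0m^{(l)T}\bigr)_{kl}=\rho^2\Gamma+\Lambda\Gamma\Lambda$, with $\Lambda=\diag(\lambda_k)$. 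So it holds for arbitrary poles and BZ vectors wherever $\det\Gamma\ne0$, phantoms included, and your use of it is legitimate. By $\Gamma=4f\,SKS$ and your congruence, the condition $\det\Gamma\ne0$ is exactly $\det M\ne0$; you should say so. Your route is shorter once \eqref{eq:detdressed} is granted. The paper's route keeps the lemma free of a further input from the inverse scattering literature, and it exhibits the displacement identity that is reused in Lemma~\ref{lem:Cbase} and Theorem~\ref{thm:mkn}.

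There is one slip in your first sentence on the determinant. Removing and re-adding poles does not preserve the determinant of the collapsed seed $f\eta$; re-adding multiplies it by $-\prod_ky_k^2$. What the scheme does is restore the determinant $\rho^2$ of the static seed \eqref{eq:staticseed}. In the null basis this is $\rho^2/4$, since that basis has determinant $\pm\frac12$ relative to the diagonal one. Stated that way, your first route also works. The one-node check is not needed.
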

\begin{proof}
Since $\mub_k=-\rho/y_k$, we have $1/(\rho^2+\mub_k\mub_l)=y_ky_l/(\rho^2(1+y_ky_l))$. So $K=(2\rho^2)^{-1}\mathsf D_yM\mathsf D_y$, which gives the
formula for $D$. The border entries of \eqref{eq:bordered} are $\hat c_{a,k}/\mub_k=-y_k\hat c_{a,k}/\rho$. Where $\det M\ne0$,
$K^{-1}=2\rho^2\mathsf D_y^{-1}M^{-1}\mathsf D_y^{-1}$, so the sum in \eqref{eq:bordered} is
\begin{equation}
\tfrac14\sum_{k,l}\frac{\hat c_{a,k}(K^{-1})_{kl}\hat c_{b,l}}{\mub_k\mub_l}=\frac1{4\rho^2}\,\hat c_a^T\mathsf D_yK^{-1}\mathsf D_y\hat c_b=\frac12\hat c_a^TM^{-1}\hat c_b,
\end{equation}
and $f=\rho/\prod_ky_k$; so $G=\rho\hat G$. Since $G$ is homogeneous of degree $0$ in each pair (\S\ref{subsec:ALEsolitons}), so is $\hat G$.

For the determinant, write $\hat C=(\hat c_0,\hat c_1)$ for the $N\times2$ matrix of the border vectors. By \eqref{eq:schur}, evaluated in both orders (Sylvester's determinant identity),
\begin{equation}
\det(\hat\eta-\frac12\hat C^TM^{-1}\hat C)=\det\hat\eta\,\det(M-\frac12\hat C\hat\eta^{-1}\hat C^T)/\det M.
\end{equation}
Here $\frac12\hat C\hat\eta^{-1}\hat C^T=(\alpha_k\beta_l+\beta_k\alpha_l)_{kl}$, and
\begin{equation} M-(\alpha_k\beta_l+\beta_k\alpha_l)_{kl}=-\Bigl(\frac{(\alpha_k\beta_l+\beta_k\alpha_l)y_ky_l}{1+y_ky_l}\Bigr)_{kl}=-\mathsf D_yM\mathsf D_y . \end{equation}
Since $\det\hat\eta=-\frac14$ and $N$ is odd, $\prod_ky_k^2\det\hat G=\det(\hat\eta-\frac12\hat C^TM^{-1}\hat C)=\det\hat\eta\,\det(-\mathsf D_yM\mathsf D_y)/\det M=-\frac14(-1)^N\prod_ky_k^2=\frac14\prod_ky_k^2$.
\end{proof}

So $G$ depends on $(\rho,z)$ only through the ratios $y_k$. For a set $S$ of indices, $1_S$ denotes the vector with entries $1$ on $S$ and $0$
elsewhere, and $\prod_Sy_k=\prod_{k\in S}y_k$. The next lemma fixes the sign of $G$ in Theorem~\ref{thm:CF}.
\begin{lemma}[base point]\label{lem:Cbase}
At the base point $c=c^*$, in the basis $e_\pm$ with the null components $(1,\pm1)$, $(-1)^{m+1}G$ is the static seed
$\diag(e^{2U},\rho^2e^{-2U})$ of \eqref{eq:staticseed}; in particular it is positive definite.
\end{lemma}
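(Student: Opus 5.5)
The plan is to compute the dressing at $c=c^*$ directly, and to use the fact that the BZ vectors of $c^*$ point along the eigendirections of $\eta$. At $c^*$ we have $c_k=(-1)^{N-k}=(-1)^{k+1}$, since $N$ is odd. So $(\alpha_k,\beta_k)=(1,1)$ for odd $k$ and $(1,-1)$ for even $k$. In the null basis these are the components of $e_+=e_0+e_1$ and of $e_-=e_0-e_1$. The basis $e_\pm$ is, by the normalisation of \S\ref{subsec:ALEsolitons}, the basis in which the static seed \eqref{eq:staticseed} and the collapsed seed $G_0'=f\eta$ are diagonal. Since $m^{(k)}$ is a multiple of $m^{(k)}_0\eta^{-1}$, and $\eta$ is diagonal in $(e_+,e_-)$, each dressing vector $m^{(k)}$ is an eigenvector of $G_0'$. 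The odd nodes carry vectors along one coordinate axis, the even nodes along the other.

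First, I decouple the dressing. In \eqref{eq:dressed}, $\Gamma_{kl}=m^{(k)}G_0'm^{(l)T}/(\rho^2+\mub_k\mub_l)$ vanishes when $k$ and $l$ have opposite parity, because $G_0'$ is diagonal in $e_\pm$. So $\Gamma$ is block diagonal, with one block for the odd nodes and one for the even nodes. Hence the dressed matrix stays diagonal in $e_\pm$. Each diagonal entry $g$ is the scalar dressing of the corresponding entry of $G_0'$ with the poles of one parity class. Equivalently, in the reduced form of Lemma~\ref{lem:Creduced}, $M$ splits into the Cauchy-type blocks $\pm2/(1+y_ky_l)$ over the odd and over the even indices.

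Next, I do the scalar dressing. For a scalar seed $g_0$ and poles $\lambda_k$, $k\in S$, the formula is $g=g_0\bigl(1-\sum_{k,l}(\rho^2+\lambda_k\lambda_l)^{-1}$-Cauchy-inverse terms$\bigr)$. I claim this equals $g_0\prod_{k\in S}(-\rho^2/\lambda_k^2)$. For one pole this is the check done for \eqref{eq:onenode}: $g_0-g_0^2(\rho^2+\lambda^2)/(\lambda^2g_0)=-g_0\rho^2/\lambda^2$. In general I would prove it with the Cauchy determinant and the Schur complement \eqref{eq:schur}, in the same way as the determinant computation in Lemma~\ref{lem:Creduced}: the bordered Cauchy determinant factors as a product. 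Alternatively it follows from \eqref{eq:detdressed} applied in each $1\times1$ block.

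Finally, I compare with the seed. With $\lambda_k=\mub_k$ we have $-\rho^2/\mub_k^2=-\mu_k^2/\rho^2$, and $\mub_k\cdot(-\mu_k^2/\rho^2)=\mu_k$. So in each entry, the factor $\mub_{w_k}$ inserted by the collapse of Lemma~\ref{lem:collapse} is turned back into $\mu_{w_k}$. The odd nodes are exactly those with $\epsilon_k=+1$, whose $\mu_{w_k}$ sits in the first entry. So the dressed matrix is $\pm$ the static seed $\diag(e^{2U},\rho^2e^{-2U})$, up to the constant factor coming from the null-basis normalisation and the signs $(-1)^m$ in \eqref{eq:collapsed}. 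I then fix the overall constant and sign by comparison with the stated normalisation: the Gram matrix of the metric in the frame $e_\pm/\sqrt2$ is $(-1)^{m+1}G(e_\pm,e_\pm)$, and the determinant identity $\det G=\rho^2/4$ of Lemma~\ref{lem:Creduced} pins down the modulus. Positive definiteness then follows because both entries of the static seed are positive on $\Hb^\circ$.

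The main obstacle is the bookkeeping of signs and constant factors: the factor $(-1)^m$ in $\eta$, the two negative entries produced by pole removal, the factor $2$ between the null basis and $e_\pm$, and the product of the signs $-1$ from the scalar dressing over $m+1$ resp.\ $m$ poles. My first step there is to check the case $N=1$ against \eqref{eq:onenode} and propagate by induction on $m$.
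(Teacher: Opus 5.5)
Your proposal is correct and is essentially the paper's proof. The paper also uses that at $c^*$ the pairings $\alpha_k\beta_l+\beta_k\alpha_l$ vanish between odd and even nodes, so that $M=2C_{A_+}\oplus(-2)C_{A_-}$, with $A_+$ and $A_-$ the odd and the even nodes. It then evaluates each diagonal entry by the matrix determinant lemma and the displacement identity $C_S-\mathbf 1\mathbf 1^T=-\mathsf D_SC_S\mathsf D_S$, which is your scalar-dressing identity $g=g_0\prod_{k\in S}(-\rho^2/\lambda_k^2)$ written in the reduced variables $y_k$.

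Tighten the sign. Read it directly off your computation:
\[
G(e_+,e_+)=f\prod_{k\in A_+}(-\mu_k^2/\rho^2),\qquad G(e_-,e_-)=-f\prod_{k\in A_-}(-\mu_k^2/\rho^2),
\]
with $f=\rho^{2m+2}/\prod_k\mu_k$, $|A_+|=m+1$ and $|A_-|=m$. This gives exactly $(-1)^{m+1}\diag(e^{2U},\rho^2e^{-2U})$, with no leftover constant. Do not fix the sign by comparison with the normalisation $2(-1)^{m+1}G$ of the metric: that normalisation is justified by this very lemma, so invoking it would be circular.
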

\begin{proof}
Let $A_+=\{k:c^*_k=1\}$ (the odd $k$, $|A_+|=m+1$) and $A_-=\{k:c^*_k=-1\}$. We show
\begin{equation} \hat G(e_+,e_+)=(-1)^{m+1}\frac{\prod_{A_+}y_k}{\prod_{A_-}y_k},\qquad \hat G(e_-,e_-)=(-1)^{m+1}\frac{\prod_{A_-}y_k}{\prod_{A_+}y_k},\qquad
\hat G(e_+,e_-)=0 . \end{equation}
$M_{kl}=0$ if $c_k\ne c_l$, so $M=2C_{A_+}\oplus(-2)C_{A_-}$ with $C_S=[1/(1+y_ky_l)]_{k,l\in S}$. Each $C_S$ is invertible:
$1/(1+y_ky_l)=y_k^{-1}/(y_l-x_k)$ by \eqref{eq:yk}, so $C_S=\mathsf D_S^{-1}[1/(y_l-x_k)]_{k,l\in S}$ with $\mathsf D_S=\diag(y_k)_{k\in S}$, and by
Cauchy's determinant formula
$\det[1/(y_l-x_k)]_{k,l\in S}=\prod_{k<l}(y_l-y_k)(x_k-x_l)\big/\prod_{k,l}(y_l-x_k)$ (products over $S$), which is non-zero since the
$x_k$ are distinct, the $y_k$ are distinct and $x_k<0<y_l$. In particular $\det M\ne0$ at $c^*$, so $\hat G$ is defined there.
\begin{itemize}[leftmargin=*]
\item Contracting with $e_+$ gives $\hat c_0+\hat c_1=2\cdot1_{A_+}$ and $\hat\eta(e_+,e_+)=1$, so
$\prod_ky_k\cdot\hat G(e_+,e_+)=1-1_{A_+}^TC_{A_+}^{-1}1_{A_+}$.
\item Likewise $\hat c_0-\hat c_1=-2\cdot1_{A_-}$ and $\hat\eta(e_-,e_-)=-1$, so $\prod_ky_k\cdot\hat G(e_-,e_-)=-1+1_{A_-}^TC_{A_-}^{-1}1_{A_-}$.
\item $\hat G(e_+,e_-)=0$, because $\hat\eta(e_+,e_-)=0$ and $M$ is block diagonal.
\end{itemize}
By the matrix determinant lemma \eqref{eq:mdl}, $1-1^TC_S^{-1}1=\det(C_S-11^T)/\det C_S$, and $C_S-11^T=-\mathsf D_SC_S\mathsf D_S$, as in Lemma~\ref{lem:Creduced}. So $1-1^TC_S^{-1}1=(-1)^{|S|}\prod_Sy_k^2$, which gives the formulas. Finally
$G=\rho\hat G$ (Lemma~\ref{lem:Creduced}), and, since $|A_+|-|A_-|=1$ and $\epsilon_k=1$ exactly on $A_+$,
\begin{equation}
\rho\,\frac{\prod_{A_+}y_k}{\prod_{A_-}y_k}=\frac{\prod_{A_+}\mu_k}{\prod_{A_-}\mu_k}=e^{2U},\qquad
\rho\,\frac{\prod_{A_-}y_k}{\prod_{A_+}y_k}=\rho^2\,\frac{\prod_{A_-}\mu_k}{\prod_{A_+}\mu_k}=\rho^2e^{-2U}.
\end{equation}
\end{proof}

\subsection{Interpolation determinants}\label{subsec:CW}
For $2N$ points $(p_i,v_i)$, $p_i\in\R\cup\{\infty\}$, $v_i=[\alpha_i:\beta_i]$, let $W$ be the determinant of the matrix with
the rows $(\alpha_ip_i^l)_{l<N}\mid(\beta_ip_i^l)_{l<N}$. For $p_i=\infty$ the row holds the coefficients of $p^{N-1}$. The same formula
defines $W$ for complex points $p_i$ and complex pairs $(\alpha_i,\beta_i)$. Put
\begin{equation}\label{eq:Pset} \mathcal P=\bigl((x_1,-c_1),\dots,(x_N,-c_N),(y_1,c_1),\dots,(y_N,c_N)\bigr) \end{equation}
and, for $\lambda\in\R$,
\begin{equation}\label{eq:Pext} \mathcal P^{\rm ext}(\lambda)=\bigl((\infty,-\lambda),(x_1,-c_1),\dots,(x_N,-c_N),(0,\lambda),(y_1,c_1),\dots,(y_N,c_N)\bigr). \end{equation}
Here $-c_k=[\alpha_k:-\beta_k]$, and the value $\lambda$ is $[1:\lambda]$.

\begin{proposition}[interpolation determinants]\label{prop:CW}
Let $M$ and $\hat G$ be as in \eqref{eq:Mproj} and \eqref{eq:Ghat}, and let $W(\mathcal P)$ and
$W(\mathcal P^{\rm ext}(\lambda))$ be the interpolation determinants of the point sets \eqref{eq:Pset} and \eqref{eq:Pext}.
\begin{enumerate}[leftmargin=*,label=\textup{(\roman*)}]
\item For any number $N$ of nodes, odd or even, and all $c\in(\RP^1)^N$,
\begin{equation}
\det M=(-1)^{N(N-1)/2}\,W(\mathcal P)\big/\bigl(\prod_ky_k\prod_{k,l}(y_l-x_k)\bigr).
\end{equation}
\item For all $c\in(\RP^1)^N$ with $\det M\ne0$ and all real $\lambda$,
\begin{equation}
(1,\lambda)\,\hat G\,(1,\lambda)^T=W(\mathcal P^{\rm ext}(\lambda))/(2\,W(\mathcal P)).
\end{equation}
\end{enumerate}
\end{proposition}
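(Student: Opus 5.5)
The plan is to reduce both interpolation determinants, by linear algebra on the space of pairs of polynomials, to determinants built from $M$, and to track the scalar factors separately. Both sides of (i) are polynomial in the pairs $(\alpha_k,\beta_k)$. So it suffices to prove the identity for $\alpha_k\ne0$ and to extend it by density; by homogeneity we may then take $\alpha_k=1$, $\beta_k=c_k$. The rows of the matrix defining $W(\mathcal P)$ are the evaluations of the linear functional $(p,q)\mapsto\alpha_ip(p_i)+\beta_iq(p_i)$ on pairs of polynomials of degree $<N$.

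For (i) we change coordinates on the pairs of polynomials from coefficients to values at the nodes $y_1,\dots,y_N$. This costs a factor $\det(y_l^i)^2=\Delta(y)^2$. Put $Q(t)=\prod_l(t-y_l)$. Lagrange interpolation expresses the values at the $x_k$ through the matrix
\[ L_{kl}=\frac{Q(x_k)}{Q'(y_l)(x_k-y_l)} . \]
Writing $P$ and $R$ for the vectors of values of $p$ and $q$ at the $y_l$, the $2N\times2N$ matrix becomes the block matrix
\[ \begin{pmatrix}\mathsf D_\alpha L&-\mathsf D_\beta L\\ \mathsf D_\alpha&\mathsf D_\beta\end{pmatrix}, \]
with the rows at the $x_k$ first, as in \eqref{eq:Pset}.

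We then eliminate $P=-\mathsf D_\alpha^{-1}\mathsf D_\beta R$ using the lower rows, that is, take the Schur complement. The remaining $N\times N$ block is the matrix
\[ -\Bigl(\frac{(\alpha_k\beta_l+\beta_k\alpha_l)L_{kl}}{\alpha_l}\Bigr)_{kl}, \]
and the factors $\alpha_l$ cancel against $\det\mathsf D_\alpha$. We insert $1/(x_k-y_l)=-y_k/(1+y_ky_l)$, which follows from $x_k=-1/y_k$ as in the proof of Lemma~\ref{lem:Cbase}. Then row $k$ carries a factor $-y_kQ(x_k)$, column $l$ a factor $1/Q'(y_l)$, and the remaining matrix is $M$. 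Using $\prod_kQ(x_k)=\prod_{k,l}(x_k-y_l)$ and $\prod_lQ'(y_l)=\pm\Delta(y)^2$, the Vandermonde factors cancel and leave $\det M\cdot\prod_ky_k\prod_{k,l}(y_l-x_k)$ up to sign. The main obstacle I expect is the sign: the row order in \eqref{eq:Pset} (all $x$ rows before all $y$ rows), the block permutation in the Schur complement, the sign of $\prod Q'(y_l)$ and the $(-1)^{N^2}$ from $x_k-y_l=-(y_l-x_k)$ all have to combine to $(-1)^{N(N-1)/2}$. I would fix the overall sign by checking $N=1$ and $N=2$ directly and confirm the parity pattern with $N=3$.

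For (ii) I would run the same elimination with polynomials of degree $\le N$ and the two extra points $(\infty,-\lambda)$ and $(0,\lambda)$. Using values at $y_1,\dots,y_N$ and $0$, and the leading coefficient, the extra unknowns border the $N\times N$ Schur block. That block is the one from (i), and it is again proportional to $M$ after the same row and column scalings. The bordering rows and columns are proportional to the border vectors $\hat c_0=\beta$ and $\hat c_1=\alpha$ contracted with $(1,\lambda)$, and the corner is a quadratic in $(1,\lambda)$ coming from the $\hat\eta$ part. The determinant of a bordered matrix is $\det M$ times the Schur complement $\hat\eta(X,X)-\tfrac12\hat c_X^TM^{-1}\hat c_X$ for $X=(1,\lambda)$, up to scalars. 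Dividing by $W(\mathcal P)$ from (i) cancels $\det M$ and all Vandermonde and Cauchy factors. What remains should be $2\prod_ky_k\cdot(1,\lambda)\hat G(1,\lambda)^T$, and the factor $\prod_ky_k$ in \eqref{eq:Ghat} absorbs the extra $y$-factors. I would fix the remaining constant, including the factor $2$, with a check at $N=1$ or at the base point, using Lemma~\ref{lem:Cbase}: there $\hat G(e_\pm,e_\pm)$ is known and the extended determinant can be evaluated directly.
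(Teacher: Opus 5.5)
Your proposal is correct. For (i) it takes a genuinely different route from the paper.

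\textbf{Part (i).} The paper does not evaluate $W(\mathcal P)$ at all. It rewrites $(c_k+c_l)/(1+y_ky_l)=y_k^{-1}(c_l-(-c_k))/(y_l-x_k)$, quotes the Cauchy-type interpolation identity of \cite{IOTZ2006} with $p=q=0$ (Appendix~\ref{app:identities}), and extends from finite $c$ by homogeneity and density. Your change to the Lagrange basis at the $y_l$, followed by elimination on the $y$-rows, is in effect a self-contained proof of that identity. It also fixes the sign without any small-case checks. The block elimination gives $(-1)^{N^2}$, and $\prod_kQ(x_k)=(-1)^{N^2}\prod_{k,l}(y_l-x_k)$. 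Since $\prod_lQ'(y_l)=(-1)^{N(N-1)/2}\Delta(y)^2$, these combine to exactly $(-1)^{N(N-1)/2}$. Checking $N=1,2,3$ would not by itself prove the sign for all $N$, but you do not need it.

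\textbf{Part (ii): the paper's route.} The paper avoids a second computation. It clears denominators in (i) to obtain the polynomial identity $\det M\prod_{k,l}(1+y_ky_l)=(-1)^{N(N-1)/2}W_h(\mathcal P)$ in homogeneous rows, which remains valid when some $y_k=0$. It then notes that $\tilde M=\bigl(\begin{smallmatrix}2\lambda&\mathbf u^T\\ \mathbf u&M\end{smallmatrix}\bigr)$, with $\mathbf u=\beta+\lambda\alpha$, is the matrix $M$ of $N+1$ nodes with $y_0=0$ and pair $(1,\lambda)$; the partner point $x_0=-1/y_0$ is $\infty$. The Schur complement $\prod_ky_k\,(1,\lambda)\hat G(1,\lambda)^T=\det\tilde M/(2\det M)$ then finishes.

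\textbf{Part (ii): your route.} Your plan is the same computation unrolled, i.e.\ the proof of (i) rerun for those $N+1$ nodes. This spares the citation and the homogeneous coordinates, at the cost of redoing the bookkeeping.

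It works only with a precise choice of coordinates, and your description leaves this ambiguous: $0$ must be an interpolation node.
\begin{itemize}
\item With the Lagrange basis on $0,y_1,\dots,y_N$ and $\tilde Q(t)=tQ(t)$, the row of $\infty$ is $p\mapsto\sum_{l=0}^Np(y_l)/\tilde Q'(y_l)$ (with $y_0=0$). The $N\times N$ block is then $(x_k/y_l)L_{kl}$, not literally the block of (i). After row and column scalings the matrix is exactly $\tilde M$, which gives $W(\mathcal P^{\rm ext}(\lambda))=(-1)^{N(N+1)/2+N}\prod_{k,l}(y_l-x_k)\det\tilde M$.
\item If instead you keep the block of (i) and take the two leading coefficients as the extra unknowns, the border comes out as $\mathsf D_y^{-1}(\lambda\alpha-\beta)$, not as $\hat c_X$. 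The resulting Schur complement $2\lambda+(\lambda\alpha-\beta)^T\mathsf D_y^{-1}M^{-1}\mathsf D_y^{-1}(\lambda\alpha-\beta)$ equals $(-1)^N\prod_ky_k^{-2}\,(2\lambda-\mathbf u^TM^{-1}\mathbf u)$ only after you invoke the displacement identity $M+\mathsf D_yM\mathsf D_y=\alpha\beta^T+\beta\alpha^T$ of Lemma~\ref{lem:Creduced}.
\end{itemize}

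In either version every factor is explicit, so no constant is left to fix by a check at $N=1$ or at the base point. Such a check could not determine a factor depending on $y$ anyway. As in (i), prove the cleared identity for all $\alpha_k\ne0$ and extend it by density before dividing by $W(\mathcal P)$.
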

\begin{proof}
(i) Since
\begin{equation}
(c_k+c_l)/(1+y_ky_l)=y_k^{-1}(c_l-(-c_k))/(y_l-x_k),
\end{equation}
for finite $c$ this is the identity of \cite[Thm.~1.1(a), (1.3)]{IOTZ2006} with $p=q=0$ (Appendix~\ref{app:identities}), applied to the
points $(x_k,-c_k)$ and $(y_l,c_l)$ in this order. Both sides are polynomials in $(\alpha,\beta)$, homogeneous of degree $2$ in each pair
$(\alpha_k,\beta_k)$: $\det M$ because the pair enters the row $k$ and the column $k$ of $M$, and $W(\mathcal P)$ because it enters the two
rows of the points $x_k$ and $y_k$. By homogeneity the identity, proved for finite $c$, holds wherever all $\alpha_k\ne0$, a dense set;
so it holds for all $(\alpha,\beta)$.

(ii) The row of a point with homogeneous coordinates $(X:Z)$ and the pair $[\alpha:\beta]$ is
$\bigl((\alpha X^lZ^{N-1-l})_{l<N}\mid(\beta X^lZ^{N-1-l})_{l<N}\bigr)$; for $(p:1)$ it is the row of $p$, and for $(1:0)$ that of $\infty$.
Multiply the row of the point $x_k=-1/y_k$ by $y_k^{N-1}$; it becomes the row of $(-1:y_k)$. Let $W_h(\mathcal P)=\prod_ky_k^{N-1}W(\mathcal P)$
be the determinant with these homogeneous rows. Since $\prod_ky_k\prod_{k,l}(y_l-x_k)=\prod_ky_k^{1-N}\prod_{k,l}(1+y_ky_l)$, (i) says
\begin{equation}\label{eq:CWhom}
\det M\prod_{k,l}(1+y_ky_l)=(-1)^{N(N-1)/2}\,W_h(\mathcal P).
\end{equation}
Both sides are polynomials in $(y,\alpha,\beta)$ (each term of $\det M$ has a denominator that divides $\prod_{k,l}(1+y_ky_l)$), and they
agree for all $0<y_1<\dots<y_N$ (the proof of (i) uses only these inequalities); so \eqref{eq:CWhom} holds for all real $y$, also if some $y_k=0$, and for any number of nodes.

Let $\mathbf u=\beta+\lambda\alpha$ and $\tilde M=\bigl(\begin{smallmatrix}2\lambda&\mathbf u^T\\\mathbf u&M\end{smallmatrix}\bigr)$. This is the
matrix \eqref{eq:Mproj} of $N+1$ nodes with $y_0=0$ and the pair $(\alpha_0,\beta_0)=(1,\lambda)$, put first; its factors $1+y_0y_l$ are $1$.
In the point set $\tilde{\mathcal P}$ of these $N+1$ nodes, the homogeneous point of $x_0$ is $(-1:y_0)=(-1:0)$, whose row is $(-1)^N$ times the
row of $(\infty,-\lambda)$, the row of $y_0$ is that of $(0,\lambda)$, and for $k\ge1$ the row of $(-1:y_k)$, now of degree $N$, is $y_k^N$
times the row of $x_k$. So $W_h(\tilde{\mathcal P})=(-1)^N\prod_ky_k^N\,W(\mathcal P^{\rm ext}(\lambda))$, with the rows in the order of
$\mathcal P^{\rm ext}(\lambda)$, and \eqref{eq:CWhom} for the $N+1$ nodes gives
\begin{equation}
\det\tilde M\prod_{k,l}(1+y_ky_l)=(-1)^{N(N+1)/2+N}\prod_ky_k^N\,W(\mathcal P^{\rm ext}(\lambda)).
\end{equation}
By \eqref{eq:Ghat}, $(1,\lambda)\hat\eta(1,\lambda)^T=\lambda$ and the Schur complement \eqref{eq:schur} with the pivot $M$,
$\prod_ky_k\,(1,\lambda)\hat G(1,\lambda)^T=\lambda-\frac12\mathbf u^TM^{-1}\mathbf u=\det\tilde M/(2\det M)$. Divide the last display by
\eqref{eq:CWhom}; the signs combine to $(-1)^{N(N+1)/2+N-N(N-1)/2}=(-1)^{2N}=1$, and (ii) follows, for all $(\alpha,\beta)$ with
$\det M\ne0$; then $W(\mathcal P)\ne0$ by (i).
\end{proof}

\subsection{Heine sums in projective form and split determinants}\label{subsec:split}
The interpolation determinants will be evaluated as Heine sums \eqref{eq:heine} of the weights $a_k=c_k/P'(w_k)$. To allow the values
$c_k=0$ and $c_k=\infty$, and complex multipliers of the weights, we write these sums in projective form.

\emph{Projective Heine sums.} Let $w_1,\dots,w_N$ be distinct atoms, real or complex, carrying pairs $(\alpha_l,\hat b_l)\ne(0,0)$, the
\emph{projective weights} $[\alpha_l:\hat b_l]$; we write $\omega$ for this measure in projective form. For $q=(q_1,\dots,q_N)\in\C^N$ and
$0\le s\le N$ put
\begin{equation}\label{eq:Hproj}
\mathbf H_s(q\omega)=\sum_{|T|=s}\ \prod_{l\notin T}\alpha_l\prod_{l\in T}\hat b_lq_l\ \Delta_T^2,\qquad
\mathbf H_0(q\omega)=\prod_l\alpha_l,\qquad \mathbf H_s(q\omega)=0\ \ (s<0\text{ or }s>N),
\end{equation}
the sum over the sets $T$ of $s$ atoms, with $\Delta_T^2$ as in \eqref{eq:PS}. Thus $q\omega$ is the measure with the projective weights
$[\alpha_l:\hat b_lq_l]$, as in the convention of Lemma~\ref{lem:Achristoffel}; we write $\mathbf H_s(\omega)$ for $q\equiv1$, and $\mathbf r\omega$,
$\omega/\mathbf r$, $\varpi\omega$, $\varpi^{-1}\omega$ for $q_l=r_l$, $1/r_l$, $\varpi_l$, $\varpi_l^{-1}$, where $\mathbf r$ is the function
$l\mapsto r_l$. $\mathbf H_s(q\omega)$ is a polynomial in the pairs, homogeneous of degree $1$ in each pair, in which the atoms enter only through
$\Delta_T^2$; and $\mathbf H_s(-q\omega)=(-1)^s\mathbf H_s(q\omega)$. If all $\alpha_l\ne0$, then by Heine's formula \eqref{eq:heine},
$\mathbf H_s(q\omega)=\prod_l\alpha_l\,H_s(q\tilde\omega)$
for the measure $\tilde\omega$ with the weights $\hat b_l/\alpha_l$.

For the ALE soliton the atoms are the nodes $w_1<\dots<w_N$ and the pairs are $(\alpha_l,\hat b_l)$ with $\hat b_l=\beta_l/P'(w_l)$, so that
$[\alpha_l:\hat b_l]$ is the weight $a_l=c_l/P'(w_l)$ in projective form. We use the \emph{standard representatives}
$(\alpha_k,\beta_k)=(1,c_k)$ for finite $c_k$ (so $(1,0)$ for $c_k=0$) and $(0,1)$ for $c_k=\infty$, those of \S\ref{subsec:ALEsolitons}; in
them $\hat b_k=a_k$ for finite $c_k$, and $\mathbf H_s(q\omega)=H_s(q\omega)$ when all $c_k$ are finite. Unless said otherwise, the polynomial
objects below are computed in the standard representatives. If condition (a) of Definition~\ref{def:admissible} holds, there are also
\emph{non-negative representatives}, with $\alpha_k\ge0$ and $\hat b_k\ge0$ for all $k$: the standard ones for finite $c_k$, and $(0,P'(w_k))$,
i.e.\ $\hat b_k=1$, for $c_k=\infty$. Replacing $(\alpha_k,\beta_k)$ by $\lambda_k(\alpha_k,\beta_k)$, $\lambda_k\in\R\setminus\{0\}$, multiplies every
$\mathbf H_s(q\omega)$ by $\prod_k\lambda_k$, and so a product of two of them by $\prod_k\lambda_k^2>0$. So the sign of such a product, and a
quotient of two quantities that scale alike, do not depend on the representatives.
Below we omit ``in any representatives'' for such statements; signs are proved in the non-negative representatives. These conventions are
used in \S\S\ref{sec:alerods} and~\ref{sec:far} as well.

\begin{lemma}[positivity and reduction]\label{lem:Hpos}
Let $w_1,\dots,w_N$ be distinct real atoms carrying real pairs $(\alpha_l,\hat b_l)\ne(0,0)$.
\begin{enumerate}[leftmargin=*,label=\textup{(\alph*)}]
\item Suppose that $\alpha_l,\hat b_l\ge0$ for all $l$, let $z_0=\#\{l:\hat b_l=0\}$ and $z_\infty=\#\{l:\alpha_l=0\}$, and let $q_1,\dots,q_N>0$. Then
every term of the sum \eqref{eq:Hproj} is $\ge0$, and $\mathbf H_s(q\omega)>0$ if and only if $z_\infty\le s\le N-z_0$.
\item Let $Z$ and $I$ be disjoint sets of atoms with $\hat b_l=0$ for $l\in Z$ and $\alpha_h=0$ for $h\in I$, let $\mathsf R$ be the set of the other atoms,
and let $\omega^I$ be the measure on $\mathsf R$ with the pairs $\bigl(\alpha_l,\,\hat b_l\prod_{h\in I}(w_l-w_h)^2\bigr)$. Then for every $q\in\C^N$ and every $s$,
\begin{equation}
\mathbf H_s(q\omega)=\kappa_I\,q_I\,\mathbf H^{\mathsf R}_{s-|I|}(q\omega^I),\qquad q_I=\prod_{h\in I}q_h,\qquad
\kappa_I=\prod_{l\in Z}\alpha_l\prod_{h\in I}\hat b_h\,\Delta_I^2\ne0,
\end{equation}
where $\mathbf H^{\mathsf R}$ is the sum \eqref{eq:Hproj} over the atoms of $\mathsf R$, and $q$ on the right is restricted to $\mathsf R$.
\item Let the atoms be the nodes of an ALE soliton with $c\in\mathcal A$, in non-negative representatives, and let $q,q'\in(0,\infty)^N$. Then
$\mathbf H_m(q\omega)>0$ and $\mathbf H_{m+1}(q\omega)>0$; so $\mathbf H_m(q\omega)\mathbf H_{m+1}(q'\omega)>0$ in any real representatives.
\end{enumerate}
\end{lemma}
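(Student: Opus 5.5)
Part (a) is read off term by term, (b) comes from factoring out the forced atoms, and (c) combines the two with the admissibility conditions.

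For (a), each term of \eqref{eq:Hproj} is $\prod_{l\notin T}\alpha_l\prod_{l\in T}\hat b_lq_l\,\Delta_T^2$. Every factor is non-negative, and $\Delta_T^2>0$ because the atoms are distinct real numbers. So a term is positive exactly when $T$ contains every atom with $\alpha_l=0$ and avoids every atom with $\hat b_l=0$. These two sets are disjoint, since the pairs are non-zero. A set $T$ of size $s$ with both properties exists if and only if $z_\infty\le s\le N-z_0$. The sum of non-negative terms is then positive exactly in this range.

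For (b), a term of \eqref{eq:Hproj} vanishes unless $T\supset I$ and $T\cap Z=\varnothing$. The non-zero terms are therefore indexed by $T=I\cup T'$ with $T'\subset\mathsf R$ and $|T'|=s-|I|$. In such a term I would factor out three pieces:
\begin{itemize}
\item the factor $\prod_{l\in Z}\alpha_l$, since $Z\cap T=\varnothing$ puts all of $Z$ in the complement of $T$;
\item the factor $\prod_{h\in I}\hat b_hq_h$;
\item the Vandermonde factor, using the splitting $\Delta_{I\cup T'}^2=\Delta_I^2\,\Delta_{T'}^2\prod_{l\in T',h\in I}(w_l-w_h)^2$.
\end{itemize}
The cross factor $\prod_{h\in I}(w_l-w_h)^2$ is absorbed into $\hat b_l$ for each $l\in T'$, which gives the pairs of $\omega^I$. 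What remains is exactly the sum defining $\mathbf H^{\mathsf R}_{s-|I|}(q\omega^I)$. The constant $\kappa_I$ is non-zero because $\alpha_l\ne0$ on $Z$, $\hat b_h\ne0$ on $I$, and the atoms are distinct. This is just a bookkeeping version of the splitting \eqref{eq:Hsplit}.

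For (c), work in the non-negative representatives, which exist because condition (a) of Definition~\ref{def:admissible} holds. There $\alpha_k,\hat b_k\ge0$. Moreover $z_0=\#\{c_k=0\}\le m$ and $z_\infty=\#\{c_k=\infty\}\le m$ by conditions (b) and (c). With $N=2m+1$ this gives $z_\infty\le m<m+1\le N-z_0$, so both $s=m$ and $s=m+1$ lie in the range of part (a). Hence $\mathbf H_m(q\omega)>0$ and $\mathbf H_{m+1}(q'\omega)>0$. The sign of their product is independent of the representatives by the scaling remark before the lemma: rescaling the pairs multiplies the product by $\prod_k\lambda_k^2>0$. The only point needing care is the range check in (a), that a valid $T$ exists; it is immediate once the two forced sets are seen to be disjoint.
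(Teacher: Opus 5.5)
Your proof is correct and follows the paper's argument essentially step for step. In (a) you do the termwise sign analysis, using that the two forced sets are disjoint. In (b) you restrict to $T=I\cup T'$ and split the Vandermonde factor. In (c) you use the admissibility bounds $z_0,z_\infty\le m$ together with the $\prod_k\lambda_k^2>0$ rescaling remark.
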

\begin{proof}
(a) The term of $T$ is a product of non-negative factors and of $\Delta_T^2>0$. It is positive if and only if $\alpha_l>0$ for $l\notin T$ and
$\hat b_l>0$ for $l\in T$, i.e.\ if and only if $T$ contains the $z_\infty$ atoms with $\alpha_l=0$ and none of the $z_0$ atoms with $\hat b_l=0$; these
two sets are disjoint, since $(\alpha_l,\hat b_l)\ne(0,0)$. Such a $T$ with $|T|=s$ exists if and only if $z_\infty\le s\le N-z_0$.

(b) Since the pairs are non-zero, $\alpha_l\ne0$ on $Z$ and $\hat b_h\ne0$ on $I$, so $\kappa_I\ne0$. A term $T$ of $\mathbf H_s(q\omega)$ is non-zero only if
$T\supset I$ and $T\cap Z=\emptyset$ (otherwise it has a factor $\alpha_h=0$, resp.\ $\hat b_l=0$). For $T=I\cup S$ with $S\subset\mathsf R$, $\Delta_T^2=\Delta_I^2\,\Delta_S^2\prod_{l\in S,\,h\in I}(w_l-w_h)^2$ and
$\prod_{l\notin T}\alpha_l=\prod_{l\in Z}\alpha_l\prod_{l\in\mathsf R\setminus S}\alpha_l$, so the term of $T$ is $\kappa_Iq_I$ times the term of $S$ in
$\mathbf H^{\mathsf R}_{|S|}(q\omega^I)$, with $|S|=s-|I|$. Sum over $S$.

(c) By conditions (b) and (c) of Definition~\ref{def:admissible}, $z_0,z_\infty\le m$, so (a) gives the first assertion, and the second follows by the
convention above.
\end{proof}

\emph{Split determinants.} For points $\xi_1,\dots,\xi_N$, real or complex, pairs $(\alpha'_k,\beta'_k)$ and orders $s'+s=N$, let
$S_{s',s}(\xi;\alpha',\beta')$ be the determinant with the rows $\bigl((\alpha'_k\xi_k^i)_{i<s'}\mid(\beta'_k\xi_k^j)_{j<s}\bigr)$, $k=1,\dots,N$, and put
$\Delta(\xi)=\prod_{k<l}(\xi_l-\xi_k)$. Expand $S_{s',s}$ by Laplace along its first $s'$ columns. The term in which the rows of
an $s$-set $T$ meet the second block has the sign $(-1)^{I(T)}$,
\begin{equation}
I(T)=\#\{(l,l'):l\in T,\ l'\notin T,\ l<l'\},
\end{equation}
and equals $(-1)^{I(T)}\Delta(\xi_{T^c})\Delta(\xi_T)\prod_{l'\notin T}\alpha'_{l'}\prod_{l\in T}\beta'_l$, where $\Delta(\xi_S)$ is the
Vandermonde product of the points of $S$ in their order. Since
\begin{equation}
\Delta(\xi)=(-1)^{I(T)}\Delta(\xi_{T^c})\Delta(\xi_T)\prod_{l\in T,l'\notin T}(\xi_l-\xi_{l'}),
\end{equation}
we get, for distinct points,
\begin{equation}\label{eq:Csplit}
S_{s',s}(\xi;\alpha',\beta')=\Delta(\xi)\sum_{|T|=s}\frac{\prod_{l'\notin T}\alpha'_{l'}\prod_{l\in T}\beta'_l}{\prod_{l\in T,\,l'\notin T}(\xi_l-\xi_{l'})} .
\end{equation}

\begin{lemma}[split determinants in Heine form]\label{lem:split}
Let $w_1,\dots,w_N$ be distinct, real or complex, $P=\prod_k(x-w_k)$, let $\xi_k=\theta_1w_k+\theta_0$ with $\theta_1\ne0$, and let $s'+s=N$. Then
\begin{equation}
S_{s',s}(\xi;\alpha',\beta')=(-1)^{s(s-1)/2}\,\theta_1^{N(N-1)/2-ss'}\,\Delta(w)\,\mathbf H_s[\alpha',\beta'],
\end{equation}
where $\mathbf H_s[\alpha',\beta']$ is the projective Heine sum \eqref{eq:Hproj} of order $s$ of the atoms $w_l$ with the pairs $(\alpha'_l,\beta'_l/P'(w_l))$
and with all multipliers equal to $1$.
\end{lemma}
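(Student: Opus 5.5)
The plan is to start from the Laplace expansion \eqref{eq:Csplit}, which already expresses $S_{s',s}$ as $\Delta(\xi)$ times a sum over $s$-sets $T$. Each summand must then be rewritten as $\pm\prod_{l\notin T}\alpha'_l\prod_{l\in T}(\beta'_l/P'(w_l))\Delta_T^2$, up to a factor that is the same for every $T$. We then compare the result with \eqref{eq:Hproj} at multipliers $q\equiv1$.

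\emph{Removing the affine change.} Since $\xi_l-\xi_{l'}=\theta_1(w_l-w_{l'})$, we have $\Delta(\xi)=\theta_1^{N(N-1)/2}\Delta(w)$. The denominator in \eqref{eq:Csplit} has $ss'$ factors, so it contributes $\theta_1^{-ss'}$. This accounts for the whole power of $\theta_1$, and we may assume $\xi=w$ from now on.

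\emph{The key identity.} For $l\in T$ we have $P'(w_l)=\prod_{l'\ne l}(w_l-w_{l'})$. Split this product according to whether $l'\in T$ or $l'\notin T$. Taking the product over $l\in T$ gives
\begin{equation}
\prod_{l\in T}P'(w_l)=\prod_{l\in T,\,l'\notin T}(w_l-w_{l'})\cdot\prod_{l\ne l'\in T}(w_l-w_{l'}).
\end{equation}
The last product equals $(-1)^{s(s-1)/2}\Delta_T^2$, because each unordered pair in $T$ contributes $(w_l-w_{l'})(w_{l'}-w_l)=-(w_l-w_{l'})^2$. It follows that
\begin{equation}
\frac{1}{\prod_{l\in T,\,l'\notin T}(w_l-w_{l'})}=(-1)^{s(s-1)/2}\frac{\Delta_T^2}{\prod_{l\in T}P'(w_l)}.
\end{equation}
Substituting this into \eqref{eq:Csplit}, the summand of $T$ becomes $(-1)^{s(s-1)/2}\prod_{l\notin T}\alpha'_l\prod_{l\in T}(\beta'_l/P'(w_l))\Delta_T^2$. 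This is exactly the term of $T$ in $\mathbf H_s[\alpha',\beta']$. Summing over $T$ gives the claim for distinct points.

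\emph{Degenerate pairs and edge cases.} Both sides are polynomial in the pairs, so no condition on the pairs is needed. The edge cases $s=0$ and $s=N$ reduce to single Vandermonde determinants, and they are covered by the same computation because empty products equal $1$. The only step that needs care is the sign bookkeeping. The factor $(-1)^{I(T)}$ is already absorbed in \eqref{eq:Csplit}, so the only remaining sign is the uniform one $(-1)^{s(s-1)/2}$ computed above. I would check it directly on $N=2$, $s=1$ and on $N=2$, $s=2$.
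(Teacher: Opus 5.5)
Your proof is correct and is essentially the paper's own argument. Like the paper, you pull $\theta_1^{N(N-1)/2}$ out of $\Delta(\xi)$ and $\theta_1^{ss'}$ out of the cross factors in \eqref{eq:Csplit}, then substitute the identity $\prod_{l\in T}P'(w_l)=\prod_{l\in T,\,l'\notin T}(w_l-w_{l'})\cdot(-1)^{s(s-1)/2}\Delta_T^2$ to turn each summand into the corresponding Heine term.
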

\begin{proof}
In \eqref{eq:Csplit}, $\Delta(\xi)=\theta_1^{N(N-1)/2}\Delta(w)$ and $\prod_{l\in T,l'\notin T}(\xi_l-\xi_{l'})=\theta_1^{ss'}\prod_{l\in T,l'\notin T}(w_l-w_{l'})$.
Since $\prod_{l\in T}P'(w_l)=\prod_{l\in T,\,l'\notin T}(w_l-w_{l'})\cdot(-1)^{s(s-1)/2}\Delta_T^2$ (the second factor is the product of
$w_l-w_{l'}$ over the ordered pairs of distinct $l,l'\in T$), insert this into \eqref{eq:Csplit}.
\end{proof}

\subsection{The pair involution}\label{subsec:PI}
The interpolation determinants of \S\ref{subsec:CW} factor into two determinants of half the size. The reason is the involution
$\mathcal I(A,B)(p)=p^{N-1}\bigl(A(-1/p),-B(-1/p)\bigr)$ on pairs of polynomials of degree $\le N-1$: the substitution $p\mapsto-1/p$
exchanges the two points $x_k=-1/y_k$ and $y_k$ of each node, and the sign change of $B$ exchanges their values $-c_k$ and $c_k$,
so $\mathcal I$ carries the functional of one row of a node to a multiple of the functional of the other row, and likewise for the
rows of $0$ and $\infty$ in $\mathcal P^{\rm ext}$. In a basis adapted to the two eigenspaces of $\mathcal I$, each of dimension $N$, the sum
and the difference of the two rows of a node are supported on one eigenspace each, and the determinant becomes block diagonal. The
lemma below carries this out with explicit bases and keeps track of the constants.

\begin{lemma}[pair involution]\label{lem:PI}
\begin{enumerate}[leftmargin=*,label=\textup{(\alph*)}]
\item Let $N=2m+1$, $y_1,\dots,y_N\in\C\setminus\{0\}$ and $(\alpha_k,\beta_k)\in\C^2$. Put $\sigma_k=y_k-1/y_k$ and $t_k=y_k+1/y_k$, and let $W$
be the interpolation determinant of the $2N$ points
\begin{equation}
(-1/y_1,[\alpha_1:-\beta_1]),\dots,(-1/y_N,[\alpha_N:-\beta_N]),\ (y_1,[\alpha_1:\beta_1]),\dots,(y_N,[\alpha_N:\beta_N])
\end{equation}
in this order. Then $W=2D_{\rm I}D_{\rm II}$, where
\begin{equation}\begin{gathered}
D_{\rm I}=\det\Bigl[(\alpha_k\sigma_k^i)_{0\le i\le m}\ \Big|\ (\beta_kt_k\sigma_k^j)_{0\le j\le m-1}\Bigr]_{k=1}^N,\\
D_{\rm II}=\det\Bigl[(\alpha_kt_k\sigma_k^j)_{0\le j\le m-1}\ \Big|\ (\beta_k\sigma_k^i)_{0\le i\le m}\Bigr]_{k=1}^N .
\end{gathered}\end{equation}
\item Let $N'=N+1=2m+2$, $y_1,\dots,y_N>0$, $(\alpha_k,\beta_k)\in\C^2$ and $\lambda\in\C$, and let $W^{\rm ext}(\lambda)$ be the interpolation
determinant of the $2N'$ points
\begin{equation}\begin{gathered}
(\infty,[1:-\lambda]),\ (-1/y_1,[\alpha_1:-\beta_1]),\dots,(-1/y_N,[\alpha_N:-\beta_N]),\\
(0,[1:\lambda]),\ (y_1,[\alpha_1:\beta_1]),\dots,(y_N,[\alpha_N:\beta_N]).
\end{gathered}\end{equation}
Put $\varpi_k=(y_k-i)/(y_k+i)$ and, for $\delta\in\{0,1\}$, $J_\delta=\{0\le j\le N'-1:j\equiv\delta\bmod2\}$, a set of $m+1$ elements. Let
$D_\delta(\lambda)$ be the determinant of the $N'\times N'$ matrix with the rows
\begin{equation}
\bigl((\alpha_k\varpi_k^j)_{j\in J_\delta}\ \big|\ (\beta_k\varpi_k^j)_{j\in J_{1-\delta}}\bigr)\ \ (k=1,\dots,N),\qquad\bigl(1,\dots,1\ \big|\ -\lambda,\dots,-\lambda\bigr).
\end{equation}
Then $W^{\rm ext}(\lambda)=K_1\,D_0(\lambda)D_1(\lambda)$, with a constant $K_1\ne0$ that depends only on $m$ and $y$, not on
$(\alpha,\beta)$ or $\lambda$.
\end{enumerate}
\end{lemma}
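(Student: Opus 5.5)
We prove both parts the same way. View $W$ as the determinant of the $2N$ linear functionals $\ell_i(A,B)=\alpha_iA(p_i)+\beta_iB(p_i)$ on the space $\mathcal V$ of pairs $(A,B)$ of polynomials of degree $\le N-1$, evaluated on the monomial basis. We change the basis of $\mathcal V$ to one adapted to the involution $\mathcal I$ of \S\ref{subsec:PI}, then replace the two rows of each node by their sum and difference. The determinant then becomes block diagonal.

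\emph{Part (a).} Write $p^{N-1}=p^{2m}$. The polynomials of degree $\le 2m$ split into the eigenspaces of $A\mapsto p^{2m}A(-1/p)$. A basis is $p^{m}(p-1/p)^i$, $0\le i\le m$, which has eigenvalue $(-1)^m\cdot(-1)^i$ up to a sign convention, together with $p^{m}(p+1/p)(p-1/p)^j$, $0\le j\le m-1$, which has the opposite parity. These $2m+1$ elements are polynomials of degree $\le 2m$. They are independent, because in the variable $\sigma=p-1/p$ they are $p^m$ times $\sigma^i$ and $p^m t\sigma^j$, and $t^2=\sigma^2+4$.

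Use this basis for $A$ and the same basis for $B$. At $p=y_k$ the functional of the row $y_k$ evaluates to $y_k^m(\alpha_k\sigma_k^i,\ \alpha_kt_k\sigma_k^j\mid\beta_k\sigma_k^i,\ \beta_kt_k\sigma_k^j)$. At $p=-1/y_k$ we have $p-1/p=\sigma_k$, $p+1/p=-t_k$ and $p^m=(-1)^my_k^{-m}$. The row of $-1/y_k$, with $\beta$ negated, is therefore $(-1)^my_k^{-m}(\alpha_k\sigma_k^i,\ -\alpha_kt_k\sigma_k^j\mid-\beta_k\sigma_k^i,\ \beta_kt_k\sigma_k^j)$.

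Rescale the two rows by $y_k^{-m}$ and by $(-1)^my_k^{m}$. Their half-sum is supported on the columns $(\alpha\sigma^i\mid\beta t\sigma^j)$, which gives the rows of $D_{\rm I}$. Their half-difference is supported on $(\alpha t\sigma^j\mid\beta\sigma^i)$, which gives the rows of $D_{\rm II}$.

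Collecting the constants gives $W=\pm c\,D_{\rm I}D_{\rm II}$. Here $c$ collects the following factors: the change-of-basis determinant, which is a power of $2$ times a sign coming from the triangular relation between $p^m\sigma^i$, $p^mt\sigma^j$ and the monomials; the row-operation factor $(-2)^N\cdot$(the $y_k^{\pm m}$ factors, which cancel pairwise); and the permutation sign of the block reordering. Checking that all of this reduces to exactly $2$ is the main obstacle. I would fix it by evaluating both sides at one convenient configuration rather than tracking every sign, for instance $N=1$, where $W=\det\begin{pmatrix}\alpha&-\beta\\ \alpha&\beta\end{pmatrix}=2\alpha\beta$, $D_{\rm I}=\alpha$ and $D_{\rm II}=\beta$. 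I would then confirm that the constant is independent of $m$ by an induction or a direct computation with diagonal data.

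\emph{Part (b).} Here $N'=2m+2$ is even, and the half-integer power makes the real basis $p^m\sigma^i$ unavailable. Instead, apply the Cayley transform $p\mapsto\varpi=(p-i)/(p+i)$. It sends $-1/p$ to $-\varpi$, since $(-1/p-i)/(-1/p+i)=(1+ip)/(1-ip)=-\varpi$. It also sends $0\mapsto-1$ and $\infty\mapsto1$.

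Write the polynomials of degree $\le N'-1$ as $(p+i)^{N'-1}$ times polynomials in $\varpi$ of degree $\le N'-1$. The involution then becomes $\varpi\mapsto-\varpi$ up to a constant, and its eigenspaces are spanned by $\varpi^j$ with $j$ even or with $j$ odd. Let $\epsilon$ be a fixed scalar depending only on $N'$. The row of $y_k$ reads $(\alpha_k\varpi_k^j\mid\beta_k\varpi_k^j)$ up to the factor $(y_k+i)^{N'-1}$. The row of $-1/y_k$ reads $(\alpha_k(-\varpi_k)^j\mid-\beta_k(-\varpi_k)^j)$ up to a factor $\epsilon\,(y_k+i)^{N'-1}y_k^{-(N'-1)}$.

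Sums and differences of the two rows separate the columns $(J_0\mid J_1)$ from $(J_1\mid J_0)$. The rows of $0$ and $\infty$ are handled the same way: $\varpi=\mp1$ with the values $\pm\lambda$ gives the last row $(1,\dots,1\mid-\lambda,\dots,-\lambda)$ in both blocks, up to the signs $(\pm1)^j$ that are absorbed. This yields $W^{\rm ext}=K_1D_0D_1$, with $K_1$ a product of the determinant of the change of basis from monomials in $p$ to $(p+i)^{N'-1}\varpi^j$, the row scalings, powers of $2$ and signs. All of these depend only on $m$ and $y$, and $K_1\ne0$ since $y_k>0$ keeps $y_k\pm i\ne0$.
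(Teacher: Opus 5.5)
Your route is the paper's in both parts. In (a) you use the basis $g_i=p^m\sigma^i$, $h_j=p^mt\sigma^j$ for both components, compute the rows $R_{y_k}$, $R_{x_k}$ of the points $y_k$ and $-1/y_k$ explicitly, and take sums and differences over each node. In (b) you use the Cayley basis $(p+i)^{N'-1}\varpi^j$ and pair the row of $y_k$ with that of $-1/y_k$, and the row of $0$ with that of $\infty$. Part (b) is complete as you wrote it, since $K_1$ is only claimed to be non-zero and independent of $(\alpha,\beta,\lambda)$.

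There is one slip in (a). Under $A\mapsto p^{2m}A(-1/p)$ every $g_i$ has eigenvalue $(-1)^m$, because $\sigma(-1/p)=\sigma(p)$, and every $h_j$ has eigenvalue $-(-1)^m$; the eigenvalue does not alternate with $i$. You computed the rows directly and they are correct, so the slip does not propagate.

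The genuine gap is the constant in (a). The statement asserts $W=2D_{\rm I}D_{\rm II}$, but you only reach $W=c(m)\,D_{\rm I}D_{\rm II}$, with $c(m)\ne0$ depending on $m$ alone. You verify $c(0)=2$ at $N=1$ and leave $m\ge1$ to an induction or a check on ``diagonal data'' that is never carried out.

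\begin{itemize}
\item The case $N=1$ says nothing about the sign or the power of $2$ in $c(m)$ for $m\ge1$.
\item The simplest degenerate data, $\beta_k=0$ on $m+1$ nodes and $\alpha_k=0$ on the other $m$, give $D_{\rm II}=0$: its $m+1$ rows $(\alpha_kt_k\sigma_k^j\mid0)$ live on $m$ columns. So that data cannot detect $c(m)$.
\item A usable test configuration needs a node with $\alpha_k\beta_k\ne0$, as at the flat point of Theorem~\ref{thm:CF}. Evaluating $W$ there is the same sign bookkeeping you hoped to avoid.
\item The sign matters downstream: it is what gives $\Xi>0$ in Theorem~\ref{thm:CF}(ii), and hence $D>0$ in (iii).
\end{itemize}

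The bookkeeping closes because the uncontrolled signs occur in pairs.

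\begin{itemize}
\item Order the monomials as $p^m,(p^{m+1},p^{m-1}),\dots,(p^{2m},p^0)$ and the basis as $g_0,(g_1,h_0),\dots,(g_m,h_{m-1})$. Then $C_A$ is block upper triangular with diagonal blocks $1$ and $\bigl(\begin{smallmatrix}1&1\\(-1)^i&(-1)^{i-1}\end{smallmatrix}\bigr)$, so $\det C_A=\pm2^m$.
\item Since you use the same basis for $A$ and $B$, the change of basis on pairs has determinant $(\pm\det C_A)^2=4^m$, with no sign ambiguity.
\item Interleaving the rows as $(x_1,y_1,\dots,x_N,y_N)$, and later de-interleaving the combined rows as (all sums; all differences), are inverse shuffles with equal signs.
\item With $\varkappa_k=(-1)^my_k^{2m}$, the operation $(R_{x_k},R_{y_k})\mapsto(R_{y_k}+\varkappa_kR_{x_k},\,R_{y_k}-\varkappa_kR_{x_k})$ has determinant $2\varkappa_k$. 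It yields $2y_k^m$ times the rows of $D_{\rm I}$, resp.\ $D_{\rm II}$.
\item Moving the $m$ columns $(0,h_j)$ in front of the $2m+1$ columns $(h_j,0),(0,g_i)$ costs $(-1)^m$.
\end{itemize}

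Hence $4^mW\prod_k2\varkappa_k=(-1)^m\prod_k(2y_k^m)^2D_{\rm I}D_{\rm II}$. This gives $4^mW=(-1)^{m(N+1)}2^ND_{\rm I}D_{\rm II}=2^ND_{\rm I}D_{\rm II}$, since $N+1$ is even, which is exactly $W=2D_{\rm I}D_{\rm II}$.
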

\begin{proof}
The plan is the one described above: write the interpolation determinant in a basis adapted to the eigenspaces of $\mathcal I$ (in (a) the
basis $g_i,h_j$ below, in (b) a Cayley-transformed basis), take sums and differences of the two rows of each node, and read off the
two blocks and the constant. In both parts let $d$ be the number of pairs of points, $d=N$ in (a) and $d=N'$ in (b), and $V$ the space
of pairs $(A,B)$ of polynomials of degree $\le d-1$. The row of a point $(p,[\alpha:\beta])$ is the
functional $(A,B)\mapsto\alpha A(p)+\beta B(p)$ written in the monomial basis $(p^l,0)$, $(0,p^l)$ of $V$, and the row of
$(\infty,[\alpha:\beta])$ is the functional that takes $\alpha$ times the coefficient of $p^{d-1}$ in $A$ plus $\beta$ times that in $B$. So
$W=\det F$, where $F$ is the matrix of the $2d$ functionals in the monomial basis. For another basis of $V$, with the matrix $C$ (its
columns are the new basis vectors in monomial coordinates), the matrix of the functionals in the new basis is $FC$, and
$\det(FC)=W\det C$.

(a) \emph{The basis.} Let $\sigma=p-p^{-1}$, $t=p+p^{-1}$ and
\begin{equation}
g_i=p^m\sigma^i\ (0\le i\le m),\qquad h_j=p^mt\sigma^j\ (0\le j\le m-1).
\end{equation}
These are polynomials of degree $\le2m$: the exponents of $g_i$ lie in $[m-i,m+i]$, those of $h_j$ in $[m-j-1,m+j+1]$. Let $C_A$ be
their matrix in the monomial basis of the polynomials of degree $\le 2m$, with the columns ordered $g_0,(g_1,h_0),\dots,(g_m,h_{m-1})$ and the
monomials ordered $p^m,(p^{m+1},p^{m-1}),\dots,(p^{2m},p^0)$. For $1\le i\le m$, $g_i$ and $h_{i-1}$ have the extreme terms $p^{m+i}+(-1)^ip^{m-i}$,
resp.\ $p^{m+i}+(-1)^{i-1}p^{m-i}$, and all their other exponents lie strictly between $m-i$ and $m+i$; and $g_0=p^m$. So $C_A$ is block upper
triangular, with the diagonal blocks $1$ and $\bigl(\begin{smallmatrix}1&1\\(-1)^i&(-1)^{i-1}\end{smallmatrix}\bigr)$, of determinant
$2(-1)^{i-1}$; hence $\det C_A=\pm2^m$. Use the basis $(g_i,0)$, $(h_j,0)$, $(0,g_i)$, $(0,h_j)$ of $V$, in this order; its matrix $C$ is block
diagonal with two blocks, each equal to $C_A$ up to a reordering of its rows and columns, so $\det C=(\pm\det C_A)^2=4^m$.

\emph{The rows in the new basis.} Since $\sigma(y_k)=\sigma_k$, $t(y_k)=t_k$, $\sigma(-1/y_k)=\sigma_k$, $t(-1/y_k)=-t_k$ and $(-1/y_k)^m=(-1)^my_k^{-m}$, the rows of the
two points of node $k$ are, in the four column blocks $(g_i,0)$, $(h_j,0)$, $(0,g_i)$, $(0,h_j)$,
\begin{equation}
\begin{gathered}
R_{y_k}=y_k^m\bigl(\alpha_k\sigma_k^i\mid\alpha_kt_k\sigma_k^j\mid\beta_k\sigma_k^i\mid\beta_kt_k\sigma_k^j\bigr),\\
R_{x_k}=(-1)^my_k^{-m}\bigl(\alpha_k\sigma_k^i\mid-\alpha_kt_k\sigma_k^j\mid-\beta_k\sigma_k^i\mid\beta_kt_k\sigma_k^j\bigr).
\end{gathered}
\end{equation}
With $\varkappa_k=(-1)^my_k^{2m}$,
\begin{equation}
R_{y_k}+\varkappa_kR_{x_k}=2y_k^m\bigl(\alpha_k\sigma_k^i\mid0\mid0\mid\beta_kt_k\sigma_k^j\bigr),\qquad
R_{y_k}-\varkappa_kR_{x_k}=2y_k^m\bigl(0\mid\alpha_kt_k\sigma_k^j\mid\beta_k\sigma_k^i\mid0\bigr).
\end{equation}

\emph{Signs.} Reorder the rows of $FC$ as $(x_1,y_1,\dots,x_N,y_N)$; replace each pair $(R_{x_k},R_{y_k})$ by
$(R_{y_k}+\varkappa_kR_{x_k},R_{y_k}-\varkappa_kR_{x_k})$, a row operation of determinant $2\varkappa_k$; reorder the rows as (all sums; all
differences), which has the same sign as the first reordering; and move the column block $(0,h_j)$, of $m$ columns, in front of the $2m+1$
columns of $(h_j,0)$ and $(0,g_i)$, with the sign $(-1)^{m(2m+1)}=(-1)^m$. The matrix is now block diagonal, with the blocks
$\bigl(2y_k^m(\alpha_k\sigma_k^i\mid\beta_kt_k\sigma_k^j)\bigr)_k$ and $\bigl(2y_k^m(\alpha_kt_k\sigma_k^j\mid\beta_k\sigma_k^i)\bigr)_k$. So, since $(2y_k^m)^2/(2\varkappa_k)=2(-1)^m$,
\begin{equation}
\det(FC)=(-1)^m\bigl(2(-1)^m\bigr)^ND_{\rm I}D_{\rm II}=(-1)^{m(N+1)}2^ND_{\rm I}D_{\rm II}=2^ND_{\rm I}D_{\rm II},
\end{equation}
as $m(N+1)=2m(m+1)$ is even. Hence $W=2^ND_{\rm I}D_{\rm II}/4^m=2D_{\rm I}D_{\rm II}$.

(b) Use the basis $\chi_j=(p-i)^j(p+i)^{N'-1-j}$, $0\le j\le N'-1$, of the polynomials of degree $\le N'-1$ in each component; it is a basis,
since $\chi_j=(p+i)^{N'-1}\varsigma^j$ with the Cayley coordinate $\varsigma(p)=(p-i)/(p+i)$. Let $C_1$ be the matrix of $(\chi_j)_j$ in the
monomial basis of the polynomials of degree $\le N'-1$, and $C$ the matrix of the basis $(\chi_j,0)$, $(0,\chi_j)$ of $V$; then $C=C_1\oplus C_1$,
and $\det C=(\det C_1)^2\ne0$ depends only on $N'$. The rows in the new basis, with the columns $(\chi_j,0)\mid(0,\chi_j)$, are:
\begin{itemize}[leftmargin=*]
\item point $y_k$: $(y_k+i)^{N'-1}\bigl(\alpha_k\varpi_k^j\mid\beta_k\varpi_k^j\bigr)$, since $\varsigma(y_k)=\varpi_k$;
\item point $-1/y_k$: since $\varsigma(-1/y_k)=(1+iy_k)/(1-iy_k)=-\varpi_k$ and $-1/y_k+i=y_k^{-1}\,i\,(y_k+i)$, the row is
$y_k^{1-N'}i^{N'-1}(y_k+i)^{N'-1}\bigl(\alpha_k(-\varpi_k)^j\mid-\beta_k(-\varpi_k)^j\bigr)$;
\item point $0$: $\chi_j(0)=(-i)^ji^{N'-1-j}=i^{N'-1}(-1)^j$, so the row is $i^{N'-1}\bigl((-1)^j\mid\lambda(-1)^j\bigr)$;
\item point $\infty$: $\chi_j$ is monic of degree $N'-1$, so the row is $\bigl(1\mid-\lambda\bigr)$ in all entries.
\end{itemize}
Replace the pair of rows of node $k$ by $\phi^\pm_k=R_{y_k}\mp iy_k^{N'-1}R_{x_k}$, and the pair $(R_\infty,R_0)$ by
$\phi^\pm_0=R_\infty\mp iR_0$. Since $i^{N'}=(-1)^{m+1}$, we have $1\mp i^{N'}(-1)^j=1\pm(-1)^{m+j}$, and so
\begin{equation}\begin{gathered}
\phi^\pm_k=(y_k+i)^{N'-1}\Bigl(\alpha_k\varpi_k^j\bigl(1\pm(-1)^{m+j}\bigr)\ \Big|\ \beta_k\varpi_k^j\bigl(1\mp(-1)^{m+j}\bigr)\Bigr),\\
\phi^\pm_0=\Bigl(1\pm(-1)^{m+j}\ \Big|\ -\lambda\bigl(1\mp(-1)^{m+j}\bigr)\Bigr).
\end{gathered}\end{equation}
Let $\delta\in\{0,1\}$ with $\delta\equiv m\bmod 2$. Then $1+(-1)^{m+j}$ is $2$ for $j\in J_\delta$ and $0$ for $j\in J_{1-\delta}$, and
$1-(-1)^{m+j}$ is $2$ for $j\in J_{1-\delta}$ and $0$ for $j\in J_\delta$. So the $N'$ rows $\phi^+$ vanish outside the $N'$ columns
$\{(\chi_j,0):j\in J_\delta\}\cup\{(0,\chi_j):j\in J_{1-\delta}\}$ of $D_\delta$, the $N'$ rows $\phi^-$ vanish outside the complementary $N'$
columns, those of $D_{1-\delta}$, and on their columns the node rows $\phi^+_k$, resp.\ $\phi^-_k$, are $2(y_k+i)^{N'-1}$ times the rows of
$D_\delta$, resp.\ $D_{1-\delta}$, and $\phi^\pm_0$ is $2$ times their last row. The row operations have the determinants $-2iy_k^{N'-1}$ for node $k$ (the pair
$(R_{x_k},R_{y_k})$ is multiplied by $\bigl(\begin{smallmatrix}-iy_k^{N'-1}&1\\ iy_k^{N'-1}&1\end{smallmatrix}\bigr)$) and $2i$ for the pair
$(R_\infty,R_0)$ (multiplied by $\bigl(\begin{smallmatrix}1&-i\\1&i\end{smallmatrix}\bigr)$). After a permutation of the rows and of the columns,
whose sign depends only on $m$, the matrix is block diagonal. So $W^{\rm ext}(\lambda)=K_1D_\delta(\lambda)D_{1-\delta}(\lambda)=K_1D_0(\lambda)D_1(\lambda)$, where $K_1$ is
$\pm4^{N'}\prod_k(y_k+i)^{2(N'-1)}$ divided by $\det C$ and by the determinants of the row operations; it is non-zero and independent of
$(\alpha,\beta)$ and $\lambda$.
\end{proof}

\subsection{The closed formula}\label{subsec:CF}
\emph{Geometry.} For $(\rho,z)\in\R^2$ different from the nodes $(0,w_k)$ put
\begin{equation}\label{eq:rvarpi}
r_k=\sqrt{\rho^2+(z-w_k)^2}>0,\qquad \varpi_k=\frac{w_k-z-i\rho}{r_k},\qquad |\varpi_k|=1 .
\end{equation}
Under $\rho\mapsto-\rho$, $r_k$ is unchanged and $\varpi_k$ is replaced by $\bar\varpi_k$. For $\rho>0$ and $y_k=\mu_k/\rho$ as in \eqref{eq:yk},
\begin{equation}\label{eq:yvarpi}
y_k-\frac1{y_k}=\frac{2(w_k-z)}\rho,\qquad y_k+\frac1{y_k}=\frac{2r_k}\rho,\qquad \frac{y_k-i}{y_k+i}=\varpi_k .
\end{equation}
Indeed, with $u=w_k-z$ we have $\mu_k=r_k+u$ and $\rho^2=r_k^2-u^2$, so $1/y_k=\rho/\mu_k=(r_k-u)/\rho$, which gives the first two identities. For
the third, $(y_k-i)/(y_k+i)=(r_k+u-i\rho)^2/\bigl((r_k+u)^2+\rho^2\bigr)$, where $(r_k+u)^2+\rho^2=2r_k(r_k+u)$ and $(r_k+u-i\rho)^2=2(r_k+u)(u-i\rho)$, by
$\rho^2=r_k^2-u^2$.

\emph{Notation.} Let $\omega$ be the measure of the soliton in projective form (\S\ref{subsec:split}), with the pairs $(\alpha_k,\hat b_k)$,
$\hat b_k=\beta_k/P'(w_k)$. Put
\begin{equation}\label{eq:PiCF}
\Pi(\rho,z;c)=\mathbf H_m(\mathbf r\omega)\,\mathbf H_{m+1}(\omega/\mathbf r),
\end{equation}
and let $\mathbf v=(\mathbf v_0,\mathbf v_1)\in\C^2$ be the vector with the components, in the null basis,
\begin{equation}\label{eq:vCF}
\mathbf v_0=\mathbf H_{m+1}(\varpi\omega),\qquad \mathbf v_1=\mathbf H_m(\varpi\omega).
\end{equation}
For real representatives $\Pi$ is real, and by \S\ref{subsec:split} its sign, the matrix $\operatorname{Re}(\mathbf v\bar{\mathbf v}^T)/\Pi$ and
$d_{(\rho,z)}\log|\Pi|$, in real representatives, do not depend on the representatives; unless said otherwise, $\Pi$ and $\mathbf v$ are computed in the representatives
used for $D$.

\begin{lemma}[Heine evaluation]\label{lem:HE}
Let $(\rho,z)\in\Hb^\circ$, let $y_k=\mu_k/\rho$, and let $(\alpha,\beta)\in\C^{2N}$. Let $D_{\rm I}$, $D_{\rm II}$ be as in
Lemma~\ref{lem:PI}\textup{(a)}, and $D_0(\lambda)$, $D_1(\lambda)$ as in Lemma~\ref{lem:PI}\textup{(b)}.
\begin{enumerate}[leftmargin=*,label=\textup{(\alph*)}]
\item The two factors of Lemma~\ref{lem:PI}(a) are
\begin{equation}\begin{gathered}
D_{\rm I}=(-1)^{m(m-1)/2}\Bigl(\frac2\rho\Bigr)^{m(m+1)}\Delta(w)\,\mathbf H_m(\mathbf r\omega),\\
D_{\rm II}=(-1)^{m(m+1)/2}\Bigl(\frac2\rho\Bigr)^{m(m+1)}\Delta(w)\prod_kr_k\,\mathbf H_{m+1}(\omega/\mathbf r).
\end{gathered}\end{equation}
Hence
\begin{equation}\label{eq:HEodd}
W(\mathcal P)=2(-1)^m\Bigl(\frac2\rho\Bigr)^{2m(m+1)}\Delta(w)^2\prod_kr_k\cdot\Pi(\rho,z;c).
\end{equation}
\item For $\delta\in\{0,1\}$, $D_\delta(\lambda)$ is a non-zero factor, which depends on $(\rho,z,w)$ but not on $(\alpha,\beta)$ or $\lambda$, times
\begin{equation}
\mathbf H_{m+1}(\varpi^{(-1)^\delta}\omega)+\lambda\,\mathbf H_m(\varpi^{(-1)^\delta}\omega).
\end{equation}
Hence, with a constant $K_2\ne0$ independent of $(\alpha,\beta)$ and $\lambda$,
\begin{equation}\label{eq:HEeven}
W(\mathcal P^{\rm ext}(\lambda))=K_2\,\bigl(\mathbf H_{m+1}(\varpi\omega)+\lambda\mathbf H_m(\varpi\omega)\bigr)\bigl(\mathbf H_{m+1}(\varpi^{-1}\omega)+\lambda\mathbf H_m(\varpi^{-1}\omega)\bigr).
\end{equation}
For real $(\alpha,\beta)$ and real $\lambda$, $\mathbf H_s(\varpi^{-1}\omega)=\overline{\mathbf H_s(\varpi\omega)}$, and the product in \eqref{eq:HEeven} is
$|\mathbf v_0+\lambda\mathbf v_1|^2$.
\end{enumerate}
\end{lemma}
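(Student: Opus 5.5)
The plan is to reduce both parts to Lemma~\ref{lem:split}, after rewriting the entries of the blocks in terms of the nodes $w_k$ by means of \eqref{eq:yvarpi}.

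For (a), substitute $\sigma_k=2(w_k-z)/\rho$ and $t_k=2r_k/\rho$ in $D_{\rm I}$. The $\alpha$-block has the entries $\alpha_k\sigma_k^i$ with $i\le m$, so it has $s'=m+1$ columns. The $\beta$-block has the entries $\beta_k(2r_k/\rho)\sigma_k^j$ with $j\le m-1$, so $s=m$. Hence $D_{\rm I}$ is the split determinant $S_{m+1,m}(\xi;\alpha,\beta')$ with
\[
\xi_k=\frac{2}{\rho}\,w_k-\frac{2z}{\rho}\quad(\theta_1=2/\rho),\qquad \beta'_k=\frac{2r_k}{\rho}\,\beta_k .
\]
Lemma~\ref{lem:split} gives $(-1)^{m(m-1)/2}\theta_1^{N(N-1)/2-m(m+1)}\Delta(w)$ times the projective Heine sum of order $m$ with the pairs $(\alpha_k,\beta'_k/P'(w_k))$. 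Each set $T$ with $|T|=m$ picks up the factor $(2/\rho)^m\prod_{T}r_k$, so this sum is $(2/\rho)^m\mathbf H_m(\mathbf r\omega)$. Since $N(N-1)/2=m(2m+1)$, the total power of $2/\rho$ is
\[
m(2m+1)-m(m+1)+m=m(m+1).
\]

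For $D_{\rm II}$ the two blocks have the sizes $s'=m$ and $s=m+1$, and the multiplier $t_k$ sits in the $\alpha$-block. Before applying Lemma~\ref{lem:split}, I pull $\prod_kt_k=(2/\rho)^N\prod_kr_k$ out of the rows, which turns the $\beta$-entries into $\beta_k\sigma_k^i/t_k$. The Heine sum of order $m+1$ then has the multipliers $1/t_k\propto1/r_k$, and it becomes $(\rho/2)^{m+1}\mathbf H_{m+1}(\omega/\mathbf r)$. The exponent of $2/\rho$ works out to
\[
N+m(2m+1)-m(m+1)-(m+1)=m(m+1),
\]
and the sign is $(-1)^{(m+1)m/2}$. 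Multiplying the two factors by $W=2D_{\rm I}D_{\rm II}$ (Lemma~\ref{lem:PI}(a)) gives \eqref{eq:HEodd}, since
\[
(-1)^{m(m-1)/2+m(m+1)/2}=(-1)^{m^2}=(-1)^m .
\]

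For (b), $D_\delta(\lambda)$ has $N'=N+1$ rows: $N$ node rows and the row $(1,\dots,1\mid-\lambda,\dots,-\lambda)$. I expand along the last row. Equivalently, I treat it as an extra node at $\varpi=\infty$ in homogeneous form; the cleaner route is to write the last row's contribution via linearity in $\lambda$. This gives $D_\delta(\lambda)=E_0+\lambda E_1$, where $E_0$ and $E_1$ are $N\times N$ minors formed from the node rows with one column deleted from the appropriate block. The exponents of $\varpi$ run over $J_\delta$ and $J_{1-\delta}$, which are arithmetic progressions of step $2$. So in the variable $\varpi_k^2$ these are split-type determinants with an extra monomial factor $\varpi_k$ on one block. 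A column of the form $1$ (all entries equal) in a block is precisely the leading-coefficient functional. Deleting it and using Cramer's rule for the coefficient of a top-degree term turns each minor into a split determinant $S_{s',s}$ in the points $\varpi_k^2$, with pairs $(\alpha_k\varpi_k^{\epsilon},\beta_k\varpi_k^{\epsilon'})$.

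This is the step I expect to be the main obstacle. Unlike (a), the points $\varpi_k^2$ are not affine in $w_k$, so Lemma~\ref{lem:split} does not apply directly. I would handle it with the Cayley relation
\[
\varpi_k^2-\varpi_l^2=\frac{(\varpi_k-\varpi_l)(\varpi_k+\varpi_l)}{1},\qquad \varpi_k-\varpi_l=\frac{2i(y_k-y_l)}{(y_k+i)(y_l+i)},
\]
together with \eqref{eq:Csplit} applied directly, which does not need affine points. I would then factor out of each term of \eqref{eq:Csplit} the parts depending only on $(\rho,z,w)$, namely $\Delta(\varpi^2)$ and the $\prod_k(y_k+i)$ powers. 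The remaining cross-products $\prod_{l\in T,l'\notin T}(\varpi_l^2-\varpi_{l'}^2)^{-1}$ must reduce to $\Delta_T^2/\prod_{T}P'(w_l)$ times a $T$-independent factor. For this I use $y_k-1/y_k=2(w_k-z)/\rho$, which makes the differences $\sigma_k-\sigma_l$ proportional to $w_k-w_l$. I would first check the degree bookkeeping on $m=0$, where $N'=2$.

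The two cases $\delta=0,1$ correspond to $\varpi$ and $\varpi^{-1}=\bar\varpi$, because the unused exponents shift by one, which accounts for the $\varpi^{(-1)^\delta}$. The product formula \eqref{eq:HEeven} then follows from Lemma~\ref{lem:PI}(b). Finally, for real pairs, conjugation maps $\varpi\mapsto\bar\varpi=\varpi^{-1}$ and fixes the real pairs, so $\mathbf H_s(\varpi^{-1}\omega)=\overline{\mathbf H_s(\varpi\omega)}$, and the product in \eqref{eq:HEeven} is $|\mathbf v_0+\lambda\mathbf v_1|^2$.
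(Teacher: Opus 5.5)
Your part (a) is correct and matches the paper. The only difference is the order of steps: you divide the rows of $D_{\rm II}$ by $t_k$ before applying Lemma~\ref{lem:split}, whereas the paper uses $\prod_{l\notin T}t_l=\prod_kt_k/\prod_{l\in T}t_l$ afterwards. Part (b), however, has a genuine gap at exactly the step you flag. The last row $(1,\dots,1\mid-\lambda,\dots,-\lambda)$ of $D_\delta(\lambda)$ is not the leading-coefficient functional. In the columns $\varpi^{2\ell}$ it is evaluation at $X=\varpi^2=1$ in both blocks: it is the row of the point $p=\infty$ in Lemma~\ref{lem:PI}(b), and the Cayley coordinate sends $\infty$ to $1$, not to $\infty$. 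Expanding along this row therefore makes the coefficients $E_0,E_1$ of $\lambda^0,\lambda^1$ signed sums of $m+1$ minors each. They are not single minors with a top column deleted, and they are not split determinants in the points $\varpi_k^2$.

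This is not cosmetic. Suppose you apply \eqref{eq:Csplit} to $S_{m,m+1}(\varpi^2;\alpha',\beta')$ as proposed, and write $c_k=w_k-z+i\rho$, so that $\varpi_l^2-\varpi_{l'}^2=2i\rho(w_l-w_{l'})/(c_lc_{l'})$. Then the term of $T$, $|T|=m+1$, carries the factor $\prod_kc_k^m\prod_{l'\notin T}c_{l'}$, which depends on $T$. The cross-products do not reduce to $\Delta_T^2/\prod_TP'(w_l)$ times a constant, and for $\delta=0$ you would get a Heine sum with multipliers $\varpi_l/c_l$ instead of $\varpi_l$.

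The missing idea is the M\"obius map $\psi(x)=(x-z-i\rho)/(x-z+i\rho)$, which sends $w_k\mapsto\varpi_k^2$ and $\infty\mapsto1$ simultaneously. The paper implements it as the column change $\iota\colon X^\ell\mapsto(x-z-i\rho)^\ell(x-z+i\rho)^{m-\ell}$ in both blocks, at the cost of a factor depending only on $(\rho,z,m)$. After this change:
\begin{itemize}
\item the node rows become $(w_k-z+i\rho)^{-m}(\alpha'_kw_k^\ell\mid\beta'_kw_k^\ell)$;
\item the last row has $1$ and $-\lambda$ at the two $x^m$ positions and $0$ elsewhere;
\item expanding along it yields $S_{m,m+1}(w;\alpha',\beta')$ and $S_{m+1,m}(w;\alpha',\beta')$, to which Lemma~\ref{lem:split} applies with $\theta_1=1$.
\end{itemize}
Within your direct approach, an equivalent repair is to pass to the basis $(X-1)^\ell$, so the last row becomes $(1,0,\dots,0\mid-\lambda,0,\dots,0)$. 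You then pull $\varpi_k^2-1=-2i\rho/c_k$ out of the block that lost its constant column; this factor is exactly what cancels the $T$-dependence above.

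Either way, you must also check that $E_0$ and $E_1$ carry the same prefactor, sign included. Otherwise you get $\mathbf H_{m+1}+C\lambda\,\mathbf H_m$ with some constant $C\ne1$. In the paper this follows from the cofactor signs $(-1)^{m+1}$ and $+1$ together with $(-1)^{m(m+1)/2}=(-1)^m(-1)^{m(m-1)/2}$.
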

\begin{proof}
Both parts are Laplace expansions of the determinants of Lemma~\ref{lem:PI} along the columns of one block (Lemma~\ref{lem:split}),
followed by a change of nodes.

(a) By \eqref{eq:yvarpi}, $\sigma_k=y_k-1/y_k=\frac2\rho w_k-\frac{2z}\rho$ is affine in $w_k$, with $\theta_1=2/\rho$, and $t_k=y_k+1/y_k=2r_k/\rho$. So
$D_{\rm I}=S_{m+1,m}(\sigma;\alpha,\beta t)$ and $D_{\rm II}=S_{m,m+1}(\sigma;\alpha t,\beta)$, and $N(N-1)/2-m(m+1)=m^2$. By Lemma~\ref{lem:split},
\begin{equation}
D_{\rm I}=(-1)^{m(m-1)/2}\Bigl(\frac2\rho\Bigr)^{m^2}\Delta(w)\,\mathbf H_m[\alpha,\beta t],\qquad
D_{\rm II}=(-1)^{m(m+1)/2}\Bigl(\frac2\rho\Bigr)^{m^2}\Delta(w)\,\mathbf H_{m+1}[\alpha t,\beta],
\end{equation}
where $\mathbf H_m[\alpha,\beta t]$ has the pairs $(\alpha_l,\hat b_lt_l)$ and $\mathbf H_{m+1}[\alpha t,\beta]$ the pairs $(\alpha_lt_l,\hat b_l)$. Each term of the first has
$m$ factors $t_l$, so $\mathbf H_m[\alpha,\beta t]=(2/\rho)^m\mathbf H_m(\mathbf r\omega)$. In the second, $\prod_{l\notin T}t_l=\prod_kt_k/\prod_{l\in T}t_l$ since
$t_l\ne0$, so
\begin{equation}
\mathbf H_{m+1}[\alpha t,\beta]=\prod_kt_k\cdot\Bigl(\frac\rho2\Bigr)^{m+1}\mathbf H_{m+1}(\omega/\mathbf r)=\Bigl(\frac2\rho\Bigr)^{m}\prod_kr_k\,\mathbf H_{m+1}(\omega/\mathbf r).
\end{equation}
For \eqref{eq:HEodd} multiply, and use Lemma~\ref{lem:PI}(a): the sign is $(-1)^{m(m-1)/2+m(m+1)/2}=(-1)^{m^2}=(-1)^m$.

(b) By \eqref{eq:rvarpi}, $\varpi_k^2=(w_k-z-i\rho)/(w_k-z+i\rho)=\psi(w_k)$ for the M\"obius map $\psi(x)=(x-z-i\rho)/(x-z+i\rho)$, and
$\psi(\infty)=1$. On the space $\mathcal V_m$ of complex polynomials of degree $\le m$ let $(\iota g)(x)=(x-z+i\rho)^m\,g(\psi(x))$. It maps $X^\ell$ to
$(x-z-i\rho)^\ell(x-z+i\rho)^{m-\ell}$, and these form a basis of $\mathcal V_m$, since $\rho\ne0$; so $\iota$ is an isomorphism,
$g(\varpi_k^2)=(\iota g)(w_k)/(w_k-z+i\rho)^m$, and $g(1)$ is the coefficient of $x^m$ in $\iota g$.

Since $J_\delta=\{\delta+2\ell:0\le\ell\le m\}$, the rows of $D_\delta$ are
$\bigl((\alpha_k\varpi_k^\delta\varpi_k^{2\ell})_{0\le\ell\le m}\mid(\beta_k\varpi_k^{1-\delta}\varpi_k^{2\ell})_{0\le\ell\le m}\bigr)$ and $(1\mid-\lambda)$. So $D_\delta$ is the determinant $E(\alpha',\beta')$ with the rows
$\bigl((\alpha'_k\varpi_k^{2\ell})_\ell\mid(\beta'_k\varpi_k^{2\ell})_\ell\bigr)$ and $(1\mid-\lambda)$, for $(\alpha',\beta')=(\alpha,\beta\varpi)$ if $\delta=0$ and
$(\alpha',\beta')=(\alpha\varpi,\beta)$ if $\delta=1$. In each block of $E$ the entries are the values of the monomials $X^0,\dots,X^m$ at $X=\varpi_k^2$,
resp.\ at $X=1$. Replace, in both blocks, the monomials by the basis $\iota^{-1}(x^\ell)$, $0\le\ell\le m$, of $\mathcal V_m$; this multiplies $E$ by a
non-zero factor that depends only on $(\rho,z,m)$. The rows become $(w_k-z+i\rho)^{-m}\bigl(\alpha'_kw_k^\ell\mid\beta'_kw_k^\ell\bigr)$, and the last row has
the entry $1$ at the position of $x^m$ in the first block, $-\lambda$ at the position of $x^m$ in the second block, and $0$ elsewhere. Expand
along the last row, row $N'$. The entry $1$ sits in column $m+1$, with the sign $(-1)^{N'+m+1}=(-1)^{m+1}$ and the minor $S_{m,m+1}(w;\alpha',\beta')$;
the entry $-\lambda$ sits in column $N'$, with the sign $+1$ and the minor $S_{m+1,m}(w;\alpha',\beta')$; both up to the row factors
$(w_k-z+i\rho)^{-m}$. By Lemma~\ref{lem:split} with $\theta_1=1$, and $(-1)^{m(m+1)/2}=(-1)^m(-1)^{m(m-1)/2}$,
\begin{multline}
(-1)^{m+1}S_{m,m+1}(w;\alpha',\beta')-\lambda S_{m+1,m}(w;\alpha',\beta')\\=-(-1)^{m(m-1)/2}\Delta(w)\bigl(\mathbf H_{m+1}[\alpha',\beta']+\lambda\mathbf H_m[\alpha',\beta']\bigr).
\end{multline}
Finally $\mathbf H_s[\alpha,\beta\varpi]=\mathbf H_s(\varpi\omega)$, and $\mathbf H_s[\alpha\varpi,\beta]=\prod_k\varpi_k\cdot\mathbf H_s(\varpi^{-1}\omega)$, since the term of $T$ is
$\prod_{l\notin T}\alpha_l\varpi_l\prod_{l\in T}\hat b_l=\prod_k\varpi_k\prod_{l\notin T}\alpha_l\prod_{l\in T}\hat b_l\varpi_l^{-1}$. All prefactors are non-zero and independent of
$(\alpha,\beta)$ and $\lambda$; this gives $D_\delta$. Now \eqref{eq:HEeven} follows from Lemma~\ref{lem:PI}(b), since $W(\mathcal P^{\rm ext}(\lambda))$ is the
determinant $W^{\rm ext}(\lambda)$ of that lemma for $y_k=\mu_k/\rho$, and $(y_k-i)/(y_k+i)=\varpi_k$ by \eqref{eq:yvarpi}. For real $(\alpha,\beta)$,
$\overline{\mathbf H_s(\varpi\omega)}=\mathbf H_s(\bar\varpi\omega)$ and $\bar\varpi_k=\varpi_k^{-1}$.
\end{proof}

\begin{theorem}[the closed formula]\label{thm:CF}
Let $\Pi$ and $\mathbf v$ be as in \eqref{eq:PiCF} and \eqref{eq:vCF}.
\begin{enumerate}[leftmargin=*,label=\textup{(\roman*)}]
\item For every $(\rho,z)\in\Hb^\circ$, as an identity of polynomials in $(\alpha,\beta)$,
\begin{equation}
\mathbf H_m(\varpi\omega)\mathbf H_{m+1}(\varpi^{-1}\omega)-\mathbf H_m(\varpi^{-1}\omega)\mathbf H_{m+1}(\varpi\omega)=2i\rho\,\mathbf H_m(\mathbf r\omega)\mathbf H_{m+1}(\omega/\mathbf r);
\end{equation}
for real $(\alpha,\beta)$ this says $\operatorname{Im}(\mathbf v_1\bar{\mathbf v}_0)=\rho\,\Pi$.
\item For all $c\in(\RP^1)^N$, in any representatives, $D=\Xi\,\Pi$ with
\begin{equation}
\Xi=2(2\rho^2)^{-N}\Bigl(\frac2\rho\Bigr)^{2m(m+1)}\Delta(w)^2\,\frac{\prod_ky_kr_k}{\prod_{k,l}(y_l-x_k)}>0,
\end{equation}
a function of $(\rho,z,w)$ alone. In particular $D\ne0$ at $(\rho,z)$ if and only if $\Pi(\rho,z;c)\ne0$. Where $\Pi\ne0$, in the null basis
and in real representatives,
\begin{equation}\label{eq:CFG}
\tilde G:=2(-1)^{m+1}G=\frac{\operatorname{Re}(\mathbf v\bar{\mathbf v}^T)}{\Pi},\qquad\text{i.e.}\qquad
\tilde G_{ab}\,d\phi^ad\phi^b=\frac{|\mathbf v_0\,d\phi^0+\mathbf v_1\,d\phi^1|^2}{\Pi}.
\end{equation}
\item If $c\in\mathcal A$, then $\Pi>0$ on $\Hb^\circ$, in any real representatives. Hence $D>0$; $G$ is real-analytic in $(\rho,z,w,c)$, in the
sense that in an affine chart of $(\RP^1)^N$ around any point of $\mathcal A$, with the corresponding representatives, it is the restriction of a
real-analytic function on a neighbourhood of each point of $\Hb^\circ\times\{w_1<\dots<w_N\}\times\mathcal A$; $\det G=\rho^2/4$; and $(-1)^{m+1}G$
is positive definite.
\item Let $c\in\mathcal A$. A function $\nu$ on $\Hb^\circ$ solves the quadrature \eqref{eq:nu} for $G$ if and only if $e^{2\nu}=C'\,\Pi$ with a
constant $C'>0$. For $c\in\mathcal A^\circ$, $\Pi$ in the standard representatives and $C'=e^{2\nu_0}/\Pi(c^*)$, which is a positive constant (Step~9 of the proof), with
$\nu_0$ a solution of \eqref{eq:nu} for the static seed, this is the conformal factor \eqref{eq:BZnu} of Belinski and Zakharov, with the BZ
vectors normalised by the standard representatives and $\Gamma^{(0)}=\Gamma(c^*)$.
\end{enumerate}
\end{theorem}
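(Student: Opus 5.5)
The plan is to assemble the closed formula from the three evaluations already in hand: Lemma~\ref{lem:Creduced} (reduced form), Proposition~\ref{prop:CW} (interpolation determinants) and Lemma~\ref{lem:HE} (Heine evaluation).

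\emph{Part (ii).} For the identity $D=\Xi\Pi$, I will combine three facts. Lemma~\ref{lem:Creduced} gives $D=(2\rho^2)^{-N}\prod_ky_k^2\det M$. Proposition~\ref{prop:CW}(i) expresses $\det M$ through $W(\mathcal P)$. Then \eqref{eq:HEodd} expresses $W(\mathcal P)$ through $\Pi$. Tracking the signs should make $(-1)^{N(N-1)/2}(-1)^m=+1$ for $N=2m+1$, since $N(N-1)/2=m(2m+1)\equiv m$. Positivity of $\Xi$ is clear because $x_k<0<y_l$.

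For $\tilde G$, Proposition~\ref{prop:CW}(ii) together with \eqref{eq:HEeven} and \eqref{eq:HEodd} gives $(1,\lambda)\hat G(1,\lambda)^T=\kappa|\mathbf v_0+\lambda\mathbf v_1|^2/\Pi$, where $\kappa$ does not depend on $(\alpha,\beta)$ or $\lambda$. Polarising in $\lambda$ yields $\hat G=\kappa\operatorname{Re}(\mathbf v\bar{\mathbf v}^T)/\Pi$. To fix $\kappa$ I will use determinants rather than the constants: $\det\operatorname{Re}(\mathbf v\bar{\mathbf v}^T)=\operatorname{Im}(\mathbf v_1\bar{\mathbf v}_0)^2$. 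So part (i) gives $\det(\operatorname{Re}(\mathbf v\bar{\mathbf v}^T)/\Pi)=\rho^2$, while $\det G=\rho^2/4$. This fixes $\kappa$ up to sign. The sign, and the value $2(-1)^{m+1}$, I will read off at the base point $c^*$ via Lemma~\ref{lem:Cbase}, where $(-1)^{m+1}G$ is positive definite. Since $\kappa$ is a constant independent of $c$ (continuous in $(\rho,z)$, never zero), one evaluation suffices. The case $\Pi\ne0$ with $\det M\ne0$ is then equivalent by the $D$-identity.

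\emph{Part (i).} This is a polynomial identity, which I would prove through the Lemma~\ref{lem:PI} structure. I would try to use that both sides are multilinear of degree one in each pair, so it suffices to check it on each monomial, i.e.\ for each choice of sets. Equivalently, consider the $(N+1)\times(N+1)$ split determinant with an extra row. The antisymmetric combination $\mathbf H_m(\varpi\omega)\mathbf H_{m+1}(\bar\varpi\omega)-\text{(swap)}$ is then a Desnanot--Jacobi/Plücker-type relation analogous to Lemma~\ref{lem:Achristoffel}. On monomials the left side is a sum over pairs $(T,T')$ with $|T|=m$, $|T'|=m+1$. Using $\varpi_k=(w_k-z-i\rho)/r_k$, the factor $\prod_{T}\varpi\prod_{T'}\bar\varpi-\prod_{T}\bar\varpi\prod_{T'}\varpi$ should telescope into $2i\rho$ times the $\mathbf r$-weights.

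A cleaner route that I would try first avoids this expansion. By (ii) up to the constant, $\det\hat G$ computed from the polarised formula equals $\kappa^2\operatorname{Im}(\mathbf v_1\bar{\mathbf v}_0)^2/\Pi^2$, while Lemma~\ref{lem:Creduced} gives $1/4$. This yields (i) up to sign, for $c$ with $\Pi\ne0$. Since both sides are polynomials, the identity extends everywhere, and the sign is fixed by continuity and evaluation at $c^*$. There is a circularity to break here: $\kappa$ must then be determined independently, directly from the explicit constants $K_2$ and \eqref{eq:HEodd}. That bookkeeping is the main obstacle.

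\emph{Part (iii).} For $c\in\mathcal A$, Lemma~\ref{lem:Hpos}(c) with $q=\mathbf r$ and $q'=1/\mathbf r$ gives $\Pi>0$, so $D>0$. Analyticity then follows from the formula, since $\varpi$, $r_k$ and the $\mathbf H_s$ are analytic, and $\det G=\rho^2/4$ holds. Definiteness follows from $\tilde G=\operatorname{Re}(\mathbf v\bar{\mathbf v}^T)/\Pi$ with $\det\tilde G=\rho^2>0$: a positive semidefinite matrix with nonzero determinant is definite.

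\emph{Part (iv).} I would apply Theorem~\ref{thm:BZnu} with $\Gamma^{(0)}=\Gamma(c^*)$. Since $\Gamma$ is a conjugate of $K$, $\det\Gamma/\det\Gamma^{(0)}=D(c)/D(c^*)=\Pi(c)/\Pi(c^*)$, because $\Xi$ cancels. This gives $e^{2\nu}=e^{2\nu_0}\Pi/\Pi(c^*)$ on $\mathcal A^\circ$. For this, $e^{2\nu_0}/\Pi(c^*)$ must be constant; that holds because at $c^*$ the metric is the static seed (Lemma~\ref{lem:Cbase}), so its $\nu_0$ agrees with that of the formula. Extension to $\mathcal A$ uses Remark~\ref{rem:density} and analyticity, and uniqueness up to a constant uses Remark~\ref{rem:nuinv}.
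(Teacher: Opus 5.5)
Your route to $D=\Xi\Pi$ and to $\hat G=\kappa\operatorname{Re}(\mathbf v\bar{\mathbf v}^T)/\Pi$ through Lemma~\ref{lem:Creduced}, Proposition~\ref{prop:CW} and Lemma~\ref{lem:HE} matches the paper. So do the sign of $\kappa$ read off at $c^*$ and your part (iii). The first gap is the normalisation of $\kappa$ together with part (i).

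The relation $\det\hat G=\frac14$ gives only $\kappa^2\operatorname{Im}(\mathbf v_1\bar{\mathbf v}_0)^2=\Pi^2/4$, which is one equation for two unknowns. Computing $K_2$ explicitly is not carried out, and the paper never does it. Even if you did, it would not settle the sign in (i). The quantities $\hat G$ and $W(\mathcal P^{\rm ext}(\lambda))$ depend on $\mathbf v$ only through $\operatorname{Re}(\mathbf v\bar{\mathbf v}^T)$, which is invariant under $\mathbf v\mapsto\bar{\mathbf v}$, whereas $\operatorname{Im}(\mathbf v_1\bar{\mathbf v}_0)$ changes sign under it. So Lemma~\ref{lem:Cbase} at $c^*$ cannot decide between $\pm\rho\Pi$, and at $c^*$ the Heine sums of $N$ non-zero finite weights are not explicit. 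Your monomial route also fails termwise, because many pairs $(T,T')$ give the same monomial in $(\alpha,\beta)$.

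The missing idea is a \emph{flat point} $c^\flat\in\cA$: $m$ parameters $0$, $m$ parameters $\infty$ (the set $I$), and one regular node $w_{k_0}$. By Lemma~\ref{lem:Hpos}(b) every Heine sum is then a single term. This gives $\mathbf v=C_I\varpi_I\,(a^\flat\varpi_{k_0},1)$ and $\Pi=C_I^2a^\flat/r_{k_0}$, with $\varpi_I=\prod_{h\in I}\varpi_h$, a real constant $C_I\ne0$ and $a^\flat>0$. Since $\operatorname{Im}\bar\varpi_{k_0}=\rho/r_{k_0}$, you get $\operatorname{Im}(\mathbf v_1\bar{\mathbf v}_0)=\rho\Pi$ there. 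As $\kappa$ does not depend on $c$, this gives $|\kappa|=1/(2\rho)$. Then $p_\pm=\operatorname{Im}(\mathbf v_1\bar{\mathbf v}_0)\mp\rho\Pi$ satisfy $p_+p_-=0$ on the dense set $\{\Pi\ne0\}$, hence in $\R[\alpha,\beta]$. Finally $p_-(c^\flat)=2\rho\Pi(c^\flat)\ne0$ forces $p_+\equiv0$, which is (i).

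Part (iv) has a second gap. Theorem~\ref{thm:BZnu} shows that $\nu_0+\frac12\log(\Pi(c)/\Pi(c^*))$ solves \eqref{eq:nu}. To conclude that the solutions are exactly $e^{2\nu}=C'\Pi$ with $C'$ constant, you need $\nu_0-\frac12\log\Pi(c^*)$ to be constant, i.e.\ that $\frac12\log\Pi(c^*)$ solves the quadrature of the static seed. Saying that at $c^*$ the metric is the static seed does not prove this. At $c=c^*$ the Belinski--Zakharov formula returns $\nu_0$ whatever the function $\Pi(c^*)$ is, so the argument is circular.

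The paper settles this at the flat point as well. There $(-1)^{m+1}G(c^\flat)=\frac14T^T\diag(\mu_{k_0},\rho^2/\mu_{k_0})T$ with a constant $T$, which is flat $\R^4$ centred at $w_{k_0}$. For it $\nu=-\frac12\log(2r_{k_0})$ solves \eqref{eq:nu}, and $\Pi(c^\flat)r_{k_0}$ is constant. Since $c^\flat\notin\cA^\circ$, this comparison needs the density extension of the quadrature from $\cA^\circ$ to $\cA$ (Remark~\ref{rem:density}) to be done first. It then yields that $\nu_0-\frac12\log\Pi(c^*)$ is constant.
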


\begin{proof}[Proof of \textup{(i)--(iii)}]
The interpolation determinants of Proposition~\ref{prop:CW} factor by Lemmas~\ref{lem:PI} and~\ref{lem:HE}; this determines $\hat G$ up to a
factor $K_0(\rho,z,w)$, which is fixed by $\det\hat G=\frac14$ at a flat point and by Lemma~\ref{lem:Cbase} at the base point.

\emph{Step 1: $D$ and $W(\mathcal P)$.} By Lemma~\ref{lem:Creduced} and Proposition~\ref{prop:CW}(i), for all $(\alpha,\beta)$,
\begin{equation}
D=(2\rho^2)^{-N}\prod_ky_k^2\det M=(2\rho^2)^{-N}(-1)^{N(N-1)/2}\prod_ky_k\,W(\mathcal P)\Big/\prod_{k,l}(y_l-x_k).
\end{equation}
Insert \eqref{eq:HEodd}; since $N(N-1)/2=m(2m+1)\equiv m$ modulo $2$, the
signs $(-1)^m$ cancel, and $D=\Xi\Pi$ with $\Xi$ as stated. Here $\Xi>0$, because $y_k,r_k>0$ and $y_l-x_k=y_l+1/y_k>0$; and $\Xi$ involves neither $c$
nor the representatives. This proves the first part of (ii). In particular $\det M\ne0\iff W(\mathcal P)\ne0\iff\Pi\ne0$.

\emph{Step 2: the flat point.} A \emph{flat point} $c^\flat$ is defined by a choice of disjoint sets $Z,I\subset\{1,\dots,N\}$ with
$|Z|=|I|=m$ and the remaining index $k_0$: $c_k=0$ for $k\in Z$, $c_k=\infty$ for $k\in I$, and $c_{k_0}$ finite with $a_{k_0}>0$. Then
$c^\flat\in\mathcal A$. In the standard representatives, $(\alpha_k,\beta_k)=(1,0)$ for $k\in Z$, $(0,1)$ for $k\in I$, and $(1,c_{k_0})$ for $k=k_0$.
Lemma~\ref{lem:Hpos}(b), with these $Z$, $I$ and $\mathsf R=\{k_0\}$, gives
$\mathbf H_s(q\omega)=\kappa_Iq_I\,\mathbf H^{\{k_0\}}_{s-m}(q\omega^I)$ with $\kappa_I=\hat b_I\Delta_I^2$, $\hat b_I=\prod_{l\in I}\hat b_l=\prod_{l\in I}1/P'(w_l)$.
Here $\mathbf H^{\{k_0\}}_0=1$ and $\mathbf H^{\{k_0\}}_1(q\omega^I)=a^\flat q_{k_0}$ with $a^\flat=a_{k_0}\prod_{l\in I}(w_l-w_{k_0})^2>0$, so
\begin{equation}
\mathbf H_m(q\omega)=\hat b_I\Delta_I^2\prod_{l\in I}q_l,\qquad \mathbf H_{m+1}(q\omega)=\hat b_I\Delta_I^2\prod_{l\in I}q_l\cdot a^\flat q_{k_0}.
\end{equation}
Hence, using $|\varpi_l|=1$ and $\operatorname{Im}\bar\varpi_{k_0}=\rho/r_{k_0}$,
\begin{equation}\label{eq:CFflat}
\Pi(c^\flat)=\hat b_I^2\Delta_I^4\,\frac{a^\flat}{r_{k_0}}>0,\qquad \mathbf v=\hat b_I\Delta_I^2\prod_{l\in I}\varpi_l\cdot\bigl(a^\flat\varpi_{k_0},1\bigr),\qquad
\operatorname{Im}(\mathbf v_1\bar{\mathbf v}_0)=\hat b_I^2\Delta_I^4a^\flat\,\frac\rho{r_{k_0}}=\rho\,\Pi(c^\flat).
\end{equation}
Note that $\Pi(c^\flat)\,r_{k_0}=\hat b_I^2\Delta_I^4a^\flat$ does not depend on $(\rho,z)$.

\emph{Step 3: the shape of $\hat G$.} Fix $(\rho,z)$, and $c$ with $\Pi(\rho,z;c)\ne0$ in real representatives $(\alpha,\beta)$. By
Proposition~\ref{prop:CW}(ii), for all real $\lambda$,
\begin{equation}
(1,\lambda)\hat G(1,\lambda)^T=\frac{W(\mathcal P^{\rm ext}(\lambda))}{2W(\mathcal P)}=K_0\,\frac{|\mathbf v_0+\lambda\mathbf v_1|^2}{\Pi},
\end{equation}
with $K_0=K_0(\rho,z,w)\ne0$ independent of $c$ and $\lambda$, by \eqref{eq:HEodd} and \eqref{eq:HEeven}: $K_0$ is $K_2$ divided by twice the
prefactor of $\Pi$ in \eqref{eq:HEodd}, and neither depends on $c$ or $\lambda$, so one $K_0$ serves for all $c$. A priori $K_0$ is complex, since
$K_2$ is. It is real: at $c=c^\flat$, in its standard representatives, and
$\lambda=0$ the left side is real and $|\mathbf v_0|^2/\Pi>0$, by \eqref{eq:CFflat}. Since
$|\mathbf v_0+\lambda\mathbf v_1|^2=(1,\lambda)\operatorname{Re}(\mathbf v\bar{\mathbf v}^T)(1,\lambda)^T$, and a real symmetric $2\times2$ matrix is
determined by its quadratic form on the vectors $(1,\lambda)$, $\lambda\in\R$,
\begin{equation}\label{eq:CFshape}
\hat G=K_0\,\frac{\operatorname{Re}(\mathbf v\bar{\mathbf v}^T)}{\Pi}.
\end{equation}

\emph{Step 4: $|K_0|$ and (i).} For $\mathbf v\in\C^2$,
\begin{equation}\label{eq:detRe}
\operatorname{Re}(\mathbf v\bar{\mathbf v}^T)=\operatorname{Re}\mathbf v\,(\operatorname{Re}\mathbf v)^T+\operatorname{Im}\mathbf v\,(\operatorname{Im}\mathbf v)^T\ge0,\qquad
\det\operatorname{Re}(\mathbf v\bar{\mathbf v}^T)=(\operatorname{Im}\mathbf v_1\bar{\mathbf v}_0)^2,
\end{equation}
so $\operatorname{Re}(\mathbf v\bar{\mathbf v}^T)$ is positive definite if and only if $\operatorname{Im}(\mathbf v_1\bar{\mathbf v}_0)\ne0$. By
\eqref{eq:CFshape} and $\det\hat G=\frac14$ (Lemma~\ref{lem:Creduced}),
\begin{equation}
K_0^2\,(\operatorname{Im}\mathbf v_1\bar{\mathbf v}_0)^2=\tfrac14\,\Pi^2\qquad\text{wherever }\Pi\ne0.
\end{equation}
At $c^\flat$ this gives $K_0^2\rho^2=\frac14$, by \eqref{eq:CFflat}; since $K_0$ does not depend on $c$, $|K_0|=1/(2\rho)$. Now fix $(\rho,z)$. Then
$p_\pm=\operatorname{Im}(\mathbf v_1\bar{\mathbf v}_0)\mp\rho\Pi$ are real polynomials in $(\alpha,\beta)\in\R^{2N}$, and $p_+p_-=0$ on the set
$\{\Pi\ne0\}$, which is dense, since $\Pi(c^\flat)\ne0$. So $p_+p_-=0$ in $\R[\alpha,\beta]$, and $p_+=0$ or $p_-=0$. At $c^\flat$,
$p_-=2\rho\Pi(c^\flat)\ne0$. Hence $p_+=0$: $\operatorname{Im}(\mathbf v_1\bar{\mathbf v}_0)=\rho\Pi$ for all real $(\alpha,\beta)$. The difference of the
two sides of (i) is a polynomial in $(\alpha,\beta)$ that equals $2i\bigl(\operatorname{Im}(\mathbf v_1\bar{\mathbf v}_0)-\rho\Pi\bigr)=0$ on $\R^{2N}$, since
there $\mathbf H_s(\varpi^{-1}\omega)=\overline{\mathbf H_s(\varpi\omega)}$; so it vanishes identically. This proves (i).

\emph{Step 5: the sign of $K_0$.} By Lemma~\ref{lem:Hpos}(c), $\Pi(c^*)>0$, so $\operatorname{Re}(\mathbf v\bar{\mathbf v}^T)$ is positive definite at
$c^*$, by (i) and \eqref{eq:detRe}. By Lemma~\ref{lem:Cbase}, $(-1)^{m+1}\hat G$ is positive definite at $c^*$. By \eqref{eq:CFshape},
$\operatorname{sgn}K_0=(-1)^{m+1}$. So $K_0=(-1)^{m+1}/(2\rho)$, and $G=\rho\hat G$ (Lemma~\ref{lem:Creduced}) gives \eqref{eq:CFG}. This proves (ii).

\emph{Step 6: (iii).} Let $c\in\mathcal A$. By Lemma~\ref{lem:Hpos}(c), $\Pi>0$ on $\Hb^\circ$, and $D=\Xi\Pi>0$ by Step~1. In an affine
chart around a point of $\mathcal A$, with the corresponding representatives, \eqref{eq:CFG} writes $G$ as a quotient of polynomials in $(\alpha,\beta)$
whose coefficients are real-analytic in $(\rho,z,w)$ on $\{\rho>0\}\times\{w_1<\dots<w_N\}$ (the $r_k$, $\varpi_k$, $1/P'(w_l)$ and $\Delta_T^2$ are), with
the denominator $2\Pi$, which is positive on $\mathcal A$ and hence on a neighbourhood of each point; so $G$ is real-analytic in the stated sense.
By (i) and \eqref{eq:detRe}, $(-1)^{m+1}G=\operatorname{Re}(\mathbf v\bar{\mathbf v}^T)/(2\Pi)$ is positive definite, with determinant
$\rho^2\Pi^2/(4\Pi^2)=\rho^2/4$.
\end{proof}

\begin{proof}[Proof of \textup{(iv)}]
Let $\nu_0$ be a solution of \eqref{eq:nu} for the static seed $G_0$ of \eqref{eq:staticseed}, as in \eqref{eq:BZnu}; it exists since
\eqref{eq:hm} is the integrability condition of \eqref{eq:nu} and $\Hb^\circ$ is simply connected, and two solutions of \eqref{eq:nu} for the
same $G$ differ by a constant (Remark~\ref{rem:nuinv}). In this proof $\Pi$ is computed in the standard representatives; for $c\in\mathcal A$,
$\Pi(c)>0$ on $\Hb^\circ$, by (iii). Put $\nu_*=\nu_0-\tfrac12\log\Pi(c^*)$, a real-analytic function of $(\rho,z)$, independent of $c$, and
$\nu_c=\frac12\log\Pi(c)+\nu_*$ for $c\in\mathcal A$.

\emph{Step 7: the positive cone.} Let $c\in\mathcal A^\circ$; then no BZ vector is null. In \eqref{eq:BZnu}, $\Gamma$ is the matrix of
\eqref{eq:dressed}; we normalise the BZ vectors by the standard representatives, and take $\Gamma^{(0)}=\Gamma(c^*)$, in the standard
representatives $(1,c^*_k)$. By Lemma~\ref{lem:Cbase}, the BZ vectors of $c^*$ give back the static seed up to the constant factor
$(-1)^{m+1}$ and the constant change of frame $e_\pm$, as \eqref{eq:BZnu} allows (Remark~\ref{rem:nuinv}). So Theorem~\ref{thm:BZnu}, whose hypotheses hold for the static seed \eqref{eq:staticseed}, the removed poles $\mub_k$ and
the re-added poles of \S\ref{subsec:ALEsolitons}, says that \eqref{eq:BZnu} solves \eqref{eq:nu} for $G(c)$; this is the only use of
Theorem~\ref{thm:BZnu} in the paper. Now
$\det\Gamma$ is $D$ times a factor that depends on $(\rho,z)$ but not on $c$, since $\Gamma=4f\,SKS$ with $S$ independent of $c$
(\S\ref{subsec:ALEsolitons}). So $|\det\Gamma/\det\Gamma^{(0)}|=D(c)/D(c^*)=\Pi(c)/\Pi(c^*)$, by (ii), (iii): the factor $\Xi$ cancels, and
both determinants are positive since $c,c^*\in\mathcal A$. Thus \eqref{eq:BZnu} is $\nu_c$, and $\nu_c$ solves \eqref{eq:nu} for $G(c)$.
This also proves the identification with \eqref{eq:BZnu} stated in (iv).

\emph{Step 8: all of $\mathcal A$.} Fix an affine chart $\mathcal U$ of $(\RP^1)^N$ around a point of $\mathcal A$, with the corresponding
representatives. Computed in these representatives, $\Pi$ is real-analytic in $(\rho,z,c)$ and positive on
$\Hb^\circ\times(\mathcal A\cap\mathcal U)$, by (iii), and $d_{(\rho,z)}\nu_c=\tfrac12\,d_{(\rho,z)}\log\Pi+d\nu_*$, since $\Pi$ in the standard and in the chart
representatives differ by a factor $\prod_k\lambda_k(c)^2>0$ that does not depend on $(\rho,z)$. So $d_{(\rho,z)}\nu_c$ is continuous on
$\Hb^\circ\times(\mathcal A\cap\mathcal U)$, and so are $G$, with $\det G=\rho^2/4$, and its derivatives in $(\rho,z)$, by (iii). By Step~7, \eqref{eq:nu} holds for
$c\in\mathcal A^\circ\cap\mathcal U$, which is dense in $\mathcal A\cap\mathcal U$ (Definition~\ref{def:admissible}); so Remark~\ref{rem:density}, with the
parameter space $\mathcal A\cap\mathcal U$, gives \eqref{eq:nu} on $\Hb^\circ\times(\mathcal A\cap\mathcal U)$. Hence $\nu_c$ solves \eqref{eq:nu} for $G(c)$ for all $c\in\mathcal A$, phantom solitons included.

\emph{Step 9: $\nu_*$ is constant.} At a flat point $c^\flat$, by (ii) and \eqref{eq:CFflat}, with $u=w_{k_0}-z$, $\operatorname{Re}\varpi_{k_0}=u/r_{k_0}$ and
$r=r_{k_0}$,
\begin{equation}
(-1)^{m+1}G(c^\flat)=\frac{r}{2a^\flat}\begin{pmatrix}(a^\flat)^2&a^\flat u/r\\a^\flat u/r&1\end{pmatrix}
=\frac12\begin{pmatrix}a^\flat r&u\\u&r/a^\flat\end{pmatrix}=\frac14\,T^T\diag\Bigl(\mu_{k_0},\frac{\rho^2}{\mu_{k_0}}\Bigr)T,
\end{equation}
with $T=\bigl(\begin{smallmatrix}\sqrt{a^\flat}&1/\sqrt{a^\flat}\\ \sqrt{a^\flat}&-1/\sqrt{a^\flat}\end{smallmatrix}\bigr)$, since $\mu_{k_0}=r+u$ and
$\rho^2/\mu_{k_0}=r-u$. By Remark~\ref{rem:nuinv}, \eqref{eq:nu} for $G(c^\flat)$ is \eqref{eq:nu} for $\diag(e^{2U},\rho^2e^{-2U})$ with
$e^{2U}=\mu_{k_0}$, flat $\R^4$ in Weyl coordinates centred at $w_{k_0}$. For such a diagonal matrix \eqref{eq:nu} reads
$\partial_\rho\nu=\rho(U_\rho^2-U_z^2)-U_\rho$, $\partial_z\nu=2\rho U_\rho U_z-U_z$. A direct computation, using
$\partial_z\mu_{k_0}=-\mu_{k_0}/r$ and $\partial_\rho\mu_{k_0}=\rho/r$, shows that $\nu=-\frac12\log(2r)$ is a solution. By Step~8 and
Remark~\ref{rem:nuinv},
$\frac12\log\Pi(c^\flat)+\nu_*+\frac12\log(2r_{k_0})$ is constant on $\Hb^\circ$, and $\Pi(c^\flat)r_{k_0}$ is constant by \eqref{eq:CFflat}. So $\nu_*$ is
constant.

\emph{Conclusion.} So $e^{2\nu_*}=e^{2\nu_0}/\Pi(c^*)$ is a positive constant, and $\nu_c=\frac12\log(e^{2\nu_*}\Pi(c))$ solves \eqref{eq:nu} for every
$c\in\mathcal A$ by Step~8; by Remark~\ref{rem:nuinv} the solutions are exactly the functions $\frac12\log(C'\Pi)$ with constants $C'>0$.
\end{proof}

\subsection{Phantom solitons}\label{subsec:Cnondeg}
The parameters $c_k=0$ and $c_k=\infty$ are the phantom solitons (\S\ref{subsec:soliton}); their BZ vectors $m^{(k)}_0=(\beta_k,\alpha_k)$,
$(0,1)$ and $(1,0)$, are the null vectors of $\hat\eta$ \eqref{eq:etahat}. By the next lemma they drop out of the closed formula; so they are
not turning points (Theorem~\ref{thm:family}\textup{(a)}, Proposition~\ref{prop:axis}).

\begin{lemma}[phantom solitons]\label{lem:Cnull}
Let $c\in(\RP^1)^N$ and $\tilde G=2(-1)^{m+1}G$. Let $Z$ and $I$ be disjoint
sets of nodes with $c_k=0$ on $Z$ and $c_k=\infty$ on $I$, $\mathsf R$ the set of the other nodes, and $\omega^I$, $\kappa_I\ne0$ and $q_I$ as in
Lemma~\ref{lem:Hpos}\textup{(b)}.
\begin{enumerate}[leftmargin=*,label=\textup{(\alph*)}]
\item \emph{Reduction.} $\mathbf v=\kappa_I\varpi_I\,\mathbf v^{\rm red}$ and $\Pi=\kappa_I^2\,\Pi^{\rm red}$, with $\varpi_I=\prod_{h\in I}\varpi_h$ and
\begin{equation}
\mathbf v^{\rm red}=\bigl(\mathbf H^{\mathsf R}_{m+1-|I|}(\varpi\omega^I),\mathbf H^{\mathsf R}_{m-|I|}(\varpi\omega^I)\bigr),\qquad
\Pi^{\rm red}=\mathbf H^{\mathsf R}_{m-|I|}(\mathbf r\omega^I)\,\mathbf H^{\mathsf R}_{m+1-|I|}(\omega^I/\mathbf r);
\end{equation}
where $\Pi\ne0$, in real representatives, $\tilde G=\operatorname{Re}(\mathbf v^{\rm red}\bar{\mathbf v}^{\rm red\,T})/\Pi^{\rm red}$. If $Z\cup I$ contains all phantoms,
then in the standard representatives $\mathbf H^{\mathsf R}_s(q\omega^I)=H_s(q\omega^I)$ with
$\omega^I=\sum_{l\in\mathsf R}a_l\prod_{h\in I}(w_l-w_h)^2\,\delta_{w_l}$.
\item \emph{Moving phantoms.} Let $c\in\mathcal A$, with $Z\cup I$ the set of all its phantoms. Move the phantoms to other points of the axis,
distinct and off the nodes of $\mathsf R$, and give each node $w_l$ of $\mathsf R$ the parameter $c_l=a'_lP'(w_l)$, with
$a'_l=\omega^I_l\prod_{h\in I}(w_l-w'_h)^{-2}$, where $w'_h$ is the moved position of the phantom $h\in I$, and $P$ computed with the moved nodes, re-indexed in increasing order. Then the new parameters lie in $\mathcal A$, and $\omega^I$ and
$G$ do not change; hence neither do the solutions of \eqref{eq:nu}.
\end{enumerate}
\end{lemma}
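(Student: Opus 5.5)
For (a), apply Lemma~\ref{lem:Hpos}(b) to each of the four projective Heine sums entering $\mathbf v$ and $\Pi$, with the multipliers $q=\varpi$, $q=\mathbf r$ and $q=1/\mathbf r$. For $q=\varpi$ this gives
\[
\mathbf H_s(\varpi\omega)=\kappa_I\varpi_I\,\mathbf H^{\mathsf R}_{s-|I|}(\varpi\omega^I),\qquad s\in\{m,m+1\},
\]
so $\mathbf v=\kappa_I\varpi_I\mathbf v^{\rm red}$ with one common scalar factor. For $\Pi$ the factors are $\kappa_Ir_I$ for $\mathbf H_m(\mathbf r\omega)$ and $\kappa_I/r_I$ for $\mathbf H_{m+1}(\omega/\mathbf r)$; their product is $\kappa_I^2$, so $\Pi=\kappa_I^2\Pi^{\rm red}$. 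In real representatives $\kappa_I$ is real and $|\varpi_I|=1$, so
\[
\operatorname{Re}(\mathbf v\bar{\mathbf v}^T)=\kappa_I^2\operatorname{Re}(\mathbf v^{\rm red}\bar{\mathbf v}^{\rm red\,T}).
\]
Dividing by $\Pi$ and using \eqref{eq:CFG} gives the reduced formula for $\tilde G$. For the last claim of (a): if all phantoms lie in $Z\cup I$, then on $\mathsf R$ every $c_l$ is finite and non-zero. In standard representatives $\alpha_l=1$ and $\hat b_l=a_l$, so the pairs of $\omega^I$ are $(1,a_l\prod_{h\in I}(w_l-w_h)^2)$. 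By the remark after \eqref{eq:Hproj}, $\mathbf H^{\mathsf R}_s$ is then the ordinary Hankel determinant $H_s$ of the stated measure.

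For (b), the key observation is that the reduced objects $\mathbf v^{\rm red}$ and $\Pi^{\rm red}$ depend only on the nodes of $\mathsf R$, on the measure $\omega^I$, and on $(\rho,z)$ through $\varpi_l,r_l$ for $l\in\mathsf R$. They do not depend on the positions of the phantoms, nor on how many there are in $Z$ versus $I$. So I would first check that the new weights reproduce the same measure. The new data have $a'_l\prod_{h\in I}(w_l-w'_h)^2=\omega^I_l$ by construction, so $\omega^I$ is unchanged. I would also observe that $c_l=a'_lP'(w_l)$ is just the definition of the weight $a'_l$ with the new $P$. Then (a), applied to the new configuration with the same $Z$, $I$ and the same $\mathsf R$, gives the same $\tilde G$.

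The step I expect to need care is admissibility of the new parameters, together with the consistency of signs and indices after re-indexing. The conditions (b) and (c) of Definition~\ref{def:admissible} are unchanged, since the numbers of zeros and infinities are preserved; $m$ is also unchanged, since $N$ is. For condition (a), the weights $a'_l$ are positive because $\omega^I_l>0$. The phantom weights are $0$ or $\infty$ regardless of the sign of $P'$, since $c_h=0$ gives $a_h=0$ and $c_h=\infty$ gives $a_h=\infty$. So all weights lie in $[0,\infty]$. In particular the sign condition $(-1)^{N-k}c_k\ge0$ is handled automatically by working with weights, and the re-indexing is harmless.

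There is one subtlety: $\tilde G$ is only defined up to the sign $(-1)^{m+1}$ and in the null basis, which is unchanged since the basis does not depend on the nodes. Hence $G$ itself is unchanged. Finally, the solutions of \eqref{eq:nu} depend only on $G$ (Remark~\ref{rem:nuinv}); alternatively, by Theorem~\ref{thm:CF}(iv) they are $\tfrac12\log(C'\Pi)$ with $\Pi=\kappa_I^2\Pi^{\rm red}$, and $\Pi^{\rm red}$ is unchanged while the constant $\kappa_I^2$ is absorbed into $C'$.
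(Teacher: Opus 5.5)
Your proposal is correct and follows the paper's proof essentially step by step. For (a), you apply Lemma~\ref{lem:Hpos}(b) with $q=\varpi,\mathbf r,1/\mathbf r$, use $|\kappa_I\varpi_I|^2=\kappa_I^2$ in \eqref{eq:CFG}, and then Heine's formula in the standard representatives. For (b), you check that the new weights are $a'_l>0$, $0$, $\infty$ with unchanged counts, that the reduced measure is again $\omega^I$, and then read off $\tilde G$ from (a). The one point left implicit is that $\Pi>0$ on all of $\Hb^\circ$ for both configurations, by Theorem~\ref{thm:CF}(iii) applied to the admissibility you verify; this is what lets the reduced formula determine $\tilde G$ everywhere.
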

\begin{proof}
(a) Lemma~\ref{lem:Hpos}(b) with $q=\varpi,\mathbf r,1/\mathbf r$ gives the formulas for $\mathbf v$ and $\Pi$: in $\Pi$ the factor is
$\kappa_I\mathbf r_I\cdot\kappa_I\mathbf r_I^{-1}=\kappa_I^2$, with $\mathbf r_I=\prod_{h\in I}r_h$. Since $|\varpi_I|=1$, the factor
$|\kappa_I\varpi_I|^2=\kappa_I^2\ne0$ cancels in \eqref{eq:CFG}. If $Z\cup I$ contains all phantoms, then in the standard representatives
the nodes of $\mathsf R$ have $\alpha_l=1$ and $\hat b_l=a_l$; then use Heine's formula.

(b) The new weights $c_k/P'(w_k)$ are $a'_l>0$ on $\mathsf R$, $\infty$ on $I$ and $0$ on $Z$, with the same numbers of infinities and zeros, so the
new parameters lie in $\mathcal A$; and the new reduced measure has the weights $a'_l\prod_{h\in I}(w_l-w'_h)^2=\omega^I_l$. By (a) and
Theorem~\ref{thm:CF}(ii),(iii), $\tilde G$ is given on $\Hb^\circ$ by Heine sums of the measure $\omega^I$ on the nodes of $\mathsf R$ of the orders
$m-|I|$ and $m+1-|I|$, which do not change; and \eqref{eq:nu} depends only on $G$.
\end{proof}

\section{The soliton family: the axis and infinity}\label{sec:alerods}
This section defines the soliton family, which contains the ALE soliton metrics and the far-node limits
(Theorem~\ref{thm:family}), analyses its axis and, in the ALE case, its end, and shows that the ALE soliton metrics lie in $\Ctor$
(Corollary~\ref{cor:CinCtor}).

\subsection{The family}\label{subsec:family}
For a function $q$ on the nodes, $q\omega$ is the measure with the weights $q_k\omega_k$ (\S\ref{subsec:heine}); we write $\mathbf r\omega$,
$\omega/\mathbf r$ and $\varpi\omega$ for $q_k=r_k$, $1/r_k$ and $\varpi_k$. We use the shifted determinants \eqref{eq:shifted} and the convention
$H_s=\Sigma_s=0$ for $s<0$ (\S\ref{subsec:atominf}).
\begin{definition}[the soliton family]\label{def:family}
\emph{Family data} $(\omega,r,\gamma,\mathfrak s)$ are: nodes $w_1<\dots<w_n$, $n\ge1$; a measure $\omega=\sum_k\omega_k\delta_{w_k}$ with
all $\omega_k>0$; $\gamma\ge0$; an integer $r$ with $0\le r\le n-1$ if $\gamma=0$ and $0\le r\le n$ if $\gamma>0$; and $\mathfrak s\in\R$. Let
$\mathcal U_\omega=\R^2\setminus\{(0,w_1),\dots,(0,w_n)\}$, let $r_k,\varpi_k$ be as in \eqref{eq:rvarpi}, and put on $\mathcal U_\omega$
\begin{equation}\label{eq:famXY}
\begin{aligned}
\mathcal X&=H_r(\varpi\omega),&\qquad \mathcal Y&=H^{(\gamma)}_{r+1}(\varpi\omega)+\gamma(\mathfrak s-i\rho)\,\mathcal X,\\
F_1&=H_r(\mathbf r\omega)+\gamma H_{r-1}(\mathbf r\omega),&\qquad F_2&=H_{r+1}(\omega/\mathbf r)+\gamma H_r(\omega/\mathbf r),
\end{aligned}
\end{equation}
and $\mathbf l=(-\mathcal Y,\mathcal X)$. The \emph{family matrix} is the real symmetric $2\times2$ matrix function
\begin{equation}\label{eq:famG}
\tilde G_{\omega,r,\gamma,\mathfrak s}=\frac{\operatorname{Re}(\mathbf l\,\mathbf l^*)}{F_1F_2},\qquad\text{i.e.}\qquad
\tilde G(x,x)=\frac{|\mathcal X x_1-\mathcal Y x_0|^2}{F_1F_2}\quad(x=(x_0,x_1)\in\R^2),
\end{equation}
in the \emph{family components} $x$. Its vectors are the family $(n,r,\gamma)$ of Definition~\ref{def:ALFfamily} for $\omega$,
$V_j=(H_r(\omega_{(j)}),H^{(\gamma)}_{r+1}(\omega_{(j)}))$, $0\le j\le n$ (for $\gamma=0$ the Hankel family $(n,r)$), and
$S_{\mathfrak s}(X,Y)=(X,Y+\gamma\mathfrak sX)$. For $\gamma=0$ the shift $\mathfrak s$ does not enter. Below, \eqref{eq:omegaj}, \eqref{eq:omegak} and \eqref{eq:wronskian} are used with the weights $\omega_k$ in place of $a_k$.
\end{definition}
The bound $r\le n-1$ for $\gamma=0$ is needed since $F_2=0$ for $(\gamma,r)=(0,n)$.

\begin{lemma}[the family identity]\label{lem:famid}
Let $(\omega,r,\gamma,\mathfrak s)$ be family data.
\begin{enumerate}[leftmargin=*,label=\textup{(\roman*)}]
\item On $\mathcal U_\omega$, $F_1\ge H_r(\mathbf r\omega)>0$ and $F_2>0$; $\mathcal X(-\rho,z)=\overline{\mathcal X(\rho,z)}$,
$\mathcal Y(-\rho,z)=\overline{\mathcal Y(\rho,z)}$, and $F_1,F_2$ are even in $\rho$. So $\tilde G$ is real-analytic on $\mathcal U_\omega$ and even
in $\rho$.
\item $\operatorname{Im}(\mathcal X\overline{\mathcal Y})=\rho\,F_1F_2$ on $\mathcal U_\omega$. This holds for every real measure $\omega$ on $n$
nodes, every $0\le r\le n$ and all real $\gamma,\mathfrak s$ (with $\mathcal X,\mathcal Y,F_1,F_2$ defined by \eqref{eq:famXY} for such data).
\item For $\rho>0$, $\tilde G$ is positive definite with $\det\tilde G=\rho^2$, and $\Phi=\rho^{-1}\tilde G=\Phi_\tau$ of \eqref{eq:Phitau} in
family components, with
\begin{equation}\label{eq:tauCF}
\tau=-\overline{\mathcal X}/\overline{\mathcal Y},\qquad \operatorname{Im}\tau=\rho F_1F_2/|\mathcal Y|^2>0 .
\end{equation}
\end{enumerate}
\end{lemma}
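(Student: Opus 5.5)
The plan is to prove (i) and (iii) directly from Heine's formula and the algebra of \S\ref{subsec:CF}, and to derive the identity (ii) from Theorem~\ref{thm:CF}(i) in two moves: a phantom reduction for $\gamma=0$, then an atom sent to infinity for $\gamma>0$.

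\emph{Part (i).} By Lemma~\ref{lem:Aheine}, $H_s(q\omega)$ for positive multipliers $q$ is a sum of positive terms whenever $s\le n$. So $H_r(\mathbf r\omega)>0$ for $r\le n$, and the term $\gamma H_{r-1}(\mathbf r\omega)$ is non-negative, which gives $F_1\ge H_r(\mathbf r\omega)>0$. For $F_2$: if $r\le n-1$ then $H_{r+1}(\omega/\mathbf r)>0$; if $r=n$ then $\gamma>0$ by the definition of family data, and $\gamma H_n(\omega/\mathbf r)>0$. Under $\rho\mapsto-\rho$ we have $r_k\mapsto r_k$ and $\varpi_k\mapsto\bar\varpi_k$. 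The Hankel determinants and $H^{(\gamma)}_{r+1}$ are polynomials with real coefficients in the weights, and $\mathfrak s-i\rho\mapsto\overline{\mathfrak s-i\rho}$. So $\mathcal X,\mathcal Y$ are conjugated, $F_1,F_2$ are even, and $\tilde G$ is real-analytic and even in $\rho$.

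\emph{Part (ii), case $\gamma=0$.} I would embed $\omega$ in an ALE soliton with phantoms. Choose $m$ large and add $|I|=m-r$ nodes with $c=\infty$ and $|Z|=2m+1-n-|I|\ge0$ nodes with $c=0$, at points off the $w_k$. The remaining weights are chosen so that the reduced measure $\omega^I$ of Lemma~\ref{lem:Hpos}(b) equals $\omega$; this is possible for any positive $\omega$, and then for any real $\omega$ by polynomiality. Theorem~\ref{thm:CF}(i) holds as a polynomial identity for all real pairs, and it gives $\operatorname{Im}(\mathbf v_1\bar{\mathbf v}_0)=\rho\Pi$. By Lemma~\ref{lem:Cnull}(a), $\mathbf v=\kappa_I\varpi_I\mathbf v^{\rm red}$ and $\Pi=\kappa_I^2\Pi^{\rm red}$ with $|\varpi_I|=1$. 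Dividing by $\kappa_I^2$ gives $\operatorname{Im}(H_r(\varpi\omega)\overline{H_{r+1}(\varpi\omega)})=\rho H_r(\mathbf r\omega)H_{r+1}(\omega/\mathbf r)$, which is the claim with $\mathcal X=H_r$, $\mathcal Y=H_{r+1}$.

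\emph{Part (ii), case $\gamma\neq0$ and general $\mathfrak s$.} Here I follow Proposition~\ref{prop:ALFlimit}. Apply the $\gamma=0$ identity to $\omega+\gamma\Omega^{1-2r}\delta_\Omega$ on $n+1$ nodes with orders $(r,r+1)$, and let $\Omega\to\infty$. Use $r_\Omega=\Omega-z+O(\Omega^{-1})$ and $\varpi_\Omega=1-i\rho/\Omega+O(\Omega^{-2})$, together with the expansion of $H_{s-1}((t-\Omega)^2\tilde\omega)$ from that proof. Then $\mathcal X^\Omega\to H_r(\varpi\omega)$. The left factor $H_r(\mathbf r\omega^\Omega)$ tends to $F_1$ and the right factor $H_{r+1}(\omega^\Omega/\mathbf r)$ tends to $F_2$. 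For the second component, $\mathcal Y^\Omega-\gamma c_\Omega\mathcal X^\Omega\to H^{(\gamma)}_{r+1}(\varpi\omega)-i\gamma\rho\,\mathcal X$ for a suitable real constant $c_\Omega$ (essentially $\Omega$ plus a $z$-free term). A real shear $\mathcal Y\mapsto\mathcal Y-c\mathcal X$ leaves $\operatorname{Im}(\mathcal X\bar{\mathcal Y})$ unchanged, since $\operatorname{Im}(\mathcal X\bar{\mathcal X})=0$. So in the limit the identity holds with $\mathfrak s=0$, and then for every real $\mathfrak s$ by the same shear invariance. Finally, polynomial continuation in $\gamma$ and in the weights extends it to all real data and all $0\le r\le n$.

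I expect the main obstacle to be this asymptotic bookkeeping. The order-$\Omega^{0}$ terms of $\mathcal Y^\Omega$ must match exactly $-2\gamma\Sigma_r(\varpi\omega)-\gamma^2H_{r-1}(\varpi\omega)-i\gamma\rho\mathcal X$. Any leftover term must be a real multiple of $\mathcal X$ not depending on $(\rho,z)$, so that it can be absorbed by the shear. If this becomes messy, the fallback is a direct proof for $\gamma>0$ by the Desnanot--Jacobi identity applied to the bordered determinant $H^{(\gamma)}_{r+1}$.

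\emph{Part (iii).} By \eqref{eq:detRe}, $\operatorname{Re}(\mathbf l\mathbf l^*)\ge0$ with determinant $(\operatorname{Im}\mathcal X\bar{\mathcal Y})^2=\rho^2F_1^2F_2^2$, using (ii). Hence for $\rho>0$ the matrix $\tilde G$ is positive definite with $\det\tilde G=\rho^2$, and $\mathcal Y\ne0$. With $\tau=-\bar{\mathcal X}/\bar{\mathcal Y}$ we get $\operatorname{Im}\tau=\operatorname{Im}(\mathcal X\bar{\mathcal Y})/|\mathcal Y|^2=\rho F_1F_2/|\mathcal Y|^2>0$. Moreover $|x_0+\tau x_1|^2/\operatorname{Im}\tau=|\bar{\mathcal Y}x_0-\bar{\mathcal X}x_1|^2/(\rho F_1F_2)=\rho^{-1}\tilde G(x,x)$, so $\Phi=\Phi_\tau$ by \eqref{eq:Phitau}.
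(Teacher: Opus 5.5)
Your proposal is correct and follows the paper's proof essentially step for step: Heine positivity and the conjugation symmetry for (i); for $\gamma=0$, the embedding into an ALE soliton with $m-r$ heavy and $m-n+1+r$ light phantoms, together with Theorem~\ref{thm:CF}(i) and the phantom reduction; for $\gamma\neq0$, the added atom $\gamma\Omega^{1-2r}\delta_\Omega$ with the real shear $c_\Omega=\Omega$ exactly (the shear only has to be real at the fixed point, so it may depend on $(\rho,z)$); and \eqref{eq:detRe} for (iii). One step is missing: Theorem~\ref{thm:CF}(i) is stated only for $\rho>0$, while (ii) is claimed on all of $\mathcal U_\omega$. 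The paper closes this at the outset: the symmetries of (i) hold for any real data, so both sides of (ii) are odd in $\rho$, and at $\rho=0$ off the nodes $\mathcal X$ and $\mathcal Y$ are real, so both sides vanish.
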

\begin{proof}
(i) On $\mathcal U_\omega$ all $r_k>0$, so $\mathbf r\omega$ and $\omega/\mathbf r$ have $n$ positive atoms. By Heine's formula \eqref{eq:heine},
$H_s$ of either is positive for $0\le s\le n$ and non-negative for all $s$. Hence $F_1\ge H_r(\mathbf r\omega)>0$. Next $F_2\ge H_{r+1}(\omega/\mathbf r)>0$
if $r\le n-1$, and $F_2\ge\gamma H_n(\omega/\mathbf r)>0$ if $r=n$, since then $\gamma>0$. Under $\rho\mapsto-\rho$, $r_k$ is fixed and
$\varpi_k\mapsto\bar\varpi_k$, and the weights, $\gamma$ and $\mathfrak s$ are real; this gives the symmetries, and under them
$\operatorname{Re}(\mathbf l_a\bar{\mathbf l}_b)\mapsto\operatorname{Re}(\bar{\mathbf l}_a\mathbf l_b)=\operatorname{Re}(\mathbf l_a\bar{\mathbf l}_b)$. The entries of
$\tilde G$ are polynomials in $\rho$, the $\varpi_k$, $\bar\varpi_k$ and $r_k^{\pm1}$, divided by $F_1F_2>0$.

(ii) At a fixed point both sides are polynomials in the weights, $\gamma$ and $\mathfrak s$. Replacing $\mathfrak s$ by $0$ changes $\mathcal Y$ by
$\gamma\mathfrak s\mathcal X$, and $\operatorname{Im}(\mathcal X\cdot\gamma\mathfrak s\overline{\mathcal X})=0$; so let $\mathfrak s=0$. Since the weights and
$\gamma$ are real, the symmetries in (i) hold, by the same argument, and both sides are odd in $\rho$; on $\rho=0$ the numbers $\mathcal X,\mathcal Y$ are real. So let
$\rho>0$.

\emph{The case $\gamma=0$, $r\le n-1$.} Let $m=\max(r,n-1-r)$, and choose a set $I$ of $m-r$ points and a set $Z$ of $m-n+1+r$ points of the
axis, distinct and different from the $w_k$; with the nodes of $\omega$ these are $N=2m+1$ nodes, with the polynomial $P_N$. Give the points
of $I$ the pairs $(0,1)$, those of $Z$ the pairs $(1,0)$, and the node $w_k$ the pair $(1,\beta_k)$, $\beta_k=\omega_kP_N'(w_k)/\prod_{k_0\in I}(w_k-w_{k_0})^2$.
Then $\hat b_k=\beta_k/P_N'(w_k)$, so the reduced measure of Lemma~\ref{lem:Hpos}(b) has the pairs $(1,\omega_k)$, and by Heine's formula
$\mathbf H_s(q\hat\omega)=\kappa_Iq_I\,H_{s-|I|}(q\omega)$ for the projective measure $\hat\omega$ of these $N$ nodes, with $\kappa_I\ne0$ and
$m-|I|=r$. Theorem~\ref{thm:CF}(i) is an identity of polynomials in $(\alpha,\beta)$, so it holds for these pairs, whatever the signs of the
$\omega_k$. With $q=\varpi^{\pm1},\mathbf r,1/\mathbf r$ and $H_s(\varpi^{-1}\omega)=\overline{H_s(\varpi\omega)}$ it reads
$2i\kappa_I^2\operatorname{Im}(\mathcal X\overline{\mathcal Y})=2i\rho\,\kappa_I^2F_1F_2$.

\emph{The case $\gamma=0$, $r=n$.} Here $\mathcal Y=H_{n+1}(\varpi\omega)=0$ and $F_2=H_{n+1}(\omega/\mathbf r)=0$.

\emph{The case $\gamma\ne0$.} Fix $(\rho,z)$, let $\Omega>w_n$, $a_\Omega=\gamma\Omega^{1-2r}$ and $\omega^\Omega=\omega+a_\Omega\delta_\Omega$, a real measure on $n+1$
nodes, with $r\le n$. Let $r_\Omega=|\Omega-z-i\rho|$ and $\varpi_\Omega=(\Omega-z-i\rho)/r_\Omega$, and put $X^\Omega=H_r(\varpi\omega^\Omega)$,
$Y^\Omega=H_{r+1}(\varpi\omega^\Omega)$. By the case $\gamma=0$, $\operatorname{Im}(X^\Omega\overline{Y^\Omega})=\rho H_r(\mathbf r\omega^\Omega)H_{r+1}(\omega^\Omega/\mathbf r)$,
and, since $\gamma\Omega$ is real, the left side is $\operatorname{Im}\bigl(X^\Omega\,\overline{Y^\Omega-\gamma\Omega X^\Omega}\bigr)$. Split off the atom at $\Omega$,
of weight $a_\Omega q_\Omega$ in $q\omega^\Omega$, by \eqref{eq:Hsplit}, and use the expansion
$H_{s-1}((t-\Omega)^2\tilde\omega)=\Omega^{2s-2}\bigl(H_{s-1}(\tilde\omega)-2\Omega^{-1}\Sigma_{s-1}(\tilde\omega)+O(\Omega^{-2})\bigr)$ of the proof of
Proposition~\ref{prop:ALFlimit}, which holds for complex weights. This gives
\begin{equation}
\begin{gathered}
X^\Omega=\mathcal X+\gamma\Omega^{-1}\varpi_\Omega H_{r-1}(\varpi\omega)+O(\Omega^{-2}),\\
Y^\Omega-\gamma\Omega X^\Omega=H_{r+1}(\varpi\omega)+\gamma\Omega(\varpi_\Omega-1)H_r(\varpi\omega)\\
-2\gamma\varpi_\Omega\Sigma_r(\varpi\omega)-\gamma^2\varpi_\Omega H_{r-1}(\varpi\omega)+O(\Omega^{-1}),\\
H_r(\mathbf r\omega^\Omega)=H_r(\mathbf r\omega)+\gamma\tfrac{r_\Omega}\Omega H_{r-1}(\mathbf r\omega)+O(\Omega^{-1}),\\
H_{r+1}(\omega^\Omega/\mathbf r)=H_{r+1}(\omega/\mathbf r)+\gamma\tfrac\Omega{r_\Omega}H_r(\omega/\mathbf r)+O(\Omega^{-1}).
\end{gathered}
\end{equation}
Now $\varpi_\Omega=1-i\rho/(\Omega-z)+O(\Omega^{-2})$, so $\Omega(\varpi_\Omega-1)\to-i\rho$, $\varpi_\Omega\to1$ and $r_\Omega/\Omega\to1$. As $\Omega\to\infty$ the
four quantities tend to $\mathcal X$, $\mathcal Y$, $F_1$ and $F_2$ (with $\mathfrak s=0$), which proves (ii).

(iii) By \eqref{eq:detRe} and (ii), $\operatorname{Re}(\mathbf l\,\mathbf l^*)$ is positive semi-definite, and its determinant is
$(\operatorname{Im}\mathbf l_1\bar{\mathbf l}_0)^2=(\operatorname{Im}\mathcal X\overline{\mathcal Y})^2=\rho^2F_1^2F_2^2$. For real $x$,
$|\mathcal Xx_1-\mathcal Yx_0|=|\overline{\mathcal Y}x_0-\overline{\mathcal X}x_1|$, and $\mathcal Y\ne0$ since $\operatorname{Im}(\mathcal X\overline{\mathcal Y})\ne0$; so
$\rho^{-1}\tilde G(x,x)=|x_0+\tau x_1|^2/\operatorname{Im}\tau$ with $\tau=-\overline{\mathcal X}/\overline{\mathcal Y}$ and
$\operatorname{Im}\tau=\operatorname{Im}(\mathcal X\overline{\mathcal Y})/|\mathcal Y|^2$.
\end{proof}

The soliton metrics of \S\S\ref{sec:ale} and~\ref{sec:far} are the family. Part (b) of the next theorem rests on
Theorem~\ref{thm:farclosed} of \S\ref{sec:far}, whose proof, with Lemmas~\ref{lem:FN} and~\ref{lem:FR}, uses \S\ref{sec:toric},
\S\S\ref{sec:bz}--\ref{sec:ale}, Lemma~\ref{lem:famid} and the field equations of Corollary~\ref{cor:CinCtor}(i), but neither part (b) nor any
result that depends on it.
\begin{theorem}[the soliton metrics are the family]\label{thm:family}
\begin{enumerate}[leftmargin=*,label=\textup{(\alph*)}]
\item \emph{ALE.} Let $c\in\cA$ have $N=2m+1$ nodes, let $Z=\{k:c_k=0\}$ and $I=\{k:c_k=\infty\}$ be its phantoms (\emph{light} and
\emph{heavy}), $\mathsf R$ the other, \emph{regular}, nodes, $n=|\mathsf R|$, $\omega^I$ the reduced measure of Lemma~\ref{lem:Cnull}\textup{(a)} and $r=m-|I|$. Then $(\omega^I,r,0)$ is family data,
and in the frame $(e_0,-e_1)$
\begin{equation}\label{eq:famALE}
2(-1)^{m+1}G=\tilde G_{\omega^I,r,0};
\end{equation}
the solutions of \eqref{eq:nu} for $G$ are $e^{2\nu}=C'F_1F_2$, $C'>0$. Conversely, let $(\omega,r,0)$ be family data, let
$m\ge\max(r,n-1-r)$, and choose a set $I$ of $m-r$ points and a set $Z$ of $m-n+1+r$ points of the axis, distinct and off the nodes of
$\omega$; with the nodes of $\omega$ these are $N=2m+1$ nodes, and $P_N$ is their polynomial $P$ of \S\ref{subsec:ALEsolitons}. With
$a_k=\omega_k\prod_{k_0\in I}(w_k-w_{k_0})^{-2}$ and $c_k=a_kP_N'(w_k)$ at the nodes of $\omega$, $c=\infty$ on $I$ and $c=0$ on $Z$,
we have $c\in\cA$, with the family data $(\omega,r,0)$.
\item \emph{ALF and AF.} Let $K=2m$ nodes $w_1<\dots<w_K$ carry parameters $c_k\in\RP^1$ and $\gamma>0$, admissible for the $N=2m+1$
nodes $w_1,\dots,w_K,\Omega$ with a far node $\Omega>w_K$ (the setting of \S\ref{sec:far}); the \emph{heavy} nodes are those with $c_k=\infty$,
$h$ in number, forming the set $I$, the \emph{light} nodes those with $c_k=0$, $l$ in number, and the $n=K-h-l$ others are
\emph{regular}; we assume $n\ge1$ (as in \S\ref{sec:far}; $n=0$ means $h=l=m$). Let $\omega$ be the positive measure \eqref{eq:farmeasure} on the regular nodes, $r=m-h$, and
$\mathfrak s=\sum_{k\le K}w_k-2\sum_{k_0\in I}w_{k_0}$. Then $(\omega,r,\gamma,\mathfrak s)$ is family data, and in the frame $(e_\gamma,e_1)$ the
far-node limit $G_\infty$ of Theorem~\ref{thm:farclosed}, the limit of the rescaled ALE soliton matrices $E_\Omega^TG_\Omega E_\Omega$ of the $N$ nodes as $\Omega\to\infty$,
satisfies
$2(-1)^{m+1}G_\infty=\tilde G_{\omega,r,\gamma,\mathfrak s}$, with the solutions $e^{2\nu}=C'F_1F_2$, $C'>0$, of \eqref{eq:nu}. Conversely, let
$(\omega,r,\gamma,\mathfrak s')$ be family data with $\gamma>0$, and let $m\ge\max(r,n-r)$. Choose a set $I$ of $m-r$ heavy and a set of
$m+r-n$ light nodes, distinct and off the nodes of $\omega$; with the nodes of $\omega$ these are $K=2m$ nodes, with the polynomial $P$. Put
$c_k=-a_kP'(w_k)$, $a_k=\omega_k\prod_{k_0\in I}(w_k-w_{k_0})^{-2}$, at the nodes of $\omega$, $c=\infty$ on $I$ and $c=0$ on the light nodes. These are
far-node data of \S\ref{sec:far} with the family data $(\omega,r,\gamma,\mathfrak s)$, $\mathfrak s$ as in \eqref{eq:farmeasure}. If
$m>\max(r,n-r)$, the positions can be chosen to give $\mathfrak s=\mathfrak s'$. In any case, if a metric has the Gram matrix $\tilde G_{\omega,r,\gamma,\mathfrak s}$
in a frame $(f_0,f_1)$, then it has the Gram matrix $\tilde G_{\omega,r,\gamma,\mathfrak s+\delta}$ in the frame $(f_0-\gamma\delta f_1,f_1)$, of
determinant $1$ relative to $(f_0,f_1)$.
\end{enumerate}
We call $(e_0,-e_1)$, resp.\ $(e_\gamma,e_1)$, the \emph{Hankel frame} of the soliton metric; in it the components of a vector are its
family components. From here on we fix a constant $C_\nu>0$ and take the conformal factor of every soliton metric to be $e^{2\nu}=C_\nu F_1F_2$; by Remark~\ref{rem:nuinv} no statement below depends on the value of $C_\nu$.
\end{theorem}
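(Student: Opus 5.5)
The plan for part (a) is to reduce it to Theorem~\ref{thm:CF} and Lemma~\ref{lem:Cnull}(a) by matching the formula \eqref{eq:CFG} with \eqref{eq:famG}. The converse of (a) and the statements of (b) are then bookkeeping on top of the far-node limit.

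\emph{Step 1: the reduced formula.} Let $c\in\cA$. Lemma~\ref{lem:Cnull}(a) gives $\tilde G=2(-1)^{m+1}G=\operatorname{Re}(\mathbf v^{\rm red}\bar{\mathbf v}^{\rm red\,T})/\Pi^{\rm red}$ in the null basis. In standard representatives the Heine sums are ordinary Hankel determinants of $\omega^I$, whose weights are positive by admissibility (a). With $r=m-|I|$ we have $\mathbf v^{\rm red}=(H_{r+1}(\varpi\omega^I),H_r(\varpi\omega^I))=(\mathcal Y,\mathcal X)$ for $\gamma=0$, and $\Pi^{\rm red}=F_1F_2$.

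\emph{Step 2: the family-data bounds and the frame.} Conditions (b), (c) of Definition~\ref{def:admissible} give $|I|\le m$ and $|Z|\le m$. Since $n=N-|I|-|Z|$, this yields $0\le r$ and $r\le n-1$, i.e.\ $m-|I|\le 2m+1-|I|-|Z|-1$. Hence $(\omega^I,r,0)$ is family data. Passing to the frame $(e_0,-e_1)$ changes the sign of the second component. Then $\tilde G(x,x)=|\mathcal Yx_0+\mathcal Xx_1|^2/(F_1F_2)$ in null components becomes $|\mathcal Xx_1-\mathcal Yx_0|^2/(F_1F_2)$, which is \eqref{eq:famG}.

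\emph{Step 3: the conformal factor and the converse of (a).} Theorem~\ref{thm:CF}(iv) gives $e^{2\nu}=C'\Pi=C'\kappa_I^2F_1F_2$. For the converse, set $c_k=a_kP_N'(w_k)$, so that the weights are $a_k>0$, with $c=\infty$ on $I$ and $c=0$ on $Z$. Then (a) of Definition~\ref{def:admissible} holds. The counts $|I|=m-r\le m$ and $|Z|=m-n+1+r\le m$ hold by the choice of $m$, and both are nonnegative. The reduced measure has weights $a_k\prod_{I}(w_k-w_{k_0})^2=\omega_k$, exactly as in the proof of Lemma~\ref{lem:famid}(ii).

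\emph{Step 4: part (b).} Part (b) rests entirely on Theorem~\ref{thm:farclosed}, which I take as given: it identifies $G_\infty$, in the frame $(e_\gamma,e_1)$, with the $\Omega\to\infty$ limit of the closed formula. That limit is computed by Proposition~\ref{prop:ALFlimit} and the expansions in the proof of Lemma~\ref{lem:famid}(ii), which produce $\mathcal Y$ with the term $\gamma(\mathfrak s-i\rho)\mathcal X$ and the shifted $F_1,F_2$. It then remains to check three things.
\begin{itemize}
\item The range $0\le r=m-h\le n$. Admissibility for $N=2m+1$ nodes forces $h\le m$. The far node $\Omega$ is regular, so $l\le m$, hence $m-h\le K-h-l=n$.
\item The converse data. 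The sign $c_k=-a_kP'(w_k)$ appears because the far node $\Omega>w_K$ contributes a factor $(w_k-\Omega)<0$ to $P_N'$.
\item The frame statement. Replacing $\mathfrak s$ by $\mathfrak s+\delta$ changes $\mathcal Y$ by $\gamma\delta\mathcal X$. So $\tilde G_{\mathfrak s+\delta}(x,x)=|\mathcal Xx_1-\mathcal Y x_0-\gamma\delta\mathcal Xx_0|^2/(F_1F_2)$, which is $\tilde G_{\mathfrak s}$ evaluated at $(x_0,x_1-\gamma\delta x_0)$. This is the Gram matrix in the frame $(f_0-\gamma\delta f_1,f_1)$, and $F_1F_2$ is unchanged.
\end{itemize}
When $m>\max(r,n-r)$, there is at least one light or heavy node left free to move. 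By the formula for $\mathfrak s$, moving it changes $\mathfrak s$ continuously and without bound, so it can be chosen to give $\mathfrak s=\mathfrak s'$.

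The main obstacle is making the identification in (b) precise. One has to keep track of the rescaling $E_\Omega$, the shear $S_\Omega$ of Proposition~\ref{prop:ALFlimit}, and the exact constant $\mathfrak s$ coming from $e_1(w)$ and the heavy nodes, which arises from the $\Sigma_r$ terms and the factors $\prod_{I}(w_k-w_{k_0})^2$. I would rely on Theorem~\ref{thm:farclosed} for this and only verify the matching of the remaining terms.
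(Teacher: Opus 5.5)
Your route is the paper's. Part (a) comes from Lemma~\ref{lem:Cnull}(a) together with Theorem~\ref{thm:CF}(ii)--(iv) and the sign change to the frame $(e_0,-e_1)$. The main claim of (b) is read off from Theorem~\ref{thm:farclosed}, and the rest is counting and the shear computation for $\mathfrak s\mapsto\mathfrak s+\delta$. You carry these out correctly. You should also cite Theorem~\ref{thm:CF}(iii), which gives $\Pi>0$ on $\Hb^\circ$ for $c\in\cA$, because the reduced formula of Lemma~\ref{lem:Cnull}(a) is stated only where $\Pi\ne0$.

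One sub-step fails as written: realising $\mathfrak s=\mathfrak s'$ by moving a single phantom node. The shift is
$\mathfrak s=\sum_kw_k+\sum_{\text{light}}u'-\sum_{\text{heavy}}u$, the first sum over the nodes of $\omega$. So one node's position enters with slope $\pm1$. But that node may not land on another node, so it cannot move ``continuously'' through them. The attainable values are therefore only $\R$ minus finitely many points. The hypothesis $m>\max(r,n-r)$ actually gives you both a heavy node ($h=m-r\ge1$) and a light one ($l=m+r-n\ge1$). Moving the light node by $t$ and the heavy node by $t'$ changes $\mathfrak s$ by $t-t'$. The remaining free parameter lets you avoid every collision, and this is the paper's argument.

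Separately, your account of where the limit comes from is inaccurate, though harmless since you take Theorem~\ref{thm:farclosed} as given. Proposition~\ref{prop:ALFlimit} and the proof of Lemma~\ref{lem:famid}(ii) use the weight $a_\Omega=\gamma\Omega^{1-2r}$ and produce no shift. Theorem~\ref{thm:farclosed} instead rests on Lemmas~\ref{lem:FN} and~\ref{lem:FR}. There $\mathfrak s=e_1(w)-2w_I$ arises from $\pi(\zeta)$, the $\boldsymbol\Sigma$ terms and the reduction of the heavy nodes.
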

\begin{proof}[Proof of \textup{(a)}]
By Definition~\ref{def:admissible}(b),(c), $|Z|,|I|\le m$, so $n=N-|Z|-|I|\ge1$, $r\ge0$ and $n-1-r=m-|Z|\ge0$. In the standard
representatives a regular node has $\alpha_l=1$ and $\hat b_l=a_l>0$, so $\omega^I>0$. By Lemma~\ref{lem:Cnull}(a), with $Z\cup I$ all phantoms, in the
standard representatives $\mathbf v=\kappa_I\varpi_I(\mathcal Y,\mathcal X)$ and $\Pi=\kappa_I^2F_1F_2$ for the data $(\omega^I,r,0)$, and $\Pi>0$ on $\Hb^\circ$
(Theorem~\ref{thm:CF}(iii)). By Theorem~\ref{thm:CF}(ii), in the null basis, $2(-1)^{m+1}G(X,X)=|\mathcal Yx+\mathcal Xy|^2/(F_1F_2)$ for $X$ with null
components $(x,y)$. Its components in the frame $(e_0,-e_1)$ are $(x_0,x_1)=(x,-y)$, which gives \eqref{eq:famALE}. By
Theorem~\ref{thm:CF}(iv), the solutions of \eqref{eq:nu} are $e^{2\nu}=C''\Pi=C''\kappa_I^2F_1F_2$, $C''>0$.

\emph{Converse.} The weights $c_k/P_N'(w_k)$ are $a_k>0$ at the nodes of $\omega$, $\infty$ on $I$ and $0$ on $Z$, and $|I|=m-r\le m$,
$|Z|=m-(n-1-r)\le m$; so $c\in\cA$, with the phantoms $Z$ and $I$. Its reduced measure has the weights $a_k\prod_{k_0\in I}(w_k-w_{k_0})^2=\omega_k$, and
$m-|I|=r$.
\end{proof}
\begin{proof}[Proof of \textup{(b)}]
Theorem~\ref{thm:farclosed}\ref{it:fcCF} is \eqref{eq:famG} for the data of \eqref{eq:farmeasure}, and Theorem~\ref{thm:farclosed}\ref{it:fcnu}
gives $e^{2\nu}$. These are family data: $\omega>0$ and $n\ge1$ (Setting of \S\ref{sec:far}); the numbers $h$ of heavy and $l$ of light nodes
are at most $m$, so $r=m-h\ge0$ and $n=2m-h-l\ge m-h=r$.

\emph{Converse.} Here $h=m-r$ and $l=m+r-n$ lie in $[0,m]$, since $0\le r\le n$, and $K-h-l=n$. The weights $-c_k/P'(w_k)$ are $a_k>0$ at the
nodes of $\omega$, $\infty$ on $I$ and $0$ on the light nodes, so the data are admissible (Setting of \S\ref{sec:far}), and the measure
\eqref{eq:farmeasure} has the weights $a_k\prod_{k_0\in I}(w_k-w_{k_0})^2=\omega_k$. If $m>\max(r,n-r)$, there are a heavy node $u$ and a light node
$u'$. Moving $u'$ to $u'+t$ and $u$ to $u+t'$, off the other nodes and with the $c_k$ of the nodes of $\omega$ recomputed, keeps $\omega$ and
changes the shift $\mathfrak s=\sum_{k\le K}w_k-2\sum_{k_0\in I}w_{k_0}$ by $t-t'$; every real value of $t-t'$ is attained in this way.

\emph{The shift.} For $\mathfrak s+\delta$, $\mathcal Y$ is replaced by $\mathcal Y+\gamma\delta\mathcal X$, so
$\tilde G_{\omega,r,\gamma,\mathfrak s+\delta}(x_0,x_1)=\tilde G_{\omega,r,\gamma,\mathfrak s}(x_0,x_1-\gamma\delta x_0)$ by \eqref{eq:famG}; and
$x_0(f_0-\gamma\delta f_1)+x_1f_1=x_0f_0+(x_1-\gamma\delta x_0)f_1$.
\end{proof}

\subsection{The axis}\label{subsec:Caxis}
A function is \emph{real-analytic in $(\rho^2,z)$ near $(0,z_0)$} if, for $\rho>0$, it is the restriction of a real-analytic function on a
neighbourhood of $(0,z_0)$ in $\R^2$; likewise in $(s,t)$.
\begin{lemma}[local forms from analyticity]\label{lem:Clocal}
Let $\tilde G$ be a positive semi-definite real symmetric $2\times2$ matrix function with $\det\tilde G=\rho^2$, defined for $\rho>0$
near a point $(0,z_0)$ and written in a basis of $\mathfrak t$; determinants of two vectors are taken in this basis.
\begin{enumerate}[leftmargin=*,label=\textup{(\alph*)}]
\item If $\tilde G$ is real-analytic in $(\rho^2,z)$ near $(0,z_0)$ and $\tilde G(0,z)\mathbf e=0$ for a vector $\mathbf e$ and all $z$ near
$z_0$, then in every basis $(\mathbf e,\mathbf e')$ with $\det(\mathbf e,\mathbf e')=1$, $\tilde G$ has the rod form \eqref{eq:Grod}
near $(0,z_0)$, with $a,b,f$ real-analytic in $(\rho^2,z)$ and $f(0,z)>0$.
\item Let $\rho^2=4st$ and $z-z_0=t-s$ with $s,t\ge0$. If $\tilde G$ is real-analytic in $(s,t)$ near $0$, $\tilde G\mathbf e=0$ on $\{t=0\}$ and
$\tilde G\mathbf e'=0$ on $\{s=0\}$, with $\det(\mathbf e,\mathbf e')=\pm1$, then in the basis $(\mathbf e,\mathbf e')$, $\tilde G$ has the form
\eqref{eq:nutmodel} with $A,B,C$ real-analytic in $(s,t)$ near $0$, $A(0)B(0)=4$ and $A(0)>0$.
\end{enumerate}
\end{lemma}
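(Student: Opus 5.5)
Both parts reduce to factoring analytic functions that vanish on a coordinate line (Weierstrass division in one variable, with analytic dependence on the other). The information that survives to the axis is pinned down by the positivity of $\tilde G$ and by $\det\tilde G=\rho^2$.

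For (a), write $\tilde G$ in the basis $(\mathbf e,\mathbf e')$. The entries are real-analytic functions $g_{ij}(u,z)$ with $u=\rho^2$, defined near $(0,z_0)$. The hypothesis $\tilde G(0,z)\mathbf e=0$ says that $g_{11}(0,z)=g_{12}(0,z)=0$. An analytic function vanishing on $\{u=0\}$ is $u$ times an analytic function (Hadamard's lemma in $u$, analytic version). So $g_{11}=\rho^2a$ and $g_{12}=\rho^2 b$ with $a,b$ analytic, and we put $f=g_{22}$.

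The determinant identity $\det\tilde G=\rho^2$ gives $\rho^2(af-\rho^2b^2)=\rho^2$, hence $af-\rho^2b^2=1$ for $\rho>0$. By analyticity this also holds at $u=0$, so $a(0,z)f(0,z)=1$. Positive semi-definiteness gives $f=g_{22}\ge0$ for $\rho>0$, hence $f(0,z)\ge0$ by continuity. Since $af=1$ at $u=0$, $f(0,z)\ne0$, and therefore $f(0,z)>0$. The change of basis has $\det(\mathbf e,\mathbf e')=1$ and so does not alter $\rho$; this is exactly \eqref{eq:Grod}.

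For (b), work in the basis $(\mathbf e,\mathbf e')$ with entries $g_{ij}(s,t)$ analytic near $0$.

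1. The condition $\tilde G\mathbf e=0$ on $\{t=0\}$ gives $g_{11}(s,0)=g_{12}(s,0)=0$. Hence $g_{11}=tB$ and $g_{12}=t\,g'$.
2. The condition $\tilde G\mathbf e'=0$ on $\{s=0\}$ gives $g_{12}(0,t)=g_{22}(0,t)=0$. Hence $g_{22}=sA$, and $t\,g'(0,t)=0$, so $g'(0,t)=0$ for $t\neq0$ and then by continuity. Therefore $g'=sC$ and $g_{12}=stC$, which is the form \eqref{eq:nutmodel}.
3. The determinant gives $st(AB-stC^2)=\rho^2=4st$, so $AB-stC^2=4$ on the open quadrant, and by analyticity near $0$. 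In particular $A(0)B(0)=4$.
4. Positivity gives $sA=g_{22}\ge0$, so $A\ge0$ on the quadrant, and $A(0)\ge0$ by continuity. Since $A(0)B(0)=4$, $A(0)\neq0$, so $A(0)>0$.

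A change of basis with $\det=\pm1$ does not alter $\rho$ or $\det\tilde G$, so both statements may be read in the new basis.

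The only delicate point is the divisibility step: the functions are given only on the half-plane, or on the quadrant $s,t\ge0$. I would handle this by working with their analytic extensions to a full neighbourhood. There the division by $u$, $s$ or $t$ is the standard fact that an analytic function vanishing on a coordinate hyperplane is divisible by that coordinate, with an analytic quotient (expand in a power series). The identities obtained on the open region then extend to the whole neighbourhood by the identity theorem.
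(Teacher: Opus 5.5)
Your proposal is correct and is essentially the paper's proof: divide the entries by $\rho^2$, resp.\ by $t$ and $s$, then use $\det\tilde G=\rho^2$ to get $af=1$, resp.\ $A(0)B(0)=4$, and positivity to fix the sign. The one step worth writing out is that, before dividing by $t$, you need $g_{11},g_{12}$ to vanish on all of $\{t=0\}$ near $0$, not only on the half-line $\{t=0,\ s\ge0\}$. This follows from the one-variable identity theorem in $s$ (likewise for the division by $s$), and the paper does exactly this.
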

\begin{proof}
A change of basis of determinant $\pm1$ does not change $\det\tilde G$. (a) Put $\sigma=\rho^2$ and work on any product neighbourhood of
$(0,z_0)$ on which $\tilde G$ is real-analytic in $(\sigma,z)$. The covector $\tilde G(\mathbf e,\cdot)$ is real-analytic in $(\sigma,z)$ and vanishes at $\sigma=0$, so it is $\sigma\chi$ with $\chi$
real-analytic. With $a=\chi(\mathbf e)$, $b=\chi(\mathbf e')$ and $f=\tilde G(\mathbf e',\mathbf e')$, $\tilde G$ is the matrix of \eqref{eq:Grod}
in the basis $(\mathbf e,\mathbf e')$, and $\rho^2(af-\rho^2b^2)=\det\tilde G=\rho^2$. So $af-\sigma b^2=1$ for $\sigma>0$, hence near
$(0,z_0)$. At $\sigma=0$ this gives $af=1$, so $f(0,z)\ne0$; and $f(0,z)\ge0$, as a limit of $\tilde G(\mathbf e',\mathbf e')\ge0$.

(b) The covector $\tilde G(\mathbf e,\cdot)$ is real-analytic in $(s,t)$ near $0$ and vanishes on $\{t=0,\ s\ge0\}$, hence, by analyticity in
$s$, on all of $\{t=0\}$ near $0$; so it is $t\chi_1$ with $\chi_1$ real-analytic near $0$. Likewise $\tilde G(\mathbf e',\cdot)=s\chi_2$. Then
$t\chi_1(\mathbf e')=\tilde G(\mathbf e,\mathbf e')=s\chi_2(\mathbf e)$ near $0$, so $\chi_1(\mathbf e')$ vanishes on $\{s=0\}$, first for $t\ne0$
and then by continuity, and it equals $sC$ with $C$ real-analytic. With $B=\chi_1(\mathbf e)$ and $A=\chi_2(\mathbf e')$ this is \eqref{eq:nutmodel}. Now $st(AB-stC^2)=\det\tilde G=4st$, so
$AB-stC^2=4$ and $A(0)B(0)=4$. Since $sA=\tilde G(\mathbf e',\mathbf e')\ge0$ for $s,t>0$, $A(0)\ge0$, hence $A(0)>0$.
\end{proof}

\begin{proposition}[the axis]\label{prop:axis}
Let $\tilde G$ be the family matrix of $(\omega,r,\gamma,\mathfrak s)$, $C_\nu>0$ and $e^{2\nu}=C_\nu F_1F_2$; let $V_j$ be the family $(n,r,\gamma)$,
$S=S_{\mathfrak s}$, and $I_0,\dots,I_n$ the rods of the nodes of $\omega$. Vectors and determinants are taken in family components, and
$U_p(w_k)=H_p\bigl(\sum_{l\ne k}\omega_l|w_l-w_k|\,\delta_{w_l}\bigr)$ for every integer $p$, as in \eqref{eq:wronskian} but with the
weights $\omega_l$ in place of $a_l$.
\begin{enumerate}[leftmargin=*,label=\textup{(\alph*)}]
\item\label{it:axrod} \emph{Rods.} Let $z_0\in I_j$. Near $(0,z_0)$, $\tilde G$ and $e^{2\nu}$ are real-analytic in $(\rho^2,z)$, and
$e^{2\nu}>0$. For $z\in I_j$, $\tilde G(0,z)$ has rank one, and its kernel is spanned by $SV_j\ne0$. In every basis $(\mathbf e,\mathbf e')$ with
$\mathbf e=SV_j$ and $\det(\mathbf e,\mathbf e')=1$, $\tilde G$ has the rod form \eqref{eq:Grod} near $(0,z_0)$, with $a,b,f$ real-analytic in
$(\rho^2,z)$ and $f(0,z)>0$. On $I_j$, $e^{2\nu}\adj\tilde G\to C_\nu\,SV_j\otimes SV_j$ as $\rho\to0$; so the normalised rod vectors are
$\pm C_\nu^{1/2}SV_j$.
\item\label{it:axtp} \emph{Turning points.} Let $w_k$ be a node, $\rho=2\varrho_1\varrho_2$, $z-w_k=\varrho_2^2-\varrho_1^2$ ($\varrho_i\ge0$) and
$(s,t)=(\varrho_1^2,\varrho_2^2)$. Then $\tilde G$ and $e^{2\nu}(s+t)$ are real-analytic in $(s,t)$ near $0$, and
$e^{2\nu}(s+t)\to C_\nu\,\omega_k\bigl(U_r(w_k)+\gamma U_{r-1}(w_k)\bigr)^2>0$ at $0$. Moreover
$\det(SV_{k-1},SV_k)=-2\omega_k\bigl(U_r(w_k)+\gamma U_{r-1}(w_k)\bigr)^2<0$, and in the basis $\mathbf e=SV_{k-1}$,
$\mathbf e'=SV_k/|\det(SV_{k-1},SV_k)|$, $\tilde G$ has the form \eqref{eq:nutmodel}, with $A,B,C$ real-analytic in $(s,t)$ and $A(0)>0$.
\item\label{it:axtame} \emph{Local tameness.} The map $\Phi=\rho^{-1}\tilde G$ is locally tamed by the rod lines $[SV_0],\dots,[SV_n]$ and
the turning points $w_1,\dots,w_n$ (Definition~\ref{def:tame}), and
\begin{equation}
\det(SV_0,SV_n)=\det(V_0,V_n)=2(-1)^{r+1}H_r(\omega)\bigl(H_{r+1}(\omega)-\gamma^2H_{r-1}(\omega)\bigr).
\end{equation}
\end{enumerate}
\end{proposition}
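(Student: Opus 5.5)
The plan is to read everything off the explicit Heine sums in \eqref{eq:famXY}. Each Heine sum is written in the coordinates adapted to the point of the axis, and the local forms then follow from Lemma~\ref{lem:Clocal}; Lemma~\ref{lem:formtame} gives local tameness.

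\emph{Rods.} Fix $z_0\in I_j$. Near $(0,z_0)$ every $r_k$ is bounded away from $0$ and real-analytic in $(\rho^2,z)$. Hence $F_1,F_2$ are real-analytic in $(\rho^2,z)$, and they are positive by Lemma~\ref{lem:famid}(i); so $e^{2\nu}=C_\nu F_1F_2$ is real-analytic and positive. The entries of $\tilde G$ are $\operatorname{Re}$ and $\operatorname{Im}$ combinations of $\mathcal X,\mathcal Y$. These are polynomials in $\varpi_k$ and $\rho$, and by the symmetry $\rho\mapsto-\rho$ of Lemma~\ref{lem:famid}(i) they are even in $\rho$. Since each $\varpi_k$ is real-analytic in $(\rho,z)$ off the node, $\tilde G$ is real-analytic in $(\rho^2,z)$.

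At $\rho=0$ we have $\varpi_k=\operatorname{sgn}(w_k-z)$, so $\varpi\omega=\omega_{(j)}$ on $I_j$. This gives $\mathcal X=H_r(\omega_{(j)})$ and $\mathcal Y=H^{(\gamma)}_{r+1}(\omega_{(j)})+\gamma\mathfrak s\mathcal X$, i.e.\ $(\mathcal X,\mathcal Y)=SV_j$, both real. Then $\tilde G(0,z)(x,x)=(\mathcal Xx_1-\mathcal Yx_0)^2/(F_1F_2)$ has rank one with kernel spanned by $SV_j$. The vector $SV_j$ is non-zero because $V_j\ne0$: if $V_j=0$, the Wronskians of Proposition~\ref{prop:ALFstructure}(i) adjacent to $j$ would vanish, which they do not. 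Lemma~\ref{lem:Clocal}(a) then gives the rod form. For the normalisation, $\adj\tilde G\to F_1F_2\cdot$ (the adjugate of the rank-one form), which is $SV_j\otimes SV_j/(F_1F_2)$. Multiplying by $e^{2\nu}$ gives $C_\nu SV_j\otimes SV_j$, and \eqref{eq:Nrod} identifies the normalised rod vectors.

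\emph{Turning points.} This is the main obstacle. Near $w_k$ put $\rho=2\varrho_1\varrho_2$, $z-w_k=\varrho_2^2-\varrho_1^2$. Then $r_k=s+t$, and
\[
\varpi_k=(w_k-z-i\rho)/r_k=(\varrho_1-i\varrho_2)^2/(s+t),
\]
which is not analytic in $(s,t)$ by itself. So split off the atom $k$ with \eqref{eq:Hsplit} and its analogue for $\Sigma$ and $H^{(\gamma)}$. Write $\mathcal X=\mathcal X'+\varpi_k\omega_k\mathcal X''$ and similarly $\mathcal Y$, where the primed quantities involve only the other nodes and are analytic in $(s,t)$. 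Clearing $\bar\varpi_k$, $\operatorname{Re}(\mathbf l\mathbf l^*)$ becomes a combination of $\operatorname{Re}(\mathbf l'\bar{\mathbf l}'')\varpi_k$ and the like. Using $(s+t)\varpi_k=(\varrho_1-i\varrho_2)^2$, one has $\operatorname{Re}$ of it equal to $s-t$ and $\operatorname{Im}$ of it equal to $-\rho$. Since $\rho$ is odd in each $\varrho_i$ only through the product, the expressions appear in the combinations $s-t$ and $\rho\cdot(\text{odd}) $. I expect $\tilde G$, after multiplying numerator and denominator by $s+t$, to be a quotient of polynomials in $(s,t)$. The denominator is $(s+t)F_1F_2$. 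Here $F_1=H_r(\mathbf r\omega)+\dots$ contains $r_k=s+t$ only linearly, and $F_2$ contains $1/r_k$ linearly. Splitting off atom $k$ gives
\[
(s+t)F_2=(s+t)F_2'+\omega_kF_2'',
\]
so $(s+t)F_1F_2$ is analytic with value $\omega_kF_1'(0)F_2''(0)$ at $0$. One identifies this value with $\omega_k(U_r+\gamma U_{r-1})^2(w_k)$: the factors $(w_l-w_k)^2$ appear as in the proof of Proposition~\ref{prop:Awronskian}, and $r_l\to|w_l-w_k|$. The only delicate point is to check that the numerator is divisible so that the quotient stays analytic. Positivity of the limit holds because the $U$'s are positive (Proposition~\ref{prop:ALFstructure}(i)); for $r=n$, $U_r=0$ but $\gamma>0$.

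The kernels on $\{t=0\}$ (the rod $I_{k-1}$, where $z<w_k$) and on $\{s=0\}$ (the rod $I_k$) are $SV_{k-1}$ and $SV_k$ by the rod step and continuity. The determinant equals $\det(V_{k-1},V_k)$ since $\det S=1$, which is \eqref{eq:wronskianALF}. Rescaling $\mathbf e'$ gives determinant $-1$, and Lemma~\ref{lem:Clocal}(b) gives \eqref{eq:nutmodel}.

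\emph{Tameness.} Combine (a) and (b) with Lemma~\ref{lem:formtame}(i),(ii). The determinant of $SV_0,SV_n$ is invariant under the shear $S$, so it equals $\det(V_0,V_n)$, which is Proposition~\ref{prop:ALFstructure}(ii).
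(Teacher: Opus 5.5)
Your treatment of the rods, of the value of $e^{2\nu}(s+t)$ at the node, of the Wronskian and of local tameness is sound and follows the paper. There is one valid variation: you get $SV_j\ne0$ from the non-vanishing of an adjacent Wronskian \eqref{eq:wronskianALF}, where the paper argues via $\det\tilde G=\rho^2$. (Also, $\adj\tilde G(0,z)=SV_j\otimes SV_j/(F_1F_2)$ directly, with no factor $F_1F_2$ in front.)

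The gap is in the step you yourself call the main obstacle: the real-analyticity of $\tilde G$ in $(s,t)$ near $w_k$. This is a claim of (b) and the hypothesis of Lemma~\ref{lem:Clocal}(b).

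You assert that the pieces $\mathcal X',\mathcal X'',\mathcal Y',\mathcal Y''$ obtained by splitting off the atom $k$ are analytic in $(s,t)$. That is false. For $l\ne k$, $\operatorname{Im}\varpi_l=-\rho/r_l$ with $\rho=2\sqrt{st}$. So these pieces are analytic in $(\rho,z)$ near $(0,w_k)$, but their imaginary parts are odd in $\rho$. Only the pieces of $F_1,F_2$, which involve the $r_l$ alone, are analytic in $(s,t)$.

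You then only ``expect'' the quotient to be analytic, and leave an unspecified divisibility of the numerator unchecked. No divisibility is needed: you have already shown that $(s+t)F_1F_2$ is analytic with a positive value at $0$. What is missing is the analyticity of the numerator, and that is a parity argument.

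Concretely, put $\mathsf q=\varrho_1-i\varrho_2$. Then $r_k\varpi_k=\mathsf q^2=w_k-z-i\rho$ and $|\varpi_k|=1$, so with $\mathbf l=\mathbf l'+\omega_k\varpi_k\mathbf l''$,
\[
r_k\operatorname{Re}(\mathbf l\mathbf l^*)=r_k\operatorname{Re}(\mathbf l'\mathbf l'^*)+\omega_k\operatorname{Re}\bigl(\mathsf q^2\,\mathbf l''\mathbf l'^*+\bar{\mathsf q}^2\,\mathbf l'\mathbf l''^*\bigr)+\omega_k^2\,r_k\operatorname{Re}(\mathbf l''\mathbf l''^*).
\]
Under $\rho\mapsto-\rho$ at fixed $z$, the vectors $\mathbf l',\mathbf l''$ and the number $\mathsf q^2$ go to their complex conjugates, as in Lemma~\ref{lem:famid}(i). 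So each of the three real parts is real-analytic in $(\rho,z)$ near $(0,w_k)$ and even in $\rho$. Hence each is analytic in $(\rho^2,z)=(4st,\,w_k+t-s)$, i.e.\ in $(s,t)$, and $r_k=s+t$.

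Your remark about the combinations $s-t$ and $\rho\cdot(\text{odd})$ gestures at this. As written, though, it contradicts your claim that the split pieces are analytic in $(s,t)$, and the step is not carried out.

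The paper phrases the same argument as follows. $\mathcal N=r_k\operatorname{Re}(\mathbf l\mathbf l^*)$ and $\mathcal D=r_kF_1F_2$ are analytic in $(\varrho_1,\varrho_2)$. Each reflection $\varrho_i\mapsto-\varrho_i$ acts as $\rho\mapsto-\rho$ and fixes $z$ and $r_k$, and $\tilde G$ and $\mathcal D$ are even in $\rho$. So $\mathcal N$ and $\mathcal D$ are even in each $\varrho_i$, hence analytic in $(s,t)$, and $\tilde G=\mathcal N/\mathcal D$ with $\mathcal D(0)>0$.
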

\begin{proof}
Everything follows from the closed form \eqref{eq:famG} and the identity of Lemma~\ref{lem:famid}(ii): on a rod, $\tilde G$ is analytic in
$(\rho^2,z)$ and its kernel is read off from $\mathcal X,\mathcal Y$ at $\rho=0$; at a turning point the square root of $\rho$ is handled by the
coordinates $(s,t)$, and the Wronskian of Proposition~\ref{prop:ALFstructure} gives the independence of the adjacent rod lines.

(a) \emph{Analyticity.} Near $(0,z_0)$ all $r_k$ are positive. By Lemma~\ref{lem:famid}(i), $\tilde G$ and $F_1F_2$ are real-analytic in $(\rho,z)$
and even in $\rho$ there, hence real-analytic in $(\rho^2,z)$; and $e^{2\nu}=C_\nu F_1F_2>0$, also at $\rho=0$.

\emph{Kernel.} At $\rho=0$, $\varpi_k=\operatorname{sgn}(w_k-z)$, which for $z\in I_j$ is $-1$ for $k\le j$ and $+1$ for $k>j$; so $\varpi\omega=\omega_{(j)}$ of
\eqref{eq:omegaj}. The term $-i\gamma\rho\mathcal X$ vanishes, and
$(\mathcal X,\mathcal Y)=\bigl(H_r(\omega_{(j)}),H^{(\gamma)}_{r+1}(\omega_{(j)})+\gamma\mathfrak sH_r(\omega_{(j)})\bigr)=SV_j$ is real. So
$\tilde G(0,z)=\mathbf l\,\mathbf l^T/(F_1F_2)$ with $\mathbf l=(-\mathcal Y,\mathcal X)$ real, and $\mathbf l\cdot SV_j=0$.

\emph{Rank one.} If $\tilde G(0,z)=0$, then $\tilde G(\rho,z)=O(\rho^2)$, by analyticity in $\rho^2$, and $\det\tilde G=O(\rho^4)$, contradicting
$\det\tilde G=\rho^2$ (Lemma~\ref{lem:famid}(iii)). So $\mathbf l\ne0$, i.e.\ $SV_j\ne0$, the rank is one, and $SV_j$ spans the kernel.

\emph{Rod form.} For $\rho>0$, $\tilde G$ is positive definite with $\det\tilde G=\rho^2$, so Lemma~\ref{lem:Clocal}(a) gives the rod form.

\emph{Adjugate.} For $2\times2$ matrices $\adj$ is linear, and $\adj\bigl(\begin{smallmatrix}a&b\\b&c\end{smallmatrix}\bigr)=\bigl(\begin{smallmatrix}c&-b\\-b&a\end{smallmatrix}\bigr)$.
Hence $\adj\operatorname{Re}(\mathbf l\,\mathbf l^*)=\operatorname{Re}(\mathbf l^\perp\mathbf l^{\perp*})$ with $\mathbf l^\perp=(\mathbf l_1,-\mathbf l_0)=(\mathcal X,\mathcal Y)$,
and $e^{2\nu}\adj\tilde G=C_\nu\operatorname{Re}(\mathbf l^\perp\mathbf l^{\perp*})$ on $\mathcal U_\omega$. At $\rho=0$ on $I_j$, $\mathbf l^\perp=SV_j$ is real.
By the rod form and \eqref{eq:Nrod}, the normalised rod vectors are $\pm C_\nu^{1/2}SV_j$; here $\adj$ transforms like $v\otimes v$ under changes of
basis of determinant $\pm1$.

(b) \emph{Splitting off the atom $k$.} Let $\omega'=\omega-\omega_k\delta_{w_k}$ and $\omega^{(k)}$ as in \eqref{eq:omegak}, with the weights
$\omega_l(w_l-w_k)^2$, $l\ne k$. By \eqref{eq:Hsplit} and \eqref{eq:splitSigma} for the atom $k$ of $q\omega$, $q\in\{\varpi,\mathbf r,1/\mathbf r\}$,
\begin{equation}
\mathbf l=\mathbf l^0+\omega_k\varpi_k\,\mathbf l^1,\qquad F_1=F_1^0+\omega_kr_kF_1^1,\qquad F_2=F_2^0+\frac{\omega_k}{r_k}F_2^1,
\end{equation}
where $\mathbf l^0,\mathbf l^1,F_i^0,F_i^1$ are polynomials in $\rho$, $\gamma$, $\mathfrak s$ and the $\varpi_l,r_l^{\pm1}$ with $l\ne k$, so real-analytic
near $(0,w_k)$, and $F_1^0=H_r(\mathbf r\omega')+\gamma H_{r-1}(\mathbf r\omega')$, $F_2^1=H_r(\omega^{(k)}/\mathbf r)+\gamma H_{r-1}(\omega^{(k)}/\mathbf r)$.

\emph{Square root.} Put $\mathsf q=\varrho_1-i\varrho_2$. Then $\mathsf q^2=w_k-z-i\rho$, $r_k=|\mathsf q|^2=s+t$ and $\varpi_k=\mathsf q/\bar{\mathsf q}$.
The map $(\varrho_1,\varrho_2)\mapsto(\rho,z)$, with the inverse $\varrho_2+i\varrho_1=(z-w_k+i\rho)^{1/2}$ (principal branch), is a homeomorphism of
the closed quadrant onto the closed half-plane, which maps $\{\varrho_2=0\}$ onto $\{\rho=0,\ z\le w_k\}$ and $\{\varrho_1=0\}$ onto
$\{\rho=0,\ z\ge w_k\}$. With $\mathsf n=\bar{\mathsf q}\,\mathbf l=\bar{\mathsf q}\,\mathbf l^0+\omega_k\mathsf q\,\mathbf l^1$ we have
$r_k\operatorname{Re}(\mathbf l\,\mathbf l^*)=\operatorname{Re}(\mathsf n\mathsf n^*)$, so $\tilde G=\mathcal N/\mathcal D$ with
$\mathcal N=\operatorname{Re}(\mathsf n\mathsf n^*)$ and $\mathcal D=r_kF_1F_2=(F_1^0+\omega_kr_kF_1^1)(r_kF_2^0+\omega_kF_2^1)$. Both are real-analytic in
$(\varrho_1,\varrho_2)$ near $0$.

\emph{Parity.} The reflections $\varrho_i\mapsto-\varrho_i$ act as $\rho\mapsto-\rho$ and fix $z$ and $r_k$. Since $\tilde G$ and $\mathcal D$ are even in
$\rho$ (Lemma~\ref{lem:famid}(i)), $\mathcal D$ and $\mathcal N=\tilde G\mathcal D$ are even in each $\varrho_i$, first where $r_k\ne0$ and then by
continuity. A real-analytic function that is even in each variable is real-analytic in $(\varrho_1^2,\varrho_2^2)=(s,t)$.

\emph{The value at $0$.} $\mathcal D(0,0)=\omega_kF_1^0(0,w_k)F_2^1(0,w_k)$. At $(0,w_k)$, $r_l=|w_l-w_k|$, so both measures $\mathbf r\omega'$ and
$\omega^{(k)}/\mathbf r$ have the weights $\omega_l|w_l-w_k|$, and $F_1^0=F_2^1=U_r(w_k)+\gamma U_{r-1}(w_k)$. This measure has $n-1$ positive atoms, so
$U_p(w_k)>0$ for $0\le p\le n-1$, and $U_n(w_k)=0$. If $r\le n-1$, then $U_r(w_k)>0$; if $r=n$, then $\gamma>0$ and $U_{n-1}(w_k)>0$. So
$\mathcal D(0,0)>0$, $\tilde G=\mathcal N/\mathcal D$ is real-analytic in $(s,t)$, and $e^{2\nu}(s+t)=C_\nu\mathcal D$ has the stated value at $0$.

\emph{Wronskian.} By Proposition~\ref{prop:ALFstructure}\ref{it:alfs1}, with the weights $\omega_k$ in place of $a_k$, and $\det S=1$.

\emph{Local form.} $\{t=0\}$ is the part $z\le w_k$ of the axis near $w_k$, in the closure of $I_{k-1}$. There $\tilde GSV_{k-1}=0$, by (a) for
$s>0$ and by continuity at $s=0$; likewise $\tilde GSV_k=0$ on $\{s=0\}$. Since $\det(\mathbf e,\mathbf e')=-1$, Lemma~\ref{lem:Clocal}(b) applies.

(c) By (b), consecutive rod lines differ. By (a), (b) and Lemma~\ref{lem:formtame}, $\Phi$ is within bounded distance of the rod model of
$[SV_j]$ near every point of $I_j$, and of the turning-point model of $[SV_{k-1}],[SV_k]$ near $w_k$. The determinant is
Proposition~\ref{prop:ALFstructure}\ref{it:alfs2}, with $\det S=1$.
\end{proof}

\subsection{Infinity}\label{subsec:Cinfty}
\begin{proposition}[infinity]\label{prop:Cinfty}
Let $(\omega,r,0)$ be family data, $\tilde G$ its family matrix, $\Phi=\rho^{-1}\tilde G$, $\bar w=\max_k|w_k|$ over the nodes of $\omega$, $\kappa_\infty=H_{r+1}(\omega)/H_r(\omega)$,
and, with $z+i\rho=Re^{i\theta}$,
\begin{equation}\label{eq:Ginftymodel}
\tilde G_\infty=\begin{pmatrix}\kappa_\infty R&z\\ z&R/\kappa_\infty\end{pmatrix}=\rho\,\Phi_{\tau_\infty},\qquad\tau_\infty=\frac{e^{i\theta}}{\kappa_\infty},
\end{equation}
in family components. For $R\ge4(r+1)\bar w$, uniformly in $\theta\in(0,\pi)$,
\begin{equation}\label{eq:Cinftybound}
\cosh d(\Phi,\rho^{-1}\tilde G_\infty)-1\le36(2r+1)^2\frac{\bar w^2}{R^2},\qquad\text{so}\qquad d(\Phi,\rho^{-1}\tilde G_\infty)\le\frac{9(2r+1)\bar w}R .
\end{equation}
\end{proposition}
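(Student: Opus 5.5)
The plan is to compare $\tau$ from Lemma~\ref{lem:famid}(iii) with $\tau_\infty$ through the half-plane distance formula \eqref{eq:coshhalf}:
\begin{equation*}
\cosh d-1=\frac{|\tau-\tau_\infty|^2}{2\operatorname{Im}\tau\operatorname{Im}\tau_\infty}.
\end{equation*}
For $\gamma=0$ the lemma gives $\tau=-\overline{\mathcal X}/\overline{\mathcal Y}$ with $\mathcal X=H_r(\varpi\omega)$ and $\mathcal Y=H_{r+1}(\varpi\omega)$, and $\operatorname{Im}\tau=\rho F_1F_2/|\mathcal Y|^2$.

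First we isolate the leading behaviour in each Heine term. Write $w_k-z-i\rho=-Re^{i\theta}(1-w_k/(Re^{i\theta}))$. Then
\begin{equation*}
\varpi_k=-e^{i\theta}\,\frac{1-\epsilon_k}{|1-\epsilon_k|},\qquad \epsilon_k=\frac{w_k}{Re^{i\theta}},\quad |\epsilon_k|\le \bar w/R,
\end{equation*}
and $r_k=R|1-\epsilon_k|$. Each term of Heine's formula for $H_s(q\omega)$ carries $\prod_{k\in S}q_k$. Hence
\begin{equation*}
\mathcal X=(-e^{i\theta})^rH_r(\omega)(1+\eta_X),\qquad \mathcal Y=(-e^{i\theta})^{r+1}H_{r+1}(\omega)(1+\eta_Y),
\end{equation*}
\begin{equation*}
F_1=R^rH_r(\omega)(1+\eta_1),\qquad F_2=R^{-r-1}H_{r+1}(\omega)(1+\eta_2).
\end{equation*}
Here each $\eta$ is a weighted average, with the positive weights $P_S(\omega)$, of the products $\prod_{k\in S}u_k-1$. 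The factors $u_k$ are $(1-\epsilon_k)/|1-\epsilon_k|$, its conjugate-type version, $|1-\epsilon_k|$, or $|1-\epsilon_k|^{-1}$, each within $O(\bar w/R)$ of $1$. Positivity of the weights is the key point. It turns the bound on each product over $|S|\le r+1$ factors into a bound on the average, so $|\eta|\le C(r+1)\bar w/R$ with an explicit $C$ (using $|\prod u_k-1|\le e^{s\delta}-1$). The hypothesis $R\ge4(r+1)\bar w$ keeps $s\delta\le 1/4$ or so, which linearises this.

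Next we substitute. We get $\tau=e^{i\theta}\kappa_\infty^{-1}(1+\bar\eta_X)/(1+\bar\eta_Y)$, so $|\tau-\tau_\infty|\le\kappa_\infty^{-1}|\eta_X-\eta_Y|/|1+\eta_Y|$. For the imaginary part,
\begin{equation*}
\operatorname{Im}\tau=\frac{\rho}{R\kappa_\infty}\cdot\frac{(1+\eta_1)(1+\eta_2)}{|1+\eta_Y|^2},\qquad \operatorname{Im}\tau_\infty=\frac{\sin\theta}{\kappa_\infty}=\frac{\rho}{R\kappa_\infty}.
\end{equation*}
This is where I expect the main obstacle. The denominator contains $\sin^2\theta$, which degenerates near the axis, yet the bound must be uniform in $\theta$. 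The naive estimate $|\eta_X-\eta_Y|=O(\bar w/R)$ therefore gives only $O(\bar w^2/(R^2\sin^2\theta))$.

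To fix this I would not bound $|\tau-\tau_\infty|$ crudely. Instead I would use the exact identity $\operatorname{Im}(\mathcal X\overline{\mathcal Y})=\rho F_1F_2$ (Lemma~\ref{lem:famid}(ii)). It says precisely that $\operatorname{Im}\tau$ is controlled by $\rho$, and it suggests that $\operatorname{Im}$ of the error is itself $O(\rho\bar w/R^2)$. Concretely, I would show $\eta_X-\eta_Y=\delta_1+i\delta_2$ with $|\delta_1|=O(\bar w/R)$ real and $|\delta_2|=O(\sin\theta\,\bar w/R)$. This should hold because at $\rho=0$ the $\varpi_k$ are real, so $\mathcal X$ and $\mathcal Y$ are real and $\tau$ is real.

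A real error then only moves $\tau$ along the ray of $e^{i\theta}$ by a relative amount $O(\bar w/R)$. The quantity $\cosh d-1$ for $\tau=\tau_\infty(1+\delta)$ is $|\delta|^2/(2\operatorname{Re}(1+\delta)\cdots)$, measured hyperbolically. Radial scaling is an isometry-type move, so the distance is $\log|1+\delta|$, which is independent of $\theta$. I would therefore factor the comparison through $\tau'=|\tau|e^{i\theta}/|\tau_\infty|\cdot\tau_\infty$ and use the triangle inequality. The angular part is handled by the $\operatorname{Im}$ identity, and the radial part is bounded by $|\log|1+\delta||$. Tracking the constants through $(2r+1)$ factors should give \eqref{eq:Cinftybound}. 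The second inequality follows from $\cosh d-1\ge d^2/2$, since $\sqrt{72}<9$.
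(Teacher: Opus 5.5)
Your reduction is correct: $\tau=\frac{e^{i\theta}}{\kappa_\infty}\,\frac{1+\bar\eta_X}{1+\bar\eta_Y}$ and $\operatorname{Im}\tau=\frac{\sin\theta}{\kappa_\infty}\,\frac{(1+\eta_1)(1+\eta_2)}{|1+\eta_Y|^2}$. You also locate the difficulty correctly, in the factor $\sin^2\theta$ in the denominator of \eqref{eq:coshhalf}. The proposed remedy, however, fails.

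Moving $\tau_\infty$ along its ray by a real relative amount does not cost a distance independent of $\theta$. The map $\tau\mapsto\lambda\tau$ is an isometry of the half-plane, but it displaces points by $\log\lambda$ only on its axis $\{\operatorname{Re}\tau=0\}$. In general, by \eqref{eq:coshhalf},
\[
\cosh d(\tau_\infty,\lambda\tau_\infty)-1=\frac{(\lambda-1)^2}{2\lambda\sin^2\theta}.
\]
You only have $|\delta_1|=O(\bar w/R)$ for the real part of the relative error. So the radial leg of your triangle inequality still contributes $O(\bar w^2/(R^2\sin^2\theta))$, and the bound is not uniform in $\theta$.

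The identity $\operatorname{Im}(\mathcal X\overline{\mathcal Y})=\rho F_1F_2$ does not rescue this. It is the formula for $\operatorname{Im}\tau$ you already used. It controls only the ratio $\operatorname{Im}\tau/\operatorname{Im}\tau_\infty=1+O(\bar w/R)$, and gives no factor $\sin\theta$ in the real part of the error.

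What is missing is that the whole relative error in $\mathcal X$ and $\mathcal Y$, real part included, carries a factor $\sin\theta$. You wrote $\varpi_k=-e^{i\theta}(1-\epsilon_k)/|1-\epsilon_k|$, but then bounded this unit factor only by $O(\bar w/R)$, together with the moduli. The moduli $|1-\epsilon_k|$ do not enter $\varpi_k$ at all. The phase $\varphi_k=\arg(1-\epsilon_k)$ is small with $\sin\theta$: $1-\frac{w_k}{R}e^{-i\theta}$ has real part at least $1-\bar w/R\ge\frac12$ and imaginary part $\frac{w_k}{R}\sin\theta$. Hence $|\varphi_k|\le|\tan\varphi_k|\le2(\bar w/R)\sin\theta$. (Indeed, on the outer rods far out every $\varpi_k$ equals $-e^{i\theta}=\mp1$ exactly, so the error vanishes at $\sin\theta=0$.)

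With $|e^{i\varphi_S}-1|\le|\varphi_S|$ and your positive-weight averaging, you get $|\eta_X|\le2r(\bar w/R)\sin\theta$ and $|\eta_Y|\le2(r+1)(\bar w/R)\sin\theta\le\frac12$. Hence $|\tau-\tau_\infty|\le4(2r+1)(\bar w/R)\sin\theta/\kappa_\infty$.

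The errors $\eta_1,\eta_2$ coming from the $r_k$ really are only $O(\bar w/R)$. But they enter $\operatorname{Im}\tau$ multiplicatively and need only a lower bound. With $u=\bar w/R$, $(1+\eta_1)(1+\eta_2)\ge(1-u)^r(1+u)^{-r-1}\ge\frac12$, which gives $\operatorname{Im}\tau\ge\frac{2\sin\theta}{9\kappa_\infty}$.

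In \eqref{eq:coshhalf} the factors $\sin\theta$ then cancel directly and give $36(2r+1)^2\bar w^2/R^2$. No radial/angular splitting is needed; this is the paper's argument.
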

\begin{proof}
\emph{The model.} $\operatorname{Re}\tau_\infty=\cos\theta/\kappa_\infty$, $\operatorname{Im}\tau_\infty=\sin\theta/\kappa_\infty$ and $|\tau_\infty|^2=\kappa_\infty^{-2}$; since
$\rho=R\sin\theta$ and $z=R\cos\theta$, $\rho\Phi_{\tau_\infty}$ is the matrix \eqref{eq:Ginftymodel}.

\emph{The phases.} Put $u=\bar w/R\le1/(4(r+1))$. Then $w_k-z-i\rho=-Re^{i\theta}(1-\frac{w_k}Re^{-i\theta})$, so $\varpi_k=-e^{i\theta}e^{i\varphi_k}$ with
$\varphi_k=\arg(1-\frac{w_k}Re^{-i\theta})$, and $R(1-u)\le r_k\le R(1+u)$. The number $1-\frac{w_k}Re^{-i\theta}$ has real part $\ge1-u\ge\frac12$
and imaginary part $\frac{w_k}R\sin\theta$, so
\begin{equation}
|\varphi_k|\le|\tan\varphi_k|\le2u\sin\theta=:\varphi_* .
\end{equation}

\emph{The Heine sums.} By Heine's formula, $H_s(\varpi\omega)=(-e^{i\theta})^s\sum_{|S|=s}P_S(\omega)e^{i\varphi_S}$ with $\varphi_S=\sum_{k\in S}\varphi_k$
and $P_S(\omega)>0$; since $|e^{ix}-1|\le|x|$,
\begin{equation}
H_s(\varpi\omega)=(-e^{i\theta})^sH_s(\omega)(1+\epsilon_s),\qquad|\epsilon_s|\le s\varphi_*,\qquad s\in\{r,r+1\}.
\end{equation}
Termwise, $F_1F_2=H_r(\mathbf r\omega)H_{r+1}(\omega/\mathbf r)=\lambda_\infty H_r(\omega)H_{r+1}(\omega)/R$ with $\lambda_\infty\ge(1-u)^r(1+u)^{-r-1}$. Here
$(1-u)^r\ge1-ru\ge\frac34$ and $(1+u)^{r+1}\le e^{(r+1)u}\le e^{1/4}<\frac32$, so $\lambda_\infty\ge\frac12$; and $(r+1)\varphi_*\le2(r+1)u\le\frac12$.

\emph{The half-plane coordinate.} By \eqref{eq:tauCF}, $\tau=-\overline{\mathcal X}/\overline{\mathcal Y}=\frac{e^{i\theta}}{\kappa_\infty}\,
\overline{\bigl(\tfrac{1+\epsilon_r}{1+\epsilon_{r+1}}\bigr)}$, so
\begin{equation}
|\tau-\tau_\infty|=\frac1{\kappa_\infty}\Bigl|\frac{\epsilon_r-\epsilon_{r+1}}{1+\epsilon_{r+1}}\Bigr|\le\frac{(2r+1)\varphi_*}{\kappa_\infty(1-(r+1)\varphi_*)}\le\frac{2(2r+1)\varphi_*}{\kappa_\infty},
\end{equation}
and, with $|\mathcal Y|=H_{r+1}(\omega)|1+\epsilon_{r+1}|$,
\begin{equation}
\operatorname{Im}\tau=\frac{\rho F_1F_2}{|\mathcal Y|^2}=\frac{\lambda_\infty\sin\theta\,H_r(\omega)H_{r+1}(\omega)}{H_{r+1}(\omega)^2|1+\epsilon_{r+1}|^2}
\ge\frac{\lambda_\infty\sin\theta}{\kappa_\infty(1+(r+1)\varphi_*)^2}\ge\frac{2\sin\theta}{9\kappa_\infty} .
\end{equation}

\emph{The distance.} By \eqref{eq:coshhalf},
\begin{equation}
\cosh d(\Phi_\tau,\Phi_{\tau_\infty})-1=\frac{|\tau-\tau_\infty|^2}{2\operatorname{Im}\tau\operatorname{Im}\tau_\infty}\le\frac{9(2r+1)^2\varphi_*^2}{\sin^2\theta}=36(2r+1)^2u^2 ;
\end{equation}
the factors $\sin\theta$ cancel, which gives the uniformity in $\theta$. Finally $\cosh d-1\ge d^2/2$ gives $d\le6\sqrt2\,(2r+1)u<9(2r+1)u$.
\end{proof}

\subsection{The ALE soliton metrics lie in \texorpdfstring{$\Ctor$}{C\_toric}}\label{subsec:CinCtor}
Let $(\omega,r,0)$ be family data, $\kappa_\infty$ as in Proposition~\ref{prop:Cinfty}, $\alpha>0$ and $\vartheta=(\alpha/(2\kappa_\infty))^{1/2}$. For
$c\in\cA$ with these family data, let $F_\alpha$ be the basis of $\mathfrak t$ in which a vector with Hankel components $(x_0,x_1)$ has the
components $(x_0/\vartheta,\vartheta x_1)$. It has determinant $1$ relative to the Hankel frame, and $-1$ relative to the null basis. We orient $g_c$
by $d\rho\wedge dz\wedge d\phi^a\wedge d\phi^b$ in $F_\alpha$, with the $(\rho,z)$ of the formulas; so the Weyl coordinates of $g_c$ in $F_\alpha$ are
$(\rho,z)$ (\S\ref{subsec:WP}). In $F_\alpha$, the Gram matrix of $\tilde G_\infty$ is
\begin{equation}\label{eq:GinftyFalpha}
\diag(\vartheta,\vartheta^{-1})\,\tilde G_\infty\,\diag(\vartheta,\vartheta^{-1})=\begin{pmatrix}\alpha R/2&z\\ z&2R/\alpha\end{pmatrix}=\rho\,\Phi^\sharp_{0,\alpha},
\end{equation}
since $\vartheta^2\kappa_\infty=\alpha/2$, and, by \eqref{eq:models} and \eqref{eq:PhiHA}, $\Phi^\sharp_{0,\alpha}=\Phi_{H,A}$ with $H=\alpha/(2R)$ and
$A=\alpha z/(2R)$, so that $\rho\Phi_{H,A}$ has the entries $H^{-1}A^2+\rho^2H=\alpha R/2$, $H^{-1}A=z$ and $H^{-1}=2R/\alpha$. The vector $V_j$ of the
Hankel family $(n,r)$ has the $F_\alpha$-components $(H_r(\omega)/\vartheta)\,D_\alpha^{-1}\tilde V_j$, with the normalised family $\tilde V_j$ of
\eqref{eq:Vnorm} and $D_\alpha=\diag(1,2/\alpha)$ as in Lemma~\ref{lem:match}; indeed the second component is
$\frac{H_r(\omega)}\vartheta\cdot\frac\alpha2\cdot\frac{H_{r+1}(\omega_{(j)})}{H_{r+1}(\omega)}=\vartheta H_{r+1}(\omega_{(j)})$.

\begin{corollary}[the ALE soliton metrics lie in $\Ctor$]\label{cor:CinCtor}
Let $c\in\cA$, with the family data $(\omega,r,0)$ (Theorem~\ref{thm:family}\textup{(a)}), the rods $I_0,\dots,I_n$ of the nodes of $\omega$,
$\bar w$ as in Proposition~\ref{prop:Cinfty},
$\tilde G=2(-1)^{m+1}G$, $e^{2\nu}=C_\nu F_1F_2$, $\Phi=\rho^{-1}\tilde G$ and $g_c=\tilde G_{ab}\,d\phi^ad\phi^b+e^{2\nu}(d\rho^2+dz^2)$.
\begin{enumerate}[leftmargin=*,label=\textup{(\roman*)}]
\item $g_c$ is a real-analytic Ricci-flat metric on $\Hb^\circ\times T^2$. Its turning points are the $n\ge1$ regular nodes; the phantom nodes are
interior points of rods.
\item In the Hankel frame the rod line of $I_j$ is spanned by $V_j=(H_r(\omega_{(j)}),H_{r+1}(\omega_{(j)}))$, and the normalised rod vectors are
$\pm C_\nu^{1/2}V_j$.
\item $(\Phi,\nu)\in\Ctor$, of type ALE: for every $\alpha>0$, in the basis $F_\alpha$, whose Weyl coordinates are the $(\rho,z)$ of the formulas,
$d(\Phi,\Phi^\sharp_{0,\alpha})\le9(2r+1)\bar w/R$ for $R\ge4(r+1)\bar w$, and the rod lines are spanned by $D_\alpha^{-1}\tilde V_j$.
\end{enumerate}
\end{corollary}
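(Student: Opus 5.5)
The corollary is an assembly of Theorem~\ref{thm:family}(a), Proposition~\ref{prop:axis} with $\gamma=0$ (so $S=\mathrm{Id}$ and $\mathfrak s$ plays no role) and Proposition~\ref{prop:Cinfty}; I would prove the three items in that order.

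For (i), Theorem~\ref{thm:family}(a) says that $\tilde G=\tilde G_{\omega,r,0}$ in the Hankel frame, with the quadrature solved by $e^{2\nu}=C_\nu F_1F_2$. Since $c\in\cA$, $G$ is a matrix solution of \eqref{eq:hm} by Theorem~\ref{thm:CF}(iii). The pair $(G,\nu)$ then satisfies \eqref{eq:hm} and \eqref{eq:nu}, so the metric is Ricci-flat by \S\ref{subsec:WP}. \eqref{eq:hm} is invariant under $G\mapsto\kappa B^TGB$, and the quadrature is invariant by Remark~\ref{rem:nuinv}, so the constant factor $2(-1)^{m+1}$ and the change of frame do not matter. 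Positivity and real-analyticity of $\tilde G$ and of $F_1F_2>0$ on $\Hb^\circ$ come from Lemma~\ref{lem:famid}(i),(iii), and $\det\tilde G=\rho^2$ holds. For the turning points, Proposition~\ref{prop:axis}(a),(b) applied to the data $(\omega,r,0)$ describes the behaviour on the axis through the nodes of $\omega$, which are the regular nodes. Each $w_k$ is a genuine turning point because $\det(V_{k-1},V_k)<0$. A phantom node does not enter the formula (Lemma~\ref{lem:Cnull}(a)), so it is an interior point of a rod of $\omega$, where the rod form of Proposition~\ref{prop:axis}(a) holds.

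Item (ii) is Proposition~\ref{prop:axis}(a) with $\gamma=0$: the kernel on $I_j$ is spanned by $V_j$, and $e^{2\nu}\adj\tilde G\to C_\nu V_j\otimes V_j$.

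For (iii) I would verify \ref{it:T1} and \ref{it:T2} in the frame $F_\alpha$. \ref{it:T1} is Proposition~\ref{prop:axis}(c). The rod lines are frame-independent as lines, and $F_\alpha$ has determinant $1$ relative to the Hankel frame, so the Weyl coordinates are unchanged. For \ref{it:T2}, Proposition~\ref{prop:Cinfty} bounds $d(\Phi,\rho^{-1}\tilde G_\infty)$ by $9(2r+1)\bar w/R$. The change of frame $\diag(\vartheta,\vartheta^{-1})$ acts isometrically on $\mathcal H$ and carries $\tilde G_\infty$ to $\rho\Phi^\sharp_{0,\alpha}$ by \eqref{eq:GinftyFalpha}, so the same bound holds for $d(\Phi,\Phi^\sharp_{0,\alpha})$ in $F_\alpha$, and it tends to $0$ uniformly in $\theta$. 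The model has $h_0=0$ and $\alpha>0$, so the type is ALE. The rod structure must also have the ALE type, i.e.\ $L_0\ne L_n$. I would check this with Proposition~\ref{prop:ALFstructure}(ii) at $\gamma=0$: $\det(V_0,V_n)=2(-1)^{r+1}H_rH_{r+1}\ne0$ since $r\le n-1$. Alternatively, Lemma~\ref{lem:outerlines} gives $L_0=[(1,\alpha/2)]\ne[(1,-\alpha/2)]=L_n$. Finally, the $F_\alpha$-components of $V_j$ are $(H_r(\omega)/\vartheta)D_\alpha^{-1}\tilde V_j$, as computed just before the statement, so the rod lines are spanned by $D_\alpha^{-1}\tilde V_j$.

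The main obstacle is bookkeeping rather than analysis. The sign $(-1)^{m+1}$ and the frame $(e_0,-e_1)$ must be matched exactly, so that $\tilde G$ is the positive definite family matrix; this is already carried out in Theorem~\ref{thm:family}(a). The orientation convention matters too: $F_\alpha$ has determinant $1$ relative to the Hankel frame, and since its Weyl coordinates are taken to be the $(\rho,z)$ of the formulas, the model comparison does not pick up the reflection $z\mapsto-z$.
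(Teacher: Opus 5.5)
Your proposal is correct and follows the paper's proof essentially step for step. You use Theorem~\ref{thm:family}(a) and Lemma~\ref{lem:famid} for the metric and its conformal factor, Proposition~\ref{prop:axis} for the turning points, rod lines and local tameness, and Proposition~\ref{prop:Cinfty} with \eqref{eq:GinftyFalpha} and $\det(V_0,V_n)\ne0$ for \ref{it:T2} and the ALE type. The one slip is where you attribute \eqref{eq:hm} to Theorem~\ref{thm:CF}(iii): that result gives $D>0$, analyticity and definiteness but not the field equation itself. The field equation comes from the Belinski--Zakharov $n$-soliton theorem wherever $D\ne0$; the paper applies it on $\cA^\circ$ and reaches the phantom parameters by density through Remark~\ref{rem:density}.
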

\begin{proof}
(i) \emph{The equations.} For $c$ in the positive cone $\mathcal A^\circ$ of Definition~\ref{def:admissible}, $G$ solves \eqref{eq:hm} where
$D\ne0$, by the $n$-soliton theorem of Belinski and Zakharov stated after \eqref{eq:dressed} (\cite{BZ1978}, \cite[\S8.2,
(8.19)--(8.26)]{BV2001}), and $D>0$ on $\Hb^\circ$ (Theorem~\ref{thm:CF}(iii)). By Theorem~\ref{thm:CF}(iii), for every
affine chart $\mathcal U$ of $(\RP^1)^N$ around a point of $\mathcal A$, $\tilde G$ is real-analytic in $(\rho,z,c)$ on a neighbourhood of
$\Hb^\circ\times(\mathcal A\cap\mathcal U)$, and for $c\in\mathcal A$ it is positive definite with $\det\tilde G=\rho^2$. Since $\mathcal A^\circ$ is
dense in $\mathcal A$ (Definition~\ref{def:admissible}), Remark~\ref{rem:density}, with the parameter space $\mathcal A\cap\mathcal U$, gives
\eqref{eq:hm} for all $c\in\mathcal A$. By Theorem~\ref{thm:family}(a), $e^{2\nu}=C_\nu F_1F_2$ solves \eqref{eq:nu}, and it is real-analytic and
positive (Lemma~\ref{lem:famid}(i)). The Weyl--Papapetrou reduction (\S\ref{subsec:WP}) gives a real-analytic Ricci-flat metric.

\emph{Turning points.} By Theorem~\ref{thm:family}(a), $\tilde G$ is the family matrix of $(\omega,r,0)$ in the Hankel frame, with $n\ge1$. By
Proposition~\ref{prop:axis}(a),(b), every point of the axis other than a node of $\omega$ is an interior point of a rod, and the rod lines of
the two rods that meet at a node of $\omega$ differ. The nodes of $\omega$ are the regular nodes, and the phantom nodes are not among them.

(ii) Proposition~\ref{prop:axis}(a), with $S=\mathrm{Id}$ since $\gamma=0$.

(iii) \emph{Local tameness} \ref{it:T1} is Proposition~\ref{prop:axis}(c). \emph{The model} \ref{it:T2}. By
Proposition~\ref{prop:Cinfty}, $d(\Phi,\rho^{-1}\tilde G_\infty)\le9(2r+1)\bar w/R$ for $R\ge4(r+1)\bar w$, uniformly in $\theta$, in family components; by \eqref{eq:GinftyFalpha}, the
Gram matrix of $\tilde G_\infty$ in $F_\alpha$ is $\rho\Phi^\sharp_{0,\alpha}$, as a function of the Weyl coordinates $(\rho,z)$ of $F_\alpha$. The change of
basis has determinant $1$, so it acts on $\mathcal H$ by an isometry, the same for $\Phi$ and for $\rho^{-1}\tilde G_\infty$. The rod lines in $F_\alpha$
are spanned by the $F_\alpha$-components of $V_j$, which are parallel to $D_\alpha^{-1}\tilde V_j$. The type is ALE, since $h_0=0$ and
$\det(V_0,V_n)=2(-1)^{r+1}H_r(\omega)H_{r+1}(\omega)\ne0$ (Proposition~\ref{prop:axis}(c)).
\end{proof}

\section{The ALF and AF soliton metrics: the far-node limit}\label{sec:far}
This section obtains the ALF and AF soliton metrics as limits of ALE soliton metrics in which one node goes to infinity: we
prove that the limit exists and is the case $\gamma>0$ of the soliton family (Theorem~\ref{thm:farclosed}), identify it with a
soliton on $\R^3\times S^1$ (Theorem~\ref{thm:mkn}), and show that it lies in $\Ctor$ (Proposition~\ref{prop:farCtor}).

\emph{Setting.} Throughout, $N=2m+1$ with $m\ge1$, and $K=N-1=2m$. Table~\ref{tab:fardata} collects the data of this section,
which are also used in \S\S\ref{sec:surj}--\ref{sec:proofs}.
\begin{table}[ht]\centering\footnotesize
\begin{tabular}{llll}\toprule
symbol & meaning & range & defined\\\midrule
$N=2m+1$ & number of nodes before the limit & $m\ge1$ & here\\
$K=2m$ & number of finite nodes $w_1<\dots<w_K$ & & here\\
$\Omega$ & the far node, $\Omega\to+\infty$ & $\Omega>w_K$ & here\\
$c_k$, $\gamma$ & parameters of the finite nodes and of $\Omega$ & $c\in(\RP^1)^K$, $\gamma>0$ & here\\
$h$, $l$ & numbers of heavy ($c_k=\infty$), resp.\ light ($c_k=0$), nodes & $h,l\le m$ & below\\
$n=K-h-l$ & number of regular nodes, the turning points & $n\ge1$ & below\\
$\omega$ & the measure on the regular nodes & $\omega>0$ & \eqref{eq:farmeasure}\\
$r=m-h$ & the degree of the Hankel determinants & $0\le r\le n$ & below\\
$\mathfrak s$ & the shift & $\mathfrak s\in\R$ & below\\
$y_k$, $\Upsilon=\prod_{k\le K}y_k$ & the ratios $\mu_k/\rho$ and their product & $y_k>0$ & here\\
$E_\Omega$ & the scaling of the frame $(e_\gamma,e_1)$ & $\det E_\Omega=1$ & here\\\bottomrule
\end{tabular}
\caption{The data of the far-node limit.}\label{tab:fardata}
\end{table} Let $w_1<\dots<w_K$ be real, $c_k=[\alpha_k:\beta_k]\in\RP^1$ for
$k\le K$, and $\gamma\in(0,\infty)$. We assume that $(c_1,\dots,c_K,\gamma)$ lies in the admissible set of
Definition~\ref{def:admissible} for the $N$ nodes $w_1,\dots,w_K,\Omega$ with $\Omega>w_K$. With $P(x)=\prod_{k\le K}(x-w_k)$ the weights of
these nodes are $c_k/(P'(w_k)(w_k-\Omega))$ and $\gamma/P(\Omega)$. So the condition says that $-c_k/P'(w_k)\in[0,\infty]$ for all $k$,
with at most $m$ zeros and at most $m$ infinities among the $c_k$; it does not depend on $\Omega$, and it depends on the nodes only
through their order, since $\operatorname{sgn}P'(w_k)=(-1)^{K-k}$. We use the standard representatives $(\alpha_k,\beta_k)$ of
\S\ref{subsec:split}, and $(\alpha_N,\beta_N)=(1,\gamma)$ for the far node. Every object below is independent of the representatives or
homogeneous in each pair ($\Pi_N$ of degree $2$, $\mathbf v$ of degree $1$). For $\rho>0$ let $y_k=\mu_k/\rho$ and
$x_k=-1/y_k$ for $k\le K$, and $\Upsilon=\prod_{k\le K}y_k$. Let $G_{\Omega}$ be the ALE soliton matrix of \S\ref{subsec:ALEsolitons} for the $N$
nodes $(w_1,\dots,w_K,\Omega)$ and parameters $(c_1,\dots,c_K,\gamma)$, in the null basis $(e_0,e_1)$, with $\det G_{\Omega}=\rho^2/4$; the
metric is built from $2(-1)^{m+1}G_{\Omega}$ (Corollary~\ref{cor:CinCtor}). Put $e_\gamma=-e_0+\gamma e_1$ and $\zeta=1/\Omega$, and let $E_{\Omega}$ be
the constant change of frame that multiplies $e_\gamma$ by $\Omega^{1/2}$ and $e_1$ by $\Omega^{-1/2}$. The limit is oriented by
$d\rho\wedge dz\wedge d\phi^\gamma\wedge d\phi^1$, with $(\phi^\gamma,\phi^1)$ dual to the frame $(e_\gamma,e_1)$.

\emph{The data of the limit.} Call $k$ \emph{heavy} if $c_k=\infty$ and \emph{light} if $c_k=0$, and let $h$ and $l$ be their
numbers, and $I$ the set of heavy indices; the other nodes are \emph{regular}. The \emph{measure} of the limit is
\begin{equation}\label{eq:farmeasure}
\omega=\sum_{k}a_k\prod_{k_0\in I}(w_k-w_{k_0})^2\,\delta_{w_k},\qquad a_k=-\frac{c_k}{P'(w_k)},
\end{equation}
the sum over the $n=K-h-l$ regular nodes. Its weights $\omega_k=a_k\prod_{k_0\in I}(w_k-w_{k_0})^2$ are positive. We assume $n\ge1$. Put
$r=m-h$, so $0\le r\le n$; the \emph{shift} is $\mathfrak s=\sum_{k\le K}w_k-2\sum_{k_0\in I}w_{k_0}$. The \emph{atom at infinity} is $\gamma\delta_\infty$, with weight $\gamma$.
We use the shifted Hankel family $(n,r,\gamma)$ of Definition~\ref{def:ALFfamily} for $\omega$, $V_j=(H_r(\omega_{(j)}),H^{(\gamma)}_{r+1}(\omega_{(j)}))$, on
the rods $I_0,\dots,I_n$ of the $n$ nodes of $\omega$.

\emph{The closed-form data.} Since $\omega>0$, $n\ge1$, $\gamma>0$ and $0\le r\le n$, $(\omega,r,\gamma,\mathfrak s)$ are family data
(Definition~\ref{def:family}). Let $\mathcal X$, $\mathcal Y$, $F_1$, $F_2$, $\mathbf l$ and $\mathcal U_\omega$ be those of
Definition~\ref{def:family} for these data (see \eqref{eq:famXY}), let $r_k$ and $\varpi_k$ be as in \eqref{eq:rvarpi} for all $K$ nodes, $\mathbf r$ the function $k\mapsto r_k$, and let
$\mathcal U_K\subset\mathcal U_\omega$ be the open set of $(\rho,z)\in\R^2$ different from $(0,w_k)$ for all $K$ nodes.

\emph{Projective form on the $K$ nodes} (the coefficients of the $N$-node Heine sums in $\zeta$, Lemma~\ref{lem:FN}). For $q=(q_1,\dots,q_K)\in\C^K$ and $T\subset\{1,\dots,K\}$ put
\begin{equation}\label{eq:farproj}
\mathbf t_T(q)=\prod_{l'\notin T}\alpha_{l'}\prod_{l\in T}\Bigl(-\frac{\beta_lq_l}{P'(w_l)}\Bigr)\Delta_T^2,\qquad
\mathbf H_s(q)=\sum_{|T|=s}\mathbf t_T(q),\qquad \boldsymbol\Sigma_s(q)=\sum_{|T|=s}\mathbf t_T(q)\,e_1(w_T),
\end{equation}
with $\mathbf H_s=\boldsymbol\Sigma_s=0$ for $s<0$ and $s>K$, and $\mathbf H^{(\gamma)}_{s+1}(q)=\mathbf H_{s+1}(q)-2\gamma\boldsymbol\Sigma_s(q)-\gamma^2\mathbf H_{s-1}(q)$. So $\mathbf H_s(q)$
is the sum \eqref{eq:Hproj} for the pairs $(\alpha_k,-\beta_k/P'(w_k))$, the weights $a_k$ in projective form. Put, on $\mathcal U_K$,
\begin{equation}
\begin{aligned}
\mathcal X^K&=\mathbf H_m(\varpi),&\qquad \mathcal Y^K&=\mathbf H^{(\gamma)}_{m+1}(\varpi)+\gamma\bigl(e_1(w)-i\rho\bigr)\mathcal X^K,\\
F_1^K&=\mathbf H_m(\mathbf r)+\gamma\mathbf H_{m-1}(\mathbf r),&\qquad F_2^K&=\mathbf H_{m+1}(1/\mathbf r)+\gamma\mathbf H_m(1/\mathbf r),
\end{aligned}
\end{equation}
and $\mathbf l^K=(-\mathcal Y^K,\mathcal X^K)$.

\subsection{The closed form}
For $q=(q_1,\dots,q_K,q_N)\in\C^N$ let $\mathbf H^{(N)}_s(q)$ be the projective Heine sum \eqref{eq:Hproj} of the ALE soliton of the $N$
nodes $w_1,\dots,w_K,\Omega$: the sum over the sets $T'$ of $s$ of the $N$ nodes of $\prod_{l'\notin T'}\alpha_{l'}\prod_{l\in T'}\beta_lq_l/P_N'(w_l)\,\Delta_{T'}^2$,
where $P_N(x)=P(x)(x-\Omega)$ and $w_N=\Omega$.
\begin{lemma}[the $N$-node Heine sums in $\zeta=1/\Omega$]\label{lem:FN}
For $q\in\C^N$ and $0\le s\le K$ put
\begin{equation}
\begin{gathered}
E^-_s(\zeta;q)=\sum_{|T|=s}\mathbf t_T(q)\prod_{l\in T}(1-w_l\zeta)^{-1},\qquad E^+_s(\zeta;q)=\sum_{|T|=s}\mathbf t_T(q)\prod_{l\in T}(1-w_l\zeta),\\
\pi(\zeta)=\prod_{k\le K}(1-w_k\zeta)^{-1},
\end{gathered}
\end{equation}
the sums over $T\subset\{1,\dots,K\}$, where $\mathbf t_T(q)$ uses only $q_1,\dots,q_K$, and $E^\pm_s=0$ for $s<0$ and $s>K$. Then for $\Omega=1/\zeta>w_K$ and $0\le s\le N$,
\begin{equation}\label{eq:FN}
\mathbf H^{(N)}_s(q)=\zeta^sE^-_s(\zeta;q)+\gamma\,q_N\,\zeta^{2m+1-s}\,\pi(\zeta)\,E^+_{s-1}(\zeta;q).
\end{equation}
Here $E^+_s$ is a polynomial in $\zeta$, and $E^-_s$ and $\pi$ are rational in $\zeta$ and real-analytic for $|\zeta|<1/\max_k|w_k|$, with
\begin{equation}\label{eq:FNexp}
E^\mp_s=\mathbf H_s(q)\pm\zeta\,\boldsymbol\Sigma_s(q)+O(\zeta^2),\qquad \pi=1+e_1(w)\,\zeta+O(\zeta^2).
\end{equation}
\end{lemma}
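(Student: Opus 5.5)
The plan is to split the sum defining $\mathbf H^{(N)}_s(q)$ over the sets $T'$ of $s$ of the $N$ nodes according to whether the far node $w_N=\Omega$ lies in $T'$, and then compute each term explicitly in terms of $\zeta=1/\Omega$.

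\emph{Key factorisations.} Both cases rest on the following identities:
\begin{itemize}
\item $P_N'(w_l)=P'(w_l)(w_l-\Omega)=-\Omega P'(w_l)(1-w_l\zeta)$ for $l\le K$;
\item $P_N'(\Omega)=P(\Omega)=\Omega^{K}\prod_{k\le K}(1-w_k\zeta)=\Omega^{2m}/\pi(\zeta)$;
\item for $T\subset\{1,\dots,K\}$, $\Delta^2_{T\cup\{N\}}=\Delta_T^2\prod_{l\in T}(\Omega-w_l)^2=\Delta_T^2\,\Omega^{2|T|}\prod_{l\in T}(1-w_l\zeta)^2$.
\end{itemize}

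\emph{Terms with $T'=T\subset\{1,\dots,K\}$.} The factor $\alpha_N=1$ is present. Each $l\in T$ contributes $\beta_lq_l/P_N'(w_l)=\zeta\cdot(-\beta_lq_l/P'(w_l))\,(1-w_l\zeta)^{-1}$. Comparing with \eqref{eq:farproj}, the term equals $\zeta^{s}\,\mathbf t_T(q)\prod_{l\in T}(1-w_l\zeta)^{-1}$. Summing over $T$ gives $\zeta^sE^-_s(\zeta;q)$.

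\emph{Terms with $T'=T\cup\{N\}$, $|T|=s-1$.} The node $N$ contributes $\beta_Nq_N/P_N'(\Omega)=\gamma q_N\zeta^{2m}\pi(\zeta)$. The nodes of $T$ contribute $\zeta^{s-1}\prod_{l\in T}(-\beta_lq_l/P'(w_l))(1-w_l\zeta)^{-1}$. The Vandermonde factor contributes $\Delta_T^2\zeta^{-2(s-1)}\prod_{l\in T}(1-w_l\zeta)^2$. Collecting the powers of $\zeta$ gives $2m+(s-1)-2(s-1)=2m+1-s$. The factors $(1-w_l\zeta)^{-1}$ combine with the squares to leave $\prod_{l\in T}(1-w_l\zeta)$. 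So these terms sum to $\gamma q_N\zeta^{2m+1-s}\pi(\zeta)E^+_{s-1}(\zeta;q)$, which proves \eqref{eq:FN}.

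The main care is only in the signs, namely the sign absorbed into $\mathbf t_T$ through $-\beta_l/P'(w_l)$, and in the boundary conventions. For $s=0$ the second sum is empty, consistent with $E^+_{-1}=0$. For $s=N$ the first sum is empty, consistent with $E^-_{N}=0$ since $N>K$. The same conventions cover $s-1>K$.

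\emph{Regularity and expansions.} $E^+_s$ is visibly a polynomial in $\zeta$. $E^-_s$ and $\pi$ are rational with poles only at $\zeta=1/w_k$, hence real-analytic for $|\zeta|<1/\max|w_k|$. For the expansions, write $\prod_{l\in T}(1-w_l\zeta)^{\mp1}=1\pm e_1(w_T)\zeta+O(\zeta^2)$; summing against $\mathbf t_T(q)$ gives $\mathbf H_s\pm\zeta\boldsymbol\Sigma_s$ by \eqref{eq:farproj}. Likewise $\pi=1+e_1(w)\zeta+O(\zeta^2)$, which gives \eqref{eq:FNexp}.
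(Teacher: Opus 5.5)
Your proposal is correct and follows the paper's proof essentially line by line. It uses the same split of the sets $T'$ according to whether the far node lies in them, the same factorisations of $P_N'(w_l)$, $P_N'(\Omega)=P(\Omega)$ and $\Delta^2_{T\cup\{N\}}$ in powers of $\zeta$ and factors $(1-w_l\zeta)$, and the same termwise expansion of $\prod_{l\in T}(1-w_l\zeta)^{\mp1}$; your explicit check of the boundary cases $s=0$ and $s=N$ via the conventions $E^\pm_s=0$ is a small addition that the paper leaves implicit.
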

\begin{proof}
Split the sets $T'$ of $s$ of the $N$ nodes into those with $N\notin T'$, i.e.\ $T'=T\subset\{1,\dots,K\}$ with $|T|=s$, and those with
$T'=T\cup\{N\}$, $|T|=s-1$. For $l\le K$, $P_N'(w_l)=(w_l-\Omega)P'(w_l)=-\zeta^{-1}(1-w_l\zeta)P'(w_l)$; moreover $P_N'(\Omega)=P(\Omega)=\zeta^{-2m}\pi(\zeta)^{-1}$
and $\Delta_{T\cup\{N\}}^2=\Delta_T^2\prod_{l\in T}(\Omega-w_l)^2=\Delta_T^2\,\zeta^{-2|T|}\prod_{l\in T}(1-w_l\zeta)^2$. With $\alpha_N=1$ the term of $T'=T$ is
\begin{equation}
\prod_{l'\notin T}\alpha_{l'}\prod_{l\in T}\frac{\beta_lq_l}{P_N'(w_l)}\,\Delta_T^2=\mathbf t_T(q)\prod_{l\in T}\frac{\zeta}{1-w_l\zeta},
\end{equation}
and with $\beta_N=\gamma$ the term of $T'=T\cup\{N\}$ is
\begin{equation}
\begin{aligned}
\frac{\gamma q_N}{P(\Omega)}\prod_{l'\notin T}\alpha_{l'}\prod_{l\in T}\frac{\beta_lq_l}{P_N'(w_l)}\,\Delta_{T\cup\{N\}}^2
&=\gamma q_N\zeta^{2m}\pi(\zeta)\,\mathbf t_T(q)\prod_{l\in T}\frac{\zeta}{1-w_l\zeta}\cdot\frac{(1-w_l\zeta)^2}{\zeta^2}\\
&=\gamma q_N\zeta^{2m+1-s}\pi(\zeta)\,\mathbf t_T(q)\prod_{l\in T}(1-w_l\zeta).
\end{aligned}
\end{equation}
Summing over $T$ gives \eqref{eq:FN}. The functions $E^-_s$ and $\pi$ are sums of products of the factors $(1-w_l\zeta)^{-1}$, which are
real-analytic for $|\zeta|<1/\max_k|w_k|$, and $E^+_s$ is a polynomial. Finally $\prod_{l\in T}(1-w_l\zeta)^{\mp1}=1\pm e_1(w_T)\zeta+O(\zeta^2)$ and
$\pi=1+e_1(w)\zeta+O(\zeta^2)$, which gives \eqref{eq:FNexp}.
\end{proof}

\begin{lemma}[reduction of the phantom nodes]\label{lem:FR}
Let $\kappa_I=\prod_{l\notin I}\alpha_l\prod_{l\in I}\bigl(-\beta_l/P'(w_l)\bigr)\cdot\Delta_I^2$, a non-zero real constant, and for
$q\in\C^K$ let $q_I=\prod_{l\in I}q_l$ and $w_I=\sum_{l\in I}w_l$. Then for every $s$,
\begin{equation}\label{eq:FR}
\mathbf H_s(q)=\kappa_I\,q_I\,H_{s-h}(q\omega),\qquad
\boldsymbol\Sigma_s(q)=\kappa_I\,q_I\bigl(\Sigma_{s-h}(q\omega)+w_IH_{s-h}(q\omega)\bigr),
\end{equation}
where $\omega$ is the measure \eqref{eq:farmeasure} on the regular nodes and $q\omega$ uses the restriction of $q$ to them. Consequently,
on $\mathcal U_K$,
\begin{equation}
\mathcal X^K=\kappa_I\varpi_I\,\mathcal X,\qquad \mathcal Y^K=\kappa_I\varpi_I\,\mathcal Y,\qquad
F_1^K=\kappa_I\mathbf r_I\,F_1,\qquad F_2^K=\kappa_I\mathbf r_I^{-1}F_2,
\end{equation}
with $\varpi_I=\prod_{l\in I}\varpi_l$ and $\mathbf r_I=\prod_{l\in I}r_l$. Since
$|\varpi_I|=1$, it follows that
\[
\frac{\operatorname{Re}(\mathbf l^K\mathbf l^{K*})}{F_1^KF_2^K}=\frac{\operatorname{Re}(\mathbf l\,\mathbf l^*)}{F_1F_2},\qquad
F_1^KF_2^K=\kappa_I^2F_1F_2,
\]
and $F_1^KF_2^K>0$ on $\mathcal U_K$, by Lemma~\ref{lem:famid}\textup{(i)}.
\end{lemma}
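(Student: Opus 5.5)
The plan is to apply Lemma~\ref{lem:Hpos}(b) termwise to the sums \eqref{eq:farproj}, with $Z$ the light and $I$ the heavy nodes. In $\mathbf t_T(q)$ a light node has $\beta_l=0$, so it must lie outside $T$; a heavy node has $\alpha_l=0$, so it must lie in $T$. Only sets $T=I\cup S$ with $S$ a subset of the regular nodes contribute.

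For such $T$ we factor $\Delta_T^2=\Delta_I^2\Delta_S^2\prod_{l\in S,h\in I}(w_l-w_h)^2$. The factors of the heavy nodes give $\prod_{l\in I}(-\beta_lq_l/P'(w_l))$, those of the light nodes give $\prod_{l\in Z}\alpha_l$, and the regular nodes outside $S$ give $\alpha_l=1$ in standard representatives. For $l\in S$ the factor $-\beta_lq_l/P'(w_l)=a_lq_l$ combines with $\prod_{h\in I}(w_l-w_h)^2$ into $\omega_lq_l$. Hence $\mathbf t_T(q)=\kappa_Iq_I\,P_S(q\omega)$, where $\kappa_I$ is as stated; its factors $\prod_{l\notin I}\alpha_l$ reduce to the light $\alpha_l=1$. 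We have $\kappa_I\ne0$ since the pairs are non-zero and the nodes are distinct.

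Summing over $|S|=s-h$ and applying Heine's formula \eqref{eq:heine} gives $\mathbf H_s(q)=\kappa_Iq_IH_{s-h}(q\omega)$. The same sum weighted by $e_1(w_T)=w_I+e_1(w_S)$ gives the formula for $\boldsymbol\Sigma_s$ by \eqref{eq:Sigmas}. The degenerate ranges $s-h<0$ or $s-h>n$ vanish on both sides, so the conventions agree.

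Next substitute into the definitions with $r=m-h$. For $q=\varpi$ this gives $\mathcal X^K=\kappa_I\varpi_IH_r(\varpi\omega)$. For $\mathcal Y^K$, the shifted determinant picks up the term $-2\gamma\kappa_I\varpi_I(\Sigma_r+w_IH_r)$. So
\[
\mathcal Y^K=\kappa_I\varpi_I\bigl(H^{(\gamma)}_{r+1}(\varpi\omega)+\gamma(e_1(w)-2w_I-i\rho)H_r(\varpi\omega)\bigr),
\]
and since $\mathfrak s=e_1(w)-2w_I$ this is $\kappa_I\varpi_I\mathcal Y$. This bookkeeping of the shift is the only step needing care: the $w_I$ correction from $\boldsymbol\Sigma_m$ must combine with $\gamma e_1(w)\mathcal X^K$ into $\gamma\mathfrak s\mathcal X$.

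The $F$'s follow with $q=\mathbf r$ and $q=1/\mathbf r$, where $q_I=\mathbf r_I^{\pm1}$. Finally, $\mathbf l^K=\kappa_I\varpi_I\mathbf l$ with $|\varpi_I|=1$ and $\kappa_I$ real, so $\operatorname{Re}(\mathbf l^K\mathbf l^{K*})=\kappa_I^2\operatorname{Re}(\mathbf l\mathbf l^*)$. Also $F_1^KF_2^K=\kappa_I^2F_1F_2$, and positivity follows from Lemma~\ref{lem:famid}(i).
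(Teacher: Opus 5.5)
Your proof is correct and follows the paper's argument essentially step by step: the same support restriction to $T=I\cup S$, the same factorisations of $\Delta_T^2$ and of $e_1(w_T)=w_I+e_1(w_S)$, and the same check that $e_1(w)-2w_I=\mathfrak s$ turns $\mathcal Y^K$ into $\kappa_I\varpi_I\mathcal Y$. The only difference is that you fix the standard representatives, so every non-heavy node has $\alpha_l=1$. The paper instead uses $-\beta_l/P'(w_l)=\alpha_la_l$ for regular $l$, which keeps the factor $\prod_{l\notin I}\alpha_l$ in $\kappa_I$ for arbitrary representatives. Your restriction is harmless, since the setting of the far-node section uses standard representatives and both sides of \eqref{eq:FR} are homogeneous of degree one in each pair.
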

\begin{proof}
A regular or light node has $\alpha_l\ne0$, and a heavy node has $\beta_l\ne0$, so $\kappa_I\ne0$. As in
Lemma~\ref{lem:Hpos}(b), applied to the pairs $(\alpha_l,-\beta_l/P'(w_l))$, $\mathbf t_T(q)=0$ unless $T=I\cup S$ with $S$ a set of regular
nodes (otherwise $T$ misses a heavy node, with $\alpha_{l'}=0$, or contains a light one, with $\beta_l=0$). For such $T$, since
$-\beta_l/P'(w_l)=\alpha_la_l$ for regular $l$ and $\Delta_T^2=\Delta_I^2\Delta_S^2\prod_{l\in S,\,k_0\in I}(w_l-w_{k_0})^2$, we get
$\mathbf t_T(q)=\kappa_Iq_IP_S(q\omega)$, with $|S|=|T|-h$ and $e_1(w_T)=e_1(w_S)+w_I$. Summing over $S$ gives \eqref{eq:FR}.
Insert \eqref{eq:FR} into the definitions, with $m-h=r$:
$\mathcal X^K=\kappa_I\varpi_IH_r(\varpi\omega)$ and
$\mathbf H^{(\gamma)}_{m+1}(\varpi)=\kappa_I\varpi_I\bigl(H^{(\gamma)}_{r+1}(\varpi\omega)-2\gamma w_IH_r(\varpi\omega)\bigr)$, so
$\mathcal Y^K=\kappa_I\varpi_I\bigl(H^{(\gamma)}_{r+1}(\varpi\omega)+\gamma(e_1(w)-2w_I-i\rho)H_r(\varpi\omega)\bigr)=\kappa_I\varpi_I\mathcal Y$, since
$e_1(w)-2w_I=\mathfrak s$. Likewise for $F_1^K$ and $F_2^K$, where $q_I=\mathbf r_I^{\pm1}$. The last assertions follow from
$|\varpi_I|=1$ and $\mathbf r_I\mathbf r_I^{-1}=1$.
\end{proof}

\begin{theorem}[the far-node limit]\label{thm:farclosed}
Let $G_\Omega$, $e_\gamma$ and $E_\Omega$ be as in the setting above, and let $\mathcal X$, $\mathcal Y$, $F_1$, $F_2$ and $\mathbf l$ be as in
\eqref{eq:famXY}.
\begin{enumerate}[leftmargin=*,label=\textup{(\roman*)}]
\item\label{it:fcCF} For every compact $\mathcal K\subset\Hb^\circ$ there are $\zeta_0>0$ and a real-analytic function
$\mathcal G$ of $(\rho,z,\zeta)$ on a neighbourhood of $\mathcal K\times[0,\zeta_0]$ such that $E_\Omega^TG_\Omega E_\Omega$, in the frame
$(e_\gamma,e_1)$, equals $\mathcal G(\rho,z,1/\Omega)$ for $(\rho,z)\in\mathcal K$ and $\Omega\ge1/\zeta_0$. Its value $G_\infty=\mathcal G|_{\zeta=0}$, the \emph{far-node limit},
satisfies on $\Hb^\circ$
\begin{equation}\label{eq:farCF}
2(-1)^{m+1}G_\infty=\frac{\operatorname{Re}(\mathbf l\,\mathbf l^*)}{F_1F_2},\qquad \mathbf l=(-\mathcal Y,\mathcal X),
\end{equation}
i.e.\ $2(-1)^{m+1}G_\infty(x,x)=|\mathcal Xx^1-\mathcal Yx^\gamma|^2/(F_1F_2)$ for $x=x^\gamma e_\gamma+x^1e_1$: in the frame $(e_\gamma,e_1)$,
$2(-1)^{m+1}G_\infty$ is the family matrix $\tilde G_{\omega,r,\gamma,\mathfrak s}$ of Definition~\ref{def:family}.
\item\label{it:fcreg}\label{it:fcnu} $G_\infty$ solves \eqref{eq:hm}, $\det G_\infty=\rho^2/4$, $(-1)^{m+1}G_\infty$ is positive definite,
and a function $\nu$ on $\Hb^\circ$ solves \eqref{eq:nu} for $G_\infty$ if and only if
\begin{equation}\label{eq:nuinfty}
e^{2\nu}=C'\,F_1F_2
\end{equation}
with a constant $C'>0$. With $C'=C_\nu$, the constant fixed for all soliton metrics in the last sentence of Theorem~\ref{thm:family}, in the
coordinates $(\phi^\gamma,\phi^1)$ dual to the frame $(e_\gamma,e_1)$,
\begin{equation}
g_\infty=\frac{|\mathcal X\,d\phi^1-\mathcal Y\,d\phi^\gamma|^2}{F_1F_2}+C_\nu\,F_1F_2\,(d\rho^2+dz^2)
\end{equation}
is a real-analytic Ricci-flat toric metric on $\Hb^\circ\times T^2$.
\end{enumerate}
\end{theorem}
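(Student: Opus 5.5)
The plan is to start from the closed formula of Theorem~\ref{thm:CF}(ii) for the $N$-node ALE soliton, $2(-1)^{m+1}G_\Omega=\operatorname{Re}(\mathbf v\bar{\mathbf v}^T)/\Pi_N$ in the null basis, and substitute the expansions of Lemma~\ref{lem:FN}. The phantom nodes are then reduced by Lemma~\ref{lem:FR}. Concretely, with $q=\varpi$, $\mathbf r$ and $1/\mathbf r$ (and $q_N=\varpi_\Omega$, $r_\Omega$, $1/r_\Omega$), \eqref{eq:FN} writes $\mathbf v_0=\mathbf H^{(N)}_{m+1}(\varpi)$, $\mathbf v_1=\mathbf H^{(N)}_m(\varpi)$ and the two factors of $\Pi_N$ as explicit combinations of $\zeta^s E^\mp$, of $\pi$, and of the functions $\varpi_\Omega$ and $r_\Omega$. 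These in turn are real-analytic in $\zeta$ near $0$, uniformly on compacta of $\Hb^\circ$. The relevant expansions are $\varpi_\Omega=1-i\rho\zeta+O(\zeta^2)$ and $r_\Omega=\zeta^{-1}(1-z\zeta+O(\zeta^2))$.

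\emph{Extracting the leading orders.} The order $s=m$ term of $\mathbf v_1$ is $\zeta^m(E^-_m+\gamma\varpi_\Omega\zeta\pi E^+_{m-1})$. The order $m+1$ term $\mathbf v_0$ is $\zeta^{m+1}E^-_{m+1}+\gamma\varpi_\Omega\zeta^m\pi E^+_m$, whose leading term is $\zeta^m\gamma\mathbf H_m$; it is of the same order as $\mathbf v_1$. The frame change to $e_\gamma=-e_0+\gamma e_1$ is designed to cancel exactly this term. In components dual to $(e_\gamma,e_1)$, the relevant combination is $\mathbf v_0-\gamma\mathbf v_1$ (up to sign). After dividing by $\zeta^{m+1}$ and using \eqref{eq:FNexp}, this combination tends to $\mathbf H_{m+1}-2\gamma\boldsymbol\Sigma_m-\gamma^2\mathbf H_{m-1}+\gamma(e_1(w)-i\rho)\mathbf H_m=\mathcal Y^K$. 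The $\zeta$-terms to collect are the $\pm\zeta\boldsymbol\Sigma$ corrections of $E^\mp$, the $e_1(w)\zeta$ of $\pi$, and the $-i\rho\zeta$ of $\varpi_\Omega$. Similarly $\zeta^{-m}\mathbf v_1\to\mathcal X^K$. For $\Pi_N$, the expansion gives $\mathbf H^{(N)}_m(\mathbf r)\sim\zeta^m F_1^K$, because $\gamma r_\Omega\zeta^{m+1}\pi E^+_{m-1}\to\gamma\zeta^m\mathbf H_{m-1}$. In the same way $\mathbf H^{(N)}_{m+1}(1/\mathbf r)\sim\zeta^{m+1}F_2^K$. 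So $\Pi_N=\zeta^{2m+1}(F_1^KF_2^K+O(\zeta))$.

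\emph{The scaling and the conclusion of (i).} The powers now have to match. The $e_\gamma$-component carries $\zeta^{m+1}$ and the $e_1$-component $\zeta^m$. The scaling $E_\Omega$ multiplies the $e_\gamma$-row by $\zeta^{-1/2}$ and the $e_1$-row by $\zeta^{1/2}$, so both become $\zeta^{m+1/2}$. Their squared modulus is then $\zeta^{2m+1}$, which cancels against $\Pi_N$. Every quantity is a polynomial in $\zeta$, $\pi$, $E^\pm$, $\varpi_\Omega$ and $\zeta r_\Omega$, and the denominator $F_1^KF_2^K+O(\zeta)$ is positive at $\zeta=0$ by Lemma~\ref{lem:FR}. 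Hence $E_\Omega^TG_\Omega E_\Omega$ extends real-analytically to $\zeta\in[0,\zeta_0]$ on a neighbourhood of $\mathcal K$. At $\zeta=0$ it equals $\operatorname{Re}(\mathbf l^K\mathbf l^{K*})/F_1^KF_2^K$, which Lemma~\ref{lem:FR} converts into \eqref{eq:farCF}.

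\emph{Part (ii) and the main obstacle.} \eqref{eq:hm} passes to the limit because $\mathcal G$ is real-analytic in $(\rho,z,\zeta)$, so its $(\rho,z)$-derivatives converge locally uniformly; alternatively one can use Remark~\ref{rem:density} with parameter $\zeta$. The determinant is unchanged by $E_\Omega$, and positivity together with $\det=\rho^2/4$ also follow directly from Lemma~\ref{lem:famid}(iii). For $\nu$, Theorem~\ref{thm:family}(a) gives $e^{2\nu_\Omega}=C'\Pi_N$. After rescaling by $\zeta^{-2m-1}\kappa_I^{-2}$, which is a constant for each fixed $\Omega$, the quadrature \eqref{eq:nu} holds for $\mathcal G(\cdot,\zeta)$ with $\nu=\frac12\log(\zeta^{-2m-1}\Pi_N)$; Remark~\ref{rem:nuinv} covers the constant frame change. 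Letting $\zeta\to0$, where both sides are continuous, gives \eqref{eq:nuinfty}, and uniqueness up to a constant is again Remark~\ref{rem:nuinv}. The metric statement is then the Weyl--Papapetrou reduction. The step I expect to be delicate is the bookkeeping of the $O(\zeta)$ corrections in $\mathbf v_0-\gamma\mathbf v_1$. The cancellation of the order-$\zeta^m$ term must be exact, and the surviving terms must assemble precisely into $\mathbf H^{(\gamma)}_{m+1}$ plus the shift $\gamma(e_1(w)-i\rho)\mathcal X^K$. I would check this first against the expansion already done in the proof of Lemma~\ref{lem:famid}(ii), which performs the same limit for the Heine sums.
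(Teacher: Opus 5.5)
Your proposal is correct and follows the paper's proof essentially step for step. The ingredients are the same: the closed formula of Theorem~\ref{thm:CF} for the $N$ nodes, the expansions of Lemma~\ref{lem:FN}, and the exact cancellation of the order-$\zeta^m$ term in $\gamma\mathbf v_1-\mathbf v_0$ (the numerator $E^-_m-\varpi_\Omega\pi E^+_m$ vanishes at $\zeta=0$, so it can be divided by $\zeta$). They continue with the balancing of powers by $E_\Omega$, the reduction of Lemma~\ref{lem:FR}, and Remark~\ref{rem:density} applied with $\frac12\log(\zeta^{-2m-1}\Pi_N)$ for part (ii).

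When you write it out, keep the relative sign rather than working ``up to sign''. The $x^\gamma$-coefficient is $\Omega^{1/2}(\gamma\mathbf v_1-\mathbf v_0)=-\Omega^{1/2}\zeta^{m+1}(\mathcal Y^K+O(\zeta))$, and this sign is what gives $\mathbf l=(-\mathcal Y,\mathcal X)$ rather than $(\mathcal Y,\mathcal X)$. Also cite Corollary~\ref{cor:CinCtor}(i) for the fact that $G_\Omega$ solves \eqref{eq:hm} before the limit, phantom data included.
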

\begin{proof}
The closed formula of Theorem~\ref{thm:CF} for the $N$ nodes is expanded in $\zeta=1/\Omega$ (Lemma~\ref{lem:FN}); after the scaling
$E_\Omega$ the leading terms are the data $\mathcal X,\mathcal Y,F_1,F_2$ of the $K$ finite nodes, the phantom nodes are removed by
Lemma~\ref{lem:FR}, and the field equations pass to the limit by analyticity in $\zeta$.

\emph{Step 1: the identity at finite $\Omega$.} Fix $\Omega>w_K$. The data $(c_1,\dots,c_K,\gamma)$ are admissible for the $N$ nodes. In the
representatives $(\alpha_k,\beta_k)$ and $(1,\gamma)$ let $\mathbf v=(\mathbf H^{(N)}_{m+1}(\varpi),\mathbf H^{(N)}_m(\varpi))$ and
$\Pi_N=\mathbf H^{(N)}_m(\mathbf r)\,\mathbf H^{(N)}_{m+1}(1/\mathbf r)$, where now $\varpi$ and $\mathbf r$ include the far node, $r_N=\sqrt{\rho^2+(\Omega-z)^2}$ and
$\varpi_N=(\Omega-z-i\rho)/r_N$. By Theorem~\ref{thm:CF}(ii),(iii) for the $N$ nodes, $\Pi_N>0$ on $\Hb^\circ$ (the representatives change $\Pi_N$ by a
positive factor), and in the null basis $2(-1)^{m+1}G_\Omega(u,u)=|\mathbf v_0u^0+\mathbf v_1u^1|^2/\Pi_N$ for real $u=u^0e_0+u^1e_1$. For real
$(x^\gamma,x^1)$ the vector $u=E_\Omega(x^\gamma e_\gamma+x^1e_1)=x^\gamma\Omega^{1/2}e_\gamma+x^1\Omega^{-1/2}e_1$ has, since $e_\gamma=-e_0+\gamma e_1$, the null components
$u^0=-\Omega^{1/2}x^\gamma$ and $u^1=\gamma\Omega^{1/2}x^\gamma+\Omega^{-1/2}x^1$; so $\mathbf v_0u^0+\mathbf v_1u^1=\Omega^{1/2}(\gamma\mathbf v_1-\mathbf v_0)x^\gamma+\Omega^{-1/2}\mathbf v_1x^1$, and
\begin{equation}\label{eq:farstep1}
2(-1)^{m+1}(E_\Omega^TG_\Omega E_\Omega)(x,x)=\frac{\bigl|\Omega^{1/2}(\gamma\mathbf v_1-\mathbf v_0)x^\gamma+\Omega^{-1/2}\mathbf v_1x^1\bigr|^2}{\Pi_N}.
\end{equation}

\emph{Step 2: expansion.} Put $\hat r_N=\sqrt{(1-z\zeta)^2+\rho^2\zeta^2}$. For $\zeta=1/\Omega>0$, $\hat r_N=\zeta r_N$ and $\varpi_N=(1-z\zeta-i\rho\zeta)/\hat r_N$;
the right sides are real-analytic in $(\rho,z,\zeta)$ where $\hat r_N>0$, in particular near $\zeta=0$, with $\hat r_N=1+O(\zeta)$ and
$\varpi_N=1-i\rho\zeta+O(\zeta^2)$. Apply Lemma~\ref{lem:FN} with $q=\varpi,\mathbf r,1/\mathbf r$, i.e.\ $q_N=\varpi_N,\ r_N=\hat r_N/\zeta,\ 1/r_N=\zeta/\hat r_N$. Writing
$E^\pm_s(q)$ for $E^\pm_s(\zeta;q)$, this gives
\begin{equation}
\begin{gathered}
\mathbf v_1=\zeta^m\,\mathsf X(\zeta),\qquad \gamma\mathbf v_1-\mathbf v_0=-\zeta^{m+1}\,\mathsf Y(\zeta),\\
\mathbf H^{(N)}_m(\mathbf r)=\zeta^m\mathsf F_1(\zeta),\qquad \mathbf H^{(N)}_{m+1}(1/\mathbf r)=\zeta^{m+1}\mathsf F_2(\zeta),
\end{gathered}
\end{equation}
with
\begin{equation}
\begin{aligned}
\mathsf X(\zeta)&=E^-_m(\varpi)+\gamma\varpi_N\zeta\,\pi E^+_{m-1}(\varpi),\\
\mathsf Y(\zeta)&=-\gamma\,\frac{E^-_m(\varpi)-\varpi_N\pi E^+_m(\varpi)}{\zeta}-\gamma^2\varpi_N\pi E^+_{m-1}(\varpi)+E^-_{m+1}(\varpi),\\
\mathsf F_1(\zeta)&=E^-_m(\mathbf r)+\gamma\hat r_N\pi E^+_{m-1}(\mathbf r),\qquad
\mathsf F_2(\zeta)=E^-_{m+1}(1/\mathbf r)+\gamma\hat r_N^{-1}\pi E^+_m(1/\mathbf r).
\end{aligned}
\end{equation}
For $\mathsf Y$ we used $\mathbf v_0=\zeta^{m+1}E^-_{m+1}(\varpi)+\gamma\varpi_N\zeta^m\pi E^+_m(\varpi)$. By \eqref{eq:FNexp} the numerator of the quotient in $\mathsf Y$ is
$\mathbf H_m(\varpi)-\mathbf H_m(\varpi)=0$ at $\zeta=0$, and its derivative there is
$\boldsymbol\Sigma_m(\varpi)-\bigl(-i\rho+e_1(w)\bigr)\mathbf H_m(\varpi)+\boldsymbol\Sigma_m(\varpi)$. So the quotient is real-analytic at $\zeta=0$, with the value
$2\boldsymbol\Sigma_m(\varpi)-(e_1(w)-i\rho)\mathbf H_m(\varpi)$, and
\begin{equation}
\begin{gathered}
\mathsf Y(0)=\mathbf H_{m+1}(\varpi)-2\gamma\boldsymbol\Sigma_m(\varpi)-\gamma^2\mathbf H_{m-1}(\varpi)+\gamma(e_1(w)-i\rho)\mathcal X^K=\mathcal Y^K,\\
\mathsf X(0)=\mathcal X^K,\qquad \mathsf F_1(0)=F_1^K,\qquad \mathsf F_2(0)=F_2^K.
\end{gathered}
\end{equation}
Insert this into \eqref{eq:farstep1}. Since $\Omega^{1/2}\zeta^{m+1}=\Omega^{-1/2}\zeta^m=\zeta^{m+1/2}$ and $\Pi_N=\zeta^{2m+1}\mathsf F_1\mathsf F_2$, for $\zeta=1/\Omega$ with $\Omega>w_K$,
\begin{equation}\label{eq:farstep2}
2(-1)^{m+1}(E_\Omega^TG_\Omega E_\Omega)(x,x)=\frac{|\mathsf X(\zeta)x^1-\mathsf Y(\zeta)x^\gamma|^2}{\mathsf F_1(\zeta)\mathsf F_2(\zeta)} .
\end{equation}

\emph{Step 3: analyticity and the limit; proof of \ref{it:fcCF}.} The functions $\mathsf X,\mathsf Y,\mathsf F_1,\mathsf F_2$ and their complex conjugates are
polynomials in $\gamma$, $\zeta$, $\pi$, $(1-w_l\zeta)^{\pm1}$, $\varpi_N$, $\bar\varpi_N$, $\hat r_N^{\pm1}$, $\rho$ and $\varpi_k$, $\bar\varpi_k$, $r_k^{\pm1}$ ($k\le K$), and of
the analytic quotient above. So they are real-analytic in $(\rho,z,\zeta)$ on the set where $(\rho,z)\in\mathcal U_K$, $|\zeta|\max_k|w_k|<1$ and
$\hat r_N>0$, which contains a neighbourhood of $\mathcal K\times\{0\}$. At $\zeta=0$, $\mathsf F_1\mathsf F_2=F_1^KF_2^K>0$ (Lemma~\ref{lem:FR}). By
compactness there is $\zeta_0>0$, with $\zeta_0\max_k|w_k|<1$ (so $1/\zeta_0>w_K$), such that $\mathsf F_1\mathsf F_2>0$ on a neighbourhood of $\mathcal K\times[0,\zeta_0]$. There the
real symmetric matrix $\mathcal G$ with the quadratic form $\frac{(-1)^{m+1}}2|\mathsf Xx^1-\mathsf Yx^\gamma|^2/(\mathsf F_1\mathsf F_2)$, i.e.\ with the entries
$\frac{(-1)^{m+1}}2\operatorname{Re}(\mathsf l_a\bar{\mathsf l}_b)/(\mathsf F_1\mathsf F_2)$ for $\mathsf l=(-\mathsf Y,\mathsf X)$, is real-analytic, and by \eqref{eq:farstep2} it equals
$E_\Omega^TG_\Omega E_\Omega$ for $0<\zeta\le\zeta_0$, since a real symmetric matrix is determined by its quadratic form. At $\zeta=0$ the quadratic form of
$2(-1)^{m+1}\mathcal G$ is $|\mathcal X^Kx^1-\mathcal Y^Kx^\gamma|^2/(F_1^KF_2^K)$, which equals $|\mathcal Xx^1-\mathcal Yx^\gamma|^2/(F_1F_2)$ by Lemma~\ref{lem:FR}. This
is \eqref{eq:farCF}. Since $\mathcal K$ is arbitrary and the limit does not depend on it, $G_\infty$ is defined, and real-analytic, on all
of $\Hb^\circ$.

\emph{Step 4: proof of \ref{it:fcreg}.} By \ref{it:fcCF} and Lemma~\ref{lem:famid}\textup{(iii)}, $\det G_\infty=\rho^2/4$ and
$(-1)^{m+1}G_\infty$ is positive definite for $\rho>0$. Let $U$ be the intersection with $\Hb^\circ\times[0,\zeta_0]$ of the neighbourhood
of Step 3; it is open in $\Hb^\circ\times[0,\zeta_0]$ and contains $\mathcal K\times[0,\zeta_0]$. On $U$, $\mathcal G$ and $\mathsf F_1\mathsf F_2>0$ are
real-analytic (Step 3), and $\mathcal G$ is invertible ($\det\mathcal G=\rho^2/4$). For $0<\zeta\le\zeta_0$, $E_{\Omega}^TG_{\Omega}E_{\Omega}$ solves
\eqref{eq:hm} (Corollary~\ref{cor:CinCtor}(i)), which is invariant under constant changes of frame. By Theorem~\ref{thm:CF}(iv) for the $N$
nodes, $\frac12\log\Pi_N$ solves \eqref{eq:nu} for $G_\Omega$ (in other representatives $\Pi_N$ changes by a constant factor); by
Remark~\ref{rem:nuinv}, and since \eqref{eq:nu} prescribes only $d\nu$, $\frac12\log(\zeta^{-2m-1}\Pi_N)=\frac12\log(\mathsf F_1\mathsf F_2)$ solves
\eqref{eq:nu} for $E_\Omega^TG_\Omega E_\Omega$, for the same $\zeta$. So Remark~\ref{rem:density}, with $\mathcal P=[0,\zeta_0]$ and the dense set $(0,\zeta_0]$, applied to
$\mathcal G$ and $\frac12\log(\mathsf F_1\mathsf F_2)$, shows that near $\mathcal K$, $G_\infty=\mathcal G|_{\zeta=0}$ solves \eqref{eq:hm} and
$\frac12\log(F_1^KF_2^K)=\frac12\log(\kappa_I^2F_1F_2)$, hence $\frac12\log(F_1F_2)$, solves \eqref{eq:nu} for $G_\infty$; as $\mathcal K$ is
arbitrary, on all of $\Hb^\circ$. Since two solutions differ by a constant (Remark~\ref{rem:nuinv}), this proves \eqref{eq:nuinfty}. The
Weyl--Papapetrou reduction (\S\ref{subsec:WP}) gives Ricci-flatness, and the formula for $g_\infty$ is \eqref{eq:farCF} in the dual
coordinates.
\end{proof}

The proof of Theorem~\ref{thm:mkn} uses a bordered form of the limit. Let $L$ be the symmetric $N\times N$ matrix
\begin{equation}\label{eq:Lgeneral}
L_{kl}=\frac{\alpha_k\beta_l+\beta_k\alpha_l}{1+y_ky_l}\ (k,l\le K),\qquad L_{kN}=\frac{\alpha_k\gamma+\beta_k}{y_k},\qquad L_{NN}=2\gamma ,
\end{equation}
and, for $\rho>0$,
\begin{equation}
\Xi_L=\frac{2\,(2/\rho)^{2m^2}\,\Delta(w)^2\prod_{k\le K}r_k}{\Upsilon^2\prod_{k,l\le K}(y_l-x_k)}>0 .
\end{equation}
\begin{lemma}[the bordered form of the limit]\label{lem:bordered}
Let $G_\infty$ be the far-node limit of Theorem~\ref{thm:farclosed}, and $y_k$ and $\Upsilon$ as in the setting above.
\begin{enumerate}[leftmargin=*,label=\textup{(\roman*)}]
\item\label{it:fcdetL} $\det L=\Xi_L\,F_1^KF_2^K$, in any representatives; in particular $\det L>0$ on $\Hb^\circ$.
\item\label{it:fcGinfty} In the frame $(e_\gamma,e_1)$, with $\hat w=(\gamma\alpha_k-\beta_k)_{k\le K}\oplus0$,
\begin{equation}\label{eq:Ginfty}
G_\infty=\frac1\Upsilon\begin{pmatrix}\dfrac{\rho^2}2\bigl(-\gamma-\tfrac12\,\hat w^TL^{-1}\hat w\bigr)&-\dfrac\rho2\,\hat w^TL^{-1}e_N\\[2mm]
-\dfrac\rho2\,\hat w^TL^{-1}e_N&-(L^{-1})_{NN}\end{pmatrix}.
\end{equation}
\end{enumerate}
\end{lemma}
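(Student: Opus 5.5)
Both parts follow from the $N$-node formulas at finite $\Omega$, taking $\zeta=1/\Omega\to0$ after the scaling $E_\Omega$, in the spirit of the proof of Theorem~\ref{thm:farclosed}. The starting point is the reduced form of Lemma~\ref{lem:Creduced}. For the $N$ nodes $w_1,\dots,w_K,\Omega$ with pairs $(\alpha_k,\beta_k)$ and $(1,\gamma)$, it gives $G_\Omega=\rho\hat G$ with
\[
\hat G_{ab}=\frac{1}{\Upsilon y_N}\Bigl(\hat\eta_{ab}-\tfrac12\hat c_a^TM^{-1}\hat c_b\Bigr),
\]
where $M$ is the $N\times N$ reduced matrix and $y_N=\mu_\Omega/\rho$. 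As $\Omega\to\infty$ one has $y_N\approx 2\Omega/\rho$, which is large.

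\textbf{Step 1.} I would rescale the last row and column of $M$ by $y_N$. Put $\tilde M=\mathsf D M\mathsf D$ with $\mathsf D=\diag(1,\dots,1,y_N)$. Then
\[
\tilde M_{kN}=\frac{(\alpha_k\gamma+\beta_k)\,y_N}{1+y_ky_N}\to\frac{\alpha_k\gamma+\beta_k}{y_k},
\qquad
\tilde M_{NN}=\frac{2\gamma y_N^2}{1+y_N^2}\to2\gamma,
\]
so $\tilde M\to L$. For \ref{it:fcdetL}, Step~1 of the proof of Theorem~\ref{thm:CF} gives $\det M=\Xi\Pi_N\cdot(2\rho^2)^N/\prod y_k^2$. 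I would multiply by $y_N^2$ and use $\Pi_N=\zeta^{2m+1}\mathsf F_1\mathsf F_2$ from Theorem~\ref{thm:farclosed}. Then I would collect the $\Omega$-dependence of $\Xi$: this means the factors $r_N\approx\Omega$, $y_N$, $\prod_k(y_N-x_k)(y_l-x_N)$, and $\Delta(w_1,\dots,w_K,\Omega)^2=\Delta(w)^2\Omega^{2K}(1+O(\zeta))$, together with $(2/\rho)^{2m(m+1)}$ against $(2/\rho)^{2m^2}$. These powers of $\Omega$ should cancel, leaving $\Xi_LF_1^KF_2^K$ in the limit. This bookkeeping is routine but fiddly, so I would check it on the simplest case, $m=1$ with one regular node. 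Positivity then follows from $F_1^KF_2^K=\kappa_I^2F_1F_2>0$ (Lemma~\ref{lem:FR}). Independence of the representatives follows because both sides scale by $\prod\lambda_k^2$.

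\textbf{Step 2.} For \ref{it:fcGinfty} I would express $\hat c_a^TM^{-1}\hat c_b=(\mathsf D\hat c_a)^T\tilde M^{-1}(\mathsf D\hat c_b)$ and pass to the frame $(e_\gamma,e_1)$ scaled by $E_\Omega$. In null components, $e_\gamma$ has border vector $-\beta+\gamma\alpha$, whose last entry is $-\gamma+\gamma=0$; this vector is $\hat w$ before scaling by $\mathsf D$. Its pairing with itself is $\hat\eta(e_\gamma,e_\gamma)=-\gamma$. The border vector of $e_1$ is $\alpha$, whose last entry is $1$, so after $\mathsf D$ its last entry is $y_N$. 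This gives the entries:
\[
\Omega\,\hat G(e_\gamma,e_\gamma)=\frac{\Omega}{\Upsilon y_N}\Bigl(-\gamma-\tfrac12\hat w^T\tilde M^{-1}\hat w\Bigr),
\]
and the factor $\Omega/y_N\to\rho/2$. Together with the prefactor $\rho$ this produces the $\rho^2/2$ in \eqref{eq:Ginfty}. For the $e_1$ entries the term $\tfrac12 y_N^2(\tilde M^{-1})_{NN}$ dominates the others, which are $O(y_N)$. After dividing by $\Omega y_N$ it gives $-(L^{-1})_{NN}/\Upsilon$, up to the factor $\rho/(2\Omega)\cdot y_N\to1$. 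The finite-node part of $\alpha$ contributes only lower order; I would verify this with the block inverse of $\tilde M$. The off-diagonal entry $-\tfrac\rho2\hat w^TL^{-1}e_N$ comes out in the same way.

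\textbf{Main obstacle.} The hardest part is tracking orders precisely in Step~2. Several terms in the $(e_1,e_1)$ and off-diagonal entries are individually of order $y_N$ or $y_N^2$, and I must show that only the displayed ones survive. This needs the expansion $\tilde M=L+O(\zeta)$, including the correction $y_N/(1+y_ky_N)=1/y_k-1/(y_k^2y_N)+\dots$, and the inverse $\tilde M^{-1}=L^{-1}+O(\zeta)$, which exists because $\det L>0$ by part \ref{it:fcdetL}. If the direct bookkeeping becomes messy, I would instead use the uniqueness of the analytic limit from Theorem~\ref{thm:farclosed}. Both \eqref{eq:Ginfty} and $\mathcal G|_{\zeta=0}$ are limits of the same $E_\Omega^TG_\Omega E_\Omega$, so it suffices to show that the right side of \eqref{eq:Ginfty} is the pointwise limit.
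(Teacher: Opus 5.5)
Your proposal is correct and follows essentially the paper's route. You rescale the last row and column of $M_N$ by $y_N$ so that $L(\tau_N)\to L$. You get (i) from $\det L=\lim y_N^2\det M_N$ via Lemma~\ref{lem:Creduced}, Theorem~\ref{thm:CF}(ii) and $\Pi_N=\zeta^{2m+1}\mathsf F_1\mathsf F_2$; the powers of $\zeta$ and of $2/\rho$ do cancel as you expect. You get (ii) by passing to the limit in $E_\Omega^TG_\Omega E_\Omega$.

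The order-tracking you flag as the main obstacle is handled in the paper by writing $\mathsf D_N\hat c_1=y_N\,v(\tau_N)$ with $v(\tau)=(\tau\alpha_k)_{k\le K}\oplus1$ and using analyticity in $\tau_N=1/y_N$, so no block inverse is needed. Your ``fallback'' is not a separate argument either: $G_\infty$ is by definition the limit of $E_\Omega^TG_\Omega E_\Omega$, so computing that pointwise limit is exactly what your Step 2 already does.
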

\begin{proof}
\ref{it:fcdetL} We use the notation of the proof of Theorem~\ref{thm:farclosed}. For finite $\Omega$ let $M_N$ be the reduced matrix \eqref{eq:Mproj} of the $N$ nodes, in the representatives
$(\alpha_k,\beta_k)$ and $(1,\gamma)$, let $\mathsf D_N=\diag(1,\dots,1,y_N)$ and $\tau_N=1/y_N$. The matrix $L(\tau_N)=\mathsf D_NM_N\mathsf D_N$ has the entries $M_{kl}$
($k,l\le K$), $(\alpha_k\gamma+\beta_k)/(y_k+\tau_N)$ and $2\gamma/(1+\tau_N^2)$. These are analytic in $\tau_N$ near $0$, and $L(0)=L$. Here
$\mu_N=(\Omega-z)+r_N=\zeta^{-1}\hat\mu_N$ with $\hat\mu_N=1-z\zeta+\hat r_N$, so $y_N=\hat\mu_N/(\rho\zeta)\to\infty$ as $\zeta\to0$, and
$\det L=\lim_{\zeta\to0}y_N^2\det M_N$. By Lemma~\ref{lem:Creduced} and Theorem~\ref{thm:CF}(ii) for the $N$ nodes, in the same representatives,
$(2\rho^2)^{-N}\prod_{k\le N}y_k^2\det M_N=D=\Xi\,\Pi_N$, i.e.
\begin{equation}
\det M_N=2\Bigl(\frac2\rho\Bigr)^{2m(m+1)}\Delta_N(w)^2\prod_{k\le N}\frac{r_k}{y_k}\cdot\frac{\Pi_N}{\prod_{k,l\le N}(y_l-x_k)},
\end{equation}
where $\Delta_N$ is the Vandermonde product of the $N$ nodes and $x_N=-1/y_N$. As $\zeta\to0$: $y_N\sim2/(\rho\zeta)$;
$\Delta_N(w)^2=\Delta(w)^2P(\Omega)^2\sim\Delta(w)^2\zeta^{-4m}$; $r_N/y_N\to\rho/2$; $\Pi_N=\zeta^{2m+1}\mathsf F_1\mathsf F_2$ with
$\mathsf F_1\mathsf F_2\to F_1^KF_2^K$; and
$\prod_{k,l\le N}(y_l-x_k)\sim\prod_{k,l\le K}(y_l-x_k)\cdot\Upsilon\cdot y_N^{2m+1}$, since $y_l-x_N\to y_l$, $y_N-x_k\sim y_N$ and
$y_N-x_N\sim y_N$. Inserting this into the display for $\det M_N$, the powers of $\zeta$ cancel, and, with $\prod_{k\le K}r_k/y_k=\prod_{k\le K}r_k/\Upsilon$,
\begin{equation}
y_N^2\det M_N\to2\Bigl(\frac2\rho\Bigr)^{2m(m+1)}\Bigl(\frac\rho2\Bigr)^{2m}\frac{\Delta(w)^2\prod_{k\le K}r_k}{\Upsilon^2\prod_{k,l\le K}(y_l-x_k)}\,F_1^KF_2^K=\Xi_LF_1^KF_2^K .
\end{equation}
This proves the formula, and $\det L>0$ on $\Hb^\circ$ by Lemma~\ref{lem:FR}, since $\Xi_L>0$, as $r_k,y_k>0$ and $y_l-x_k=y_l+1/y_k>0$.

\ref{it:fcGinfty} With $\hat\mu_N$ as in the proof of \ref{it:fcdetL}, $\tau_N=\rho\zeta/\hat\mu_N$ and $y_N/\Omega=\hat\mu_N/\rho$ are
real-analytic in $(\rho,z,\zeta)$ near $\zeta=0$, with the values $0$ and $2/\rho$ at $\zeta=0$. By Lemma~\ref{lem:Creduced},
$G_\Omega=\rho\hat G$ with $\hat G_{ab}=(\hat\eta_{ab}-\tfrac12\hat c_a^TM_N^{-1}\hat c_b)/(\Upsilon y_N)$, where $\hat c_0=\beta$ and $\hat c_1=\alpha$ are the
vectors of all $N$ pairs. In the frame $(e_\gamma,e_1)$ the border vectors are $\hat c_{e_\gamma}=-\hat c_0+\gamma\hat c_1=(\gamma\alpha_k-\beta_k)_{k\le N}$,
whose last entry vanishes, and $\hat c_1$; since $M_N^{-1}=\mathsf D_NL(\tau_N)^{-1}\mathsf D_N$, $\mathsf D_N\hat c_{e_\gamma}=\hat w$ and
$\mathsf D_N\hat c_1=y_N\,v(\tau_N)$ with $v(\tau)=(\tau\alpha_k)_{k\le K}\oplus1$, and since $E_\Omega=\diag(\Omega^{1/2},\Omega^{-1/2})$ in this frame,
\begin{equation}\label{eq:EGEmatrix}
E_\Omega^TG_\Omega E_\Omega=\frac\rho\Upsilon\,S\Bigl(\begin{pmatrix}-\gamma&-\tfrac12\tau_N\\-\tfrac12\tau_N&0\end{pmatrix}-\tfrac12\,C(\tau_N)^TL(\tau_N)^{-1}C(\tau_N)\Bigr)S,
\end{equation}
where $S=\diag\bigl((\Omega/y_N)^{1/2},(y_N/\Omega)^{1/2}\bigr)$ and $C(\tau)$ is the $N\times2$ matrix with the columns $\hat w$ and $v(\tau)$. By \ref{it:fcdetL}, $\det L(0)>0$, so the right side is
real-analytic in $(\rho,z,\zeta)$ near $\zeta=0$, and at $\zeta=0$, where $\tau_N=0$, $S=\diag((\rho/2)^{1/2},(2/\rho)^{1/2})$ and $v(0)=e_N$, it is the right side of
\eqref{eq:Ginfty}. By Theorem~\ref{thm:farclosed}\ref{it:fcCF} the left side tends to $G_\infty$.
\end{proof}

\subsection{The limit is a soliton on the flat seed}
\begin{theorem}[the limit is a soliton on $\R^3\times S^1$]\label{thm:mkn}
Let $G_\infty$ be the far-node limit \eqref{eq:Ginfty}. For $k\le K$ put
\begin{equation} \beta^-_k=\beta_k-\gamma\alpha_k,\qquad \beta^+_k=\beta_k+\gamma\alpha_k,\qquad b_k=\frac{\beta^+_k}{\beta^-_k}=\frac{c_k+\gamma}{c_k-\gamma}\in\RP^1 . \end{equation}
Let $G_{\rm BZ}$ be the soliton \eqref{eq:BZAF} on $\R^3\times S^1$ with poles at $w_1,\dots,w_K$ and BZ vectors
$m^{(k)}=(\beta^-_k,\beta^+_k/\mu_k)$, i.e.\ BZ parameters $b_k$. (Formula \eqref{eq:BZAF}, with $\Gamma$ as in \eqref{eq:dressed}, does not change when $m^{(k)}$ is multiplied
by a constant, so $b_k=\infty$ is allowed.) Then
\begin{equation} G_\infty=-\frac1{2\gamma}\,\mathsf J_\gamma^TG_{\rm BZ}\,\mathsf J_\gamma,\qquad \mathsf J_\gamma=\begin{pmatrix}0&1\\-\gamma&0\end{pmatrix}, \end{equation}
in the frame $(e_\gamma,e_1)$ on the left and the frame of \eqref{eq:BZAF} on the right. So
$2(-1)^{m+1}G_\infty=(-1)^mB^TG_{\rm BZ}B$ with $B=\gamma^{-1/2}\mathsf J_\gamma$ and $\det B=1$. A heavy node has $b_k=1$ and a light node $b_k=-1$.
\end{theorem}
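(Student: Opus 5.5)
The identity will be proved by comparing two explicit bordered-matrix expressions: the bordered form \eqref{eq:Ginfty} of $G_\infty$ from Lemma~\ref{lem:bordered}, and the dressing formula \eqref{eq:BZAF}. The comparison is a Schur complement computation. Both sides are rational in the representatives and analytic in the parameters, so it suffices to treat the case where all $\beta^-_k\ne0$ and all $\mu_k$ are distinct. This is a dense set, and the general case, including $b_k=\infty$, follows by continuity in the homogeneous form of $\Gamma$.

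\textbf{Rewriting \eqref{eq:BZAF}.} With $G_0=\diag(1,\rho^2)$ and $m^{(k)}=(\beta^-_k,\beta^+_k/\mu_k)$, we have $G_0m^{(k)T}/\mu_k=(\beta^-_k/\mu_k,\rho^2\beta^+_k/\mu_k^2)^T$. Using $\mu_k=\rho y_k$, the matrix $\Gamma$ becomes $\Gamma_{kl}=(\beta^-_k\beta^-_l+\beta^+_k\beta^+_l/(y_ky_l))/(\rho^2(1+y_ky_l))$. Substituting $\beta^\mp=\beta\mp\gamma\alpha$ gives $\beta^-_k\beta^-_l+\beta^+_k\beta^+_l=2(\beta_k\beta_l+\gamma^2\alpha_k\alpha_l)$ and $\beta^+_k\beta^+_l-\beta^-_k\beta^-_l=2\gamma(\alpha_k\beta_l+\beta_k\alpha_l)$. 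So $y_ky_l\Gamma_{kl}\rho^2$ splits into a part proportional to $(\beta^+_k\beta^+_l)/(1)$ minus a multiple of $L_{kl}$. Concretely, I will write
\[
\Gamma=\rho^{-2}\mathsf D_y^{-1}\bigl(\beta^+\beta^{+T}-\gamma\,L_K\bigr)\mathsf D_y^{-1}
\]
up to a rank-one correction, where $L_K$ is the $K\times K$ block of \eqref{eq:Lgeneral}. The identity $\frac{1}{y_ky_l}\cdot\frac{1+\dots}{1+y_ky_l}$ must be tracked carefully.

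\textbf{Matching the $L$ matrix.} The rank-one term $\beta^+\beta^{+T}$ with $\beta^+_k=\alpha_k\gamma+\beta_k$ is exactly $y_k L_{kN}$. The border row and column of $L$, together with $L_{NN}=2\gamma$, therefore encode this rank-one correction. By the Schur complement formula, inverting $\Gamma$ against the border vectors of \eqref{eq:BZAF} becomes inversion of $L$ against $\hat w=-\beta^-\oplus0$ and $e_N$. The four entries of $\sum_{k,l}G_0m^{(k)T}(\Gamma^{-1})_{kl}m^{(l)}G_0/(\mu_k\mu_l)$ will then be expressed through $\hat w^TL^{-1}\hat w$, $\hat w^TL^{-1}e_N$ and $(L^{-1})_{NN}$. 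Sylvester's identity, as in Lemma~\ref{lem:Creduced}, handles the Schur complements.

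\textbf{Matching the prefactor.} The normalising factor $\prod_k\mu_k/\rho^{2m}=\rho^{-m}\Upsilon$ in \eqref{eq:BZAF} should combine with the $1/\Upsilon$ and the powers of $\rho$ in \eqref{eq:Ginfty}. I would pin down the overall scalar by comparing determinants: $\det G_\infty=\rho^2/4$, $\det G_{\rm BZ}=\rho^2$ and $\det\mathsf J_\gamma=\gamma$ give $|{-1/(2\gamma)}|^2\gamma^2=1/4$, which is consistent. I would fix the sign at a flat point or at $K=2$ (Euclidean Kerr). The conjugation by $\mathsf J_\gamma$ swaps the two frame vectors and rescales, which accounts for $G_{\rm BZ}$ having its $1$ entry where $G_\infty$ has $-(L^{-1})_{NN}/\Upsilon$.

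\textbf{Final claims and main obstacle.} For a heavy node, $c_k=\infty$ means $\alpha_k=0$, so $\beta^+_k=\beta^-_k$ and $b_k=1$. For a light node, $\beta_k=0$ and $b_k=-1$. The main obstacle is the bookkeeping in the rank-one/Schur step, specifically showing that the extra row and column $N$ of $L$, which in Lemma~\ref{lem:bordered} came from the far node, reproduces exactly the rank-one correction $\beta^+\beta^{+T}$. If this becomes messy, the fallback is to verify the identity in the polynomial quantities via Lemma~\ref{lem:famid}: compute both sides' values of $\tau$ on the axis and at one interior point. This works because both sides are harmonic maps with the same rod data, so uniqueness (Lemma~\ref{lem:uniq}) would close the argument, provided the rod lines of \eqref{eq:BZAF} are computed.
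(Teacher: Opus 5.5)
Your first step is right, and it is exact. Since $\beta^+=\mathsf D_y\ell$ with $\ell_k=L_{kN}$, and $\beta^+_k\beta^+_l-\beta^-_k\beta^-_l=2\gamma(\alpha_k\beta_l+\beta_k\alpha_l)$, one gets $\rho^2\Gamma=\ell\ell^T-2\gamma L_K=-2\gamma L'$. Here $L'=L_K-\ell\ell^T/(2\gamma)$ is the Schur complement of the corner $L_{NN}=2\gamma$. Your displayed formula is off: it puts $\mathsf D_y^{-1}$ around $L_K$ and uses the coefficient $\gamma$, and there is no further rank-one correction. So the step you call the main obstacle is a one-line computation.

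The gap is in the next step. The border vectors of \eqref{eq:BZAF} are $G_0m^{(k)T}/\mu_k=(\beta^-_k/(\rho y_k),\,\beta^+_k/y_k^2)$. So the entries of $G_{\rm BZ}$ are quadratic forms of $L'^{-1}$ in the vectors $\mathsf D_y^{-1}\beta^-$ and $\mathsf D_y^{-1}\ell$. The entries of \eqref{eq:Ginfty} are instead $\hat w^TL^{-1}\hat w=\beta^{-T}L'^{-1}\beta^-$, $\hat w^TL^{-1}e_N=\beta^{-T}L'^{-1}\ell/(2\gamma)$ and $(L^{-1})_{NN}=\frac1{2\gamma}+\ell^TL'^{-1}\ell/(4\gamma^2)$. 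These use the same inverse, on $\beta^-$ and $\ell$ without the factor $\mathsf D_y^{-1}$. Block inversion and Sylvester's identity do not turn one family of quadratic forms into the other. So the entries of the dressing are not expressible through $\hat w^TL^{-1}\hat w$, $\hat w^TL^{-1}e_N$ and $(L^{-1})_{NN}$ as you claim. The exchange of the two frame vectors is not a relabelling.

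The missing idea is a displacement identity. Put $p=\mathsf D_y\beta^-$, $q=\beta^+$, $U=(p\ q)$ and $\mathsf A=-2\gamma\mathsf D_yL'\mathsf D_y$, so that $\mathsf A_{kl}=(p_kp_l+q_kq_l)/(1+y_ky_l)$. Then $\mathsf A+\mathsf D_y\mathsf A\mathsf D_y=UU^T$, the analogue of the relation $M-(\alpha_k\beta_l+\beta_k\alpha_l)_{kl}=-\mathsf D_yM\mathsf D_y$ in the proof of Lemma~\ref{lem:Creduced}. Here it has to be used for inverses, not only for determinants. In these terms:
\begin{itemize}
\item $G_\infty$ is $-\frac1{2\Upsilon}$ times $\mathsf F=\mathrm{Id}-U^T\mathsf A^{-1}U$, scaled on both sides by $\diag(\rho\gamma^{1/2},\gamma^{-1/2})$.
\item $G_{\rm BZ}$ is $\Upsilon$ times $\mathsf H=\mathrm{Id}-U^T(\mathsf D_y\mathsf A\mathsf D_y)^{-1}U=\mathrm{Id}-U^T(UU^T-\mathsf A)^{-1}U$, scaled on both sides by $\diag(1,\rho)$.
\item Woodbury's identity gives $\mathsf H=\mathsf F^{-1}$.
\item Sylvester's identity with the displacement relation gives $\det\mathsf F=\Upsilon^2$, since $K$ is even.
\item $\mathsf F^{-1}=\mathsf J_1^T\mathsf F\mathsf J_1/\det\mathsf F$ for a symmetric $2\times2$ matrix, which produces the conjugation by $\mathsf J_\gamma$ and the scalar $-1/(2\gamma)$.
\end{itemize}
The swap you noticed is this inversion, and it carries the content of the theorem. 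Without it, fixing the constant by determinants and the sign at one data point proves nothing, because proportionality by a constant is exactly what has not been shown.

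Your fallback through Lemma~\ref{lem:uniq} does not close the gap either. It would need local tameness and the model at infinity for \eqref{eq:BZAF} itself, in its own frame; values at an interior point play no role in that lemma. Nothing in the paper provides these independently of this theorem. It would also give the identity only for admissible data, whereas the statement is an identity of rational functions valid for all data.
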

\begin{proof}
Both sides are rational functions of $(\rho,y,\alpha,\beta,\gamma)$, and the proof is an identity between them. It uses only that $\det L$,
and hence $\det \mathsf A$, is not the zero function, which holds by Lemma~\ref{lem:bordered}\ref{it:fcdetL}; so the identity holds for all data,
admissible or not, and for all $y_k$ at which both sides are defined; here $G_\infty$ denotes the rational expression on the right of \eqref{eq:Ginfty}, and the identity is one of rational functions. Write $\mathsf D_y=\diag(y_k)$ and
$C_{kl}=1/(1+y_ky_l)$, let $\circ$ be the entrywise product, and put
\begin{equation} p=(\beta^-_ky_k)_k,\qquad q=(\beta^+_k)_k,\qquad U=(p\ \ q),\qquad \mathsf A=(pp^T+qq^T)\circ C . \end{equation}
\begin{itemize}[leftmargin=*]
\item \emph{The far-node side.} In \eqref{eq:Lgeneral}, $L_{kN}=\beta^+_k/y_k$, and $\hat w=(-\beta^-)\oplus0$ with $\beta^-=(\beta^-_k)_k$. Since
$\beta^+_k\beta^+_l-\beta^-_k\beta^-_l=2\gamma(\alpha_k\beta_l+\beta_k\alpha_l)$, the Schur complement \eqref{eq:schur} of the corner $2\gamma$ is
\begin{equation} L'=(L_{kl})_{k,l\le K}-\frac{\ell\ell^T}{2\gamma}=-\frac1{2\gamma}\,\mathsf D_y^{-1}\mathsf A\mathsf D_y^{-1},\qquad \ell=(L_{kN})_{k\le K}. \end{equation}
So $\det L=2\gamma\,(2\gamma)^{-K}\Upsilon^{-2}\det \mathsf A$, and $\det \mathsf A\ne0$. The block inverse of $L$ has the blocks $L'^{-1}$,
$-L'^{-1}\ell/(2\gamma)$ and $\frac1{2\gamma}+\ell^TL'^{-1}\ell/(4\gamma^2)$. Inserting them into \eqref{eq:Ginfty}, with $L'^{-1}=-2\gamma \mathsf D_y\mathsf A^{-1}\mathsf D_y$,
$\ell=\mathsf D_y^{-1}q$ and $\mathsf D_y\beta^-=p$, gives
\begin{equation} G_\infty=-\frac1{2\Upsilon}\,\mathsf E\mathsf F\mathsf E,\qquad \mathsf F=\mathrm{Id}-U^T\mathsf A^{-1}U,\qquad \mathsf E=\diag\bigl(\rho\gamma^{1/2},\gamma^{-1/2}\bigr). \end{equation}
\item \emph{The soliton side.} Put $\mu_k=\rho y_k$ and $G_0=\diag(1,\rho^2)$. Then
\begin{equation} m^{(k)}G_0m^{(l)\mathsf T}=\frac{\beta^-_k\beta^-_ly_ky_l+\beta^+_k\beta^+_l}{y_ky_l},\qquad
\frac{G_0m^{(k)\mathsf T}}{\mu_k}=\Bigl(\frac{\beta^-_k}{\rho y_k},\frac{\beta^+_k}{y_k^2}\Bigr), \end{equation}
so $\Gamma=\rho^{-2}\mathsf D_y^{-1}\mathsf A\mathsf D_y^{-1}$. Inserting this into \eqref{eq:BZAF} gives
\begin{equation} G_{\rm BZ}=\Upsilon\,R\mathsf HR,\qquad \mathsf H=\mathrm{Id}-U^T\mathsf D_y^{-1}\mathsf A^{-1}\mathsf D_y^{-1}U,\qquad R=\diag(1,\rho). \end{equation}
\item \emph{Displacement.} Since $(1+y_ky_l)C_{kl}=1$,
\begin{equation} \mathsf A+\mathsf D_y\mathsf A\mathsf D_y=UU^T . \end{equation}
So $\mathsf D_y^{-1}\mathsf A^{-1}\mathsf D_y^{-1}=(UU^T-\mathsf A)^{-1}$, and $\det\mathsf F=\det(\mathsf A-UU^T)/\det \mathsf A=\det(-\mathsf D_y\mathsf A\mathsf D_y)/\det \mathsf A=\Upsilon^2$, since
$\det(-\mathsf D_y\mathsf A\mathsf D_y)=(-1)^K(\det\mathsf D_y)^2\det \mathsf A$ and $K$ is even. By Woodbury's identity \eqref{eq:woodbury}, $(\mathsf A-UU^T)^{-1}=\mathsf A^{-1}+\mathsf A^{-1}U\mathsf F^{-1}U^T\mathsf A^{-1}$. With $U^T\mathsf A^{-1}U=\mathrm{Id}-\mathsf F$ this gives
\begin{equation} \mathsf H=\mathrm{Id}+U^T(\mathsf A-UU^T)^{-1}U=\mathrm{Id}+(\mathrm{Id}-\mathsf F)+(\mathrm{Id}-\mathsf F)\mathsf F^{-1}(\mathrm{Id}-\mathsf F)=\mathsf F^{-1}. \end{equation}
\item \emph{Conclusion.} For a symmetric $2\times2$ matrix, $\mathsf F^{-1}=\mathsf J_1^T\mathsf F\mathsf J_1/\det\mathsf F$, with $\mathsf J_1$ the matrix $\mathsf J_\gamma$ at $\gamma=1$.
Hence $G_{\rm BZ}=\Upsilon^{-1}R\mathsf J_1^T\mathsf F\mathsf J_1R=-2\,\mathsf T^TG_\infty \mathsf T$, where
\begin{equation}
\mathsf T=\mathsf E^{-1}\mathsf J_1R=\begin{pmatrix}0&\gamma^{-1/2}\\-\gamma^{1/2}&0\end{pmatrix}=-\gamma^{1/2}\mathsf J_\gamma^{-1}.
\end{equation}
This is the claim.
\end{itemize}
For $c_k=\infty$, $(\alpha_k,\beta_k)=(0,1)$ gives $b_k=1$; for
$c_k=0$, $(1,0)$ gives $b_k=-1$.
\end{proof}

\begin{remark}[pole exchange]\label{rem:readd}
Write $\mu_k=\rho y_k$. At the point $(\rho,0)$, for the nodes $w_k=\frac\rho2(y_k-1/y_k)$, we have $r_k=\frac\rho2(y_k+1/y_k)$ and
$\varpi_k=(y_k-i)/(y_k+i)$, so the closed form of Theorem~\ref{thm:farclosed}, with $\bar\varpi_k$ written as $1/\varpi_k$, is a
rational function of $(\rho,y,\alpha,\beta,\gamma)$. The substitution $(y_k,\beta_k)\mapsto(-1/y_k,-\beta_k)$, for one $k$, fixes it: it
fixes $w_k$ and sends $r_k\mapsto-r_k$ and $\varpi_k\mapsto-\varpi_k$, so it fixes every factor $-\beta_kq_k/P'(w_k)$ in \eqref{eq:farproj},
hence $\mathbf l^K$, $F_1^K$, $F_2^K$ and $\operatorname{Re}(\mathbf l^K\mathbf l^{K*})/(F_1^KF_2^K)$, which is $2(-1)^{m+1}G_\infty$ by
Lemma~\ref{lem:FR} and \eqref{eq:farCF}. By
Theorem~\ref{thm:mkn}, this substitution is $(\mu_k,b_k)\mapsto(\mub_k,1/b_k)$; so replacing $\mu_k$ by $\mub_k$ and $b_k$ by $1/b_k$ in
\eqref{eq:BZAF}, in the normalising factor as well, does not change it. With Lemmas~\ref{lem:collapse} and~\ref{lem:scalar} it follows that
$(-1)^m$ times \eqref{eq:BZAF} is obtained from the AF seed by removing the anti-soliton poles $\mub_k$ and re-adding them with the
parameters $1/b_k$, so without the normalisation of Definition~\ref{def:soliton}(d). This is not used in the proofs.
\end{remark}

\subsection{The axis}
\begin{proposition}[the axis of the far-node limit]\label{prop:faraxis}
Let $G_\infty$ be the far-node limit, with the conformal factor $e^{2\nu}=C_\nu F_1F_2$ of \eqref{eq:nuinfty}, let $(\omega,r,\gamma,\mathfrak s)$
be its data \eqref{eq:farmeasure}, with the shifted Hankel family $V_j$ and the rods $I_0,\dots,I_n$ of the nodes of $\omega$, and put
$\Phi=\rho^{-1}\cdot2(-1)^{m+1}G_\infty$.
\begin{enumerate}[leftmargin=*,label=\textup{(\roman*)}]
\item\label{it:fa3} Proposition~\ref{prop:axis} applies to $2(-1)^{m+1}G_\infty$ in the frame $(e_\gamma,e_1)$, with the constant $C_\nu$. In
particular, the turning points are the nodes of $\omega$; every other point of the axis, phantom nodes included, is an interior point of
a rod; the rod line of $I_j$ is spanned by
\begin{equation}\label{eq:farrodvec}
H_r(\omega_{(j)})\,e_\gamma+\bigl(H^{(\gamma)}_{r+1}(\omega_{(j)})+\gamma\,\mathfrak s\,H_r(\omega_{(j)})\bigr)\,e_1 ,
\end{equation}
the vector with the components $S_{\mathfrak s}V_j$; the normalised rod vectors are $\pm\sqrt{C_\nu}\,S_{\mathfrak s}V_j$; and $\Phi$ is locally
tamed by these turning points and rod lines.
\item\label{it:fa2} \emph{Moving a phantom.} Moving a phantom node by $\delta$, to any point that is not another node, with $\omega$ and
$\gamma$ fixed, changes $2(-1)^{m+1}G_\infty$ by a constant change of frame: if it is the Gram matrix in the frame $(e_\gamma,e_1)$ before the move, it is the Gram matrix in the frame $(e_\gamma\mp\gamma\delta\,e_1,e_1)$ after it, of
determinant $1$ ($-$ for a light, $+$ for a heavy node), and leaves $F_1F_2$, hence $e^{2\nu}/C_\nu$, unchanged.
\end{enumerate}
\end{proposition}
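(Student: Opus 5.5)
Part \ref{it:fa3} is a direct consequence of what is already proved. By Theorem~\ref{thm:farclosed}\ref{it:fcCF}, in the frame $(e_\gamma,e_1)$ the matrix $2(-1)^{m+1}G_\infty$ is the family matrix $\tilde G_{\omega,r,\gamma,\mathfrak s}$ of Definition~\ref{def:family}. By Theorem~\ref{thm:farclosed}\ref{it:fcnu}, the conformal factor is $e^{2\nu}=C_\nu F_1F_2$. Since $(\omega,r,\gamma,\mathfrak s)$ are family data (Setting), Proposition~\ref{prop:axis} applies verbatim, with the following consequences.
\begin{itemize}
\item Part~\ref{it:axrod} gives analyticity in $(\rho^2,z)$ near every point of the axis other than a node of $\omega$, in particular near the phantom nodes, since the family matrix involves only the regular nodes. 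So phantom nodes are interior points of rods.
\item Part~\ref{it:axrod} also shows that the kernel on $I_j$ is spanned by $S_{\mathfrak s}V_j$. Written in the frame, this vector is \eqref{eq:farrodvec}, and the normalised rod vectors are $\pm C_\nu^{1/2}S_{\mathfrak s}V_j$.
\item Part~\ref{it:axtp} shows that consecutive rod lines differ at each node of $\omega$, so these nodes are exactly the turning points.
\item Part~\ref{it:axtame} gives local tameness.
\end{itemize}

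For part \ref{it:fa2}, I work entirely with the closed form, not with the limit. Moving a phantom changes the data $(\omega,r,\gamma,\mathfrak s)$ only through $\mathfrak s$, after $c$ at the regular nodes has been recomputed so that $\omega$ stays fixed. This recomputation is the same as in the converse of Theorem~\ref{thm:family}(b): $c_k=-a_kP'(w_k)$ with $a_k=\omega_k\prod_{k_0\in I}(w_k-w_{k_0})^{-2}$.

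The numbers $r=m-h$ and $\gamma$ are unchanged, and admissibility is preserved, because the weights at the regular nodes stay positive and the counts $h,l$ are unchanged. Now look at $\mathfrak s=\sum_{k\le K}w_k-2\sum_{k_0\in I}w_{k_0}$. Moving a light node by $\delta$ changes $\mathfrak s$ by $+\delta$, and moving a heavy node by $\delta$ changes it by $\delta-2\delta=-\delta$.

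The last sentence of Theorem~\ref{thm:family}(b) then applies. If the Gram matrix is $\tilde G_{\omega,r,\gamma,\mathfrak s}$ in the frame $(f_0,f_1)=(e_\gamma,e_1)$, then $\tilde G_{\omega,r,\gamma,\mathfrak s\pm\delta}$ is the Gram matrix in the frame $(e_\gamma\mp\gamma\delta e_1,e_1)$, which has determinant $1$. This is the $-$ sign for a light node and the $+$ sign for a heavy node, as claimed.

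Finally, $F_1$ and $F_2$ in \eqref{eq:famXY} depend only on $\omega$, $r$ and $\gamma$, not on $\mathfrak s$. So $F_1F_2$, and hence $e^{2\nu}/C_\nu$, is unchanged.

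The one point that needs care is identifying which frame is "before" and which is "after". Both limits are expressed in the same abstract frame $(e_\gamma,e_1)$ built from the null basis, and the statement compares Gram matrices. So I must read Theorem~\ref{thm:family}(b) with $\delta$ replaced by $\pm\delta$ and check the sign convention once against the definition of $\mathcal Y$. Nothing else is an obstacle. I also note that no ordering issue arises when the moved phantom passes regular nodes: $\omega$, $r$ and $\gamma$ are unchanged, and the sign $\operatorname{sgn}P'$ is absorbed in the recomputed $c_k$.
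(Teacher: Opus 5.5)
Your proposal is correct and follows the paper's proof essentially step by step. Part (i) identifies $2(-1)^{m+1}G_\infty$ with the family matrix through Theorem~\ref{thm:farclosed} and then applies Proposition~\ref{prop:axis}; part (ii) recomputes the regular parameters so that $\omega$ stays fixed, notes that $\mathfrak s$ changes by $+\delta$ for a light and $-\delta$ for a heavy phantom, and invokes the last statement of Theorem~\ref{thm:family}(b), with the same sign bookkeeping as the paper's $\epsilon=\pm1$.
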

\begin{proof}
(i) By Theorem~\ref{thm:farclosed}\ref{it:fcCF}, in the frame $(e_\gamma,e_1)$, $2(-1)^{m+1}G_\infty$ is the restriction to $\Hb^\circ$ of the
family matrix $\tilde G$ of $(\omega,r,\gamma,\mathfrak s)$, which is real-analytic on $\mathcal U_\omega$ (Lemma~\ref{lem:famid}\textup{(i)}), and
$e^{2\nu}=C_\nu F_1F_2$ by Theorem~\ref{thm:farclosed}\ref{it:fcnu}. So Proposition~\ref{prop:axis} applies. By its items \ref{it:axrod} and \ref{it:axtp}, the nodes of
$\omega$ are the turning points and every other axis point is an interior point of a rod; a phantom node is not a node of $\omega$.
The vector \eqref{eq:farrodvec} has the components $S_{\mathfrak s}V_j$. The rest is Proposition~\ref{prop:axis}\ref{it:axrod},\ref{it:axtame}.

(ii) A phantom node is not a node of $\omega$, and it enters $2(-1)^{m+1}G_\infty$ and $F_1F_2$ only through $\mathfrak s$
(Theorem~\ref{thm:farclosed}\ref{it:fcCF}, \eqref{eq:famXY}). Move the phantom node
$w_k$ to $w_k+\delta$, a point that is not another node, and re-index the $K$ nodes in increasing order. Give each regular node $w_l$ the new
parameter $c_l=-P'(w_l)\omega_l/\prod_{k_0\in I}(w_l-w_{k_0})^2$, computed with the moved nodes, where $\omega_l>0$ is the old weight of $\omega$ at
$w_l$. Then the new measure \eqref{eq:farmeasure} is $\omega$, the numbers of heavy and of light nodes, and $r$, are unchanged, and the new data are admissible, since
$-c_l/P'(w_l)>0$ for the regular nodes. The shift becomes $\mathfrak s+\epsilon\delta$ with $\epsilon=+1$ for a light and $\epsilon=-1$ for a heavy node. So $F_1$ and $F_2$ are
unchanged \eqref{eq:famXY}, the new $2(-1)^{m+1}G_\infty$ is $\tilde G_{\omega,r,\gamma,\mathfrak s+\epsilon\delta}$ in the frame $(e_\gamma,e_1)$
(Theorem~\ref{thm:farclosed}\ref{it:fcCF}), and by the last statement of Theorem~\ref{thm:family}(b), with $\delta$ replaced by
$\epsilon\delta$ and $(f_0,f_1)=(e_\gamma,e_1)$, this is the old $2(-1)^{m+1}G_\infty$ in the frame $(e_\gamma-\gamma\epsilon\delta e_1,e_1)$.
\end{proof}

\subsection{Infinity}
Write $z+i\rho=Re^{i\theta}$ with $R=\sqrt{\rho^2+z^2}$ and $\theta\in[0,\pi]$, and put $\varepsilon=1/R$. Let
\begin{equation}\label{eq:alphadef} D_\gamma=H_{r+1}(\omega)-\gamma^2H_{r-1}(\omega),\qquad \alpha(\omega,\gamma)=\frac{2D_\gamma}{\gamma H_r(\omega)}=\frac{2\bigl(H_{r+1}(\omega)-\gamma^2H_{r-1}(\omega)\bigr)}{\gamma H_r(\omega)}, \end{equation}
with $H_{-1}=0$ and $H_{n+1}=0$. For $\varepsilon_0>0$, a function of $(\rho,z)\in\Hb^\circ$ with $R>1/\varepsilon_0$ is called
\emph{real-analytic in $(\varepsilon,\theta)$} if it is the restriction to $0<\varepsilon<\varepsilon_0$, $0<\theta<\pi$ of a real-analytic function on
$\{|\varepsilon|<\varepsilon_0\}\times\R$ that is $2\pi$-periodic and even in $\theta$; negative $\varepsilon$ only serve to have an open set around $\varepsilon=0$.
\begin{proposition}[the end of the far-node limit]\label{prop:farinfty}
Let $f=G_\infty(e_\gamma,e_1)/G_\infty(e_1,e_1)$. There is $\varepsilon_0>0$ such that:
\begin{enumerate}[leftmargin=*,label=\textup{(\roman*)}]
\item $G_\infty(e_1,e_1)$ is real-analytic in $(\varepsilon,\theta)$, and $(-1)^{m+1}G_\infty(e_1,e_1)\to\frac1{2\gamma}$ as $\varepsilon\to0$.
\item $f$ is real-analytic in $(\varepsilon,\theta)$, and $f=-2\mathfrak N\cos\theta+C_f+O(\varepsilon)$, uniformly in $\theta$, with a constant $C_f$ and the
\emph{total NUT charge}
\begin{equation}\label{eq:NUTmu}
\mathfrak N=-\frac{\gamma\,\alpha(\omega,\gamma)}4=\frac{\gamma^2H_{r-1}(\omega)-H_{r+1}(\omega)}{2H_r(\omega)} .
\end{equation}
The real-analytic extension of $f$ to the axis equals $-2\mathfrak N+C_f$ on the upper outer rod and $2\mathfrak N+C_f$ on the lower one, for all $R$. Moreover
$C_f=2\gamma\Sigma/H_r(\omega)-\gamma\mathfrak s$, where $\Sigma=\Sigma_r(\omega)$ \eqref{eq:Sigmas}.
\item $e^{2\nu}=C_\nu F_1F_2$ is real-analytic in $(\varepsilon,\theta)$ and tends to the positive constant $C_\nu\gamma H_r(\omega)^2$ as $\varepsilon\to0$.
\end{enumerate}
\end{proposition}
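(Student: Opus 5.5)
The plan is to work entirely with the closed form of Theorem~\ref{thm:farclosed}, namely $2(-1)^{m+1}G_\infty=\operatorname{Re}(\mathbf l\,\mathbf l^*)/(F_1F_2)$ with $\mathbf l=(-\mathcal Y,\mathcal X)$ in the frame $(e_\gamma,e_1)$. This gives
\[
2(-1)^{m+1}G_\infty(e_1,e_1)=\frac{|\mathcal X|^2}{F_1F_2},\qquad f=-\frac{\operatorname{Re}(\mathcal Y\overline{\mathcal X})}{|\mathcal X|^2},
\]
so everything reduces to expanding $\mathcal X,\mathcal Y,F_1,F_2$ at large $R$. As in the proof of Proposition~\ref{prop:Cinfty}, I write
\[
\varpi_k=-e^{i\theta}\,\frac{1-\varepsilon w_ke^{-i\theta}}{|1-\varepsilon w_ke^{-i\theta}|},\qquad r_k=R\,|1-\varepsilon w_ke^{-i\theta}|.
\]
Here $|1-\varepsilon w e^{-i\theta}|^2=1-2\varepsilon w\cos\theta+\varepsilon^2w^2$ is real-analytic in $(\varepsilon,\theta)$, even and $2\pi$-periodic in $\theta$, and positive for $|\varepsilon|\bar w<1$. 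Hence, after the obvious prefactors $(-e^{i\theta})^s$ and powers of $R$ are pulled out, every Heine sum is a real-analytic function of $(\varepsilon,\theta)$. The factor $e^{i\theta}$ must be handled carefully: it appears with power $r$ in $\mathcal X$ and $r+1$ in $H^{(\gamma)}_{r+1}(\varpi\omega)$. The term $\gamma(\mathfrak s-i\rho)\mathcal X$ in $\mathcal Y$ carries $\rho=R\sin\theta$, which is of order $R$. This is exactly what produces the $\cos\theta$ term in $f$.

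For (i) and (iii), I expand $F_1=H_r(\mathbf r\omega)+\gamma H_{r-1}(\mathbf r\omega)$ and $F_2=H_{r+1}(\omega/\mathbf r)+\gamma H_r(\omega/\mathbf r)$ termwise. The leading terms are $R^rH_r(\omega)(1+O(\varepsilon))$ and $\gamma R^{-r}H_r(\omega)(1+O(\varepsilon))$, because $H_{r+1}(\omega/\mathbf r)=O(R^{-r-1})$. So $F_1F_2\to\gamma H_r(\omega)^2$, which gives (iii). Moreover $R^{-r}F_1$ and $R^rF_2$ are real-analytic in $(\varepsilon,\theta)$ with positive value at $\varepsilon=0$, which gives the analyticity of $e^{2\nu}$. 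Similarly $|\mathcal X|^2=|H_r(\varpi\omega)|^2=H_r(\omega)^2(1+O(\varepsilon))$, analytic because the phase $e^{ir\theta}$ cancels in the modulus. This yields $(-1)^{m+1}G_\infty(e_1,e_1)\to 1/(2\gamma)$. Shrinking $\varepsilon_0$ keeps the denominators $|\mathcal X|^2$ and $F_1F_2$ away from zero, and this gives the analyticity statements of (i) and (ii).

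For (ii), I compute $\mathcal Y\overline{\mathcal X}$. Write $H_s(\varpi\omega)=(-e^{i\theta})^s\sum_S P_S(\omega)\prod_{k\in S}u_k$, where $u_k=(1-\varepsilon w_ke^{-i\theta})/|\cdot|=1-i\varepsilon w_k\sin\theta+O(\varepsilon^2)$. Then
\[
H_{r+1}(\varpi\omega)\overline{H_r(\varpi\omega)}=-e^{i\theta}\bigl(H_{r+1}H_r+O(\varepsilon)\bigr),
\]
and this is the dangerous step. The $-2\gamma\Sigma_r$ and $-\gamma^2H_{r-1}$ terms contribute $e^{i\theta}$ times phases of relative order $O(\varepsilon)$. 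The term $-i\gamma\rho|\mathcal X|^2=-i\gamma R\sin\theta\,|\mathcal X|^2$ is of order $R$, and it is purely imaginary only up to corrections. So I must expand $|\mathcal X|^2$ to first order and check that its $O(R)$ contribution to the real part cancels; here the identity $\operatorname{Im}(\mathcal X\overline{\mathcal Y})=\rho F_1F_2$ of Lemma~\ref{lem:famid}(ii) is a consistency check. Taking real parts, $\operatorname{Re}(-e^{i\theta}(\cdots))$ contributes $-\cos\theta\,(H_{r+1}-\gamma^2H_{r-1})H_r$ at leading order. The $\Sigma_r$ term gives a $\cos\theta$ piece together with a constant piece. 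That constant piece comes from the $O(\varepsilon)$ phase correction $-i\varepsilon w\sin\theta$ multiplied by $R\sin\theta$ and by the $\sin\theta$ of $-e^{i\theta}$, combining through $\sin^2\theta+\cos^2\theta=1$. The $\gamma\mathfrak s|\mathcal X|^2$ term gives the constant $-\gamma\mathfrak s$. Collecting the coefficients should give $-2\mathfrak N\cos\theta+C_f$ with $C_f=2\gamma\Sigma_r/H_r-\gamma\mathfrak s$. I expect this bookkeeping of first-order phase corrections against the $O(R)$ factor $\rho$ to be the main obstacle. I would organise it by expanding $\mathcal X\mathcal Y^{-1}$, equivalently $\tau$ of \eqref{eq:tauCF}, in $\varepsilon$ rather than the entries separately.

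Finally, the values on the outer rods come for free. There $\theta=0$ or $\pi$, and by Proposition~\ref{prop:faraxis} the rod vectors are $S_{\mathfrak s}V_0$ and $S_{\mathfrak s}V_n$, which are independent of $R$. Since $G_\infty$ has kernel $S_{\mathfrak s}V_j$ on the rod, $f$ equals minus the ratio of the first to the second component of a kernel vector, i.e.\ $-(H^{(\gamma)}_{r+1}+\gamma\mathfrak sH_r)/H_r$ evaluated at $\omega_{(j)}$. Evaluating this with $\omega_{(0)}=\omega$ and $\omega_{(n)}=-\omega$, as in the proof of Proposition~\ref{prop:ALFstructure}(ii), gives exactly $\mp2\mathfrak N+C_f$. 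This also independently confirms the formulas for $\mathfrak N$ and $C_f$ found in the expansion.
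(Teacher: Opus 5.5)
Your arguments for (i), (iii) and for the values on the outer rods are sound. Reading the latter off the kernel vector $S_{\mathfrak s}V_j$ of Proposition~\ref{prop:faraxis} is equivalent to the paper's direct evaluation with $\varpi\omega=\mp\omega$ at $\rho=0$.

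The gap is in the expansion for (ii), and it starts from a wrong phase count. $H^{(\gamma)}_{r+1}(\varpi\omega)=H_{r+1}(\varpi\omega)-2\gamma\Sigma_r(\varpi\omega)-\gamma^2H_{r-1}(\varpi\omega)$ is not homogeneous of degree $r+1$ in the phase $-e^{i\theta}$; its three pieces have degrees $r+1$, $r$ and $r-1$. Put $\omega^\varphi=\sum_k\omega_ke^{i\varphi_k}\delta_{w_k}$, where $e^{i\varphi_k}$ is your normalised $u_k$. Then $\Sigma_r(\varpi\omega)\overline{\mathcal X}=\Sigma_r(\omega^\varphi)\overline{H_r(\omega^\varphi)}$, which at $\varepsilon=0$ is the real number $\Sigma_r(\omega)H_r(\omega)$. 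By contrast, $H_{r-1}(\varpi\omega)\overline{\mathcal X}$ carries the phase $-e^{-i\theta}$. So in the limit the $\Sigma_r$ term produces no $\cos\theta$ piece at all, only the constant $2\gamma\Sigma_r/H_r$ in $f$, and it does so directly.

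Moreover, $-i\gamma\rho|\mathcal X|^2$ is exactly imaginary, not imaginary up to corrections, since $\rho$ and $|\mathcal X|^2$ are real. There is therefore no $O(R)$ contribution to $\operatorname{Re}(\mathcal Y\overline{\mathcal X})$ to cancel. The mechanism you invoke for the constant, an $O(\varepsilon)$ phase correction multiplied by $R\sin\theta$, contributes nothing to the real part. Carried out as described, your computation would put a $\Sigma_r$-dependent term into the coefficient of $\cos\theta$, and would look for $C_f$ in a term that vanishes identically. The formulas you state are the right ones, but your derivation does not produce them. The outer-rod values would fix the two coefficients once you know the limit has the form $A\cos\theta+B$, but establishing that form needs exactly the correct phase count.

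The remedy, which is the paper's argument, is to divide rather than multiply: $f=-\operatorname{Re}(\mathcal Y/\mathcal X)$, and
\[
\frac{\mathcal Y}{\mathcal X}=-e^{i\theta}\frac{H_{r+1}(\omega^\varphi)}{H_r(\omega^\varphi)}-2\gamma\frac{\Sigma_r(\omega^\varphi)}{H_r(\omega^\varphi)}+\gamma^2e^{-i\theta}\frac{H_{r-1}(\omega^\varphi)}{H_r(\omega^\varphi)}+\gamma\mathfrak s-i\gamma\rho .
\]
The last term drops out of the real part identically, and at $\varepsilon=0$ the rest gives $-2\mathfrak N\cos\theta+C_f$ at once.

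This exact drop-out is also what makes $f$ real-analytic in $(\varepsilon,\theta)$. Since $\rho=\sin\theta/\varepsilon$ is not analytic at $\varepsilon=0$, keeping $|\mathcal X|^2$ away from zero is not enough on its own. If you prefer to work with $\tau$, use $f=\operatorname{Re}(1/\tau)$ with $1/\tau=-\overline{\mathcal Y}/\overline{\mathcal X}$, where the $\rho$-term enters additively; in $\mathcal X\mathcal Y^{-1}$ it sits in the denominator.

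Finally, real-analyticity in $(\varepsilon,\theta)$ in the sense of the paper includes evenness in $\theta$, which you check only for $|1-\varepsilon w_ke^{-i\theta}|$. It holds for $G_\infty(e_1,e_1)$, $f$ and $F_1F_2$ because $\theta\mapsto-\theta$ conjugates each $e^{i\varphi_k}$ and sends $\rho$ to $-\rho$, and so conjugates $\mathcal X$ and $\mathcal Y$.
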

\begin{proof}
Let $\tilde G=\tilde G_{\omega,r,\gamma,\mathfrak s}$ be the family matrix; it is real-analytic on $\mathcal U_\omega$, even in $\rho$, and equals
$2(-1)^{m+1}G_\infty$ on $\Hb^\circ$ in the frame $(e_\gamma,e_1)$ (Theorem~\ref{thm:farclosed}\ref{it:fcCF}, Lemma~\ref{lem:famid}\textup{(i)}).
Let $\bar w=\max_k|w_k|$ over the nodes of $\omega$, and parametrise $(\rho,z)=(R\sin\theta,R\cos\theta)$ with $\theta\in\R$; for $\sin\theta<0$ this is
the reflection $\rho\mapsto-\rho$, under which $\tilde G$ and $F_1F_2$ are invariant. For
$|\varepsilon|<1/\bar w$ put $u_k=1-\varepsilon w_ke^{-i\theta}$, so that
$|u_k|^2=1-2\varepsilon w_k\cos\theta+\varepsilon^2w_k^2\ge(1-|\varepsilon|\bar w)^2>0$, and $e^{i\varphi_k}:=u_k/|u_k|$. The functions $|u_k|$ and $e^{i\varphi_k}$ are
real-analytic in $(\varepsilon,\theta)$ on $\{|\varepsilon|<1/\bar w\}\times\R$, $2\pi$-periodic in $\theta$, and equal to $1$ at $\varepsilon=0$. For $0<\varepsilon<1/\bar w$,
i.e.\ $R=1/\varepsilon>\bar w$, we have $z+i\rho-w_k=Re^{i\theta}u_k$, hence
\begin{equation}
r_k=R\,|u_k|,\qquad \varpi_k=-e^{i\theta}e^{i\varphi_k} .
\end{equation}
With the measures
$\omega^\varphi=\sum_k\omega_ke^{i\varphi_k}\delta_{w_k}$, $|u|\omega=\sum_k\omega_k|u_k|\delta_{w_k}$ and $\omega/|u|$, the homogeneity of the
Heine terms gives, for $\varepsilon>0$,
\begin{equation}
\begin{gathered}
\mathcal X=(-e^{i\theta})^rH_r(\omega^\varphi),\qquad
\frac{\mathcal Y}{\mathcal X}=-e^{i\theta}\frac{H_{r+1}(\omega^\varphi)}{H_r(\omega^\varphi)}-2\gamma\frac{\Sigma_r(\omega^\varphi)}{H_r(\omega^\varphi)}+\gamma^2e^{-i\theta}\frac{H_{r-1}(\omega^\varphi)}{H_r(\omega^\varphi)}+\gamma\mathfrak s-i\gamma\rho,\\
F_1=R^r\bigl(H_r(|u|\omega)+\varepsilon\gamma H_{r-1}(|u|\omega)\bigr),\qquad F_2=R^{-r}\bigl(\varepsilon H_{r+1}(\omega/|u|)+\gamma H_r(\omega/|u|)\bigr),
\end{gathered}
\end{equation}
the second wherever $H_r(\omega^\varphi)\ne0$. At $\varepsilon=0$, $\omega^\varphi=|u|\omega=\omega/|u|=\omega$, and $H_r(\omega)>0$. By continuity and the compactness of
$\{0\}\times[0,2\pi]$ there is $\varepsilon_0\in(0,1/\bar w)$, depending on $\omega$ and $\gamma$, such that for $|\varepsilon|<\varepsilon_0$ and all $\theta$, $H_r(\omega^\varphi)\ne0$ and
\begin{equation}
\hat F:=\bigl(H_r(|u|\omega)+\varepsilon\gamma H_{r-1}(|u|\omega)\bigr)\bigl(\varepsilon H_{r+1}(\omega/|u|)+\gamma H_r(\omega/|u|)\bigr)>0;
\end{equation}
$\hat F$ is real-analytic in $(\varepsilon,\theta)$, $\hat F(0,\theta)=\gamma H_r(\omega)^2$, and $\hat F=F_1F_2$ for $\varepsilon>0$, since the factors $R^{\pm r}$ cancel.

(i) For $\varepsilon>0$, $\tilde G(e_1,e_1)=|\mathcal X|^2/(F_1F_2)=H_r(\omega^\varphi)\overline{H_r(\omega^\varphi)}/\hat F$, and the right side is real-analytic in $(\varepsilon,\theta)$ for $|\varepsilon|<\varepsilon_0$, and equal to $1/\gamma$
at $\varepsilon=0$. So $(-1)^{m+1}G_\infty(e_1,e_1)=\frac12\tilde G(e_1,e_1)\to\frac1{2\gamma}$, uniformly in $\theta$.

(ii) $f=\tilde G(e_\gamma,e_1)/\tilde G(e_1,e_1)=\operatorname{Re}(-\mathcal Y\overline{\mathcal X})/|\mathcal X|^2=-\operatorname{Re}(\mathcal Y/\mathcal X)$. The term $-i\gamma\rho$ is imaginary and drops
out, so for $\varepsilon>0$, $f$ is the real part of a quotient of the right sides above without that term, which is real-analytic in
$(\varepsilon,\theta)$ for $|\varepsilon|<\varepsilon_0$; the factor $\rho=\sin\theta/\varepsilon$ does not occur. At $\varepsilon=0$, with $H_s$ and $\Sigma_r$ of
$\omega$,
\begin{equation}
f_0(\theta)=\cos\theta\,\frac{H_{r+1}-\gamma^2H_{r-1}}{H_r}+\frac{2\gamma\Sigma_r}{H_r}-\gamma\mathfrak s=-2\mathfrak N\cos\theta+C_f .
\end{equation}
Analyticity on a neighbourhood of the compact set $\{0\}\times[0,2\pi]$ gives $f=f_0+O(\varepsilon)$ uniformly in $\theta$. Since $\tilde G$ is
real-analytic on $\mathcal U_\omega$, $f=-\operatorname{Re}(\mathcal Y/\mathcal X)$ at every point of $\mathcal U_\omega$ with $\mathcal X\ne0$, in particular on the
axis. On the upper outer rod ($\rho=0$, $z>w_n$) every $\varpi_k=-1$, so $\varpi\omega=-\omega$ and the term $-i\gamma\rho\mathcal X$ vanishes; since
$H_s(-\omega)=(-1)^sH_s(\omega)$ and $\Sigma_s(-\omega)=(-1)^s\Sigma_s(\omega)$, $\mathcal X=(-1)^rH_r(\omega)\ne0$ and $f=-\mathcal Y/\mathcal X=f_0(0)=-2\mathfrak N+C_f$.
Likewise, on the lower outer rod ($\rho=0$, $z<w_1$) every $\varpi_k=+1$, and $f=f_0(\pi)=2\mathfrak N+C_f$.

(iii) For $\varepsilon>0$, $e^{2\nu}=C_\nu F_1F_2=C_\nu\hat F$; it is real-analytic in $(\varepsilon,\theta)$ and tends to $C_\nu\gamma H_r(\omega)^2>0$.

Finally, under $\theta\mapsto-\theta$, $u_k$ and $\bar u_k$ are exchanged, so $|u_k|$ is invariant, $e^{i\varphi_k}$ and $e^{i\theta}$ are conjugated, and
$\rho\mapsto-\rho$. So $\mathcal X$ and $\mathcal Y$ are conjugated, and $\tilde G(e_1,e_1)$, $f$ and $\hat F$ are even in $\theta$.
\end{proof}

\subsection{The far-node limit lies in \texorpdfstring{$\Ctor$}{C\_toric}}
Let $T_0=\diag\bigl(1/H_r(\omega),1/(\gamma H_r(\omega))\bigr)$, and let
\begin{equation}\label{eq:Vhat} \hat V_j=\bigl(H_r(\omega_{(j)}),\ H^{(\gamma)}_{r+1}(\omega_{(j)})+2\gamma\,\tfrac{\Sigma}{H_r(\omega)}\,H_r(\omega_{(j)})\bigr) \end{equation}
be the Hankel vectors after the shear $(X,Y)\mapsto(X,Y+2\gamma\Sigma X/H_r(\omega))$. Then $\hat V_0=(H_r(\omega),D_\gamma)$ and
$\hat V_n=(-1)^r(H_r(\omega),-D_\gamma)$, so
\begin{equation} T_0\hat V_0=\bigl(1,\tfrac\alpha2\bigr),\qquad T_0\hat V_n=(-1)^r\bigl(1,-\tfrac\alpha2\bigr),\qquad \alpha=\alpha(\omega,\gamma). \end{equation}
\begin{proposition}[the far-node limit lies in $\Ctor$]\label{prop:farCtor}
Put $\Phi=\rho^{-1}\cdot2(-1)^{m+1}G_\infty$ and $e^{2\nu}=C_\nu F_1F_2$. In the basis
\begin{equation} a_\gamma=\gamma^{-1/2}(e_\gamma-C_fe_1),\qquad b_\gamma=\gamma^{1/2}e_1 , \end{equation}
which has determinant $1$ relative to $(e_\gamma,e_1)$, so that $\rho$ and $z$ are unchanged, $\Phi=\Phi_{H,A}$ as in \eqref{eq:PhiHA} with $H^{-1}=2(-1)^{m+1}\gamma\,G_\infty(e_1,e_1)$ and
$A=(f-C_f)/\gamma$, where $f=G_\infty(e_\gamma,e_1)/G_\infty(e_1,e_1)$ and $C_f$ are those of Proposition~\ref{prop:farinfty}, and:
\begin{enumerate}[leftmargin=*,label=\textup{(\roman*)}]
\item\label{it:ft1} $\Phi$ is locally tamed by the rod structure whose turning points are the nodes of $\omega$, whose rod lines are
spanned by $T_0\hat V_j$, and whose type is ALF$^-$ if $\alpha>0$, ALF$^+$ if $\alpha<0$ and AF$_\beta$ if $\alpha=0$, with $\beta$ recorded by the
lattice as in Definition~\ref{def:Ctor};
\item\label{it:ft2} $d(\Phi,\Phi^\sharp_{1,\alpha})=O(R^{-1})$ uniformly in $\theta$, with $\alpha=\alpha(\omega,\gamma)$.
\end{enumerate}
So $(\Phi,\nu)\in\Ctor$, with the model $\Phi^\sharp_{1,\alpha}$ in the basis $(a_\gamma,b_\gamma)$.
\end{proposition}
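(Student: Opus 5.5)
The first step is to verify the form $\Phi=\Phi_{H,A}$ in the basis $(a_\gamma,b_\gamma)$. For a vector $x=x^ae_\gamma+x^be_1$, written $x=\xi a_\gamma+\eta b_\gamma$, we have $x^a=\gamma^{-1/2}\xi$ and $x^b=\gamma^{1/2}\eta-\gamma^{-1/2}C_f\xi$. Put $g_{11}=\tilde G(e_1,e_1)$ with $\tilde G=2(-1)^{m+1}G_\infty$; then $\tilde G(e_\gamma,e_1)=fg_{11}$.

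From these, the Gram matrix in $(a_\gamma,b_\gamma)$ has the entries
\begin{equation}
G_{bb}=\gamma g_{11},\qquad G_{ab}=g_{11}(f-C_f).
\end{equation}
By \eqref{eq:PhiHA} this gives $H^{-1}=G_{bb}=\gamma g_{11}$ and $A=G_{ab}/G_{bb}=(f-C_f)/\gamma$, as stated. The determinant relative to $(e_\gamma,e_1)$ is $\gamma^{-1/2}\gamma^{1/2}=1$, so $(\rho,z)$ are unchanged.

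For \ref{it:ft2}, I compare with the model $\Phi^\sharp_{1,\alpha}$, for which $H^\sharp=1+|\alpha|/(2R)$ and $A^\sharp=\alpha z/(2R)=(\alpha/2)\cos\theta$. Proposition~\ref{prop:farinfty}(i) gives $H^{-1}=\gamma g_{11}\to1$. Since $g_{11}$ is real-analytic in $(\varepsilon,\theta)$, we get $H=1+O(\varepsilon)$ uniformly in $\theta$, so $H-H^\sharp=O(R^{-1})$.

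Proposition~\ref{prop:farinfty}(ii) gives $A=-2\mathfrak N\cos\theta/\gamma+O(\varepsilon)$. By \eqref{eq:NUTmu}, $-2\mathfrak N/\gamma=\alpha/2$, so $A-A^\sharp=O(R^{-1})$ uniformly in $\theta$.

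We now insert these into \eqref{eq:coshhalf}:
\begin{equation}
\cosh d-1=\frac{\rho^2(H-H')^2+(A-A')^2}{2\rho^2HH'}.
\end{equation}
The first term is $O(R^{-2})$ because $H,H'\to1$. The second term is the main obstacle, since we have to divide by $\rho^2$ and this blows up near the axis.

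To handle it, I use the analyticity in $(\varepsilon,\theta)$ and the evenness in $\theta$. Both $A$ and $A^\sharp$ are even, $2\pi$-periodic, real-analytic functions of $\theta$ for $|\varepsilon|<\varepsilon_0$. On the outer rods ($\theta=0,\pi$) they agree exactly, by the last part of Proposition~\ref{prop:farinfty}(ii). Hence $A-A^\sharp=\varepsilon\,\psi(\varepsilon,\theta)$ with $\psi$ even, analytic, and vanishing at $\theta=0,\pi$ for every $\varepsilon$. Thus $\psi=\sin^2\theta\cdot\chi$ with $\chi$ analytic and bounded near $\varepsilon=0$.

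To justify this factorisation, note that an even analytic function vanishing at $\theta=0$ vanishes to second order there, and similarly at $\theta=\pi$. Since $\rho=R\sin\theta$, this gives
\begin{equation}
\frac{(A-A^\sharp)^2}{\rho^2}=\varepsilon^4\sin^2\theta\,\chi^2=O(R^{-4}).
\end{equation}
Therefore $\cosh d-1=O(R^{-2})$, hence $d=O(R^{-1})$.

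For \ref{it:ft1}, local tameness is Proposition~\ref{prop:faraxis}\ref{it:fa3}, with rod lines $[S_{\mathfrak s}V_j]$ in $(e_\gamma,e_1)$. To pass to $(a_\gamma,b_\gamma)$, I compute the new components of $S_{\mathfrak s}V_j=(X,Y+\gamma\mathfrak sX)$:
\begin{equation}
\bigl(\gamma^{1/2}X,\ \gamma^{-1/2}(Y+\gamma\mathfrak sX+C_fX)\bigr).
\end{equation}
Using $C_f=2\gamma\Sigma/H_r-\gamma\mathfrak s$, this equals $\gamma^{1/2}\bigl(X,(Y+2\gamma\Sigma X/H_r)/\gamma\bigr)=\gamma^{1/2}H_r(\omega)\,T_0\hat V_j$, which is parallel to $T_0\hat V_j$.

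The outer lines are $[(1,\pm\alpha/2)]$, consistent with Lemma~\ref{lem:outerlines}. The type follows from the sign of $\alpha$. When $\alpha=0$ we have $L_0=L_n$, and the type is AF$_\beta$ with $\beta$ recorded by the lattice. Combining \ref{it:ft1} and \ref{it:ft2} gives \ref{it:T1} and \ref{it:T2}, so $(\Phi,\nu)\in\Ctor$.
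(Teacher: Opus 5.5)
Your proposal is correct and follows the paper's proof essentially step by step. It uses the same change of basis to get $H^{-1}=\gamma g_{11}$ and $A=(f-C_f)/\gamma$. It then uses Proposition~\ref{prop:farinfty} together with the exact agreement of $A$ and $A^\sharp$ on the outer rods to factor $A-A^\sharp=\varepsilon\sin^2\theta\,\chi$, which controls the $\rho^{-2}$ term in \eqref{eq:coshhalf}. Finally it rewrites $S_{\mathfrak s}V_j$ via $C_f+\gamma\mathfrak s=2\gamma\Sigma/H_r(\omega)$ to obtain the rod lines $[T_0\hat V_j]$.

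The paper spells out one point you leave implicit: the value at $\theta=0,\pi$ of the $(\varepsilon,\theta)$-extension of $f$ is its value on the outer rods. This is immediate, since both are continuous extensions of the same function.
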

\begin{proof}
\ref{it:ft1} By \eqref{eq:farrodvec}, with $e_\gamma=\gamma^{1/2}a+C_f\gamma^{-1/2}b$ and $e_1=\gamma^{-1/2}b$, the rod line of $I_j$ is spanned by
\begin{equation} \gamma^{1/2}H_r(\omega_{(j)})\,a+\gamma^{-1/2}\bigl(H^{(\gamma)}_{r+1}(\omega_{(j)})+(C_f+\gamma\mathfrak s)H_r(\omega_{(j)})\bigr)b=\gamma^{1/2}H_r(\omega)\,T_0\hat V_j , \end{equation}
since $C_f+\gamma\mathfrak s=2\gamma\Sigma/H_r(\omega)$ by Proposition~\ref{prop:farinfty}(ii). Local tameness is Proposition~\ref{prop:faraxis}\ref{it:fa3}, since
$(a_\gamma,b_\gamma)$ has determinant $1$ relative to $(e_\gamma,e_1)$. The outer lines are $[(1,\alpha/2)]$ and $[(1,-\alpha/2)]$, and they coincide exactly when $\alpha=0$.

\ref{it:ft2} In the basis $(a_\gamma,b_\gamma)$ write $\Phi=\Phi_{H,A}$ as in \eqref{eq:PhiHA}. Then
\begin{equation}
\begin{gathered}
H^{-1}=2(-1)^{m+1}G_\infty(b_\gamma,b_\gamma)=2(-1)^{m+1}\gamma G_\infty(e_1,e_1),\\
A=2(-1)^{m+1}G_\infty(a_\gamma,b_\gamma)/H^{-1}=(f-C_f)/\gamma.
\end{gathered}
\end{equation}
The model $\Phi^\sharp_{1,\alpha}$ of \eqref{eq:models} is $\Phi_{H^\sharp,A^\sharp}$ with $H^\sharp=1+|\alpha|\varepsilon/2$ and $A^\sharp=\frac\alpha2\cos\theta$. By
Proposition~\ref{prop:farinfty}, $H^{-1}$ and $A-A^\sharp$ are real-analytic in $(\varepsilon,\theta)$, and $H^{-1}=1$ at $\varepsilon=0$; so there are
$\varepsilon_1\in(0,\varepsilon_0)$ and $C''$ with $|H-1|\le C''\varepsilon$, hence $|H-H^\sharp|\le(C''+|\alpha|/2)\varepsilon$, for $0<\varepsilon\le\varepsilon_1$ and all
$\theta$. Also $A-A^\sharp$ vanishes at $\varepsilon=0$, by Proposition~\ref{prop:farinfty}(ii) and $-2\mathfrak N/\gamma=\alpha/2$, so $A-A^\sharp=\varepsilon k_1$
with $k_1$ real-analytic on $\{|\varepsilon|<\varepsilon_0\}\times\R$, $2\pi$-periodic and even in $\theta$. For $0<\varepsilon<\varepsilon_0$, the value of $f$ at
$\theta=0$, resp.\ $\theta=\pi$, is its value at the point $(0,1/\varepsilon)$ of the upper, resp.\ $(0,-1/\varepsilon)$ of the lower, outer rod (it is computed
from $\tilde G$, which is real-analytic on $\mathcal U_\omega$), where $f=\mp2\mathfrak N+C_f$ (Proposition~\ref{prop:farinfty}(ii)); so
$A-A^\sharp=\mp(2\mathfrak N/\gamma+\alpha/2)=0$ there. Hence $k_1=0$ at $\theta=0$ and $\theta=\pi$ for $0<\varepsilon<\varepsilon_0$, and by analyticity for
$|\varepsilon|<\varepsilon_0$. Since $k_1$ is even about $\theta=0$ and about $\theta=\pi$ and vanishes there, $k_1/\sin^2\theta$ extends real-analytically
across $\theta\in\pi\Z$, so it is bounded on $[-\varepsilon_1,\varepsilon_1]\times[0,2\pi]$.
So $|A-A^\sharp|\le C'\varepsilon\sin^2\theta$ for $0<\varepsilon\le\varepsilon_1$ and all $\theta\in[0,\pi]$. By \eqref{eq:coshhalf},
with $\rho=R\sin\theta$ and $H,H^\sharp\ge\frac12$ for small $\varepsilon$,
\begin{equation} \cosh d-1=\frac{\rho^2(H-H^\sharp)^2+(A-A^\sharp)^2}{2\rho^2HH^\sharp}=O(\varepsilon^2)+O\Bigl(\frac{\varepsilon^2\sin^4\theta}{R^2\sin^2\theta}\Bigr)=O(\varepsilon^2), \end{equation}
so $d=O(R^{-1})$ uniformly in $\theta$.
\end{proof}

\begin{remark}\label{rem:farfacts}
\leavevmode
\begin{enumerate}[leftmargin=*,label=\textup{(\alph*)}]
\item \emph{The type.} $\alpha(\omega,\gamma)$ vanishes exactly on the \emph{AF locus} $\gamma=\gamma_*(\omega)=\sqrt{H_{r+1}(\omega)/H_{r-1}(\omega)}$, where
$V_n=(-1)^rV_0$ (Proposition~\ref{prop:ALFstructure}\ref{it:alfs2}); $\alpha>0$ for $\gamma<\gamma_*$ and $\alpha<0$ for $\gamma>\gamma_*$. For
$r=0$, $\gamma_*=\infty$, and for $r=n$, $\gamma_*=0$.
\item \emph{A coordinate.} For fixed $\omega$, $\gamma\mapsto\alpha$ is strictly decreasing, since
$\partial_\gamma\alpha=-2\bigl(H_{r+1}/\gamma^2+H_{r-1}\bigr)/H_r<0$. For $1\le r\le n-1$ its range is $\R$; for $r=0$ it is $(0,\infty)$, and
for $r=n$ it is $(-\infty,0)$. So $(\omega,\gamma)\mapsto(\omega,\alpha)$ is a change of coordinates.
\end{enumerate}
\end{remark}

\section{Surjectivity}\label{sec:surj}
Fix turning points $w_1<\dots<w_n$, $n\ge1$. In this section we show that every admissible configuration of rod lines,
normalised by the model at infinity, is attained by a soliton metric with these turning points (Propositions~\ref{prop:ALEreal}
and~\ref{prop:ALFreal}); this is the existence half of the proof of Theorem~\ref{thm:G}. Since the rod lines of the soliton
metrics are the lines of Hankel families (Corollary~\ref{cor:CinCtor}, Proposition~\ref{prop:farCtor}), it suffices to show that
the map from positive weights $\omega$ on the turning points to the rays of the normalised Hankel family is onto the corresponding cell ($C_r$ of
\S\ref{subsec:cells}, the exchanged cell $C_{r-1}$, or the AF cell $A_r$ of \S\ref{subsec:surjAF}), and we show by degree theory that it is proper of degree $\pm1$.

\emph{Notation.} We use the notation of \S\ref{sec:hankel}: for $\omega=\sum_ka_k\delta_{w_k}$ with $a\in\R^n_{>0}$ and a set $S$ of atoms,
$P_S(\omega)=\prod_{k\in S}a_k\Delta_S^2$ as in \eqref{eq:PS}, so that $H_s(\omega)=\sum_{|S|=s}P_S(\omega)$ by \eqref{eq:heine}, and
$\pi_s(S)=P_S(\omega)/H_s(\omega)$, a probability distribution on the $s$-subsets $S$ of $\{1,\dots,n\}$, for $0\le s\le n$. We write
$\varepsilon_j(S)=(-1)^{|S\cap[1,j]|}$, $\langle f\rangle_s=\sum_{|S|=s}\pi_s(S)f(S)$, with $\langle f\rangle_s=0$ for $s<0$ and $s>n$, and
$e_1(w_S)=\sum_{k\in S}w_k$ as in \eqref{eq:Sigmas}. Since $P_S(\omega_{(j)})=\varepsilon_j(S)P_S(\omega)$, Heine's formula gives
$H_s(\omega_{(j)})=H_s(\omega)\langle\varepsilon_j\rangle_s$.

\subsection{ALE}\label{subsec:surjALE}
Let $0\le r\le n-1$. By \eqref{eq:Vnorm} and Heine's formula, the normalised Hankel family of \S\ref{sec:hankel} is
\begin{equation}\label{eq:Vtilde} \tilde V_j=\bigl(H_r(\omega_{(j)})/H_r(\omega),\ H_{r+1}(\omega_{(j)})/H_{r+1}(\omega)\bigr)=\bigl(\langle\varepsilon_j\rangle_r,\langle\varepsilon_j\rangle_{r+1}\bigr). \end{equation}
It has $\tilde V_0=(1,1)$ and $\tilde V_n=(-1)^r(1,-1)$, and its steps are clockwise with sum $(r+\frac12)\pi$
(Propositions~\ref{prop:Awronskian} and~\ref{prop:Aturning}). Let
\begin{equation}\label{eq:Lw} L_w\colon\R^n_{>0}/\R_{>0}\to C_r,\qquad [a]\mapsto(s_1,\dots,s_n), \end{equation}
record the clockwise steps $s_j\in(0,\pi)$ from $\tilde V_{j-1}$ to $\tilde V_j$, with $C_r$ as in \eqref{eq:cell}. Both sides are connected
manifolds of dimension $n-1$.

All properness arguments of this section rest on the following lemma.
\begin{lemma}[degenerating weights]\label{lem:degen}
Let $\omega^{[i]}=\sum_ka^{[i]}_k\delta_{w_k}$ be such that $[a^{[i]}]$ leaves every compact subset of $\R^n_{>0}/\R_{>0}$, and let $1\le p\le n$.
After passing to a subsequence, every ratio $a_k/a_l$ converges in $[0,\infty]$. Call $a_k$ and $a_l$ of the same \emph{class} if the limit
is in $(0,\infty)$, and $a_k$ \emph{heavier} than $a_l$ if $a_l/a_k\to0$. Let $\mathfrak g$ be the $p$-th heaviest class, counted with
multiplicity, and let $\mathsf H$, $\mathsf M$, $\mathsf L$ be the sets of atoms whose weights are heavier than, of the class of, and lighter
than $\mathfrak g$. Then:
\begin{enumerate}[leftmargin=*,label=\textup{(\roman*)}]
\item $|\mathsf H|\le p-1$, $|\mathsf H|+|\mathsf M|\ge p$, and $\mathsf H\cup\mathsf L\ne\varnothing$.
\item Fix $k_0\in\mathsf M$, let $a'_k=\lim\,(a_k/a_{k_0})\prod_{h\in\mathsf H}(w_k-w_h)^2>0$ for $k\in\mathsf M$, $\omega'=\sum_{k\in\mathsf M}a'_k\delta_{w_k}$ and
$C_{\mathsf H}=\Delta_{\mathsf H}^2\prod_{h\in\mathsf H}a_h$. For $0\le s\le n$,
\begin{equation}\label{eq:degenH}
H_s(\omega)=C_{\mathsf H}\,a_{k_0}^{s-|\mathsf H|}\bigl(H_{s-|\mathsf H|}(\omega')+o(1)\bigr),
\end{equation}
where $H_t(\omega')=0$ for $t<0$ and $t>|\mathsf M|$. If $0\le s-|\mathsf H|\le|\mathsf M|$, then $\pi_s$ converges to the image of the distribution
$\pi'_{s-|\mathsf H|}$ of $\omega'$ under $S'\mapsto\mathsf H\cup S'$.
\item For $S'\subset\mathsf M$, $\varepsilon_j(\mathsf H\cup S')=\sigma_j\varepsilon_{j'}(S')$ with $\sigma_j=(-1)^{|\mathsf H\cap[1,j]|}$, where $j'$ is the number of atoms
of $\mathsf M$ to the left of the rod $j$ and $\varepsilon_{j'}$ refers to the atoms of $\mathsf M$; and $e_1(w_{\mathsf H\cup S'})=e_1(w_{\mathsf H})+e_1(w_{S'})$.
For $k\notin\mathsf M$ the rods $k-1$ and $k$ have the same $j'$, and $\sigma_k=\sigma_{k-1}$ if $k\in\mathsf L$, $\sigma_k=-\sigma_{k-1}$ if $k\in\mathsf H$.
\end{enumerate}
\end{lemma}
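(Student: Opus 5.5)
The plan is to set up the subsequence by compactness, organise the weights by their order of magnitude, and read everything off Heine's formula \eqref{eq:heine}. Since $[0,\infty]$ is compact and there are finitely many pairs $(k,l)$, a diagonal subsequence makes every ratio $a_k/a_l$ converge. Once the ratios converge, the relation ``same class'' is an equivalence relation. ``Heavier'' induces a strict total order on the classes, because the limits multiply: $a_k/a_m=(a_k/a_l)(a_l/a_m)$, and a product of a finite nonzero limit with $0$ or $\infty$ is $0$ or $\infty$.

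List the atoms in decreasing weight class, each class counted with multiplicity. The $p$-th atom in this list lies in some class $\mathfrak g$. All atoms of strictly heavier classes come before it, so $|\mathsf H|\le p-1$. All atoms of $\mathfrak g$ together with the heavier ones make up at least the first $p$ entries, so $|\mathsf H|+|\mathsf M|\ge p$. If $\mathsf H\cup\mathsf L$ were empty, all atoms would lie in one class. Then, after normalising by $a_{k_0}$, the sequence $[a^{[i]}]$ would converge in $\R^n_{>0}/\R_{>0}$, contradicting that it leaves every compact set. This proves (i).

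For (ii), divide $P_S(\omega)$ by $C_{\mathsf H}a_{k_0}^{s-|\mathsf H|}$ and compare orders of magnitude. Write $S=S_{\mathsf H}\cup S_{\mathsf M}\cup S_{\mathsf L}$ with $S_{\mathsf H}=S\cap\mathsf H$ and so on. A term with $|S|=s$ is dominated by one that contains as many heavy atoms as possible, and then, among the remaining slots, as many $\mathsf M$-atoms as possible. When $|\mathsf H|\le s\le|\mathsf H|+|\mathsf M|$, the maximal terms are exactly those with $S=\mathsf H\cup S'$, $S'\subset\mathsf M$, $|S'|=s-|\mathsf H|$. Every other term is $o(1)$ relative to them. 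This needs a short exchange argument: replacing a missing $\mathsf H$-atom or a present $\mathsf L$-atom by an $\mathsf M$-atom multiplies the weight by a factor tending to $\infty$, and the finitely many $\Delta^2$ factors are fixed nonzero numbers. For the maximal terms, $\Delta^2_{\mathsf H\cup S'}=\Delta_{\mathsf H}^2\Delta_{S'}^2\prod_{k\in S',h\in\mathsf H}(w_k-w_h)^2$, so after normalisation the term tends to $P_{S'}(\omega')$. Summing gives \eqref{eq:degenH}, and since $H_{s-|\mathsf H|}(\omega')>0$ in this range, the distribution $\pi_s$ converges as claimed.

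The out-of-range cases are the main obstacle, where one must check that the normalised $H_s$ is $o(1)$. If $s<|\mathsf H|$, every term misses some heavy atom. If $s>|\mathsf H|+|\mathsf M|$, every term contains an $\mathsf L$-atom. In each case, compare the term's weight with $C_{\mathsf H}a_{k_0}^{s-|\mathsf H|}$ through the same exchange argument: each missing heavy atom contributes a factor $a_{k_0}/a_h\to0$, and each light atom contributes $a_l/a_{k_0}\to0$. This requires care with signs of exponents, and I would organise it by pairing the atoms of $S\setminus\mathsf H$ against $\mathsf H\setminus S$ and against extra $\mathsf M$-slots.

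Part (iii) is bookkeeping. Count the atoms of $\mathsf H\cup S'$ in $[1,j]$: $|(\mathsf H\cup S')\cap[1,j]|=|\mathsf H\cap[1,j]|+|S'\cap[1,j]|$, and $S'\cap[1,j]$ consists of the first $j'$ atoms of $\mathsf M$, which gives the sign identity. The identity for $e_1$ holds because the sum is additive over the disjoint union. Passing from rod $k-1$ to rod $k$ adds the atom $k$ to $[1,j]$. If $k\notin\mathsf M$, then $j'$ is unchanged, and $\sigma$ flips exactly when $k\in\mathsf H$.
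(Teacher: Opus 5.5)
Your proposal is correct and follows the paper's proof essentially step by step. It uses compactness for the subsequence and counting for (i). For (ii) it compares each Heine term $P_S(\omega)$ with $C_{\mathsf H}a_{k_0}^{s-|\mathsf H|}$; the paper organises this by pairing the atoms of $S$, heaviest first, against the list $\mathsf H,\mathsf M,\mathsf L$, which is the same mechanism as your exchange argument. Part (iii) is the same bookkeeping as the paper's.

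For the out-of-range cases you flagged, one identity settles all values of $t=s-|\mathsf H|$ at once and removes the sign-of-exponent worry. With $u=|\mathsf H\setminus S|$, write
\[
\frac{a_S}{a_{\mathsf H}a_{k_0}^{t}}
=\frac{a_{S\cap\mathsf M}}{a_{k_0}^{|S\cap\mathsf M|}}\cdot\frac{a_{k_0}^{u}}{a_{\mathsf H\setminus S}}\cdot\frac{a_{S\cap\mathsf L}}{a_{k_0}^{|S\cap\mathsf L|}}.
\]
The first factor converges in $(0,\infty)$. The second tends to $0$ if $u>0$, and the third tends to $0$ if $S$ contains an atom of $\mathsf L$. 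Hence the ratio tends to $0$ unless $S=\mathsf H\cup S'$ with $S'\subset\mathsf M$, which can only happen when $0\le t\le|\mathsf M|$.
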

\begin{proof}
There are finitely many ratios, so the subsequence exists, and the classes are totally ordered.

(i) At most $p-1$ atoms are heavier than the $p$-th heaviest class, and at least $p$ are at least as heavy. If $\mathsf H\cup\mathsf L=\varnothing$,
all $a_k/a_{k_0}$ converge in $(0,\infty)$, so $[a^{[i]}]$ converges in $\R^n_{>0}/\R_{>0}$, which it does not.

(ii) Write $a_S=\prod_{k\in S}a_k$. If the factors of two products of $d$ weights can be paired so that each factor of the first is at
most as heavy as its partner, then the ratio of the products, a product of $d$ ratios of weights, converges in $[0,\infty)$, to $0$ if some
factor is strictly lighter than its partner. List the atoms by class, heaviest first: those of $\mathsf H$, then those of $\mathsf M$, whose
weights have the class of $a_{k_0}$, then those of $\mathsf L$. The $i$-th heaviest atom of a set $S$ is at most as heavy as the $i$-th atom of
the list. Let $|S|=s$ and $t=s-|\mathsf H|$. If $t<0$, pair the atoms of $S$ with the first $s$ atoms of the list and the $-t$ factors $a_{k_0}$
with the other atoms of $\mathsf H$: so $a_{k_0}^{-t}a_S/a_{\mathsf H}\to0$. If $t\ge0$, pair the atoms of $S$, heaviest first, with those of $\mathsf H$ and
then with $t$ factors $a_{k_0}$. If $t>|\mathsf M|$, the $(|\mathsf H|+|\mathsf M|+1)$-st atom of $S$ lies in $\mathsf L$, so $a_S/(a_{\mathsf H}a_{k_0}^t)\to0$. If
$0\le t\le|\mathsf M|$, then $a_S/(a_{\mathsf H}a_{k_0}^t)\to0$ unless $S=\mathsf H\cup S'$ with $S'\subset\mathsf M$, and then, since
\begin{equation}
\Delta^2_{\mathsf H\cup S'}=\Delta_{\mathsf H}^2\Delta_{S'}^2\prod_{k\in S',h\in\mathsf H}(w_k-w_h)^2,
\end{equation}
$P_S(\omega)=C_{\mathsf H}a_{k_0}^t(P_{S'}(\omega')+o(1))$. As the $\Delta_S^2$ are fixed positive constants, summing over $S$ gives \eqref{eq:degenH}. If
$0\le t\le|\mathsf M|$, then $H_t(\omega')>0$, and dividing $P_S(\omega)$ by \eqref{eq:degenH} gives the limit of $\pi_s$.

(iii) $|(\mathsf H\cup S')\cap[1,j]|=|\mathsf H\cap[1,j]|+|S'\cap[1,j]|$, and $S'\cap[1,j]$ consists of atoms among the first $j'$ atoms of $\mathsf M$. The
atoms to the left of the rods $k-1$ and $k$ differ only by $k$.
\end{proof}

\begin{proposition}\label{prop:Aproper}
The map $L_w$ of \eqref{eq:Lw} is proper.
\end{proposition}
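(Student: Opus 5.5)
We argue by sequences. Take $[a^{[i]}]$ leaving every compact subset of $\R^n_{>0}/\R_{>0}$; we must show that $L_w([a^{[i]}])$ leaves every compact subset of $C_r$. Since $C_r$ is cut out of $(0,\pi)^n$ by the affine condition $\sum_js_j=(r+\frac12)\pi$, a subset is compact exactly when the steps stay away from $0$ and from $\pi$. So it suffices to show that, after passing to a subsequence, some step $s_j$ tends to $0$ or to $\pi$, i.e.\ that for some $j$ the vectors $\tilde V_{j-1}$ and $\tilde V_j$ become parallel in the limit. Throughout, by \eqref{eq:Vtilde}, $\tilde V_j=(\langle\varepsilon_j\rangle_r,\langle\varepsilon_j\rangle_{r+1})$.

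\textbf{Main case.} Apply Lemma~\ref{lem:degen} with $p=r+1$, so that $|\mathsf H|\le r$ and $|\mathsf H|+|\mathsf M|\ge r+1$. First assume $t:=r-|\mathsf H|$ satisfies $0\le t$ and $t+1\le|\mathsf M|$, which holds by part (i). Part (ii) then gives $\pi_r\to$ the image of $\pi'_t$ and $\pi_{r+1}\to$ the image of $\pi'_{t+1}$. With part (iii) this yields
\[
\tilde V_j\to\sigma_j\bigl(\langle\varepsilon_{j'}\rangle'_t,\langle\varepsilon_{j'}\rangle'_{t+1}\bigr)=\sigma_j\tilde V'_{j'},
\]
where $\tilde V'$ is the normalised Hankel family $(|\mathsf M|,t)$ of $\omega'$. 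This limit is non-zero, since its first component at $j'=0$ is $\pm1$, and by Proposition~\ref{prop:Awronskian} each $\tilde V'_{j'}$ is non-zero with a non-vanishing determinant against its neighbour. By Lemma~\ref{lem:degen}(i) there is an atom $k\in\mathsf H\cup\mathsf L$. For this $k$ the rods $k-1$ and $k$ have the same $j'$, and $\sigma_k=\pm\sigma_{k-1}$. Hence $\tilde V_{k-1}$ and $\tilde V_k$ converge to the same non-zero vector or to opposite ones, so $s_k\to0$ or $s_k\to\pi$. This proves properness.

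\textbf{Degenerate case and the main obstacle.} The step I expect to need care is the boundary case $t+1>|\mathsf M|$. Lemma~\ref{lem:degen}(i) gives $|\mathsf H|+|\mathsf M|\ge r+1$ for the choice $p=r+1$, so $t+1\le|\mathsf M|$ and this case does not actually arise. I would nonetheless check that choosing $p=r+1$ (rather than $p=r$) is consistent for both components simultaneously, i.e.\ that the same classes $\mathsf H,\mathsf M$ serve for $s=r$ and $s=r+1$. Indeed $\mathsf H$ and $\mathsf M$ depend only on $p$, and \eqref{eq:degenH} holds for every $s$, so both orders $r$ and $r+1$ are handled by a single application of the lemma. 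Finally, the normalisation in \eqref{eq:Vtilde} is exactly $\langle\cdot\rangle_s$, so the overall factors $C_{\mathsf H}a_{k_0}^{s-|\mathsf H|}$ cancel and no rescaling issues remain.
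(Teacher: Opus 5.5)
Your proposal is correct and is essentially the paper's proof: Lemma~\ref{lem:degen} with $p=r+1$, the convergence $\tilde V_j\to\sigma_j\tilde V'_{j'}$ to the normalised family $(|\mathsf M|,r-|\mathsf H|)$ of $\omega'$, whose vectors are non-zero by Proposition~\ref{prop:Awronskian}, and then a step $s_k\to0$ or $\pi$ at an atom $k\notin\mathsf M$. Your ``degenerate case'' paragraph is superfluous, since Lemma~\ref{lem:degen}(i) already rules it out.
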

\begin{proof}
Let $[a^{[i]}]$ leave every compact set. It suffices to show that, after passing to a subsequence, some step $s_k$ tends to $0$ or $\pi$.
Apply Lemma~\ref{lem:degen} with $p=r+1$. Then $|\mathsf H|\le r$, and $r'=r-|\mathsf H|$ satisfies $r'+1\le|\mathsf M|$. By Lemma~\ref{lem:degen}(ii),(iii)
for $s=r,r+1$, $\tilde V_j\to\sigma_j\tilde V'_{j'}$, where $\tilde V'$ is the normalised family $(|\mathsf M|,r')$ of $\omega'$. Its vectors are non-zero, by
Proposition~\ref{prop:Awronskian} for $\omega'$, since $r'\le|\mathsf M|-1$. For every atom $k\notin\mathsf M$, which exists by Lemma~\ref{lem:degen}(i),
the limits of $\tilde V_{k-1}$ and $\tilde V_k$ are $\sigma_{k-1}\tilde V'_{j'}$ and $\sigma_k\tilde V'_{j'}$ (Lemma~\ref{lem:degen}(iii)). So $s_k\to0$ if $k\in\mathsf L$ and
$s_k\to\pi$ if $k\in\mathsf H$.
\end{proof}

\begin{proposition}\label{prop:Adegree}
The map $L_w$ of \eqref{eq:Lw} has degree $\deg L_w=\pm1$ for all $n\ge r+1$.
\end{proposition}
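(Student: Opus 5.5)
Since $L_w$ is a proper map between connected oriented manifolds of dimension $n-1$ (Proposition~\ref{prop:Aproper}), its degree is defined. I would compute it by induction on $n$, starting at the smallest admissible value $n=r+1$, using degenerations to pass from $n$ to $n-1$ nodes.

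\emph{Base case $n=r+1$.} The proof of Proposition~\ref{prop:Aturning} already gives the family explicitly: $\tilde V_j=(-1)^j(1-2F_j,1)$ with $F_j=\sum_{k\le j}\tau_k/\sum_k\tau_k$ and $\tau_k=\Delta^2_{\{1,\dots,n\}\setminus k}/a_k$. The clockwise step $s_j$ is $\pi-\delta_j$, where $\delta_j$ is the counter-clockwise angle from $(1-2F_{j-1},1)$ to $(1-2F_j,1)$. The map $[a]\mapsto(F_1,\dots,F_{n-1})$ is a diffeomorphism from $\R^n_{>0}/\R_{>0}$ onto the open simplex $0<F_1<\dots<F_{n-1}<1$, because $a\mapsto\tau$ is a coordinatewise inversion times positive constants. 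The map $F\mapsto(\delta_j)$ is also a diffeomorphism onto $\{\delta_j>0,\ \sum\delta_j=\pi/2\}$, since $F_j\mapsto$ (angle of $(1-2F_j,1)$) is strictly monotone. So $L_w$ is a diffeomorphism and has degree $\pm1$.

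\emph{Induction step, $n>r+1$.} Degree is invariant under proper homotopy, so $\deg L_w$ is locally constant in the nodes $w$, which range over a connected set. It therefore suffices to count preimages of one well-chosen regular value. I would take a point $\bar s\in C_r$ with $s_n$ very close to $0$ and $(s_1,\dots,s_{n-2},s_{n-1}+s_n)$ close to a regular value $\bar s'$ of the $(n-1)$-node map $L'_{w'}$, where $w'=(w_1,\dots,w_{n-1})$. By Lemma~\ref{lem:Climits}(i), as $a_n\to0$ the family $(n,r)$ converges to the family $(n-1,r)$ on $w'$. The steps merge accordingly, with $s_n\to0$.

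The main step is to show that every preimage of such a $\bar s$ lies near the face $\{a_n=0\}$. This should follow from the argument of Proposition~\ref{prop:Aproper}: a sequence of preimages with $s_n\to0$ and the other steps bounded away from $0$ and $\pi$ must degenerate, and by Lemma~\ref{lem:degen} only the atom $n$ can become light. Near that face I would use $(a',a_n)$ as coordinates, with $a'$ the weights on $w'$. The map is then a perturbation of $(L'_{w'}(a'),\,s_n(a',a_n))$. Here $s_n$ is strictly monotone in $a_n$ for small $a_n$, and its first-order term comes from \eqref{eq:Hsplit} and Proposition~\ref{prop:Awronskian}, where $\det(V_{n-1},V_n)=-2a_nU_r(w_n)^2$ is linear in $a_n$. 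Preimages therefore correspond bijectively, with a fixed sign, to preimages of $\bar s'$ under $L'_{w'}$, which gives $\deg L_w=\pm\deg L'_{w'}=\pm1$.

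I expect the main obstacle to be this localisation near the face together with the transversality in $a_n$. Concretely, I need a uniform bound showing that no preimage with $a_n$ comparable to the other weights has $s_n$ arbitrarily small. I would get it by combining Lemma~\ref{lem:degen} with the Wronskian formula, which gives $s_n\asymp a_n$ in the ratio to the other weights. If this proves delicate, a fallback is to extend $L_w$ continuously to the closed face and apply the boundary-degree formula to the compactified map.
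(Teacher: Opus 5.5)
Your proposal is correct and follows essentially the same route as the paper. The base case $n=r+1$ uses the explicit family $\tilde V_j=(-1)^j(1-2F_j,1)$; you identify the image with $C_r$ directly, where the paper argues via injectivity, invariance of domain and properness. The induction step sends one weight to $0$, extends the map real-analytically across the face, uses the Wronskian of Proposition~\ref{prop:Awronskian} for transversality in that weight, and reuses the degeneration argument of Proposition~\ref{prop:Aproper} to localise the preimages of points with small $s_n$ near preimages of a regular value of the $(n-1)$-node map. The one superfluous remark is that $\deg L_w$ is locally constant in $w$: you never vary $w$, and that claim would need properness jointly in $(w,a)$.
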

\begin{proof}
\emph{Base case $n=r+1$.} By the proof of Proposition~\ref{prop:Aturning}, $\tilde V_j=(-1)^j(1-2F_j,1)$ with $F_j=\sum_{k\le j}\tau_k/\sum_k\tau_k$ and
$\tau_k=\Delta^2_{\{1,\dots,n\}\setminus k}/a_k$. The map $[a]\mapsto[\tau]$ is a diffeomorphism of $\R^n_{>0}/\R_{>0}$, and $[\tau]\mapsto(F_1<\dots<F_{n-1})$ is a
diffeomorphism onto the open simplex. The steps determine the $F_j$ and conversely, smoothly; the $F_j$ determine $[\tau]$, hence $[a]$.
So $L_w$ is injective and a diffeomorphism onto its image. Both sides have
dimension $n-1$, so the image is open by invariance of domain, and it is closed by Proposition~\ref{prop:Aproper}. Since $C_r$ is
connected, $L_w$ is a diffeomorphism onto $C_r$, and $\deg L_w=\pm1$.

\emph{Induction step $n\ge r+2$.} Fix an atom $k$.
\begin{enumerate}[leftmargin=*,label=(\arabic*)]
\item \emph{Extension to the face.} For $s\le r+1\le n-1$, $H_s(\omega)>0$ also when $a_k=0$. So the normalised vectors are
real-analytic in $a$ on a neighbourhood of the face $F_k=\{a_k=0\}$. On $F_k$ the family is the family $(n-1,r)$ with the rods
$I_{k-1},I_k$ merged (Lemma~\ref{lem:Climits}; for $r=0$ directly from $H_1(\omega_{(j)})=\sum_l\pm a_l$). So the extension $\bar L$ maps $F_k$ into the face $\{s_k=0\}$ of $\bar C_r$, and on it
$\bar L=L_{w'}$, $w'=w\setminus w_k$. This face is the cell $C_r$ of $n-1$ steps.
\item \emph{Transversality.} By Proposition~\ref{prop:Awronskian},
$\det(\tilde V_{k-1},\tilde V_k)=-2a_kU_r(w_k)^2/(H_r(\omega)H_{r+1}(\omega))$; here $U_r(w_k)$ does not involve $a_k$, and it and $H_r(\omega)$,
$H_{r+1}(\omega)$ are positive also at $a_k=0$, since $r+1\le n-1$. Since $\sin s_k=-\det(\tilde V_{k-1},\tilde V_k)/(|\tilde V_{k-1}||\tilde V_k|)$
and $\tilde V_{k-1}=\tilde V_k$ on $F_k$,
\begin{equation}
\frac{\partial\sin s_k}{\partial a_k}=\frac{2U_r(w_k)^2}{H_r(\omega)H_{r+1}(\omega)\,|\tilde V_k|^2}>0\qquad\text{on }F_k .
\end{equation}
Near the face we use the \emph{collar chart}: fix $l\ne k$ and, for $y'=(y'_j)$ in the face $\{s_k=0\}$ and $0\le\delta<y'_l$, let
\begin{equation}
\chi(y',\delta)=(s_1,\dots,s_n),\qquad s_k=\delta,\quad s_l=y'_l-\delta,\quad s_j=y'_j\ (j\ne k,l),
\end{equation}
which preserves the sum of the steps. Represent $[a]$ by the slice $a_l=1$ for a fixed $l\ne k$. In the coordinates (points of $F_k$, $a_k$) on the source in this slice and $\chi$ on the target, the Jacobian of
$\bar L$ at a point of the face is block triangular,
\begin{equation}
d\bar L=\begin{pmatrix}dL_{w'}&*\\0&\partial s_k/\partial a_k\end{pmatrix},\qquad \frac{\partial s_k}{\partial a_k}=\frac{\partial\sin s_k}{\partial a_k}>0 ,
\end{equation}
since $s_k=0$ on the face.
\item \emph{Localisation.} Let $y'$ be a regular value of $L_{w'}$, and $y_\delta=\chi(y',\delta)$ with $\delta>0$ small. Suppose $L_w(a^{[i]})=y_{\delta_i}$ with $\delta_i\to0$. If $a^{[i]}$ leaves every compact set, then by the proof of
Proposition~\ref{prop:Aproper} every atom outside $\mathsf M$ forces its own step to $0$ or $\pi$. Only $s_k\to0$, so the set outside $\mathsf M$ is
$\{k\}$, and $k$ is light. So $a^{[i]}$ converges to a point of $F_k$, which is a preimage of $y'$. If $a^{[i]}$ stays in a compact set,
a limit point would be a preimage of the point $(y',0)$ in the open cell, which does not exist.
\item \emph{Counting.} By (2), $d\bar L$ is invertible at each of the finitely many preimages $p$ of $y'$ in $F_k$, since $y'$ is
regular for $L_{w'}$, hence on a neighbourhood $U_p$ of $p$, and by the inverse function theorem $U_p$ can be chosen so that, for
small $\delta$, it contains exactly one preimage of $y_\delta$. Its local degree is the sign of $\det dL_{w'}$ times the fixed sign of the
normal block. By (3), for small $\delta$ all preimages of $y_\delta$ lie in $\bigcup_pU_p$; so $y_\delta$ is a regular value of $L_w$
and there are no other preimages.
\end{enumerate}
Hence $\deg L_w=\pm\deg L_{w'}=\pm1$.
\end{proof}

\begin{corollary}\label{cor:Lwonto}
The map $L_w$ of \eqref{eq:Lw} is onto the cell $C_r$ of \eqref{eq:cell}.
\end{corollary}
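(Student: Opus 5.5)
The plan is to deduce the surjectivity of $L_w$ from the two preceding propositions by standard degree theory. Proposition~\ref{prop:Aproper} says that $L_w\colon\R^n_{>0}/\R_{>0}\to C_r$ is proper, and Proposition~\ref{prop:Adegree} says that its degree is $\pm1$. Both the source and the target are connected, oriented manifolds without boundary of dimension $n-1$. The source is diffeomorphic to the open simplex via $[a]\mapsto a/\sum_ka_k$. The target $C_r$ is the intersection of an affine hyperplane with the open cube $(0,\pi)^n$, hence a convex open subset of that hyperplane. It is non-empty, since Proposition~\ref{prop:Aturning} shows that the image of $L_w$ lies in it.

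The map $L_w$ is smooth, in fact real-analytic. By \eqref{eq:Vtilde} its components are ratios of Heine sums with positive denominators. Each step $s_j$ is the clockwise angle between two non-zero vectors, and it stays in $(0,\pi)$ by Proposition~\ref{prop:Awronskian}, so $s_j$ can be written smoothly through $\arccos$ of the normalised inner product. Hence the Brouwer degree of a proper smooth map between connected oriented manifolds of the same dimension is defined and independent of the regular value.

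Now suppose some $y\in C_r$ were not in the image. The image of a proper map into a locally compact Hausdorff space is closed. So the complement of the image is a non-empty open set, and by Sard's theorem it contains a regular value. That value has no preimages, so it would give degree $0$. This contradicts $\deg L_w=\pm1$, and therefore $L_w$ is onto $C_r$.

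The only points that need care are that the degree really is well defined, which requires smoothness, properness and connectedness of $C_r$, and that Proposition~\ref{prop:Adegree} computes it at some regular value. None of these is a real obstacle; the substance lies in the two cited propositions.
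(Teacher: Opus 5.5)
Your proposal is correct and is the paper's own argument: properness (Proposition~\ref{prop:Aproper}) together with degree $\pm1$ (Proposition~\ref{prop:Adegree}) for a map between connected manifolds of the same dimension forces surjectivity onto $C_r$. One simplification: a point outside the image is vacuously a regular value, so your last step needs neither the closedness of the image nor Sard's theorem.
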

\begin{proof}
A proper map of non-zero degree between connected manifolds of the same dimension is onto.
\end{proof}

\begin{proposition}[ALE realisation]\label{prop:ALEreal}
Let $\alpha>0$, let $\ell_0,\dots,\ell_n$ be as in Lemma~\ref{lem:match}, and let $r$ be the integer of
Lemma~\ref{lem:match}(i); so $0\le r\le n-1$, and the clockwise steps of $D_\alpha\ell_j$, $D_\alpha=\diag(1,2/\alpha)$, lie in the cell $C_r$
of \eqref{eq:cell} (Lemma~\ref{lem:match}(ii)). Let $m=\max(r,n-1-r)$. Then there are
$N=2m+1$ nodes, consisting of $w_1,\dots,w_n$, $m-r$ further nodes with parameter $\infty$ and $m-n+1+r$ further nodes with
parameter $0$, and a parameter $c\in\mathcal A$, such that the ALE soliton metric $g_c$ of Corollary~\ref{cor:CinCtor} has the turning
points $w_1,\dots,w_n$, and in the basis $F_\alpha$ of \S\ref{subsec:CinCtor} for $c$ the model $\Phi^\sharp_{0,\alpha}$ and the rod lines $[\ell_0],\dots,[\ell_n]$.
\end{proposition}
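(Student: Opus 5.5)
The plan is to combine Corollary~\ref{cor:Lwonto} with the converse part of Theorem~\ref{thm:family}(a) and Corollary~\ref{cor:CinCtor}(iii). By Lemma~\ref{lem:match}(ii), the clockwise steps of the vectors $D_\alpha\ell_j$ form a point $(s_1,\dots,s_n)\in C_r$. Corollary~\ref{cor:Lwonto} then gives positive weights $a=(a_1,\dots,a_n)$ on the turning points whose normalised Hankel family $(n,r)$ has exactly these steps. Let $\omega=\sum_k a_k\delta_{w_k}$. Both $\tilde V_j$ and $D_\alpha\ell_j$ start at $(1,1)$, and both sequences are determined by their clockwise steps in $(0,\pi)$ up to positive rescaling of each vector. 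Inductively, then, $[\tilde V_j]=[D_\alpha\ell_j]$ as rays for all $j$. The endpoint is consistent: Lemma~\ref{lem:match}(ii) gives $D_\alpha\ell_n$ as a positive multiple of $(-1)^r(1,-1)$, and $\tilde V_n=(-1)^r(1,-1)$.

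Next we build the soliton. Set $m=\max(r,n-1-r)$, so $m\ge\max(r,n-1-r)$ as the converse in Theorem~\ref{thm:family}(a) requires. Choose a set $I$ of $m-r$ points and a set $Z$ of $m-n+1+r$ points on the axis, distinct and off the $w_k$. Put $c=\infty$ on $I$, $c=0$ on $Z$, and $c_k=a'_kP_N'(w_k)$ at the turning points, where $a'_k=a_k\prod_{k_0\in I}(w_k-w_{k_0})^{-2}$. That converse gives $c\in\cA$ with family data $(\omega,r,0)$, and the reduced measure $\omega^I$ equals the chosen $\omega$.

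By Corollary~\ref{cor:CinCtor}(i), the metric $g_c$ is Ricci-flat, its turning points are exactly the regular nodes $w_1,\dots,w_n$, and the phantoms are interior points of rods. By Corollary~\ref{cor:CinCtor}(iii), in the basis $F_\alpha$ for this $c$ the model is $\Phi^\sharp_{0,\alpha}$ and the rod lines are spanned by $D_\alpha^{-1}\tilde V_j$. The first step showed $[\tilde V_j]=[D_\alpha\ell_j]$, so $[D_\alpha^{-1}\tilde V_j]=[\ell_j]$, which is the claim.

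The main obstacle is the bookkeeping in the second paragraph: checking that steps in $(0,\pi)$ determine the rays uniquely, and that the normalisation $D_\alpha$ matches the frame $F_\alpha$ exactly. The latter should follow from the displayed identity before Corollary~\ref{cor:CinCtor}, which gives the $F_\alpha$-components of $V_j$ as $(H_r(\omega)/\vartheta)D_\alpha^{-1}\tilde V_j$. The substantive input, surjectivity of $L_w$, is already supplied by Propositions~\ref{prop:Aproper} and~\ref{prop:Adegree}.
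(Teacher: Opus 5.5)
Your proposal is correct and follows the paper's own proof essentially step for step. You obtain $\omega$ from Corollary~\ref{cor:Lwonto}, use the converse of Theorem~\ref{thm:family}(a) with $m-r$ heavy and $m-n+1+r$ light phantoms to get $c\in\cA$ with family data $(\omega,r,0)$, and read off the model and the rod lines $[D_\alpha^{-1}\tilde V_j]=[\ell_j]$ from Corollary~\ref{cor:CinCtor}(i),(iii). The last step holds because rays with the same starting ray $(1,1)$ and the same clockwise steps in $(0,\pi)$ coincide.
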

\begin{proof}
By Corollary~\ref{cor:Lwonto} there is $\omega=\sum_ka_k\delta_{w_k}$, $a\in\R^n_{>0}$, with $L_w([a])$ equal to the steps of $D_\alpha\ell_j$. Choose
the $m-r$ and the $m-n+1+r$ further nodes anywhere on the axis, distinct and off $w$. By Theorem~\ref{thm:family}\textup{(a)} there is $c\in\cA$
on these $N$ nodes, with $c=\infty$, resp.\ $c=0$, at the further nodes, whose family data are $(\omega,r,0)$. By
Corollary~\ref{cor:CinCtor}\textup{(i)},\textup{(iii)}, $g_c$ has the turning points $w$, and in the basis $F_\alpha$ the model $\Phi^\sharp_{0,\alpha}$ and the rod
lines $[D_\alpha^{-1}\tilde V_j]$, with $\tilde V_j$ the normalised family $(n,r)$ of $\omega$. The vectors $\tilde V_j$ and $D_\alpha\ell_j$ have the same first
ray, that of $(1,1)$, and the same clockwise steps in $(0,\pi)$, so they have the same rays, and the rod lines are $[\ell_j]$.
\end{proof}

\subsection{ALF}\label{subsec:surjALF}
For $\omega$ as above, $0\le r\le n$ and $\gamma\in(0,\infty)$, let $V_j$ be the family $(n,r,\gamma)$ of Definition~\ref{def:ALFfamily}, $\hat V_j$ its
shear and $T_0$ as before Proposition~\ref{prop:farCtor}, and $\alpha=\alpha(\omega,\gamma)$ and $D_\gamma=H_{r+1}(\omega)-\gamma^2H_{r-1}(\omega)=\frac\alpha2\gamma H_r(\omega)$
as in \eqref{eq:alphadef}. For $\alpha\ne0$ let
$\Theta=\diag(1/H_r(\omega),1/D_\gamma)$, so that $\Theta\hat V_0=(1,1)$ and $\Theta\hat V_n=(-1)^r(1,-1)$, and $D_\alpha T_0=\Theta$. The \emph{Hankel rays} of the
family are the rays of $\Theta\hat V_j=D_\alpha T_0\hat V_j$, i.e.\ $D_\alpha$ applied to the rays of $T_0\hat V_j$, which are the normalised rays (\S\ref{subsec:cells}) of the far-node limit (Proposition~\ref{prop:farCtor} and Lemma~\ref{lem:famdegree}). We write $\mathrm{Cov}_r(f,g)=\langle fg\rangle_r-\langle f\rangle_r\langle g\rangle_r$. The sum $\Sigma$ in \eqref{eq:Vhat} is $\Sigma=\Sigma_r(\omega)=H_r(\omega)\langle e_1(w_S)\rangle_r$, by \eqref{eq:Sigmas}. Put
\begin{equation}\label{eq:kappaQ} \kappa=\frac{\gamma H_{r-1}(\omega)}{H_r(\omega)},\qquad Q=\frac{H_{r+1}(\omega)H_{r-1}(\omega)}{H_r(\omega)^2}. \end{equation}
The matrix $T_0$ gives the rod lines (Proposition~\ref{prop:farCtor}); for $\alpha\ne0$ we also use $\Theta=D_\alpha T_0$, for which the vectors
$\Theta\hat V_j$ stay bounded as $|\alpha|\to\infty$ (Lemma~\ref{lem:ALFnorm}(iii)).

\begin{lemma}[normalised far-node vectors]\label{lem:ALFnorm}
Let $\alpha$ be in the range of $\alpha(\omega,\cdot)$ (Remark~\ref{rem:farfacts}(b)), let $\gamma=\gamma(\omega,\alpha)$, let $T_0$, $\Theta$ and $\hat V_j$ be as above,
and let $\kappa$ and $Q$ be as in \eqref{eq:kappaQ}.
\begin{enumerate}[leftmargin=*,label=\textup{(\roman*)}]
\item $\kappa^2+\frac\alpha2\kappa=Q$, and $\kappa$ is the larger root: $\kappa=-\frac\alpha4+\bigl(\frac{\alpha^2}{16}+Q\bigr)^{1/2}$. Moreover $Q\le C(w)$ for all
$\omega>0$, with $C(w)$ depending only on $w$ and $r$.
\item $T_0\hat V_j=\bigl(\langle\varepsilon_j\rangle_r,\ (\frac\alpha2+\kappa)\langle\varepsilon_j\rangle_{r+1}-\kappa\langle\varepsilon_j\rangle_{r-1}-2\,\mathrm{Cov}_r(\varepsilon_j,e_1(w_S))\bigr)$.
In particular $T_0\hat V_j$ is a real-analytic function of $(\omega,\alpha)$. If $1\le r\le n-1$, it is defined for all $\alpha\in\R$, and at $\alpha=0$,
where $\gamma=\gamma_*(\omega)$, $\kappa=\sqrt Q$.
\item If $\alpha\ne0$, then $\Theta\hat V_j=(\xi_j,\eta_j)$ with
\begin{equation} \xi_j=\langle\varepsilon_j\rangle_r,\qquad \eta_j=\langle\varepsilon_j\rangle_{r+1}+\frac{2\kappa}\alpha\bigl(\langle\varepsilon_j\rangle_{r+1}-\langle\varepsilon_j\rangle_{r-1}\bigr)-\frac4\alpha\,\mathrm{Cov}_r\bigl(\varepsilon_j,e_1(w_S)\bigr) . \end{equation}
These vectors are bounded, uniformly in $\omega$ and in $|\alpha|\ge\alpha_0>0$.
\end{enumerate}
In particular the vectors of (ii) and (iii) depend only on $w$, $\pi_{r-1},\pi_r,\pi_{r+1}$, $Q$ and $\alpha$.
\end{lemma}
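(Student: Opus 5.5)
The proof is a direct computation from the definitions \eqref{eq:alphadef}, \eqref{eq:kappaQ} and \eqref{eq:Vhat}. Throughout, Hankel determinants of the sign-reversed measures are written as averages: $H_s(\omega_{(j)})=H_s(\omega)\langle\varepsilon_j\rangle_s$, and by \eqref{eq:Sigmas} with $P_S(\omega_{(j)})=\varepsilon_j(S)P_S(\omega)$ also $\Sigma_r(\omega_{(j)})=H_r(\omega)\langle\varepsilon_je_1(w_S)\rangle_r$.

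For (i), multiply $\kappa^2+\frac\alpha2\kappa$ out. By \eqref{eq:alphadef}, $\frac\alpha2\kappa=(H_{r+1}-\gamma^2H_{r-1})H_{r-1}/H_r^2$. Since $\kappa^2=\gamma^2H_{r-1}^2/H_r^2$, the sum is $H_{r+1}H_{r-1}/H_r^2=Q$. We have $\kappa\ge0$, while the product of the two roots is $-Q\le0$. So for $Q>0$ the value $\kappa$ is the non-negative root, which is the larger one. For $Q=0$ (the cases $r=0$ or $r=n$) one checks $\kappa=\max(0,-\alpha/2)$ directly, which again is the larger root.

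For the bound on $Q$, use Heine's formula: $H_{r+1}H_{r-1}$ is a sum of products $P_{S}P_{S'}$ with $|S|=r+1$, $|S'|=r-1$. Each such product is bounded by a constant $C(w)$ times a sum of products $P_TP_{T'}$ with $|T|=|T'|=r$. To see this, move one atom $k\in S\setminus S'$ from $S$ to $S'$. The weights are unchanged, since the multiset $S\sqcup S'$ is preserved, and only the Vandermonde factors change, by ratios bounded in terms of $w$. Hence $H_{r+1}H_{r-1}\le C(w)H_r^2$. Getting this combinatorial bound cleanly, for instance via a Newton-type inequality or by the exchange just described, is the main point to check.

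For (ii), compute the two components of $T_0\hat V_j$. The first is $H_r(\omega_{(j)})/H_r(\omega)=\langle\varepsilon_j\rangle_r$. For the second, insert the shifted determinant \eqref{eq:shifted} into \eqref{eq:Vhat} and divide by $\gamma H_r(\omega)$. This gives
\[
\frac{H_{r+1}\langle\varepsilon_j\rangle_{r+1}}{\gamma H_r}-2\langle\varepsilon_je_1\rangle_r-\kappa\langle\varepsilon_j\rangle_{r-1}+2\langle e_1\rangle_r\langle\varepsilon_j\rangle_r .
\]
By (i), $H_{r+1}/(\gamma H_r)=Q/\kappa=\kappa+\alpha/2$, and the two middle averages combine into $-2\,\mathrm{Cov}_r(\varepsilon_j,e_1(w_S))$. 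This is the stated formula.

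Analyticity in $(\omega,\alpha)$ then follows from the explicit root in (i), provided $\alpha^2/16+Q>0$. For $1\le r\le n-1$ this holds because $Q>0$, so the expression is defined for all real $\alpha$. At $\alpha=0$ it gives $\kappa=\sqrt Q$.

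For (iii), apply $\Theta=D_\alpha T_0$, which multiplies the second component of (ii) by $2/\alpha$. This gives $\eta_j$ directly. For boundedness, note that all averages are bounded: $|\langle\varepsilon_j\rangle_s|\le1$, and $|\mathrm{Cov}_r(\varepsilon_j,e_1(w_S))|\le2r\max|w_k|$. By (i), $0\le\kappa\le|\alpha|/4+\sqrt{\alpha^2/16+C(w)}$, so $\kappa/|\alpha|$ is bounded for $|\alpha|\ge\alpha_0$. Hence $(\xi_j,\eta_j)$ is bounded, uniformly in $\omega$ and in $|\alpha|\ge\alpha_0$.

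The final dependence claim follows by inspection of the formulas in (ii) and (iii), using $\kappa=\kappa(Q,\alpha)$ from (i).
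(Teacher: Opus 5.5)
Your proof is correct and follows the paper's argument essentially step for step: the same exchange bound $H_{r+1}(\omega)H_{r-1}(\omega)\le C(w)H_r(\omega)^2$, the same division by $\gamma H_r(\omega)$, and $\Theta=D_\alpha T_0$. Your single bound $\kappa\le|\alpha|/2+\sqrt{C(w)}$ in (iii) is a slightly simpler substitute for the paper's split into $\alpha>0$ and $\alpha<0$.

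Two small repairs are needed:
\begin{enumerate}
\item For $r=0$ one has $\kappa=Q=0$, so $Q/\kappa$ is undefined. Take $H_{r+1}(\omega)/(\gamma H_r(\omega))=\kappa+\frac\alpha2$ directly from \eqref{eq:alphadef} instead.
\item For $r\in\{0,n\}$, analyticity needs $\alpha\ne0$. This holds because the range of $\alpha(\omega,\cdot)$ is then $(0,\infty)$, resp.\ $(-\infty,0)$ (Remark~\ref{rem:farfacts}(b)).
\end{enumerate}
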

\begin{proof}
(i) $\kappa(\kappa+\frac\alpha2)=\frac{\gamma H_{r-1}(\omega)}{H_r(\omega)}\cdot\frac{H_{r+1}(\omega)}{\gamma H_r(\omega)}=Q$, since $\frac\alpha2+\kappa=H_{r+1}(\omega)/(\gamma H_r(\omega))$ by \eqref{eq:alphadef}. As $\kappa\ge0$ and $\kappa+\frac\alpha2\ge0$,
$\kappa\ge-\frac\alpha4$, so $\kappa$ is the larger root. For the bound, write $H_{r+1}(\omega)H_{r-1}(\omega)$ by
Heine's formula as a sum over pairs $(S_+,S_-)$ with $|S_+|=r+1$ and $|S_-|=r-1$. Choose $k\in S_+\setminus S_-$ and put $S=S_+\setminus\{k\}$, $S'=S_-\cup\{k\}$.
Then $a_{S_+}a_{S_-}=a_Sa_{S'}$, the Vandermonde factors change by a ratio bounded in terms of $w$, and each pair
$(S,S')$ arises at most $n$ times. So $H_{r+1}(\omega)H_{r-1}(\omega)\le C(w)H_r(\omega)^2$.

(ii) By \eqref{eq:shifted} and Heine's formula,
\begin{equation} H^{(\gamma)}_{r+1}(\omega_{(j)})=H_{r+1}(\omega)\langle\varepsilon_j\rangle_{r+1}-2\gamma H_r(\omega)\langle\varepsilon_je_1(w_S)\rangle_r-\gamma^2H_{r-1}(\omega)\langle\varepsilon_j\rangle_{r-1}, \end{equation}
and the shear adds
\begin{equation}
2\gamma\frac{\Sigma}{H_r(\omega)}H_r(\omega_{(j)})=2\gamma H_r(\omega)\langle e_1(w_S)\rangle_r\langle\varepsilon_j\rangle_r.
\end{equation}
Divide by $\gamma H_r(\omega)$ and use $H_{r+1}(\omega)/(\gamma H_r(\omega))=\frac\alpha2+\kappa$ and $\gamma H_{r-1}(\omega)/H_r(\omega)=\kappa$. By (i), $\kappa$ is a real-analytic
function of $(Q,\alpha)$ where $\alpha^2+16Q>0$, which holds on the range of $\alpha$: for $1\le r\le n-1$, $Q>0$, and for $r\in\{0,n\}$, $\alpha\ne0$. The $\langle\cdot\rangle_s$
and $Q$ are real-analytic in $\omega$. For $1\le r\le n-1$ the range of $\alpha$ is $\R$, and at $\alpha=0$, $\gamma=\gamma_*(\omega)$ (Remark~\ref{rem:farfacts}(a))
and $\kappa=\sqrt Q$ by (i).

(iii) $\Theta\hat V_j=D_\alpha T_0\hat V_j$; multiply the second component of (ii) by $2/\alpha$. For the bound: $|\langle\varepsilon_j\rangle_s|\le1$ and
$|\mathrm{Cov}_r(\varepsilon_j,e_1(w_S))|\le2\sum_k|w_k|$. For $\alpha>0$, $\kappa\le\sqrt Q$. For $\alpha<0$, $\kappa\ge|\alpha|/2$, and
$\frac{2\kappa}\alpha=-1+\frac{2Q}{\alpha\kappa}$ with $|\frac{2Q}{\alpha\kappa}|\le4C(w)/\alpha_0^2$.
\end{proof}

\begin{lemma}[degree of the Hankel family]\label{lem:famdegree}
Let $\Theta$, $T_0$, $\hat V_j$ and $\alpha=\alpha(\omega,\gamma)$ be as above, and let $\gamma_*$ be as in Remark~\ref{rem:farfacts}(a), so $\gamma_*=\infty$ if
$r=0$ and $\gamma_*=0$ if $r=n$. For every $\gamma\in(0,\infty)$ the rays $T_0\hat V_j$ satisfy Lemma~\ref{lem:match}(i) with the integer $r$. Let
$\gamma\ne\gamma_*$, so $\alpha\ne0$. Then the steps of $\Theta\hat V_j$ lie in $(0,\pi)$. If $\alpha>0$, then $0\le r\le n-1$, the steps are clockwise and
they lie in $C_r$. If $\alpha<0$, then $1\le r\le n$, the steps are counter-clockwise, and after exchanging the two components they
lie in $C_{r-1}$.
\end{lemma}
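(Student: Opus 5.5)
The plan is to check the hypotheses of Lemma~\ref{lem:match} for the sequence $\ell_j=T_0\hat V_j$ and then read off the integer of Lemma~\ref{lem:match}(i) by a continuity argument in $\gamma$. Lemma~\ref{lem:match}(ii) then gives the statements about $\Theta\hat V_j$.

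First the hypotheses. The shear $(X,Y)\mapsto(X,Y+2\gamma\Sigma X/H_r(\omega))$ has determinant $1$, and $T_0$ is diagonal with positive entries. So $\det(T_0\hat V_{j-1},T_0\hat V_j)$ is a positive multiple of $\det(V_{j-1},V_j)$, which is negative by Proposition~\ref{prop:ALFstructure}\ref{it:alfs1}. For $r=n$ this uses $\gamma>0$, which holds. The computation before Proposition~\ref{prop:farCtor} gives $T_0\hat V_0=(1,\alpha/2)$ and $T_0\hat V_n=(-1)^r(1,-\alpha/2)$. Hence Lemma~\ref{lem:match}(i) applies and produces an integer $r'$ with $\sum_js_j=r'\pi+2\arctan(\alpha/2)$ and $T_0\hat V_n$ a positive multiple of $(-1)^{r'}(1,-\alpha/2)$.

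The main step is to show $r'=r$. The steps $s_j\in(0,\pi)$ depend continuously on $\gamma\in(0,\infty)$, because consecutive vectors are never parallel. The function $\alpha(\omega,\gamma)$ is also continuous, so $r'=\pi^{-1}(\sum s_j-2\arctan(\alpha/2))$ is a continuous integer-valued function on the connected set $(0,\infty)$, hence constant.
\begin{itemize}
\item For $0\le r\le n-1$, let $\gamma\to0$. Then $\alpha\to+\infty$ and $\Theta\hat V_j\to$ the normalised Hankel family $(n,r)$. Indeed, $H^{(\gamma)}_{r+1}\to H_{r+1}$ and the shear term vanishes, so by Lemma~\ref{lem:ALFnorm}(iii) the limit vectors are $\tilde V_j$. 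Proposition~\ref{prop:Aturning} gives total turning $(r+\tfrac12)\pi$ for $\Theta\hat V_j$. Lemma~\ref{lem:match}(ii) relates this sum to $r'$ as $(r'+\tfrac12)\pi$, so $r'=r$ for small $\gamma$, and therefore for all $\gamma$.
\item For $r=n$, $\alpha<0$ throughout. Let $\gamma\to\infty$ and use Proposition~\ref{prop:ALFstructure}\ref{it:alfs4}: after the second component is divided by $-\gamma^2$ and the components are exchanged, the family becomes the Hankel family $(n,r-1)$, whose total turning is $(r-\tfrac12)\pi$. Comparing with Lemma~\ref{lem:match}(ii) for $\alpha<0$ gives $r'-\tfrac12=r-\tfrac12$, so $r'=r$.
\end{itemize}
Parity gives a consistency check: Proposition~\ref{prop:ALFstructure}\ref{it:alfs2}-type sign computations show $T_0\hat V_n=(-1)^r(\cdots)$, which matches $r'\equiv r\pmod 2$.

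The delicate point is this limit identification. One must check that the step sum passes to the limit, i.e.\ that no step tends to $0$ or $\pi$. This holds because the limit family has nonzero Wronskians by Proposition~\ref{prop:Awronskian}, and the rescalings involved are by positive diagonal matrices or by orientation reversals that are accounted for.

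With $r'=r$ established, the remaining claims follow from Lemma~\ref{lem:match}(ii) applied to $\ell_j=T_0\hat V_j$, using $D_\alpha\ell_j=\Theta\hat V_j$:
\begin{itemize}
\item for $\alpha>0$ the steps are clockwise with sum $(r+\tfrac12)\pi$, which is the cell $C_r$, and the range $0\le r\le n-1$ comes from Lemma~\ref{lem:match}(i);
\item for $\alpha<0$ the steps are counter-clockwise, and after exchanging the components they are clockwise with sum $(r-\tfrac12)\pi$, which is $C_{r-1}$, with $1\le r\le n$.
\end{itemize}
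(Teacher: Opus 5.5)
Your proof is correct and is essentially the paper's argument. You check the hypotheses of Lemma~\ref{lem:match} using $\det T_0>0$ and Proposition~\ref{prop:ALFstructure}\ref{it:alfs1}, note that the integer $r'$ is constant in $\gamma\in(0,\infty)$, and fix it for $r\le n-1$ by letting $\gamma\to0$, where the family tends to the normalised family $(n,r)$, and applying Proposition~\ref{prop:Aturning}.

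The only difference is the case $r=n$. There the paper observes that $\Theta\hat V_j=(\langle\varepsilon_j\rangle_n,\langle\varepsilon_j\rangle_{n-1})$ does not depend on $\gamma$, because the shear cancels against $\Sigma_n(\omega_{(j)})=e_1(w)H_n(\omega_{(j)})$ and $D_\gamma=-\gamma^2H_{n-1}(\omega)$. So no limit is needed. Your $\gamma\to\infty$ limit also works, provided you apply it to $\Theta\hat V_j$, including the positive normalisations by $H_n(\omega)$ and $H_{n-1}(\omega)$. Applied to the raw $V_j$ it does not give the right comparison, since their step sum is not $(r-\frac12)\pi$ in general.
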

\begin{proof}
$\det T_0>0$ and $\det(V_{j-1},V_j)<0$ (Proposition~\ref{prop:ALFstructure}\ref{it:alfs1}), so the rays $\ell_j=T_0\hat V_j$ satisfy the hypotheses
of Lemma~\ref{lem:match}, with $\ell_0=(1,\alpha/2)$ and $\ell_n=(-1)^r(1,-\alpha/2)$. Let $r'(\gamma)$ be the integer of
Lemma~\ref{lem:match}(i), so $r'\equiv r$ modulo $2$. The steps are continuous in $\gamma\in(0,\infty)$ and stay in $(0,\pi)$, and $\alpha$ is
continuous, so $r'$ is constant on $(0,\infty)$.
\begin{itemize}[leftmargin=*]
\item If $r\le n-1$, then $\gamma_*>0$ (with $\gamma_*=\infty$ if $r=0$). For $0<\gamma<\gamma_*$, the steps of $\Theta\hat V_j=D_\alpha\ell_j$ sum to $(r'+\frac12)\pi$ by
Lemma~\ref{lem:match}(ii). As $\gamma\to0$, $\Theta\hat V_j$ tends to the normalised ALE family $(n,r)$, whose steps lie in $(0,\pi)$ and sum to
$(r+\frac12)\pi$ (Proposition~\ref{prop:Aturning}); the limit holds by Lemma~\ref{lem:ALFnorm}(iii), since $\kappa\to0$ and $\alpha\to\infty$. So $r'=r$.
\item If $r=n$, then $\gamma_*=0$ and $\alpha<0$ for all $\gamma$. Here $H_{n+1}=0$ and $\Sigma_n(\omega_{(j)})=e_1(w)H_n(\omega_{(j)})$, so $\Theta\hat V_j=(\langle\varepsilon_j\rangle_n,\langle\varepsilon_j\rangle_{n-1})$ for every $\gamma$, which is the normalised ALE family $(n,n-1)$
with the components exchanged, whose steps sum to $(n-\frac12)\pi$. By
Lemma~\ref{lem:match}(ii) the exchanged steps sum to $(r'-\frac12)\pi$. So $r'=n$.
\end{itemize}
The remaining claims are Lemma~\ref{lem:match}(ii).
\end{proof}

\begin{proposition}[properness]\label{prop:ALFproper}
Let $1\le r\le n-1$ and $\alpha_0>0$, and let the Hankel rays be those defined before Lemma~\ref{lem:ALFnorm}. The map that
sends $(\omega,\alpha)$ to the steps of the Hankel rays is proper to $C_r$ on
$\R^n_{>0}/\R_{>0}\times[\alpha_0,\infty]$, where $\alpha=\infty$ means the ALE map $L_w$ of the family $(n,r)$, and likewise proper to $C_{r-1}$ (components exchanged) on $\R^n_{>0}/\R_{>0}\times[-\infty,-\alpha_0]$, where $\alpha=-\infty$ means
the ALE map of $(n,r-1)$ with the components exchanged.
\end{proposition}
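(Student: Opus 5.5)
Continuity first. By Lemma~\ref{lem:ALFnorm}(iii) the Hankel rays $\Theta\hat V_j=(\xi_j,\eta_j)$ are real-analytic in $(\omega,\alpha)$ for $|\alpha|\ge\alpha_0$. As $\alpha\to+\infty$ we have $\kappa\to0$ and the correction terms carry a factor $1/\alpha$, so $\Theta\hat V_j\to\tilde V_j$, the normalised family $(n,r)$. These convergences are uniform in $\omega$, because $Q\le C(w)$ and the averages and covariances are bounded uniformly. Hence the step map extends continuously to $[\alpha_0,\infty]$ with value $L_w$ at $\infty$. The case $\alpha\to-\infty$ is similar: there $2\kappa/\alpha\to-1$, and after exchanging the components the limit is the family $(n,r-1)$. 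By Lemma~\ref{lem:famdegree} the image lies in $C_r$, respectively $C_{r-1}$. The steps stay in $(0,\pi)$ on the closed range because the Wronskian \eqref{eq:wronskianALF} is strictly negative.

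Properness then reduces to sequences $(\omega^{[i]},\alpha^{[i]})$ with $[\omega^{[i]}]$ leaving every compact set. For these we must show that, after passing to a subsequence, some step tends to $0$ or $\pi$. The case where $\omega$ stays in a compact set while $\alpha$ varies in the compactified interval is harmless by continuity. Pass to a subsequence with $\alpha^{[i]}\to\alpha_\infty\in[\alpha_0,\infty]$ and apply Lemma~\ref{lem:degen}. Take $p=r+1$ if $\alpha_\infty=\infty$; this is exactly Proposition~\ref{prop:Aproper} applied to the limit family, since the rays converge uniformly. For finite $\alpha_\infty$, the natural choice is $p=r$. Then $\pi_{r-1},\pi_r,\pi_{r+1}$ concentrate, by Lemma~\ref{lem:degen}(ii), on sets $\mathsf H\cup S'$ with $S'\subset\mathsf M$. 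Using Lemma~\ref{lem:ALFnorm}, the limit rays are $\sigma_j$ times a vector depending only on $j'$, built from the averages of $\omega'$ and from the limit of $Q$, which stays bounded. The constraint is that the orders $r-1,r,r+1$ must all fall within the range $[|\mathsf H|,|\mathsf H|+|\mathsf M|]$. Where they do not, the corresponding averages degenerate. Lemma~\ref{lem:degen}(i) guarantees an atom $k\notin\mathsf M$. For it, Lemma~\ref{lem:degen}(iii) gives limits $\sigma_{k-1}X$ and $\sigma_kX$ of the same vector $X$, so $s_k\to0$ or $\pi$, provided $X\neq0$.

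\textbf{Main obstacle.} The hardest step is showing that the limit vector $X$ is non-zero, and, where orders fall outside $\mathsf M$'s range, handling the ambiguity in which class is chosen. Here is what I would try first. The first component $\xi_j=\langle\varepsilon_j\rangle_r$ converges to $\pm\langle\varepsilon_{j'}\rangle'_{r-|\mathsf H|}$. This is non-zero whenever the reduced order $r'=r-|\mathsf H|$ satisfies $0\le r'\le|\mathsf M|-1$, by Proposition~\ref{prop:Awronskian} for $\omega'$, because that first component is a component of a normalised Hankel family. So I would choose $p$ to make this hold, namely $p=r+1$ as in the ALE case. Then $\xi$ alone suffices to get a non-zero limit at every rod. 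The second component only needs boundedness, which Lemma~\ref{lem:ALFnorm}(iii) supplies. If the limit of $\xi$ vanishes at some $j'$, I would instead use the Wronskian identity \eqref{eq:wronskianALF}, rescaled, to show the full vector is non-zero. The negative side follows the same way with the components exchanged.
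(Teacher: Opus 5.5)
Your continuity argument (uniform in $\omega$ up to $\alpha=\pm\infty$), the reduction to sequences with $[\omega^{[i]}]$ leaving every compact set, and the case $\alpha_\infty=\pm\infty$ via uniform closeness to the ALE rays are fine. The gap is in the step you yourself call the main obstacle, and the way you propose to close it is false. You claim that $\lim\xi_j=\pm\langle\varepsilon_{j'}\rangle'_{r'}$ is non-zero by Proposition~\ref{prop:Awronskian}. That proposition only says that consecutive Hankel vectors are not parallel; a single component can vanish. For example, take $n=3$, $r=1$, a fixed $\alpha>0$, $a_1=a_3=1$ and $a_2\to0$. Then $\mathsf H=\varnothing$, $\mathsf M=\{1,3\}$, $\mathsf L=\{2\}$ and $r'=1\le|\mathsf M|-1$. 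The only atom outside $\mathsf M$ is $k=2$, and $\xi_1,\xi_2\to0$.

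So you need the whole limit vector, and your fallback (``the Wronskian, rescaled'') is exactly the argument that is missing. You must show that the limit rays are $\sigma_j\Theta'\hat V'_{j'}$, the normalised vectors of an actual family $(\omega',r',\gamma')$ with $\alpha(\omega',\gamma')=\alpha^\infty$. Then Proposition~\ref{prop:ALFstructure}\ref{it:alfs1} gives $X\ne0$; use Proposition~\ref{prop:Awronskian} instead if $\alpha^\infty=\pm\infty$, and note the first component is $1$ if $r'=0$. The identification comes from $Q\to Q'=H_{r'+1}(\omega')H_{r'-1}(\omega')/H_{r'}(\omega')^2$, by \eqref{eq:degenH}, together with Lemma~\ref{lem:ALFnorm}(i): $\kappa$ tends to the larger root of $\kappa^2+\frac{\alpha^\infty}2\kappa=Q'$, which is the $\kappa$ of the reduced family.

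The identification also dictates $p$, which you leave open (first $p=r$, then $p=r+1$, and ``the same way'' for $\alpha<0$). One needs $p=r+1$ for $\alpha>0$ and $p=r$ for $\alpha<0$. With these choices Lemma~\ref{lem:degen}(ii) fails for at most one of the orders $r-1,r,r+1$, and harmlessly:
\begin{itemize}
\item For $\alpha>0$ and $|\mathsf H|=r$ it fails for $s=r-1$. But then $Q'=0$, so $\kappa\le\sqrt Q\to0$ and the term $\frac{2\kappa}\alpha\langle\varepsilon_j\rangle_{r-1}$ drops out.
\item For $\alpha<0$ and $|\mathsf M|=r'$ it fails for $s=r+1$. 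After the rewriting $\frac{2\kappa}\alpha=-1+\frac{2Q}{\alpha\kappa}$, the average $\langle\varepsilon_j\rangle_{r+1}$ carries only the coefficient $\frac{2Q}{\alpha\kappa}$, of modulus at most $4Q/\alpha_0^2\to0$.
\end{itemize}
In both cases the same term vanishes in the limit family ($\kappa'=0$, resp.\ $2Q'/(\alpha^\infty\kappa')=0$). Also $\alpha^\infty$ lies in the range of $\alpha(\omega',\cdot)$ (Remark~\ref{rem:farfacts}(b)), because $r'\le|\mathsf M|-1$ for $\alpha>0$ and $r'\ge1$ for $\alpha<0$.

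With the opposite choices the uncontrolled average survives with a coefficient tending to $1$:
\begin{itemize}
\item $1+\frac{2\kappa}\alpha$ in front of $\langle\varepsilon_j\rangle_{r+1}$ if $\alpha>0$, $p=r$ and $|\mathsf H|+|\mathsf M|=r$;
\item $1-\frac{2Q}{\alpha\kappa}$ in front of $\langle\varepsilon_j\rangle_{r-1}$ if $\alpha<0$, $p=r+1$ and $|\mathsf H|=r$.
\end{itemize}
Its limit is then governed by the class adjacent to that of $\mathsf M$, not by $\omega'$. So Lemma~\ref{lem:degen}(iii) no longer makes the limits on the rods $k-1$ and $k$ equal to $\pm$ one vector. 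These are exactly the cases your proposal sets aside with ``the corresponding averages degenerate''.
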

\begin{proof}
By Remark~\ref{rem:farfacts}(b), $\gamma=\gamma(\omega,\alpha)$ is a function of $(\omega,\alpha)$. For $\alpha>0$ let $\varsigma=1/\alpha$. Solving
$2\varsigma H_{r-1}\gamma^2+H_r\gamma-2\varsigma H_{r+1}=0$, $\gamma$ is real-analytic in $(\omega,\varsigma)$ near $\varsigma=0$, with $\gamma=2\varsigma H_{r+1}/H_r+O(\varsigma^3)$. So
$\kappa=O(\varsigma)$, and by Lemma~\ref{lem:ALFnorm}(iii) the normalised vectors are continuous up to $\varsigma=0$, where they are the
normalised ALE family $(n,r)$. For $\alpha<0$ let $\varsigma=-1/\alpha$; then $1/\gamma$ is analytic in $(\omega,\varsigma)$ near $\varsigma=0$, and by the
rewriting $\frac{2\kappa}\alpha=-1+\frac{2Q}{\alpha\kappa}$ the vectors tend to the exchanged ALE family $(n,r-1)$.

Let $\omega^{[i]}$ leave every compact set, with $\alpha_i$ in one of the two ranges. It suffices to show that, after passing to a
subsequence, some step tends to $0$ or $\pi$. Pass to a subsequence along which $\alpha_i\to\alpha^\infty$, and apply Lemma~\ref{lem:degen} with
$p=r+1$ if $\alpha_i>0$ and $p=r$ if $\alpha_i<0$. Then $|\mathsf H|\le r$, resp.\ $|\mathsf H|\le r-1$, and $r'=r-|\mathsf H|$ satisfies $r'+1\le|\mathsf M|$, resp.\
$1\le r'\le|\mathsf M|$.

\emph{Limits.} By Lemma~\ref{lem:degen}(ii), $\pi_s$ converges to the image of $\pi'_{s-|\mathsf H|}$ for $s\in\{r-1,r,r+1\}$ with
$0\le s-|\mathsf H|\le|\mathsf M|$, and, by \eqref{eq:degenH}, since the factors $C_{\mathsf H}$ and the powers of $a_{k_0}$ cancel,
\begin{equation}
Q\to Q'=H_{r'+1}(\omega')H_{r'-1}(\omega')/H_{r'}(\omega')^2 .
\end{equation}
The first statement covers all three values of $s$, with two exceptions.
\begin{itemize}[leftmargin=*]
\item For $\alpha>0$ and $|\mathsf H|=r$ it fails for $s=r-1$. But then $Q'=0$, since $H_{-1}=0$, so $\kappa\le\sqrt Q\to0$, and the term with
$\langle\varepsilon_j\rangle_{r-1}$, which is bounded by $1$, drops out.
\item For $\alpha<0$ and $|\mathsf M|=r'$ it fails for $s=r+1$. Again $Q'=0$, since $H_{|\mathsf M|+1}(\omega')=0$. After the rewriting,
\begin{equation}
\eta_j=\langle\varepsilon_j\rangle_{r-1}+\frac{2Q}{\alpha\kappa}\bigl(\langle\varepsilon_j\rangle_{r+1}-\langle\varepsilon_j\rangle_{r-1}\bigr)-\frac4\alpha\mathrm{Cov}_r(\varepsilon_j,e_1(w_S)),
\end{equation}
so $\langle\varepsilon_j\rangle_{r+1}$ enters only with the coefficient $\frac{2Q}{\alpha\kappa}$, and $|\frac{2Q}{\alpha\kappa}|\le4Q/\alpha_0^2\to0$.
\end{itemize}
So, by Lemma~\ref{lem:ALFnorm}(i), for finite $\alpha^\infty$, $\kappa$ converges to the larger root of $\kappa^2+\frac{\alpha^\infty}2\kappa=Q'$, which is the $\kappa$ of
the limit family below. By Lemma~\ref{lem:degen}(iii), $\langle\varepsilon_j\rangle_s\to\sigma_j\langle\varepsilon_{j'}\rangle'_{s-|\mathsf H|}$ for these $s$, and the
covariance does not see the constant $e_1(w_{\mathsf H})$, so
\begin{equation}
\mathrm{Cov}_r(\varepsilon_j,e_1(w_S))\to\sigma_j\mathrm{Cov}'_{r'}(\varepsilon_{j'},e_1(w_{S'})),
\end{equation}
where the prime refers to $\omega'$. Hence, by Lemma~\ref{lem:ALFnorm}(iii),
\begin{equation} \Theta\hat V_j\ \to\ \sigma_j\,\Theta'\hat V'_{j'}, \end{equation}
where $\Theta'\hat V'_{j'}$ are the normalised vectors of the family $(\omega',r',\gamma')$ with $\alpha(\omega',\gamma')=\alpha^\infty$ (Remark~\ref{rem:farfacts}(b)),
and the normalised ALE vectors if $\alpha^\infty=\pm\infty$. In the exceptions the dropped term is absent from the limit family as well,
since there $\kappa'=0$, resp.\ $2Q'/(\alpha^\infty\kappa')=0$. These vectors are non-zero, by Proposition~\ref{prop:ALFstructure}\ref{it:alfs1},
resp.\ Proposition~\ref{prop:Awronskian}; for $r'=0$ the first component is $1$.

\emph{A step degenerates.} By Lemma~\ref{lem:degen}(i) some atom $k\notin \mathsf M$ exists. The rods $k-1$ and $k$ have the same $j'$, so
their limit vectors are $\pm\Theta'\hat V'_{j'}$, and the step $s_k$ tends to $0$ or $\pi$.
\end{proof}

\begin{theorem}[surjectivity for ALF]\label{thm:ALFsurj}
Let $\alpha\ne0$, and let $C_r$ be the cell \eqref{eq:cell}. If $\alpha>0$ and $0\le r\le n-1$, the map $\omega\mapsto$ Hankel rays
at $(\omega,\alpha)$, defined before Lemma~\ref{lem:ALFnorm}, is a proper map of degree $\pm1$ to $C_r$, hence onto $C_r$. If $\alpha<0$ and $1\le r\le n$,
the same holds for the cell of the ALE family $(n,r-1)$ with the components exchanged.
\end{theorem}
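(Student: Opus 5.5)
We reduce to the ALE degree statement of Proposition~\ref{prop:Adegree} by a homotopy in $\varsigma=1/\alpha$. Fix $\alpha>0$ and $0\le r\le n-1$, and let $L^{(\alpha)}_w\colon\R^n_{>0}/\R_{>0}\to C_r$ send $[\omega]$ to the clockwise steps of the Hankel rays $\Theta\hat V_j$ at $(\omega,\alpha)$. By Lemma~\ref{lem:famdegree} these steps lie in $(0,\pi)$ and in $C_r$, so the map is well defined. It is invariant under scaling $\omega$: $\pi_s$ does not change, $Q$ is homogeneous of degree $0$, and $\alpha$ is fixed, so Lemma~\ref{lem:ALFnorm}(iii) gives the same vectors. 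For $1\le r\le n-1$ we consider the family of maps parametrised by $\alpha'\in[\alpha,\infty]$, where $\alpha'=\infty$ means $L_w$. Proposition~\ref{prop:ALFproper}, applied with $\alpha_0=\alpha$, says that the combined map on $\R^n_{>0}/\R_{>0}\times[\alpha,\infty]$ is proper, and its proof gives continuity up to $\varsigma=0$. So this family is a proper homotopy from $L^{(\alpha)}_w$ to $L_w$. Degree is invariant under proper homotopy, hence $\deg L^{(\alpha)}_w=\deg L_w=\pm1$ by Proposition~\ref{prop:Adegree}. Properness of $L^{(\alpha)}_w$ itself follows by restricting to the slice $\{\alpha\}$. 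Surjectivity then follows exactly as in Corollary~\ref{cor:Lwonto}.

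The case $\alpha<0$, $1\le r\le n-1$, is handled the same way on $[-\infty,\alpha]$, with the exchanged components, using the second half of Proposition~\ref{prop:ALFproper} and Lemma~\ref{lem:famdegree}. The endpoint is now the ALE map of $(n,r-1)$ with the components exchanged. Exchanging the components is a fixed linear reflection, which identifies the cell of the exchanged family with $C_{r-1}$ (Lemma~\ref{lem:match}(ii)). So its degree is still $\pm1$ by Proposition~\ref{prop:Adegree}, applied with $r-1\le n-1$.

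Two edge cases fall outside Proposition~\ref{prop:ALFproper}, and we treat them directly.

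For $r=n$ (so $\alpha<0$), the proof of Lemma~\ref{lem:famdegree} shows that $\Theta\hat V_j=(\langle\varepsilon_j\rangle_n,\langle\varepsilon_j\rangle_{n-1})$ for every $\gamma$. This is exactly the exchanged ALE family $(n,n-1)$, independent of $\alpha$, so the claim is Propositions~\ref{prop:Aproper} and~\ref{prop:Adegree}.

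For $r=0$ (so $\alpha>0$), $\Theta\hat V_j=(1,\eta_j)$ with $\eta_j=\langle\varepsilon_j\rangle_1-\frac4\alpha\mathrm{Cov}_0=\langle\varepsilon_j\rangle_1$. Here $\kappa=0$ because $H_{-1}=0$, and the covariance vanishes because $\pi_0$ is a point mass. So again the map is $L_w$ for $(n,0)$ and the claim follows.

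The main obstacle is making sure the homotopy really is proper, and the endpoint $\alpha'=\infty$ is the delicate part. Proposition~\ref{prop:ALFproper} is stated with $[\alpha_0,\infty]$ compactified, so what must be checked is that a sequence $(\omega^{[i]},\alpha_i)$ with $\alpha_i\to\infty$ and $\omega^{[i]}$ divergent forces a step to $0$ or $\pi$. Its proof handles this: the limit family is the normalised ALE family of $\omega'$. We also need that the homotopy is continuous on the closed interval, which follows from the analyticity of $\gamma$ in $\varsigma$ at $\varsigma=0$ and from Lemma~\ref{lem:ALFnorm}(iii). Finally, degree is defined for proper maps between connected oriented manifolds of equal dimension $n-1$, and $C_r$ is such a manifold, being an open convex subset of an affine hyperplane.
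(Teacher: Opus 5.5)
Your proposal is correct and follows the paper's proof essentially step by step. The edge cases $r=0$ and $r=n$ reduce directly to the ALE maps of $(n,0)$ and of $(n,n-1)$ with the components exchanged. For $1\le r\le n-1$, the proper homotopy in $\varsigma=\pm1/\alpha$ given by Proposition~\ref{prop:ALFproper} carries the degree $\pm1$ of Proposition~\ref{prop:Adegree} to the slice at $\alpha$, and surjectivity follows.
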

\begin{proof}
If $r=0$, then $V_j=(1,H_1(\omega_{(j)}))$ and $\Theta\hat V_j=(1,H_1(\omega_{(j)})/H_1(\omega))$ do not depend on $\gamma$; the map is $L_w$, proper of degree
$\pm1$ by Propositions~\ref{prop:Aproper} and~\ref{prop:Adegree}. If $r=n$, then $H_{n+1}=0$, $\hat V_j=(H_n(\omega_{(j)}),-\gamma^2H_{n-1}(\omega_{(j)}))$ and
$\Theta\hat V_j$ is the normalised ALE family $(n,n-1)$ with the components exchanged, independently of $\gamma$; again
Propositions~\ref{prop:Aproper} and~\ref{prop:Adegree} apply. Let $1\le r\le n-1$ and
$\alpha>0$. With $\varsigma=1/\alpha\in[0,1/\alpha_0]$, Proposition~\ref{prop:ALFproper} gives a proper homotopy from $L_w$, which has degree $\pm1$
(Proposition~\ref{prop:Adegree}). So every slice is proper of degree $\pm1$, hence onto, as in the proof of Corollary~\ref{cor:Lwonto}. For $\alpha<0$ use
$\varsigma=-1/\alpha$ and the ALE map of $(n,r-1)$ at $\varsigma=0$.
\end{proof}

\subsection{AF}\label{subsec:surjAF}
Let $1\le r\le n-1$, so $n\ge2$. On the AF locus $\gamma=\gamma_*(\omega)$, $D_{\gamma_*}=0$, so $\hat V_0=(H_r(\omega),0)$ and
$\hat V_n=(-1)^r(H_r(\omega),0)$. So $T_0\hat V_0=(1,0)$ and $T_0\hat V_n=(-1)^r(1,0)$. The \emph{AF rays} are the rays of $T_0\hat V_1,\dots,T_0\hat V_{n-1}$.
By Lemma~\ref{lem:famdegree} at $\gamma=\gamma_*(\omega)$ and Lemma~\ref{lem:match}(i) with $\alpha=0$, their steps are clockwise and sum to $r\pi$, so they lie in the AF cell
\begin{equation}\label{eq:AFcell} A_r=\Bigl\{s\in(0,\pi)^n:\ \textstyle\sum_js_j=r\pi\Bigr\}. \end{equation}

\begin{theorem}[surjectivity for AF]\label{thm:AFsurj}
Let $n\ge2$ and $1\le r\le n-1$, let the AF rays be as above, and let $A_r$ be the AF cell \eqref{eq:AFcell}. The map
$L^{\AF}_w\colon\omega\mapsto$ AF rays at $(\omega,\gamma_*(\omega))$ is proper of degree $\pm1$ onto $A_r$.
\end{theorem}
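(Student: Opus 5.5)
The plan is to follow the pattern of Theorem~\ref{thm:ALFsurj}: first prove properness of $L^{\AF}_w$, then compute its degree through a proper homotopy to a map whose degree is already known. The source $\R^n_{>0}/\R_{>0}$ and the target $A_r$ are connected manifolds of dimension $n-1$. The AF rays are the rays $T_0\hat V_j$ at $\alpha=0$. By Lemma~\ref{lem:ALFnorm}(ii), these vectors are real-analytic in $(\omega,\alpha)$ for all $\alpha\in\R$, because $1\le r\le n-1$, and they depend only on $w$, $\pi_{r-1},\pi_r,\pi_{r+1}$, $Q$ and $\alpha$, with $\kappa=\sqrt Q$ at $\alpha=0$. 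So $L^{\AF}_w$ is smooth.

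\emph{Properness.} I would run the argument of Proposition~\ref{prop:ALFproper} at $\alpha=0$. Let $\omega^{[i]}$ leave every compact set. Apply Lemma~\ref{lem:degen} with $p=r$ if $|\mathsf H|<r$ can be arranged; more simply, I would try both $p=r$ and $p=r+1$ and pick the one for which $1\le r'=r-|\mathsf H|\le|\mathsf M|-1$. That condition keeps the limit family an AF family $(|\mathsf M|,r')$ with $\kappa'=\sqrt{Q'}$.

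In the borderline cases, $r'=0$ or $r'=|\mathsf M|$, we get $Q'=0$. The limit vectors $(\langle\varepsilon_j\rangle_r,\kappa(\langle\varepsilon_j\rangle_{r+1}-\langle\varepsilon_j\rangle_{r-1})-2\mathrm{Cov}_r)$ then lose their $\kappa$-term. In that case I must check that the limits are still non-zero. For $r'=0$ the first component is $\pm1$. For $r'=|\mathsf M|$ the first component $\langle\varepsilon_j\rangle'_{|\mathsf M|}=\pm1$ is also non-zero. So in every case the limit vectors are $\sigma_j$ times non-zero vectors.

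By Lemma~\ref{lem:degen}(iii), every atom $k\notin\mathsf M$ has $\tilde V_{k-1}$ and $\tilde V_k$ with proportional limits. Hence $s_k\to0$ or $\pi$, which gives properness. One subtlety remains: a limit vector could vanish only through cancellation in the covariance term. I expect the main obstacle to lie here, in verifying non-vanishing in every degenerate configuration. Where that is unclear, I would fall back on Proposition~\ref{prop:ALFstructure}(i) for the limit family, which shows consecutive determinants are $<0$ and hence the vectors are non-zero.

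\emph{Degree.} Rather than a direct computation, I would deform in $\alpha$. On the slice $\alpha=\alpha_1>0$, Theorem~\ref{thm:ALFsurj} says the Hankel rays $D_\alpha T_0\hat V_j$ give a map of degree $\pm1$ onto $C_r$. The rays $T_0\hat V_j$ themselves vary analytically in $\alpha\in[0,\alpha_1]$. To compare cells of different total turning, I would use the normalisation of Lemma~\ref{lem:match}(ii). Apply the linear map $\diag(1,1/c(\alpha))$ with $c(\alpha)=\tan$ chosen so that the first and last rays are fixed at angles $\pm\phi(\alpha)$. Then rotate/rescale by an orientation-preserving linear map $A_\alpha$, continuous in $\alpha$, that sends $(1,\alpha/2)$ to $(1,0)$. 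This map does not change the inner steps up to a continuous reparametrisation. Equivalently, I would record only the $n-1$ inner rays relative to $\ell_0$: that is a point of a family of open cells $\{s\in(0,\pi)^n:\sum s_j=r\pi+2\arctan(\alpha/2)\}$, and these cells are identified with $A_r$ by an affine rescaling depending continuously on $\alpha$.

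Properness of this homotopy on $\R^n_{>0}/\R_{>0}\times[0,\alpha_1]$ follows by combining the argument above with Proposition~\ref{prop:ALFproper}. Its proof only used $|\alpha|\ge\alpha_0$ through the bound on $\Theta$, and with $T_0$ in place of $\Theta$ the vectors stay bounded for bounded $\alpha$, since $\kappa\le\sqrt Q+|\alpha|$ and $Q\le C(w)$ by Lemma~\ref{lem:ALFnorm}(i). Homotopy invariance then gives $\deg L^{\AF}_w=\pm1$. Surjectivity follows as in Corollary~\ref{cor:Lwonto}.
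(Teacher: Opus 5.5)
Your properness argument at $\alpha=0$ and your plan to deform in $\alpha$ to Theorem~\ref{thm:ALFsurj} follow the paper's strategy. On the choice of $p$: the paper just takes $p=r+1$ in Lemma~\ref{lem:degen}. Your requirement $1\le r'\le|\mathsf M|-1$ cannot always be met by either choice of $p$; for example $n=2$, $r=1$, $a_1\to\infty$ gives $r'=0$ for $p=2$ and $r'=|\mathsf M|=1$ for $p=1$. This is harmless, because the borderline cases you treat cover it.

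The gap is in how you carry the degree across $\alpha$. Homotopy invariance needs a fixed target. The slices land in the cells $Y_c=\{s\in(0,\pi)^n:\sum_js_j=c\}$ with $c=c(\alpha)=r\pi+2\arctan(\alpha/2)$, and these are not identified with $A_r$ by an affine rescaling. For $n=3$, $r=1$, the cell $A_1$ is an open triangle, while $Y_c$ for $c$ slightly above $\pi$ is a hexagon. The rescaling $s\mapsto(r\pi/c)s$ maps $Y_c$ onto a proper open subset of $A_r$. So the composed homotopy is not proper: a sequence in $Y_c$ with $s_1\to\pi$ is sent to points with $s_1\to r\pi^2/c<\pi$, which stay inside $A_r$. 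Your alternative of an orientation-preserving linear map $A_\alpha$ of $\R^2$ sending $(1,\alpha/2)$ to $(1,0)$ also fails. For $\alpha\neq0$ the outer rays $(1,\pm\alpha/2)$ are not parallel, no invertible linear map makes them parallel, and so the total turning is never brought to $r\pi$. As written, the equality of the degree at $\alpha=0$ with the degree at $\alpha_1>0$ is therefore not justified.

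The paper avoids any identification of targets. It regards $\Psi(\omega,\alpha)$, the clockwise steps of $T_0\hat V_j$, as a single map from the $n$-manifold $X=\R^n_{>0}/\R_{>0}\times\R$ into the convex open set $Y=\{s\in(0,\pi)^n:|\sum_js_j-r\pi|<\pi\}$. It proves $\Psi$ proper by two cases:
\begin{itemize}
\item sequences with $\alpha_i\to\alpha^\infty\ne0$ are handled by Proposition~\ref{prop:ALFproper};
\item sequences with $\alpha_i\to0$, not only $\alpha=0$, use your argument together with $\kappa\to\sqrt{Q'}$ and $\alpha_i/2+\kappa\to\sqrt{Q'}$ from Lemma~\ref{lem:ALFnorm}(i).
\end{itemize}
Since $\sum_js_j\circ\Psi=c(\alpha)$ with $c'(\alpha)>0$, the Jacobian is block triangular. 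Hence a regular value of the slice $\Psi_\alpha$ in $Y_{c(\alpha)}$ is a regular value of $\Psi$, with the same preimages and local degrees, and $\deg\Psi_\alpha=\deg\Psi$ for every $\alpha$. At $\alpha>0$ the slice is compared with Theorem~\ref{thm:ALFsurj} through the diffeomorphism $Y_{c(\alpha)}\to C_r$ induced by $D_\alpha$ acting on rays (Lemma~\ref{lem:match}(ii)); this map is not affine in the steps.

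Your version can also be repaired, by a homeomorphism $Y_{c(\alpha)}\cong A_r$ that depends continuously on $\alpha$. One such is the radial map via Minkowski gauges about the centre $(c/n,\dots,c/n)$, which lies in $Y_c$ because $1\le r\le n-1$. You would then still have to prove properness of the total map, which is the same work as the paper's.
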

\begin{proof}
We deform $L^{\AF}_w$ through the maps of Theorem~\ref{thm:ALFsurj}, in the variable $\alpha$. Let $X=\R^n_{>0}/\R_{>0}\times\R$. For $(\omega,\alpha)$
let $\gamma=\gamma(\omega,\alpha)$ (Remark~\ref{rem:farfacts}(b)) and $\ell_j=T_0\hat V_j$. By Lemma~\ref{lem:ALFnorm}(ii) the $\ell_j$ depend on $\omega$ only through
the $\pi_s$ and $Q$, which do not change under $\omega\mapsto\lambda\omega$; so they are real-analytic functions on $X$. At $\alpha=0$, $\gamma=\gamma_*(\omega)$, and
the $\ell_j$ span the AF rays.

\emph{The map.} By Proposition~\ref{prop:ALFstructure}\ref{it:alfs1}, $\det(\ell_{j-1},\ell_j)=\det T_0\det(V_{j-1},V_j)<0$, since the shear in \eqref{eq:Vhat}
has determinant $1$; and $\ell_0=(1,\frac\alpha2)$, $\ell_n=(-1)^r(1,-\frac\alpha2)$ (before Proposition~\ref{prop:farCtor}). So Lemma~\ref{lem:match} applies at every point of
$X$, and its integer $(\sum_js_j-2\arctan(\alpha/2))/\pi$ is $r$ everywhere, by Lemma~\ref{lem:famdegree}. Let $\Psi(\omega,\alpha)=(s_1,\dots,s_n)$ be the clockwise steps of the $\ell_j$. So $\Psi$ is real-analytic and maps $X$ into the
open convex set
\begin{equation}
Y=\Bigl\{s\in(0,\pi)^n:\ \bigl|\textstyle\sum_js_j-r\pi\bigr|<\pi\Bigr\},\qquad\textstyle\sum_js_j\circ\Psi=c(\alpha):=r\pi+2\arctan(\alpha/2).
\end{equation}
For $\alpha\in\R$ let $X_\alpha=\R^n_{>0}/\R_{>0}\times\{\alpha\}$, $Y_c=Y\cap\{\sum_js_j=c\}$, and $\Psi_\alpha\colon X_\alpha\to Y_{c(\alpha)}$ the restriction of $\Psi$. Then
$\Psi_0=L^{\AF}_w$ and $Y_{r\pi}=A_r$.

\emph{Properness.} Let $K\subset Y$ be compact and $(\omega^{[i]},\alpha_i)\in\Psi^{-1}(K)$. After passing to a subsequence, $\Psi(\omega^{[i]},\alpha_i)\to s^\infty\in K$.
Then $c(\alpha_i)\to\sum_js^\infty_j\in(r\pi-\pi,r\pi+\pi)$, so $\alpha_i\to\alpha^\infty=c^{-1}(\sum_js^\infty_j)\in\R$. Suppose that $\omega^{[i]}$ leaves every compact set.
We show that, after passing to a subsequence, there is an atom $k$ for which the limits of $\ell_{k-1}$ and $\ell_k$ are non-zero and parallel;
then $s_k\to0$ or $\pi$, which contradicts $s^\infty\in(0,\pi)^n$.
\begin{itemize}[leftmargin=*]
\item If $\alpha^\infty\ne0$, then for large $i$, $\alpha_i$ has the sign of $\alpha^\infty$ and $|\alpha_i|\ge|\alpha^\infty|/2$. The proof of
Proposition~\ref{prop:ALFproper} gives such an atom $k$ for the vectors $\Theta\hat V_j$. Since $\ell_j=D_{\alpha_i}^{-1}\Theta\hat V_j$ and $D_{\alpha_i}\to D_{\alpha^\infty}$, which is
invertible, $k$ has the same property for the $\ell_j$.
\item If $\alpha^\infty=0$, apply Lemma~\ref{lem:degen} with $p=r+1$, and let $r'=r-|\mathsf H|$, so $0\le r'$ and $r'+1\le|\mathsf M|$. As in the proof of
Proposition~\ref{prop:ALFproper}, $Q\to Q'$, with $Q'=0$ if $r'=0$; so, by Lemma~\ref{lem:ALFnorm}(i), $\kappa\to\sqrt{Q'}$ and $\frac{\alpha_i}2+\kappa\to\sqrt{Q'}$.
By Lemma~\ref{lem:degen}(ii),(iii), $\langle\varepsilon_j\rangle_s\to\sigma_j\langle\varepsilon_{j'}\rangle'_{s-|\mathsf H|}$ for $s=r,r+1$, and for $s=r-1$ if $r'\ge1$, and
$\mathrm{Cov}_r(\varepsilon_j,e_1(w_S))\to\sigma_j\mathrm{Cov}'_{r'}(\varepsilon_{j'},e_1(w_{S'}))$, where the prime refers to $\omega'$; if $r'=0$, then
$\kappa\langle\varepsilon_j\rangle_{r-1}\to0$, since $|\langle\varepsilon_j\rangle_{r-1}|\le1$. By Lemma~\ref{lem:ALFnorm}(ii), $\ell_j\to\sigma_j\ell'_{j'}$. Here, if $r'\ge1$, the
$\ell'_{j'}$ are the vectors $\ell_{j'}$ of $(\omega',r')$ at $\alpha=0$; they are non-zero, since consecutive ones are not parallel, as shown above
for $\omega$. If $r'=0$, then $\ell'_{j'}=(1,0)$, since $\langle\varepsilon_{j'}\rangle'_0=1$ and $\mathrm{Cov}'_0=0$. By Lemma~\ref{lem:degen}(i) there is an atom
$k\notin\mathsf M$, and by Lemma~\ref{lem:degen}(iii) the rods $k-1$ and $k$ have the same $j'$, so the limits of $\ell_{k-1}$ and $\ell_k$ are
$\sigma_{k-1}\ell'_{j'}$ and $\sigma_k\ell'_{j'}$.
\end{itemize}
So a subsequence of $(\omega^{[i]},\alpha_i)$ converges in $X$, and its limit lies in $\Psi^{-1}(K)$. Hence $\Psi$ is proper. For compact $K\subset Y_c$,
$\Psi_\alpha^{-1}(K)=\Psi^{-1}(K)$ with $\alpha=c^{-1}(c)$, so every $\Psi_\alpha$ is proper.

\emph{Degree.} $X$ and $Y$ are connected oriented $n$-manifolds and $\Psi$ is proper, so $\deg\Psi$ is defined; so are the slices
$X_\alpha$ and $Y_c$, which are open cells, hence connected, so that $\deg\Psi_\alpha$ is defined as well. Since
$\sum_js_j\circ\Psi=c(\alpha)$, $d\Psi$ maps $TX_\alpha$ into $TY_{c(\alpha)}$, and $d(\sum_js_j)(d\Psi\,\partial_\alpha)=c'(\alpha)=(1+\alpha^2/4)^{-1}>0$. In bases adapted to
$TX=TX_\alpha\oplus\R\partial_\alpha$ and $TY=TY_c\oplus\R v$, with $d(\sum_js_j)v=1$, $d\Psi$ is block upper triangular with the diagonal blocks $d\Psi_\alpha$ and
$c'(\alpha)$. So $s\in Y_c$ is a regular value of $\Psi$ if and only if it is one of $\Psi_\alpha$, $\alpha=c^{-1}(c)$, and $\Psi^{-1}(s)=\Psi_\alpha^{-1}(s)$. Orient $X_\alpha$
and $Y_c$ so that $\partial_\alpha$, resp.\ $v$, completes them to the orientations of $X$ and $Y$; then the local degrees of $\Psi$ and $\Psi_\alpha$ at
the points of $\Psi^{-1}(s)$ agree. By Sard's theorem $\Psi_\alpha$ has regular values, so $\deg\Psi_\alpha=\deg\Psi$ for every $\alpha$.

\emph{Value.} Let $\alpha>0$. For $s\in Y_{c(\alpha)}$ let $m_j(s)$ be the rays from $m_0=(1,\frac\alpha2)$ with the clockwise steps $s_j$. Since $\sum_js_j=c(\alpha)$,
$m_n(s)$ is the ray of $(-1)^r(1,-\frac\alpha2)$, so Lemma~\ref{lem:match} applies with the integer $r$, and by Lemma~\ref{lem:match}(ii) the steps of
$D_\alpha m_j(s)$ lie in $C_r$. This defines a real-analytic map $\phi_\alpha\colon Y_{c(\alpha)}\to C_r$. Its inverse, also real-analytic, sends $t\in C_r$ to the steps of
$D_\alpha^{-1}m'_j$, where $m'_j$ are the rays from $(1,1)$ with the clockwise steps $t_j$: since $\det D_\alpha^{-1}>0$, these steps are clockwise and
lie in $(0,\pi)$, $D_\alpha^{-1}m'_n$ is the ray of $(-1)^r(1,-\frac\alpha2)$, and by Lemma~\ref{lem:match}(i),(ii) the integer of Lemma~\ref{lem:match}(i) is
$r$, so the steps sum to $c(\alpha)$. So $\phi_\alpha$ is a diffeomorphism. As $\Theta\hat V_j=D_\alpha\ell_j$, $\phi_\alpha\circ\Psi_\alpha$ is the map $\omega\mapsto$
Hankel rays at $(\omega,\alpha)$, which has degree $\pm1$ by Theorem~\ref{thm:ALFsurj}. Hence $\deg L^{\AF}_w=\deg\Psi_0=\deg\Psi_\alpha=\pm1$, and $L^{\AF}_w$
is onto $A_r$, as in the proof of Corollary~\ref{cor:Lwonto}.
\end{proof}

\begin{proposition}[ALF and AF realisation]\label{prop:ALFreal}
Let $\alpha\in\R$, and let $\ell_0,\dots,\ell_n$ satisfy the hypotheses of Lemma~\ref{lem:match}, with the integer $r$ of
Lemma~\ref{lem:match}(i). Then for every $w_1<\dots<w_n$ there are $\omega=\sum_ka_k\delta_{w_k}$ with $a\in\R^n_{>0}$, $\gamma>0$ and a far-node configuration as in
\S\ref{sec:far}, with $m=\max(r,n-r)$, $m-r$ heavy and $m+r-n$ light nodes, whose far-node limit has the turning points $w$, and
in the basis $(a_\gamma,b_\gamma)$ of Proposition~\ref{prop:farCtor} the model $\Phi^\sharp_{1,\alpha}$ and the rod lines $[\ell_0],\dots,[\ell_n]$. If $\alpha=0$, then
$1\le r\le n-1$ and $\gamma=\gamma_*(\omega)$.
\end{proposition}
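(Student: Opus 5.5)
The plan is to reduce the statement to the surjectivity theorems just proved, in the way Proposition~\ref{prop:ALEreal} reduces to Corollary~\ref{cor:Lwonto}, with one case for each sign of $\alpha$. By Lemma~\ref{lem:match}(i), $0\le r\le n-1$ if $\alpha>0$, $1\le r\le n-1$ if $\alpha=0$ (so $n\ge2$), and $1\le r\le n$ if $\alpha<0$.

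\emph{Case $\alpha\ne0$.} Lemma~\ref{lem:match}(ii) places the steps of $D_\alpha\ell_j$ in $C_r$ when $\alpha>0$, and after exchanging components in $C_{r-1}$ when $\alpha<0$. Theorem~\ref{thm:ALFsurj} then yields $\omega>0$ on $w$ whose Hankel rays $\Theta\hat V_j=D_\alpha T_0\hat V_j$ at $(\omega,\alpha)$ have exactly these steps. Set $\gamma=\gamma(\omega,\alpha)>0$ (Remark~\ref{rem:farfacts}(b)). Both sequences start at the ray of $(1,1)$ and take the same clockwise steps in $(0,\pi)$, so their rays agree, and applying $D_\alpha^{-1}$ gives $[T_0\hat V_j]=[\ell_j]$.

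\emph{Case $\alpha=0$.} Lemma~\ref{lem:match}(i) with $\alpha=0$ puts the steps of the $\ell_j$ in $A_r$, since $\ell_0=(1,0)$. Theorem~\ref{thm:AFsurj} gives $\omega$ whose AF rays at $\gamma_*(\omega)$ have the same steps. Here $T_0\hat V_0=(1,0)$, so again $[T_0\hat V_j]=[\ell_j]$.

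\emph{Realisation.} Set $m=\max(r,n-r)$. In the AF case $m\ge1$ holds since $n\ge2$; in the other cases it holds unless $n=1$ with $r\in\{0,1\}$, and even then $m=1$. Apply the converse in Theorem~\ref{thm:family}(b) to the family data $(\omega,r,\gamma,\mathfrak s)$: it supplies $m-r$ heavy and $m+r-n$ light nodes placed off $w$, the parameters $c_k=-a_kP'(w_k)$, and far-node data whose measure \eqref{eq:farmeasure} is $\omega$ and whose value of $r$ is $m-h=r$. Proposition~\ref{prop:farCtor} then shows three things. The turning points are the nodes of $\omega$, which are $w$. In the basis $(a_\gamma,b_\gamma)$ the model is $\Phi^\sharp_{1,\alpha(\omega,\gamma)}$, where $\alpha(\omega,\gamma)=\alpha$ by construction. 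The rod lines are $[T_0\hat V_j]=[\ell_j]$.

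\emph{Main obstacle.} The step I expect to need care is ensuring the rod vectors of the limit are $T_0\hat V_j$ with the specific shear, so that they match the normalised rays. Proposition~\ref{prop:farCtor}(i) absorbs $\mathfrak s$ into $C_f$, which makes the dependence on $\mathfrak s$ disappear; this point should be checked. It also needs checking that the ray normalisation $\ell_0=(1,\alpha/2)$ of Lemma~\ref{lem:match} coincides with $T_0\hat V_0$; this holds, and also shows that the integer $r$ from Lemma~\ref{lem:famdegree} equals the given one.
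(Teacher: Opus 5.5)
Your proposal is correct and follows the paper's proof essentially step for step. You use Lemma~\ref{lem:match} for the range of $r$ and for the cells $C_r$, $C_{r-1}$ or $A_r$, Theorem~\ref{thm:ALFsurj} or Theorem~\ref{thm:AFsurj} to produce $\omega$, then $\gamma=\gamma(\omega,\alpha)$ from Remark~\ref{rem:farfacts}(b), the matching of rays by a common first ray and common steps, and finally the converse of Theorem~\ref{thm:family}(b) together with Proposition~\ref{prop:farCtor}. Your aside on $m\ge1$ is muddled, but harmless: $m=\max(r,n-r)\ge1$ holds for every $n\ge1$ with no exception.
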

\begin{proof}
By Lemma~\ref{lem:match}, $0\le r\le n-1$ if $\alpha>0$, $1\le r\le n-1$ if $\alpha=0$, and $1\le r\le n$ if $\alpha<0$. If $\alpha\ne0$, Lemma~\ref{lem:match}(ii)
puts the steps of $D_\alpha\ell_j$, exchanged if $\alpha<0$, in $C_r$, resp.\ $C_{r-1}$, and Theorem~\ref{thm:ALFsurj} gives $\omega$ whose Hankel rays
at $(\omega,\alpha)$ are the rays of $D_\alpha\ell_j$. If $\alpha=0$, the steps of the $\ell_j$ lie in $A_r$, and Theorem~\ref{thm:AFsurj} gives $\omega$ whose AF rays
are the rays of the $\ell_j$. Let $\gamma=\gamma(\omega,\alpha)$ (Remark~\ref{rem:farfacts}(b)); so $\gamma=\gamma_*(\omega)$ if $\alpha=0$. In both cases the rays of
$D_\alpha T_0\hat V_j$ and of $D_\alpha\ell_j$ (if $\alpha=0$, of $T_0\hat V_j$ and of $\ell_j$) start at the same ray and have the same steps, so they coincide;
hence $[T_0\hat V_j]=[\ell_j]$. By Theorem~\ref{thm:family}\textup{(b)} there is a
far-node configuration with $m=\max(r,n-r)$, $m-r$ heavy and $m+r-n$ light nodes and the family data $(\omega,r,\gamma,\mathfrak s)$ for some $\mathfrak s$. By
Proposition~\ref{prop:farCtor} its far-node limit has the turning points $w$, and in the basis $(a_\gamma,b_\gamma)$ the model $\Phi^\sharp_{1,\alpha}$ and the rod lines
$[T_0\hat V_j]=[\ell_j]$.
\end{proof}

\begin{remark}[uniqueness of the soliton data]\label{rem:injectivity}
For fixed turning points, the maps of this section from soliton data, modulo scaling, to normalised rod lines are proper of
degree $\pm1$ onto convex open sets. A proper local diffeomorphism from a connected manifold onto a convex open set is a
diffeomorphism; so if their differentials are invertible everywhere, the soliton data of every element of $\Ctor$ are unique up to scaling, apart
from the number and positions of the phantom solitons, which are not parameters of the metric (\S\ref{subsec:soliton}). Theorem~\ref{thm:G} does
not need this; we shall consider it in future work. For the family of type AF in Aksteiner's parametrisation, the invertibility is
proved in \cite[Prop.~6.1]{Aksteiner2026}.
\end{remark}

\section{Proof of the main theorem}\label{sec:proofs}
This section states the detailed form of Theorem~\ref{thm:G} and proves Proposition~\ref{prop:GG}, from which both follow: by
uniqueness, every element of $\Ctor$ is the soliton metric of \S\ref{sec:surj} with the same rod structure and model.

\begin{theorem}[Theorem~\ref{thm:G}, detailed form]\label{thm:Gfull}
Let $(M,g)$ be a simply connected toric gravitational instanton (Definitions~\ref{def:toricRF}
and~\ref{def:GI}) whose torus action has finitely many fixed points, $n\ge1$ of them. Then the end of $M$ is of type
ALE, ALF$^\pm$ or AF$_\beta$, and $g$ is a gravitational soliton. More precisely, let $r$ be the degree index of $g$
(\S\ref{subsec:cells}; $g$ lies in $\Ctor$ by Propositions~\ref{prop:endtype} and~\ref{prop:smoothCtor}), so that the Li--Sun degrees of its rod structure (\S\ref{subsec:rods}) and of the reversed one are $\{n-1-r,r\}$
(ALE), resp.\ $\{n-r,r\}$ (ALF, AF), by Lemma~\ref{lem:degreeindex}.
\begin{enumerate}[leftmargin=*,label=\textup{(\alph*)}]
\item If $g$ is ALE, it is a soliton on $\R^4$ with $2m+1$ poles, $m=\max(r,n-1-r)$: one at each turning point, and $2m+1-n$ phantom
solitons.
\item If $g$ is ALF or AF, it is a Belinski--Zakharov soliton \eqref{eq:BZAF} on $\R^3\times S^1$, a single dressing of the flat seed,
with $2m$ real poles, $m=\max(r,n-r)$: one at each turning point, and $2m-n$ phantom solitons. It is AF exactly when its total NUT
charge \eqref{eq:NUTmu} vanishes.
\end{enumerate}
In both cases there are soliton data forming a positive measure $\omega$ on the turning points, with an atom $\gamma>0$ at infinity in
case (b); the proof produces them by a surjectivity argument, and their uniqueness is not proved here
(Remark~\ref{rem:injectivity}). In case (b) the BZ parameters of the poles at the turning points lie in $\RP^1\setminus\{\pm1\}$. In a suitable basis, the rod line of the $j$-th rod is spanned by $\bigl(H_r(\omega_{(j)}),H_{r+1}(\omega_{(j)})\bigr)$, where $\omega_{(j)}$
is $\omega$ with the signs of the atoms below that rod reversed and $H_s$ is the $s\times s$ Hankel determinant of the moments; in
case (b) the moment of order $2r-1$ in $H_{r+1}$ is shifted by $\gamma$, which we write $H^{(\gamma)}_{r+1}$ \eqref{eq:shifted}.
\end{theorem}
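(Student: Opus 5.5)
The plan is to reduce everything to the class $\Ctor$ and then combine the three parts announced in the introduction: realisation (\S\ref{sec:surj}), membership of the solitons in $\Ctor$ (Corollary~\ref{cor:CinCtor}, Proposition~\ref{prop:farCtor}), and uniqueness (Lemma~\ref{lem:uniq}). First, Proposition~\ref{prop:endtype} gives global Weyl coordinates and shows that the end is ALE, ALF$^\pm$ or AF$_\beta$. Proposition~\ref{prop:smoothCtor} then puts $(\Phi,\nu)$ in $\Ctor$, with turning points $w_1<\dots<w_n$ the images of the fixed points, rod lines $L_j$ the stabiliser lines, and a model $\Phi^\sharp_{h_0,\alpha}$ in a basis $(a,b)$. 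Lemma~\ref{lem:outerlines} fixes the outer lines, so we can form the normalised rays $\ell_j$ with $\ell_0=(1,\alpha/2)$ and the degree index $r$ of Lemma~\ref{lem:match}. Lemma~\ref{lem:degreeindex} then gives the claimed Li--Sun degrees.

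Next we realise the rod data by a soliton. If $h_0=0$ (ALE, $\alpha>0$), Proposition~\ref{prop:ALEreal} gives $c\in\cA$ on $N=2m+1$ nodes with $m=\max(r,n-1-r)$, namely the $w_k$ together with $m-r$ heavy and $m-n+1+r$ light phantoms. Its metric $g_c$ has the same turning points, and in the basis $F_\alpha$ it has model $\Phi^\sharp_{0,\alpha}$ and rod lines $[\ell_j]$. If $h_0=1$, Proposition~\ref{prop:ALFreal} gives $\omega,\gamma$ and a far-node configuration with $m=\max(r,n-r)$ whose limit has, in $(a_\gamma,b_\gamma)$, model $\Phi^\sharp_{1,\alpha}$ and the same rod lines. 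Its AF case is $\alpha=0$, i.e.\ $\mathfrak N=0$ by \eqref{eq:NUTmu}. Both solitons lie in $\Ctor$ and are locally tamed by the same turning points and rod lines (Proposition~\ref{prop:axis}). We identify the basis $(a,b)$ with $F_\alpha$, resp.\ $(a_\gamma,b_\gamma)$; this has determinant one, so Weyl coordinates are unchanged up to translating $z$, which \ref{it:T2} tolerates. Then both maps tend to the same model in the same basis, and Lemma~\ref{lem:uniq} gives $\Phi=\Phi_{\rm sol}$. The Gram matrices then agree up to the constant factor of Definition~\ref{def:soliton}, and by Remark~\ref{rem:nuinv} $\nu$ agrees up to a constant. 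Hence $g$ is a gravitational soliton. In case (b) we still need the statement that it is a single BZ dressing \eqref{eq:BZAF} of $\diag(1,\rho^2)$; this is Theorem~\ref{thm:mkn} with $B$ of determinant one. The BZ parameters at the turning points are $b_k=(c_k+\gamma)/(c_k-\gamma)$, and regular nodes have $c_k\notin\{0,\infty\}$, so $b_k\ne\pm1$. The pole count is $N$, resp.\ $2m$, with the phantoms making up the difference. The rod-line formula is Corollary~\ref{cor:CinCtor}(ii), resp.\ \eqref{eq:farrodvec} after the shear (Theorem~\ref{thm:family}(b)), in the Hankel frame.

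The main obstacle is the careful matching of frames and orientation. The rod lines of $g$ are computed in the model basis $(a,b)$ with the orientation of $M$, and we must check that the clockwise convention of Lemma~\ref{lem:match} and the orientation $d\rho\wedge dz\wedge d\phi^a\wedge d\phi^b$ agree on both sides. Otherwise we should first pass to $\bar{\mathfrak R}$, using that the model does not depend on the orientation of the frame (\S\ref{subsec:models}). For AF$_\beta$, the twist $\beta$ lives only in the lattice and not in $\Phi$, so uniqueness only needs $\Phi^\sharp_{1,0}$. We also need to confirm that the $z$-origins can be matched, which should follow since both sides have the same turning points.
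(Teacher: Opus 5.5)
Your proposal is correct and follows the paper's proof essentially step for step. As in the paper, Propositions~\ref{prop:endtype} and~\ref{prop:smoothCtor} put $g$ in $\Ctor$, and you then run the proof of Proposition~\ref{prop:GG}: realise the normalised rays by Proposition~\ref{prop:ALEreal}, resp.\ Proposition~\ref{prop:ALFreal}, apply Lemma~\ref{lem:uniq}, and read off the pole counts, the parameters $b_k$ and the rod lines through Theorem~\ref{thm:mkn} and the Hankel frame.

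The orientation and $z$-origin issues you flag are not real obstacles. As in the paper, $\Phi$ and $\Phi_s$ are compared as maps $\Hb^\circ\to\mathcal H$ in the Weyl coordinates and components of their own bases, and the soliton's nodes are placed at the instanton's own $w_k$, so no translation is needed. The clockwise normalisation of Lemma~\ref{lem:match} is only a choice of signs of the $\ell_j$, valid for any lines with the given outer lines, so you never need to pass to $\bar{\mathfrak R}$.
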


The proof also gives the metric itself in closed form. Let $r_k=\sqrt{\rho^2+(z-w_k)^2}$ be the distance to the $k$-th turning
point in Weyl coordinates, $\varpi_k=(w_k-z-i\rho)/r_k$, and write $\varpi\omega$, $\mathbf r\omega$ and $\omega/\mathbf r$ for the measures on the turning
points with the weights $\varpi_k\omega_k$, $r_k\omega_k$ and $\omega_k/r_k$, where $\omega_k>0$ are the weights of $\omega$ and the bold
$\mathbf r$ is the function $k\mapsto r_k$; $H_s$ and $H^{(\gamma)}_{r+1}$ of a measure with complex weights are defined by the same
determinants, and $H_{-1}=0$. In a suitable basis $(f_0,f_1)$ of the Lie algebra of the torus, with dual angles $\psi^0,\psi^1$, which has determinant $1$
relative to the basis of the model at infinity (Remark~\ref{rem:GGclosed}), and in the Weyl coordinates $(\rho,z)$ of this basis,
\begin{equation}\label{eq:introclosed}
\begin{gathered}
g=\frac{\bigl|\mathcal Y\,d\psi^0-\mathcal X\,d\psi^1\bigr|^2}{F_1F_2}+C'F_1F_2\,(d\rho^2+dz^2),\\
\begin{aligned}
\mathcal X&=H_r(\varpi\omega),&\qquad \mathcal Y&=H^{(\gamma)}_{r+1}(\varpi\omega)-i\gamma\rho\,H_r(\varpi\omega),\\
F_1&=H_r(\mathbf r\omega)+\gamma H_{r-1}(\mathbf r\omega),&\qquad F_2&=H_{r+1}(\omega/\mathbf r)+\gamma H_r(\omega/\mathbf r),
\end{aligned}
\end{gathered}
\end{equation}
with a constant $C'>0$ and with $\gamma=0$ in case (a) (Theorem~\ref{thm:family}, Remark~\ref{rem:GGclosed}). The factorisation of the interpolation determinants on which
\eqref{eq:introclosed} rests was pointed out to us by S.~Aksteiner \cite{Aksteiner2022}, who proves it, for both parities of the number of
poles, through the Cauchy--Pfaffian identity \cite[Prop.~2.2]{Aksteiner2026}.
On the $j$-th rod $\rho=0$ and the $\varpi_k$ are real, $-1$ for the turning points below the rod and $+1$ for those above, so there
$\varpi\omega=\omega_{(j)}$, and the rod line is spanned by $H_r(\omega_{(j)})\,f_0+H^{(\gamma)}_{r+1}(\omega_{(j)})\,f_1$, as in Theorem~\ref{thm:Gfull}. The
phantom solitons do not enter \eqref{eq:introclosed}. For $r=0$, \eqref{eq:introclosed} is the Gibbons--Hawking form of the multi-Eguchi--Hanson
metrics ($\gamma=0$) and of the multi-Taub--NUT metrics ($\gamma>0$) (Proposition~\ref{prop:GHmeh}).

\begin{proposition}[solitons in $\Ctor$]\label{prop:GG}
Every element of $\Ctor$, and every toric Ricci-flat metric built from it with any period lattice, is a gravitational
soliton, with the data described in Theorem~\ref{thm:Gfull}.
\end{proposition}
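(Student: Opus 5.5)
Given $(\Phi,\nu)\in\Ctor$, the plan is to construct a soliton metric with the same turning points, rod lines and model, and then identify the two by Lemma~\ref{lem:uniq}.

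Let $\Phi$ have rod structure $\mathfrak R$ with turning points $w_1<\dots<w_n$, and model $\Phi^\sharp_{h_0,\alpha}$ in the basis $(a,b)$ of \ref{it:T2}. By Lemma~\ref{lem:outerlines}, the outer rod lines are $[(1,\pm\alpha/2)]$. We choose the normalised rays $\ell_j$ of \S\ref{subsec:cells}, with $\ell_0=(1,\alpha/2)$, and let $r$ be the degree index.

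We then split by type. If $h_0=0$ (ALE, $\alpha>0$), Proposition~\ref{prop:ALEreal} gives $c\in\mathcal A$ on $2m+1$ nodes, with $m=\max(r,n-1-r)$, and the metric $g_c$. By Corollary~\ref{cor:CinCtor}, $g_c\in\Ctor$, and in the basis $F_\alpha$ it has turning points $w$, model $\Phi^\sharp_{0,\alpha}$ and rod lines $[\ell_j]$. If $h_0=1$ (ALF or AF), Proposition~\ref{prop:ALFreal} gives a far-node configuration with $m=\max(r,n-r)$. By Proposition~\ref{prop:farCtor}, its limit lies in $\Ctor$ and, in the basis $(a_\gamma,b_\gamma)$, has model $\Phi^\sharp_{1,\alpha}$ and rod lines $[\ell_j]$.

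Next we identify the frames. Let $B$ be the linear map sending $(a,b)$ to $F_\alpha$, respectively to $(a_\gamma,b_\gamma)$. Expressed in the bases $(a,b)$, the soliton map $\Phi_1$ and the given map $\Phi_0$ are both tamed by the same rod lines $[\ell_j]$ at the same turning points. Both also tend uniformly in $\theta$ to the same model $\Phi^\sharp_{h_0,\alpha}$.

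The point to check is that $(\rho,z)$ agree. This holds because $B$ has determinant $1$, up to a sign we can fix by orientation. Hence the Weyl coordinates of the soliton in its basis coincide with those used for $\Phi_0$, and the origins of $z$ agree because the turning points $w_k$ are the same. Lemma~\ref{lem:uniq} then gives $\Phi_0=\Phi_1$. The conformal factors then agree up to an additive constant by Remark~\ref{rem:nuinv}. So $G$ is, up to a constant frame change and factor, the soliton Gram matrix: for ALE the dressing \eqref{eq:bordered} of the collapsed seed, which is obtained from $\R^4$ by operations (a)--(c), and for ALF/AF the normalised dressing \eqref{eq:BZAF} of $\R^3\times S^1$, by Theorem~\ref{thm:mkn}. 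This is Definition~\ref{def:soliton}.

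The lattice plays no role: the soliton condition concerns only $G$, and the period lattice of the given metric is simply inherited.

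The remaining data are read off from the construction.
\begin{itemize}
\item The numbers of poles and phantoms come from Theorem~\ref{thm:family}.
\item The rod lines $(H_r(\omega_{(j)}),H^{(\gamma)}_{r+1}(\omega_{(j)}))$ come from Proposition~\ref{prop:axis} or Proposition~\ref{prop:faraxis} after the shear $S_{\mathfrak s}$.
\item The BZ parameters $b_k=(c_k+\gamma)/(c_k-\gamma)\ne\pm1$ at the regular nodes follow from Theorem~\ref{thm:mkn}, since there $c_k\notin\{0,\infty\}$.
\item The AF criterion $\mathfrak N=0\iff\alpha=0$ follows from \eqref{eq:NUTmu}.
\end{itemize}

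The main obstacle is the frame and sign bookkeeping: ensuring that the bases carrying the model for $\Phi_0$ and for the soliton differ by a map of determinant $+1$, so that $z$, and not $-z$, is preserved and both models are the same function of $(\rho,z)$. I would handle this with the orientation convention of \S\ref{subsec:models}, which makes the model independent of the frame's orientation, together with the fact that Lemma~\ref{lem:match} normalises the rays consistently in both cases.
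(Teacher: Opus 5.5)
Your proposal is correct and follows the paper's proof. You realise the normalised rays by Propositions~\ref{prop:ALEreal} and~\ref{prop:ALFreal}, get taming and the model from Corollary~\ref{cor:CinCtor} and Proposition~\ref{prop:farCtor}, identify the maps by Lemma~\ref{lem:uniq}, and finish with Remark~\ref{rem:nuinv} and Theorem~\ref{thm:mkn}; the only difference is bookkeeping. The paper compares the two maps as functions on $\Hb^\circ$, each in the Weyl coordinates of its own basis, without identifying the bases, and uses the determinant $\pm1$ of the named basis relative to the null basis (resp.\ the frame of \eqref{eq:BZAF}) only at the end to write $G=B^T\tilde G_sB$; so your ``main obstacle'' is already handled by those cited statements.
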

\begin{proof}
Let $(\Phi,\nu)\in\Ctor$, locally tamed by a rod structure with turning points $w_1<\dots<w_n$, $n\ge1$, and rod lines
$L_0,\dots,L_n$, and with the model $\Phi^\sharp_{h_0,\alpha}$ in a basis $(a,b)$ of $\mathfrak t$ (Definition~\ref{def:Ctor}). The Weyl
coordinates are those of the basis $(a,b)$, and the turning points are taken in these coordinates.

\emph{The soliton metric.} Let $\ell_0=(1,\alpha/2),\ell_1,\dots,\ell_n$ span the normalised rays of $(\Phi,\nu)$, and let $r$ be its
degree index (\S\ref{subsec:cells}); so $[\ell_j]=L_j$, and the $\ell_j$ satisfy the hypotheses of Lemma~\ref{lem:match}. If $h_0=0$,
then $\alpha>0$, the type is ALE, and Proposition~\ref{prop:ALEreal} applies. If $h_0=1$, the type is ALF$^-$ if $\alpha>0$, ALF$^+$ if $\alpha<0$, and AF if
$\alpha=0$, in which case $L_0=L_n$ and $n\ge2$; Proposition~\ref{prop:ALFreal} applies. It gives a soliton metric with the turning points $w$, the model $\Phi^\sharp_{h_0,\alpha}$ and the rod lines $L_j$: an ALE soliton
metric $g_c$ in the basis $F_\alpha$, whose Weyl coordinates are the $(\rho,z)$ of the null-basis formulas
(Corollary~\ref{cor:CinCtor}(iii)), resp.\ a far-node limit $g_\infty$ in the basis $(a_\gamma,b_\gamma)$ of Proposition~\ref{prop:farCtor}.
In each case, let $\Phi_s$ be the harmonic map of the soliton metric in the basis named, as a function of its Weyl coordinates.
Here $\Phi$ and $\Phi_s$ are compared as maps $\Hb^\circ\to\mathcal H$, in the Weyl coordinates of the basis $(a,b)$ and of the basis
named, respectively; the turning points $w$, the rod lines $L_j$ and the model $\Phi^\sharp_{h_0,\alpha}$ are read in these coordinates and
components, and no identification of the two bases of $\mathfrak t$ is made at this stage.

\emph{Uniqueness.} $\Phi$ and $\Phi_s$ are axisymmetric harmonic maps on $\Hb^\circ$. Both are locally tamed by the rod structure with
turning points $w$ and rod lines $L_j$: $\Phi$ by assumption, and $\Phi_s$ by Corollary~\ref{cor:CinCtor} and
Proposition~\ref{prop:farCtor}\ref{it:ft1}. Both satisfy $d(\cdot,\Phi^\sharp_{h_0,\alpha})\to0$ uniformly in $\theta$, for the same model: $\Phi$ by
\ref{it:T2}, and $\Phi_s$ with the rate $O(R^{-1})$, by Corollary~\ref{cor:CinCtor}(iii) and Proposition~\ref{prop:farCtor}\ref{it:ft2}.
Lemma~\ref{lem:uniq} gives $\Phi=\Phi_s$.

\emph{Conclusion.} Let $\tilde G_s=2(-1)^{m+1}G_c$ in the null basis for ALE (Corollary~\ref{cor:CinCtor}), and $\tilde G_s=(-1)^mG_{\rm BZ}$
in the frame of \eqref{eq:BZAF} for ALF and AF; so $\det\tilde G_s=\rho^2$. Let $B$ be the matrix of the basis named above relative to
these. For ALE, $\det B=-1$, since $F_\alpha$ has determinant $1$ relative to the Hankel frame $(e_0,-e_1)$. For ALF and AF,
$B=B_1B_2$, where $B_1$ is the matrix called $B$ in Theorem~\ref{thm:mkn} and $B_2$ is the matrix of $(a_\gamma,b_\gamma)$ relative
to $(e_\gamma,e_1)$; both have determinant $1$ (Theorem~\ref{thm:mkn}, Proposition~\ref{prop:farCtor}), so $\det B=1$. So $\rho$ is the same in both bases, and the Weyl coordinates of the soliton metric in the basis named
are the $(\rho,z)$ of its formulas (Corollary~\ref{cor:CinCtor}(iii), Proposition~\ref{prop:farCtor}). As $\Phi=\Phi_s$, the Gram matrix of $(\Phi,\nu)$ in the basis
$(a,b)$ is $G=B^T\tilde G_sB$, and $\tilde G_s$ is a constant multiple of the ALE soliton matrix \eqref{eq:bordered} with $N=2m+1$
poles, resp.\ of the soliton \eqref{eq:BZAF} on $\R^3\times S^1$ with $2m$ poles. Both are obtained from the flat model of their type by the operations of Definition~\ref{def:soliton},
the first by \S\ref{subsec:ALEsolitons} and the second by \S\ref{subsec:MKN}. By Remark~\ref{rem:nuinv}, $\nu$ solves \eqref{eq:nu}
for $\tilde G_s$ as well, so $e^{2\nu}$ is a constant multiple of $F_1F_2$ (Theorem~\ref{thm:family}). Hence $(\Phi,\nu)$, and every metric \eqref{eq:WP} built
from it with any lattice, is a gravitational soliton.

\emph{The data.} In all cases $(\omega,r,\gamma,\mathfrak s)$ are the family data of the soliton (Theorem~\ref{thm:family}), with $\gamma=0$
for ALE, and the rod lines are spanned by the vectors with the components $S_{\mathfrak s}V_j$ in its Hankel frame
(Proposition~\ref{prop:axis}\ref{it:axrod}); here $\omega$ is the positive measure on the turning points of the proof of
Proposition~\ref{prop:ALEreal}, resp.\ the measure \eqref{eq:farmeasure}. In the ALE case the soliton has $N=2m+1$ poles,
$m=\max(r,n-1-r)$: one at each turning point, with positive weight, $m-r$ phantom solitons with parameter $\infty$ and $m-n+1+r$ with
parameter $0$ at arbitrary further points. In the ALF and AF cases it has $2m$ poles, $m=\max(r,n-r)$: one at each turning point, with BZ
parameter $b_k=(c_k+\gamma)/(c_k-\gamma)$, $|b_k|\ne1$, and $m-r$ phantom solitons with $b=1$ and $m+r-n$ with $b=-1$ at arbitrary further
points (Theorem~\ref{thm:mkn}); the type is AF exactly when $\gamma=\gamma_*(\omega)$, i.e.\ when the total NUT charge \eqref{eq:NUTmu}
vanishes.
\end{proof}

\begin{remark}[closed formulas]\label{rem:GGclosed}
The proof gives every element of $\Ctor$ in closed form. Let $(\Phi,\nu)\in\Ctor$, and let $(\omega,r,\gamma,\mathfrak s)$ be the family data
of the soliton metric in the proof of Proposition~\ref{prop:GG} ($\gamma=0$ for ALE). Transport its Hankel frame to $\mathfrak t$ by
the identification of the basis named in that proof with $(a,b)$; the transported frame has determinant $1$ relative to $(a,b)$
(\S\ref{subsec:CinCtor}, resp.\ Proposition~\ref{prop:farCtor}). Since $\Phi=\Phi_s$, Theorem~\ref{thm:family} says that in it $\rho\Phi$ is the
family matrix $\tilde G_{\omega,r,\gamma,\mathfrak s}$, and $e^{2\nu}$ is a constant multiple of $F_1F_2$. Replace its first vector $f_0$ by $f_0+\gamma\mathfrak s f_1$; in the
new basis $(f_0,f_1)$, again of determinant $1$ relative to $(a,b)$ and with the same Weyl coordinates, the family has the shift $0$, and
$g$ is given by \eqref{eq:introclosed}.
\end{remark}

\begin{proof}[Proof of Theorems~\ref{thm:G} and~\ref{thm:Gfull}]
Let $(M,g)$ be as in Theorem~\ref{thm:G}. By Proposition~\ref{prop:endtype}, its end is of type ALE, ALF$^\pm$ or AF$_\beta$. By
Proposition~\ref{prop:smoothCtor}, $g$ has global Weyl coordinates, it has the form
\eqref{eq:WP} on $\Hb^\circ\times T^2$, with the lattice of $2\pi$-periodic elements, and its augmented harmonic map lies in $\Ctor$,
with the rod structure given by the fixed points and the stabilisers. Since the action has a fixed point, there is at least
one turning point. By Proposition~\ref{prop:GG}, $g$ is a gravitational soliton, with the data described there.
\end{proof}

\section{Examples}\label{sec:examples}
Nothing in this section is used in the proofs. We work out the smallest cases (\S\ref{subsec:smallest}, with Euclidean Kerr), the
Gibbons--Hawking metrics (\S\ref{subsec:GH}), the known smooth instantons (Table~\ref{tab:smooth}) and instantons found with computer
assistance (\S\ref{subsec:explicit}).

\subsection{The smallest cases}\label{subsec:smallest}
We work out the soliton families with the fewest poles. Coordinates are centred so that the formulas are short.

\emph{One ALE pole: flat $\R^4$.} For $N=1$, $m=0$, admissibility (Definition~\ref{def:admissible}) means $c\in(0,\infty)$, the
weight is $a_1=c$, and the soliton metric is the Gibbons--Hawking metric of Proposition~\ref{prop:GHmeh}(a) with $n=1$ and $\omega_1=c$,
i.e.\ flat $\R^4$ (in the frame
$(c^{-1/2}e_0,-c^{1/2}e_1)$ its Gram matrix is $\rho\Phi^\sharp_{0,2}$). Its rod lines are spanned by $V_0=(1,a_1)$ and $V_1=(1,-a_1)$ in the
Hankel frame (Corollary~\ref{cor:CinCtor}\textup{(ii)}).

\emph{Two far poles and one turning point: Taub--NUT.} In the setting of \S\ref{sec:far} let $K=2$, so $m=1$, and put $\delta=w_2-w_1$.
Admissibility means $c_1\in[0,\infty]$ and $c_2\in[-\infty,0]$, with at most one zero and at most one infinity. Let $c_1$ be a phantom
soliton and $c_2$ finite and non-zero. Then $n=1$, and the measure \eqref{eq:farmeasure} is $\omega_2\delta_{w_2}$ with a weight $\omega_2>0$; here $a_2=-c_2/\delta$.
If $c_1=\infty$ (heavy), then $r=0$, $\omega_2=-c_2\delta$, and the limit is the Taub--NUT metric of Proposition~\ref{prop:GHmeh}(b) with $n=1$ and
$c_0=\gamma$; since $\alpha=2\omega_2/\gamma>0$, its type is ALF$^-$ (anti-Taub--NUT). If $c_1=0$ (light), then $r=1$ and $\omega_2=-c_2/\delta$. After
the shear, $\hat V_0=(\omega_2,-\gamma^2)$ and $\hat V_1=(-\omega_2,-\gamma^2)$, and $\alpha=-2\gamma/\omega_2<0$, $\mathfrak N=\gamma^2/(2\omega_2)$. So the normalised
rays are those of $(1,\frac\alpha2)$ and $-(1,-\frac\alpha2)$, and the far-node limit, in the basis of Proposition~\ref{prop:farCtor}, has the turning
point $w_2$ and the rod lines and the model of $\Phi^\sharp_{1,\alpha}$, translated to $w_2$. By Lemma~\ref{lem:uniq} it is that model, $\Phi_{H,A}$
with $H=1+|\alpha|/(2R)$ and $A=\alpha z/(2R)$, where $R$ and $z$ are centred at $w_2$: Taub--NUT (ALF$^+$). TN$^+$ and TN$^-$ are the same metric
with opposite orientations (\S\ref{subsec:models}).

\emph{Two far poles and two turning points: Kerr.} Let $c_1,c_2$ be finite and non-zero. Then $n=2$, $r=1$, and the measure is
$a_1\delta_{w_1}+a_2\delta_{w_2}$ with $a_1=c_1/\delta>0$ and $a_2=-c_2/\delta>0$. Centre $z$ so that $a_1w_1+a_2w_2=0$. Then the shear is
trivial, and by \eqref{eq:shifted} the family $(2,1,\gamma)$ is
\begin{equation} \begin{gathered}
\hat V_0=\bigl(a_1+a_2,\ a_1a_2\delta^2-\gamma^2\bigr),\qquad \hat V_2=\bigl(-a_1-a_2,\ a_1a_2\delta^2-\gamma^2\bigr),\\
\hat V_1=\Bigl(a_2-a_1,\ -a_1a_2\delta^2-\frac{4\gamma a_1a_2\delta}{a_1+a_2}-\gamma^2\Bigr).
\end{gathered} \end{equation}
So $\hat V_2=-\hat V_0$ exactly on the AF locus $\gamma=\gamma_*=\delta\sqrt{a_1a_2}$, where the total NUT charge
$\mathfrak N=(\gamma^2-a_1a_2\delta^2)/(2(a_1+a_2))$ vanishes. By Theorem~\ref{thm:mkn} the limit is the soliton \eqref{eq:BZAF} with two poles
and $b_k=(c_k+\gamma)/(c_k-\gamma)$. On the AF locus these are the Euclidean Kerr metrics, possibly with conical singularities along
the axis (Proposition~\ref{prop:kerr}); off it they are ALF, with $\mathfrak N\ne0$.

\emph{Euclidean Kerr.} For $M>0$ and $\mathsf a\in\R$ put $\sigma=\sqrt{M^2+\mathsf a^2}$, $\mathsf r_\pm=M\pm\sigma$ and $\Omega=\mathsf a/(2M\mathsf r_+)$. The
\emph{Euclidean Kerr metric} \cite{Hawking1977,GH1979} is
\begin{equation}\label{eq:kerr}
g_{M,\mathsf a}=\frac\Sigma\Delta\,d\mathsf r^2+\Sigma\,d\vartheta^2+\frac\Delta\Sigma\bigl(d\tau+\mathsf a\sin^2\vartheta\,d\phi\bigr)^2
+\frac{\sin^2\vartheta}\Sigma\bigl((\mathsf r^2-\mathsf a^2)\,d\phi-\mathsf a\,d\tau\bigr)^2
\end{equation}
on $\{\mathsf r>\mathsf r_+,\ 0<\vartheta<\pi\}\times T^2$, with $\Delta=\mathsf r^2-2M\mathsf r-\mathsf a^2=(\mathsf r-\mathsf r_+)(\mathsf r-\mathsf r_-)$ and
$\Sigma=\mathsf r^2-\mathsf a^2\cos^2\vartheta$. It is obtained from the Kerr metric in Boyer--Lindquist coordinates by $t=-i\tau$ and $a=i\mathsf a$, and it
is Ricci-flat \cite{GH1979}. Here $(\mathsf r,\vartheta)$ are
Boyer--Lindquist coordinates, used only in this example. Note that $\Delta>0$,
$\mathsf r_+^2-\mathsf a^2=2M\mathsf r_+>0$, $\Sigma\ge\mathsf r^2-\mathsf a^2>0$ and $\Sigma-2M\mathsf r=\Delta+\mathsf a^2\sin^2\vartheta>0$ for $\mathsf r>\mathsf r_+$.
\begin{proposition}[Euclidean Kerr]\label{prop:kerr}
\leavevmode
\begin{enumerate}[leftmargin=*,label=\textup{(\roman*)}]
\item In the Weyl coordinates $\rho=\sqrt\Delta\sin\vartheta$, $z=(\mathsf r-M)\cos\vartheta$ and the basis $(\partial_\phi,\partial_\tau)$, the augmented
harmonic map of $g_{M,\mathsf a}$ lies in $\Ctor$. Its turning points are $\pm\sigma$, its rod lines are $[\partial_\phi]$, $[\Omega\partial_\phi+\partial_\tau]$,
$[\partial_\phi]$, its type is AF, and its model is $\Phi^\sharp_{1,0}$, at distance $O(R^{-1})$.
\item Let $G_\infty$ be a far-node limit of \S\ref{sec:far} of type AF with $n=2$ turning points $w_1<w_2$, and let $\Phi$ and $(a_\gamma,b_\gamma)$ be as in
Proposition~\ref{prop:farCtor}. Then its middle rod line is $[ta+b]$ for a unique $t\in\R$, and there are unique $M>0$ and $\mathsf a\in\R$ with
$\sigma=\frac12(w_2-w_1)$ and $\Omega=t$. For them, the Gram matrix $\rho\Phi$ in the basis $(a_\gamma,b_\gamma)$ at $(\rho,z)$ is the Gram matrix of $g_{M,\mathsf a}$ in the
basis $(\partial_\phi,\partial_\tau)$ at $(\rho,z-\frac12(w_1+w_2))$, and the two conformal factors differ by a constant factor.
\item Every $g_{M,\mathsf a}$ arises as in \textup{(ii)} from a far-node configuration with $K=2$ nodes, neither of them a phantom soliton.
\end{enumerate}
\end{proposition}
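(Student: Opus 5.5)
Part (i) is a computation followed by an appeal to Lemma~\ref{lem:formtame}. Part (ii) is the uniqueness lemma applied to (i). Part (iii) uses the surjectivity machinery for $n=2$.

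\emph{Part (i).} First I verify that $\rho=\sqrt\Delta\sin\vartheta$ and $z=(\mathsf r-M)\cos\vartheta$ are Weyl coordinates, i.e.\ that $\rho^2=\det G$ in the basis $(\partial_\phi,\partial_\tau)$.
\begin{itemize}
\item Reading off \eqref{eq:kerr}, $G_{\tau\tau}=(\Delta-\mathsf a^2\sin^2\vartheta)/\Sigma$, $G_{\tau\phi}=\mathsf a\sin^2\vartheta(\Delta-(\mathsf r^2-\mathsf a^2))/\Sigma$ and $G_{\phi\phi}=\sin^2\vartheta(\Delta\mathsf a^2\sin^2\vartheta+(\mathsf r^2-\mathsf a^2)^2)/\Sigma$.
\item The standard Kerr identity then gives $\det G=\Delta\sin^2\vartheta$, so $\rho^2=\det G$.
\item The map $(\mathsf r,\vartheta)\mapsto(\rho,z)$ is the usual prolate-spheroidal map. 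It takes $\{\mathsf r>\mathsf r_+\}$ onto $\Hb^\circ$, the horizon $\mathsf r=\mathsf r_+$ onto the segment $[-\sigma,\sigma]$ (since $\mathsf r_+-M=\sigma$), and $\vartheta=0,\pi$ onto the outer rods. So the turning points are $\pm\sigma$.
\item On $\vartheta\in\{0,\pi\}$, $\partial_\phi$ degenerates, giving the outer rod lines $[\partial_\phi]$.
\item On the horizon, $\Delta=0$, and the kernel of $G$ is spanned by $\Omega\partial_\phi+\partial_\tau$ with $\Omega=\mathsf a/(\mathsf r_+^2-\mathsf a^2)=\mathsf a/(2M\mathsf r_+)$.
\end{itemize}
Local tameness then comes from Lemma~\ref{lem:formtame}. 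Near the rods I write $G$ in the rod form \eqref{eq:Grod}, and near $\pm\sigma$ in the form \eqref{eq:nutmodel}, using the analyticity of the coefficients in $(\mathsf r,\cos\vartheta)$ together with the variables $s,t$. At infinity, $G_{\tau\tau}\to1$, $G_{\phi\phi}=\rho^2(1+O(R^{-1}))$ and $G_{\tau\phi}=O(\rho^2R^{-2})$. Inserting these into \eqref{eq:coshhalf} against $\Phi^\sharp_{1,0}=\Phi_{1,0}$ (written in the order $(\partial_\phi,\partial_\tau)$) gives $d=O(R^{-1})$ uniformly in $\theta$.

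\emph{Part (ii).} By Proposition~\ref{prop:farCtor} the far-node limit lies in $\Ctor$, with model $\Phi^\sharp_{1,0}$ in $(a_\gamma,b_\gamma)$ and outer rod lines $[(1,0)]=[a]$. The middle line differs from $[a]$ because consecutive rod lines are distinct, so it is $[ta+b]$ for a unique $t$.

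Next I solve for the Kerr parameters. The conditions $\sigma=\frac12(w_2-w_1)$ and $\Omega=t$ mean $M^2+\mathsf a^2=\sigma^2$ and $\mathsf a=2tM(M+\sigma)$. I substitute $\mathsf a=\sigma\sin\chi$, $M=\sigma\cos\chi$ with $\chi\in(-\pi/2,\pi/2)$. Then $\Omega=\sin\chi/(2\sigma\cos\chi(1+\cos\chi))$, which is strictly increasing in $\chi$ and maps onto $\R$, so a unique solution exists.

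Both maps are now tamed by the same rod structure after translating $z$ by $\frac12(w_1+w_2)$. Both tend to the same model, since by the remark after Definition~\ref{def:Ctor} a translation costs $O(R^{-1})$. Lemma~\ref{lem:uniq} therefore identifies the two maps, and Remark~\ref{rem:nuinv} shows the conformal factors differ by a constant.

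\emph{Part (iii).} Given $g_{M,\mathsf a}$, part (i) supplies the normalised rays $\ell_0=(1,0)$, $\ell_1\propto(\Omega,1)$ and $\ell_2\propto(1,0)$, with signs fixed by Lemma~\ref{lem:match}. Here $\alpha=0$ and $n=2$, so $r=1$ and $m=\max(1,1)=1$. Proposition~\ref{prop:ALFreal} then gives a far-node configuration with $m-r=0$ heavy and $m+r-n=0$ light nodes, that is, $K=2$ nodes and no phantoms. Its turning points can be taken to be $\pm\sigma$ and its rod lines are the $[\ell_j]$. By (ii) its limit is $g_{M',\mathsf a'}$ with $\sigma'=\sigma$ and $\Omega'=\Omega$. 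The uniqueness of the solution in (ii) forces $(M',\mathsf a')=(M,\mathsf a)$.

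\emph{Main obstacle.} I expect the main obstacle to be the local tameness at the horizon endpoints in (i), i.e.\ exhibiting the form \eqref{eq:nutmodel} with $A(0)>0$ in the basis $(\partial_\phi,\Omega\partial_\phi+\partial_\tau)$, normalised to determinant $\pm1$. I would first try to express $\mathsf r-\mathsf r_+$ and $1\mp\cos\vartheta$ as $s,t$ times analytic positive factors, exactly as the square-root argument of Lemma~\ref{lem:axis} does, and then apply Lemma~\ref{lem:Clocal}(b).
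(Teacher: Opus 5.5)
Your proposal is correct. Parts (ii), (iii) and the estimate at infinity in (i) are essentially the paper's argument: Proposition~\ref{prop:farCtor}, the monotone parametrisation $M=\sigma\cos\chi$, $\mathsf a=\sigma\sin\chi$, Lemma~\ref{lem:uniq} after translating $z$, and Proposition~\ref{prop:ALFreal} with $\alpha=0$, $r=m=1$.

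The real difference is how you obtain \ref{it:T1} in (i). The paper computes no local forms. It observes that for the lattice generated by $\pm\partial_\phi$ and $\pm(2M\mathsf r_+/\sigma)(\Omega\partial_\phi+\partial_\tau)$ the metric is the smooth, complete, simply connected Euclidean Kerr instanton with two fixed points. Propositions~\ref{prop:endtype} and~\ref{prop:smoothCtor} then give local tameness of its harmonic map, which does not depend on the lattice. You instead read the local forms off the explicit formulas and apply Lemmas~\ref{lem:Clocal} and~\ref{lem:formtame}.

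Your route is more elementary and self-contained. It needs neither the global facts about the Kerr instanton nor, through Proposition~\ref{prop:endtype}, the results of Li and Sun. The paper's route is shorter given the literature, and it also identifies the rod structure with the fixed points and stabilisers.

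The obstacle you anticipate is not really there. Use the inverse map $\mathsf r-M=\frac12(R_++R_-)$, $\sigma\cos\vartheta=\frac12(R_--R_+)$, with $R_\pm=\sqrt{\rho^2+(z\mp\sigma)^2}$. Then $\mathsf r$ and $\cos\vartheta$ are real-analytic in $(\rho^2,z)$ away from $\pm\sigma$. At the turning point $\pm\sigma$ they are real-analytic in $(s,t)$, since there $R_\pm=s+t$ exactly and $R_\mp$ is analytic and positive. Hence so is $G$, because $\Sigma\ge\mathsf r^2-\mathsf a^2\ge 2M\mathsf r_+>0$ up to the horizon. The kernel vectors $\partial_\phi$ and $\Omega\partial_\phi+\partial_\tau$ already have determinant $\pm1$ in $(\partial_\phi,\partial_\tau)$. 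So Lemma~\ref{lem:Clocal}(b) applies directly, with no factorisation of $\mathsf r-\mathsf r_+$ or $1\mp\cos\vartheta$.

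Two small corrections.
\begin{itemize}
\item The diagonal entry is $G_{\tau\tau}=(\Delta+\mathsf a^2\sin^2\vartheta)/\Sigma=(\Sigma-2M\mathsf r)/\Sigma$. Your minus sign would make it negative on the horizon.
\item The identity $\rho^2=\det G$ alone does not make $(\rho,z)$ Weyl coordinates. You also need the conformality $d\rho^2+dz^2=\lambda\,(d\mathsf r^2/\Delta+d\vartheta^2)$ with $\lambda=(\mathsf r-M)^2-\sigma^2\cos^2\vartheta$. This makes $z$ a harmonic conjugate of $\rho$, so that $(\rho^{-1}G,\nu)$ with $e^{2\nu}=\Sigma/\lambda$ is an augmented harmonic map.
\end{itemize}
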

\begin{proof}
(i) \emph{Weyl coordinates.} A direct computation gives $d\rho^2+dz^2=\lambda\,(d\mathsf r^2/\Delta+d\vartheta^2)$ with
$\lambda=(\mathsf r-M)^2-\sigma^2\cos^2\vartheta>0$, and $R_\pm=\sqrt{\rho^2+(z\mp\sigma)^2}=\mathsf r-M\mp\sigma\cos\vartheta$. So $(\mathsf r,\vartheta)\mapsto(\rho,z)$ is a
real-analytic diffeomorphism of $(\mathsf r_+,\infty)\times(0,\pi)$ onto $\Hb^\circ$, with the inverse $\mathsf r-M=\frac12(R_++R_-)$,
$\sigma\cos\vartheta=\frac12(R_--R_+)$; these formulas extend $\mathsf r$ and $\cos\vartheta$ continuously to the closed half-plane, where
$\{\rho=0,\ |z|<\sigma\}$ is $\{\mathsf r=\mathsf r_+\}$, $\{\rho=0,\ z>\sigma\}$ is $\{\vartheta=0\}$ and $\{\rho=0,\ z<-\sigma\}$ is $\{\vartheta=\pi\}$. In the basis
$(\partial_\phi,\partial_\tau)$ the Gram matrix of \eqref{eq:kerr} is
\begin{equation}\label{eq:kerrG}
G=\frac1\Sigma\begin{pmatrix}\sin^2\vartheta\bigl(\Delta\mathsf a^2\sin^2\vartheta+(\mathsf r^2-\mathsf a^2)^2\bigr)&-2M\mathsf r\mathsf a\sin^2\vartheta\\
-2M\mathsf r\mathsf a\sin^2\vartheta&\Sigma-2M\mathsf r\end{pmatrix},\qquad \det G=\Delta\sin^2\vartheta=\rho^2,
\end{equation}
and $g_{M,\mathsf a}=G_{ab}\,d\phi^ad\phi^b+e^{2\nu}(d\rho^2+dz^2)$ with $e^{2\nu}=\Sigma/\lambda$. Since $g_{M,\mathsf a}$ is Ricci-flat, $(\rho^{-1}G,\nu)$ is an
augmented harmonic map (\S\ref{subsec:WP}). $G$ and $\nu$ are invariant under $\vartheta\mapsto\pi-\vartheta$, i.e.\ $z\mapsto-z$, so the sign
convention for $z$ in \S\ref{subsec:WP} plays no role.

\emph{\ref{it:T1}.} With the lattice generated by the normalised rod vectors $\pm\partial_\phi$ and $\pm(2M\mathsf r_+/\sigma)(\Omega\partial_\phi+\partial_\tau)$
(computed after the proof), $g_{M,\mathsf a}$ is the smooth Euclidean Kerr instanton on $\R^2\times S^2$, which is complete and simply connected,
with $|\mathrm{Rm}|=O(\mathfrak r^{-3})$ and two fixed points \cite{Hawking1977,GH1979}; so it is a toric gravitational instanton as in
Theorem~\ref{thm:G}. By Propositions~\ref{prop:endtype} and~\ref{prop:smoothCtor}, its augmented harmonic map, which does not depend on the
lattice, is locally tamed by the rod structure of its fixed points and stabilisers, in any basis (Definition~\ref{def:Ctor}). In the basis
$(\partial_\phi,\partial_\tau)$ the coordinates $(\rho,z)$ above are its Weyl coordinates: $\rho=(\det G)^{1/2}$, and $z$ is a harmonic conjugate of $\rho$,
since $(\mathsf r,\vartheta)\mapsto(\rho,z)$ is conformal for $d\mathsf r^2/\Delta+d\vartheta^2$, hence for the orbit-space metric $\Sigma(d\mathsf r^2/\Delta+d\vartheta^2)$;
the sign of $z$ plays no role, by the symmetry $z\mapsto-z$. The fixed points are
the zeros of $G$, over the turning points $\pm\sigma$, and the stabilisers are read off from the kernels of $G$: on the outer rods
($\vartheta\in\{0,\pi\}$) $G\partial_\phi=0$, and on the middle rod ($\mathsf r=\mathsf r_+$), since $\Delta=0$ and $\mathsf r_+^2-\mathsf a^2=2M\mathsf r_+$,
$\Sigma G=\sin^2\vartheta\,ww^T$ with $w=(\mathsf r_+^2-\mathsf a^2,-\mathsf a)$, and $w\cdot(\Omega,1)=0$. So the rod lines are $[\partial_\phi]$,
$[\Omega\partial_\phi+\partial_\tau]$, $[\partial_\phi]$, and the type is AF.

\emph{\ref{it:T2}.} Write $\rho^{-1}G=\Phi_{H,A}$ as in \eqref{eq:PhiHA}, in the basis $(\partial_\phi,\partial_\tau)$. Then $H=1/G_{22}=\Sigma/(\Sigma-2M\mathsf r)$ and
$A=G_{12}/G_{22}=-2M\mathsf r\mathsf a\sin^2\vartheta/(\Sigma-2M\mathsf r)$, while $\Phi^\sharp_{1,0}=\Phi_{1,0}$ by \eqref{eq:models}. Since $\Sigma-2M\mathsf r\ge\Delta$, we have
$0\le H-1\le2M\mathsf r/\Delta$ and $|A|/\rho\le2M\mathsf r|\mathsf a|/\Delta^{3/2}$, so by \eqref{eq:coshhalf}
\begin{equation}
\cosh d(\Phi_{H,A},\Phi_{1,0})-1=\frac{\rho^2(H-1)^2+A^2}{2\rho^2H}\le\frac{2M^2\mathsf r^2}{\Delta^2}+\frac{2M^2\mathsf r^2\mathsf a^2}{\Delta^3} .
\end{equation}
Finally $R^2=\rho^2+z^2=(\mathsf r-M)^2-\sigma^2\sin^2\vartheta$, so $R^2\le(\mathsf r-M)^2\le R^2+\sigma^2$ and $\Delta=(\mathsf r-M)^2-\sigma^2\ge R^2-\sigma^2$.
Hence $d=O(R^{-1})$ uniformly in $\vartheta$.

(ii) By Proposition~\ref{prop:farCtor}, $(\Phi,\nu)\in\Ctor$ with the model $\Phi^\sharp_{1,0}$ in the basis $(a_\gamma,b_\gamma)$, since the type is AF, and by
Lemma~\ref{lem:outerlines} its outer rod lines are $L_0=L_2=[a]$. The middle rod line differs from them, so it is $[ta+b]$ for a unique $t$.
Write $M=\sigma\cos\beta$, $\mathsf a=\sigma\sin\beta$ with $\beta\in(-\frac\pi2,\frac\pi2)$; then $\mathsf r_+=\sigma(1+\cos\beta)$ and
$\Omega=\sin\beta/\bigl(2\sigma\cos\beta(1+\cos\beta)\bigr)$. This is an odd function of $\beta$, and on $[0,\frac\pi2)$ it is the product of the
increasing function $\sin\beta$ and the increasing positive function $1/(2\sigma\cos\beta(1+\cos\beta))$; so it is a strictly increasing
bijection of $(-\frac\pi2,\frac\pi2)$ onto $\R$. Hence $M,\mathsf a$ exist and are unique. By (i), the map of $g_{M,\mathsf a}$, translated by $\frac12(w_1+w_2)$
in $z$ and written in the basis $(a_\gamma,b_\gamma)$ by identifying it with $(\partial_\phi,\partial_\tau)$, is locally tamed by the same rod structure as $\Phi$,
and tends to the same model; a translation of $z$ moves the model by $O(R^{-1})$ (after Definition~\ref{def:Ctor}). By Lemma~\ref{lem:uniq} the two
maps coincide, and so do the Gram matrices $\rho\Phi$. The conformal factors both solve \eqref{eq:nu} for this Gram matrix, so they differ by an
additive constant in $\nu$.

(iii) Given $M,\mathsf a$, let $\ell_0=(1,0)$, $\ell_1=-(\Omega,1)$ and $\ell_2=(-1,0)$. Then $\det(\ell_0,\ell_1)=\det(\ell_1,\ell_2)=-1$, and
Lemma~\ref{lem:match}(i) gives $r=1$. By Proposition~\ref{prop:ALFreal} with $\alpha=0$ and $n=2$, there is a far-node configuration with $m=\max(r,n-r)=1$,
i.e.\ $K=2$, with no heavy and no light node, whose far-node limit has the turning points $\pm\sigma$ and, in the basis $(a_\gamma,b_\gamma)$ of
Proposition~\ref{prop:farCtor}, the model $\Phi^\sharp_{1,0}$ and the rod lines $[\ell_j]$. Its middle rod line is $[\Omega a+b]$, so by (ii) and the
uniqueness of $(M,\mathsf a)$ it is $g_{M,\mathsf a}$.
\end{proof}
By the definition before \eqref{eq:Nrod}, a direct computation from \eqref{eq:kerrG} and $e^{2\nu}=\Sigma/\lambda$ gives $\mathfrak c(\partial_\phi)=1$ on the
outer rods and $\mathfrak c(\Omega\partial_\phi+\partial_\tau)=2M\mathsf r_+/\sigma$ on the middle rod. So the normalised rod vectors are $\pm\partial_\phi$ and
$\pm(2M\mathsf r_+/\sigma)(\Omega\partial_\phi+\partial_\tau)$. For the lattice that they generate, $g_{M,\mathsf a}$ is the smooth Euclidean Kerr instanton on
$\R^2\times S^2$ \cite{Hawking1977,GH1979}; for other lattices it has conical singularities along the axis.

The explicit rod lines give a first application of Theorem~\ref{thm:G}: the twist of an AF end cannot vanish unless the metric is
Schwarzschild. Let $(X,g)$ be as in Theorem~\ref{thm:G}, of type AF$_\beta$, with lattice $\Lambda$; we write $X$ for the manifold, since $M$ is
the mass parameter of $g_{M,\mathsf a}$. Let $v_0\in\Lambda$ be a
primitive generator of the stabiliser of the outer rods, and let $u$ be the Killing field that stays bounded at infinity, the
translation of the model, normalised to the period of the model. Since the end is equivariantly diffeomorphic to the end of
$(\R^3\times\R)/\langle q_\beta\rangle$, the lattice $\Lambda$ is generated by $v_0$ and an element $w=\beta v_0+u$, and $\beta$ is
determined by the metric modulo $\Z$ \cite[Defs.~4.6, 4.30]{LS2025}; $\beta\in\Z$ means that the translation itself is a lattice
vector, the end being that of $\R^3\times S^1$.
\begin{corollary}[toric AF$_0$ instantons]\label{cor:AF0}
Let $(X,g)$ be as in Theorem~\ref{thm:G}, of type AF$_\beta$. If $n\ge3$, then $\beta\notin\Z$. If $\beta\in\Z$, then $n=2$ and $g$ is the
Euclidean Schwarzschild metric. In particular, the Euclidean Schwarzschild metric is the only simply connected toric AF$_0$
gravitational instanton with finitely many fixed points, at least one, as conjectured by Li and Sun for all AF$_0$ gravitational instantons \cite[Conj.~8.1]{LS2025}. (Remark~\ref{rem:pi1} does not
remove the hypothesis of simple connectivity here, since an AF end has cubic, not Euclidean, volume growth.)
\end{corollary}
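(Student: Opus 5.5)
The plan is to read the lattice condition of Remark~\ref{rem:lattice} in the explicit Hankel coordinates of the rod vectors. By Theorem~\ref{thm:Gfull} and Proposition~\ref{prop:farCtor} the instanton is a far-node limit of type AF. The model is $\Phi^\sharp_{1,0}$ in the basis $(a_\gamma,b_\gamma)$, and by Lemma~\ref{lem:outerlines} both outer rod lines equal $[a_\gamma]$, so $v_0$ is a multiple of $a_\gamma$. Since $\Phi=\Phi_{H,A}$ with $H\to1$ and $A\to0$ at infinity, the Killing field that stays bounded is the one of $b_\gamma$, so $u$ is a multiple of $b_\gamma$. Hence coordinates with respect to $(v_0,u)$ are, up to fixed positive rescalings of each axis, coordinates with respect to $(a_\gamma,b_\gamma)$. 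By Proposition~\ref{prop:faraxis}\ref{it:fa3} and the computation in Proposition~\ref{prop:farCtor}\ref{it:ft1}, the normalised rod vectors are $\tilde v_j=\pm\lambda\,T_0\hat V_j$ for one constant $\lambda>0$. By Lemma~\ref{lem:ALFnorm}(ii), the first component of $T_0\hat V_j$ is $\langle\varepsilon_j\rangle_r$, with $\langle\varepsilon_0\rangle_r=1$.

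The first step is to fix the scales. By Proposition~\ref{prop:smoothCtor}, $\tilde v_0$ is primitive in $\Lambda$, so $\tilde v_0=\pm v_0$. Therefore the $v_0$-coordinate of $\tilde v_j$ in the basis $(v_0,u)$ is exactly $\pm\langle\varepsilon_j\rangle_r$.

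The second step assumes $\beta\in\Z$. Then $u=w-\beta v_0\in\Lambda$, so $(v_0,u)$ is a $\Z$-basis of $\Lambda$, and every $\tilde v_j$, being in $\Lambda$, has integer coordinates. I then show that $|\langle\varepsilon_j\rangle_r|<1$ for $1\le j\le n-1$; recall that $1\le r\le n-1$ in the AF case by Lemma~\ref{lem:match}(i). By definition $\langle\varepsilon_j\rangle_r$ is an average of signs $(-1)^{|S\cap[1,j]|}$ with strictly positive weights $\pi_r(S)$ over all $r$-subsets $S$. When $1\le j\le n-1$ and $1\le r\le n-1$ there is an $r$-set $S$ and an atom $k\in S\cap[1,j]$ together with an atom $l\notin S$, $l>j$; swapping them gives $S'$ with $|S'\cap[1,j]|=|S\cap[1,j]|-1$. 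So both signs occur and the average lies strictly inside $(-1,1)$. Since the coordinate is an integer, it must be $0$ on every interior rod, i.e.\ $\tilde v_j\in\Z u$.

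If $n\ge3$, the rods $I_1$ and $I_2$ are both interior, so $\tilde v_1$ and $\tilde v_2$ are parallel. This contradicts Remark~\ref{rem:lattice}, which requires $\det(\tilde v_1,\tilde v_2)=\pm1$ (equivalently, consecutive rod lines differ). Hence $n\ge3$ forces $\beta\notin\Z$.

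If $n=2$, Proposition~\ref{prop:kerr}(ii) identifies $g$ with $g_{M,\mathsf a}$, whose middle rod line is $[\Omega\partial_\phi+\partial_\tau]$. The vanishing of its $v_0$-coordinate gives $\Omega=0$. Since $\Omega=\mathsf a/(2M\mathsf r_+)$, this means $\mathsf a=0$, which is Euclidean Schwarzschild. The ``in particular'' statement follows because AF$_0$ means $\beta\in\Z$.

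I expect the main obstacle to be the first step: checking rigorously that $u$ is exactly proportional to $b_\gamma$, and not to $b_\gamma$ plus a multiple of $a_\gamma$. This requires comparing the Li--Sun AF$_\beta$ model of Definition~\ref{def:asymptype} with $\Phi^\sharp_{1,0}$ written in $(a_\gamma,b_\gamma)$. A shear $b_\gamma\mapsto b_\gamma+ca_\gamma$ changes $A$ by an amount of order $c\rho^2$, which does not tend to $0$, so condition \ref{it:T2} should rule it out; this is the argument I would make precise. The normalisation of $u$ by the period of the model only rescales the $u$-axis, so it does not affect the integrality of the $v_0$-coordinate.
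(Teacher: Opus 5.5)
Your proof is correct and follows the paper's argument essentially step for step. The $v_0$-coordinate of $\tilde v_j$ in the basis $(v_0,u)$ is $\pm H_r(\omega_{(j)})/H_r(\omega)$, which lies strictly inside $(-1,1)$ on interior rods by Heine's formula; it therefore vanishes when $\beta\in\Z$, which forces $n=2$ and then $\Omega=0$ via Proposition~\ref{prop:kerr}(ii).

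The step you flag as the main obstacle is settled in the paper by the observation you already make in your first paragraph. Since $\rho\Phi_{H,A}(b_\gamma,b_\gamma)=H^{-1}$ stays bounded while $\rho\Phi_{H,A}(a_\gamma,a_\gamma)$ grows like $\rho^2$, the only bounded direction is $[b_\gamma]$, so $u$ is a multiple of $b_\gamma$.
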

\begin{proof}
By Proposition~\ref{prop:GG} and Theorem~\ref{thm:Gfull}(b), $g$ is a far-node limit in a basis $(a_\gamma,b_\gamma)$ in which
the model is $\Phi^\sharp_{1,0}$ (Proposition~\ref{prop:farCtor}); its data are $(\omega,r,\gamma,\mathfrak s)$ with
$\gamma=\gamma_*(\omega)$, and $1\le r\le n-1$ by
Lemma~\ref{lem:match}(i); so $n\ge2$. Let $\tilde v_0,\dots,\tilde v_n$ be the normalised rod vectors, the primitive generators of the
stabilisers (Proposition~\ref{prop:smoothCtor}); consecutive ones form a basis of $\Lambda$ (Lemma~\ref{lem:axis}(i)), and
$\tilde v_0=\pm v_0$. By Proposition~\ref{prop:faraxis}\ref{it:fa3}, in the frame $(e_\gamma,e_1)$ of \S\ref{sec:far} the
$e_\gamma$-component of $\tilde v_j$ is $\pm\sqrt{C_\nu}\,H_r(\omega_{(j)})$. The vector $e_1$ is a multiple of $b_\gamma$, and
$b_\gamma$ is the direction in which the Gram matrix stays bounded at infinity, since $\rho\Phi_{H,A}(b_\gamma,b_\gamma)=H^{-1}$ in
\eqref{eq:PhiHA}, while $\rho\Phi_{H,A}(a_\gamma,a_\gamma)$ grows like $\rho^2$ and $\rho\Phi_{H,A}(a_\gamma,b_\gamma)$ stays bounded; so $u$ is a multiple of $e_1$. For $0<j<n$ write
$\tilde v_j=x_j\tilde v_0+y_ju$. Comparing $e_\gamma$-components,
\begin{equation}\label{eq:AF0strip}
|x_j|=\frac{|H_r(\omega_{(j)})|}{H_r(\omega)}<1,\qquad 0<j<n .
\end{equation}
The inequality holds because, by Heine's formula \eqref{eq:heine} with \eqref{eq:PS}, $H_r(\omega_{(j)})$ is the sum of the positive
terms $P_S(\omega)$ of $H_r(\omega)$ with the signs $(-1)^{|S\cap\{1,\dots,j\}|}$, and both signs occur: for $1\le j\le n-1$ and
$1\le r\le n-1$ there are $r$-sets $S$ of atoms with $k_0$ and with $k_0+1$ atoms among the first $j$, where $k_0=\max(0,r-n+j)$.

Now let $\beta\in\Z$. Replacing $w$ by $w-\beta v_0$, which does not change $\Lambda$, we may assume $\beta=0$, so that $(v_0,u)$ is a
basis of $\Lambda$. Then $\tilde v_j=p_jv_0+q_ju$ with $p_j,q_j\in\Z$, so $x_j=\pm p_j$, and \eqref{eq:AF0strip} forces $p_j=0$; since
$\tilde v_j$ is primitive, $\tilde v_j=\pm u$ for every $0<j<n$. Two consecutive interior rod vectors would then be parallel, which
contradicts $\det(\tilde v_j,\tilde v_{j+1})=\pm1$; so $n-1\le1$, i.e.\ $n=2$, and the middle rod line is $[u]=[b_\gamma]$. By
Proposition~\ref{prop:kerr}(ii), $g$ is a Euclidean Kerr metric $g_{M,\mathsf a}$ whose middle rod line is $[\Omega a_\gamma+b_\gamma]$;
so $\Omega=0$, i.e.\ $\mathsf a=0$, and $g_{M,0}$ is the Euclidean Schwarzschild metric.
\end{proof}
The general form of \eqref{eq:AF0strip} is a restriction on the twist: with $\tilde v_j=p_jv_0+q_jw$, every interior rod satisfies
$|p_j+q_j\beta|<1$. For the rod structures $X_n$ of Li and Sun this is $\beta\in(0,1)$, the range of their existence theorem
\cite[Thm.~1.1]{LS2025}, and for Euclidean Kerr it is the bound $|\Omega|<\kappa_{\mathrm s}=\sigma/(2M\mathsf r_+)$ of the angular velocity by the surface gravity, i.e.\
$|\mathsf a|<\sigma$, which always holds.

\subsection{Gibbons--Hawking metrics}\label{subsec:GH}
Let $w_1<\dots<w_n$. The Gibbons--Hawking metric with these collinear centres is
\begin{equation} g=V^{-1}(d\psi+A\,d\varphi)^2+V(d\rho^2+dz^2+\rho^2d\varphi^2), \end{equation}
where $V=c_0+\sum_l\omega_l/R_l$ and $A=\sum_l\omega_l(z-w_l)/R_l$, with $\omega_l=\lambda_l/2$, $\lambda_l>0$, $R_l=\sqrt{\rho^2+(z-w_l)^2}$ and $c_0\ge0$: multi-Eguchi--Hanson for $c_0=0$ and multi-Taub--NUT for $c_0>0$.
For equal $\lambda_l=\lambda$, and $\psi$ of period $2\pi\lambda$, these are smooth toric gravitational instantons, and by
Theorem~\ref{thm:G} they are solitons, with degree index $r=0$. They are all the toric hyperk\"ahler gravitational instantons
\cite[Thm.~7.4, \S7.1.1]{LS2025}; this also follows from the classification of the hyperk\"ahler gravitational instantons, which is
complete (Remark~\ref{rem:ALFends}). The following proposition shows directly, for all $\lambda_l>0$, that
they are soliton metrics: they are the case $r=0$ of the soliton family (Definition~\ref{def:family}), in which all Heine sums have
order $0$ or $1$. It uses neither the surjectivity of \S\ref{sec:surj} nor the uniqueness theorem.

\begin{proposition}[Gibbons--Hawking metrics]\label{prop:GHmeh}
Let $\omega=\sum_l\omega_l\delta_{w_l}$, and let $V$, $A$ and $c_0$ be as above. For every $\mathfrak s\in\R$, the family matrix of the family data
$(\omega,0,c_0,\mathfrak s)$ is the Gram matrix of $g$ in the basis $f_0=\partial_\varphi-c_0\mathfrak s\,\partial_\psi$, $f_1=\partial_\psi$, and $F_1F_2=V$.
Let $u_1,\dots,u_h$ be points of the axis, distinct and off the $w_l$, with $h=n-1$ if $c_0=0$ and $h=n$ if $c_0>0$, and put
\begin{equation} c_l=\pm\,\omega_l\,\frac{\prod_{l'\ne l}(w_l-w_{l'})}{\prod_{h'}(w_l-u_{h'})},\qquad +\text{ if }c_0=0,\quad -\text{ if }c_0>0 . \end{equation}
\begin{enumerate}[leftmargin=*,label=\textup{(\alph*)}]
\item If $c_0=0$, the parameters $c_l$ at the $w_l$ and $\infty$ at the $u_{h'}$ lie in $\cA$, and for the ALE soliton matrix $G$ of these $2n-1$
nodes, $2(-1)^nG$ in the Hankel frame is the Gram matrix of $g$ in the basis $(\partial_\varphi,\partial_\psi)$. So $g$ is a soliton on $\R^4$
with $2n-1$ poles, $n-1$ of them phantom solitons.
\item If $c_0>0$, then for the far-node limit of \S\ref{sec:far} with $\gamma=c_0$ and the $2n$ finite nodes $w_l$ (parameters $c_l$) and
$u_{h'}$ (heavy), $2(-1)^{n+1}G_\infty$ in the frame $(e_\gamma,e_1)$ is the Gram matrix of $g$ in the basis $(f_0,f_1)$ with
$\mathfrak s=\sum_lw_l-\sum_{h'}u_{h'}$. So, by Theorem~\ref{thm:mkn}, $g$ is the soliton \eqref{eq:BZAF} on $\R^3\times S^1$ with $2n$ poles: at the $w_l$
with $b_l=(c_l+c_0)/(c_l-c_0)$, and at the $u_{h'}$ with $b=1$, phantom solitons.
\end{enumerate}
In both cases the conformal factors $e^{2\nu}=C'F_1F_2$ of the soliton metric (Theorem~\ref{thm:family}) are the constant multiples of $V$,
the conformal factor of $g$.
\end{proposition}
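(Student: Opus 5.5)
The plan is a direct computation in the case $r=0$ of the soliton family, where every Heine sum has order $0$ or $1$. With $r=0$ we have $H_0=1$, $H_{-1}=0$ and $H_1(q\omega)=\sum_l q_l\omega_l$, so \eqref{eq:famXY} gives
\begin{equation*}
\mathcal X=1,\qquad \mathcal Y=H_1(\varpi\omega)-2\gamma\Sigma_0+\gamma(\mathfrak s-i\rho),\qquad F_1=1,\qquad F_2=H_1(\omega/\mathbf r)+\gamma .
\end{equation*}
Since $\Sigma_0=0$ and $H_1(\varpi\omega)=\sum_l\omega_l(w_l-z)/R_l-i\rho\sum_l\omega_l/R_l$, I will set $\gamma=c_0$ and obtain $F_1F_2=V$ and $\mathcal Y=-A+c_0\mathfrak s-i\rho V$.

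Next I will insert these into \eqref{eq:famG}. For $x=(x_0,x_1)$ this gives
\begin{equation*}
\tilde G(x,x)=\frac{\bigl(x_1+(A-c_0\mathfrak s)x_0\bigr)^2+\rho^2V^2x_0^2}{V}.
\end{equation*}
This should be compared with the orbit part of $g$, evaluated on $x_0\partial_\varphi+x_1\partial_\psi$, which is $V^{-1}(x_1+Ax_0)^2+V\rho^2x_0^2$. The shift $-c_0\mathfrak s\,x_0$ is absorbed by passing to the frame $f_0=\partial_\varphi-c_0\mathfrak s\,\partial_\psi$, $f_1=\partial_\psi$, and this proves the first assertion. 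For the conformal factor I will argue as follows. The conformal factor of $g$ is $V$. By Theorem~\ref{thm:family}, the solutions of \eqref{eq:nu} for the family matrix are the $C'F_1F_2=C'V$. Since $g$ is Ricci-flat, $\tfrac12\log V$ solves \eqref{eq:nu}, and Remark~\ref{rem:nuinv} then gives that $\tfrac12\log V$ lies among these. No separate computation is needed.

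For (a) and (b) I will apply the converse parts of Theorem~\ref{thm:family} with $r=0$. In (a) we have $\gamma=0$. I take $m=n-1=\max(0,n-1)$, so there are $|I|=m-r=n-1$ heavy phantoms at the $u_{h'}$ and $|Z|=m-n+1=0$ light ones, for $N=2n-1$ nodes in total. The prescription $c_l=a_lP_N'(w_l)$ with $a_l=\omega_l\prod_{h'}(w_l-u_{h'})^{-2}$ simplifies to the stated $c_l$, because $P_N'(w_l)=\prod_{l'\ne l}(w_l-w_{l'})\prod_{h'}(w_l-u_{h'})$. Theorem~\ref{thm:family}(a) then gives $c\in\cA$ and $2(-1)^{m+1}G=2(-1)^nG=\tilde G_{\omega,0,0}$ in the Hankel frame. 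Because $\mathfrak s$ does not enter when $\gamma=0$, the frame is $(\partial_\varphi,\partial_\psi)$.

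In (b) I take $m=\max(0,n)=n$, with $m-r=n$ heavy nodes at the $u_{h'}$ and $m+r-n=0$ light ones, so $K=2n$. The converse in Theorem~\ref{thm:family}(b) prescribes $c_l=-a_lP'(w_l)$, which again simplifies to the stated $c_l$. The shift is $\mathfrak s=\sum_{k\le K}w_k-2\sum_{I}u_{h'}=\sum_lw_l-\sum_{h'}u_{h'}$. Theorem~\ref{thm:farclosed} gives $2(-1)^{n+1}G_\infty=\tilde G_{\omega,0,c_0,\mathfrak s}$ in the frame $(e_\gamma,e_1)$. By the first part this is the Gram matrix of $g$ in the basis $(f_0,f_1)$. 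Theorem~\ref{thm:mkn} then identifies the result with \eqref{eq:BZAF}, with $b_l=(c_l+c_0)/(c_l-c_0)$ and $b=1$ at the heavy nodes.

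The only step needing care is the bookkeeping of signs and orientation. I expect the main obstacle there to be the sign and orientation of $A$ relative to the Hankel component $x_1$: that is, checking that $H_1(\varpi\omega)$ has real part $-A$ and not $+A$, and that the frame orientation matches $d\rho\wedge dz\wedge d\varphi\wedge d\psi$. If the sign came out reversed, I would replace $\partial_\psi$ by $-\partial_\psi$, which is a change of frame of determinant $-1$ and reverses $z$. Everything else is substitution.
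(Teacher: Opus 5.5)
Your proposal is correct and follows the paper's proof step by step. It computes $\mathcal X=1$, $F_1=1$, $F_2=V$ and $\mathcal Y=-(A-c_0\mathfrak s)-i\rho V$ for $r=0$ and matches $\tilde G$ with the Gibbons--Hawking Gram matrix in $(f_0,f_1)$. It then applies the converse parts of Theorem~\ref{thm:family}(a),(b) with $m=n-1$, resp.\ $m=n$, heavy phantoms at the $u_{h'}$ and no light ones.

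Two of your steps are unnecessary. The appeal to Ricci-flatness of $g$ for the conformal factor adds nothing, since $F_1F_2=V$ together with Theorem~\ref{thm:family} already gives it. The fallback of replacing $\partial_\psi$ by $-\partial_\psi$ is also not needed: your own computation $\operatorname{Re}H_1(\varpi\omega)=-A$ settles the sign, and the statement makes no orientation claim.
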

\begin{proof}
For $r=0$, $H_0=1$ and $H_{-1}=\Sigma_0=0$, so $H^{(c_0)}_1=H_1$ by \eqref{eq:shifted}, and in \eqref{eq:famXY} $\mathcal X=1$, $F_1=1$,
$F_2=H_1(\omega/\mathbf r)+c_0=V$ and $\mathcal Y=\sum_l\omega_l\varpi_l+c_0(\mathfrak s-i\rho)$. Since $\operatorname{Re}\varpi_l=(w_l-z)/R_l$ and
$\operatorname{Im}\varpi_l=-\rho/R_l$, $\mathcal Y=-(A-c_0\mathfrak s)-i\rho V$. So, by \eqref{eq:famG},
\begin{equation}
\tilde G(x,x)=V^{-1}\bigl(x_1+(A-c_0\mathfrak s)x_0\bigr)^2+V\rho^2x_0^2=g(x_0f_0+x_1f_1,\,x_0f_0+x_1f_1),
\end{equation}
since $d\varphi(x_0f_0+x_1f_1)=x_0$ and $d\psi(x_0f_0+x_1f_1)=x_1-c_0\mathfrak sx_0$. Let $a_l=\omega_l\prod_{h'}(w_l-u_{h'})^{-2}$; the polynomial $P$ of
all nodes has $P'(w_l)=\prod_{l'\ne l}(w_l-w_{l'})\prod_{h'}(w_l-u_{h'})$, so $c_l=\pm a_lP'(w_l)$.

(a) $(\omega,0,0)$ are family data. Apply the converse of Theorem~\ref{thm:family}\textup{(a)} with $r=0$, $m=n-1=\max(r,n-1-r)$, $I=\{u_{h'}\}$
and $Z=\emptyset$: it gives exactly these parameters, in $\cA$, with the family data $(\omega,0,0)$. By Theorem~\ref{thm:family}\textup{(a)},
$2(-1)^{m+1}G=2(-1)^nG$ in the Hankel frame is the family matrix, which for $c_0=0$ is the Gram matrix of $g$ in $(f_0,f_1)=(\partial_\varphi,\partial_\psi)$.

(b) $(\omega,0,c_0)$ are family data with $c_0>0$. Apply the converse of Theorem~\ref{thm:family}\textup{(b)} with $r=0$, $m=n=\max(r,n-r)\ge1$,
$m-r=n$ heavy nodes $u_{h'}$ and $m+r-n=0$ light ones: it gives exactly these parameters, $-a_lP'(w_l)=c_l$, and the shift of
\eqref{eq:farmeasure} is $\sum_lw_l+\sum_{h'}u_{h'}-2\sum_{h'}u_{h'}=\sum_lw_l-\sum_{h'}u_{h'}$. By Theorem~\ref{thm:family}\textup{(b)},
$2(-1)^{m+1}G_\infty=2(-1)^{n+1}G_\infty$ in $(e_\gamma,e_1)$ is the family matrix with this shift. Theorem~\ref{thm:mkn} applies to these data, and the
heavy nodes have $b=1$.

In both cases Theorem~\ref{thm:family} gives the solutions $e^{2\nu}=C'F_1F_2=C'V$ of \eqref{eq:nu}.
\end{proof}

\subsection{Known smooth toric instantons}\label{subsec:known}
Table~\ref{tab:smooth} lists the smooth toric gravitational instantons known to us. By Theorem~\ref{thm:G} each of them is a
gravitational soliton; the number of poles and of phantom solitons follows from the proof of Proposition~\ref{prop:GG} and
Lemma~\ref{lem:degreeindex}. For several of them the soliton form was known before, as indicated.
\begin{table}[ht]\centering\small
\begin{tabularx}{\textwidth}{l>{\raggedright\arraybackslash}p{0.3\textwidth}cc>{\raggedright\arraybackslash}X}\toprule
type & instanton & $n$ & degrees & poles (phantom solitons)\\\midrule
ALE & flat $\R^4$ & $1$ & $\{0,0\}$ & $1$ $(0)$; Proposition~\ref{prop:GHmeh}\\
ALE & multi-Eguchi--Hanson $A_{n-1}$ & $n\ge2$ & $\{0,n-1\}$ & $2n-1$ $(n-1)$; Proposition~\ref{prop:GHmeh}\\
ALF & Taub--NUT & $1$ & $\{0,1\}$ & $2$ $(1)$; \cite{CT2010}, Proposition~\ref{prop:GHmeh}\\
ALF & multi-Taub--NUT & $n\ge2$ & $\{0,n\}$ & $2n$ $(n)$; Proposition~\ref{prop:GHmeh}\\
ALF & Taub--bolt \cite{Page1978,GibbonsPerry1980} & $2$ & $\{1,1\}$ & $2$ $(0)$\\
ALF & four turning points, $\kappa=(-1,1,-1)$, Remark~\ref{rem:compexamples}(a), \cite{Jones2026} & $4$ & $\{2,2\}$ & $4$ $(0)$; explicit, \eqref{eq:n4data}\\
AF & Euclidean Kerr (Schwarzschild: $\mathsf a=0$) & $2$ & $\{1,1\}$ & $2$ $(0)$; \cite{BZ1979}, Proposition~\ref{prop:kerr}\\
AF & Chen--Teo \cite{CT2011} & $3$ & $\{1,2\}$ & $4$ $(1)$; \cite{CT2011,Chen2016}, \cite[App.~A, 3-001]{Aksteiner2026}\\
AF & Li--Sun $X_3$ \cite{LS2025}, Teo \cite{Teo2026} & $4$ & $\{2,2\}$ & $4$ $(0)$; \cite{Teo2026}, \cite[App.~A, 4-001]{Aksteiner2026}\\
AF & Li--Sun $X_{n-1}$ \cite{LS2025} & $n\ge5$ & $\{\lfloor\frac n2\rfloor,\lceil\frac n2\rceil\}$ & $2\lceil\frac n2\rceil$ $(n\bmod2)$; numerically \cite[App.~A]{Aksteiner2026} for $n\le8$\\\bottomrule
\end{tabularx}
\caption{Smooth toric gravitational instantons known to us; $n$ is the number of fixed points. The last column gives the number
of poles of the soliton of Theorem~\ref{thm:G}, with the number of phantom solitons in brackets, and earlier soliton
realisations.}\label{tab:smooth}
\end{table}

\subsection{Explicit instantons from the soliton family}\label{subsec:explicit}
The closed form of \S\ref{sec:far} can be used to construct instantons. Since the rod data are polynomial in the soliton data,
smoothness is a system of polynomial equations, which can be solved numerically.

\begin{proposition}\label{prop:certsmooth}
Let $(\omega,\gamma)$ be data of the far-node family of \S\ref{sec:far} whose Hankel family $V_j$ has $\beta_j=1$ and $\kappa_j\in\Z$ for
$1\le j\le n-1$, in the sense of \eqref{eq:kappabeta}. Then the far-node limit, with the normalised augmentation \cite[Prop.~4.26]{LS2025}
and the lattice generated by two consecutive normalised rod vectors, is a smooth complete toric gravitational instanton
whose end is of type ALF$^-$, ALF$^+$ or AF according as $\alpha(\omega,\gamma)>0$, $<0$ or $=0$, in the sense of
Definition~\ref{def:asymptype} (for ALF a finite cover of the end is modelled on the end of anti-Taub--NUT, resp.\ Taub--NUT; for AF the
twist is recorded by the lattice).
\end{proposition}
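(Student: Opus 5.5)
The plan is to assemble the statement from the local and asymptotic results already proved for the far-node limit, and then pass from Weyl coordinates to a genuine manifold.

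\emph{Smoothness.} The far-node limit $(\Phi,\nu)$ with $e^{2\nu}=C_\nu F_1F_2$ is a real-analytic Ricci-flat metric on $\Hb^\circ\times T^2$ (Theorem~\ref{thm:farclosed}). By Proposition~\ref{prop:faraxis}\ref{it:fa3} and Proposition~\ref{prop:axis}, near every rod the Gram matrix has the rod form \eqref{eq:Grod}, and near every turning point the form \eqref{eq:nutmodel}, in both cases with real-analytic coefficients. The normalised rod vectors are $\pm\sqrt{C_\nu}\,S_{\mathfrak s}V_j$. A common rescaling and a linear map of determinant one do not change $\kappa_j,\beta_j$ (Remark~\ref{rem:lattice}), so these invariants are those of the Hankel family $V_j$. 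The hypothesis $\beta_j=1$, $\kappa_j\in\Z$ then lets Remark~\ref{rem:lattice} produce the lattice generated by two consecutive normalised rod vectors, and the metric extends smoothly across all rods and turning points.

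\emph{The manifold.} Glue $\Hb\times T^2$ with circles collapsed over the rods and tori over the turning points, using the standard local models $\R^2\times(\R\times S^1)$ and $\C^2$. This gives a smooth $4$-manifold $M$ with an effective $T^2$-action whose stabilisers are connected, since consecutive primitive vectors form a lattice basis. The only freedom in $\nu$ is the additive constant; the normalised augmentation of \cite[Prop.~4.26]{LS2025} fixes it, and it enters only through the common scale of the $\tilde v_j$, which is harmless by Remark~\ref{rem:lattice}.

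\emph{The end and completeness.} By Proposition~\ref{prop:farCtor}, $d(\Phi,\Phi^\sharp_{1,\alpha})=O(R^{-1})$ uniformly in $\theta$, with the type fixed by the sign of $\alpha$. Proposition~\ref{prop:farinfty} also gives real-analyticity of $G(e_1,e_1)$, $f$ and $e^{2\nu}$ in $(\varepsilon,\theta)$, with $e^{2\nu}\to C_\nu\gamma H_r(\omega)^2$. Expanding these in $\varepsilon=1/R$ with coefficients analytic in $\theta$, and comparing with the model Gibbons--Hawking metric in the frame $(a_\gamma,b_\gamma)$, should give $|\nabla^k(g-F^*g^\sharp)|=O(\mathfrak r^{-k-1})$ for an equivariant diffeomorphism $F$ of the end. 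In the ALF case $F$ is defined on a finite cover, because the lattice need not match the period of the model fibre; in the AF case the twist $\beta$ is read off from the lattice. Completeness follows: the complement of the end has compact closure in $M$ (it lies over a bounded region of $\Hb$), and the end is complete, being asymptotic to a complete model. Remark~\ref{rem:asymptypes} then yields quadratic curvature decay, so $M$ is a gravitational instanton.

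\emph{Main obstacle.} I expect the hardest step to be upgrading the hyperbolic-distance estimate of Proposition~\ref{prop:farCtor} to derivative bounds in the model geometry, uniformly up to the outer rods. There $\rho\to0$ and $(\rho,z)$ are degenerate coordinates on the model, so closeness in $\mathcal H$ does not control the metric directly. I would handle this as in the proof of Proposition~\ref{prop:farCtor}: factor $\sin^2\theta$ out of $A-A^\sharp$ and out of the off-diagonal terms using evenness in $\theta$. I would then express $g-g^\sharp$ in Cartesian coordinates of the model $\R^3$ (for ALF, the Gibbons--Hawking base), where the coefficients become smooth functions of $\varepsilon$ and of the unit vector $x/|x|$. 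That gives each $\partial^k$ an extra factor $\varepsilon^k$. Alternatively, I would invoke Li--Sun's criterion for strongly tamed maps \cite[Prop.~4.38]{LS2025}, which converts an $O(R^{-1})$ rate, together with smoothness on the rods, into the asymptotic type.
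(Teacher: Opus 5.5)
Your proof is correct. The smoothness step is the paper's. The normalised rod vectors $\pm\sqrt{C_\nu}\,S_{\mathfrak s}V_j$ are the images of $\pm V_j$ under one map of positive determinant, so they have the same $\kappa_j,\beta_j$. Remark~\ref{rem:lattice}, with the local forms of Proposition~\ref{prop:axis}, then gives smoothness across the turning points and all rods. The outer rods are included, since $\tilde v_0$ and $\tilde v_n$ are primitive in the lattice.

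For completeness and the end, the paper does not estimate derivatives. It notes that $\Phi$ is locally tamed with $d(\Phi,\Phi^\sharp_{1,\alpha})=O(R^{-1})$ (Proposition~\ref{prop:farCtor}), and that $e^{2\nu}$ tends to a constant (Proposition~\ref{prop:farinfty}(iii)). It then cites \cite[Thm.~4.11 and \S4.4]{LS2025} for completeness and \cite[Prop.~4.36]{LS2025} for the end being modelled on that of its type. This is your fallback. The relevant references are Thm.~4.11 and Prop.~4.36, not Prop.~4.38, which the paper uses only for equivariance of ALE charts.

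Your primary route, a direct expansion in $(\varepsilon,\theta)$, would make this step self-contained with the explicit rate $\delta=1$. It is sound in outline but only sketched, and it needs two further ingredients.

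\emph{The base term.} $g-F^*g^\sharp$ contains $(H-e^{2\nu})\rho^2(d\phi^a)^2$ in the basis $(a_\gamma,b_\gamma)$. This term is smooth across the outer rods with decaying derivatives only because $H=e^{2\nu}$ there. That holds because $e^{2\nu}/H$ is the squared cone-angle factor of $a\mp\frac\alpha2 b$, which is constant along each outer rod. For the normalised augmentation it equals $1$ at infinity. So the normalisation is used here, not only for the scale of the lattice. Then $(H-e^{2\nu})/\sin^2\theta$ is real-analytic in $(\varepsilon,\theta)$, exactly as $k_1/\sin^2\theta$ is in the proof of Proposition~\ref{prop:farCtor}.

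\emph{Gauges and the cover.} The connection term must be written with the fibre coordinates $\phi^b\pm\frac\alpha2\phi^a$ near the upper, resp.\ lower, axis. In the ALF case the finite cover is the one of $\mathfrak t/\langle\tilde v_0,\tilde v_n\rangle$, of degree $|\det(\tilde v_0,\tilde v_n)|$ relative to the lattice.

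With these two points, the Cartesian argument you describe does give $|\nabla^k(g-F^*g^\sharp)|=O(\mathfrak r^{-k-1})$. Remark~\ref{rem:asymptypes} then gives the curvature decay.
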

\begin{proof}
By Proposition~\ref{prop:faraxis}\ref{it:fa3}, the normalised rod vectors are $\pm\sqrt{C_\nu}\,S_{\mathfrak s}V_j$ in the frame $(e_\gamma,e_1)$, i.e.\
the images of $\pm V_j$ under one invertible linear map, so they have the same $\kappa_j,\beta_j$. By Proposition~\ref{prop:faraxis}\ref{it:fa3} and Remark~\ref{rem:lattice}, the metric extends smoothly across the finite rods and
the turning points, and the outer normalised rod vectors, the images of $\pm V_0$ and $\pm V_n$, are primitive in the lattice, so it extends across the
outer rods as well. By Proposition~\ref{prop:farCtor}, $\Phi$ is locally tamed and $d(\Phi,\Phi^\sharp_{1,\alpha})=O(R^{-1})$, and by
Proposition~\ref{prop:farinfty}(iii) its conformal factor tends to a constant at infinity. By \cite[Thm.~4.11 and \S4.4]{LS2025} the
metric is complete, with an end modelled on the end of its type \cite[Prop.~4.36]{LS2025}.
\end{proof}

\begin{remark}[further examples]\label{rem:compexamples}
For a given rod structure, the conditions $\kappa_j=\kappa_j^0$, $\beta_j=1$ of Proposition~\ref{prop:certsmooth} are a square
polynomial system in the data $(w,a,\gamma)$ for ALF rod structures, after the scaling and translation of $w$ and the scaling of $a$
are fixed; for AF rod structures $\gamma=\gamma_*(\omega)$, and since $\tilde v_n=\pm\tilde v_0$ automatically, two of the equations are
redundant, so the solutions form curves along which the twist varies. Here
$\kappa_j$ is the self-intersection number of the sphere over $I_j$, for the orientation of \S\ref{subsec:WP}, cf.\
\cite[\S3.9, p.~544]{OR1970} and \cite[Thm.~1]{Nilsson2024}. Li and Sun show that for a smooth simply connected toric gravitational instanton with global Weyl
coordinates it lies in $\{-1,0,1\}$ unless the metric is hyperk\"ahler \cite[App.~A]{LS2026b}, and Aksteiner shows the same for the
smooth metrics of the positive soliton family of type AF, with the bound $|\det(v_a,v_b)|<b-a$ for the rod vectors of non-adjacent
rods \cite[Thm.~5.1]{Aksteiner2026}; so only the targets $\kappa\in\{-1,0,1\}^{n-1}$ need be considered. By numerical search we have
found solutions with positive weights for the following rod structures; the one with $n=4$ in (a) is known exactly.
\begin{enumerate}[leftmargin=*,label=\textup{(\alph*)}]
\item One ALF rod structure with $n=4$, two with $n=5$, five with $n=6$ and eleven with $n=7$, listed in Table~\ref{tab:compex}.
The one with $n=4$ (Figure~\ref{fig:rods4}(c)), which is also studied by Jones \cite{Jones2026}, has the turning points
$(0,1,\frac{\sqrt{10}}2,1+\frac{\sqrt{10}}2)$, so the finite
rods have the lengths $1$, $\frac{\sqrt{10}}2-1$, $1$; its weights are $(1,A,A,1)$ and its atom at infinity is $\gamma_0$, with
\begin{equation}\label{eq:n4data}
A=\frac{7+2\sqrt{10}}9,\qquad \gamma_0=\frac{40\sqrt2+36\sqrt5-19\sqrt{10}-65}{82},
\end{equation}
a root of $656T^4+2080T^3-120T^2-200T+25$, and $\kappa=(-1,1,-1)$. The moments are $H_1(\omega)=\frac{32+4\sqrt{10}}9$,
$H_2(\omega)=\frac{28+8\sqrt{10}}3$ and $H_3(\omega)=\frac{80+25\sqrt{10}}3$, so $\gamma_0/\gamma_*(\omega)=0.0447\ldots$ and
$\alpha=2(H_3-\gamma_0^2H_1)/(\gamma_0H_2)>0$: the end is of type ALF$^-$. These data are exact: the vectors $V_j=(H_2(\omega_{(j)}),H^{(\gamma_0)}_3(\omega_{(j)}))$ of the Hankel family ($r=2$) satisfy
the relations $V_{j+1}+V_{j-1}=\kappa_jV_j$ of \eqref{eq:kappabeta} exactly, a finite computation in
$\Q(\sqrt2,\sqrt5)$, which contains all the data. To check this, translate $z$ so that the turning points
are centred at $0$; this changes $\Sigma_2$ by a multiple of $H_2$, hence the $V_j$ by a common shear, which is harmless by
Remark~\ref{rem:lattice}. With $\frac92A=\frac72+\sqrt{10}$ and $A^2(\frac{\sqrt{10}}2-1)^2=\frac A2$ one finds, for $j=0,\dots,4$,
$H_2(\omega_{(j)})=4A\,(3,-1,-2,-1,3)$, $H_1(\omega_{(j)})=(2+2A,2A,0,-2A,-2-2A)$,
$\Sigma_2(\omega_{(j)})=(5+\sqrt{10})A\,(0,1,0,-1,0)$ and
$H_3(\omega_{(j)})=(\frac52A+\frac{45}2A^2,-\frac{45}2A^2,0,\frac{45}2A^2,-\frac52A-\frac{45}2A^2)$. So the first components satisfy the
relations with $\kappa=(-1,1,-1)$, and $Y_j=H^{(\gamma)}_3(\omega_{(j)})$ \eqref{eq:shifted} has the form $(Y_0,Y_1,0,-Y_1,-Y_0)$, for which the
three relations reduce to $Y_0+Y_1=0$, i.e.\ $(2+4A)\gamma^2+2(5+\sqrt{10})A\gamma=\frac52A$. Its positive root is $\gamma_0$, and
$\det(V_0,V_1)=8AY_1\ne0$. So by
Proposition~\ref{prop:certsmooth} they define a smooth complete toric gravitational instanton, without interval arithmetic. Its
normalised rod vectors are, in the basis of the lattice generated by the first two (this basis has the orientation opposite to that of the frame, so $\det(v_{j-1},v_j)=1$; the signs of the $v_j$ are
those fixed in Remark~\ref{rem:lattice} and are not re-chosen, so the linear relations \eqref{eq:kappabeta} give the same $\kappa_j$, $\beta_j$,
and $p,q'$ of Remark~\ref{rem:ALFends} are unchanged by this change of basis),
\begin{equation}\label{eq:n4rods}
v_0=(1,0),\quad v_1=(0,1),\quad v_2=(-1,-1),\quad v_3=(-1,-2),\quad v_4=(2,3),
\end{equation}
so $\det(v_0,v_4)=3$ and $v_4-v_0=(1,3)$ is primitive: the end is the quotient of the end of Taub--NUT by $\Z_3$ acting on the
base $\R^3$. The spheres over the three finite rods have self-intersections $-1$, $1$, $-1$; the manifold has $\chi=4$, $b_2=3$ and
the odd intersection form of determinant $3$ and signature $-1$, so it is not spin. The opposite orientation has $\kappa=(1,-1,1)$
at the same turning points, with the weights $(1,9A,9A,1)$ and the atom $45A/(4\gamma_0)$, and the Li--Sun degrees are $\{2,2\}$, so
the metric is Hermitian for neither orientation (\S\ref{subsec:rods}); in particular there are toric ALF gravitational instantons
that are not locally Hermitian, which answers a question of Li and Sun \cite[Q.~8.10]{LS2025} in the ALF case. Since $d+\bar d=n$
for ALF (Remark~\ref{rem:degid}), it is the first ALF rod structure in the family that is not of degree $0$ or $1$ in either
orientation.
\item The AF rod structure $X_4$ of Li and Sun, along a curve on which the twist varies.
\item Sample twists of $X_5$ and of $X_6$, and of two further AF rod structures with six turning points and two with seven
(Table~\ref{tab:compex}).
\end{enumerate}
Each of the other solutions is known numerically and passes the Krawczyk test, an interval version of Newton's method; if the
interval code is correct, the test proves that an exact solution lies in a small box around the approximate one, and by
Proposition~\ref{prop:certsmooth} each exact solution gives a smooth complete toric gravitational instanton, in the closed form
\eqref{eq:farCF}. The AF rod structures of (b) and (c), with
$X_4$, $X_5$ and $X_6$, are exactly the admissible AF words with five, six and seven turning points in the catalogue of Aksteiner
\cite[App.~A, Tables~7--9]{Aksteiner2026}, whose self-intersections are our $-\kappa_j$ up to reversal and orientation; there the
balance equations are solved with interval certificates at the midpoint of the admissible twist interval, with existence on the
whole interval under his comparison hypotheses.

Since the code has not been independently checked, and since for (b) the analysis of the curve near its ends is not given
here, we regard this as strong evidence rather than a proof, and, apart from the exact example with $n=4$, we do not claim the
existence of these instantons in this paper. Nothing else in the paper depends on them, and the other ALF examples in (a), whose rod
structures are not in Table~\ref{tab:smooth}, are not listed there. Details, with the data and the code, will be presented in a
future publication.

By Theorem~\ref{thm:G} every smooth toric instanton with one of these rod structures, in particular each of Li and Sun's metrics
with rod structure $X_4$, is a member of the family. Whether there is exactly one toric one for each twist is, for the rod structures
$X_n$, Li and Sun's uniqueness conjecture \cite[Conj.~8.5]{LS2025}, and the same question arises for the other AF rod structures of
(c); whether the AF$_\beta$ instanton on $X_n$ is unique up to isometry among all gravitational instantons, without the torus
symmetry, is their Question~8.6.
\end{remark}

\begin{remark}[the ends of the ALF examples]\label{rem:ALFends}
None of the ALF examples of Table~\ref{tab:compex} has an end of type ALF-$A_k$, the end of multi-Taub--NUT with $k+1$ centres. This
does not contradict the classification of ends. For Ricci-flat metrics with curvature decay faster than quadratic and controlled
holonomy, which means that geodesic loops of length less than $\epsilon\mathfrak r$ based far out have holonomy tending to the identity, the
ends are of the types ALE, ALF-$A_k$, ALF-$D_k$, ALG or ALH \cite{ChenLi2021}, extending the hyperk\"ahler classification
\cite{Minerbe2010,ChenChen2021}. For hyperk\"ahler instantons Sun and Zhang assume no decay rate, only the finite curvature energy
$\int_M|\mathrm{Rm}_g|^2\,d\mathrm{vol}_g<\infty$ of Definition~\ref{def:GI}: every non-flat hyperk\"ahler gravitational instanton is ALE,
ALF, ALG, ALG$^*$, ALH or ALH$^*$ \cite[Thm.~1.2]{SunZhang2024}, and together with the results for each type this classifies the
hyperk\"ahler gravitational instantons \cite[\S4]{LS2026}. Nilsson shows that a toric gravitational instanton with controlled
holonomy is ALE with cyclic group at infinity or ALF-$A_k$ \cite[Thm.~3]{Nilsson2024}. Our examples have curvature decay $O(\mathfrak r^{-3})$ (Remark~\ref{rem:asymptypes})
but not controlled holonomy. Their ends are quotients of the end of Taub--NUT by a cyclic group $\Z_p$ in the torus,
$p=|\det(v_0,v_n)|$, which acts on the base $\R^3$ through a rotation group $\Z_q$ with $q>1$: the cone at infinity is $\R^3/\Z_q$,
and the orbits of the asymptotic circle through the outer rods are short loops whose holonomy is a rotation of the base by a fixed
angle $2\pi l/q$. They are gravitational instantons in the sense of Definition~\ref{def:GI}, which asks only for quadratic curvature
decay, with ends of type ALF in the sense of Definition~\ref{def:asymptype}, which allows a finite cover; the end is of type
ALF-$A_{p-1}$ exactly when $q=1$. The boundary at infinity is the lens space $L(p,q')$ with
$q'=\operatorname{sgn}\det(v_0,v_n)\,\det(v_1,v_n)$ \cite[Lemma~3]{Nilsson2024}, and for every example of Table~\ref{tab:compex}
$q'\equiv\pm1$ modulo $p$, as for an $A_{p-1}$ end; the difference is geometric, not topological.
\end{remark}

\begin{table}[ht]\centering\small
\begin{tabularx}{\textwidth}{ccl>{\raggedright\arraybackslash}p{0.13\textwidth}>{\raggedright\arraybackslash}X}\toprule
$n$ & $r$ & $\kappa$ & $\gamma/\gamma_*$ & end\\\midrule
$4$ & $2$ & $(-1,1,-1)$ & $0.0447$ & $p=3$, $q=3$\\
$5$ & $2$ & $(-1,1,0,1)$ & $0.1012$ & $p=3$, $q=3$\\
$5$ & $2$ & $(1,-1,-1,1)$ & $0.1070$ & $p=3$, $q=3$\\
$6$ & $2$ & $(1,0,1,0,1)$ & $0.1465$ & $p=3$, $q=3$\\
$6$ & $3$ & $(-1,0,-1,-1,1)$ & $0.1545$ & $p=3$, $q=3$\\
$6$ & $3$ & $(-1,1,0,0,-1)$ & $0.1522$ & $p=3$, $q=3$\\
$6$ & $3$ & $(-1,1,-1,-1,1)$ & $0.0592$ & $p=4$, $q=2$\\
$6$ & $3$ & $(-1,1,0,1,-1)$ & $0.0493$ & $p=4$, $q=2$\\
$7$ & $3$ & $(-1,0,0,1,0,1)$ & $0.1880$ & $p=3$, $q=3$\\
$7$ & $3$ & $(-1,1,0,0,0,1)$ & $0.1953$ & $p=3$, $q=3$\\
$7$ & $3$ & $(-1,1,0,1,0,1)$ & $0.0813$ & $p=4$, $q=2$\\
$7$ & $3$ & $(-1,1,1,0,-1,1)$ & $0.2087$ & $p=3$, $q=3$\\
$7$ & $3$ & $(-1,1,1,1,1,-1)$ & $0.2088$ & $p=3$, $q=3$\\
$7$ & $3$ & $(1,-1,-1,0,0,1)$ & $0.1965$ & $p=3$, $q=3$\\
$7$ & $3$ & $(1,-1,-1,1,0,1)$ & $0.0867$ & $p=4$, $q=2$\\
$7$ & $3$ & $(1,-1,0,0,-1,1)$ & $0.2085$ & $p=3$, $q=3$\\
$7$ & $3$ & $(1,-1,1,1,-1,1)$ & $30.01$ & $p=5$, $q=5$\\
$7$ & $3$ & $(1,-1,1,1,0,1)$ & $9.641$ & $p=4$, $q=2$\\
$7$ & $3$ & $(1,0,1,1,0,1)$ & $5.240$ & $p=3$, $q=3$\\\midrule
$5$ & $2$ & $(1,0,0,1)$ & $1$ & AF; $X_4$; \cite[5-001]{Aksteiner2026}\\
$6$ & $3$ & $(1,0,0,0,-1)$ & $1$ & AF; $X_5$; \cite[6-001]{Aksteiner2026}\\
$6$ & $3$ & $(-1,1,0,-1,1)$ & $1$ & AF; \cite[6-002]{Aksteiner2026}\\
$6$ & $3$ & $(-1,1,1,1,-1)$ & $1$ & AF; \cite[6-003]{Aksteiner2026}\\
$7$ & $3$ & $(1,0,0,0,0,1)$ & $1$ & AF; $X_6$; \cite[7-003]{Aksteiner2026}\\
$7$ & $3$ & $(-1,1,1,1,0,1)$ & $1$ & AF; \cite[7-001]{Aksteiner2026}\\
$7$ & $3$ & $(1,-1,0,1,0,1)$ & $1$ & AF; \cite[7-002]{Aksteiner2026}\\\bottomrule
\end{tabularx}
\caption{The rod structures of the examples of Remark~\ref{rem:compexamples}: the number $n$ of turning points,
the degree index $r$, the integers $\kappa=(\kappa_1,\dots,\kappa_{n-1})$ of \eqref{eq:kappabeta} in the orientation
with the signs of Remark~\ref{rem:lattice}, one representative per metric up to the reflection $z\mapsto-z$ (which reverses $\kappa$)
and, for $n=4$ and $n=6$, orientation reversal (which changes the sign of $\kappa$ and replaces the data by those of the opposite
orientation, inverting the ratio $\gamma/\gamma_*$; for $n=4$ see Remark~\ref{rem:compexamples}(a)), and the ratio
$\gamma/\gamma_*(\omega)$ to four digits; for the ALF rows the end is the quotient of the end of Taub--NUT by a cyclic group of order
$p=|\det(v_0,v_n)|$ which acts on the base $\R^3$ through a cyclic group of order $q$ (so the end is of type ALF-$A_{p-1}$, the end of multi-Taub--NUT with $p$ centres, exactly when $q=1$, which does not occur
here; for $q>1$ the cone at infinity is $\R^3/\Z_q$). The AF rows are one-parameter families, with the twist varying; the last column names the Li--Sun
structures and the entries of Aksteiner's catalogue \cite[App.~A]{Aksteiner2026}, whose self-intersection words are $-\kappa$ up to
reversal and orientation.}\label{tab:compex}
\end{table}

\begin{figure}[ht]
\centering
\begin{minipage}[b]{0.48\textwidth}\centering
\begin{tikzpicture}[scale=0.450,>=stealth]
    \fill[gray!50,opacity=0.3](0,6.837)--(0,0)--(2.5,0)--(4.736,1.118)--(8.064,3.337)--(8.064,6.837)--cycle;
    \draw (0,6.837)--(0,0)--(2.5,0)--(4.736,1.118)--(8.064,3.337);
    \draw[->] (0,3.418)--(0.9,3.418) node[anchor=180,inner sep=1.5pt] {$v_{0}$};
    \draw[->] (0.75,0)--(0.75,0.9) node[anchor=270,inner sep=1.5pt] {$v_{1}$};
    \draw[->] (3.618,0.559)--(3.216,1.364) node[anchor=297,inner sep=1.5pt] {$v_{2}$};
    \draw[->] (6.4,2.227)--(5.901,2.976) node[anchor=304,inner sep=1.5pt] {$v_{3}$};
\end{tikzpicture}

\smallskip\small (a) $A_2$ and $3$-centre multi-Taub--NUT, $\kappa=(2,2)$
\end{minipage}\hfill
\begin{minipage}[b]{0.48\textwidth}\centering
\begin{tikzpicture}[scale=0.450,>=stealth]
    \fill[gray!50,opacity=0.3](0,9.268)--(0,0)--(2.5,0)--(4.268,1.768)--(4.268,5.768)--(4.268,9.268)--cycle;
    \draw (0,9.268)--(0,0)--(2.5,0)--(4.268,1.768)--(4.268,5.768);
    \draw[->] (0,4.634)--(0.9,4.634) node[anchor=180,inner sep=1.5pt] {$v_{0}$};
    \draw[->] (0.75,0)--(0.75,0.9) node[anchor=270,inner sep=1.5pt] {$v_{1}$};
    \draw[->] (3.384,0.884)--(2.747,1.52) node[anchor=315,inner sep=1.5pt] {$v_{2}$};
    \draw[->] (4.268,3.768)--(3.368,3.768) node[anchor=0,inner sep=1.5pt] {$v_{3}$};
\end{tikzpicture}

\smallskip\small (b) Chen--Teo, AF, $\kappa=(1,1)$
\end{minipage}
\caption{Rod structures with three turning points, drawn as in \cite{LS2025}: the shaded region is the orbit space, each rod is a
side of its boundary, orthogonal to the rod vector, and the arrows are the oriented rod vectors $v_0,\dots,v_n$ of
\eqref{eq:kappabeta}, $v_{j+1}=\kappa_jv_j-v_{j-1}$, in the basis $v_0=(1,0)$, $v_1=(0,1)$ of the lattice; the finite sides are
drawn to scale with the first rod of length $1$, and the two semi-infinite rods are cut off. The outer rod vectors are parallel
exactly for the AF structures. (a) The multi-Eguchi--Hanson metric $A_2$ (ALE) and the multi-Taub--NUT metric with three centres
(ALF, of type ALF-$A_2$: the end is the quotient of the end of Taub--NUT by $\Z_3$ acting on the circle fibre) have the same rod
vectors $v_j=(1-j,j)$; their rod lengths are free parameters, drawn equal. (b) The Chen--Teo instanton,
$\kappa=(1,1)$, $v_3=-v_0$, at the point of its one-parameter family with equal rod lengths.}
\label{fig:rods3}
\end{figure}

\begin{figure}[ht]
\centering
\begin{minipage}[b]{0.31\textwidth}\centering
\begin{tikzpicture}[scale=0.399,>=stealth]
    \fill[gray!50,opacity=0.3](0,8.405)--(0,0)--(2.5,0)--(4.736,1.118)--(6.816,2.505)--(10.016,4.905)--(10.016,8.405)--cycle;
    \draw (0,8.405)--(0,0)--(2.5,0)--(4.736,1.118)--(6.816,2.505)--(10.016,4.905);
    \draw[->] (0,4.202)--(0.9,4.202) node[anchor=180,inner sep=1.5pt] {$v_{0}$};
    \draw[->] (0.75,0)--(0.75,0.9) node[anchor=270,inner sep=1.5pt] {$v_{1}$};
    \draw[->] (3.618,0.559)--(3.216,1.364) node[anchor=297,inner sep=1.5pt] {$v_{2}$};
    \draw[->] (5.776,1.811)--(5.277,2.56) node[anchor=304,inner sep=1.5pt] {$v_{3}$};
    \draw[->] (8.416,3.705)--(7.876,4.425) node[anchor=307,inner sep=1.5pt] {$v_{4}$};
\end{tikzpicture}

\smallskip\small (a) $A_3$ and $4$-centre multi-Taub--NUT
\end{minipage}\hfill
\begin{minipage}[b]{0.31\textwidth}\centering
\begin{tikzpicture}[scale=0.450,>=stealth]
    \fill[gray!50,opacity=0.3](0,9.268)--(0,0)--(2.5,0)--(4.268,1.768)--(6.768,1.768)--(6.768,5.768)--(6.768,9.268)--cycle;
    \draw (0,9.268)--(0,0)--(2.5,0)--(4.268,1.768)--(6.768,1.768)--(6.768,5.768);
    \draw[->] (0,4.634)--(0.9,4.634) node[anchor=180,inner sep=1.5pt] {$v_{0}$};
    \draw[->] (0.75,0)--(0.75,0.9) node[anchor=270,inner sep=1.5pt] {$v_{1}$};
    \draw[->] (3.384,0.884)--(2.747,1.52) node[anchor=315,inner sep=1.5pt] {$v_{2}$};
    \draw[->] (5.518,1.768)--(5.518,0.868) node[anchor=90,inner sep=1.5pt] {$v_{3}$};
    \draw[->] (6.768,3.768)--(7.668,3.768) node[anchor=180,inner sep=1.5pt] {$v_{4}$};
\end{tikzpicture}

\smallskip\small (b) $X_3$ (Teo), AF, $\kappa=(1,0,-1)$
\end{minipage}\hfill
\begin{minipage}[b]{0.31\textwidth}\centering
\begin{tikzpicture}[scale=0.440,>=stealth]
    \fill[gray!50,opacity=0.3](0,3.5)--(0,0)--(2.5,0)--(3.527,-1.027)--(5.763,-2.145)--(9.092,-4.364)--(9.092,3.5)--cycle;
    \draw (0,3.5)--(0,0)--(2.5,0)--(3.527,-1.027)--(5.763,-2.145)--(9.092,-4.364);
    \draw[->] (0,2.5)--(0.9,2.5) node[anchor=180,inner sep=1.5pt] {$v_{0}$};
    \draw[->] (0.75,0)--(0.75,0.9) node[anchor=270,inner sep=1.5pt] {$v_{1}$};
    \draw[->] (3.014,-0.514)--(2.377,-1.15) node[anchor=45,inner sep=1.5pt] {$v_{2}$};
    \draw[->] (4.645,-1.586)--(4.243,-2.391) node[anchor=63,inner sep=1.5pt] {$v_{3}$};
    \draw[->] (7.427,-3.255)--(7.927,-2.506) node[anchor=236,inner sep=1.5pt] {$v_{4}$};
\end{tikzpicture}

\smallskip\small (c) ALF, $\kappa=(-1,1,-1)$
\end{minipage}
\caption{Rod structures with four turning points, drawn as in Figure~\ref{fig:rods3}. (a) $A_3$ and the multi-Taub--NUT metric with
four centres (ALF-$A_3$), $\kappa=(2,2,2)$, equal rod lengths. (b) Li--Sun's $X_3$, the AF instanton of Teo, $v_4=v_0$, at the point of its
one-parameter family with three rods of equal length. (c) The ALF example of Remark~\ref{rem:compexamples}(a) and of \cite{Jones2026}, with the rod
vectors \eqref{eq:n4rods} and the rod lengths $1$, $\frac{\sqrt{10}}2-1$, $1$; the outer rods are not parallel, and
$\det(v_0,v_4)=3$. Its end is not of type ALF-$A_k$: it is the quotient of the end of Taub--NUT by $\Z_3$ acting on the base
$\R^3$ by rotations about the axis, so that the cone at infinity is $\R^3/\Z_3$ ($p=q=3$ in Table~\ref{tab:compex}), whereas an
ALF-$A_k$ end is the quotient by $\Z_{k+1}$ acting on the fibre only, with cone $\R^3$; see Remark~\ref{rem:ALFends}.}
\label{fig:rods4}
\end{figure}

\begin{figure}[ht]
\centering
\begin{minipage}[b]{0.48\textwidth}\centering
\begin{tikzpicture}[scale=0.450,>=stealth]
    \fill[gray!50,opacity=0.3](0,10.004)--(0,0)--(2.5,0)--(4.736,1.118)--(6.816,2.505)--(8.816,4.005)--(11.94,6.504)--(11.94,10.004)--cycle;
    \draw (0,10.004)--(0,0)--(2.5,0)--(4.736,1.118)--(6.816,2.505)--(8.816,4.005)--(11.94,6.504);
    \draw[->] (0,5.002)--(0.9,5.002) node[anchor=180,inner sep=1.5pt] {$v_{0}$};
    \draw[->] (0.75,0)--(0.75,0.9) node[anchor=270,inner sep=1.5pt] {$v_{1}$};
    \draw[->] (3.618,0.559)--(3.216,1.364) node[anchor=297,inner sep=1.5pt] {$v_{2}$};
    \draw[->] (5.776,1.811)--(5.277,2.56) node[anchor=304,inner sep=1.5pt] {$v_{3}$};
    \draw[->] (7.816,3.255)--(7.276,3.975) node[anchor=307,inner sep=1.5pt] {$v_{4}$};
    \draw[->] (10.378,5.254)--(9.816,5.957) node[anchor=309,inner sep=1.5pt] {$v_{5}$};
\end{tikzpicture}

\smallskip\small (a) $A_4$ and $5$-centre multi-Taub--NUT
\end{minipage}\hfill
\begin{minipage}[b]{0.48\textwidth}\centering
\begin{tikzpicture}[scale=0.450,>=stealth]
    \fill[gray!50,opacity=0.3](0,10.884)--(0,0)--(2.5,0)--(4.268,1.768)--(6.997,1.768)--(8.614,3.384)--(8.614,7.384)--(8.614,10.884)--cycle;
    \draw (0,10.884)--(0,0)--(2.5,0)--(4.268,1.768)--(6.997,1.768)--(8.614,3.384)--(8.614,7.384);
    \draw[->] (0,5.442)--(0.9,5.442) node[anchor=180,inner sep=1.5pt] {$v_{0}$};
    \draw[->] (0.75,0)--(0.75,0.9) node[anchor=270,inner sep=1.5pt] {$v_{1}$};
    \draw[->] (3.384,0.884)--(2.747,1.52) node[anchor=315,inner sep=1.5pt] {$v_{2}$};
    \draw[->] (5.633,1.768)--(5.633,0.868) node[anchor=90,inner sep=1.5pt] {$v_{3}$};
    \draw[->] (7.805,2.576)--(8.442,1.939) node[anchor=135,inner sep=1.5pt] {$v_{4}$};
    \draw[->] (8.614,5.384)--(9.514,5.384) node[anchor=180,inner sep=1.5pt] {$v_{5}$};
\end{tikzpicture}

\smallskip\small (b) $X_4$, AF, $\kappa=(1,0,0,1)$
\end{minipage}

\medskip
\begin{minipage}[b]{0.48\textwidth}\centering
\begin{tikzpicture}[scale=0.446,>=stealth]
    \fill[gray!50,opacity=0.3](0,3.5)--(0,0)--(2.5,0)--(3.892,-1.392)--(7.333,-3.112)--(9.221,-5.001)--(12.55,-7.22)--(12.55,3.5)--cycle;
    \draw (0,3.5)--(0,0)--(2.5,0)--(3.892,-1.392)--(7.333,-3.112)--(9.221,-5.001)--(12.55,-7.22);
    \draw[->] (0,2.5)--(0.9,2.5) node[anchor=180,inner sep=1.5pt] {$v_{0}$};
    \draw[->] (0.75,0)--(0.75,0.9) node[anchor=270,inner sep=1.5pt] {$v_{1}$};
    \draw[->] (3.196,-0.696)--(2.559,-1.332) node[anchor=45,inner sep=1.5pt] {$v_{2}$};
    \draw[->] (5.612,-2.252)--(5.21,-3.057) node[anchor=63,inner sep=1.5pt] {$v_{3}$};
    \draw[->] (8.277,-4.057)--(8.913,-3.42) node[anchor=225,inner sep=1.5pt] {$v_{4}$};
    \draw[->] (10.886,-6.11)--(11.385,-5.361) node[anchor=236,inner sep=1.5pt] {$v_{5}$};
\end{tikzpicture}

\smallskip\small (c) ALF, $\kappa=(-1,1,0,1)$
\end{minipage}\hfill
\begin{minipage}[b]{0.48\textwidth}\centering
\begin{tikzpicture}[scale=0.399,>=stealth]
    \fill[gray!50,opacity=0.3](0,8.553)--(0,0)--(2.5,0)--(4.821,2.321)--(7.756,3.789)--(10.256,3.789)--(14.051,5.053)--(14.051,8.553)--cycle;
    \draw (0,8.553)--(0,0)--(2.5,0)--(4.821,2.321)--(7.756,3.789)--(10.256,3.789)--(14.051,5.053);
    \draw[->] (0,4.277)--(0.9,4.277) node[anchor=180,inner sep=1.5pt] {$v_{0}$};
    \draw[->] (0.75,0)--(0.75,0.9) node[anchor=270,inner sep=1.5pt] {$v_{1}$};
    \draw[->] (3.66,1.16)--(3.024,1.797) node[anchor=315,inner sep=1.5pt] {$v_{2}$};
    \draw[->] (6.289,3.055)--(6.691,2.25) node[anchor=117,inner sep=1.5pt] {$v_{3}$};
    \draw[->] (9.006,3.789)--(9.006,4.689) node[anchor=270,inner sep=1.5pt] {$v_{4}$};
    \draw[->] (12.154,4.421)--(11.869,5.275) node[anchor=288,inner sep=1.5pt] {$v_{5}$};
\end{tikzpicture}

\smallskip\small (d) ALF, $\kappa=(1,-1,-1,1)$
\end{minipage}
\caption{Rod structures with five turning points, drawn as in Figure~\ref{fig:rods3}. (a) $A_4$ and the multi-Taub--NUT metric with
five centres (ALF-$A_4$), $\kappa=(2,2,2,2)$, equal rod lengths. (b) Li--Sun's $X_4$, Remark~\ref{rem:compexamples}(b), $v_5=v_0$, at the
point of its family with rod lengths $(1,1,1.092,0.914)$. (c), (d) The two ALF examples of Remark~\ref{rem:compexamples}(a), with
rod lengths $(1,0.787,1.539,1.068)$ and $(1,1.313,1.313,1)$; (d) is symmetric under $z\mapsto-z$. Both have $\det(v_0,v_5)=3$ and
the same end as Figure~\ref{fig:rods4}(c), the quotient of the end of Taub--NUT by $\Z_3$ acting on the base, which is not an
ALF-$A_k$ end (Remark~\ref{rem:ALFends}).}
\label{fig:rods5}
\end{figure}

\clearpage
\section*{Computation and the use of an AI assistant}\label{sec:computation}
The proof of Theorem~\ref{thm:G} does not depend on numerical computation. The examples of
Remark~\ref{rem:compexamples}, except the exact one with four turning points, are computer-assisted, and none is used elsewhere. Computation nevertheless shaped the work: three facts were first found by computation and then
proved, namely the Hankel formula for the rod vectors (by exact computer algebra for small numbers of solitons), the closed
formula of Theorem~\ref{thm:CF} (numerically, as a factorisation of the interpolation determinants, Lemmas~\ref{lem:PI}
and~\ref{lem:HE}), and the identification of the far-node limit with a soliton on the flat seed $\R^3\times S^1$, with its rod lines
and the form of its end (numerically).%

The project was carried out with the help of an AI assistant, Claude (Anthropic), used through the Claude Code environment.
The assistant took part in developing and writing the arguments, wrote and ran the verification scripts and the searches
for examples, reviewed the literature, and carried out critical reviews of the drafts. The authors take responsibility for
the content.

\subsection*{Acknowledgements}
L.A. is grateful to Paul Tod, Lionel Mason, Maciej Dunajski, Mingyang Li, and Song Sun for helpful discussions. L.A. thanks the
Simons Center at Stony Brook University, where some of the work was done, for hospitality. We are grateful to Steffen Aksteiner
and Alfie Jones for sharing their work before publication, and for helpful discussions. We thank James Lucietti for comments on
a draft version of this paper.

\appendix
\section{Classical identities}\label{app:identities}
We collect the classical determinant identities used in \S\S\ref{sec:hankel}, \ref{sec:ale} and \ref{sec:far}, in the form in which they
are used. All but the last are in Chapter~0 of \cite{HornJohnson2013}; see also the notes \cite{Grinberg2022}, which state them
with complete proofs in the form used here.

\emph{Cauchy--Binet.} For an $s\times n$ matrix $A$ and an $n\times s$ matrix $B$, $\det(AB)$ is the sum of
$\det A_{\cdot T}\det B_{T\cdot}$ over the $s$-subsets $T$ of the columns of $A$ \cite[Ch.~I]{Gantmacher1959}, \cite[\S0.8]{HornJohnson2013}; see also \cite[Thm.~6.32]{Grinberg2022}.

\emph{Laplace expansion along several rows or columns} \cite[\S0.8]{HornJohnson2013}; see also \cite[Thm.~6.156]{Grinberg2022}. For an $N\times N$ matrix $A$ and a set $C$ of $a$ of its columns,
\begin{equation}\label{eq:laplace}
\det A=\sum_{|R|=a}(-1)^{\Sigma R+\Sigma C}\,\det A_{R,C}\,\det A_{R^c,C^c},
\end{equation}
the sum over the sets $R$ of $a$ rows. Here $A_{R,C}$ is the submatrix with the rows $R$ and the columns $C$, in their order in $A$;
$R^c$, $C^c$ are the complements; and $\Sigma R$, $\Sigma C$ are the sums of the indices in $R$, $C$. We call $(-1)^{\Sigma R+\Sigma C}$ the
\emph{Laplace sign}. It does not depend on whether rows and columns are numbered from $0$ or from $1$, since the two numberings change
$\Sigma R+\Sigma C$ by $2a$. Applied to $A^T$, \eqref{eq:laplace} gives the expansion along a set of $a$ rows, with the same sign. For
$a=1$ it is the expansion along one column, or one row. If $C$ consists of the first $a$ columns, then, numbering from $1$,
$(-1)^{\Sigma R+\Sigma C}=(-1)^{\Sigma R-a(a+1)/2}$, and
$\Sigma R-a(a+1)/2=\#\{(i,i'):i\in R,\ i'\notin R,\ i'<i\}$ is the number of inversions of the permutation that moves the rows $R$,
in their order, in front of the others. This is used for \eqref{eq:Csplit} in \S\ref{subsec:split}, in the proof of
Lemma~\ref{lem:HE}, and for \eqref{eq:shifted} in \S\ref{subsec:atominf}.

\emph{Schur complement} \cite{Schur1917}, \cite[\S0.8]{HornJohnson2013}; see also \cite[Exercise~6.36]{Grinberg2022}. For a square matrix $K$ with $\det K\ne0$, bordered by $U$, $V$ and a square block $E$,
\begin{equation}\label{eq:schur}
\det\begin{pmatrix}K&U\\V^T&E\end{pmatrix}=\det K\,\det\bigl(E-V^TK^{-1}U\bigr),
\end{equation}
so a bordered determinant divided by $\det K$ is the determinant of the Schur complement $E-V^TK^{-1}U$. Evaluated in both
orders, for a column $u$ and a row $v^T$, it gives the \emph{matrix determinant lemma}, Cauchy's expansion of a determinant with a
rank-one perturbation \cite[\S0.8]{HornJohnson2013}, see also \cite[Exercise~6.59]{Grinberg2022},
\begin{equation}\label{eq:mdl}
\det(A+uv^T)=\det A\,\bigl(1+v^TA^{-1}u\bigr),\qquad\det A\ne0 .
\end{equation}

\emph{Woodbury's identity} \cite{Woodbury1950}, \cite[(1)]{Hager1989}, \cite{HendersonSearle1981}, \cite[\S0.7]{HornJohnson2013}. For an
invertible $n\times n$ matrix $A$ and an $n\times s$ matrix $U$ with $\mathrm{Id}-U^TA^{-1}U$ invertible,
\begin{equation}\label{eq:woodbury}
(A-UU^T)^{-1}=A^{-1}+A^{-1}U\,(\mathrm{Id}-U^TA^{-1}U)^{-1}U^TA^{-1},
\end{equation}
as one checks by multiplying by $A-UU^T$; this is formula (1) of \cite{Hager1989}, where the history of the identity is given, with
$V=U^T$.

\emph{A Cauchy-type identity for interpolation determinants.} For $x,y,a,b$ in $\C^n$, let $V^{n,n}(x,y;a,b)$ be the $2n\times2n$
matrix whose row $i$ and row $n+j$, for $1\le i,j\le n$, are
\begin{equation}
\bigl(1,x_i,\dots,x_i^{n-1},\,a_i,a_ix_i,\dots,a_ix_i^{n-1}\bigr),\qquad
\bigl(1,y_j,\dots,y_j^{n-1},\,b_j,b_jy_j,\dots,b_jy_j^{n-1}\bigr).
\end{equation}
Then
\begin{equation} \det\Bigl[\frac{b_j-a_i}{y_j-x_i}\Bigr]_{i,j=1}^n=\frac{(-1)^{n(n-1)/2}}{\prod_{i,j=1}^n(y_j-x_i)}\,\det V^{n,n}(x,y;a,b). \end{equation}
This is \cite[Thm.~1.1(a), (1.3)]{IOTZ2006} with $p=q=0$: then $\det V^{p+1,q+1}(x_i,y_j;a_i,b_j)=b_j-a_i$ and $\det V^{p,q}=1$. The rows
of $V^{n,n}$ are those of the interpolation determinant $W$ of \S\ref{subsec:CW} for the pairs $[1:a_i]$ and $[1:b_j]$; since $W$ is
homogeneous of degree $1$ in each pair, the identity holds for the pairs $[\alpha_i:\beta_i]$ after multiplying both sides by the product of the $\alpha$'s of all
$2n$ pairs.

\bibliographystyle{amsplain}
\bibliography{Geroch}
\end{document}